\theoremstyle{plain}
\newtheorem{maintheorem}{Theorem}
\newtheorem{maincorollary}{Corollary}
\newtheorem{mainquestion}{Question}
\newtheorem{Thm}{Theorem}[section]
\newtheorem{Lem}[Thm]{Lemma}
\newtheorem{Prop}[Thm]{Proposition}
\newtheorem{Cor}[Thm]{Corollary}
\theoremstyle{remark}
\newtheorem{Def}[Thm] {Definition}
\newtheorem{Rem}[Thm] {Remark}
\newtheorem{Ex}[Thm] {Example}
\newtheorem{Que}[Thm] {Question}
\theoremstyle{remark}
\theoremstyle{definition}
\DeclareMathOperator{\cov}{cov}
\newcommand{\eps}{\varepsilon}
\newcommand{\N}{\mathbb{N}}
\newcommand{\Z}{\mathbb{Z}}
\newcommand{\set}[1]{\left\{#1\right\}}
\newcommand{\htop}{h_{\text{top}}}
\newcommand{\E}{\mathcal{E}}
\newcommand{\B}{\mathcal{B}}
\newcommand{\C}{\mathfrak{C}}
\begin{document}

\title{Abundance of Smale's horseshoes and ergodic measures via multifractal analysis and various quantitative spectrums}



\author{{Yiwei Dong$^{*},$ Xiaobo Hou$^\S$ and Xueting Tian$^{\dag}\,^\ddag$}\\
{\em\small School of Mathematical Science,  Fudan University}\\
{\em\small Shanghai 200433, People's Republic of China}\\
{\small $^*$Email:dongyiwei06@gmail.com;  $^\S$Email:xiaobohou@fudan.edu.cn;   $^{\dag}$Email:xuetingtian@fudan.edu.cn}\\}
\date{}
\maketitle

\renewcommand{\baselinestretch}{1.2}
\large\normalsize
\footnotetext {$^\ddag$ Tian is the corresponding author.}
\footnotetext { Key words and phrases: Metric entropy; Dimension; Uniformly hyperbolic systems; Partially hyperbolic systems; Symbolic dynamics; Shadowing property.  }
\footnotetext {AMS Review:    37A35; 37C45; 37D20; 37D30; 37B10; 37C50.   }

\begin{abstract}
In this article, we combine the perspectives of density, entropy, and multifractal analysis to investigate the structure of ergodic measures.  We prove that for each transitive topologically Anosov system  $(X,f)$, each continuous function $\varphi$ on $X$  and each $(a,h)\in \mathrm{Int}\{(\int \varphi d\mu, h_\mu(f)):\mu\in \mathcal{M}_f(X)\},$  the set $\{\mu\in \mathcal{M}_f^e(X): (\int \varphi d\mu, h_\mu(f))=(a,h)\}$ is non-empty and contains a dense $G_\delta$ subset of $\{\mu\in \mathcal{M}_f(X): (\int \varphi d\mu, h_\mu(f))=(a,h)\}.$ Meanwhile, combining the development of non-hyperbolic systems and cocycles we give a general framework and use it to obtain intermediate entropy property of ergodic measures with same Lyapunov exponent for non-hyperbolic step skew-products, elliptic $\operatorname{SL}(2, \mathbb{R})$ cocycles and robustly non-hyperbolic transitive diffeomorphisms. Moreover,  we get  generalized results on multiple functions and use them to obtain the intermediate Hausdorff dimension of ergodic measures for transitive average conformal or quasi-conformal  Anosov diffeomorphisms, that is $\left\{\operatorname{dim}_H \mu: \mu\in\mathcal{M}_f^e(M)\right\}= \left\{\operatorname{dim}_H \mu: \mu\in\mathcal{M}_f(M)\right\}.$  In this process, we introduce and establish a 'multi-horseshoe' entropy-dense property and use it to get the goal combined with the well-known  conditional variational principles.  As applications, we also obtain many new observations on various other quantitative spectrums including Lyapunov exponents, first return rate,   geometric pressure, unstable Hausdorff dimension, etc. 
\end{abstract}

\section{Introduction}
Throughout the paper, by a dynamical system $(X,f)$ we mean that $(X,d)$ is a compact metric space and $f:X \rightarrow X$ is a continuous map.
Denote by $\mathcal{M}(X)$, $\mathcal{M}_f(X)$, $\mathcal{M}_{f}^{e}(X)$ the set of probability measures, $f$-invariant probability measures, $f$-ergodic probability measures, respectively.  The  set of continuous functions on $X$ is denoted by by $C(X)$.
In the 19th century, the work of Boltzmann and Gibbs on statistical mechanics raised a mathematical problem which can be stated as follows: given a  dynamical system $(X,f)$, an $f$-invariant measure $\mu$ and  an integrable function $\varphi:X\to\mathbb{R}$, find conditions under which the limit
\begin{equation}\label{equa-A}
	\lim\limits_{n\to \infty}\frac{1}{n}\sum_{i=0}^{n-1}\varphi(f^i(x))
\end{equation}
exists and is constant almost everywhere.  In 1931 Birkhoff \cite{Birkhoff1931} proved that the limit (\ref{equa-A}) exists  almost everywhere. From this result, he showed that a necessary and sufficient condition for its value to be constant almost everywhere (the constant is $\int \varphi d\mu$) is that there exists no Borel set $A$ such that $0<\mu(A)<1$ and $f^{-1}(A)=A.$ Measures that satisfy this condition are called $f$-ergodic measures.   Denote  $G_\mu=\{x\in X:\lim\limits_{n\to \infty}\frac{1}{n}\sum_{i=0}^{n-1}\varphi(f^i(x))=\int \varphi d\mu \text{ for any }\varphi\in C(X)\}.$  Birkhoff's ergodic theorem implies that  $\mu(G_\mu)=1$ for every $\mu\in\mathcal{M}_f^e(X),$ and while for every $\nu\in \mathcal{M}_f(X)\setminus \mathcal{M}_f^e(X)$ one has $\nu(G_\nu)=0$ (see \cite[Proposition 5.10]{DGS}). Moreover, in 1973 Bowen \cite{Bowen1973} proved the remarkable result that $h_{top}(f,G_\mu)=h_{\mu}(f)$ for every $\mu\in\mathcal{M}_f^e(X).$ Here $h_{top}(f,G_\mu)$ is the topological entropy of $G_\mu$ and $h_\mu(f)$ is the metric entropy of $\mu$.
For non-ergodic measures the situation is quite different,
it is not difficult to provide non-ergodic measure with $G_\mu=\emptyset$ and $h_{\mu}(f)>0.$
These results all demonstrate the crucial role of ergodic measures in ergodic theory and dynamical systems. Over the last decades, the properties of ergodic measures have been investigated extensively by researchers.   In particular, people are interested in the following question:
$$\emph{How abundant are ergodic measures in the set of invariant measures for a given dynamical system?}$$
People 
have investigated the richness of ergodic measures in the set of invariant measures from various perspectives, including density, entropy, multifractal analysis, and others. In the following, we will review some of the results  obtained in this field of study.

From the perspective of {\bf density},  researchers mainly focus on the condition under what one has $\overline{\mathcal{M}_f^e(X)}=\mathcal{M}_f(X).$ $\mathcal{M}_f^e(X)$ naturally forms a nonempty $G_\delta$ subset of $\mathcal{M}_f(X)$ (see \cite[Proposition 5.7]{DGM2019}). Thus $\mathcal{M}_f^e(X)$ is a residual subset of $\mathcal{M}_f(X)$ when $\overline{\mathcal{M}_f^e(X)}=\mathcal{M}_f(X).$    Researchers developed two methods to prove  the density of ergodic measures in chaotic dynamics, 
\begin{itemize}
	\item[$\bullet$] \emph{Constructing periodic measure.} Since periodic measures are ergodic, it's enough to obtain the density of ergodic measures by  proving that every invariant measure is the weak limit of  periodic measures. For  full shifts, Ville \cite{Ville}, Parthasarathy \cite{Parthasarathy1961} and Oxtoby \cite{Oxtoby1963} proved that periodic measures are dense in the set of invariant measures.  Building on their methods, as well as on the periodic speciﬁcation property developed by Bowen \cite{Bowen1971} for basic sets of Axioms A diffeomorphisms,  in 1970 Sigmund \cite{Sigmund1970}  obtained the density of periodic measures for Axioms A diffeomorphisms.  Other important classes of dynamical systems also have the periodic speciﬁcation property, for example,   mixing shifts of ﬁnite type \cite{DGS} and mixing interval maps \cite{Buzzi1997}. 
	The closeability and linkability properties introduced by Gelfert and Kwietniak in \cite{GK2018} are more general than the periodic specification property and apply to a wide range of dynamical systems, including  $S$-gap shifts and certain geodesic ﬂows of a complete connected negatively curved manifold.  The authors showed that they imply the density of ergodic measures. In \cite{ABC2011} Abdenur, Bonatti, and Crovisier introduced the barycenter property and used it to obtain the density of periodic measures for isolated non-trivial transitive set of a $C^1$-generic diﬀeomorphism.
	\item[$\bullet$] \emph{Constructing invariant sets.}  For dynamical systems with the speciﬁcation property but without the periodic speciﬁcation property, it's difficult to find periodic measures. A new method was developed in \cite{Dateyama1981} by Dateyama for such systems. He proved the density of ergodic measures for dynamical systems with the speciﬁcation property by constructing an invariant set on which every invariant measure is close to the given invariant measure. Eizenberg, Kifer, and Weiss \cite{EKW}  obtained a stronger result for  dynamical systems with the specification property. They construct an invariant set such that not only every invariant measure on it is close to the given invariant measure, but also the topological entropy of the set is close to the metric entropy of the given invariant measure. 
	In 2005, Pﬁster and Sullivan \cite{PS2005} generalized  this result to dynamical systems with the approximate product property, and referred to this result as the entropy-dense property.   Many dynamical systems have been shown to possess the approximate product property, including $\beta$-shifts \cite{PS2005}, transitive and noninvertible graph maps \cite{KLO2016}, and transitive soﬁc shifts \cite{KLO2016}. For a broad class of symbolic systems, i.e., the subshifts with non-uniform structure, Climenhaga, Thompson, and Yamamoto \cite{CTY2017} derived a ‘horseshoe’ theorem which implies the entropy-dense property.  This has applications to  $\beta$-shifts, $S$-gap shifts, and their factors.
\end{itemize}

\begin{figure}\caption{Entropy and multifractal analysis}
	\begin{center}
		\tikzset{every picture/.style={line width=0.75pt}} 
		\begin{tikzpicture}[x=0.75pt,y=0.75pt,yscale=-1,xscale=1]
			
			\draw    (137.94,216.69) .. controls (177.16,180.72) and (189.02,146.9) .. (185.37,134.49) ;
			\draw    (113,110) .. controls (135,83) and (178.99,107.94) .. (185.37,134.49) ;
			\draw    (80,203) .. controls (73,167) and (89.25,138.63) .. (113,110) ;
			\draw    (210,122) .. controls (249.8,92.15) and (297.52,46.95) .. (392.56,118.41) ;
			\draw [shift={(394,119.5)}, rotate = 217.25] [color={rgb, 255:red, 0; green, 0; blue, 0 }  ][line width=0.75]    (10.93,-3.29) .. controls (6.95,-1.4) and (3.31,-0.3) .. (0,0) .. controls (3.31,0.3) and (6.95,1.4) .. (10.93,3.29)   ;
			\draw    (212,221) .. controls (241.7,263.08) and (350.79,250.75) .. (391.78,221.39) ;
			\draw [shift={(393,220.5)}, rotate = 143.13] [color={rgb, 255:red, 0; green, 0; blue, 0 }  ][line width=0.75]    (10.93,-3.29) .. controls (6.95,-1.4) and (3.31,-0.3) .. (0,0) .. controls (3.31,0.3) and (6.95,1.4) .. (10.93,3.29)   ;
			\draw    (459,169.5) -- (459,20) ;
			\draw [shift={(459,18)}, rotate = 90] [color={rgb, 255:red, 0; green, 0; blue, 0 }  ][line width=0.75]    (10.93,-3.29) .. controls (6.95,-1.4) and (3.31,-0.3) .. (0,0) .. controls (3.31,0.3) and (6.95,1.4) .. (10.93,3.29)   ;
			\draw    (427,221) -- (613,220.51) ;
			\draw [shift={(615,220.5)}, rotate = 179.85] [color={rgb, 255:red, 0; green, 0; blue, 0 }  ][line width=0.75]    (10.93,-3.29) .. controls (6.95,-1.4) and (3.31,-0.3) .. (0,0) .. controls (3.31,0.3) and (6.95,1.4) .. (10.93,3.29)   ;
			\draw [color={rgb, 255:red, 245; green, 166; blue, 35 }  ,draw opacity=1 ][line width=3] [line join = round][line cap = round]   (112.4,164.03) .. controls (112.4,163.74) and (112.4,163.46) .. (112.4,163.17) ;
			\draw [color={rgb, 255:red, 245; green, 166; blue, 35 }  ,draw opacity=1 ][line width=3] [line join = round][line cap = round]    ;
			\draw [color={rgb, 255:red, 245; green, 166; blue, 35 }  ,draw opacity=1 ][line width=3] [line join = round][line cap = round]   (459,93.75) .. controls (459,93.42) and (459,93.08) .. (459,92.75) ;
			\draw [color={rgb, 255:red, 245; green, 166; blue, 35 }  ,draw opacity=1 ][line width=3] [line join = round][line cap = round]   (500,221) .. controls (500,220.67) and (501,220) .. (500,220) ;
			\draw    (461,214) -- (461,227) ;
			\draw    (459,34) -- (467,34) ;
			\draw    (459,169.5) -- (467,169.5) ;
			\draw    (80,203) .. controls (83,218) and (97.94,246.69) .. (137.94,216.69) ;
		
			\draw (120.08,147.86) node [anchor=north west][inner sep=0.75pt]    {$\mu $};
			\draw (478,226.4) node [anchor=north west][inner sep=0.75pt]    {$\int \varphi d\mu $};
			\draw (469,85.4) node [anchor=north west][inner sep=0.75pt]    {$h_{\mu }( f)$};
			\draw (280,56.4) node [anchor=north west][inner sep=0.75pt]    {$\mathcal{E}_f$};
			\draw (281,257.4) node [anchor=north west][inner sep=0.75pt]    {$\mathcal{P}_\varphi$};
			\draw (75.15,252.91) node [anchor=north west][inner sep=0.75pt]    {$\mathcal{M}_f(X)$};
			\draw (472,164.4) node [anchor=north west][inner sep=0.75pt]    {$0$};
			\draw (473,26.4) node [anchor=north west][inner sep=0.75pt]    {$h_{top}( f)$};
			\draw (457,232.4) node [anchor=north west][inner sep=0.75pt]    {$0$};
		\end{tikzpicture}
	\end{center}
\end{figure}
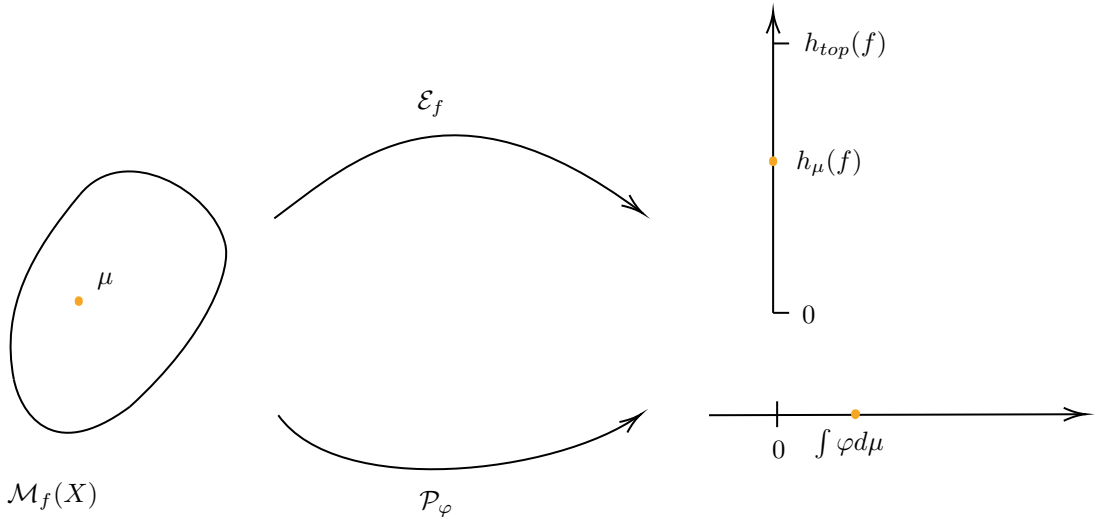

From the perspective of {\bf entropy},  researchers focus on the size of  $\mathcal{E}_f(\mathcal{M}_f^e(X))$,  where $\mathcal{E}_f:\mu\to h_{\mu}(f)$ is the entropy function of $(X,f).$ The following problems have attracted people's attention: is $\mathcal{E}_f(\mathcal{M}_f^e(X))$ dense in $\mathcal{E}_f(\mathcal{M}_f(X))$? is $\mathcal{E}_f(\mathcal{M}_f^e(X))$ equal to $\mathcal{E}_f(\mathcal{M}_f(X))$? Can  every invertible non-atomic ergodic measure-preserving system $(\Omega, \mu , T)$ with measure-theoretic entropy strictly less than the topological entropy of $(X,f)$ be  embedded  into $(X, f)$?  Now we review the progress of these problems.
\begin{itemize}
	\item[$\bullet$] \emph{Dense intermediate entropies of ergodic measures.} The classical variational principle shows that $\htop(f)=\sup\mathcal{E}_f(\mathcal{M}_f(X))=\sup\mathcal{E}_f(\mathcal{M}_f^e(X)),$ where $\htop(f)$ is the topological entropy of $(X,f)$.  Using the entropy-dense property, Eizenberg, Kifer, and Weiss \cite[Theorem B]{EKW}  showed that if $(X,f)$ has the specification property and its entropy function is upper semi-continuous, then any invariant measure $\mu$ is the weak limit of a sequence of ergodic measures
	$\{\mu_n\}_{n=1}^{\infty}$,   such that the entropy of $\mu$ is the limit of the entropies of the $\{\mu_n\}_{n=1}^{\infty}$. This implies dense intermediate entropies of ergodic measures, i.e.,  $\overline{\mathcal{E}_f(\mathcal{M}_f^e(X))}=\mathcal{E}_f(\mathcal{M}_f(X)).$ Pﬁster and Sullivan \cite{PS2005} generalized this result to dynamical systems with the approximate product property and upper semi-continuous entropy function. In \cite{Sun2012}, the density of intermediate entropies of ergodic measures is verified for linear toral automorphisms. Recently,  Sun \cite{Sun2020} shows that systems with the Climenhaga-Thompson structure have dense intermediate entropies of ergodic measures, and applies it to the Mañé diﬀeomorphisms.
	\item[$\bullet$] \emph{Intermediate entropies of ergodic measures.} 
	In \cite{Katok} Katok proved a milestone result that every $C^{1+\alpha}$ diffeomorphism $f$ in dimension $2$ has horseshoes of large entropies. Thus such systems  have ergodic measures of arbitrary intermediate metric entropies, that is $\mathcal{E}_f(\mathcal{M}_f^e(X))$ includes $[0, \htop(f)).$ This implies $\mathcal{E}_f(\mathcal{M}_f^e(X))=\mathcal{E}_f(\mathcal{M}_f(X)).$     Katok believed that this result holds for systems of any dimension. It is called Katok's conjecture or intermediate entropy problem. In the last decade, Katok's conjecture has been verified in many systems, including certain skew product systems \cite{Sun201001,Sun201002},
	some partially hyperbolic diﬀeomorphisms with one-dimensional center bundles \cite{Ures2012,YZ2020,DGM2017}, certain homogeneous dynamics \cite{GSW2017}, hereditary shifts \cite{KKK2018}, transitive systems with shadowing property \cite{LiOpro2018}, systems with approximate product property and asymptotic entropy expansiveness \cite{Sun2019}, star ﬂows \cite{LSWW2020}, and affine transformations of nilmanifolds \cite{HXX}.
	\item[$\bullet$] \emph{Universality.}  A  dynamical system $(X, f)$ is said to be universal if for every invertible non-atomic ergodic measure-preserving system $(\Omega, \mu , T)$ with measure-theoretic entropy strictly less than the topological entropy of $(X,f)$ there exists an embedding of $(\Omega, \mu , T)$ into $(X, f)$.  Obviously, for a universal dynamical system, one has $\mathcal{E}_f(\mathcal{M}_f^e(X))=\mathcal{E}_f(\mathcal{M}_f(X)).$ The Krieger ﬁnite generator theorem \cite{Krieger1970,Krieger1972} says that the full shift is universal.  Quas and Soo \cite{QuasSoo2016} extended Krieger’s theorem to dynamical systems that satisfy almost weak speciﬁcation, asymptotic entropy expansiveness, and the small boundary property.
	Burguet \cite{Burguet2020} improved the result of Quas and Soo to request only the almost weak speciﬁcation property. Recently, Chandgotia and Meyerovitch \cite{CM2021} define a combinatorial condition (`flexible marker sequence') that is a sufficient condition for a topological $\mathbb{Z}^d$ dynamical system to be universal. This condition is a suitable form of specification property. Specifically, it means the result applies to generic homeomorphisms of two-dimensional compact manifolds and certain systems defined by tiling and coloring conditions. 
\end{itemize}

From the perspective of {\bf multifractal analysis}, researchers focus on the relation between $\mathcal{P}_\varphi(\mathcal{M}_f^e(X))$ and $\mathcal{P}_\varphi(\mathcal{M}_f(X))$ for a given function $\varphi.$ Here
$\mathcal{P}_\varphi(\mu)=\int \varphi d\mu$ is the integral of $\mu$ with respect to $\varphi.$ According to type of $\varphi,$ people study Birkhoff spectrum  and Lyapunov spectrum. 
\begin{itemize}
	\item[$\bullet$] \emph{Birkhoff spectrum.} From the definition of weak* topology on the space of probability measures, if $\mathcal{M}_f^e(X)$ is dense in $\mathcal{M}_f(X)$, then $\overline{\mathcal{P}_\varphi(\mathcal{M}_f^e(X))}=\mathcal{P}_\varphi(\mathcal{M}_f(X))$ for any $\varphi\in C(X).$ In the research of entropy spectrum of the Birkhoff averages, that is, the topological entropy of level sets of points with a common given average,
	 Barreira and Saussol \cite{BarreiraSaussol2001} showed that if $\mathcal{E}_f$ is upper semi-continuous, and $b\varphi+c$ has  a unique equilibrium measure for any real numbers $b$ and $c,$ then for any $a\in \mathrm{Int}(\mathcal{P}_\varphi(\mathcal{M}_f(X))),$ $\mathcal{M}_f^e(X)\cap \mathcal{P}_\varphi^{-1}(a)\neq\emptyset.$ This means $\mathrm{Int}(\mathcal{P}_\varphi(\mathcal{M}_f^e(X)))=\mathrm{Int}(\mathcal{P}_\varphi(\mathcal{M}_f(X))).$ Combining the ergodic decomposition theorem,  one has $\mathcal{P}_\varphi(\mathcal{M}_f^e(X))=\mathcal{P}_\varphi(\mathcal{M}_f(X)).$ Tian, Wang, and Wang \cite{TWW2019} generalized it to dynamical systems with the periodic gluing orbit property and any continuous functions.  
	\item[$\bullet$] \emph{Lyapunov spectrum.} 
	In  the  research of entropy spectrum of the Lyapunov exponents, that is, the topological entropy of level sets of points with a common given exponent, of some type of skew product with circle fibers, Díaz, Gelfert, and Rams \cite{DGM2019,DGM2022} showed that the set $\{\chi(\mu):\mu \in \mathcal{M}_f^e(X)\}$ contains a closed interval, where $\chi(\mu)$ is the Lyapunov exponent of $\mu.$  B. Bárány,  T.Jordan,  A. Käenmäki, and M. Rams \cite{BJKR2021} calculated the Lyapunov spectrum of strongly irreducible planar self-affine sets satisfying the strong open set condition and obtained similar results on Lyapunov exponents of ergodic measures.
\end{itemize}

Some aforementioned results show the richness of ergodic measures from multiple perspectives simultaneously. On one hand, the entropy-dense property shows that the proportion of ergodic measures is signiﬁcant from the viewpoints of  topology and entropy simultaneously. On the other hand, the conditional variational principle obtained in \cite{BarreiraSaussol2001} gives a partial description of ergodic measures from the viewpoints of multifractal analysis and entropy simultaneously. Given  $n\in \mathbb{N}$ and $C\subset \mathbb{R}^n,$ denote the interior of $C$ by $$\mathrm{Int}(C)=\{x\in C: \text{there is an open subset } B \text{ of } \mathbb{R}^n\text{ such that }x\in B_x\subset C \}.$$
Barreira and Saussol \cite{BarreiraSaussol2001} gave the following result, called conditional variational principle.
\begin{Thm}\cite[Theorem 4 and Lemma 4]{BarreiraSaussol2001}
	Suppose $(X,  f)$ is a dynamical system whose entropy function $\mathcal{E}_f$ is upper semi-continuous. Given $\varphi\in C(X).$ If $b_1\varphi+b_2$ has  a unique equilibrium measure for any $b_1,b_2\in\mathbb{R},$  then for any $a\in \mathrm{Int}(\mathcal{P}_\varphi(\mathcal{M}_f(X))),$ there exists $\mu_a\in \mathcal{M}_f^e(X)\cap \mathcal{P}_\varphi^{-1}(a)$  such that $\mathcal{E}_f(\mu_a)=\sup\mathcal{E}_f(\mathcal{P}_\varphi^{-1}(a)).$ Here  $\sup\mathcal{E}_f(\mathcal{P}_\varphi^{-1}(a))=\sup_{\mu\in \mathcal{P}_\varphi^{-1}(a)} \mathcal{E}_f(\mu).$ 
\end{Thm}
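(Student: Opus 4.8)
The plan is to reduce this conditional variational principle to the existing topological-entropy version of multifractal analysis together with the ergodic decomposition theorem, and to supply the missing ``ergodic realizer'' via a subadditive/continuity argument on the entropy map restricted to the level set $\mathcal{P}_\varphi^{-1}(a)$. Concretely, the first step is to invoke the entropy spectrum result already recalled in the introduction (Barreira--Saussol, \cite{BarreiraSaussol2001}): under the stated hypotheses, for $a\in\mathrm{Int}(\mathcal{P}_\varphi(\mathcal{M}_f(X)))$ one has the Legendre-transform formula
\begin{equation}\label{eq:plan-legendre}
	h_{top}\!\left(f,\left\{x: \lim_{n\to\infty}\tfrac1n\sum_{i=0}^{n-1}\varphi(f^i x)=a\right\}\right)=\sup\{\mathcal{E}_f(\mu):\mu\in\mathcal{M}_f(X),\ \mathcal{P}_\varphi(\mu)=a\},
\end{equation}
and moreover the right-hand side equals $\inf_{q\in\mathbb{R}}\bigl(P(q\varphi)-qa\bigr)$, where $P$ is topological pressure. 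The key point I would extract is that the uniqueness of equilibrium states for $b_1\varphi+b_2$ forces the pressure function $q\mapsto P(q\varphi)$ to be differentiable, so the infimum in the Legendre transform is attained at a unique $q=q_a$, and the (unique) equilibrium measure $\mu_a$ of $q_a\varphi$ is the measure that achieves the supremum on the right of \eqref{eq:plan-legendre}.

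The second step is to argue that this maximizing measure $\mu_a$ is automatically ergodic and satisfies $\mathcal{P}_\varphi(\mu_a)=a$. Ergodicity is immediate from uniqueness: an equilibrium state for a continuous potential that is unique must be ergodic, because otherwise its ergodic decomposition would produce distinct equilibrium states (each ergodic component is again an equilibrium state, by affinity of $h_\mu(f)+\int(q_a\varphi)\,d\mu$ in $\mu$ and the fact that the supremum is attained on a face). For the constraint $\mathcal{P}_\varphi(\mu_a)=a$, I would use that differentiability of $P$ at $q_a$ gives $\frac{d}{dq}P(q\varphi)\big|_{q=q_a}=\int\varphi\,d\mu_a$, and that the outer infimum $\inf_q(P(q\varphi)-qa)$ being attained at $q_a$ forces this derivative to equal $a$ — here the assumption $a\in\mathrm{Int}(\mathcal{P}_\varphi(\mathcal{M}_f(X)))$ is exactly what guarantees the minimizing $q_a$ is finite and interior rather than escaping to $\pm\infty$. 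Then $\mathcal{E}_f(\mu_a)=h_{\mu_a}(f)=P(q_a\varphi)-q_a\int\varphi\,d\mu_a=P(q_a\varphi)-q_a a$, which by \eqref{eq:plan-legendre} equals $\sup\mathcal{E}_f(\mathcal{P}_\varphi^{-1}(a))$, completing the verification.

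The remaining point, and the one I expect to require the most care, is that $\sup\mathcal{E}_f(\mathcal{P}_\varphi^{-1}(a))$ defined as a supremum over all invariant (not just ergodic) measures with integral $a$ is not obviously attained, and not obviously equal to the pressure-theoretic quantity on the right of \eqref{eq:plan-legendre} — a priori the conditional variational principle for topological entropy of level sets could leave a gap. The resolution is the ergodic decomposition: if $\nu\in\mathcal{P}_\varphi^{-1}(a)$ is arbitrary with decomposition $\nu=\int\mu\,d\tau(\mu)$, then $\mathcal{E}_f(\nu)=\int\mathcal{E}_f(\mu)\,d\tau(\mu)$ by affinity of entropy, while $\int\mathcal{P}_\varphi(\mu)\,d\tau(\mu)=a$; one cannot directly conclude each component has integral $a$, so instead one bounds $\mathcal{E}_f(\mu)\le h_{top}(f,\cdot)$ on the appropriate level set and uses upper semi-continuity of $\mathcal{E}_f$ together with the variational characterization of pressure to show $\mathcal{E}_f(\nu)\le\inf_q(P(q\varphi)-qa)$ for every such $\nu$. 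Since $\mu_a$ already realizes this upper bound among ergodic measures, the supremum over all of $\mathcal{P}_\varphi^{-1}(a)$ coincides with $\mathcal{E}_f(\mu_a)$, which is the asserted equality. Upper semi-continuity of $\mathcal{E}_f$ is used twice — once to make the pressure variational principle achieve its sup, once to control the ergodic decomposition — and dropping it is exactly where the argument would break.
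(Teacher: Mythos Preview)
The paper does not supply its own proof of this statement: it is quoted verbatim as a known result from Barreira--Saussol \cite{BarreiraSaussol2001} and used only as background in the introduction. Your sketch is the standard thermodynamic-formalism argument and is essentially what appears in the cited source, so in that sense the approaches coincide.

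Your argument is correct, but the third paragraph is overcomplicated. The upper bound $\mathcal{E}_f(\nu)\le\inf_q\bigl(P(q\varphi)-qa\bigr)$ for every $\nu\in\mathcal{P}_\varphi^{-1}(a)$ does not need ergodic decomposition at all: the variational principle gives $h_\nu(f)+q\int\varphi\,d\nu\le P(q\varphi)$ for every invariant $\nu$ and every $q$, and substituting $\int\varphi\,d\nu=a$ yields $h_\nu(f)\le P(q\varphi)-qa$ directly. Taking the infimum over $q$ finishes it. Upper semi-continuity of $\mathcal{E}_f$ is really only used once, to ensure equilibrium states exist (so that the uniqueness hypothesis is not vacuous and the pressure function is differentiable with derivative $\int\varphi\,d\mu_{q\varphi}$). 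The finiteness of the minimizing $q_a$ when $a$ is interior follows from the observation that $q\mapsto P(q\varphi)-qa$ is convex with slopes ranging over $\mathcal{P}_\varphi(\mathcal{M}_f(X))-a$, which straddles zero; this forces the function to be coercive, hence the infimum is attained at a finite point.
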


Combining $\mathcal{E}_f$ and $\mathcal{P}_\varphi$, we define a map on $\mathcal{M}_f(X)$ as following: $$\mathcal{T}_{\varphi,f}:\mu\to (\mathcal{P}_\varphi(\mu),\mathcal{E}_f(\mu))=(\int \varphi d\mu, h_\mu(f)).$$ We draw the graph of $\mathcal{T}_{\varphi,f}.$ Then the conditional variational principle
implies that every point in the green line of Figure \ref{fig-1} can be attained by ergodic measures. 
\begin{figure}\caption{Graph of $(\int \varphi d\mu, h_\mu(f))$}\label{fig-1}
	\begin{center}
		\tikzset{every picture/.style={line width=0.75pt}} 
		\begin{tikzpicture}[x=0.75pt,y=0.75pt,yscale=-1,xscale=1]
			
			\draw    (105.94,208.69) .. controls (145.16,172.72) and (157.02,138.9) .. (153.37,126.49) ;
			\draw    (81,102) .. controls (103,75) and (146.99,99.94) .. (153.37,126.49) ;
			\draw    (48,195) .. controls (41,159) and (57.25,130.63) .. (81,102) ;
			\draw    (170,160) .. controls (216.77,126.17) and (236.8,125.01) .. (322.7,163.42) ;
			\draw [shift={(324,164)}, rotate = 204.15] [color={rgb, 255:red, 0; green, 0; blue, 0 }  ][line width=0.75]    (10.93,-3.29) .. controls (6.95,-1.4) and (3.31,-0.3) .. (0,0) .. controls (3.31,0.3) and (6.95,1.4) .. (10.93,3.29)   ;
			\draw [color={rgb, 255:red, 245; green, 166; blue, 35 }  ,draw opacity=1 ][line width=3] [line join = round][line cap = round]   (80.4,156.03) .. controls (80.4,155.74) and (80.4,155.46) .. (80.4,155.17) ;
			\draw [color={rgb, 255:red, 245; green, 166; blue, 35 }  ,draw opacity=1 ][line width=3] [line join = round][line cap = round]    ;
			\draw    (48,195) .. controls (51,210) and (65.94,238.69) .. (105.94,208.69) ;
			\draw [color={rgb, 255:red, 126; green, 211; blue, 33 }  ,draw opacity=1 ]   (411,86) .. controls (496,41) and (570,98) .. (591,115) ;
			\draw [color={rgb, 255:red, 208; green, 2; blue, 27 }  ,draw opacity=1 ]   (591,241) -- (591,115) ;
			\draw [color={rgb, 255:red, 245; green, 166; blue, 35 }  ,draw opacity=1 ][line width=3] [line join = round][line cap = round]   (460.4,180.03) .. controls (460.4,179.74) and (460.4,179.46) .. (460.4,179.17) ;
			\draw  (351,240.24) -- (615,240.24)(379.51,18) -- (379.51,269) (608,235.24) -- (615,240.24) -- (608,245.24) (374.51,25) -- (379.51,18) -- (384.51,25)  ;
			\draw [color={rgb, 255:red, 208; green, 2; blue, 27 }  ,draw opacity=1 ]   (411,86) -- (411,241) ;
			\draw  [dash pattern={on 0.84pt off 2.51pt}]  (460.4,180.03) -- (460.4,240.03) ;
			\draw  [dash pattern={on 0.84pt off 2.51pt}]  (460.4,180.03) -- (379.4,180.03) ;
			
			\draw (88.08,138.86) node [anchor=north west][inner sep=0.75pt]    {$\mu $};
			\draw (224,105.4) node [anchor=north west][inner sep=0.75pt]    {$\mathcal{T}_{\varphi,f}$};
			\draw (43.15,244.91) node [anchor=north west][inner sep=0.75pt]    {$\mathcal{M}_f(X)$};
			\draw (362,242.4) node [anchor=north west][inner sep=0.75pt]    {$0$};
			\draw (458.03,144.03) node [anchor=north west][inner sep=0.75pt]  [rotate=-359.83]  {$\left(\int \varphi d\mu, h_{\mu }( f) \right)$};
			\draw (332.08,179.86) node [anchor=north west][inner sep=0.75pt]    {$h_{\mu }( f)$};
			\draw (446.08,246.86) node [anchor=north west][inner sep=0.75pt]    {$\int \varphi d\mu $};
		\end{tikzpicture}
	\end{center}
\end{figure}
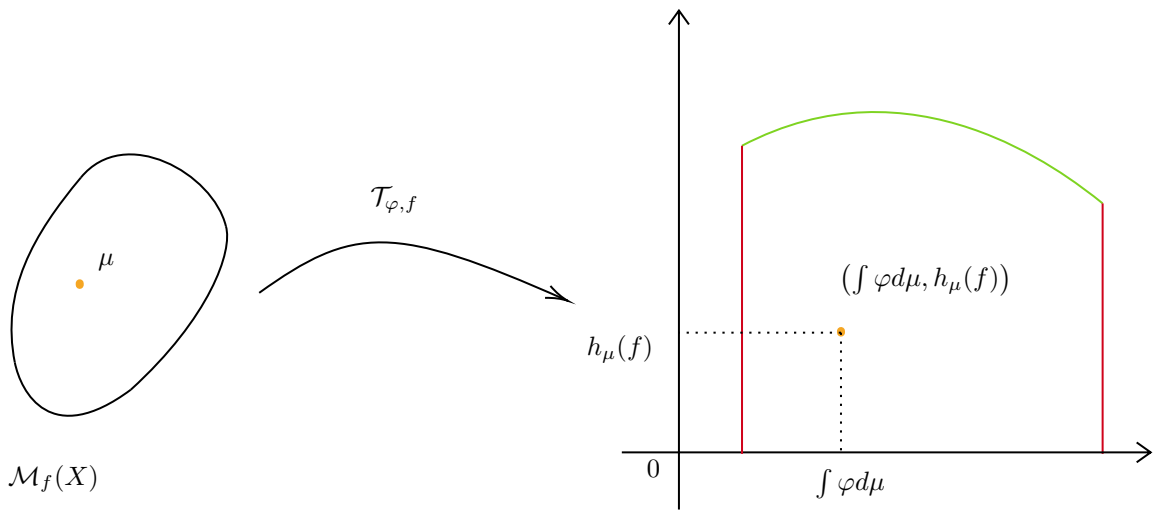

However, it is a question that whether the points outside the green line can be attained by ergodic measures, i.e., whether one has $\mathcal{T}_{\varphi,f}(\mathcal{M}_f^e(X))=\mathcal{T}_{\varphi,f}(\mathcal{M}_f(X))$?
In this article, we aim to answer this question.  And we
delve deeper into the abundance of ergodic measures by combining the perspectives of entropy, topology, and multifractal analysis. Precisely, we consider the following question.
\begin{mainquestion}\label{Conjecture-2}
	For every typical diffeomorphism $f$ on a compact Riemannian manifold $M$ and every continuous function $\varphi$ on $M,$  whether one has
	$\mathcal{T}_{\varphi,f}(\mathcal{M}_f^e(M))=\mathcal{T}_{\varphi,f}(\mathcal{M}_f(M))$?
	Moreover, for any $c\in \mathcal{T}_{\varphi,f}(\mathcal{M}_f(M)),$ is  $\mathcal{T}_{\varphi,f}^{-1}(c)\cap \mathcal{M}_f^e(M)$ residual in $\mathcal{T}_{\varphi,f}^{-1}(c)?$
\end{mainquestion}
\noindent In a Baire space, a set is {\it residual} if it contains a countable intersection of dense open sets.
Question \ref{Conjecture-2} in the context of the graph is whether every point in the closed region of Figure \ref{fig-1} can be attained by ergodic measures. If so, then for any point, does the set of all ergodic measures that attain the point form a residual set in the set of all invariant measures that attain that point?

There are three topics about Figure \ref{fig-1}:
\begin{itemize}
	\item[$\bullet$] \emph{Conditional variational principle or multifractal analysis.} On this subject, researchers are mainly concerned about whether the green line is the entropy spectrum of level sets and whether every point in the green line can be attained by ergodic measures.  We refer the reader to \cite{BarreiraSaussol2001,FLP2008,BarreiraDoutor2009,IJ2015,BH21} for more progress.
	\item[$\bullet$] \emph{Ergodic optimization.}  Ergodic optimization is the study of problems relating to maximizing/minimizing invariant measures which are the measures that attain the red lines, and under which conditions a dynamical system has a unique maximizing/minimizing  measure which is supported on a periodic orbit. We refer the reader to \cite{Cont2016,Bochi2018,HLXMZ2019,Jenkinson2019,GS2022} for more progress. When a  system has a unique maximizing/minimizing  measure, the red lines will degenerate into two dots.
	\item[$\bullet$] \emph{Intermediate entropy property of ergodic measures with same level.} There is no result about the interior of the closed region of Figure \ref{fig-1}  as far as we know. In the present paper, we  mainly consider the interior and give a partial answer to Question \ref{Conjecture-2}.
\end{itemize}

\subsection{Intermediate entropy property of ergodic measures with same level}
\subsubsection{Topologically Anosov systems}
First, we consider topologically Anosov systems.
\begin{Def}
	A homeomorphism $f:X\to X$ of a compact metric space is called  {\it topologically hyperbolic} or  {\it topologically Anosov},   if $X$ has infinitely many points,  $(X,f)$ is expansive and satisfies the shadowing property. 
\end{Def}
We denote the support of a measure $\mu$ by $S_\mu:=\{x\in X:\mu(U)>0\ \text{for any neighborhood}\ U\ \text{of}\ x\}.$ Let $\rho$ be a metric for the weak*-topology on $\mathcal{M}(X)$ (see definition in section \ref{section-space of measure}). 
Now we state our main result on topologically Anosov systems.
\begin{maintheorem}\label{thm-continuous}
	Suppose that  $(X,f)$ is transitive and topologically Anosov. Let $\varphi$ be a continuous function on $X$ with $\mathrm{Int}(\mathcal{P}_\varphi(\mathcal{M}_f(X)))\neq\emptyset$. 
    Then:
    \begin{description}
    	\item[(I)] For any $a\in \mathrm{Int}(\mathcal{P}_\varphi(\mathcal{M}_f(X))),$ any $\mu\in \mathcal{P}_\varphi^{-1}(a)$ and any $\eta,  \zeta>0$, there is $\nu\in \mathcal{P}_\varphi^{-1}(a)\cap \mathcal{M}_f^e(X)$ such that $\rho(\nu,\mu)<\zeta$ and $|h_{\nu}(f)-h_{\mu}(f)|<\eta.$ 
    	\item[(II)] For any $a\in \mathrm{Int}(\mathcal{P}_\varphi(\mathcal{M}_f(X))),$ any $\mu\in \mathcal{P}_\varphi^{-1}(a),$ any $0\leq h\leq h_{\mu}(f)$ and any $\eta,  \zeta>0$, there is $\nu\in \mathcal{P}_\varphi^{-1}(a)\cap \mathcal{M}_f^e(X)$ such that $\rho(\nu,\mu)<\zeta$ and $|h_{\nu}(f)-h|<\eta.$ 
    	\item[(III)] For  any $a\in \mathrm{Int}(\mathcal{P}_\varphi(\mathcal{M}_f(X)))$ and $0\leq h< \sup\mathcal{E}_f(\mathcal{P}_\varphi^{-1}(a)),$ the set $\mathcal{T}_{\varphi,f}^{-1}(a,h)\cap \mathcal{M}_f^e(X)\cap \{\mu:S_\mu=X\}$ is residual in $\mathcal{T}_{\varphi,f}^{-1}(\{a\}\times[h,+\infty)).$
    	\item[(IV)] $\mathrm{Int}(\mathcal{T}_{\varphi,f}(\mathcal{M}_f(X)))=\mathrm{Int}(\mathcal{T}_{\varphi,f}(\mathcal{M}_f^e(X))).$
    	\item[(V)]  If further $b_1\varphi+b_2$ has  a unique equilibrium measure for any $b_1,b_2\in\mathbb{R},$ then $\{\mathcal{T}_{\varphi,f}(\mu):\mu\in \mathcal{M}_f(X),\ \mathcal{P}_\varphi(\mu) \in\mathrm{Int}(\mathcal{P}_\varphi(\mathcal{M}_f(X)))\}
    	=\{\mathcal{T}_{\varphi,f}(\mu):\mu\in \mathcal{M}_f^e(X),\ \mathcal{P}_\varphi(\mu) \in \mathrm{Int}(\mathcal{P}_\varphi(\mathcal{M}_f(X)))\}.$
    \end{description}
\end{maintheorem}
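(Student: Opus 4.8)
The plan is to derive parts (I)--(V) from a single construction, a \emph{multi-horseshoe entropy-dense property}, combined with the conditional variational principle quoted above and routine Baire-category arguments. Throughout I use that expansiveness makes $\mathcal{E}_f$ upper semi-continuous, and that transitivity together with the shadowing property provides enough gluing (a specification-type property) to run Pfister--Sullivan-type constructions; in particular $(X,f)$ carries fully supported invariant measures, and every $\mu\in\mathcal{M}_f(X)$ is a $\rho$-limit of ergodic measures of entropy arbitrarily close to $h_\mu(f)$ (from below, hence by u.s.c.\ from both sides). The lemma I would prove is: for any $\mu_1,\dots,\mu_k\in\mathcal{M}_f(X)$ and $\varepsilon>0$ there is a compact $f$-invariant $\Lambda\subset X$ on which $f$ is topologically conjugate to a transitive subshift of finite type, such that the set $\mathcal{M}_f(\Lambda)$ of invariant measures carried by $\Lambda$ is $\varepsilon$-Hausdorff-close to the convex hull of $\{\mu_1,\dots,\mu_k\}$, each $\mu_i$ is $\varepsilon$-close to an ergodic measure on $\Lambda$ of entropy within $\varepsilon$ of $h_{\mu_i}(f)$, and $\Lambda$ contains sub-horseshoes of topological entropy arbitrarily close to every value in $[0,\htop(f|_\Lambda))$. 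One builds $\Lambda$ by taking very long $\mu_i$-generic orbit segments, gluing (by shadowing) all their concatenations with transition gaps of uniformly bounded length, and passing to the orbit closure; expansiveness makes the block-coding injective, so $\Lambda$ is a genuine SFT, and sub-horseshoes arise by thinning the admissible blocks. The crucial feature is that, as the gaps have bounded length, the Birkhoff average of $\varphi$ along any such orbit is in the limit governed \emph{exactly} by the asymptotic proportions of the blocks; thus gluing $\mu_i$-generic segments in asymptotic proportions $(t_1,\dots,t_k)$ yields an ergodic $\sigma$ with $\mathcal{P}_\varphi(\sigma)=\sum_i t_i\,\mathcal{P}_\varphi(\mu_i)$ \emph{on the nose}, while $\rho(\sigma,\sum_i t_i\mu_i)$ and $|h_\sigma(f)-\sum_i t_i h_{\mu_i}(f)|$ are as small as we wish.

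For (I): since $a\in\mathrm{Int}(\mathcal{P}_\varphi(\mathcal{M}_f(X)))$, fix $\mu^{\pm}\in\mathcal{M}_f(X)$ with $\mathcal{P}_\varphi(\mu^{-})<a<\mathcal{P}_\varphi(\mu^{+})$; for small $\delta>0$ the measures $(1-\delta)\mu+\delta\mu^{\pm}$ are $\rho$-close to $\mu$, have entropy close to $h_\mu(f)$, and have $\mathcal{P}_\varphi$-values straddling $a$, and re-ergodizing them (entropy density plus u.s.c.) gives ergodic $\tilde\nu^{\pm}$ with the same three properties and still $\mathcal{P}_\varphi(\tilde\nu^{-})<a<\mathcal{P}_\varphi(\tilde\nu^{+})$. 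Applying the lemma to $\{\tilde\nu^{+},\tilde\nu^{-}\}$ and interpolating at $t=\bigl(a-\mathcal{P}_\varphi(\tilde\nu^{-})\bigr)/\bigl(\mathcal{P}_\varphi(\tilde\nu^{+})-\mathcal{P}_\varphi(\tilde\nu^{-})\bigr)\in(0,1)$ produces ergodic $\nu$ with $\mathcal{P}_\varphi(\nu)=a$ exactly, $\rho(\nu,\mu)<\zeta$, $|h_\nu(f)-h_\mu(f)|<\eta$. Part (II) runs the same way once one knows that for every $h\in[0,h_\mu(f)]$ there is an ergodic measure $\rho$-close to $\mu$ with entropy within a prescribed error of $h$: take an ergodic measure close to $\mu$ with entropy near $h_\mu(f)$, realize it (the lemma with $k=1$) in a horseshoe $\Lambda$ all of whose invariant measures are close to $\mu$ with $\htop(f|_\Lambda)$ near $h_\mu(f)$, and pass to a sub-horseshoe of $\Lambda$ of entropy within the prescribed error of $h$ (or to $\Lambda$ itself when $h$ is already near $\htop(f|_\Lambda)$), whose measure of maximal entropy is the required ergodic measure. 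The two straddling ergodic measures for the interpolation are then obtained from two such low-entropy measures by the same small-$\delta$ bump toward $\mu^{\pm}$, and interpolating yields ergodic $\nu$ with $\mathcal{P}_\varphi(\nu)=a$ exactly, entropy within $\eta$ of $h$, and $\rho(\nu,\mu)<\zeta$.

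Parts (III)--(V) are then soft. For (III), put $Y:=\mathcal{T}_{\varphi,f}^{-1}(\{a\}\times[h,+\infty))=\{\nu\in\mathcal{M}_f(X):\mathcal{P}_\varphi(\nu)=a,\ h_\nu(f)\ge h\}$, which is closed in the compact space $\mathcal{M}_f(X)$ by continuity of $\mathcal{P}_\varphi$ and u.s.c.\ of $\mathcal{E}_f$, hence a Baire space. In $Y$ each of $\{h_\nu(f)\le h\}=\bigcap_n\{h_\nu(f)<h+\tfrac{1}{n}\}$, $\mathcal{M}_f^e(X)$, and $\{\nu:S_\nu=X\}=\bigcap_k\{\nu:\nu(U_k)>0\}$ (for a countable base $\{U_k\}$ of $X$) is a $G_\delta$ set; each is dense in $Y$, as one sees by combining (I)--(II) with small convex perturbations of a given $\nu_0\in Y$ toward a fixed measure of $\mathcal{P}_\varphi^{-1}(a)$ of entropy $>h$ (which exists because $h<\sup\mathcal{E}_f(\mathcal{P}_\varphi^{-1}(a))$) and, for the full-support statement, toward a fixed fully supported measure of $\mathcal{P}_\varphi^{-1}(a)$ of entropy $>h$. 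By the Baire category theorem the intersection $\{h_\nu(f)=h\}\cap\mathcal{M}_f^e(X)\cap\{S_\nu=X\}\cap Y$, which is precisely $\mathcal{T}_{\varphi,f}^{-1}(a,h)\cap\mathcal{M}_f^e(X)\cap\{\mu:S_\mu=X\}$, is residual in $Y$. For (IV): a point $(a,h)\in\mathrm{Int}(\mathcal{T}_{\varphi,f}(\mathcal{M}_f(X)))$ has $a\in\mathrm{Int}(\mathcal{P}_\varphi(\mathcal{M}_f(X)))$ and $h<\sup\mathcal{E}_f(\mathcal{P}_\varphi^{-1}(a))$, so by (III) it is attained by an ergodic measure; thus $\mathrm{Int}(\mathcal{T}_{\varphi,f}(\mathcal{M}_f(X)))\subseteq\mathcal{T}_{\varphi,f}(\mathcal{M}_f^e(X))$, and passing to interiors gives the asserted equality. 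For (V), take $\mu\in\mathcal{M}_f(X)$ with $a:=\mathcal{P}_\varphi(\mu)$ interior: if $h_\mu(f)<\sup\mathcal{E}_f(\mathcal{P}_\varphi^{-1}(a))$ apply (III); if $h_\mu(f)=\sup\mathcal{E}_f(\mathcal{P}_\varphi^{-1}(a))$, the quoted conditional variational principle (valid because $\mathcal{E}_f$ is u.s.c.\ and $b_1\varphi+b_2$ has a unique equilibrium state) produces an ergodic measure in $\mathcal{P}_\varphi^{-1}(a)$ of entropy equal to that supremum, hence to $h_\mu(f)$.

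The main obstacle is the multi-horseshoe lemma with \emph{exact} control of $\mathcal{P}_\varphi$. One must arrange the shadowing and gluing so that the transition segments are asymptotically negligible for Birkhoff averages --- so that $\mathcal{P}_\varphi$ of the constructed ergodic measure is literally the convex combination of the $\mathcal{P}_\varphi(\mu_i)$, not merely close to it --- while simultaneously pushing $\htop(f|_\Lambda)$ up to the entropy of the convex combination used and keeping enough room to descend to sub-horseshoes of every smaller entropy whose measures are still close to $\mu$ and have $\mathcal{P}_\varphi$-values on both sides of $a$. Extracting ``exact integral'', ``large entropy'', and ``entropy shrinkable while staying near $\mu$'' from one uniform construction is the delicate point; after that, the rest is bookkeeping with convex combinations and Baire category.
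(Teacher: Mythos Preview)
Your global architecture matches the paper's: a multi-horseshoe entropy-dense lemma feeding into Baire-category arguments for (III)--(V). The difference, and the gap in your proposal, is in how exact level-set membership $\mathcal{P}_\varphi(\nu)=a$ is achieved for the ergodic $\nu$ in (I)--(II).

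You claim that gluing $\mu_i$-generic blocks in asymptotic proportions $(t_1,\dots,t_k)$ produces an ergodic $\sigma$ with $\mathcal{P}_\varphi(\sigma)=\sum_i t_i\mathcal{P}_\varphi(\mu_i)$ \emph{exactly}, and you then solve a linear equation for $t$. This equality does not hold: both the shadowing perturbation and the transition gaps shift the Birkhoff average of $\varphi$ by an amount that is small but nonzero for any fixed block length. What one actually gets is $\mathcal{P}_\varphi(\sigma_t)=tc_{+}+(1-t)c_{-}$ for constants $c_{\pm}$ that are only \emph{close} to $\mathcal{P}_\varphi(\tilde\nu^{\pm})$; your displayed value of $t$ therefore misses $a$. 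One can salvage this with an intermediate-value argument in $t$ once the straddling is strict, but then one must also check that the entropy of the Bernoulli-type measure at the unknown $t^{*}$ remains $\eta$-close to $h_\mu(f)$. You correctly flag this ``exact integral plus large entropy plus shrinkable entropy, all at once'' as the main obstacle, but you do not resolve it.

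The paper sidesteps the obstacle entirely. Its multi-horseshoe lemma is weaker than yours: it only asserts that $\Lambda$ is (conjugate to) a transitive subshift of finite type with $\mathcal{M}_f(\Lambda)$ Hausdorff-close to the convex hull and each $\htop(f,\Lambda_i)>h_{\mu_i}(f)-\eta$; no exact integral or sub-horseshoe claims. The decisive extra ingredient is to apply the \emph{conditional variational principle locally on} $\Lambda$. Because $\Lambda$ is an SFT, every H\"older potential there has a unique equilibrium state, so the Barreira--Saussol (and Climenhaga/Holanda) variational principle applies on $(\Lambda,f|_\Lambda)$: once one checks $a\in\mathrm{Int}(\mathcal{P}_\varphi(\mathcal{M}_f(\Lambda)))$ (which follows from the multi-horseshoe lemma applied to the straddling measures), one obtains an ergodic $\nu\in\mathcal{M}_f^e(\Lambda)$ with $\mathcal{P}_\varphi(\nu)=a$ exactly and $h_\nu(f)$ within $\eta$ of $H_{A,B}(f,a,\mathcal{M}_f(\Lambda))$; upper semi-continuity together with $d_H(\mu,\mathcal{M}_f(\Lambda))<\zeta'$ pins this supremum near $h_\mu(f)$. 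This ``locally conditional variational principle'' is precisely the missing idea in your scheme; it replaces your unproven ``exact control from gluing'' and also explains why the same argument extends to several observables and to asymptotically additive sequences, where a direct block-interpolation approach would be considerably harder.

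Your treatment of (III)--(V) is essentially correct, with one small point: the density of $\{h_\nu(f)\le h\}$ in $Y$ is obtained in the paper not from (II) but by first showing $\{h_\mu(f)=0\}$ is dense in $\mathcal{M}_f(X)$ (periodic measures), transporting this into $\mathcal{P}_\varphi^{-1}(a)$ via the straddling-plus-convex-combination trick, and interpolating a given $\nu_0\in Y$ toward such a zero-entropy measure; your sketch elides this.
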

\begin{Rem}
	The set of continuos function $\varphi$ satisfying $\mathrm{Int}(\mathcal{P}_\varphi(\mathcal{M}_f(X)))\neq\emptyset$ is open and dense in $C(X),$ see Proposition \ref{proposition-AD}.
\end{Rem}

\begin{Rem}
	As the readers can see, Item (I) can be obtained directly from Item (II) of Theorem \ref{thm-continuous}. The reason why we list Item (I) here is that Item (I) is the most important step in the proof of Theorem \ref{thm-continuous} (see Lemma \ref{lemma-F}). 
\end{Rem}
\begin{Rem}\label{remark-AA}
	Now, we compare Theorem \ref{thm-continuous} with some known results.
	\begin{itemize}
		\item[$\bullet$] \emph{Entropy-dense property.} In \cite{EKW} Eizenberg,   Kifer and Weiss proved for systems with the specification property that  for every invariant measure $\mu$, there exists a sequence of ergodic measures $\{\mu_{n}\}_{n=1}^{\infty}$ such that $\lim\limits_{n \rightarrow \infty} \mu_{n}=\mu$ and $\lim\limits_{n \rightarrow \infty} h_{\mu_{n}}(f)\geq h_{\mu}(f)$. Pfister and Sullivan referred to this property as the entropy-dense property \cite{PS2005} and proved that this property holds for systems with the approximate product property. From Theorem \ref{thm-continuous}(I), if  $(X,f)$ is transitive and topologically Anosov,  then the entropy-dense property holds in $\mathcal{P}_\varphi^{-1}(a)$ for any $a\in \mathrm{Int}(\mathcal{P}_\varphi(\mathcal{M}_f(X)))$ and any $\varphi\in C(X)$.
		\item[$\bullet$] \emph{Refined entropy-dense property.} 
		Li and Oprocha proved in \cite{LiOpro2018} gave a refined entropy-dense property for transitive dynamical systems with the shadowing property, that is, for every invariant measure $\mu$ and every $0 \leq h \leq$ $h_{\mu}(f),$ there exists a sequence of ergodic measures $\{\mu_{n}\}_{n=1}^{\infty}$ such that $\lim\limits_{n \rightarrow \infty} \mu_{n}=\mu$ and $\lim\limits_{n \rightarrow \infty} h_{\mu_{n}}(f)=h$. If further, the entropy function is upper semi-continuous, they proved that for every $0 \leq h<\htop(f)$, the set of ergodic measures with entropy $h$ is residual in the space of invariant measures with entropy at least $h$. From Theorem \ref{thm-continuous}(II) and (III) we obtain more refined results for $(X,f)$ which  is transitive and topologically Anosov,  that is, for any  $\varphi \in C(X)$,  any $a\in \mathrm{Int}(\mathcal{P}_\varphi(\mathcal{M}_f(X))),$ any $\mu\in \mathcal{P}^{-1}_\varphi(a),$ and any $0 \leq h \leq$ $h_{\mu}(f),$ there exists a sequence of ergodic measures $\{\mu_{n}\}_{n=1}^{\infty}\subseteq \mathcal{P}^{-1}_\varphi(a)$ such that $\lim\limits_{n \rightarrow \infty} \mu_{n}=\mu$ and $\lim\limits_{n \rightarrow \infty} h_{\mu_{n}}(f)=c,$ and for any $a\in \mathrm{Int}(\mathcal{P}_\varphi(\mathcal{M}_f(X)))$ and $0\leq h< \sup\mathcal{E}_f(\mathcal{P}_\varphi^{-1}(a)),$ in $\mathcal{P}^{-1}_\varphi(a)$ the set of ergodic measures with entropy $h$ and full support is residual in the set of invariant measures with entropy at least $h$. 
		\item[$\bullet$] \emph{Question \ref{Conjecture-2}.} 
		From Theorem \ref{thm-continuous}(III), the set $\mathcal{T}_{\varphi,f}^{-1}(a,h)\cap \mathcal{M}_f^e(X)\cap \{\mu:S_\mu=X\}$ is residual in $\mathcal{T}_{\varphi,f}^{-1}(a,h)$.
		Thus, from Theorem A(III)(IV)(V) we give a partial answer to Question \ref{Conjecture-2} for  transitive topologically Anosov systems.
	\end{itemize}
\end{Rem}
From Lemma \ref{LH} and \ref{SFT}, every system restricted on a locally maximal hyperbolic set or a  two-sided subshift of finite type is topologically Anosov. So we have the following corollary.
\begin{maincorollary}
	Suppose that  $(X,f)$ is a system restricted on a transitive locally maximal hyperbolic set or a transitive two-sided subshift of finite type.  Then the results of Theorem \ref{thm-continuous} hold.
\end{maincorollary}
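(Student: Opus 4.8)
The plan is simply to verify that a transitive locally maximal hyperbolic set, respectively a transitive two-sided subshift of finite type, satisfies all the hypotheses of Theorem \ref{thm-continuous}, and then to invoke that theorem verbatim. Transitivity is part of the assumption, so the only thing to check is that $(X,f)$ is topologically Anosov in the sense of the definition above, that is: $X$ is infinite, $(X,f)$ is expansive, and $(X,f)$ has the shadowing property.

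First I would dispose of the degenerate case. If $X$ were finite, then by transitivity $(X,f)$ would be a single periodic orbit, so $\mathcal{M}_f(X)$ would be a singleton and $\mathcal{P}_\varphi(\mathcal{M}_f(X))$ a single point with empty interior for every $\varphi\in C(X)$; in that situation the standing hypothesis $\mathrm{Int}(\mathcal{P}_\varphi(\mathcal{M}_f(X)))\neq\emptyset$ carried over from Theorem \ref{thm-continuous} is never satisfied and there is nothing to prove. Hence we may assume $X$ is infinite.

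Next I would invoke the structural facts recorded as Lemma \ref{LH} and Lemma \ref{SFT}: a homeomorphism restricted to a locally maximal hyperbolic set is expansive and has the shadowing property (these follow from the local product structure, cf.\ Bowen's work), and likewise a two-sided subshift of finite type is expansive (immediate from the product metric) and has the shadowing property (an easy combinatorial check on admissible words). Combining this with the previous paragraph, in either case $(X,f)$ is transitive and topologically Anosov. Theorem \ref{thm-continuous} then applies to $(X,f)$ and to every continuous $\varphi$ with $\mathrm{Int}(\mathcal{P}_\varphi(\mathcal{M}_f(X)))\neq\emptyset$, yielding items (I)--(V) for $(X,f)$, which is exactly the assertion of the corollary.

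There is essentially no obstacle here beyond bookkeeping; the substantive content lies entirely in Theorem \ref{thm-continuous} and in Lemmas \ref{LH} and \ref{SFT}, which are classical or established elsewhere in the paper. The only mild subtlety is the edge case treated in the second step, namely ensuring that the ``infinitely many points'' clause in the definition of topological hyperbolicity is not an obstruction, and this is handled by observing that the nonemptiness of $\mathrm{Int}(\mathcal{P}_\varphi(\mathcal{M}_f(X)))$ already forces $X$ to be infinite.
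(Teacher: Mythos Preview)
Your proposal is correct and matches the paper's approach exactly: the paper simply cites Lemma~\ref{LH} and Lemma~\ref{SFT} to conclude that such systems are topologically Anosov, then applies Theorem~\ref{thm-continuous}. Your extra care with the finite/degenerate case is fine but unnecessary here, since the paper carries a standing non-degeneracy assumption (``$(X,f)$ is not reduced to a single periodic orbit'') that already rules it out.
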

The results of Theorem \ref{thm-continuous} are also applicable to some dynamical systems beyond uniform hyperbolicity. From \cite{Gogo2010} we know that non-hyperbolic diffeomorphism $f$ with $C^{1+Lip}$ smoothness, conjugated to a transitive Anosov diffeomorphism $g$, exists and even the conjugation and its inverse are Hölder  continuous. Such $f$ is also topologically Anosov and transitive.

\subsubsection{A general framework }\label{section-nonhyperblic}
Now, we consider  dynamical systems beyond uniform hyperbolicity.
We introduce a general framework and apply it to non-hyperbolic step skew-products, elliptic $\operatorname{SL}(2, \mathbb{R})$ cocycles and robustly non-hyperbolic transitive diffeomorphisms.
Let $f$ be a $C^{1}$ diffeomorphism on a compact Riemannian manifold  $M$ and $p$ be a hyperbolic periodic point.  A  hyperbolic periodic point $q$ is said to be homoclinically related to $p$ if the stable manifold of the orbit of $q$ transversely meets the unstable one of the orbit of $p$ and vice versa. 
We say  $\mu\in \mathcal{M}_f(M)$ can be approximated by $p$-horseshoes if for any $\varepsilon>0,$ there are an  $f$-invariant compact subset $\Lambda_{\varepsilon}$ and an invariant  measure $\mu_\varepsilon\in \mathcal{M}_f(\Lambda_{\varepsilon})$ satisfying the following three properties 
\begin{description}
	\item[(1)] $\Lambda_\varepsilon$ is a transitive locally maximal hyperbolic set which contains a hyperbolic saddle $q$ homoclinically related to $p.$
	\item[(2)] $\rho(\mu,\mu_\varepsilon)<\varepsilon.$
	\item[(3)] $h_{\mu_\varepsilon}(f)>h_{\mu}(f)-\varepsilon.$
\end{description}
We denote $\mathcal{M}_{horse}(p)$ the set of invariant measures  which can be approximated by $p$-horseshoes, and  denote $\mathcal{M}_{horse}^{e}(p)=\mathcal{M}_{horse}(p)\cap \mathcal{M}_f^e(M).$
Now we state our main result on $\mathcal{M}_{horse}(p)$.
\begin{maintheorem}\label{thm-continuous-2}
	Let $f$ be a $C^{1}$ diffeomorphism on a compact Riemannian manifold  $M$ and $p$ be a hyperbolic periodic point.  Assume that $\mu\to h_\mu(f)$ is upper semi-continuous on $\mathcal{M}_{horse}(p)$. Let $\varphi$ be a continuous function on $M$ with $\mathrm{Int}(\mathcal{P}_\varphi(\mathcal{M}_{horse}(p)))\neq\emptyset$. 
	Then:
	\begin{description}
		\item[(I)] For any $a\in \mathrm{Int}(\mathcal{P}_\varphi(\mathcal{M}_{horse}(p))),$ any $\mu\in \mathcal{P}_\varphi^{-1}(a)\cap \mathcal{M}_{horse}(p)$ and any $\eta,  \zeta>0$, there is $\nu\in \mathcal{P}_\varphi^{-1}(a)\cap \mathcal{M}_{horse}^e(p)$ such that $\rho(\nu,\mu)<\zeta$ and $|h_{\nu}(f)-h_{\mu}(f)|<\eta.$ 
		\item[(II)] For any $a\in \mathrm{Int}(\mathcal{P}_\varphi(\mathcal{M}_{horse}(p))),$ any $\mu\in \mathcal{P}_\varphi^{-1}(a)\cap \mathcal{M}_{horse}(p),$ any $0\leq h\leq h_{\mu}(f)$ and any $\eta,  \zeta>0$, there is $\nu\in \mathcal{P}_\varphi^{-1}(a)\cap \mathcal{M}_{horse}^e(p)$ such that $\rho(\nu,\mu)<\zeta$ and $|h_{\nu}(f)-h|<\eta.$ 
		\item[(III)] For  any $a\in \mathrm{Int}(\mathcal{P}_\varphi(\mathcal{M}_{horse}(p)))$ and $0\leq h< \sup\mathcal{E}_f(\mathcal{P}_\varphi^{-1}(a)\cap \mathcal{M}_{horse}(p)),$ the set $\mathcal{T}_{\varphi,f}^{-1}(a,h)\cap \mathcal{M}_{horse}^e(p)$ is residual in $\mathcal{T}_{\varphi,f}^{-1}(\{a\}\times [h,+\infty))\cap \mathcal{M}_{horse}(p) .$
		\item[(IV)] $\mathrm{Int}(\mathcal{T}_{\varphi,f}(\mathcal{M}_{horse}(p)))=\mathrm{Int}(\mathcal{T}_{\varphi,f}(\mathcal{M}_{horse}^e(p))).$
	\end{description}
\end{maintheorem}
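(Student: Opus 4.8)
The plan is to reduce each item to the corresponding item of Theorem \ref{thm-continuous}, applied to a single hyperbolic set obtained by amalgamating several horseshoes. The dynamical input is the standard fact that finitely many transitive locally maximal hyperbolic sets, each containing a hyperbolic saddle homoclinically related to $p$, are all contained in one transitive locally maximal hyperbolic set $\Lambda$ (join the pieces along the transverse heteroclinic orbits produced by the homoclinic relations and pass to the maximal invariant set in a small neighbourhood of the resulting hyperbolic set). Two consequences will be used repeatedly. First, $\mathcal{M}_{horse}(p)$ is convex: given $\mu,\nu\in\mathcal{M}_{horse}(p)$ and $\varepsilon>0$, amalgamate horseshoes approximating $\mu$ and $\nu$, so a convex combination of the approximating measures lives on one horseshoe, and since metric entropy is affine the required entropy bound survives; hence $\mathcal{P}_\varphi(\mathcal{M}_{horse}(p))$ is an interval and $a\in\mathrm{Int}(\mathcal{P}_\varphi(\mathcal{M}_{horse}(p)))$ gives $\mu_1,\mu_2\in\mathcal{M}_{horse}(p)$ with $\mathcal{P}_\varphi(\mu_1)<a<\mathcal{P}_\varphi(\mu_2)$. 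Second, for any transitive locally maximal hyperbolic set $\Lambda$ containing a saddle homoclinically related to $p$ one has $\mathcal{M}_f(\Lambda)\subseteq\mathcal{M}_{horse}(p)$ (take $\Lambda_\varepsilon=\Lambda$), and $(\Lambda,f|_\Lambda)$ is transitive topologically Anosov (Lemma \ref{LH}), so Theorem \ref{thm-continuous} is available on it.

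\textbf{Proof of (I) and (II).} Fix $a\in\mathrm{Int}(\mathcal{P}_\varphi(\mathcal{M}_{horse}(p)))$, $\mu\in\mathcal{P}_\varphi^{-1}(a)\cap\mathcal{M}_{horse}(p)$, $0\le h\le h_\mu(f)$ and $\eta,\zeta>0$, and choose $\mu_1,\mu_2$ as above. For a small $\varepsilon>0$ pick horseshoes $\Lambda^{(0)},\Lambda^{(1)},\Lambda^{(2)}$ carrying measures $\mu^{(j)}$ that approximate $\mu,\mu_1,\mu_2$ to within $\varepsilon$ in $\rho$ and within $\varepsilon$ in entropy from below, and amalgamate the $\Lambda^{(j)}$ into one transitive locally maximal hyperbolic set $\Lambda$, so $\tilde\mu:=\mu^{(0)},\ \tilde\mu_1:=\mu^{(1)},\ \tilde\mu_2:=\mu^{(2)}\in\mathcal{M}_f(\Lambda)\subseteq\mathcal{M}_{horse}(p)$. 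For $\varepsilon$ small, $\mathcal{P}_\varphi(\tilde\mu_1)<a<\mathcal{P}_\varphi(\tilde\mu_2)$, hence $a\in\mathrm{Int}(\mathcal{P}_{\varphi|_\Lambda}(\mathcal{M}_f(\Lambda)))$; moreover $\rho(\tilde\mu,\mu)<\varepsilon$, $\mathcal{P}_\varphi(\tilde\mu)\to a$, and combining the upper semi-continuity hypothesis with property (3) in the definition of $\mathcal{M}_{horse}(p)$ gives $h_{\tilde\mu}(f)\to h_\mu(f)$ as $\varepsilon\to0$. Replacing $\tilde\mu$ by a convex combination $\nu_0:=(1-t)\tilde\mu+t\tilde\mu_i$, with $i\in\{1,2\}$ chosen according to the sign of $\mathcal{P}_\varphi(\tilde\mu)-a$ and $t\in[0,1)$ the (small, $t\to0$) weight making $\mathcal{P}_\varphi(\nu_0)=a$ exactly, we obtain $\nu_0\in\mathcal{M}_f(\Lambda)$ with $\mathcal{P}_\varphi(\nu_0)=a$, with $\rho(\nu_0,\mu)$ arbitrarily small, and with $h_{\nu_0}(f)=(1-t)h_{\tilde\mu}(f)+t\,h_{\tilde\mu_i}(f)\to h_\mu(f)$. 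Now apply Theorem \ref{thm-continuous}(II) to $(\Lambda,f|_\Lambda)$ at the value $a$, the measure $\nu_0$, and target entropy $\min\{h,h_{\nu_0}(f)\}$: it produces an ergodic $\nu\in\mathcal{P}_\varphi^{-1}(a)\cap\mathcal{M}_f(\Lambda)$ arbitrarily close to $\nu_0$ with $h_\nu(f)$ arbitrarily close to $\min\{h,h_{\nu_0}(f)\}$. Since $\nu\in\mathcal{M}_f(\Lambda)\subseteq\mathcal{M}_{horse}(p)$ and $\nu$ is ergodic, $\nu\in\mathcal{M}_{horse}^e(p)$; choosing $\varepsilon$ and the precision in Theorem \ref{thm-continuous}(II) small enough gives $\rho(\nu,\mu)<\zeta$ and $|h_\nu(f)-h|<\eta$, estimating $|h_\nu(f)-h|\le|h_\nu(f)-h_{\nu_0}(f)|+|h_{\nu_0}(f)-h|$ in the case $h>h_{\nu_0}(f)$ (which, as $h\le h_\mu(f)$ and $h_{\nu_0}(f)\to h_\mu(f)$, can only occur by less than $\eta/2$). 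This proves (II); (I) is the case $h=h_\mu(f)$.

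\textbf{Proof of (III) and (IV).} For (III) fix $a$ and $0\le h<\sup\mathcal{E}_f(\mathcal{P}_\varphi^{-1}(a)\cap\mathcal{M}_{horse}(p))$, and set $Y:=\mathcal{T}_{\varphi,f}^{-1}(\{a\}\times[h,+\infty))\cap\mathcal{M}_{horse}(p)=\{\nu\in\mathcal{M}_{horse}(p):\mathcal{P}_\varphi(\nu)=a,\ h_\nu(f)\ge h\}$; the choice of $h$ gives $\mu^*\in Y$ with $h_{\mu^*}(f)>h$. Using the upper semi-continuity hypothesis one checks that $\mathcal{M}_{horse}(p)$, hence also $Y$, is closed in the compact metrizable space $\mathcal{M}_f(M)$, so $Y$ is a Baire space. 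The target set $\mathcal{T}_{\varphi,f}^{-1}(a,h)\cap\mathcal{M}_{horse}^e(p)$ equals $\{\nu\in Y:\nu\ \text{ergodic},\ h_\nu(f)=h\}$, and we write it as $\big(\bigcap_k A_k\big)\cap\big(\bigcap_k B_k\big)$, where $\bigcap_k A_k=\mathcal{M}_f^e(M)\cap Y$ with each $A_k$ open in $Y$ (ergodic measures form a $G_\delta$ in $\mathcal{M}_f(M)$), and $B_k:=\{\nu\in Y:h_\nu(f)<h+1/k\}$, open in $Y$ by upper semi-continuity, with $\bigcap_k B_k=\{\nu\in Y:h_\nu(f)=h\}$. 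Density of each $A_k$ in $Y$ follows from (II): given $\nu\in Y$, if $h_\nu(f)>h$ approximate $\nu$ by an ergodic measure of $\mathcal{M}_{horse}(p)$ with the same $\mathcal{P}_\varphi$-value and entropy within $h_\nu(f)-h$ of $h_\nu(f)$; if $h_\nu(f)=h$, first move slightly towards $\mu^*$ along a convex combination (staying in $Y$, raising the entropy above $h$, remaining arbitrarily close to $\nu$) and then approximate. Density of each $B_k$: if $h_\nu(f)<h+1/k$ take $\nu$; otherwise $h\le h+\tfrac1{2k}\le h_\nu(f)$, so (II) with target entropy $h+\tfrac1{2k}$ and precision $<\tfrac1{2k}$ yields an ergodic measure in $Y\cap B_k$ arbitrarily close to $\nu$. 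Hence the target set is a countable intersection of open dense subsets of the Baire space $Y$, i.e. residual in $Y$. For (IV), the inclusion $\mathrm{Int}(\mathcal{T}_{\varphi,f}(\mathcal{M}_{horse}^e(p)))\subseteq\mathrm{Int}(\mathcal{T}_{\varphi,f}(\mathcal{M}_{horse}(p)))$ is trivial; conversely, if $(a,h)$ is interior to $\mathcal{T}_{\varphi,f}(\mathcal{M}_{horse}(p))$, take a box $I\times J$ around it inside that set, and for each $(a',h')$ in a slightly smaller box note $a'\in\mathrm{Int}(\mathcal{P}_\varphi(\mathcal{M}_{horse}(p)))$ and that there is $\mu'\in\mathcal{M}_{horse}(p)$ with $\mathcal{P}_\varphi(\mu')=a'$, $h_{\mu'}(f)>h'$, so $\sup\mathcal{E}_f(\mathcal{P}_\varphi^{-1}(a')\cap\mathcal{M}_{horse}(p))>h'$ and (III) supplies an ergodic measure of $\mathcal{M}_{horse}(p)$ realising $(a',h')$; thus the smaller box lies in $\mathcal{T}_{\varphi,f}(\mathcal{M}_{horse}^e(p))$.

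\textbf{Main obstacle.} The substance is concentrated in the amalgamation step: building one transitive locally maximal hyperbolic set that simultaneously carries measures approximating $\mu$, $\mu_1$ and $\mu_2$ with controlled weak$^*$ positions and entropies, together with the convex-combination correction that restores the exact constraint $\mathcal{P}_\varphi=a$ and the attendant continuity and upper-semicontinuity estimates. Closely related is the verification that $\mathcal{M}_{horse}(p)$ is closed, which is exactly where the upper semi-continuity hypothesis enters; once these are in place, the rest is a routine reduction to Theorem \ref{thm-continuous} together with a standard Baire-category argument.
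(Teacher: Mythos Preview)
Your reduction for (I), (II) and (IV) is correct and essentially the same mechanism the paper uses: amalgamate finitely many $p$-horseshoes into one transitive locally maximal hyperbolic set $\Lambda$ and then quote Theorem~\ref{thm-continuous} on $(\Lambda,f|_\Lambda)$. The paper packages this as an abstract framework (Theorems~\ref{thm-Almost-Additive}, \ref{thm-Almost-Additive2}) fed by the `multi-horseshoe' dense property on $\mathcal{M}_{horse}(p)$ (Theorem~\ref{def-strong-basic-2}), but the content is the same.

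There is, however, a genuine gap in your proof of (III). You assert that ``using the upper semi-continuity hypothesis one checks that $\mathcal{M}_{horse}(p)$, hence also $Y$, is closed'', and then invoke Baire. This does not follow. If $\mu_n\in\mathcal{M}_{horse}(p)$ and $\mu_n\to\mu$, the horseshoe approximants to $\mu_n$ carry measures with entropy $>h_{\mu_n}(f)-\varepsilon$, but upper semi-continuity only gives $\limsup h_{\mu_n}(f)\le h_\mu(f)$, not a lower bound; nothing prevents all $h_{\mu_n}(f)$ from being far below $h_\mu(f)$ (e.g.\ periodic measures accumulating on a positive-entropy $\mu$), so you cannot verify condition~(3) in the definition of $\mathcal{M}_{horse}(p)$ for the limit $\mu$. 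The paper does not claim $\mathcal{M}_{horse}(p)$ is closed; Remark~\ref{Rem-baire} explicitly singles out the case $C_1=\mathcal{M}_f(X)$ as the one where $Y$ is compact and Baire, and for the general $C=\mathcal{M}_{horse}(p)$ it instead proves density of the target set directly (Lemma~\ref{Lem-baire}).

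The fix is already latent in your own argument for (II): given $\mu_0\in Y$, first move slightly toward $\mu^*$ if needed so that $h_{\mu_0}(f)>h$, then build $\Lambda$ as you do, arranging $a\in\mathrm{Int}(\mathcal{P}_{\varphi|_\Lambda}(\mathcal{M}_f(\Lambda)))$ and $\sup\{h_\omega(f):\omega\in\mathcal{M}_f(\Lambda),\,\mathcal{P}_\varphi(\omega)=a\}>h$. Now apply Theorem~\ref{thm-continuous}(III), not (II), on $(\Lambda,f|_\Lambda)$: it produces an ergodic $\nu\in\mathcal{M}_f(\Lambda)\subset\mathcal{M}_{horse}(p)$ with $\mathcal{P}_\varphi(\nu)=a$ and $h_\nu(f)=h$ exactly, as close to $\mu_0$ as desired. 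This gives density of $\mathcal{T}_{\varphi,f}^{-1}(a,h)\cap\mathcal{M}_{horse}^e(p)$ in $Y$ directly; the $G_\delta$ part is as you wrote. No Baire category is needed.
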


\begin{Rem}
	In \cite{ABC2011} Abdenur, Bonatti, and Crovisier introduced the barycenter property and used it to obtain the density of ergodic measures for isolated non-trivial homoclinic class of a $C^1$-generic diﬀeomorphism. However, even entropy-dense property is unknown for such homoclinic classes. So, it's difficult to answer Question \ref{Conjecture-2} in the framework of \cite{ABC2011}.
\end{Rem}
\begin{Rem}
	In Section \ref{section-thm}  we give more general results than Theorem \ref{thm-continuous} and  \ref{thm-continuous-2}, see Theorem \ref{thm-almost} and  \ref{thm-almost-2}. They generalize Theorem \ref{thm-continuous} and  \ref{thm-continuous-2} from the following aspects:
	\begin{itemize}
		\item[$\bullet$] continuous functions $\rightarrow$ asymptotically additive sequences of continuous functions;
		\item[$\bullet$]  Hölder continuous functions  $\rightarrow$  almost additive sequences of continuous functions;
		\item[$\bullet$]  single function  $\rightarrow$  multiple   functions;
		\item[$\bullet$]  entropy   $\rightarrow$  abstract pressure.
	\end{itemize}
We will use the result on multiple   functions to obtain intermediate Hausdorff  dimension of ergodic measures and Lyapunov spectrum for  transitive Anosov diffeomorphisms, see Theorem \ref{thm-inter-huasdorff}, \ref{thm-Lyapunov} and \ref{thm-first-return}.
   The concept of almost additive sequences is introduced to study  the Lyapunov exponents of nonconformal transformations \cite{BarreiraGelfert2006}, and the concept of asymptotically additive sequences  is mainly motivated by some works on the Lyapunov exponents of matrix products \cite{FH2010}, so that the results of the present paper are suitable for the cases of \cite{BarreiraGelfert2006,FH2010}.
\end{Rem}

\subsubsection{Non-hyperbolic step skew-products}
Now we consider non-hyperbolic step skew-products with circle fibers. In \cite{DGM2019,DGM2022}, Díaz, Gelfert and Rams derive a multifractal analysis for the topological entropy of the level sets of Lyapunov exponent for these systems. Consider a finite family $f_i: \mathbb{S}^1 \rightarrow \mathbb{S}^1, i=$ $0, \ldots, N-1$ for $N \geq 2$, of $C^1$ diffeomorphisms and the associated step skew-product
\begin{equation}\label{equation-step-skew}
	F: \Sigma_N \times \mathbb{S}^1 \rightarrow \Sigma_N \times \mathbb{S}^1, \quad F(\xi, x)=\left(\sigma(\xi), f_{\xi_0}(x)\right),
\end{equation}
where $\Sigma_N=\{0, \ldots, N-1\}^{\mathbb{Z}}$. We consider the class $\operatorname{SP}_{\text {shyp }}^1\left(\Sigma_N \times \mathbb{S}^1\right)$ of such maps which are topologically transitive and "nonhyperbolic in a nontrivial way". Readers can refer to  \cite{DGM2019,DGM2022} for precise definitions.
Given $X=(\xi, x) \in \Sigma_N \times \mathbb{S}^1$, consider the (fiber) Lyapunov exponent of $X$
$$
\chi(X) {=} \lim _{n \rightarrow \pm \infty} \frac{1}{n} \log \left|\left(f_{\xi}^n\right)^{\prime}(x)\right|,
$$
(where $f_{\xi}^{-n} {=} f_{\xi_{-n}} \circ \cdots \circ f_{-1}$ and $f_{\xi}^n {=} f_{\xi_{n-1}} \circ \cdots \circ f_{\xi_0}$, $\xi=(\dots\xi_{-1}\xi_0\xi_1\dots)\in \Sigma_N$ ) where we assume that both limits $n \rightarrow \pm \infty$ exist and coincide.  Given $\alpha \in \mathbb{R}$ let
$$
\mathcal{L}(\alpha) {=}\left\{X \in \Sigma_N \times \mathbb{S}^1: \chi(X)=\alpha\right\}.
$$
Given an $F$-invariant measure $\mu$, denote by $\chi(\mu)$ the Lyapunov exponent of $\mu$ defined by
$$
\chi(\mu) {=} \int \log \left|\left(f_{\xi_0}\right)^{\prime}(x)\right| d \mu(\xi, x) .
$$
Denote $a_{\min }=\inf\{a:\mathcal{L}(a) \neq \emptyset\}$ and $a_{\max }=\sup\{a:\mathcal{L}(a) \neq \emptyset\}$. By Lemma \ref{Lem-skew-min-max} we have $$a_{\min }=\min\{\chi(\mu):\mu  \in \mathcal{M}_{F}^e(\Sigma_N \times \mathbb{S}^1)\}$$ $$a_{\max }=\max\{\chi(\mu):\mu \in  \mathcal{M}_{F}^e(\Sigma_N \times \mathbb{S}^1)\}.$$ By \cite[Section 7.1]{DGM2019}, there exist $F$-ergodic measures  with positive exponent or negative exponent. It implies $a_{\min}<0<a_{\max}.$
\begin{Thm}(\cite[Theorem A]{DGM2019} and \cite[Theorem A]{DGM2022})\label{Thm-skew}
	For every $N \geq 2$ and every $F \in \operatorname{SP}_{\text {shyp }}^1\left(\Sigma_N \times \mathbb{S}^1\right)$ we have $\mathcal{L}(a) \neq \emptyset$ if and only if $a \in\left[a_{\min }, a_{\max }\right]$. Moreover,  the map $\alpha \mapsto h_{top}(\mathcal{L}(a))$ is continuous and concave on each interval $\left[a_{\min }, 0\right]$ and $\left[0, a_{\max }\right]$ and for every $a \in\left[a_{\min }, a_{\max }\right]$, one has
	$$
	h_{top}(\mathcal{L}(a))=\sup \{h_\mu(F): \mu \in \mathcal{M}_{F}^e(\Sigma_N \times \mathbb{S}^1),\chi(\mu)=a\} .
	$$
\end{Thm}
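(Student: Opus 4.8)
The plan is to prove Theorem \ref{Thm-skew} in four steps. Put $\psi(\xi,x)=\log\bigl|f'_{\xi_0}(x)\bigr|$, a continuous function on $\Sigma_N\times\mathbb{S}^1$ for which $\log|(f^n_\xi)'(x)|=S_n\psi(\xi,x):=\sum_{k=0}^{n-1}\psi(F^k(\xi,x))$; thus $\chi(\mu)=\int\psi\,d\mu$ is a continuous affine functional on the convex compact connected set $\mathcal{M}_F(\Sigma_N\times\mathbb{S}^1)$, and $\mathcal{L}(a)$ is the set of points whose forward and backward Birkhoff averages of $\psi$ both equal $a$. Step 1 settles the shape of the spectrum; Step 2 gives the lower bound $h_{top}(\mathcal{L}(a))\geq\sup\{h_\mu(F):\mu\in\mathcal{M}_F^e(\Sigma_N\times\mathbb{S}^1),\ \chi(\mu)=a\}$; Step 3 gives the matching upper bound modulo passing from invariant to ergodic measures; Step 4 supplies that passage together with concavity and continuity, via a horseshoe‑concatenation construction.

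\emph{Steps 1--2.} Since $\chi$ is continuous and affine, $\{\chi(\mu):\mu\in\mathcal{M}_F(\Sigma_N\times\mathbb{S}^1)\}$ is a compact interval, equal to $[a_{\min},a_{\max}]$ by Lemma \ref{Lem-skew-min-max}. If $X\in\mathcal{L}(a)$, any weak* accumulation point $\mu$ of its empirical measures is $F$-invariant with $\chi(\mu)=a$, so $\mathcal{L}(a)=\emptyset$ when $a\notin[a_{\min},a_{\max}]$; conversely, for $a=\chi(\nu)$ with $\nu$ $F$-ergodic, Birkhoff's theorem applied to both $F$ and $F^{-1}$ gives $\nu(\mathcal{L}(a))=1$, and for an arbitrary $a\in[a_{\min},a_{\max}]$ a point of $\mathcal{L}(a)$ is produced by the gluing of Step 4 from the coexisting ergodic measures of positive and of negative exponent recalled before the statement. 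For the lower bound, fix $a$ and an $F$-ergodic $\nu$ with $\chi(\nu)=a$; the set of $\psi$-generic points of $\nu$ has full $\nu$-measure and lies in $\mathcal{L}(a)$, so Bowen's theorem \cite{Bowen1973} on the entropy of generic points gives $h_{top}(\mathcal{L}(a))\geq h_\nu(F)$, and taking the supremum over all such $\nu$ yields the claimed lower bound.

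\emph{Step 3.} The standard multifractal covering estimate gives $h_{top}(\mathcal{L}(a))\leq\inf_{q\in\mathbb{R}}\bigl(P_{top}(F,q\psi)-qa\bigr)$: on a Bowen ball centred at a point $x$ of $\mathcal{L}(a)$ the weight $e^{q S_n\psi(x)-nqa}$ is comparable to $1$, so the Bowen‑dimension sum defining $h_{top}(\mathcal{L}(a))$ is dominated by the sum defining the pressure. Using the variational principle for $P_{top}(F,q\psi)$ and the upper semicontinuity of $\mathcal{E}_F$ on the compact convex set $\mathcal{M}_F(\Sigma_N\times\mathbb{S}^1)$ (the fibre is one‑dimensional, whence $F$ is entropy‑expansive), a minimax argument identifies this infimum, for $a\in(a_{\min},a_{\max})$, with $\sup\{h_\mu(F):\mu\in\mathcal{M}_F(\Sigma_N\times\mathbb{S}^1),\ \chi(\mu)=a\}$. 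There remains the genuinely nontrivial point that this supremum over \emph{invariant} measures equals the supremum over \emph{ergodic} measures with exponent $a$ — a non‑ergodic $\mu$ with $\chi(\mu)=a$ may carry large entropy while its ergodic components have exponents on both sides of $a$ — together with the treatment of $a\in\{a_{\min},0,a_{\max}\}$; both are addressed in Step 4.

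\emph{Step 4.} Fix a small $\delta>0$. Using transitivity and the ``nonhyperbolic in a nontrivial way'' structure defining $\operatorname{SP}_{\mathrm{shyp}}^1(\Sigma_N\times\mathbb{S}^1)$, choose two transitive locally maximal hyperbolic sets $\Gamma^{+}$, $\Gamma^{-}$ homoclinically related to each other, all of whose fibre exponents are positive on $\Gamma^{+}$ and negative on $\Gamma^{-}$, carrying ergodic measures $\mu^{+}$, $\mu^{-}$ with $h_{\mu^{+}}(F)$, $h_{\mu^{-}}(F)$ within $\delta$ of $\sup\{h_\nu(F):\nu\in\mathcal{M}_F^e,\ \chi(\nu)>0\}$ and of $\sup\{h_\nu(F):\nu\in\mathcal{M}_F^e,\ \chi(\nu)<0\}$ respectively. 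For a target $a\in(0,a_{\max})$ I would build a Moran‑type (``horseshoe with holes'') subset of $\mathcal{L}(a)$ by concatenating very long orbit segments shadowing alternately $\Gamma^{+}$ and $\Gamma^{-}$, with the fraction of time spent near $\Gamma^{-}$ and the exponents inside $\Gamma^{\pm}$ tuned so that every resulting orbit has Birkhoff average of $\psi$ exactly $a$; counting admissible itineraries exhibits such a subset of entropy at least $\lambda\,h_{\mu^{+}}(F)+(1-\lambda)\,h_{\mu^{-}}(F)-O(\delta)$ for the appropriate $\lambda=\lambda(a)\in(0,1)$, and generic points of its ergodic measures realise exponent $a$ with entropy arbitrarily close to $\sup\{h_\nu(F):\nu\in\mathcal{M}_F^e,\ \chi(\nu)=a\}$ — this simultaneously closes the gap in Step 3 and gives the entropy formula on $(0,a_{\max})$. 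Running the same construction with two positive (resp.\ two negative) exponents shows $a\mapsto h_{top}(\mathcal{L}(a))$ is concave, hence continuous, on $(0,a_{\max})$ (resp.\ on $(a_{\min},0)$), and upper semicontinuity of $\mathcal{E}_F$ together with gluing in a vanishingly small positive or negative block extends continuity and the entropy formula to $a_{\min}$, $a_{\max}$ and to $a=0$. The main obstacle is precisely the regime $a\approx0$: there is no hyperbolic set with zero exponent, so a near‑zero target exponent must be synthesised by finely interleaving positive‑ and negative‑exponent blocks, and one must control the entropy lost at the exponentially many bounded‑length transition segments and the Bowen distortion along the non‑uniformly hyperbolic pieces — this is where the nontrivial‑nonhyperbolicity hypothesis is indispensable and where the bulk of the work of Díaz, Gelfert and Rams \cite{DGM2019,DGM2022} lies, a naive specification‑based gluing being unavailable since these systems need not have specification.
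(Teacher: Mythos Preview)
This theorem is not proved in the present paper: it is quoted verbatim as ``\cite[Theorem A]{DGM2019} and \cite[Theorem A]{DGM2022}'' and is used as an input to the paper's own results (in particular to Theorem~\ref{maintheorem-skew}). There is therefore no ``paper's own proof'' to compare against; the authors simply cite the result of D\'iaz, Gelfert and Rams and do not attempt to reprove it.

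As for your sketch itself: Steps~1--3 are the standard multifractal framework and are essentially correct as stated, and you are right that the entire difficulty lives in Step~4, specifically in passing from the supremum over invariant measures to the supremum over ergodic measures at and near $a=0$. Your outline of Step~4 is at best a high-level summary of the strategy in \cite{DGM2019,DGM2022} rather than a proof: the phrases ``tune the fraction of time'' and ``control the entropy lost at transition segments'' hide precisely the substantial constructions (skeletons, multi-variable-time horseshoes, the approximation of nonhyperbolic measures by hyperbolic basic sets in Lemma~\ref{Lem-skew-horseshoe}) that occupy the bulk of those two papers. In particular, your appeal to ``two transitive locally maximal hyperbolic sets $\Gamma^+,\Gamma^-$ homoclinically related to each other'' glosses over the fact that sets of different fibre-index cannot be homoclinically related in the usual sense; the actual mechanism in \cite{DGM2019} for passing between the two hyperbolicity types uses the specific axioms defining $\operatorname{SP}^1_{\mathrm{shyp}}$ (controlled expansion/contraction, accessibility) and is not a straightforward concatenation. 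You correctly flag this as the crux, but what you have written is a description of where the work lies, not the work itself.
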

Using  Theorem \ref{thm-continuous-2}, we show that every $F \in \operatorname{SP}_{\text {shyp }}^1\left(\Sigma_N \times \mathbb{S}^1\right)$ has intermediate entropy property of ergodic measures with same Lyapunov exponent.
\begin{maintheorem}\label{maintheorem-skew}
	For every $N \geq 2$,  every $F \in \operatorname{SP}_{\text {shyp }}^1\left(\Sigma_N \times \mathbb{S}^1\right)$,   every $a \in\left(a_{\min }, 0\right) \cup\left(0, a_{\max }\right)$ and every $0\leq h<\sup \{h_\mu(F): \mu \in \mathcal{M}_{F}^e(\Sigma_N \times \mathbb{S}^1),\chi(\mu)=a\},$ there exists $\mu_{a,h}\in  \mathcal{M}_{F}^e(\Sigma_N \times \mathbb{S}^1)$ such that $\chi(\mu_{a,h})=a$ and $h_{\mu_{a,h}}(F)=h,$  that is,
	$$
	[0,H(f,\chi,a))\subset  \{h_\mu(F): \mu \in \mathcal{M}_{F}^e(\Sigma_N \times \mathbb{S}^1), \chi(\mu)=a\} \subset [0,H(f,\chi,a)],
	$$
	where $H(f,\chi,a)=\sup \{h_\mu(F): \mu \in \mathcal{M}_{F}^e(\Sigma_N \times \mathbb{S}^1),\chi(\mu)=a\}.$
	Moreover, combining with Theorem \ref{Thm-skew} we have
	$$
	[0,h_{top}(\mathcal{L}(a)))\subset  \{h_\mu(F): \mu \in \mathcal{M}_{F}^e(\Sigma_N \times \mathbb{S}^1), \chi(\mu)=a\} \subset [0,h_{top}(\mathcal{L}(a)].
	$$
\end{maintheorem}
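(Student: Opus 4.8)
The plan is to deduce Theorem~\ref{maintheorem-skew} from the general framework of Theorem~\ref{thm-continuous-2}, applied to $F\in\operatorname{SP}^1_{\text{shyp}}(\Sigma_N\times\mathbb{S}^1)$ with the continuous potential $\varphi(\xi,x)=\log|(f_{\xi_0})'(x)|$, so that $\mathcal{P}_\varphi(\mu)=\chi(\mu)$ and $\mathcal{P}_\varphi^{-1}(a)\cap\mathcal{M}_F^e(\Sigma_N\times\mathbb{S}^1)=\{\mu\in\mathcal{M}_F^e:\chi(\mu)=a\}$ for every $a$. I treat $a\in(0,a_{\max})$; the case $a\in(a_{\min},0)$ is symmetric, with the fiber contracting rather than expanding. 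Fix once and for all a hyperbolic periodic point $p$ of $F$ with $\chi(p)>0$; such a $p$ exists because $F$ is nonhyperbolic in a nontrivial way (see \cite{DGM2019}). To invoke Theorem~\ref{thm-continuous-2} I first record that $\mu\mapsto h_\mu(F)$ is upper semi-continuous on all of $\mathcal{M}_F(\Sigma_N\times\mathbb{S}^1)$, hence on $\mathcal{M}_{horse}(p)$: for these step skew products with $C^1$ circle fibers one has $h_\mu(F)=h_{\pi_*\mu}(\sigma)$, where $\pi:\Sigma_N\times\mathbb{S}^1\to\Sigma_N$ is the projection (the relative fiber entropy vanishing because circle homeomorphisms have zero topological entropy), and $\mu\mapsto\pi_*\mu$ is weak$^*$ continuous while $\nu\mapsto h_\nu(\sigma)$ is upper semi-continuous by expansiveness of $\sigma$.

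The key step is the inclusion $\{\mu\in\mathcal{M}_F^e:\chi(\mu)\in(0,a_{\max})\}\subseteq\mathcal{M}_{horse}(p)$. Given ergodic $\mu$ with $\chi(\mu)=\alpha>0$, the measure is nonuniformly hyperbolic (uniformly hyperbolic symbolic base together with a nonzero fiber exponent), so the Katok-type horseshoe approximation established for this class in \cite{Katok,DGM2019,DGM2022} provides, for every $\varepsilon>0$, a transitive locally maximal hyperbolic (indeed $u$-fibered) set carrying a measure $\varepsilon$-close to $\mu$ with entropy greater than $h_\mu(F)-\varepsilon$; using the transitivity of $F$ together with the fact that this horseshoe and $p$ both have uniformly expanding fibers, one may enlarge it (splicing in a transitive connecting orbit, so that the stable and unstable laminations, which wrap around the circle fiber, meet transversally) to contain a saddle $q$ homoclinically related to $p$. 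Thus conditions~(1)--(3) defining $\mathcal{M}_{horse}(p)$ hold, i.e.\ $\mu\in\mathcal{M}_{horse}(p)$. Since by Theorem~\ref{Thm-skew} every $\alpha\in(0,a_{\max})$ is the fiber exponent of some $F$-ergodic measure, we get $(0,a_{\max})\subseteq\mathcal{P}_\varphi(\mathcal{M}_{horse}(p))$, an open set; hence $a\in\mathrm{Int}(\mathcal{P}_\varphi(\mathcal{M}_{horse}(p)))\neq\emptyset$, and, combining the inclusion with Theorem~\ref{Thm-skew}, $\sup\mathcal{E}_f(\mathcal{P}_\varphi^{-1}(a)\cap\mathcal{M}_{horse}(p))\geq\sup\{h_\mu(F):\mu\in\mathcal{M}_F^e,\ \chi(\mu)=a\}=h_{top}(\mathcal{L}(a))$.

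To finish, fix $h\in[0,h_{top}(\mathcal{L}(a)))$. Then $h<h_{top}(\mathcal{L}(a))\leq\sup\mathcal{E}_f(\mathcal{P}_\varphi^{-1}(a)\cap\mathcal{M}_{horse}(p))$ and $a\in\mathrm{Int}(\mathcal{P}_\varphi(\mathcal{M}_{horse}(p)))$, so Theorem~\ref{thm-continuous-2}(III) applies and gives that $\mathcal{T}_{\varphi,F}^{-1}(a,h)\cap\mathcal{M}_{horse}^e(p)$ is residual, hence nonempty, in $\mathcal{T}_{\varphi,F}^{-1}(\{a\}\times[h,+\infty))\cap\mathcal{M}_{horse}(p)$ (which is nonempty because some $\mu'\in\mathcal{M}_{horse}(p)$ has $\chi(\mu')=a$ and $h_{\mu'}(F)>h$). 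Any $\mu_{a,h}$ in this residual set is ergodic with $\chi(\mu_{a,h})=a$ and $h_{\mu_{a,h}}(F)=h$, which yields $[0,h_{top}(\mathcal{L}(a)))\subseteq\{h_\mu(F):\mu\in\mathcal{M}_F^e,\ \chi(\mu)=a\}$; the reverse-type inclusion $\{h_\mu(F):\mu\in\mathcal{M}_F^e,\ \chi(\mu)=a\}\subseteq[0,h_{top}(\mathcal{L}(a))]$ and the identity $H(f,\chi,a)=h_{top}(\mathcal{L}(a))$ are immediate from Theorem~\ref{Thm-skew}. Hence both displayed chains of inclusions hold, and the case $a\in(a_{\min},0)$ follows verbatim with a reference saddle of negative exponent.

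The hard part will be the connecting step in the second paragraph: arranging that the Katok-type horseshoes approximating \emph{every} ergodic measure of positive exponent can be taken homoclinically related to the \emph{single} fixed saddle $p$. This needs the transitivity hypothesis built into $\operatorname{SP}^1_{\text{shyp}}$ together with the $u$-fibered structure of these horseshoes (so that their invariant laminations genuinely wrap around the circle and can be made to intersect transversally after a controlled transitive detour), and is essentially the content of the horseshoe constructions in \cite{DGM2019,DGM2022}. A secondary, routine point is to confirm that the ambient space in Theorem~\ref{thm-continuous-2}(III) is a Baire space, so that ``residual'' indeed forces a nonempty intersection.
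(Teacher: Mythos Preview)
Your proposal is correct and follows essentially the same strategy as the paper: fix a saddle $p$ of positive fiber exponent, use the horseshoe approximation and homoclinic-relation results of \cite{DGM2017,DGM2019} to show that every ergodic measure with nonnegative exponent lies in $\mathcal{M}_{horse}(p)$, deduce $(0,a_{\max})\subset\mathrm{Int}(\mathcal{P}_\varphi(\mathcal{M}_{horse}(p)))$, and then apply Theorem~\ref{thm-continuous-2}(III). The ``hard part'' you flag is handled in the paper exactly as you anticipate, via \cite[Lemmas~6.4 and~6.5]{DGM2019} (any two saddles of the same type are homoclinically related, and homoclinically related basic sets sit in a common basic set), and upper semi-continuity of entropy is obtained by the same projection argument you sketch (the paper simply cites \cite[p.~77]{DGM2022}).
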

We draw the graph of $(\chi(\mu), h_\mu(F))$ in Figure \ref{fig-3}. The blue line denotes the supremum of metric entropy of ergodic measures with Lyapunov exponent. Theorem \ref{Thm-skew} implies the blue line coincides with the graph of 
$h_{top}(\mathcal{L}(a)).$
And Theorem \ref{maintheorem-skew} implies that every point in the interior of two closed regions of Figure \ref{fig-3} can be attained by ergodic measures.

\begin{figure}\caption{Graph of $(\chi(\mu), h_\mu(F))$}\label{fig-3}
	\begin{center}

		\tikzset{every picture/.style={line width=0.75pt}} 
		
		\begin{tikzpicture}[x=0.75pt,y=0.75pt,yscale=-1,xscale=1]
			
			\draw    (320,210.5) -- (519,210.5) ;
			\draw [shift={(521,210.5)}, rotate = 180] [color={rgb, 255:red, 0; green, 0; blue, 0 }  ][line width=0.75]    (10.93,-3.29) .. controls (6.95,-1.4) and (3.31,-0.3) .. (0,0) .. controls (3.31,0.3) and (6.95,1.4) .. (10.93,3.29)   ;
			\draw    (419,238) -- (419.99,74.5) ;
			\draw [shift={(420,72.5)}, rotate = 90.35] [color={rgb, 255:red, 0; green, 0; blue, 0 }  ][line width=0.75]    (10.93,-3.29) .. controls (6.95,-1.4) and (3.31,-0.3) .. (0,0) .. controls (3.31,0.3) and (6.95,1.4) .. (10.93,3.29)   ;
			\draw [color={rgb, 255:red, 208; green, 2; blue, 27 }  ,draw opacity=1 ]   (349,170) -- (349,210) ;
			\draw [color={rgb, 255:red, 74; green, 144; blue, 226 }  ,draw opacity=1 ]   (349,170) .. controls (364,119) and (388,82) .. (419,115) ;
			\draw [color={rgb, 255:red, 74; green, 144; blue, 226 }  ,draw opacity=1 ]   (419,115) .. controls (464,77) and (479,137) .. (490,170) ;
			\draw [color={rgb, 255:red, 208; green, 2; blue, 27 }  ,draw opacity=1 ]   (490,170) -- (490,187.5) -- (490,210) ;
			\draw    (111.05,188.74) .. controls (141.94,160.31) and (151.28,133.57) .. (148.41,123.76) ;
			\draw    (91.4,104.4) .. controls (108.73,83.06) and (143.38,102.78) .. (148.41,123.76) ;
			\draw    (65.41,177.92) .. controls (59.9,149.46) and (72.69,127.04) .. (91.4,104.4) ;
			\draw [color={rgb, 255:red, 245; green, 166; blue, 35 }  ,draw opacity=1 ][line width=3] [line join = round][line cap = round]   (90.93,147.11) .. controls (90.93,146.89) and (90.93,146.66) .. (90.93,146.43) ;
			\draw    (65.41,177.92) .. controls (67.77,189.78) and (79.54,212.46) .. (111.05,188.74) ;
			\draw    (173,153) .. controls (199.6,117.54) and (266.94,119.92) .. (296.67,150.58) ;
			\draw [shift={(298,152)}, rotate = 227.82] [color={rgb, 255:red, 0; green, 0; blue, 0 }  ][line width=0.75]    (10.93,-3.29) .. controls (6.95,-1.4) and (3.31,-0.3) .. (0,0) .. controls (3.31,0.3) and (6.95,1.4) .. (10.93,3.29)   ;
			\draw [color={rgb, 255:red, 245; green, 166; blue, 35 }  ,draw opacity=1 ][line width=3] [line join = round][line cap = round]   (447.23,170.08) .. controls (447.23,169.86) and (447.23,169.63) .. (447.23,169.41) ;
			\draw  [dash pattern={on 0.84pt off 2.51pt}]  (419,170) -- (447.23,170.08) ;
			\draw  [dash pattern={on 0.84pt off 2.51pt}]  (447.23,170.08) -- (447,210) ;
			
			\draw (95.71,131.95) node [anchor=north west][inner sep=0.75pt]    {$\mu $};
			\draw (48,212.4) node [anchor=north west][inner sep=0.75pt]    {$\mathcal{M}_{F}( \Sigma _{N} \times \mathbb{S}^1)$};
			\draw (186,248.4) node [anchor=north west][inner sep=0.75pt]    {$\mu \rightarrow ( \chi ( \mu ) ,h_{\mu }( F))$};
			\draw (401.71,212.95) node [anchor=north west][inner sep=0.75pt]    {$0$};
			\draw (431,218.4) node [anchor=north west][inner sep=0.75pt]    {$\chi ( \mu )$};
			\draw (378,162.4) node [anchor=north west][inner sep=0.75pt]    {$h_{\mu }( F)$};
			\draw (335.71,213.95) node [anchor=north west][inner sep=0.75pt]    {$a_{\min}$};
			\draw (475.71,215.95) node [anchor=north west][inner sep=0.75pt]    {$a_{\max}$};

		\end{tikzpicture}
		
	\end{center}
\end{figure}
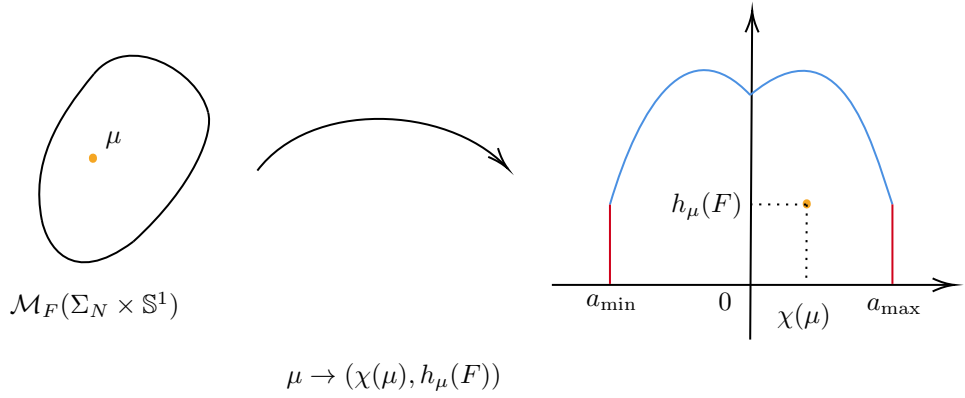

\subsubsection{Elliptic $\operatorname{SL}(2, \mathbb{R})$ cocycles}
Next, we apply Theorem \ref{maintheorem-skew} to elliptic $\operatorname{SL}(2, \mathbb{R})$ cocycles.  $\operatorname{SL}(2, \mathbb{R})$ is the set of $2 \times 2$ matrices with real coefficients and determinant one. Given $N \geq 2,$ a continuous map $A: \Sigma_N^+ \rightarrow \operatorname{SL}(2, \mathbb{R})$ is called a $2 \times 2$ matrix cocycle, where $\Sigma_N^{+}=\{0, \ldots, N-1\}^{\mathbb{N}_0}$. If $A$ is piecewise constant and depends only on the zeroth coordinate of the sequences $\xi \in \Sigma_N^+$, that is $A(\xi)=A_{\xi_0}$ where $\mathbf{A} \stackrel{\text { def }}{=}\left\{A_0, \ldots, A_{N-1}\right\} \in \operatorname{SL}(2, \mathbb{R})^N$, then we refer to it as the one-step cocycle generated by $\mathbf{A}$ or simply as the one-step cocycle $\mathbf{A}$. 
We denote
$$
\mathbf{A}^n\left(\xi^{+}\right) {=} A_{\xi_{n-1}} \circ \cdots \circ A_{\xi_1} \circ A_{\xi_0}, \quad \xi^{+} \in \Sigma_N^{+}, n \geq 0.
$$
The Lyapunov exponents of the cocycle $\mathbf{A}$ at $\xi^{+} \in \Sigma_N^{+}$ are the limits
$$
\lambda_1\left(\mathbf{A}, \xi^{+}\right) {=} \lim _{n \rightarrow \infty} \frac{1}{n} \log \left\|\mathbf{A}^n\left(\xi^{+}\right)\right\| \,
$$
where $\|L\|$ denotes the norm of the matrix $L$, whenever they exist. Given $\alpha \in \mathbb{R}$,  consider the level set
$$
\mathcal{L}_{\mathbf{A}}^{+}(\alpha) {=}\left\{\xi^{+} \in \Sigma_N^{+}: \lambda_1\left(\mathbf{A}, \xi^{+}\right)=\alpha\right\}.
$$
Given $v$ an invariant measure on $\Sigma_N^{+}$(with respect to $\sigma^{+}: \Sigma_N^{+} \rightarrow \Sigma_N^{+}$), denote
$$
\lambda_1(\mathbf{A}, \nu){=} \lim _{n \rightarrow \infty} \int \frac{1}{n} \log \left\|\mathbf{A}^n\left(\xi^{+}\right)\right\| d \nu
$$
Denote by $\langle\mathbf{A}\rangle$ the semigroup generated by $\mathbf{A}$. An element $R \in \operatorname{SL}(2, \mathbb{R})$ is elliptic if the absolute value of its trace is strictly less than 2. The set $\mathfrak{E}_N$ of elliptic cocycles is the set of cocycles $\mathbf{A} \in \operatorname{SL}(2, \mathbb{R})^N$ such that $\langle\mathbf{A}\rangle$ contains an elliptic element. In \cite{DGM2019} it is introduced an open and dense subset $\mathfrak{E}_{N \text {,shyp }}$ of $\mathfrak{E}_N$, the so-called elliptic cocycles having some hyperbolicity. 
\begin{Thm}(\cite[Theorem B]{DGM2019} and \cite[Theorem B]{DGM2022})\label{Thm-cocycle}
	For every $N \geq 2$ and  every $\mathbf{A}\in \mathfrak{E}_{N \text {,shyp }}$ there are numbers $a_{\max }>0$ such that the map $a \mapsto h_{top}\left(\mathcal{L}_{\mathbf{A}}^{+}(a)\right)$ is continuous and concave on $\left[0, a_{\max }\right]$, and for every $a \in[0, a_{\max}]$ one has
	$$
	h_{top}(\mathcal{L}_{\mathbf{A}}^{+}(a))=\sup\{h_\nu(\sigma^+): \nu \in \mathcal{M}_{\sigma^+}^e(\Sigma_N^{+}), \lambda_1(\mathbf{A}, \nu)=a\} .
	$$
\end{Thm}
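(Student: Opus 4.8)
\emph{Proof strategy.} Following the scheme of \cite{DGM2019,DGM2022}, the plan is to deduce the statement from the step skew-product case, Theorem~\ref{Thm-skew}, via the projective action. Given $\mathbf{A}=\{A_0,\dots,A_{N-1}\}\in\operatorname{SL}(2,\mathbb{R})^N$, each $A_i$ acts on the real projective line $\mathbb{P}^1\cong\mathbb{S}^1$ as a real-analytic diffeomorphism $f_i=[A_i]$, and one forms the associated two-sided step skew-product
$$
\widehat F:\Sigma_N\times\mathbb{S}^1\to\Sigma_N\times\mathbb{S}^1,\qquad \widehat F(\xi,x)=(\sigma\xi,f_{\xi_0}(x)),
$$
which is exactly of the form~\eqref{equation-step-skew}. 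Since the cocycle $\mathbf{A}$ is one-step and one-sided while $\widehat F$ is two-sided, one moves freely between $\sigma^{+}$-invariant measures on $\Sigma_N^{+}$, $\sigma$-invariant measures on $\Sigma_N$, and $\widehat F$-invariant measures on $\Sigma_N\times\mathbb{S}^1$ by taking natural extensions; all entropies are preserved under these identifications, and moreover $h_\mu(\widehat F)=h_{\bar\mu}(\sigma)$ for every $\widehat F$-invariant $\mu$ projecting to $\bar\mu$, since compositions of circle diffeomorphisms carry no fibrewise entropy (the fibrewise $(n,\varepsilon)$-separated numbers grow only linearly in $n$). The first step is then to record that, by the very definition of $\mathfrak{E}_{N,\mathrm{shyp}}$ in \cite{DGM2019}, $\mathbf{A}\in\mathfrak{E}_{N,\mathrm{shyp}}$ forces $\widehat F\in\operatorname{SP}_{\text{shyp}}^1(\Sigma_N\times\mathbb{S}^1)$: an elliptic element of $\langle\mathbf{A}\rangle$ acts on $\mathbb{P}^1$ conjugate to a rotation, hence without hyperbolic periodic points, which supplies the ``non-hyperbolic in a non-trivial way'' feature, while the hyperbolicity built into ``shyp'' supplies hyperbolic saddles and transitivity of $\widehat F$.

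Next I would set up the dictionary between the two notions of exponent. A direct computation with the projective action shows that for a unit vector $v$ and $A\in\operatorname{SL}(2,\mathbb{R})$ one has $\log|[A]'([v])|=-2\log\|Av\|$, so iterating gives that the fibre exponent of $\widehat F$ at $(\xi,[v])$ equals $-2\lim_{n}\frac1n\log\|\mathbf{A}^n(\xi^{+})v\|$. By Oseledets' theorem this limit equals $\lambda_1(\mathbf{A},\xi^{+})$ for every $[v]$ outside the stable Oseledets direction (a single point of $\mathbb{P}^1$) and equals $-\lambda_1(\mathbf{A},\xi^{+})$ on it. Hence, for every $a>0$, the level set $\mathcal{L}(-2a)$ of $\widehat F$ fibres over $\mathcal{L}_{\mathbf{A}}^{+}(a)$ with ``generic'' fibres, and a routine Bowen-metric argument---using that the negative symbolic coordinates and the $\mathbb{S}^1$-factor contribute no exponential growth---yields $h_{top}(\mathcal{L}(-2a))=h_{top}(\mathcal{L}_{\mathbf{A}}^{+}(a))$; the endpoint $a=0$ (the elliptic, zero-exponent regime) is treated in the same spirit. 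On the level of measures, an $\widehat F$-ergodic $\mu$ with $\chi(\mu)=-2a<0$ cannot be supported on the stable section, so its projection $\nu$ to $\Sigma_N^{+}$ satisfies $\lambda_1(\mathbf{A},\nu)=a$ by the Furstenberg formula, with $h_\mu(\widehat F)=h_\nu(\sigma^{+})$; conversely every $\nu$ with $\lambda_1(\mathbf{A},\nu)=a>0$ lifts, via its forward stationary (Furstenberg) measure on fibres, to such a $\mu$. Therefore $\sup\{h_\mu(\widehat F):\mu\in\mathcal{M}_{\widehat F}^{e}(\Sigma_N\times\mathbb{S}^1),\ \chi(\mu)=-2a\}=\sup\{h_\nu(\sigma^{+}):\nu\in\mathcal{M}_{\sigma^{+}}^{e}(\Sigma_N^{+}),\ \lambda_1(\mathbf{A},\nu)=a\}$.

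Finally, setting $a_{\max}:=-a_{\min}^{\widehat F}/2>0$, where $[a_{\min}^{\widehat F},0]$ is the negative branch of the exponent interval produced by Theorem~\ref{Thm-skew} for $\widehat F$, the affine substitution $a\mapsto -2a$ transports Theorem~\ref{Thm-skew} on $[a_{\min}^{\widehat F},0]$ to the desired statement on $[0,a_{\max}]$: continuity and concavity of $a\mapsto h_{top}(\mathcal{L}_{\mathbf{A}}^{+}(a))=h_{top}(\mathcal{L}(-2a))$ (an affine reparametrisation preserves both), together with the conditional variational principle $h_{top}(\mathcal{L}_{\mathbf{A}}^{+}(a))=\sup\{h_\nu(\sigma^{+}):\nu\in\mathcal{M}_{\sigma^{+}}^{e}(\Sigma_N^{+}),\ \lambda_1(\mathbf{A},\nu)=a\}$. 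I expect the main obstacle to be precisely this exponent dictionary: one must match the \emph{forward} (one-sided) Lyapunov exponent of the cocycle with the \emph{two-sided} fibre exponent of $\widehat F$ that defines $\mathcal{L}(\cdot)$, and check that topological entropy is lost neither in passing between $\Sigma_N^{+}$ and $\Sigma_N$ nor in adding or discarding the circle fibre---this is exactly where the Oseledets splitting (to discard the exceptional stable direction when $a>0$) and the vanishing of fibrewise entropy for circle diffeomorphisms are needed.
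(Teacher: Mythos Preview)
The paper does not contain a proof of this statement: Theorem~\ref{Thm-cocycle} is quoted verbatim from \cite[Theorem~B]{DGM2019} and \cite[Theorem~B]{DGM2022} and used as a black box (together with Theorem~\ref{Thm-skew}) in the proof of Theorem~\ref{maintheorem-cocycle}. So there is no ``paper's own proof'' to compare your proposal against.

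That said, your outline is essentially the strategy of the cited references, and the paper itself records several of the ingredients you list when it proves Theorem~\ref{maintheorem-cocycle}: that $\mathbf{A}\in\mathfrak{E}_{N,\mathrm{shyp}}$ implies $F_{\mathbf{A}}\in\operatorname{SP}_{\text{shyp}}^1(\Sigma_N\times\mathbb{S}^1)$ (cited as \cite[Proposition~11.23]{DGM2019}); the exponent dictionary (Lemma~\ref{Lem-cocycle}, i.e.\ \cite[Proposition~11.5]{DGM2019}); the identity $h_{top}(\mathcal{L}_{\mathbf{A}}^{+}(a))=h_{top}(\mathcal{L}(2a))$ (\cite[Theorem~5]{DGM2019}); and $h_{\nu}(\sigma^+)=h_{\mu}(F_{\mathbf{A}})$ for the pushforward $\nu=(\pi^{+}\circ\pi_1)_*\mu$ (\cite[p.~82]{DGM2022}). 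One discrepancy worth flagging: the paper (and \cite{DGM2019}) match $\mathcal{L}_{\mathbf{A}}^{+}(a)$ with the \emph{positive} skew-product level set $\mathcal{L}(2a)$ and hence with $a_{\max}=\tfrac12\sup\{a:\mathcal{L}(a)\neq\emptyset\}$, whereas you use the negative branch $\mathcal{L}(-2a)$ and set $a_{\max}=-a_{\min}^{\widehat F}/2$. Both conventions are consistent with your derivative identity $\log|[A]'([v])|=-2\log\|Av\|$, since Lemma~\ref{Lem-cocycle} shows $\chi^{+}(\xi^{+},v)$ takes the value $2a$ on the single direction $v_0(\xi^{+})$ and $-2a$ elsewhere; the two level sets have the same Bowen entropy by the obvious duality, but you should be aware that the references you are reducing to state the correspondence with the opposite sign.
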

Using  Theorem \ref{maintheorem-skew}, we show that every $A \in\mathfrak{E}_{N \text {,shyp }}$ has intermediate entropy property of ergodic measures with same Lyapunov exponent.
\begin{maintheorem}\label{maintheorem-cocycle}
	For every $N \geq 2$,  every $\mathbf{A}\in \mathfrak{E}_{N \text {,shyp }},$ every $a \in\left(0, a_{\max }\right)$ and every $0\leq h<h_{top}(\mathcal{L}_{\mathbf{A}}^{+}(a)),$ there exists $\nu_{a,h}\in  \mathcal{M}_{\sigma^+}^e(\Sigma_N^{+})$ such that $\lambda_1(\mathbf{A},\nu_{a,h})=a$ and $h_{\nu_{a,h}}(\sigma^+)=h,$  that is,
	$$
	[0,h_{top}(\mathcal{L}_{\mathbf{A}}^{+}(a)))\subset \{h_\nu(\sigma^+): \nu \in \mathcal{M}_{\sigma^+}^e(\Sigma_N^{+}), \lambda_1(\mathbf{A}, \nu)=a\} \subset [0,h_{top}(\mathcal{L}_{\mathbf{A}}^{+}(a))].
	$$
\end{maintheorem}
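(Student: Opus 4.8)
The plan is to deduce the statement from Theorem \ref{maintheorem-skew} by realising the one-step cocycle $\mathbf A$ as the fibrewise action of a projective step skew-product. First I would recall from \cite{DGM2019,DGM2022} that a one-step cocycle $\mathbf A=\{A_0,\dots,A_{N-1}\}\in\mathfrak E_{N,\mathrm{shyp}}$ induces, through the projective action of $\operatorname{SL}(2,\mathbb R)$ on $\mathbb P^1\cong\mathbb S^1$, a step skew-product
\[
F_{\mathbf A}:\Sigma_N\times\mathbb S^1\to\Sigma_N\times\mathbb S^1,\qquad F_{\mathbf A}(\xi,x)=\bigl(\sigma(\xi),\bar A_{\xi_0}(x)\bigr),
\]
where $\bar A_i$ is the circle diffeomorphism induced by $A_i$; that $F_{\mathbf A}\in\operatorname{SP}^1_{\mathrm{shyp}}(\Sigma_N\times\mathbb S^1)$; that the fibre Lyapunov exponent is related to the top Lyapunov exponent of the cocycle by the affine change of variables $\chi=-2\lambda_1$ (the factor $2$ being the one appearing in the projective action); and that consequently $a_{\min}(F_{\mathbf A})=-2\,a_{\max}(\mathbf A)$, so that $a\mapsto -2a$ carries $(0,a_{\max}(\mathbf A))$ onto $(a_{\min}(F_{\mathbf A}),0)$.

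The heart of the reduction is a dictionary between invariant measures. For $a>0$, an $F_{\mathbf A}$-ergodic measure $\mu$ with $\chi(\mu)=-2a<0$ must be a $u$-state, i.e.\ the graph of the measurable Oseledets section of the expanding direction of $\mathbf A$ over $\hat\nu:=\pi_*\mu$, where $\pi:\Sigma_N\times\mathbb S^1\to\Sigma_N$ is the projection; hence $\pi$ is an isomorphism of $(\Sigma_N\times\mathbb S^1,\mu,F_{\mathbf A})$ onto $(\Sigma_N,\hat\nu,\sigma)$ mod measure, so $h_\mu(F_{\mathbf A})=h_{\hat\nu}(\sigma)$ and $\lambda_1(\mathbf A,\hat\nu)=-\chi(\mu)/2=a$. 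Since $\lambda_1(\mathbf A,\cdot)$ (which only reads the forward coordinates) and metric entropy (natural extension) are preserved by the factor map $\Sigma_N\to\Sigma_N^+$, pushing forward gives an ergodic $\nu:=(\pi^+)_*\hat\nu\in\mathcal M^e_{\sigma^+}(\Sigma_N^+)$ with $h_\nu(\sigma^+)=h_\mu(F_{\mathbf A})$ and $\lambda_1(\mathbf A,\nu)=a$; conversely every $\nu\in\mathcal M^e_{\sigma^+}(\Sigma_N^+)$ with $\lambda_1(\mathbf A,\nu)=a$ arises in this way from the $u$-lift of its natural extension. Combining this with Theorems \ref{Thm-skew} and \ref{Thm-cocycle} yields
\begin{align*}
h_{top}\bigl(\mathcal L(-2a)\bigr)&=\sup\{h_\mu(F_{\mathbf A}):\mu\in\mathcal M^e_{F_{\mathbf A}},\ \chi(\mu)=-2a\}\\
&=\sup\{h_\nu(\sigma^+):\nu\in\mathcal M^e_{\sigma^+},\ \lambda_1(\mathbf A,\nu)=a\}=h_{top}\bigl(\mathcal L_{\mathbf A}^+(a)\bigr).
\end{align*}

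Granting the dictionary, the deduction is immediate. Fix $a\in(0,a_{\max})$ and $0\le h<h_{top}(\mathcal L_{\mathbf A}^+(a))$, and set $a'=-2a$. By the first paragraph $a'\in(a_{\min}(F_{\mathbf A}),0)$, an admissible value for Theorem \ref{maintheorem-skew}; and by the displayed identity together with Theorem \ref{Thm-skew}, $h<h_{top}(\mathcal L(a'))=\sup\{h_\mu(F_{\mathbf A}):\chi(\mu)=a'\}=H(F_{\mathbf A},\chi,a')$. Theorem \ref{maintheorem-skew} then supplies an $F_{\mathbf A}$-ergodic measure $\mu_{a',h}$ with $\chi(\mu_{a',h})=a'$ and $h_{\mu_{a',h}}(F_{\mathbf A})=h$. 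Running $\mu_{a',h}$ through the dictionary produces $\nu_{a,h}:=(\pi^+)_*\pi_*\mu_{a',h}\in\mathcal M^e_{\sigma^+}(\Sigma_N^+)$ with $\lambda_1(\mathbf A,\nu_{a,h})=a$ and $h_{\nu_{a,h}}(\sigma^+)=h$. This gives $[0,h_{top}(\mathcal L_{\mathbf A}^+(a)))\subset\{h_\nu(\sigma^+):\nu\in\mathcal M^e_{\sigma^+}(\Sigma_N^+),\ \lambda_1(\mathbf A,\nu)=a\}$, while the reverse inclusion into $[0,h_{top}(\mathcal L_{\mathbf A}^+(a))]$ is exactly the variational identity of Theorem \ref{Thm-cocycle}.

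The whole difficulty is contained in the measure dictionary, and it has two parts. The first is that $F_{\mathbf A}$ genuinely lies in $\operatorname{SP}^1_{\mathrm{shyp}}(\Sigma_N\times\mathbb S^1)$ and that $a_{\min}(F_{\mathbf A})=-2a_{\max}(\mathbf A)$; this is precisely what the (somewhat delicate) definitions and structural lemmas of \cite{DGM2019,DGM2022} behind the choice of $\mathfrak E_{N,\mathrm{shyp}}$ are designed to provide, so I would quote it rather than reprove it. The second is the claim that every $F_{\mathbf A}$-ergodic measure with negative fibre exponent is a $u$-state, hence the graph of an Oseledets section, which is what makes $\pi$ an isomorphism mod measure and forces $h_\mu(F_{\mathbf A})=h_{\pi_*\mu}(\sigma)$; this rests on the standard theory of invariant measures of linear cocycles on projective bundles, and here one must keep careful track of the factor $2$ relating $\chi$ and $\lambda_1$ and of the passage between the one-sided shift $\sigma^+$ and its natural extension $\sigma$. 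Everything else is a purely formal transfer through Theorems \ref{maintheorem-skew}, \ref{Thm-skew} and \ref{Thm-cocycle}.
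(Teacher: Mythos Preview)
Your reduction is correct and follows the same overall scheme as the paper: realise $\mathbf A$ as a projective step skew-product $F_{\mathbf A}\in\operatorname{SP}^1_{\mathrm{shyp}}(\Sigma_N\times\mathbb S^1)$, apply Theorem~\ref{maintheorem-skew} to obtain an $F_{\mathbf A}$-ergodic measure with prescribed fibre exponent and entropy, then push it down to $\Sigma_N^+$ and check that the exponent and entropy are preserved. The one substantive difference is the \emph{sign}: you work on the negative side $\chi=-2a\in(a_{\min},0)$ and invoke the $u$-state structure (ergodic measures with negative fibre exponent sit on the graph of the Oseledets expanding section), whereas the paper works on the positive side $\chi=+2a\in(0,a_{\max})$ and quotes the identity $h_{top}(\mathcal L^+_{\mathbf A}(a))=h_{top}(\mathcal L(2a))$ from \cite[Theorem~5]{DGM2019}. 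By the $s$/$u$-state duality of projective $\operatorname{SL}(2,\mathbb R)$ cocycles the two choices are equivalent, so your argument goes through.

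The paper's sign choice is marginally more economical for one reason worth noting. With $\chi=+2a$, Lemma~\ref{Lem-cocycle} (i.e.\ \cite[Proposition~11.5]{DGM2019}) says that $\chi^+(\xi^+,v)=+2a$ occurs at the \emph{single} direction $v_0(\xi^+)$, which already depends only on the one-sided sequence $\xi^+$; this gives the graph structure over $\Sigma_N^+$ directly and yields $\lambda_1(\mathbf A,\nu_{a,h})=a$ and $h_{\nu_{a,h}}(\sigma^+)=h_{\mu_{a,h}}(F_{\mathbf A})$ in one step (the latter cited from \cite[p.~82]{DGM2022}). On your side, $\chi^+(\xi^+,v)=-2a$ holds for \emph{every} $v\neq v_0(\xi^+)$, so the graph structure is not visible from the one-sided picture alone; you must pass through the two-sided system, invoke the standard dichotomy that ergodic invariant measures on the projective bundle with simple spectrum are supported on one of the two Oseledets graphs, and then descend via the natural extension. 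This is all legitimate, but it is an extra layer of citation compared with the paper's route.
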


\subsubsection{Robustly non-hyperbolic transitive diffeomorphisms}
In \cite{YZ2020} Yang and Zhang study a rich family of robustly non-hyperbolic transitive diffeomorphisms and we show that each ergodic measure is approached by hyperbolic sets in weak*-topology and in entropy. 
A diffeomorphism $f$ on a smooth closed Riemannian manifold $M$ is said to be partially hyperbolic, if there exist an invariant splitting $T M=E^s \oplus E^c \oplus E^u$ and a metric $\|\cdot\|$ such that for each $x \in M$, one has
$$
\left\|\left.D f\right|_{E^s(x)}\right\|<\min \left\{1, m\left(\left.D f\right|_{E^c(x)}\right)\right\} \leqslant \max \left\{1,\left\|\left.D f\right|_{E^c(x)}\right\|\right\}<m\left(\left.D f\right|_{E^u(x)}\right) .
$$
Consider the set $\mathcal{U}(M)$ of all $C^1$ partially hyperbolic diffeomorphisms on $M$ satisfying that for each $f \in \mathcal{U}(M)$, one has:
\begin{enumerate}
	\item $f$ is partially hyperbolic with one-dimensional center bundle;
	\item $f$ has hyperbolic periodic points of different indices;
	\item the strong stable and unstable foliations are robustly minimal.
\end{enumerate}
By definition, $\mathcal{U}(M)$ is an open set. Yang and Zhang proved that there exists a $C^1$ open and dense subset $\mathcal{V}(M)$ of $\mathcal{U}(M)$ such that for any $f \in \mathcal{V}(M)$, each $f$-ergodic measure $\mu$ is approached by hyperbolic sets in weak*-topology and in entropy. 
Given $f\in \mathcal{V}(M)$ and  $x\in M$, consider the center Lyapunov exponent of $x$
$$
\chi(x) \stackrel{\text { def }}{=} \lim _{n \rightarrow \pm \infty} \frac{1}{n} \log ||Df^n|_{E^c(x)}||,
$$
where we assume that both limits $n \rightarrow \pm \infty$ exist and coincide.  Given $\alpha \in \mathbb{R}$ let
$$
\mathcal{L}(\alpha) \stackrel{\text { def }}{=}\left\{x\in M: \chi(x)=\alpha\right\}.
$$
Given an $f$-invariant measure $\mu$, denote by $\chi(\mu)$ the Lyapunov exponent of $\mu$ defined by
$$
\chi(\mu) \stackrel{\text { def }}{=} \int \log||Df^n|_{E^c(x)}||d\mu.
$$
Following the argument of Theorem \ref{maintheorem-skew}, we have the following result. 
\begin{maintheorem}\label{maintheorem-robust}
	There exists a $C^1$ open and dense subset $\mathcal{V}(M)$ of $\mathcal{U}(M)$ such that for  any $f \in \mathcal{V}(M)$, there are numbers $a_{\min }<0<a_{\max }$ such that for every $a \in\left(a_{\min }, 0\right) \cup\left(0, a_{\max }\right)$ and every $0\leq h<\sup \{h_\mu(f): \mu \in \mathcal{M}_{f}^e(M),\chi(\mu)=a\},$ there exists $\mu\in  \mathcal{M}_{f}^e(M)$ such that $\chi(\mu)=a$ and $h_\mu(f)=h,$  that is,
	$$
	[0,H(f,\chi,a))\subset  \{h_\mu(f): \mu \in \mathcal{M}_{f}^e(M), \chi(\mu)=a\} \subset [0,H(f,\chi,a)],
	$$
	where $H(f,\chi,a)=\sup \{h_\mu(f): \mu \in \mathcal{M}_{f}^e(M),\chi(\mu)=a\}.$ Moreover, if $h_{top}(\mathcal{L}(a))=\sup \{h_\mu(f): \mu \in \mathcal{M}_{f}^e(M),\chi(\mu)=a\},$ then we have
	$$
	[0,h_{top}(\mathcal{L}(a)))\subset  \{h_\mu(f): \mu \in \mathcal{M}_{f}^e(M), \chi(\mu)=a\} \subset [0,h_{top}(\mathcal{L}(a)].
	$$
\end{maintheorem}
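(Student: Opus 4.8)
The plan is to deduce Theorem~\ref{maintheorem-robust} from the general framework Theorem~\ref{thm-continuous-2}, exactly as Theorem~\ref{maintheorem-skew} was. Fix the open and dense subset $\mathcal{V}(M)\subseteq\mathcal{U}(M)$ given by Yang and Zhang \cite{YZ2020} and take $f\in\mathcal{V}(M)$. Since $f$ has hyperbolic periodic points of two different indices and $\dim E^c=1$, choose hyperbolic periodic points $p^-$ with $\chi(p^-)<0$ (center contracting) and $p^+$ with $\chi(p^+)>0$ (center expanding). Put $\varphi(x):=\log\|Df|_{E^c(x)}\|$; the center bundle of a partially hyperbolic diffeomorphism is continuous, so $\varphi\in C(M)$ and $\mathcal{P}_\varphi(\mu)=\chi(\mu)$ for all $\mu\in\mathcal{M}_f(M)$. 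Moreover, partially hyperbolic diffeomorphisms with one-dimensional center bundle are entropy-expansive (Díaz, Fisher, Pacifico and Vieitez), so $\mu\mapsto h_\mu(f)$ is upper semi-continuous on $\mathcal{M}_f(M)$, hence on $\mathcal{M}_{horse}(p^-)$ and $\mathcal{M}_{horse}(p^+)$. Thus the hypotheses of Theorem~\ref{thm-continuous-2} will hold for $(f,\varphi,p^-)$ and $(f,\varphi,p^+)$ as soon as the relevant interiors are nonempty.

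Set $a_{\min}:=\inf\{\chi(\mu):\mu\in\mathcal{M}_f^e(M)\}$ and $a_{\max}:=\sup\{\chi(\mu):\mu\in\mathcal{M}_f^e(M)\}$; these are finite, and $a_{\min}\le\chi(p^-)<0<\chi(p^+)\le a_{\max}$. The key step is to show
$$
(a_{\min},0)\subseteq\mathrm{Int}\big(\mathcal{P}_\varphi(\mathcal{M}_{horse}(p^-))\big),\qquad (0,a_{\max})\subseteq\mathrm{Int}\big(\mathcal{P}_\varphi(\mathcal{M}_{horse}(p^+))\big),
$$
and that, for $a\in(a_{\min},0)$ (resp.\ $a\in(0,a_{\max})$),
$$
\sup\mathcal{E}_f\big(\mathcal{P}_\varphi^{-1}(a)\cap\mathcal{M}_{horse}(p^-)\big)\ \ge\ H(f,\chi,a)
$$
(resp.\ with $p^+$). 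For this I would use that, by \cite{YZ2020}, every ergodic measure is a weak* limit of measures carried by transitive locally maximal hyperbolic sets with simultaneous convergence of entropy; when the ergodic measure has negative (resp.\ positive) center exponent these hyperbolic sets have the index of $p^-$ (resp.\ $p^+$), and the robust minimality of the strong stable and strong unstable foliations makes all hyperbolic periodic points of a fixed index homoclinically related, so the approximating hyperbolic sets may be taken homoclinically related to $p^-$ (resp.\ $p^+$); hence the ergodic measure lies in $\mathcal{M}_{horse}(p^-)$ (resp.\ $\mathcal{M}_{horse}(p^+)$). Since two $p^-$-horseshoes can be amalgamated into a single one through a common homoclinic relation and both entropy and the $\varphi$-integral are affine, $\mathcal{M}_{horse}(p^-)$ is convex, so $\mathcal{P}_\varphi(\mathcal{M}_{horse}(p^-))$ is an interval; it contains $\chi(p^-)$, it contains a measure realising $a_{\min}$ (by the previous sentence, since $a_{\min}<0$), and, because the nonhyperbolicity of $f$ produces $p^-$-horseshoes with center exponent arbitrarily close to $0^-$, it contains points arbitrarily close to $0$. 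This gives the first inclusion; the entropy inequality follows by taking, for an ergodic $\mu$ with $\chi(\mu)=a$, entropy-approximating $p^-$-horseshoe measures $\mu_n$ (so $\chi(\mu_n)\to a$) and correcting the exponent back to exactly $a$ by a convex combination of tiny weight with a suitable element of $\mathcal{M}_{horse}(p^-)$, which costs only $\varepsilon$ in entropy. The $p^+$ case is symmetric.

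With these facts the proof closes immediately. Given $a\in(a_{\min},0)$ and $0\le h<H(f,\chi,a)$, pick $\mu\in\mathcal{P}_\varphi^{-1}(a)\cap\mathcal{M}_{horse}(p^-)$ with $h_\mu(f)>h$; Theorem~\ref{thm-continuous-2}(II) produces ergodic measures in $\mathcal{P}_\varphi^{-1}(a)\cap\mathcal{M}_{horse}^e(p^-)\subseteq\mathcal{M}_f^e(M)$ with center exponent $a$ and entropy arbitrarily close to $h$, and Theorem~\ref{thm-continuous-2}(III), applicable since $h<H(f,\chi,a)\le\sup\mathcal{E}_f(\mathcal{P}_\varphi^{-1}(a)\cap\mathcal{M}_{horse}(p^-))$, upgrades this (residuality forces nonemptiness) to an ergodic $\mu_{a,h}$ with $\chi(\mu_{a,h})=a$ and $h_{\mu_{a,h}}(f)=h$ exactly. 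The case $a\in(0,a_{\max})$ is identical with $p^+$. This yields $[0,H(f,\chi,a))\subseteq\{h_\mu(f):\mu\in\mathcal{M}_f^e(M),\chi(\mu)=a\}$; the reverse inclusion into $[0,H(f,\chi,a)]$ is immediate from the definition of $H$, and the ``moreover'' clause is the substitution $h_{top}(\mathcal{L}(a))=H(f,\chi,a)$.

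The main obstacle is the key step of the second paragraph: bridging the Yang--Zhang approximation, whose hyperbolic sets are not a priori attached to one periodic orbit, with the rigid object $\mathcal{M}_{horse}(p^\pm)$. One must verify that robust minimality of the strong foliations really does place all the relevant horseshoes into a single homoclinic class (possibly after replacing $\mathcal{V}(M)$ by a still open and dense subset) and that convex corrections stay inside $\mathcal{M}_{horse}(p^\pm)$ while simultaneously controlling $\chi$ and the entropy.
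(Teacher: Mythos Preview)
Your proposal is correct and follows essentially the same approach as the paper: the authors' proof consists of the single remark that Theorem~\ref{maintheorem-robust} is obtained by repeating the argument for Theorem~\ref{maintheorem-skew} verbatim, replacing the horseshoe approximation of \cite{DGM2017} by Yang--Zhang's \cite[Theorem~1.1]{YZ2020}, and your write-up is a faithful unpacking of exactly that. The ``obstacle'' you flag in your last paragraph (placing the Yang--Zhang horseshoes into a single homoclinic class via minimality of the strong foliations, then using convexity of $\mathcal{M}_{horse}(p^\pm)$) is precisely the content the paper sweeps under ``same method'', and it is handled by the analogues of Lemmas~\ref{Lem-skew-horseshoe} and~\ref{Lem-skew-bridge} available in the Yang--Zhang setting.
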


\subsection{'Multi-horseshoe' entropy-dense property}
Now, we present the key points in the proof of Theorems \ref{thm-continuous} and \ref{thm-continuous-2}. Previous research from the perspective of topology often involves constructing periodic or ergodic measures. When considering from the perspective of entropy, a useful method is to construct a horseshoe or an invariant set with large entropy. When considering from the perspective of multifractal analysis, one idea is to construct a sequence of periodic measures. However, when combining these three perspectives, the difficulty lies in finding an ergodic measure that is  close to the given invariant measure from the perspectives of topology and entropy, and has the same integral as the given invariant measure. Unfortunately, none of the aforementioned methods can satisfy all three requirements simultaneously.
To overcome this difficulty, we propose using the conditional variational principle locally to find ergodic measures. To do so, we establish the 'multi-horseshoe' entropy-dense property, as stated in Theorem \ref{Mainlemma-convex-by-horseshoe} and \ref{def-strong-basic-2}. By using this property, we can use the conditional variational principle locally to find ergodic measures that satisfy all three requirements, ultimately leading to the proof of our theorems.

We give the precise statement of the 'multi-horseshoe' entropy-dense property. This property has its  independent significance. We believe it is  a powerful tool and potentially has plenty of applications in other problems.

For any $m\in\N$ and $\{\nu_i\}_{i=1}^m \subseteq \mathcal{M}(X)$,   we write $\cov\{\nu_i\}_{i=1}^m$ for the convex combination of $\{\nu_i\}_{i=1}^m$,   namely,
$$\cov\{\nu_i\}_{i=1}^m=\cov(\nu_1,\cdots,\nu_m):=\left\{\sum_{i=1}^mt_i\nu_i:t_i\in[0,  1],  1\leq i\leq m~\textrm{and}~\sum_{i=1}^mt_i=1\right\}.$$ We denote the \textit{Hausdorff distance} between two nonempty subsets of $\mathcal{M}(X),$ $A$ and $B,$  by $$d_H(A,  B):=\max\set{\sup_{\mu\in A}\inf_{\nu\in B}\rho(\mu,  \nu),\sup_{\nu\in B}\inf_{\mu\in A}\rho(\nu, \mu)  }.$$
\begin{Def}\label{def-strong-basic-A}
	We say $(X,  f)$ satisfies the {\it 'multi-horseshoe' entropy-dense property} (abbrev. {\it 'multi-horseshoe' dense property}) if for any positive integer $m,$ any $f$-invariant measures $\{\mu_i\}_{i=1}^m\subseteq \mathcal{M}_f(X),$ any $x\in X$ and any $\eta,  \zeta>0$,   there exist compact invariant subsets $\Lambda_i\subseteq\Lambda\subsetneq X$ such that for each $1\leq i\leq m$
	\begin{enumerate}
		\item $(\Lambda_i,f)$ and $(\Lambda,f)$ conjugate to transitive two-sided subshifts of ﬁnite type (and thus they are transitive and topologically Anosov).
		\item $\htop(f,  \Lambda_i)>h_{\mu_i}(f)-\eta$. 
		\item $d_H(K,  \mathcal{M}_f(\Lambda))<\zeta$,   $d_H(\mu_i,  \mathcal{M}_f(\Lambda_i))<\zeta$, where $K=\cov\{\mu_i\}_{i=1}^m.$
	\end{enumerate}
\end{Def}
\begin{Cor}\label{Cor-interior}
	Suppose $(X,  f)$ is a dynamical system. If $(X,  f)$ satisfies the 'multi-horseshoe' dense property, then for any continuous function $\varphi$ on $X,$ any $a\in \mathrm{Int}(\mathcal{P}_{\varphi}(\mathcal{M}_f(X))),$ any $\mu\in \mathcal{P}_{\varphi}^{-1}(a)$ and any $ \zeta>0$, there is a compact invariant subset $\Lambda\subset X$ such that
	$a\in  \mathrm{Int}(\mathcal{P}_{\varphi}(\mathcal{M}_f(\Lambda))),$ $\rho(\mu,  \nu)<\zeta$ for any $\nu\in \mathcal{M}_f(\Lambda),$ and $(\Lambda,f)$ conjugates to a transitive two-sided subshift of ﬁnite type.
\end{Cor}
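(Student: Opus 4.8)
The plan is to deduce the statement directly from the definition of the 'multi-horseshoe' dense property (Definition \ref{def-strong-basic-A}), applied to a two-element family of invariant measures obtained by perturbing $\mu$ in two directions straddling $a$. First I would use the hypothesis $a\in\mathrm{Int}(\mathcal{P}_\varphi(\mathcal{M}_f(X)))$ to fix $\mu^-,\mu^+\in\mathcal{M}_f(X)$ with $\int\varphi\,d\mu^-<a<\int\varphi\,d\mu^+$, and for a parameter $t\in(0,1)$ to be chosen set $\mu_1:=(1-t)\mu+t\mu^-$ and $\mu_2:=(1-t)\mu+t\mu^+$; these are $f$-invariant, satisfy $\int\varphi\,d\mu_1<a<\int\varphi\,d\mu_2$, and the whole convex hull $K:=\cov\{\mu_1,\mu_2\}=\{(1-t)\mu+t\nu':\nu'\in\cov\{\mu^-,\mu^+\}\}$ shrinks toward $\{\mu\}$ as $t\to0$. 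Since $\cov\{\mu^-,\mu^+\}$ is compact and $(t,\nu')\mapsto(1-t)\mu+t\nu'$ is continuous into $(\mathcal{M}(X),\rho)$, I can fix $t$ small enough that $\sup_{\kappa\in K}\rho(\kappa,\mu)<\zeta/2$.

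Once $t$ is fixed, the $\varphi$-gap $\delta_0:=t\min\{a-\int\varphi\,d\mu^-,\ \int\varphi\,d\mu^+-a\}>0$ is determined, so $\int\varphi\,d\mu_1\le a-\delta_0$ and $\int\varphi\,d\mu_2\ge a+\delta_0$. Using weak$^*$-continuity of $\nu\mapsto\int\varphi\,d\nu$, I would pick $\delta_1>0$ with $\rho(\nu,\nu')<\delta_1\Rightarrow|\int\varphi\,d\nu-\int\varphi\,d\nu'|<\delta_0/2$, and then set $\zeta':=\min\{\zeta/2,\delta_1\}$. Now apply Definition \ref{def-strong-basic-A} with $m=2$, the family $\{\mu_1,\mu_2\}$, any base point $x\in X$, $\eta:=1$ (the entropy bound (2) is not needed here), and tolerance $\zeta'$; this produces a compact invariant $\Lambda\subsetneq X$ that conjugates to a transitive two-sided subshift of finite type and satisfies $d_H(K,\mathcal{M}_f(\Lambda))<\zeta'$.

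It then remains to read off the three conclusions. The conjugacy to a transitive two-sided subshift of finite type is immediate. For any $\nu\in\mathcal{M}_f(\Lambda)$ the Hausdorff bound gives some $\kappa\in K$ with $\rho(\nu,\kappa)<\zeta'\le\zeta/2$, and $\rho(\kappa,\mu)<\zeta/2$ by the choice of $t$, so $\rho(\nu,\mu)<\zeta$, which is the second assertion. For the interior statement, since $\mu_1,\mu_2\in K$ the Hausdorff bound also yields $\nu_1,\nu_2\in\mathcal{M}_f(\Lambda)$ with $\rho(\nu_i,\mu_i)<\zeta'\le\delta_1$, hence $\int\varphi\,d\nu_1<a-\delta_0/2<a<a+\delta_0/2<\int\varphi\,d\nu_2$; as $\mathcal{M}_f(\Lambda)$ is convex and $\nu\mapsto\int\varphi\,d\nu$ is affine and continuous, $\mathcal{P}_\varphi(\mathcal{M}_f(\Lambda))$ is an interval containing the open interval $(\int\varphi\,d\nu_1,\int\varphi\,d\nu_2)\ni a$, so $a\in\mathrm{Int}(\mathcal{P}_\varphi(\mathcal{M}_f(\Lambda)))$.

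There is no deep obstacle; the point to watch is the order of the parameter choices. One must first decide how hard to pull $\mu_1,\mu_2$ toward $\mu$ (this fixes $t$, hence the $\varphi$-gap $\delta_0$), and only afterwards take the horseshoe-approximation tolerance $\zeta'$ small relative to $\delta_0$; reversing this order would risk the invariant measures on $\Lambda$ collapsing so close to $\mu$ that they no longer straddle $a$ under $\varphi$-integration. A minor formal point is that $\rho$ is used simultaneously as the weak$^*$ metric (closeness to $\mu$) and, through continuity of $\varphi$-integration, as a device controlling $\int\varphi\,d\nu$, which is precisely why the single tolerance $\zeta'$ is taken as a minimum of two quantities.
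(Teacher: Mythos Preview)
Your proof is correct and follows essentially the same route as the paper. In the paper the Corollary is stated as a special case of Lemma~\ref{Lem-interior}, whose proof proceeds exactly as you do: use Lemma~\ref{lemma-D} to produce measures $\mu_\xi$ close to $\mu$ with $\mathcal{P}_\varphi(\mu_\xi)$ straddling $a$ (your convex combinations $(1-t)\mu+t\mu^\pm$), then invoke the multi-horseshoe dense property with a tolerance chosen after the $\varphi$-gap is fixed, and finally use weak$^*$-continuity of $\nu\mapsto\int\varphi\,d\nu$ to see that $\mathcal{M}_f(\Lambda)$ still contains measures straddling $a$, hence $a\in\mathrm{Int}(\mathcal{P}_\varphi(\mathcal{M}_f(\Lambda)))$ (the paper phrases this last step via Corollary~\ref{corollary-AA}). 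The only cosmetic difference is that the paper, aiming at the stronger Lemma~\ref{Lem-interior} with its entropy clause, extracts the straddling measures $\nu_\xi$ from the sub-horseshoes $\Lambda_\xi$, whereas you read them off directly from the Hausdorff bound $d_H(K,\mathcal{M}_f(\Lambda))<\zeta'$; for the Corollary as stated your shortcut is entirely sufficient.
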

In Section \ref{Almost Additive} we give a more general result than Corollary \ref{Cor-interior}. See Lemma \ref{Lem-interior}.

\begin{maintheorem}\label{Mainlemma-convex-by-horseshoe-A}
	Suppose $(X,  f)$ is topologically Anosov and transitive.   Then $(X,  f)$ satisfies the 'multi-horseshoe' dense property.
\end{maintheorem}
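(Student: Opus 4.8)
The plan is to manufacture every subsystem in the statement from one common pool of finite ``coding blocks'' extracted from the ergodic components of the $\mu_i$, and then to realize bi-infinite concatenations of these blocks as genuine orbits by the shadowing property; the point is to arrange the pool so that \emph{free} concatenation of \emph{all} blocks recovers the whole simplex $K$ while \emph{proportion-constrained} concatenation of the blocks coming from a single $\mu_i$ recovers $\mu_i$ alone. Fix continuous functions $g_1,\dots,g_r$ so that $\rho$ is comparable to $\max_s|\int g_s\,d\mu-\int g_s\,d\nu|$ on the relevant scale. First I would reduce to ergodic data: using the ergodic decomposition $\mu_i=\int \nu\,d\tau_i(\nu)$ and the affinity of the entropy function (so $h_{\mu_i}(f)=\int h_\nu(f)\,d\tau_i(\nu)$, which is finite because expansiveness forces $\htop(f)<\infty$), partition $\mathcal M_f^e(X)$ into finitely many Borel sets of small $\rho$-diameter and choose representatives to obtain ergodic measures $\mu_{i1},\dots,\mu_{ik_i}$ and weights $\lambda_{ij}\ge0$, $\sum_j\lambda_{ij}=1$, with $\rho\big(\mu_i,\sum_j\lambda_{ij}\mu_{ij}\big)$ small and $\sum_j\lambda_{ij}h_{\mu_{ij}}(f)>h_{\mu_i}(f)-\eta/4$; after a further tiny perturbation we may assume the $\lambda_{ij}$ are rationals with a common denominator.

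Next I would build the blocks. For each ergodic $\mu_{ij}$, Katok's entropy formula (equivalently Bowen's identity $\htop(f,G_{\mu_{ij}})=h_{\mu_{ij}}(f)$, see \cite{Katok,Bowen1973}) provides, for a fixed $\varepsilon$ below the expansive constant and all large $T$, an $(\varepsilon,T)$-separated set $E_{ij}(T)$ of points whose length-$T$ empirical measures lie $\rho$-close to $\mu_{ij}$ and whose cardinality is at least $e^{T(h_{\mu_{ij}}(f)-\eta/8)}$. Pick a large common length $N$, split it as $N\approx\sum_j\lambda_{ij}T_{ij}$, and define a \emph{master block of type $i$} to be an orbit segment obtained by concatenating (in order, over $j$) one such separated orbit segment of length $T_{ij}$ for each $j$, the pieces welded together by fixed bounded-length connecting segments supplied by transitivity plus shadowing — e.g.\ all connectors routed through one fixed periodic orbit, so that they have a single common length $L$ compatible with the cyclic structure of $(X,f)$ (trivial if $X$ is connected, in which case $(X,f)$ is mixing). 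Let $\mathcal S_i$ be the set of all master blocks of type $i$ and $\mathcal S=\bigcup_i\mathcal S_i$. Then: distinct master blocks are $(\varepsilon',N)$-separated; each member of $\mathcal S_i$ has length-$N$ empirical measure $\rho$-close to $\mu_i$ (the connector contribution being negligible for $N$ large); and $|\mathcal S_i|=\prod_j|E_{ij}(T_{ij})|\ge e^{N(h_{\mu_i}(f)-\eta/2)}$.

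Now I would assemble the systems. Consider all bi-infinite pseudo-orbits obtained by freely stringing together master blocks from $\mathcal S$ with the connector inserted after each; by shadowing each such pseudo-orbit is $\varepsilon$-traced, and by expansiveness together with the $(\varepsilon',N)$-separation the tracing point is unique and depends injectively and continuously on the symbol sequence (this is exactly the mechanism underlying the horseshoe and entropy-density constructions of \cite{EKW,PS2005,LiOpro2018}). Let $\Lambda$ be the resulting compact invariant set and $\Lambda_i$ the analogous set using only blocks from $\mathcal S_i$, so $\Lambda_i\subseteq\Lambda$. The coding realizes $(\Lambda,f)$ and $(\Lambda_i,f)$ as conjugate to the full shifts on $|\mathcal S|$, respectively $|\mathcal S_i|$, symbols ``geared'' by words of length $N+L$ — honest transitive two-sided subshifts of finite type, giving item (1). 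Since $L$ is bounded while $N$ is huge, $\htop(f,\Lambda_i)=\tfrac1{N+L}\log|\mathcal S_i|>h_{\mu_i}(f)-\eta$, giving item (2). For the measure estimates: any $y\in\Lambda_i$ has all its orbital empirical measures asymptotically equal to averages of length-$N$ type-$i$ master-block empiricals, hence $\rho$-close to $\mu_i$; since $\mathcal M_f(\Lambda_i)$ lies in the closed convex hull of the limit points of such empiricals, $d_H(\mu_i,\mathcal M_f(\Lambda_i))<\zeta$. Likewise every $\nu\in\mathcal M_f(\Lambda)$ is $\rho$-close to a convex combination $\sum_i s_i\mu_i\in K$, where $s_i$ is the asymptotic frequency of type-$i$ master blocks, and conversely any $\sum_i s_i\mu_i\in K$ is $\rho$-close to $\sum_i s_i\nu_i\in\mathcal M_f(\Lambda)$ with $\nu_i\in\mathcal M_f(\Lambda_i)$ arbitrary; hence $d_H(K,\mathcal M_f(\Lambda))<\zeta$, which is item (3). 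Finally $\Lambda\subsetneq X$: when $X$ is not zero-dimensional this is automatic since $\Lambda$ is conjugate to an SFT, and otherwise one shrinks $\eta$ so that $\htop(f,\Lambda)<\htop(f,X)$, using that a transitive SFT always admits proper transitive sub-SFTs of entropy arbitrarily close to its own.

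The hard part will be the triple bookkeeping inside the block construction: (i) keeping $\htop(f,\Lambda_i)$ close to $h_{\mu_i}(f)$ forces \emph{many} master blocks; (ii) forcing \emph{every} measure in $\mathcal M_f(\Lambda_i)$ — not merely one — to be close to $\mu_i$ forces the ergodic-component proportions $\lambda_{ij}$ to be hard-wired into each individual master block when $\mu_i$ is not ergodic; and (iii) making $\mathcal M_f(\Lambda)$ sweep out all of $K$ forces completely unconstrained mixing \emph{between} types. The master-block device — rigid inside a block, free between blocks — is what reconciles (ii) and (iii); ensuring that the connectors, the cyclic structure of $X$, and the $(\varepsilon',N)$-separation conspire to make the combinatorial full shift on master blocks into an honest transitive SFT embedded in $X$ (rather than merely a subshift or a sofic system) is the remaining technical core, handled by the expansiveness-plus-shadowing package.
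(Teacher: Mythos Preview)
Your outline is correct and would work, but the paper takes a shorter route that sidesteps both of the places where you do the most work.

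\textbf{Avoiding ergodic decomposition.} You first break each $\mu_i$ into ergodic pieces $\mu_{ij}$ with rational weights $\lambda_{ij}$ and then hard-wire those weights into every master block of type $i$. The paper never does this: it invokes a lemma (their Lemma~\ref{Maincor-mu-gamma-n-expansive}, quoted from \cite{DT}) which, for an arbitrary $\mu\in\mathcal M_f(X)$ and any neighbourhood $F_\mu$, directly produces an $(n,\tfrac{\varepsilon^*}{3})$-separated set of \emph{periodic} points $\Gamma_n^{\mu}\subset X_{n,F_\mu}\cap B(x,\delta/2)\cap P_n(f)$ with $\frac{1}{n}\log|\Gamma_n^{\mu}|>h_\mu(f)-\eta/8$. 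So the building blocks for type $i$ are simply the length-$n$ orbit segments of points in $\Gamma_n^{\nu_i}$, with no internal structure and no proportion bookkeeping. This makes the master blocks much simpler: $\mathfrak C_{\underline y}=\mathfrak C_{x_0}^n\mathfrak C_{x_0}^n\mathfrak C_{y_1}^n\cdots\mathfrak C_{y_{l-2}}^n$ with each $y_j\in\widetilde\Gamma_n^{\nu_i}$ (for $\Lambda_i$) or $y_j\in\bigcup_i\widetilde\Gamma_n^{\nu_i}$ (for $\Lambda$). Your route has the advantage of being self-contained (no reliance on the external lemma), at the cost of the proportion-hard-wiring machinery.

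\textbf{The disjointness/SFT step.} You correctly identify as ``the remaining technical core'' the problem of making $(\Lambda,f)$---not just $(\Lambda,f^{N+L})$---conjugate to a transitive SFT, which requires the shifts $f^k(\Delta)$, $0\le k<N+L$, to be pairwise disjoint. Your suggestion of routing connectors through a fixed periodic orbit is exactly the right idea, but the paper implements it in a specific way that you should be aware of: it chooses a periodic point $x_0$ of \emph{minimal} period $n$ (obtained by counting: most of $\Gamma_n^{\nu_0}$ must have minimal period $>n/2$), discards from each $\Gamma_n^{\nu_i}$ the at most $n$ points that are $(\varepsilon^*/9,n)$-close to some iterate $f^j(x_0)$, and then prefixes every master block by \emph{two} copies of $\mathfrak C_{x_0}^n$. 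The double copy, the minimal period, and the enforced separation from $\mathrm{orb}(x_0)$ are exactly what is needed for a clean four-case analysis showing $f^k(\Delta)\cap f^{k'}(\Delta)=\emptyset$ for $0\le k<k'<nl$. Your ``single connector'' formulation would need the same kind of care to go through.

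Finally, for $\Lambda\subsetneq X$ the paper simply chooses $\zeta<d_H(K,\mathcal M_f(X))$ at the outset (noting $K\ne\mathcal M_f(X)$ since $K\cap\mathcal M_f^e(X)$ is finite while $\mathcal M_f^e(X)$ is infinite), so the Hausdorff estimate in item (3) already forces $\Lambda\ne X$; no separate argument is needed.
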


\subsection{Intermediate Hausdorff  dimension of ergodic measures}
Given a Borel probability measure $\mu$ on a compact Riemannian manifold $M$, the Hausdorff dimension of the measure $\mu$ is defined as
$$
\dim_H \mu=\inf \left\{\dim_H Y: Y \subset M \text{ and } \mu(Y)=1\right\},
$$
where $\dim_H Y $ is the Hausdorff dimension of $Y.$
There are very different properties between  Hausdorff dimension and entropy. When the entropy map $\mu\to h_{\mu}(f)$ of $(M,f)$ is upper semi-continuous, there exists $\mu_{\max}\in \mathcal{M}_f(M)$ such that  $h_{\mu_{\max}}(f)=\sup\{h_{\mu}(f):\mu\in\mathcal{M}_f(M)\}.$ 
However, the map $\mu \mapsto \operatorname{dim}_H \mu$ enjoys no continuity property even if the entropy map is upper semi-continuous.  So it is a question to prove the existence of measures of maximal dimension, that is, try to find an invariant measure that attains the supremum of the  quantity
$
\sup \left\{\operatorname{dim}_H \mu: \mu\in\mathcal{M}_f(M)\right\}.
$
For hyperbolic diffeomorphisms, Barreira and Wolf \cite{BarreiraWolf} show that if $f:M\to M$ is a diffeomorphism on a compact surface and $\Lambda$ is a topological mixing locally maximal hyperbolic set, then there exists an ergodic measure of maximal dimension, see \cite[Ch.5]{Barreira2008} for the hyperbolic conformal case. In \cite{Rams2005}, Rams gave the existence of a measure of maximal dimension for piecewise linear horseshoe maps. In \cite{Wolf}, Wolf proved that there exist ﬁnitely many measures of maximal dimension for polynomial automorphisms of $C^2$.
Recently, Chen, Luo, and Zhao \cite[Theorem C]{CLZ2018}  show that there exists an ergodic measure of maximal dimension if $f: M \mapsto M$ is a $C^{1+\alpha}$ diffeomorphism on a compact Riemannian manifold $M$ and  $\Lambda\subset M$ is a mixing locally maximal hyperbolic set which is  average conformal.
From their result, one has 
\begin{equation}\label{max-dim}
	\max \left\{\operatorname{dim}_H \mu: \mu\in\mathcal{M}_f^e(M)\right\}=\max \left\{\operatorname{dim}_H \mu: \mu\in\mathcal{M}_f(M)\right\}.
\end{equation}
It is natural to raise the following question on  intermediate Hausdorff  dimension of ergodic measures:
\begin{mainquestion}\label{Conjecture-3}
	For every typical diffeomorphism $f$ on a compact Riemannian manifold $M$,  whether one has
	$$
	 \left\{\operatorname{dim}_H \mu: \mu\in\mathcal{M}_f^e(M)\right\}= \left\{\operatorname{dim}_H \mu: \mu\in\mathcal{M}_f(M)\right\}?
	$$
\end{mainquestion}

Using our result on multiple functions (Theorem \ref{thm-almost}), we answer Question \ref{Conjecture-3} for average conformal Anosov diffeomorphisms and diffeomorphisms with hyperbolic ergodic measures. 

Given  $(X,f),$ $d \in \mathbb{N}$ and $\Phi_d=\{\varphi_i\}_{i=1}^{d}\subset C(X)$.  Denote $\mathcal{P}_{\Phi_d}(\mu)=(\int \varphi d\mu,\dots,\int \varphi_d d\mu)$ for any $\mu\in\mathcal{M}_f(X).$  Then  $\mathcal{P}_{\Phi_d}(\mathcal{M}_f(X))$ is a convex compact subset of $\mathbb{R}^d.$  Denote the relative interior of a set $C$ by $\mathrm { relint } (C)$ (see definition in Section \ref{sec-convex}).

\begin{maintheorem}\label{thm-inter-huasdorff}
	Let $f: M \mapsto M$ be a $C^{1}$ diffeomorphism on a compact Riemannian manifold $M$ such that $M$ is  average conformal. 
	\begin{enumerate}
		\item [(1)] If $f: M \mapsto M$ is a transitive  Anosov diffeomorphism, then $$[0,\sup\limits_{\mu\in \mathcal{M}_f^e(M)}\dim_H\mu)\subset \{\dim_H\mu:\mu\in \mathcal{M}_f^e(M)\}\subset  [0,\sup\limits_{\mu\in \mathcal{M}_f^e(M)}\dim_H\mu].$$
		Moreover, given $d\in \mathbb{N}$ and $\Phi_{d}=\{\varphi_i\}_{i=1}^{d}\subset C(M)$, then for any $a\in \mathrm { relint } (\mathcal{P}_{\Phi_{d}}(\mathcal{M}_f(M))),$ we have $\sup\limits_{\mu\in \mathcal{M}_f^e(M)\cap   \mathcal{P}_{\Phi_{d}}^{-1}(a)}\dim_H\mu>0$ and $$[0,\sup\limits_{\mu\in \mathcal{M}_f^e(M)\cap   \mathcal{P}_{\Phi_{d}}^{-1}(a)}\dim_H\mu)\subset \{\dim_H\mu:\mu\in \mathcal{M}_f^e(M)\cap   \mathcal{P}_{\Phi_{d}}^{-1}(a)\}\subset  [0,\sup\limits_{\mu\in \mathcal{M}_f^e(M)\cap   \mathcal{P}_{\Phi_{d}}^{-1}(a)}\dim_H\mu].$$ 
		\item [(2)] If $f: M \mapsto M$ is a $C^{1+\alpha}$  transitive Anosov diffeomorphism, then $$\{\operatorname{dim}_H \mu: \mu\in\mathcal{M}_f^e(M)\}= \left\{\operatorname{dim}_H \mu: \mu\in\mathcal{M}_f(M)\right\}.$$
		\item [(3)] Let $\nu$ be an $f$-invariant ergodic hyperbolic measure. If $f$ is $C^{1+\alpha}$ for some $0<\alpha<1$ or the Oseledec splitting of $\nu$ is dominated,   then $$\{\operatorname{dim}_H \mu: \mu\in\mathcal{M}_f^e(M)\}\supset [0,\dim_H\nu].$$ 
		 In particular, if further $\dim_H\nu=\sup\limits_{\mu\in \mathcal{M}_f^e(M)}\dim_H\mu,$ then $$\{\operatorname{dim}_H \mu: \mu\in\mathcal{M}_f^e(M)\}= \left\{\operatorname{dim}_H \mu: \mu\in\mathcal{M}_f(M)\right\}.$$
	\end{enumerate}
\end{maintheorem}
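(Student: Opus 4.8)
The plan is to reduce the whole statement to the ``intermediate entropy on a prescribed level set'' phenomenon supplied by Theorem~\ref{thm-almost} (the multi-potential, asymptotically/almost additive generalization of Theorem~\ref{thm-continuous}), using a dimension formula to convert Hausdorff dimension into entropy once the Lyapunov data are frozen. The key point is that on a level set where the Lyapunov exponents are constant, Hausdorff dimension is a \emph{continuous, strictly increasing function of the metric entropy that vanishes at entropy zero}; hence filling in intermediate entropies fills in intermediate dimensions.

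\emph{Step 1 (dimension formula).} For ergodic $\mu$ one has $\dim_H\mu=D(h_\mu(f),\vec\lambda(\mu))$, where $\vec\lambda(\mu)$ is the ordered Lyapunov spectrum and $D$ is continuous, nondecreasing (indeed strictly increasing on hyperbolic measures) in the entropy variable, with $D(0,\cdot)=0$. When $M$ is average conformal, $\vec\lambda(\mu)=(\lambda^u(\mu),\lambda^s(\mu))$ with $\lambda^u(\mu)=\int\Psi^u\,d\mu>0$, $\lambda^s(\mu)=\int\Psi^s\,d\mu<0$ for \emph{asymptotically additive} sequences $\Psi^u,\Psi^s$ of continuous functions --- this is exactly what average conformality gives --- and $D(h,a,b)=h(1/a-1/b)$ is the dimension formula for average conformal Anosov diffeomorphisms. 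In the general hyperbolic setting (items (2)--(3)) the entries of $\vec\lambda(\mu)$ are read off from the integrals $c_k(\mu)=\lim_n\tfrac1n\int\log\|\wedge^kDf^n|_{E^u}\|\,d\mu$ and their stable analogues, which are affine in $\mu$ and, although only subadditive on $M$, become \emph{almost additive} once restricted to a locally maximal hyperbolic set with H\"older splitting; then $D$ is the Ledrappier--Young ``fill-up'' function, valid for $C^{1+\alpha}$ $f$, or for $C^1$ $f$ on hyperbolic sets carrying a dominated splitting refining $E^u\oplus E^s$.

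\emph{Step 2 (core argument, proving item (1)).} Fix $s$ with $0\le s<\sup\{\dim_H\mu:\mu\in\mathcal{M}_f^e(M)\}$; in the conditional version also impose $\mathcal{P}_{\Phi_d}(\mu)=a$ with $a\in\mathrm{relint}(\mathcal{P}_{\Phi_d}(\mathcal{M}_f(M)))$ and work with $\Psi'=(\Psi^u,\Psi^s,\varphi_1,\dots,\varphi_d)$. Pick an ergodic $\mu_1$ with $\dim_H\mu_1>s$ satisfying the constraint (in the conditional case, its existence and the positivity of $\sup_{\mathcal{M}_f^e\cap\mathcal{P}_{\Phi_d}^{-1}(a)}\dim_H$ follow by writing $a$ as a convex combination involving the measure of maximal entropy to obtain a positive-entropy measure in $\mathcal{P}_{\Phi_d}^{-1}(a)$, then applying Theorem~\ref{thm-almost}/Lemma~\ref{Lem-interior} and the formula of Step~1). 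Since the convex combination of a relative-interior point with any point of a convex set stays in the relative interior, and since entropy and the integrals $c_k$ are affine in $\mu$, I replace $\mu_1$ by $\nu_t=(1-t)\mu_1+t\mu^\dagger$ for small $t>0$, with $\mu^\dagger$ chosen so that $\mathcal{P}_{\Psi'}(\mu^\dagger)$ lies in $\mathrm{relint}(\mathcal{P}_{\Psi'}(\mathcal{M}_f(M)))$ and $\mathcal{P}_{\Phi_d}(\mu^\dagger)=a$ (possible because an affine projection carries relative interior onto relative interior); then $\mathcal{P}_{\Psi'}(\nu_t)$ is in the relative interior while $h_{\nu_t}(f)$ and $\vec\lambda(\nu_t)$ converge to those of $\mu_1$, so $D(h_{\nu_t}(f),\vec\lambda(\nu_t))>s$ for $t$ small. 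On $L=\mathcal{P}_{\Psi'}^{-1}(\mathcal{P}_{\Psi'}(\nu_t))$ every ergodic measure has spectrum $\vec\lambda_0:=\vec\lambda(\nu_t)$, so $\dim_H\mu=D(h_\mu(f),\vec\lambda_0)$ there; as $h_{\nu_t}(f)\le\sup\mathcal{E}_f(L)$ and $D(\cdot,\vec\lambda_0)$ is a continuous increasing map of $[0,\sup\mathcal{E}_f(L))$ onto a half-open interval containing $s$, there is $h^\ast<\sup\mathcal{E}_f(L)$ with $D(h^\ast,\vec\lambda_0)=s$, and Theorem~\ref{thm-almost} (the Theorem~\ref{thm-continuous}(III)-analogue for the vector $\Psi'$, applicable since $(M,f)$ is transitive topologically Anosov and $\Psi'$ is asymptotically additive) provides an ergodic $\nu\in L$ with $h_\nu(f)=h^\ast$; hence $\dim_H\nu=s$ and $\mathcal{P}_{\Phi_d}(\nu)=a$.

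\emph{Step 3 (items (2), (3) and the main obstacles).} Item (2): ``$\subseteq$'' is trivial, and by \cite{CLZ2018} there is an ergodic maximal-dimension measure, so $\sup_{\mathcal{M}_f^e}\dim_H=\max_{\mathcal{M}_f}\dim_H$; thus every invariant $\mu$ has $\dim_H\mu$ either equal to that maximum (realized by the maximal measure) or strictly below it (realized by an ergodic measure via item (1)). Item (3): $\dim_H\nu$ is realized by $\nu$ itself; for $s<\dim_H\nu$, Katok-type approximation \cite{Katok} (in the $C^{1+\alpha}$ form, or the $C^1$ dominated form) produces, for each $\varepsilon>0$, a transitive locally maximal hyperbolic set $\Lambda$ (with the appropriate dominated splitting in the $C^1$ case) carrying a measure of entropy and Lyapunov spectrum $\varepsilon$-close to $\nu$'s, hence --- by Step~1 and continuity of $D$ --- of Hausdorff dimension $>\dim_H\nu-\varepsilon$; running Step~2 inside the transitive topologically Anosov system $(\Lambda,f)$ (with the almost additive exterior-power potentials on $\Lambda$) gives $[0,\dim_H\nu-\varepsilon)\subseteq\{\dim_H\mu:\mu\in\mathcal{M}_f^e(\Lambda)\}\subseteq\{\dim_H\mu:\mu\in\mathcal{M}_f^e(M)\}$, and $\varepsilon\to0$ together with $\dim_H\nu$ itself yields $[0,\dim_H\nu]$; the ``in particular'' claim then follows as in item (2). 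The hard part is Step~1 in the stated regularity: one must know the dimension formula with $D$ continuous and vanishing at zero entropy and, above all, that the Lyapunov data are encodable by asymptotically/almost additive potentials so that the multi-potential Theorem~\ref{thm-almost} applies --- this is precisely where average conformality (or restriction to a hyperbolic set) and the $C^1$ versus $C^{1+\alpha}$ distinction enter. A second, more technical obstacle is the relative-interior bookkeeping in Step~2: the Lyapunov level set through $\mu_1$ need not meet the relative interior of the joint pressure spectrum, which forces the perturbation by $\mu^\dagger$ and the use of the \emph{sums} $c_k$ (which are affine) rather than the individual exponents (which are only differences of concave functions). For item (3) one additionally needs that horseshoes approximate a hyperbolic ergodic measure in Hausdorff dimension, not merely in entropy and exponents.
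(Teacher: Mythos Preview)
Your approach is correct and rests on the same two pillars as the paper's: the average-conformal dimension formula $\dim_H\mu = h_\mu(f)(1/\chi_u(\mu)-1/\chi_s(\mu))$ together with Theorem~\ref{thm-almost} applied to the potentials $\psi^u,\psi^s$, and Katok's horseshoe theorem for item~(3). The differences are in execution rather than strategy. First, the paper does not fix a target $s$ and build an ergodic measure hitting it via your Step~2 perturbation-then-intermediate-entropy construction; instead it proves once (Lemma~\ref{lemma-relative}) that $\mathrm{relint}(\mathcal{T}_{\psi^u,\psi^s,f}(\mathcal{M}_f(M)))=\mathrm{relint}(\mathcal{T}_{\psi^u,\psi^s,f}(\mathcal{M}_f^e(M)))$ and then observes that the continuous map $\mathcal{Q}(x,y,z)=d_uz/x-d_sz/y$ sends this convex set to an interval, with $\mathcal{Q}\circ\mathcal{T}_{\psi^u,\psi^s,f}=\dim_H$ on ergodic measures; a sandwich between $\mathcal{Q}(\mathrm{relint})$ and its closure then gives the whole claim at once. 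This avoids your relative-interior bookkeeping with $\mu^\dagger$ and the separate treatment of each $s$. Second, because the statement assumes $M$ is average conformal throughout (items~(2) and~(3) included), $\chi_u,\chi_s$ are integrals of the genuine continuous functions $\psi^u/d_u,\psi^s/d_s$ on any hyperbolic set; the Ledrappier--Young formula, exterior powers, and almost-additive potentials you invoke in Step~1 are never needed. Your route is more constructive and would survive in greater generality, but the paper's connectedness argument is shorter and sidesteps the ``second technical obstacle'' you flag.
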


\begin{figure}\caption{Graph of $(\int \varphi d\mu, \dim_H\mu)$}\label{fig-2}
	\begin{center}

		\tikzset{every picture/.style={line width=0.75pt}} 
		
		\begin{tikzpicture}[x=0.75pt,y=0.75pt,yscale=-1,xscale=1]
			
			\draw    (135.05,170.74) .. controls (165.94,142.31) and (175.28,115.57) .. (172.41,105.76) ;
			\draw    (115.4,86.4) .. controls (132.73,65.06) and (167.38,84.78) .. (172.41,105.76) ;
			\draw    (89.41,159.92) .. controls (83.9,131.46) and (96.69,109.04) .. (115.4,86.4) ;
			\draw [color={rgb, 255:red, 245; green, 166; blue, 35 }  ,draw opacity=1 ][line width=3] [line join = round][line cap = round]   (114.93,129.11) .. controls (114.93,128.89) and (114.93,128.66) .. (114.93,128.43) ;
			\draw [color={rgb, 255:red, 245; green, 166; blue, 35 }  ,draw opacity=1 ][line width=3] [line join = round][line cap = round]    ;
			\draw    (89.41,159.92) .. controls (91.77,171.78) and (103.54,194.46) .. (135.05,170.74) ;
			\draw [color={rgb, 255:red, 126; green, 211; blue, 33 }  ,draw opacity=1 ]   (353.32,73.75) .. controls (420.27,38.18) and (478.56,83.24) .. (495.1,96.68) ;
			\draw [color={rgb, 255:red, 208; green, 2; blue, 27 }  ,draw opacity=1 ]   (495.1,196.28) -- (494,68) ;
			\draw [color={rgb, 255:red, 245; green, 166; blue, 35 }  ,draw opacity=1 ][line width=3] [line join = round][line cap = round]   (392.23,148.08) .. controls (392.23,147.86) and (392.23,147.63) .. (392.23,147.41) ;
			\draw  (306.06,195.68) -- (514,195.68)(328.52,20) -- (328.52,218.42) (507,190.68) -- (514,195.68) -- (507,200.68) (323.52,27) -- (328.52,20) -- (333.52,27)  ;
			\draw [color={rgb, 255:red, 208; green, 2; blue, 27 }  ,draw opacity=1 ]   (354.5,58.25) -- (353.32,196.28) ;
			\draw  [dash pattern={on 0.84pt off 2.51pt}]  (392.23,148.08) -- (392.23,195.51) ;
			\draw  [dash pattern={on 0.84pt off 2.51pt}]  (392.23,148.08) -- (328.43,148.08) ;
			\draw [color={rgb, 255:red, 74; green, 144; blue, 226 }  ,draw opacity=1 ]   (354.5,58.25) .. controls (394.5,28.25) and (471.5,45.75) .. (494,68) ;
			\draw    (176,126) .. controls (215.4,96.45) and (266.44,107.65) .. (296.64,132.84) ;
			\draw [shift={(298,134)}, rotate = 220.91] [color={rgb, 255:red, 0; green, 0; blue, 0 }  ][line width=0.75]    (10.93,-3.29) .. controls (6.95,-1.4) and (3.31,-0.3) .. (0,0) .. controls (3.31,0.3) and (6.95,1.4) .. (10.93,3.29)   ;
			
			\draw (119.71,113.95) node [anchor=north west][inner sep=0.75pt]    {$\mu $};
			\draw (79.6,197.77) node [anchor=north west][inner sep=0.75pt]    {$\mathcal{M}_{f}( X)$};
			\draw (313.45,195.8) node [anchor=north west][inner sep=0.75pt]    {$0$};
			\draw (271.92,143.05) node [anchor=north west][inner sep=0.75pt]    {$\dim_{H} \mu $};
			\draw (375.33,197.65) node [anchor=north west][inner sep=0.75pt]    {$\int \varphi d\mu $};
			\draw (166,228.4) node [anchor=north west][inner sep=0.75pt]    {$\mu \rightarrow \left(\int \varphi d\mu ,\dim_{H} \mu \right)$};

		\end{tikzpicture}
		
	\end{center}
\end{figure}
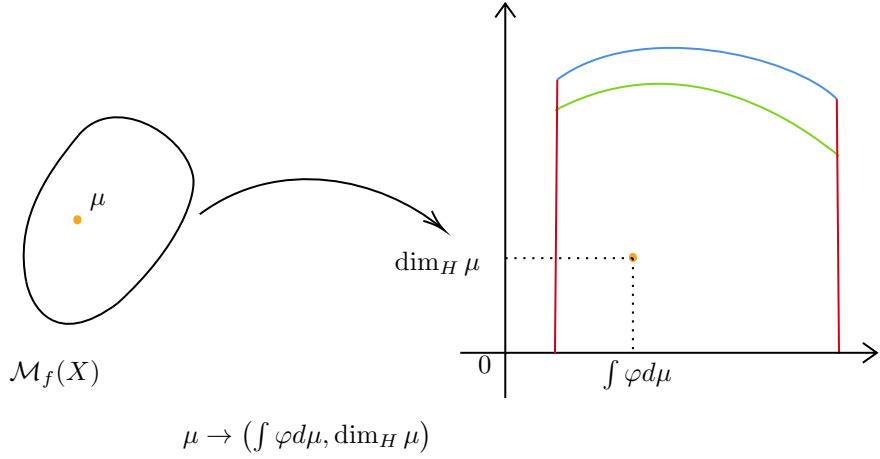
We draw the graph of $(\int \varphi d\mu, \dim_H\mu).$ The green line denotes the supremum of Hausdorff  dimension of ergodic measures with same level, and the blue line denotes the supremum of Hausdorff  dimension of invariant measures with same level. Theorem \ref{thm-inter-huasdorff} implies that every point in the interior of the smaller closed region of Figure \ref{fig-2} can be attained by ergodic measures.  We don't know the relationship between the blue line and the green line.

\begin{Rem}
	Note that every quasi-conformal diffeomorphism is average conformal. So the result of Theorem \ref{thm-inter-huasdorff} holds for quasi-conformal transitive  Anosov diffeomorphisms. See definitions of quasi-conformal and average conformal in Section \ref{subsection-LayHyp}.
\end{Rem}

A basic known strategy to obtain intermediate entropies of ergodic measures is combining refined entropy-dense property and upper semi-continuity of entropy function. However, it's very difficult to prove refined Hausdorff  dimension-dense property and upper semi-continuity of Hausdorff  dimension function $\mu \mapsto \operatorname{dim}_H \mu$. In this paper, we  use item (III) and (IV) of Theorem \ref{thm-almost} on $\psi^u(x)=\log|\det D_xf|_{E_x^{u}}|$ and $\psi^s(x)=\log|\det D_xf|_{E_x^{s}}|$  to show that $\mathrm{Int}(\mathcal{T}_{\psi^u,\psi^s,f}(\mathcal{M}_f^e(M)))$  is a nonempty convex subset of $\mathbb{R}^3,$ where $\mathcal{T}_{\psi^u,\psi^s,f}(\mu)=(\int \psi^ud\mu,\int \psi^sd\mu, h_\mu(f)).$ Then we construct a continuous map $\mathcal{Q}$ from $\mathbb{R}^3$ to $\mathbb{R}$ such that $\mathcal{Q}(\mathcal{T}_{\psi^u,\psi^s,f}(\mu))=\dim_H\mu$ for any $f$-ergodic measure $\mu.$
This allow us to obtain that  $\{\operatorname{dim}_H \mu: \mu\in\mathcal{M}_f^e(M)\}$ is an interval containing $[0,\sup\limits_{\mu\in \mathcal{M}_f^e(M)}\dim_H\mu).$

\subsection{Lyapunov spectrum}
Now we consider Lyapunov spectrum. First, we introduce a conception, multi-average conformal. Given an invariant measure $\mu$,  for $\mu$ a.e. $x$,  denote by $$
\chi_1(x) \geq \chi_2(x) \geq \cdots \geq \chi_{\dim M}(x).
$$
the Lyapunov exponents at $x.$ 
For any $1\leq i\leq \dim M,$ denote $\chi_i(\mu)=\int \chi_i(x) d\mu.$ 
We say a  Anosov diffeomorphism $f:M\to M$ is multi-average conformal, if  there are $t_u,t_s\in\mathbb{N^{+}},$ $d_1,d_2,\dots,d_{t_u+t_s}\in\mathbb{N^{+}},$   and $E^{1},\dots,E^{t_u+t_s}\subset TM$ for a such that    
\begin{enumerate}
	\item   $\sum_{j=1}^{t_u}d_j=\dim E^u,$  $\sum_{j=t_u+1}^{t_u+t_s}d_j=\dim E^s$ and $\dim E^j=d_j$ for  $1\leq j\leq  t_u+t_s.$
	\item  $D_xf(E_x^j)=E_{f(x)}^j$  for any $x\in M$ and $1\leq j\leq  t_u+t_s.$ 
	\item    $x\to E^{j}_x$ is continuous for any $1\leq j\leq  t_u+t_s.$
	\item $E_x^u=E^1_x\oplus \dots\oplus E^{t_1}_x,$ $E_x^s=E^{t_2+1}\oplus \dots\oplus E^{\dim M}_x$ for any $x\in M$.
	\item $\chi_{\sum_{k=1}^{j}d_k+1}(\mu)=\chi_{\sum_{k=1}^{j}d_k+2}(\mu)\dots=\chi_{\sum_{k=1}^{j+1}d_k}(\mu)$ for any $0\leq j\leq t_u+t_s-1$ and any $\mu\in\mathcal{M}_f(M).$
\end{enumerate}   
Denote $\mathrm{Lya}(\mu)=(\chi_1(\mu),\dots,\chi_{\dim M}(\mu))$ for any $\mu\in \mathcal{M}_f(M).$ Then  $\mathrm{Lya}(\mathcal{M}_f(X))$ is a subset of $\mathbb{R}^{t_u+t_s}.$
Denote $\mathcal{T}_{\mathrm{Lya},f}(\mu)=(\mathrm{Lya}(\mu),h_\mu(f))$ for any $\mu\in \mathcal{M}_f(M).$ Then  $\mathcal{T}_{\mathrm{Lya},f}(\mathcal{M}_f(X))$ is a subset of $\mathbb{R}^{t_u+t_s+1}.$
Using our result on multiple functions (Theorem \ref{thm-almost}),  we have the following result.
\begin{maintheorem}\label{thm-Lyapunov}
	Let $f: M \mapsto M$ be a $C^1$ transitive multi-average conformal  Anosov diffeomorphism on a compact Riemannian manifold $M$. Then   
	\begin{enumerate}
		\item [(1)] $\mathrm { relint } (\mathrm{Lya}(\mathcal{M}_f(X)))=\mathrm { relint } (\mathrm{Lya}(\mathcal{M}_f^e(X))).$
		\item [(2)] $\mathrm { relint } (\mathcal{T}_{\mathrm{Lya},f}(\mathcal{M}_f(X)))=\mathrm { relint } (\mathcal{T}_{\mathrm{Lya},f}(\mathcal{M}_f^e(X))).$
		\item [(3)] if $t_u=t_s=1,$ then for any $a\in \mathrm { relint } (\mathrm{Lya}(\mathcal{M}_f(X))),$ we have $\sup\limits_{\mu\in \mathcal{M}_f^e(M)\cap   \mathrm{Lya}^{-1}(a)}\dim_H\mu>0$ and $$[0,\sup\limits_{\mu\in \mathcal{M}_f^e(M)\cap   \mathrm{Lya}^{-1}(a)}\dim_H\mu)\subset \{\dim_H\mu:\mu\in \mathcal{M}_f^e(M)\cap   \mathrm{Lya}^{-1}(a)\}\subset  [0,\sup\limits_{\mu\in \mathcal{M}_f^e(M)\cap   \mathrm{Lya}^{-1}(a)}\dim_H\mu].$$ 
	\end{enumerate} 
\end{maintheorem}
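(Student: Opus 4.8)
The plan is to reduce all three items to results already established for transitive Anosov diffeomorphisms: the multi-function refinement Theorem~\ref{thm-almost} for items (1) and (2), and Theorem~\ref{thm-inter-huasdorff}(1) for item (3). The mechanism is the observation that $\mathrm{Lya}$ is, up to a fixed linear change of coordinates, the pressure-type map $\mathcal{P}_\Phi$ attached to an explicit family of continuous potentials. Concretely, for $1\le j\le t_u+t_s$ put $\psi^j(x)=\log|\det D_xf|_{E^j_x}|$. Since $f$ is $C^1$ and $x\mapsto E^j_x$ is continuous and $D_xf$-invariant, each $\psi^j$ is a continuous function on $M$ and $\log|\det D_xf^n|_{E^j_x}|=\sum_{k=0}^{n-1}\psi^j(f^kx)$ is an additive cocycle carried by the continuous invariant subbundle $E^j$. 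Applying the (sub)additive Oseledec ergodic theorem to this subbundle and invoking the multi-average-conformal condition (5), one obtains $\chi_i(\mu)=\tfrac1{d_j}\int\psi^j\,d\mu$ for every $\mu\in\mathcal{M}_f(M)$ and every index $i$ in the $j$-th block (cf.\ Section~\ref{subsection-LayHyp}). Hence, writing $\Phi=\{\psi^1,\dots,\psi^{t_u+t_s}\}$ and letting $L\colon\mathbb{R}^{t_u+t_s}\to\mathbb{R}^{t_u+t_s}$ be the linear isomorphism $L(y_1,\dots,y_{t_u+t_s})=(y_1/d_1,\dots,y_{t_u+t_s}/d_{t_u+t_s})$, one has $\mathrm{Lya}=L\circ\mathcal{P}_\Phi$ and $\mathcal{T}_{\mathrm{Lya},f}=(L\oplus\mathrm{id})\circ\mathcal{T}_{\Phi,f}$ on $\mathcal{M}_f(M)$, where $\mathcal{T}_{\Phi,f}(\mu)=(\mathcal{P}_\Phi(\mu),h_\mu(f))$.

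For items (1) and (2): a transitive Anosov diffeomorphism on a compact manifold is topologically Anosov (Lemma~\ref{LH}, with $M$ itself as the locally maximal hyperbolic set) and transitive, so Theorem~\ref{thm-almost} is available for the finite family $\Phi$ of continuous potentials; in particular every point of $\mathrm{relint}(\mathcal{P}_\Phi(\mathcal{M}_f(M)))$ and every point of $\mathrm{relint}(\mathcal{T}_{\Phi,f}(\mathcal{M}_f(M)))$ is realised by an ergodic measure. On the other hand $\mathcal{P}_\Phi$ and $\mu\mapsto h_\mu(f)$ are affine on the convex set $\mathcal{M}_f(M)$, so by the ergodic decomposition — using Jacobs' theorem for the entropy coordinate — $\mathcal{P}_\Phi(\mathcal{M}_f(M))$, resp.\ $\mathcal{T}_{\Phi,f}(\mathcal{M}_f(M))$, lies in the closed convex hull of the corresponding image of $\mathcal{M}_f^e(M)$; thus the images of $\mathcal{M}_f(M)$ and of $\mathcal{M}_f^e(M)$ share the same affine hull. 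Combining this with the realisation statement gives $\mathrm{relint}(\mathcal{P}_\Phi(\mathcal{M}_f(M)))=\mathrm{relint}(\mathcal{P}_\Phi(\mathcal{M}_f^e(M)))$ and $\mathrm{relint}(\mathcal{T}_{\Phi,f}(\mathcal{M}_f(M)))=\mathrm{relint}(\mathcal{T}_{\Phi,f}(\mathcal{M}_f^e(M)))$. Since the invertible affine maps $L$ and $L\oplus\mathrm{id}$ commute with $\mathrm{relint}$, with the passage to $\mathcal{M}_f^e(M)$, and with the identities $\mathrm{Lya}=L\circ\mathcal{P}_\Phi$, $\mathcal{T}_{\mathrm{Lya},f}=(L\oplus\mathrm{id})\circ\mathcal{T}_{\Phi,f}$, items (1) and (2) follow.

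For item (3): when $t_u=t_s=1$, condition (5) is exactly the assertion that all unstable exponents, and all stable exponents, coincide for every invariant measure, i.e.\ $f$ is average conformal in the sense of Section~\ref{subsection-LayHyp}; moreover $E^1=E^u$, $E^2=E^s$, so $\psi^1=\psi^u$ and $\psi^2=\psi^s$, and $\mathrm{Lya}^{-1}(a)=\mathcal{P}_{\{\psi^u,\psi^s\}}^{-1}(d_1a_1,d_2a_2)$, with $a=(a_1,a_2)\in\mathrm{relint}(\mathrm{Lya}(\mathcal{M}_f(M)))$ if and only if $(d_1a_1,d_2a_2)\in\mathrm{relint}(\mathcal{P}_{\{\psi^u,\psi^s\}}(\mathcal{M}_f(M)))$. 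Applying Theorem~\ref{thm-inter-huasdorff}(1) with $d=2$ and $\Phi_2=\{\psi^u,\psi^s\}\subset C(M)$ then delivers both $\sup\{\dim_H\mu:\mu\in\mathcal{M}_f^e(M)\cap\mathrm{Lya}^{-1}(a)\}>0$ and the displayed chain of inclusions.

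The only step that is more than bookkeeping is the identity $\chi_i(\mu)=\tfrac1{d_j}\int\psi^j\,d\mu$ used in the first paragraph: one must ensure that the continuous invariant splitting $TM=E^1\oplus\cdots\oplus E^{t_u+t_s}$ is arranged so that each $E^j$ carries exactly the $j$-th consecutive block of Lyapunov exponents — equivalently, that it is a dominated splitting compatible with condition (5) — so that the additive cocycle generated by $\psi^j$ genuinely computes the sum of those exponents. This is where the hyperbolicity of $f$ and the continuity of the $E^j$ are used; granting it, everything downstream is a transfer of Theorems~\ref{thm-almost} and \ref{thm-inter-huasdorff} along the linear isomorphisms $L$ and $L\oplus\mathrm{id}$, together with elementary facts about relative interiors of convex sets under affine maps.
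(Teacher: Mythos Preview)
Your overall route is the paper's route: write $\mathrm{Lya}$ as $\mathcal{P}_\Phi$ for an explicit family of continuous potentials built from the $\psi^j=\log|\det D_xf|_{E^j_x}|$, then feed this into the multi-function machinery. For item~(3) your invocation of Theorem~\ref{thm-inter-huasdorff}(1) with $\Phi_2=\{\psi^u,\psi^s\}$ is exactly what the paper does (it cites the underlying Lemma~\ref{lemma-locally-haus-2}). The linear isomorphism $L$ versus the paper's choice of repeated potentials $\phi^i=\psi^j/d_j$ is only packaging.

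There is, however, a real gap in your argument for (1) and (2). Theorem~\ref{thm-almost} carries the hypothesis $\mathrm{Int}(\mathcal{P}_{A,B}(\mathcal{M}_f(X)))\neq\emptyset$ and its conclusions are about $\mathrm{Int}$, not $\mathrm{relint}$. That hypothesis can fail here: for an Anosov toral automorphism $\mathrm{Lya}(\mathcal{M}_f(M))$ is a single point (this is the very example in the Remark following the theorem), so $\mathrm{Int}=\emptyset$ in $\mathbb{R}^{t_u+t_s}$ and your direct appeal to Theorem~\ref{thm-almost} yields nothing. The paper closes this gap by going through Lemma~\ref{lemma-relative}: one case-splits on $\dim_{\mathrm{aff}}(\mathcal{P}_\Phi(\mathcal{M}_f(M)))$ and, in the degenerate cases, projects via $\pi_{\mathrm{aff}}$ onto the affine hull so that $\mathrm{relint}$ becomes a genuine $\mathrm{Int}$ for a smaller subfamily $\Phi_{\mathrm{aff}}\subset\Phi$, at which point Theorem~\ref{thm-almost} applies. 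Your ergodic-decomposition remark that the two images share the same affine hull is correct and is part of that lemma's content, but by itself it does not produce ergodic measures at relint points. Either cite Lemma~\ref{lemma-relative}(a),(b) in place of Theorem~\ref{thm-almost}, or insert the $\pi_{\mathrm{aff}}$ reduction explicitly before invoking Theorem~\ref{thm-almost}.
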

\begin{Rem}
	For Anosov diffeomorphisms, $\mathrm { Int } (\mathrm{Lya}(\mathcal{M}_f(X)))$ may be empty, but  $\mathrm { relint } (\mathrm{Lya}(\mathcal{M}_f(X)))\neq\emptyset$ always holds. For example, when $f$ is an Anosov toral automorphism, $\mathrm{Lya}(\mathcal{M}_f(X)$ is a singleton, then $\mathrm { Int } (\mathrm{Lya}(\mathcal{M}_f(X)))=\emptyset,$ $\mathrm { relint } (\mathrm{Lya}(\mathcal{M}_f(X)))=\mathrm{Lya}(\mathcal{M}_f(X)).$
\end{Rem}
In \cite{TWW2019} the authors proved that for any system $(X,f)$ satisfying  the periodic gluing orbit property and any continuous function $\varphi\in C(X)$, one has $\{\int \varphi d\mu:\mu\in\mathcal{M}_f(X)\}=\{\int \varphi d\mu:\mu\in\mathcal{M}_f^e(X)\}.$ Using this result, we can obtain 
$\{\chi_i(\mu):\mu\in \mathcal{M}_f(M)\}=\{\chi_i(\mu):\mu\in \mathcal{M}_f^e(M)\}$ for any $1\leq i\leq \dim M.$ However, the result of \cite{TWW2019} can not be extended from one function to multiple functions, since (3.9) of \cite{TWW2019} may fail for multiple functions. So we can not obtain Theorem \ref{thm-Lyapunov} using the method of \cite{TWW2019}.  In this paper, we  use Theorem \ref{thm-almost} on $\psi^i(x)=\log|\det Df|_{E_x^{i}}|$, $1\leq i\leq t_u+t_s$  to obtain Theorem  \ref{thm-Lyapunov}.


\subsection{First return rate of ergodic measures}
Given $x \in M$ and $r>0$, denote the first return time of a ball $B(x, r)$ radius $r$ at $x$ by
$$
\tau(B(x, r)):=\min \left\{k>0 : f^k(B(x, r)) \cap B(x, r) \neq \emptyset\right\}
$$
Define the first return rate of $x$ by $$r_f(x)=\lim _{r \rightarrow 0} \frac{\tau(B(x, r))}{-\log r}$$  if the limit exists. 
For any $\mu\in \mathcal{M}_f(M),$ we define first return rate of $\mu$ by $r_f(\mu)=\int r_f(x) d\mu$. For transitive  Anosov diffeomorphisms, if $\mu$ is an ergodic measure with $h_\mu(f)>0$ and with only two Lyapunov exponents  $\chi_s(\mu)<0<\chi_u(\mu)$, then $r_f(\mu)=\frac{1}{\lambda_u(\mu)}-\frac{1}{\lambda_s(\mu)}$ by  \cite{STV2003,OliveTian 2013}.
Using our result on multiple functions (Theorem \ref{thm-almost}), we show that $\{r_f(\mu):\mu\in \mathcal{M}_f^e(\Lambda) \text{ and }h_\mu(f)>0\}$ is an interval.
\begin{maintheorem}\label{thm-first-return}
	Let $f: M \mapsto M$ be a $C^{1}$ transitive  Anosov diffeomorphism on a compact Riemannian manifold $M$ such that $M$ is  average conformal. Then $\{r_f(\mu):\mu\in \mathcal{M}_f^e(\Lambda) \text{ and }h_\mu(f)>0\}$ is an interval. In particular,  if there exist $\mu_1,\mu_2\in \mathcal{M}_f^e(M)$ with $h_{\mu_1}(f)>0,h_{\mu_2}(f)>0$ and $r_f(\mu_1)<r_f(\mu_2),$ then for any $r_f(\mu_1)<r<r_f(\mu_2),$ there is $\mu \in \mathcal{M}_f^e(M)$ such that $h_{\mu}(f)>0$ and $r_f(\mu)=r.$
\end{maintheorem}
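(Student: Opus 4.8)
The plan is to express $r_{f}$, restricted to ergodic measures of positive entropy, as a fixed continuous function of the two integrals against $\psi^{u}(x)=\log|\det D_{x}f|_{E_{x}^{u}}|$ and $\psi^{s}(x)=\log|\det D_{x}f|_{E_{x}^{s}}|$, and then to invoke Theorem \ref{thm-almost} for the pair $\Phi=(\psi^{u},\psi^{s})$ to realise the relevant parameter region by ergodic measures. First I would observe that, since $f$ is Anosov and $M$ is average conformal, every $\mu\in\mathcal{M}_{f}^{e}(M)$ has all unstable Lyapunov exponents equal to $\chi_{u}(\mu)=\frac{1}{\dim E^{u}}\int\psi^{u}\,d\mu$ and all stable ones equal to $\chi_{s}(\mu)=\frac{1}{\dim E^{s}}\int\psi^{s}\,d\mu$, with $\chi_{s}(\mu)<0<\chi_{u}(\mu)$, all uniformly bounded away from $0$ by uniform hyperbolicity. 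Hence $\psi^{u},\psi^{s}\in C(M)$, the set $K:=\mathcal{P}_{\Phi}(\mathcal{M}_{f}(M))$ is a compact convex subset of $\{(a,b)\in\mathbb{R}^{2}:a\geq c>0,\ b\leq -c<0\}$ for some $c>0$, and $G(a,b):=\frac{\dim E^{u}}{a}-\frac{\dim E^{s}}{b}$ is continuous on a neighbourhood of $K$. By the recurrence formula recalled just before Theorem \ref{thm-first-return}, every $\mu\in\mathcal{M}_{f}^{e}(M)$ with $h_{\mu}(f)>0$ satisfies $r_{f}(\mu)=\frac{1}{\chi_{u}(\mu)}-\frac{1}{\chi_{s}(\mu)}=G(\mathcal{P}_{\Phi}(\mu))$; in particular $\{r_{f}(\mu):\mu\in\mathcal{M}_{f}^{e}(M),\ h_{\mu}(f)>0\}\subseteq G(K)$, which is a compact interval since $K$ is connected.

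Next I would show that every value $G(a)$ with $a\in\mathrm{relint}(K)$ is attained by an ergodic measure of positive entropy. Fix such an $a$. One first builds an invariant measure of positive entropy over $a$: let $\mu_{0}$ be a measure of maximal entropy of $f$ (it exists and may be taken ergodic since $f$ is expansive and $\htop(f)>0$), put $a_{0}=\mathcal{P}_{\Phi}(\mu_{0})\in K$; if $a=a_{0}$ take $\mu=\mu_{0}$, and otherwise choose $\varepsilon>0$ with $a_{1}:=(1+\varepsilon)a-\varepsilon a_{0}\in K$ (possible because $a$ lies in the relative interior of $K$ and $a_{1}\in\mathrm{aff}(K)$ tends to $a$ as $\varepsilon\to0^{+}$), pick $\nu_{1}\in\mathcal{M}_{f}(M)$ with $\mathcal{P}_{\Phi}(\nu_{1})=a_{1}$, and set $\mu:=\frac{1}{1+\varepsilon}\nu_{1}+\frac{\varepsilon}{1+\varepsilon}\mu_{0}$, so that $\mathcal{P}_{\Phi}(\mu)=a$ and $h_{\mu}(f)\geq\frac{\varepsilon}{1+\varepsilon}\htop(f)>0$. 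Then, since $a\in\mathrm{relint}(\mathcal{P}_{\Phi}(\mathcal{M}_{f}(M)))$, Theorem \ref{thm-almost} applied to $\Phi=(\psi^{u},\psi^{s})$ and to this $\mu$ provides an ergodic $\nu\in\mathcal{P}_{\Phi}^{-1}(a)$ whose entropy is as close as we wish to, say, $\tfrac12 h_{\mu}(f)$, hence with $h_{\nu}(f)>0$; consequently $r_{f}(\nu)=G(\mathcal{P}_{\Phi}(\nu))=G(a)$. Thus $G(\mathrm{relint}(K))\subseteq\{r_{f}(\mu):\mu\in\mathcal{M}_{f}^{e}(M),\ h_{\mu}(f)>0\}$. (Alternatively, the existence of such a $\nu$ is already contained in Theorem \ref{thm-Lyapunov}(3).)

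It then remains to assemble these facts. The set $G(\mathrm{relint}(K))$ is an interval, being the continuous image of the convex, hence connected, set $\mathrm{relint}(K)$; it is dense in $G(K)$ because $\mathrm{relint}(K)$ is dense in $K$ and $G$ is continuous, and a dense subinterval of an interval contains that interval's interior, so $\mathrm{Int}(G(K))\subseteq G(\mathrm{relint}(K))$. Combining with the two inclusions of the previous paragraphs gives $\mathrm{Int}(G(K))\subseteq\{r_{f}(\mu):\mu\in\mathcal{M}_{f}^{e}(M),\ h_{\mu}(f)>0\}\subseteq G(K)$, and a set squeezed between an open interval and its closure is an interval, which is the first assertion. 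The ``in particular'' statement is then immediate: $r_{f}(\mu_{1})$ and $r_{f}(\mu_{2})$ lie in this interval, so every $r$ strictly between them equals $r_{f}(\mu)$ for some $\mu\in\mathcal{M}_{f}^{e}(M)$ with $h_{\mu}(f)>0$.

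The substantial ingredient is the application of Theorem \ref{thm-almost} to the pair $(\psi^{u},\psi^{s})$, i.e. the ``multi-horseshoe'' entropy-dense machinery, which is what produces ergodic measures with a prescribed pair of integrals and controlled (here merely positive) entropy in the $C^{1}$ category; the remaining arguments are elementary. The two points I expect to need the most care are guaranteeing that the extracted ergodic measures can be chosen with strictly positive entropy (dealt with by the convex-combination trick with a measure of maximal entropy in the second paragraph) and that the possibly irregular boundary behaviour of $G$ on $\partial K$ does not destroy the interval property (dealt with by the squeeze in the third paragraph).
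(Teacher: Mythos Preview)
Your approach is essentially the paper's: express $r_f(\mu)$ for ergodic $\mu$ with $h_\mu(f)>0$ as a continuous function of $(\int\psi^u\,d\mu,\int\psi^s\,d\mu)$, invoke the multi-horseshoe machinery (Theorem~\ref{thm-almost}) to realise each parameter value by an ergodic measure of positive entropy, and conclude by connectedness. The paper packages the last two steps into Lemma~\ref{lemma-relative}(c), working with the three-dimensional map $\mathcal{T}_{\psi^u,\psi^s,f}$ (which includes entropy as a coordinate) and using that $\mathrm{relint}(\mathcal{T}_{\psi^u,\psi^s,f}(\mathcal{M}_f(M)))$ already consists of points with $h>0$; you instead work with the two-dimensional $\mathcal{P}_\Phi$ and manufacture positive entropy via the convex combination with $\mu_0$. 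Both routes are fine.

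There is, however, one genuine gap. You invoke Theorem~\ref{thm-almost} for $a\in\mathrm{relint}(K)$, but the theorem is stated for $a\in\mathrm{Int}(\mathcal{P}_\Phi(\mathcal{M}_f(M)))$, the interior in $\mathbb{R}^2$, and requires this interior to be nonempty. For an average conformal Anosov toral automorphism, $K$ is a single point and $\mathrm{Int}(K)=\emptyset$; more generally $K$ could have affine dimension~$1$. In those cases your appeal to Theorem~\ref{thm-almost} is vacuous and the second-paragraph step does not go through as written. The paper handles this inside Lemma~\ref{lemma-relative} by a case analysis on $\dim_{\mathrm{aff}}(K)$: when $\dim_{\mathrm{aff}}(K)<2$ one projects to a subfamily $\Phi_{\mathrm{aff}}\subset\{\psi^u,\psi^s\}$ with $\mathrm{Int}(\mathcal{P}_{\Phi_{\mathrm{aff}}}(\mathcal{M}_f(M)))\neq\emptyset$ and applies Theorem~\ref{thm-almost} there. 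You should either insert this reduction explicitly, or (cleaner) simply cite Lemma~\ref{lemma-relative}(c) for the pair $(\psi^u,\psi^s)$ in place of your second and third paragraphs.
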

The proof of Theorem \ref{thm-first-return} is similar to Theorem \ref{thm-inter-huasdorff}. We  use Theorem \ref{thm-almost}(I) on $\psi^u(x)=\log|\det D_xf|_{E_x^{u}}|$ and $\psi^s(x)=\log|\det D_xf|_{E_x^{s}}|$  to show that $\mathrm{Int}(\mathcal{T}_{\psi^u,\psi^s}(\mathcal{M}_f^e(\Lambda)))$  is a nonempty convex subset of $\mathbb{R}^2,$ where $\mathcal{T}_{\psi^u,\psi^s}(\mu)=(\int \psi^ud\mu,\int \psi^sd\mu).$ Then we construct a continuous map $\mathcal{Q}$ from $\mathbb{R}^2$ to $\mathbb{R}$ such that $\mathcal{Q}(\mathcal{T}_{\psi^u,\psi^s}(\mu))=r_\mu$ for $\mu\in \mathcal{M}_f^e(M)$ with $h_\mu(f)>0.$
This allow us to obtain that  $\{r_\mu: \mu\in\mathcal{M}_f^e(M)\text{ and }h_\mu(f)>0\}$ is an interval.

\subsection{Other applications}
In this subsection, let $f: M \mapsto M$ be a $C^1$ transitive Anosov diffeomorphism on a compact Riemannian manifold $M.$   Given an invariant measure $\mu \in\mathcal{M}_f(M)$,  for $\mu$ a.e. $x$,  denote by $$
\chi_1(x) \geq \chi_2(x) \geq \cdots \geq \chi_{\dim M}(x).
$$
the Lyapunov exponents at $x.$ 

\subsubsection{Intermediate geometric pressure of ergodic measures}
Denote
$
\chi_i^{+}(x)=\max \left\{\chi_i(x), 0\right\}.
$
By Ruelle’s inequality \cite{Ruelle1978}, for any $\mu \in \mathcal{M}_f(M)$ one has
$
h_\mu(f) \leq \int \sum_{i=1}^{\operatorname{dim} M} \chi_i^{+}(x) d \mu.
$
We define the geometric pressure of $\mu$ by $P^{u}(\mu)=h_{\mu}(f)-\int \sum_{i=1}^{\operatorname{dim} M} \chi_i^{+} d\mu.$
Then
$-\sup _{\mu\in \mathcal{M}_f(M)}\int \sum_{i=1}^{\operatorname{dim} M} \chi_i^{+} d\mu \leq P^{u}(\mu)\leq 0$ for any $\mu\in \mathcal{M}_f(M).$  Using Theorem \ref{thm-continuous}, we show that $f$ has intermediate geometric pressure of ergodic measures.
\begin{maincorollary}\label{thm-pressure}
	Let $f: M \mapsto M$ be a $C^{1}$ diffeomorphism on a compact Riemannian manifold $M$. 
	\begin{enumerate}
		\item [(1)] If $f: M \mapsto M$ is a transitive  Anosov diffeomorphism, then $$(-\psi^u_{\sup},0]\subseteq \{P^{u}(\mu):\mu\in \mathcal{M}_f^e(M)\}\subseteq [-\psi^u_{\sup},0],$$ where $\psi^u_{\sup}=\sup\limits_{\mu\in \mathcal{M}_f(M)}\int \sum_{i=1}^{\operatorname{dim} M} \chi_i^{+} d\mu.$ 
		Moreover, given $d\in \mathbb{N}$ and $\Phi_{d}=\{\varphi_i\}_{i=1}^{d}\subset C(M)$, then for any $a\in \mathrm { relint } (\mathcal{P}_{\Phi_{d}}(\mathcal{M}_f(M))),$   $\{P^u(f):\mu\in \mathcal{M}_f^e(M)\cap   \mathcal{P}_{\Phi_{d}}^{-1}(a)\}$ is an interval and $\overline{\{P^u(f):\mu\in \mathcal{M}_f^e(M)\cap   \mathcal{P}_{\Phi_{d}}^{-1}(a)\}}=\overline{\{P^u(f):\mu\in \mathcal{M}_f(M)\cap   \mathcal{P}_{\Phi_{d}}^{-1}(a)\}}.$
		\item [(2)] Let $\nu$ be an $f$-invariant ergodic hyperbolic measure. If $f$ is $C^{1+\alpha}$ for some $0<\alpha<1$ or the Oseledec splitting of $\nu$ is dominated,   then $$\{P^{u}(\mu):\mu\in \mathcal{M}_f^e(M)\}\supset (-\int \sum_{i=1}^{\operatorname{dim} M} \chi_i^{+} d\nu,P^{u}(\nu)].$$ 
		In particular, if further $P^{u}(\nu)=0$, then $$(-\int \sum_{i=1}^{\operatorname{dim} M} \chi_i^{+} d\nu,0]\subset \{P^{u}(\mu):\mu\in \mathcal{M}_f^e(M)\}\subset [-\int \sum_{i=1}^{\operatorname{dim} M} \chi_i^{+} d\nu,0].$$
	\end{enumerate}
\end{maincorollary}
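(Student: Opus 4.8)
\emph{Reduction to the pressure--entropy point.} Set $\psi^{u}(x)=\log\bigl|\det D_{x}f|_{E^{u}_{x}}\bigr|$; since the Anosov splitting is continuous, $\psi^{u}\in C(M)$. For a transitive Anosov diffeomorphism the positive Lyapunov exponents of every invariant measure are exactly the exponents along $E^{u}$, so a telescoping/Oseledec argument gives $\int\sum_{i=1}^{\dim M}\chi_{i}^{+}\,d\mu=\int\psi^{u}\,d\mu$ for every $\mu\in\mathcal{M}_{f}(M)$, and hence
$$P^{u}(\mu)=h_{\mu}(f)-\int\psi^{u}\,d\mu=L\bigl(\mathcal{T}_{\psi^{u},f}(\mu)\bigr),\qquad L(x,h):=h-x .$$
Thus $P^{u}$ is an affine functional that factors through the convex compact region $R:=\mathcal{T}_{\psi^{u},f}(\mathcal{M}_{f}(M))\subset\mathbb{R}^{2}$ via the surjective linear map $L$. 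Ruelle's inequality gives $P^{u}\le 0$, while $h_{\mu}\ge 0$ and $\int\psi^{u}\,d\mu\le\psi^{u}_{\sup}$ give $P^{u}\ge-\psi^{u}_{\sup}$; these are the easy inclusions in (1), and combined with $P^{u}\le 0$ they yield the upper bound in the ``in particular'' clause of (2).

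\emph{Proof of (1).} Since $(M,f)$ is transitive, topologically Anosov, and has upper semicontinuous entropy, Theorem~\ref{thm-continuous} (and its general form Theorem~\ref{thm-almost}) applies with $\varphi=\psi^{u}$. First one checks $L(R)=[-\psi^{u}_{\sup},0]$: the maximum equals $P_{\mathrm{top}}(f,-\psi^{u})=0$, the classical zero--pressure identity for the geometric potential, and the minimum is $-\psi^{u}_{\sup}$ because, by the entropy--dense conclusion of Theorem~\ref{thm-continuous}(II), a maximiser of $\int\psi^{u}$ can be approximated weak$^{*}$ by ergodic measures of arbitrarily small entropy. If $\psi^{u}$ is not cohomologous to a constant, $R$ is two--dimensional, so $L(\Int R)=\Int L(R)=(-\psi^{u}_{\sup},0)$; by Theorem~\ref{thm-continuous}(IV), $\Int R=\Int\mathcal{T}_{\psi^{u},f}(\mathcal{M}_{f}^{e}(M))$, so every value in $(-\psi^{u}_{\sup},0)$ is $P^{u}(\nu)$ for some ergodic $\nu$; the endpoint $0$ is $P^{u}$ of any ergodic equilibrium state for $-\psi^{u}$ (one exists by upper semicontinuity and may be taken ergodic since $P^{u}$ is affine). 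In the degenerate case $\psi^{u}\sim c$ one has $\psi^{u}_{\sup}=c=\htop(f)$ and $P^{u}(\mu)=h_{\mu}(f)-c$, so the claim reduces to $\mathcal{E}_{f}(\mathcal{M}_{f}^{e}(M))=[0,\htop(f)]$, which follows from Theorem~\ref{thm-continuous}(III) applied to any non--constant $\varphi$ with $\Int(\mathcal{P}_{\varphi}(\mathcal{M}_{f}(M)))\neq\emptyset$. For the $\Phi_{d}$--refinement one repeats the argument with Theorem~\ref{thm-almost} applied to the $(d{+}1)$ functions $\varphi_{1},\dots,\varphi_{d},\psi^{u}$ and the linear map $(y_{1},\dots,y_{d},x,h)\mapsto h-x$: for $a\in\operatorname{relint}\mathcal{P}_{\Phi_{d}}(\mathcal{M}_{f}(M))$ the slice of the $(d{+}2)$--dimensional region above $a$ is convex compact, its $P^{u}$--image is a compact interval, and --- using that relative interiors are preserved by surjective linear maps together with the relint form of Theorem~\ref{thm-almost}(IV) --- its relative interior is realised by ergodic measures, which gives the asserted interval/closure statement.

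\emph{Proof of (2).} Now $f$ need not be Anosov and there is no global $\psi^{u}$, so we localise to horseshoes. By Katok's horseshoe theory (in the $C^{1+\alpha}$ case) or its $C^{1}$ analogue under a dominated Oseledec splitting, the hyperbolic ergodic measure $\nu$ is approximated by horseshoes \emph{with control of the Lyapunov exponents}: for each $\varepsilon>0$ there are a transitive locally maximal hyperbolic set $\Lambda_{\varepsilon}$ of the same unstable index as $\nu$ and an ergodic $\mu_{\varepsilon}\in\mathcal{M}_{f}(\Lambda_{\varepsilon})$ with $\rho(\mu_{\varepsilon},\nu)<\varepsilon$, $h_{\mu_{\varepsilon}}(f)>h_{\nu}(f)-\varepsilon$, and $\bigl|\int\psi^{u}_{\Lambda_{\varepsilon}}\,d\mu_{\varepsilon}-\int\sum_{i}\chi_{i}^{+}\,d\nu\bigr|<\varepsilon$, where $\psi^{u}_{\Lambda_{\varepsilon}}\in C(\Lambda_{\varepsilon})$ is the unstable Jacobian of $\Lambda_{\varepsilon}$. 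On $\Lambda_{\varepsilon}$ one again has $\int\sum_{i}\chi_{i}^{+}\,d\mu=\int\psi^{u}_{\Lambda_{\varepsilon}}\,d\mu$, so $P^{u}(\mu)=h_{\mu}(f)-\int\psi^{u}_{\Lambda_{\varepsilon}}\,d\mu$ there, and $(\Lambda_{\varepsilon},f)$ is transitive and topologically Anosov, so Theorem~\ref{thm-continuous} applies to it with potential $\psi^{u}_{\Lambda_{\varepsilon}}$. Given a target $r\in(-\int\sum_{i}\chi_{i}^{+}d\nu,\,P^{u}(\nu))$, write $s:=\int\sum_{i}\chi_{i}^{+}d\nu$ and choose $\varepsilon$ small; then $s_{\varepsilon}:=\int\psi^{u}_{\Lambda_{\varepsilon}}d\mu_{\varepsilon}$ is within $\varepsilon$ of $s$ and $\sup\mathcal{E}_{f}\bigl(\mathcal{P}_{\psi^{u}_{\Lambda_{\varepsilon}}}^{-1}(s_{\varepsilon})\bigr)\ge h_{\mu_{\varepsilon}}>h_{\nu}(f)-\varepsilon$, so by concavity of $s'\mapsto\sup\mathcal{E}_{f}(\mathcal{P}_{\psi^{u}_{\Lambda_{\varepsilon}}}^{-1}(s'))$ we may pick an interior $s'$ with $|s'-s|$ small and $\sup\mathcal{E}_{f}(\mathcal{P}_{\psi^{u}_{\Lambda_{\varepsilon}}}^{-1}(s'))$ still $>h_{\nu}(f)-2\varepsilon$; taking $h':=r+s'$ we have $0\le h'<\sup\mathcal{E}_{f}(\mathcal{P}_{\psi^{u}_{\Lambda_{\varepsilon}}}^{-1}(s'))$, and Theorem~\ref{thm-continuous}(III) on $\Lambda_{\varepsilon}$ yields an ergodic $\mu\in\mathcal{M}_{f}(\Lambda_{\varepsilon})\subset\mathcal{M}_{f}^{e}(M)$ with $\int\psi^{u}_{\Lambda_{\varepsilon}}d\mu=s'$ and $h_{\mu}(f)=h'$, whence $P^{u}(\mu)=h'-s'=r$. (If $\psi^{u}_{\Lambda_{\varepsilon}}$ happens to be cohomologous to a constant on $\Lambda_{\varepsilon}$, argue as in the degenerate case of (1) inside $\Lambda_{\varepsilon}$.) The endpoint $P^{u}(\nu)$ is attained by $\nu$ itself; combined with Ruelle's inequality this gives the ``in particular'' two--sided bound.

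\emph{Main difficulty.} The one genuinely non--formal ingredient is the Lyapunov--refined Katok approximation used in (2): approximating a hyperbolic ergodic measure by horseshoes so as to control \emph{simultaneously} the weak$^{*}$ distance, the entropy, and the integral $\int\sum_{i}\chi_{i}^{+}$; this is exactly where the $C^{1+\alpha}$ hypothesis, or the dominated--splitting hypothesis, is used. In (1) the only inputs beyond formal manipulation are the identity $P_{\mathrm{top}}(f,-\psi^{u})=0$ and the bookkeeping for the degenerate case $\psi^{u}\sim\mathrm{const}$ (e.g. Anosov toral automorphisms); everything else is a transfer of Theorems~\ref{thm-continuous}/\ref{thm-almost} through the affine functional $L(x,h)=h-x$.
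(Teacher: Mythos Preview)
Your argument is correct and follows essentially the same route as the paper: both reduce $P^{u}$ to the affine map $L(x,h)=h-x$ applied to $\mathcal{T}_{\psi^{u},f}$, invoke the ``ergodic measures fill the relative interior'' principle (Theorem~\ref{thm-almost} / Lemma~\ref{lemma-relative}) to make $\{P^{u}(\mu):\mu\in\mathcal{M}_f^{e}\}$ an interval, and for part~(2) localise to Katok horseshoes with Lyapunov control before repeating the argument. The paper packages this into Lemma~\ref{lemma-locally-press} (stated for an arbitrary transitive locally maximal hyperbolic set) and then applies it as a black box to both $M$ and the horseshoes $\Lambda_{\varepsilon}$, whereas you argue directly on each horseshoe with Theorem~\ref{thm-continuous}(III); the content is the same.

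Two minor points. First, your appeal to ``the classical zero--pressure identity $P_{\mathrm{top}}(f,-\psi^{u})=0$'' is not quite classical in the stated $C^{1}$ setting: for merely $C^{1}$ Anosov diffeomorphisms this is exactly what the paper extracts from the SRB--like measures of \cite{CatsEnri2011} together with the Pesin formula of \cite{CatsCerEnri2011}, so you should cite those rather than treat it as folklore. Second, the paper's relative--interior framework (Lemma~\ref{lemma-relative}) handles the degenerate case $\psi^{u}\sim\text{const}$ uniformly, while you branch on it and invoke Theorem~\ref{thm-continuous}(III); your reduction there is correct but is really just Lemma~\ref{Lemma-inter-entropy} ($\{h_{\mu}:\mu\in\mathcal{M}_f^{e}\}=[0,\htop(f)]$) rather than Theorem~\ref{thm-continuous}(III) per se.
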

\begin{Rem}
	An invariant measure $\mu$ is said to satisfy the Pesin entropy formula, if $P^{u}(\mu)= 0.$ From Corollary \ref{thm-pressure}, the ergodic measures that do not satisfy the Pesin entropy formula are very abundant.
\end{Rem}
In \cite{Sun2019} the author proved that for any system $(X,f)$ satisfying the  approximate product property  and asymptotically entropy expansiveity, and any continuous function $\varphi\in C(X)$, one has $$(\inf_{\mu\in \mathcal{M}_f(X)}P_\varphi(\mu),\sup_{\mu\in \mathcal{M}_f(X) }P_\varphi(\mu)]\subset \{P_\varphi(\mu):\mu\in\mathcal{M}_f^e(X)\},$$ where $P_\varphi(\mu)=h_\mu(f)+\int \varphi d\mu.$ Using this result, we can obtain 
$(\inf\limits_{\mu\in \mathcal{M}_f(X)}P^u(\mu),0]\subset \{P^u(\mu):\mu\in\mathcal{M}_f^e(X)\}$ under the assumption of Corollary \ref{thm-pressure}. However, we can not obtain  Corollary \ref{thm-pressure}, since $-\psi^u_{\sup}\leq \inf\limits_{\mu\in \mathcal{M}_f(X)}P^u(\mu)$.  In this paper, we  will use Theorem \ref{thm-continuous} on on $\psi^u(x)=\log|\det D_xf|_{E_x^{u}}|$  to show that $\{P^u(\mu):\mu\in\mathcal{M}_f^e(X)\}$ is an interval and  $-\psi^u_{\sup}\in \overline{ \{P^u(\mu):\mu\in\mathcal{M}_f^e(X)\}}$, and thus $-\psi^u_{\sup}= \inf\limits_{\mu\in \mathcal{M}_f(X)}P^u(\mu)$.

\subsubsection{Intermediate unstable Hausdorff  dimension of ergodic measures}  
Given an invariant measure $\mu \in\mathcal{M}_f(M)$,  define the unstable dimension of $\mu$  by $\dim^{u}_H\mu=\frac{h_{\mu}(f)}{ \int \sum_{i=1}^{\operatorname{dim} M} \chi_i^{+} d\mu}.$
Then by Ruelle’s inequality \cite{Ruelle1978},  one has 
$\dim^{u}_H\mu\leq 1$ for any $\mu\in \mathcal{M}_f(M).$  Using Theorem \ref{thm-continuous} to $\psi^u$, we show that $f$ has intermediate unstable Hausdorff dimension of ergodic measures.
\begin{maincorollary}\label{thm-inter-huasdorff-3}
	Let $f: M \mapsto M$ be a $C^{1}$ diffeomorphism on a compact Riemannian manifold $M$. 
	\begin{enumerate}
		\item [(1)] If $f: M \mapsto M$ is a transitive  Anosov diffeomorphism, then $$\{\dim^{u}_H\mu:\mu\in \mathcal{M}_f^e(M)\}=\{\dim^{u}_H\mu:\mu\in \mathcal{M}_f(M)\}= [0,1].$$ 
		Moreover, given $d\in \mathbb{N}$ and $\Phi_{d}=\{\varphi_i\}_{i=1}^{d}\subset C(M)$, then for any $a\in \mathrm { relint } (\mathcal{P}_{\Phi_{d}}(\mathcal{M}_f(M))),$ we have $\dim_H^u(a):=\sup\limits_{\mu\in \mathcal{M}_f^e(M)\cap   \mathcal{P}_{\Phi_{d}}^{-1}(a)}\dim_H^u\mu>0$  and $$[0,\dim_H^u(a))\subset \{\dim_H^u\mu:\mu\in \mathcal{M}_f^e(M)\cap   \mathcal{P}_{\Phi_{d}}^{-1}(a)\}\subset  [0,\dim_H^u(a)],$$ $$\{\dim_H^u\mu:\mu\in \mathcal{M}_f(M)\cap   \mathcal{P}_{\Phi_{d}}^{-1}(a)\}= [0,\dim_H^u(a)].$$
		\item [(2)] Let $\nu$ be an $f$-invariant ergodic hyperbolic measure. If $f$ is $C^{1+\alpha}$ for some $0<\alpha<1$ or the Oseledec splitting of $\nu$ is dominated,   then  $$\{\dim^{u}_H\mu:\mu\in \mathcal{M}_f^e(M)\}\supset [0,\dim_H^u\nu].$$ In particular, if further $\dim_H^u\nu=1$, then $$\{\dim^{u}_H\mu:\mu\in \mathcal{M}_f^e(M)\}=\{\dim^{u}_H\mu:\mu\in \mathcal{M}_f(M)\}= [0,1].$$
	\end{enumerate}
\end{maincorollary}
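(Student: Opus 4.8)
The plan is to reduce the unstable dimension to a ratio governed by Theorem~\ref{thm-continuous} (and, for the $\Phi_d$-constrained statement, by its multi-function refinement Theorem~\ref{thm-almost}), and then transport the interval structure of the entropy/integral picture through the change of variables $(a,h)\mapsto h/a$. Write $\psi^u(x)=\log|\det D_xf|_{E_x^u}|$; this is continuous on $M$ and strictly positive, since $E^u$ is a nontrivial, uniformly expanded subbundle, so $\psi^u\geq(\dim E^u)\log\lambda>0$ for the hyperbolicity rate $\lambda>1$. Because $f$ is Anosov, the positive Lyapunov exponents of every $\mu\in\mathcal M_f(M)$ are exactly those along $E^u$, so $\int\sum_{i=1}^{\dim M}\chi_i^+\,d\mu=\int\psi^u\,d\mu$ and hence $\dim_H^u\mu=h_\mu(f)/\!\int\psi^u\,d\mu$ for every $\mu$. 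Ruelle's inequality \cite{Ruelle1978} gives $\dim_H^u\mu\in[0,1]$, which already yields the inclusions ``$\subseteq[0,1]$'' in part~(1) and the easy half of part~(2). The same identities hold with $M$ replaced by a transitive locally maximal hyperbolic set $\Lambda$ and $\psi^u$ replaced by $\log|\det Df|_{E_\Lambda^u}|$.

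The one geometric input that is not formal is: $P_{\mathrm{top}}(-\psi^u)=0$ for every $C^1$ transitive Anosov diffeomorphism. For $C^{1+\alpha}$ this is Bowen--Ruelle: the SRB measure is the equilibrium state of $-\psi^u$ and satisfies Pesin's formula. For $C^1$ one obtains it by a Bowen-ball covering argument: the $u$-volume of an $(n,\varepsilon_0)$-Bowen ball at $x$ is comparable to $\varepsilon_0^{\dim E^u}/|\det D_xf^n|_{E^u}|$ up to a factor $e^{\pm n\omega(\varepsilon_0)}$, where $\omega$ is a modulus of continuity of $\psi^u$; summing over an $(n,\varepsilon_0)$-separated set and letting $\varepsilon_0\to0$ pins $P_{\mathrm{top}}(-\psi^u)$ to $0$. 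Since $(M,f)$ is expansive the entropy map is upper semicontinuous, so $-\psi^u$ has an ergodic equilibrium state $\mu^\ast$; it satisfies $h_{\mu^\ast}(f)=\int\psi^u\,d\mu^\ast$, i.e. $\dim_H^u\mu^\ast=1$. Thus the value $1$ is attained by an ergodic measure and $\sup_\mu\dim_H^u\mu=1$.

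For the intermediate values in part~(1) I apply Theorem~\ref{thm-continuous} to $\varphi=\psi^u$ in the nondegenerate case $\mathrm{Int}(\mathcal P_{\psi^u}(\mathcal M_f(M)))\neq\emptyset$: items~(III)--(IV) say that every point of $\mathrm{Int}(\mathcal T_{\psi^u,f}(\mathcal M_f(M)))$ is realized by an ergodic measure, and the continuous map $(a,h)\mapsto h/a$ (well defined because $\int\psi^u\,d\mu$ is bounded below by a positive constant) sends this open convex set onto the open interval $(0,\sup_\mu\dim_H^u\mu)=(0,1)$; together with the periodic measures (giving $0$) and $\mu^\ast$ (giving $1$) this shows $\{\dim_H^u\mu:\mu\in\mathcal M_f^e(M)\}=[0,1]$, and the same upper bound handles the invariant-measure side. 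If $\psi^u$ is cohomologous to a constant $c$ (for instance for hyperbolic toral automorphisms), then $\dim_H^u\mu\equiv h_\mu(f)/c$ and $h_{\mathrm{top}}(f)=c$ because $P_{\mathrm{top}}(-\psi^u)=h_{\mathrm{top}}(f)-c=0$; since $h_{\mathrm{top}}(f)>0$ there is a continuous function with nondegenerate integral range (Proposition~\ref{proposition-AD}), and Theorem~\ref{thm-continuous}(II) applied to it, together with the measure of maximal entropy and horseshoes, gives $\{h_\mu(f):\mu\in\mathcal M_f^e(M)\}=[0,c]$, hence again $[0,1]$. The $\Phi_d$-constrained statement is identical, replacing Theorem~\ref{thm-continuous} by Theorem~\ref{thm-almost} for $\{\psi^u\}\cup\Phi_d$, restricting to the slice $\mathcal P_{\Phi_d}^{-1}(a)$, and using that for $a\in\mathrm{relint}(\mathcal P_{\Phi_d}(\mathcal M_f(M)))$ the level set of $\Phi_d$ at $a$ has positive topological entropy (conditional variational principle), which forces $\dim_H^u(a)>0$.

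For part~(2): if $h_\nu=0$ then $\dim_H^u\nu=0$ and there is nothing to prove, so assume $h_\nu>0$. Under the stated hypotheses ($C^{1+\alpha}$, Katok's horseshoe theorem \cite{Katok}; or dominated Oseledec splitting, its analogue) $\nu$ is approximated, in entropy and in Lyapunov exponents, by transitive locally maximal hyperbolic sets $\Lambda_k$: the measure of maximal entropy $\mu_k$ of $\Lambda_k$ has $\dim_H^u\mu_k=h_{\mathrm{top}}(f|_{\Lambda_k})/\!\int\psi^u_{\Lambda_k}\,d\mu_k\to h_\nu/\!\int\sum\chi_i^+\,d\nu=\dim_H^u\nu$. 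Applying the argument of part~(1) to each transitive topologically Anosov system $(\Lambda_k,f)$ — where now $P_{\mathrm{top}}(-\psi^u_{\Lambda_k})\leq0$ need not vanish — gives $\{\dim_H^u\mu:\mu\in\mathcal M_f^e(\Lambda_k)\}=[0,D_k]$, where $D_k:=\sup_{\mu\in\mathcal M_f(\Lambda_k)}\dim_H^u\mu$ is attained by an ergodic measure (the maximizers of the upper semicontinuous ratio $h/\!\int\psi^u$ form a face of $\mathcal M_f(\Lambda_k)$, whose extreme points are ergodic) and $D_k\geq\dim_H^u\mu_k$. Since $\mathcal M_f^e(\Lambda_k)\subseteq\mathcal M_f^e(M)$ and $\limsup_k D_k\geq\dim_H^u\nu$, we get $\{\dim_H^u\mu:\mu\in\mathcal M_f^e(M)\}\supseteq\bigcup_k[0,D_k]\supseteq[0,\dim_H^u\nu)$, and $\nu\in\mathcal M_f^e(M)$ supplies the endpoint; the ``in particular'' clause follows by combining this with $\dim_H^u\mu\leq1$. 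The main obstacle is the $C^1$ case of $P_{\mathrm{top}}(-\psi^u)=0$, together with the bookkeeping that turns ``intermediate entropy at a fixed value of $\int\psi^u$'' (Theorem~\ref{thm-continuous}(II)--(IV)) into ``intermediate values of the ratio $\dim_H^u$'' and the treatment of the degenerate case where $\psi^u$ is cohomologous to a constant; the remainder is a formal consequence of Theorem~\ref{thm-continuous}/\ref{thm-almost} and Ruelle's inequality.
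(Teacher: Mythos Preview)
Your approach is essentially the same as the paper's: identify $\dim_H^u\mu=h_\mu(f)/\!\int\psi^u\,d\mu$, push the convex picture $\mathcal T_{\psi^u,f}(\mathcal M_f(M))$ through the continuous map $(a,h)\mapsto h/a$ to see that $\{\dim_H^u\mu:\mu\in\mathcal M_f^e\}$ is an interval containing $0$, and then pin the right endpoint at $1$. For part~(2) both you and the paper run Katok's horseshoe approximation and apply the hyperbolic case on each $\Lambda_\varepsilon$.

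The one substantive difference is how the value $1$ is reached by an ergodic measure in the $C^1$ Anosov case. The paper does not compute $P_{\mathrm{top}}(-\psi^u)$ directly; instead it invokes the SRB-like measure theory of Catsigeras--Enrich \cite{CatsEnri2011} together with \cite[Corollary~2]{CatsCerEnri2011}, which guarantees an invariant measure satisfying Pesin's formula for any $C^1$ diffeomorphism with dominated splitting, and then passes to an ergodic component. Your Bowen-ball volume argument for $P_{\mathrm{top}}(-\psi^u)=0$ is morally the same computation done by hand, and is correct, but the sub-exponential distortion step in $C^1$ is exactly the nontrivial content packaged in those references; citing them is cleaner than redoing it. A second, purely organizational difference: the paper absorbs your separate treatment of the degenerate case ($\psi^u$ cohomologous to a constant) into a single lemma (their Lemma~\ref{lemma-relative}) phrased with relative interiors, which handles all affine dimensions of $\mathcal P_{\psi^u}(\mathcal M_f(M))$ uniformly and also yields the $\Phi_d$-constrained statement without extra case analysis.
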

\begin{Rem}
	A central topic in differential dynamical systems is studying a class of 'good' measures, called Sinai-Ruelle-Bowen measures, which satisfy $\dim^{u}_H\mu=1$. From Corollary \ref{thm-inter-huasdorff-3}, the 'bad' measures are very abundant.
\end{Rem}
The proof of Corollary  \ref{thm-inter-huasdorff-3} is similar to Corollary \ref{thm-pressure}. We  use Theorem \ref{thm-continuous} on $\psi^u(x)=\log|\det D_xf|_{E_x^{u}}|$ to show  $\{\dim^{u}_H\mu:\mu\in \mathcal{M}_f^e(M)\}$ is an interval containing $[0,\sup\limits_{\mu\in\mathcal{M}_f^e(M)}\dim_H^u\mu)$. Combining with Ruelle’s inequality and the result of \cite[Corollary 2]{CatsCerEnri2011}, we have  $\max\limits_{\mu\in\mathcal{M}_f^e(M)}\dim_H^u\mu=1$ and thus get  Corollary  \ref{thm-inter-huasdorff-3}.

\subsubsection{Intermediate Hausdorff  dimension of generic points of ergodic measures}
Let $f: M \mapsto M$ be a $C^1$ diffeomorphism on a compact surface $M$. Then it has only two Lyapunov exponents $\chi_u(\nu)>0$ and $\chi_s(\nu)<0$ with respect to each $\nu \in \mathcal{M}_f^e(M)$.
Given $\mu\in\mathcal{M}_f(M),$ recall the set of generic points of $\mu$ is $G_\mu=\{x\in M:\lim\limits_{n\to \infty}\frac{1}{n}\sum_{i=0}^{n-1}\varphi(f^i(x))=\int \varphi d\mu \text{ for any continuous }\varphi\}$ In \cite{Manning}, Manning proved that  the Hausdorff dimension, denoted by $\delta_\mu,$ of $
G_\mu \cap W_{\mathrm{loc}}^u(x)$ is independent of  $x \in M$, and  $\delta_\mu=\frac{h_{\mu}(f)}{ \chi_u(\mu) }=\dim^{u}_H\mu,$ if $f: M \mapsto M$ is a $C^1$ transitive Anosov diffeomorphism on a compact surface  $M$ and $\mu$ is ergodic. 
 By Corollary \ref{thm-inter-huasdorff-3},  $f$ has intermediate  Hausdorff dimension of generic points of ergodic measures.
\begin{maincorollary}
	Let $f: M \mapsto M$ be a $C^1$ transitive Anosov diffeomorphism on a compact surface $M$. Then $$\{\delta_\mu:\mu\in \mathcal{M}_f^e(M)\}= [0,1].$$ Moreover, given $d\in \mathbb{N}$ and $\Phi_{d}=\{\varphi_i\}_{i=1}^{d}\subset C(M)$, then for any $a\in \mathrm { relint } (\mathcal{P}_{\Phi_{d}}(\mathcal{M}_f(M))),$ we have $\delta(a):=\sup\limits_{\mu\in \mathcal{M}_f^e(M)\cap   \mathcal{P}_{\Phi_{d}}^{-1}(a)}\delta_\mu>0$  and $[0,\delta(a))\subset \{\delta_\mu:\mu\in \mathcal{M}_f^e(M)\cap   \mathcal{P}_{\Phi_{d}}^{-1}(a)\}\subset  [0,\delta(a)].$
\end{maincorollary}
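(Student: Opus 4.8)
The plan is to obtain the statement as a direct consequence of Manning's formula together with Corollary~\ref{thm-inter-huasdorff-3}(1). The first step is to identify $\delta_\mu$ with the unstable dimension $\dim_H^u\mu$ on ergodic measures. On a compact surface every $\mu\in\mathcal{M}_f^e(M)$ has exactly two Lyapunov exponents, and for a transitive Anosov $f$ these are $\chi_u(\mu)>0>\chi_s(\mu)$; hence $\int\sum_{i=1}^{\dim M}\chi_i^+\,d\mu=\chi_u(\mu)$ and $\dim_H^u\mu=h_\mu(f)/\chi_u(\mu)$. Comparing with Manning's theorem \cite{Manning} recalled just before the corollary, which gives $\delta_\mu=h_\mu(f)/\chi_u(\mu)=\dim_H^u\mu$ for every ergodic $\mu$ (both sides being $0$ when $h_\mu(f)=0$, since $\chi_u(\mu)$ is bounded away from $0$), we conclude $\delta_\mu=\dim_H^u\mu$ for all $\mu\in\mathcal{M}_f^e(M)$. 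In particular $\{\delta_\mu:\mu\in\mathcal{M}_f^e(M)\}=\{\dim_H^u\mu:\mu\in\mathcal{M}_f^e(M)\}$, and the right-hand side equals $[0,1]$ by Corollary~\ref{thm-inter-huasdorff-3}(1). This proves the first assertion.

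For the refinement, fix $d\in\mathbb{N}$, $\Phi_d=\{\varphi_i\}_{i=1}^d\subset C(M)$ and $a\in\mathrm{relint}(\mathcal{P}_{\Phi_d}(\mathcal{M}_f(M)))$. Intersecting the identity $\delta_\mu=\dim_H^u\mu$ with the slice $\mathcal{P}_{\Phi_d}^{-1}(a)$ yields $\delta(a)=\sup\{\dim_H^u\mu:\mu\in\mathcal{M}_f^e(M)\cap\mathcal{P}_{\Phi_d}^{-1}(a)\}$, which is precisely $\dim_H^u(a)$ in the notation of Corollary~\ref{thm-inter-huasdorff-3}(1). That corollary asserts $\dim_H^u(a)>0$ and $[0,\dim_H^u(a))\subset\{\dim_H^u\mu:\mu\in\mathcal{M}_f^e(M)\cap\mathcal{P}_{\Phi_d}^{-1}(a)\}\subset[0,\dim_H^u(a)]$; transporting these inclusions across $\delta_\mu=\dim_H^u\mu$ gives $\delta(a)>0$ and $[0,\delta(a))\subset\{\delta_\mu:\mu\in\mathcal{M}_f^e(M)\cap\mathcal{P}_{\Phi_d}^{-1}(a)\}\subset[0,\delta(a)]$, as required.

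There is essentially no obstacle internal to this argument: all of the dynamical work is already carried by Corollary~\ref{thm-inter-huasdorff-3} (which in turn applies Theorem~\ref{thm-almost}, resp.\ Theorem~\ref{thm-continuous}, to $\psi^u(x)=\log|\det D_xf|_{E_x^{u}}|$, and uses $\max_{\mu\in\mathcal{M}_f^e(M)}\dim_H^u\mu=1$) and by Manning's theorem. The only genuine care is the bookkeeping of the identification $\delta_\mu=\dim_H^u\mu$, which must be verified for all ergodic measures including those of zero entropy; and it is worth remarking that, unlike in Theorem~\ref{thm-inter-huasdorff}, no conformality hypothesis is needed here, because on a surface $E^u$ and $E^s$ are one-dimensional, so conformality along each of them is automatic and the defining quotient for $\dim_H^u\mu$ reduces to $h_\mu(f)/\chi_u(\mu)$. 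Thus the corollary may be read simply as the surface specialization of the unstable-dimension statement in Corollary~\ref{thm-inter-huasdorff-3}(1), combined with Manning's identity $\delta_\mu=\dim_H^u\mu$.
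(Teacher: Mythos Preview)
Your proposal is correct and matches the paper's own argument essentially verbatim: the paper derives this corollary immediately from Manning's identity $\delta_\mu=h_\mu(f)/\chi_u(\mu)=\dim_H^u\mu$ for ergodic $\mu$ together with Corollary~\ref{thm-inter-huasdorff-3}(1), just as you do.
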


The relationships between the main statements in this paper may be represented by the following diagram:
\begin{figure}[htbp]
	\centering
	\includegraphics[width=14cm]{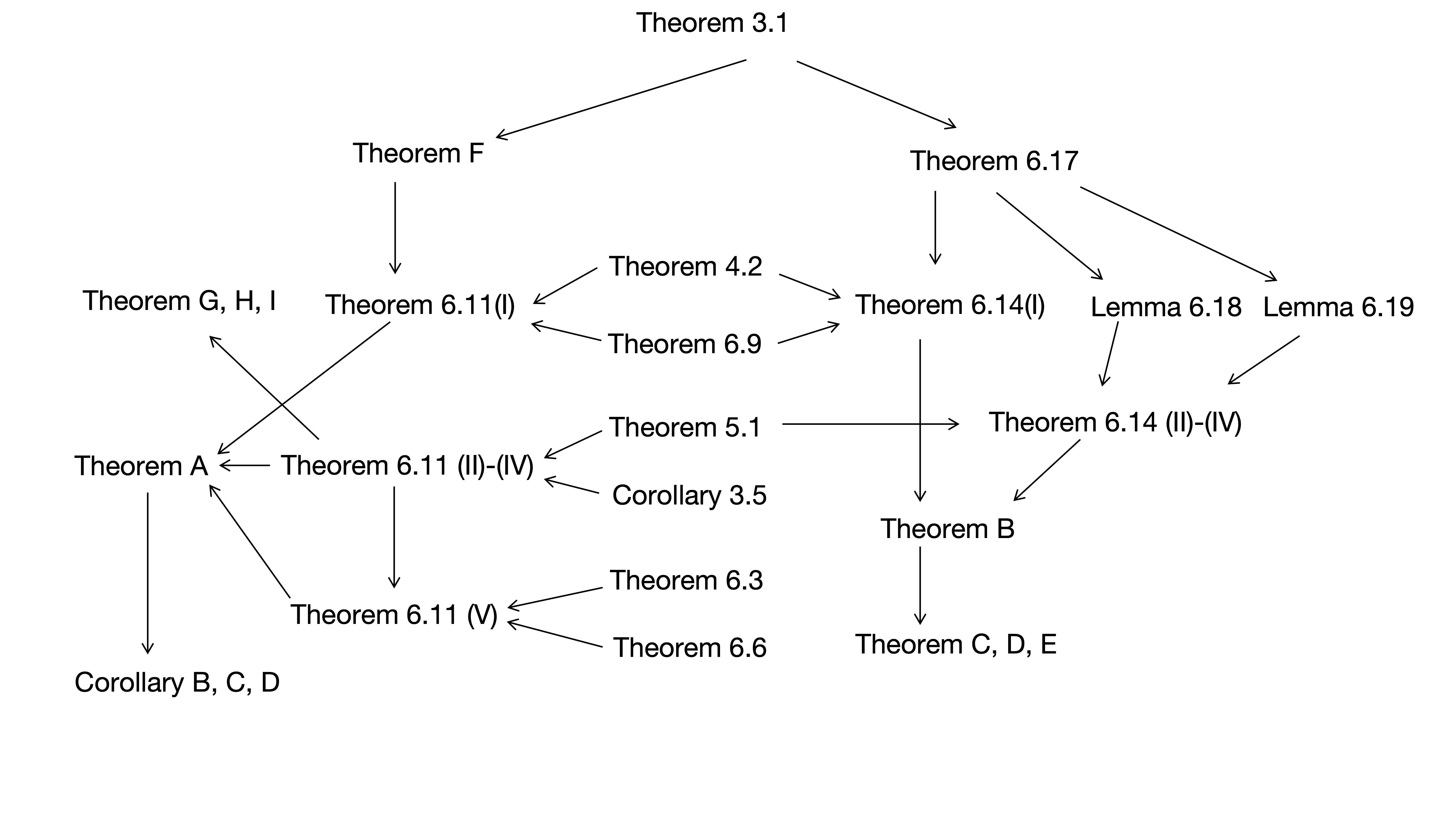}
	\caption{Relationships between the main statements}
\end{figure}

\textbf{Outline of the paper.}  
Section \ref{section-preli} is a review of deﬁnitions to make precise statements of the theorems and their proofs.
In Section \ref{section-entropy-dense} we prove that the 'multi-horseshoe' entropy-dense property   holds for transitive topologically Anosov system.
In Section \ref{Almost Additive} and \ref{section-almost2},
we give  abstract conditions on which the results of Theorem \ref{thm-continuous} hold in the more general context of asymptotically  additive sequences of continuous functions (see Theorem \ref{thm-Almost-Additive} and Theorem \ref{thm-Almost-Additive2}).
In Section \ref{section-thm} by 'multi-horseshoe' entropy-dense property and conditional variational principles  we show that the abstract conditions given in  Section \ref{Almost Additive} and \ref{section-almost2} are satisfied for transitive topologically Anosov system and the framework of Theorem \ref{thm-continuous-2}, and thus we obtain Theorem \ref{thm-continuous}
and  \ref{thm-continuous-2}. Then we give the proofs of Theorem \ref{maintheorem-skew}, \ref{maintheorem-cocycle} and \ref{maintheorem-robust} by using Theorem \ref{thm-continuous-2}. In Section \ref{section-inter-huasdorff}, we  give the proofs of Theorem \ref{thm-inter-huasdorff}-\ref{thm-first-return}. In Section \ref{section-Applications}, we  give the proofs of Corollary \ref{thm-pressure} and \ref{thm-inter-huasdorff-3}.

\section{Preliminaries}\label{section-preli}
\subsection{The space of probability measures}\label{section-space of measure}
Consider  a compact metric space $(X,d).$ The space of Borel probability measures on $X$ is denoted by $\mathcal{M}(X)$ and the set of continuous functions on $X$ by $C(X)$.   We endow $\varphi\in C(X)$ the norm $\|\varphi\|=\max\{|\varphi(x)|:x\in X\}$.
Let ${\{\varphi_{j}\}}_{j\in\mathbb{N}}$ be a dense subset of $C(X)$,   then
$$\rho(\xi,  \tau)=\sum_{j=1}^{\infty}\frac{|\int\varphi_{j}d\xi-\int\varphi_{j}d\tau|}{2^{j}\|\varphi_{j}\|}$$
defines a metric on $\mathcal{M}(X)$ for the $weak^{*}$ topology  \cite{Walters}.
For $\nu\in \mathcal{M}(X)$ and $r>0$,   we denote a ball in $\mathcal{M}(X)$ centered at $\nu$ with radius $r$ by
$\mathcal{B}(\nu,  r):=\{\mu\in \mathcal{M}(X):\rho(\nu,  \mu)<r\}.  $
One notices that
\begin{equation}\label{diameter-of-Borel-pro-meas}
	\rho(\xi,  \tau)\leq2~~\textrm{for any}~~\xi,  \tau\in \mathcal{M}(X).
\end{equation}
It is also well known that the natural imbedding $j:x\mapsto \delta_x$ is continuous.   Since $X$ is compact and $\mathcal{M}(X)$ is Hausdorff,   one sees that there is a homeomorphism between $X$ and its image $j(X)$.   Therefore,   without loss of generality
we will assume that
\begin{equation}\label{metric-on-X}
	d(x,  y)=\rho(\delta_x,  \delta_y).
\end{equation}
For $x\in X$ and $\varepsilon>0$,   we denote a ball in $X$ centered at $x$ with radius $\varepsilon$ by
$B(x,\varepsilon):=\{y\in X:d(x,y)<\varepsilon\}.$
A straight calculation using \eqref{diameter-of-Borel-pro-meas} and \eqref{metric-on-X} gives
\begin{Lem}\label{lem:prohorov}
	For any $\varepsilon > 0,\delta >0$, and $\{x_i\}_{i=0}^{n-1},\{y_i\}_{i=0}^{n-1}\subset X$, if $d(x_i,y_i)<\varepsilon$ holds for any $0\leq i\leq n-1$, then for any $J\subseteq \{0,1,\cdots,n-1\}$ with $\frac{n-|J|}{n}<\delta$, one has $\rho(\frac{1}{n}\sum_{i=0}^{n-1}\delta_{x_i},\frac{1}{|J|}\sum_{i\in J}\delta_{y_i})<\varepsilon+2\delta.$
\end{Lem}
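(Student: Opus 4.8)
The plan is to estimate $\rho\bigl(\frac1n\sum_{i=0}^{n-1}\delta_{x_i},\,\frac1{|J|}\sum_{i\in J}\delta_{y_i}\bigr)$ by splitting the difference through the intermediate measure $\frac1n\sum_{i=0}^{n-1}\delta_{y_i}$ and using the triangle inequality for $\rho$. First I would bound $\rho\bigl(\frac1n\sum_{i=0}^{n-1}\delta_{x_i},\,\frac1n\sum_{i=0}^{n-1}\delta_{y_i}\bigr)$: for any test function $\varphi_j$ from the fixed dense sequence, $\bigl|\int\varphi_j\,d(\frac1n\sum\delta_{x_i})-\int\varphi_j\,d(\frac1n\sum\delta_{y_i})\bigr|=\bigl|\frac1n\sum_{i}(\varphi_j(x_i)-\varphi_j(y_i))\bigr|$, which, by the definition \eqref{metric-on-X} $d(x,y)=\rho(\delta_x,\delta_y)$ and the assumption $d(x_i,y_i)<\varepsilon$, yields $\rho(\delta_{x_i},\delta_{y_i})<\varepsilon$ for each $i$; averaging and summing the $\rho$-series termwise gives $\rho\bigl(\frac1n\sum\delta_{x_i},\frac1n\sum\delta_{y_i}\bigr)<\varepsilon$. (Concretely: $\rho$ is convex in each argument and $\rho(\delta_{x_i},\delta_{y_i})<\varepsilon$, so the barycentric inequality $\rho(\frac1n\sum\delta_{x_i},\frac1n\sum\delta_{y_i})\le\frac1n\sum_i\rho(\delta_{x_i},\delta_{y_i})<\varepsilon$ closes this step.)

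Next I would bound the "dropping indices" error $\rho\bigl(\frac1n\sum_{i=0}^{n-1}\delta_{y_i},\,\frac1{|J|}\sum_{i\in J}\delta_{y_i}\bigr)$. Write $\mu_1=\frac1n\sum_{i=0}^{n-1}\delta_{y_i}$ and $\mu_2=\frac1{|J|}\sum_{i\in J}\delta_{y_i}$. Then $\mu_1=\frac{|J|}{n}\mu_2+\frac{n-|J|}{n}\nu$ where $\nu=\frac1{n-|J|}\sum_{i\notin J}\delta_{y_i}$ is a probability measure (if $J=\{0,\dots,n-1\}$ there is nothing to prove). Hence $\mu_1-\mu_2=\frac{n-|J|}{n}(\nu-\mu_2)$, so by \eqref{diameter-of-Borel-pro-meas} $\rho(\mu_1,\mu_2)\le\frac{n-|J|}{n}\,\rho(\nu,\mu_2)\le\frac{n-|J|}{n}\cdot 2<2\delta$, using the hypothesis $\frac{n-|J|}{n}<\delta$. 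Combining the two estimates via the triangle inequality gives $\rho\bigl(\frac1n\sum_{i=0}^{n-1}\delta_{x_i},\,\frac1{|J|}\sum_{i\in J}\delta_{y_i}\bigr)<\varepsilon+2\delta$, as claimed.

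I do not expect a genuine obstacle here; the only point requiring a little care is the convexity/barycenter bound $\rho\bigl(\frac1n\sum_i\alpha_i,\frac1n\sum_i\beta_i\bigr)\le\frac1n\sum_i\rho(\alpha_i,\beta_i)$, which follows directly from the explicit series formula for $\rho$ by moving the average inside each absolute value $\bigl|\int\varphi_j\,d(\frac1n\sum\alpha_i)-\int\varphi_j\,d(\frac1n\sum\beta_i)\bigr|\le\frac1n\sum_i|\int\varphi_j\,d\alpha_i-\int\varphi_j\,d\beta_i|$ and summing over $j$. Everything else is the triangle inequality and the uniform bound \eqref{diameter-of-Borel-pro-meas}. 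The statement is exactly the Prohorov-type stability of empirical measures under small perturbation of points plus deletion of a small fraction of them, and the argument above makes the two error contributions $\varepsilon$ and $2\delta$ transparent.
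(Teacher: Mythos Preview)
Your proof is correct and is precisely the ``straight calculation using \eqref{diameter-of-Borel-pro-meas} and \eqref{metric-on-X}'' that the paper alludes to without spelling out; the paper gives no explicit proof, but your two-step decomposition through $\frac{1}{n}\sum_i\delta_{y_i}$, using \eqref{metric-on-X} and convexity of $\rho$ for the $\varepsilon$ term and \eqref{diameter-of-Borel-pro-meas} for the $2\delta$ term, is exactly what is intended.
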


\begin{Prop}\label{proposition-AD}
	Suppose that $(X,f)$ is a dynamical system.  If $\sharp \mathcal{M}_f(X)>1,$ then the set $\mathcal C^*=\{\varphi\in C(X):\mathrm{Int}(\mathcal{P}_\varphi(\mathcal{M}_f(X)))\neq\emptyset\}$ is an open and dense subset in $C(X)$.
\end{Prop}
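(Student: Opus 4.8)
The plan is to recognize $\mathcal{C}^*$ as the complement of a proper closed linear subspace of the Banach space $C(X)$, after which both openness and density are immediate. First I would observe that for any $\varphi\in C(X)$ the set $\mathcal{P}_\varphi(\mathcal{M}_f(X))=\{\int\varphi\,d\mu:\mu\in\mathcal{M}_f(X)\}$ is the image of the convex compact set $\mathcal{M}_f(X)$ (convex since a convex combination of invariant measures is invariant, compact in the weak$^*$ topology) under the continuous affine map $\mathcal{P}_\varphi$, hence a compact subinterval $[\alpha_\varphi,\beta_\varphi]$ of $\mathbb{R}$. Its interior $(\alpha_\varphi,\beta_\varphi)$ is nonempty precisely when $\alpha_\varphi<\beta_\varphi$, i.e.\ exactly when $\mathcal{P}_\varphi$ is not constant on $\mathcal{M}_f(X)$. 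Consequently $\mathcal{C}^*=C(X)\setminus N$, where $N:=\{\psi\in C(X):\int\psi\,d\mu=\int\psi\,d\nu\ \text{for all}\ \mu,\nu\in\mathcal{M}_f(X)\}$.

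Next I would check that $N$ is a closed linear subspace. Writing $N=\bigcap_{\mu,\nu\in\mathcal{M}_f(X)}\ker\ell_{\mu,\nu}$ with $\ell_{\mu,\nu}(\psi)=\int\psi\,d\mu-\int\psi\,d\nu$, each $\ell_{\mu,\nu}$ is a bounded linear functional on $C(X)$, so $N$ is an intersection of closed hyperplanes, hence a closed subspace. Moreover $N$ is \emph{proper}: since $\sharp\,\mathcal{M}_f(X)>1$ we may fix distinct $\mu\neq\nu$ in $\mathcal{M}_f(X)$, and because distinct Borel probability measures on a compact metric space are separated by continuous functions (uniqueness in the Riesz representation theorem), there is $g\in C(X)$ with $\int g\,d\mu\neq\int g\,d\nu$; thus $g\notin N$.

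Finally I would invoke the elementary fact that the complement of a proper closed linear subspace $N$ of a Banach space $E$ is open (being the complement of a closed set) and dense: every point of $E\setminus N$ is approximated by itself, while given $v\in N$, $\varepsilon>0$ and a fixed $w\in E\setminus N$, the point $v+\tfrac{\varepsilon}{2\|w\|}w$ lies in $E\setminus N$ (otherwise $\tfrac{\varepsilon}{2\|w\|}w$ would be a difference of two elements of $N$, forcing $w\in N$) and is within $\varepsilon$ of $v$. Applying this with $E=C(X)$ yields that $\mathcal{C}^*=C(X)\setminus N$ is open and dense.

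I do not expect a genuine obstacle; the only nonroutine input is the standard fact that probability measures on a compact metric space are determined by their integrals against $C(X)$, which is what makes $N$ proper. If one prefers to avoid the subspace language, density can be argued directly: given $\varphi\notin\mathcal{C}^*$, the integral $\int\varphi\,d\mu$ is constant over $\mathcal{M}_f(X)$, so with $g$ as above and $\delta>0$ small we have $\int(\varphi+\delta g)\,d\mu\neq\int(\varphi+\delta g)\,d\nu$ and $\|\delta g\|<\varepsilon$, whence $\varphi+\delta g\in\mathcal{C}^*$; openness likewise follows by a small-perturbation estimate on the gap $\beta_\varphi-\alpha_\varphi$.
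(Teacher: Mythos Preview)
Your proof is correct and follows essentially the same route as the paper: both fix distinct $\mu,\nu\in\mathcal{M}_f(X)$, produce a separating function (your $g$, the paper's $\varphi_0$), obtain density via the perturbation $\varphi+\delta g$, and obtain openness via stability of a strict inequality $\int\varphi\,d\mu_1<\int\varphi\,d\mu_2$ under small sup-norm perturbations. Your framing of $\mathcal{C}^*$ as the complement of a proper closed linear subspace $N$ is a clean conceptual wrapper that makes openness automatic, but the underlying content is identical---indeed your final paragraph spells out the paper's direct argument almost verbatim.
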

\begin{proof} 
	Take $\mu\neq\nu\in  \mathcal{M}_f(X).$ Then there is $\varphi_0\in C(X)$ such that $\int \varphi_0d\mu\neq \int \varphi_0d\nu.$
	On one hand, we show $\mathcal C^*$  is  dense in $C(X)\setminus \mathcal C^*.$ Fix $\phi\in C(X)\setminus \mathcal C^*.$ Then $\int \phi d\mu= \int \phi d\nu.$ Take $\phi_n=\frac 1n \varphi_0 + \phi,\,n\geq 1.$ Then $\phi_n$ converges to $\phi$ in sup norm. By construction, it is easy to check that $\int \phi_n d\mu\neq \int \phi_n d\nu.$ That is, $\phi_n\in \mathcal C^*$.
	On the other hand, we prove that $\mathcal C^*$ is open. Fix $\phi\in \mathcal C^*$. Then   there must exist two different invariant measures $\mu_1\neq\mu_2\in  \mathcal{M}_f(X)$ such that $ \int \phi d\mu_1< \int \phi d\mu_2.$  By continuity of sup norm, we can take an open neighborhood of $\phi$, denoted by $U (\phi)$, such that for any $\varphi\in U (\phi),$  $\int \varphi d\mu_1< \int \varphi d\mu_2.$  This implies $U (\phi)\subset \mathcal C^*$ and thus 
	$\mathcal C^*$ is open.
\end{proof}

\subsection{Entropy and dimension}
\subsubsection{Topological entropy and metric entropy}
Now let us  recall the definition of topological entropy in \cite{Bowen1973} by Bowen.  Given a  dynamical system $(X,  f).$  For $x,  y\in X$ and $n\in\N$,   the Bowen distance between $x,  y$ is defined as
$d_n(x,  y):=\max\{d(f^i(x),  f^i(y)):i=0,  1,  \cdots,  n-1\}$
and the Bowen ball centered at $x$ with radius $\eps>0$ is defined as
$B_n(x,  \eps):=\{y\in X:d_n(x,  y)<\eps\}. $
Let $E\subseteq X$,   and $\mathcal {G}_{n}(E,  \sigma)$ be the collection of all finite or countable covers of $E$ by sets of the form $B_{u}(x,  \sigma)$ with $u\geq n$.   We set
$$C(E;t,  n,  \sigma,  f):=\inf_{\mathcal {C}\in \mathcal {G}_{n}(E,  \sigma)}\sum_{B_{u}(x,  \sigma)\in \mathcal {C}}e^{-tu} \,\,\,\text{   and }
C(E;t,  \sigma,  f):=\lim_{n\rightarrow\infty}C(E;t,  n,  \sigma,  f).  $$
Then we define
$\htop(E;\sigma,  f):=\inf\{t:C(E;t,  \sigma,  f)=0\}=\sup\{t:C(E;t,  \sigma,  f)=\infty\}.$
The \textit{Bowen topological entropy} of $E$ is
\begin{equation*}\label{definition-of-topological-entropy}
	\htop(f,  E):=\lim_{\sigma\rightarrow0} \htop(E;\sigma,  f).
\end{equation*}
For convenience, we denote $\htop(f)=\htop(f,X).$

Given $\mu\in\mathcal{M}_f(X).$ Let  $\xi=\{A_1,  \cdots,  A_n\}$ be a ﬁnite partition of measurable sets of $X$,   define
$H_\mu(\xi)=-\sum_{i=1}^n\mu(A_i)\log\mu(A_i).$  We denote by $\bigvee_{i=0}^{n-1}f^{-i}\xi$ the partition whose element is the set $\bigcap_{i=0}^{n-1}f^{-i}A_{j_i},  1\leq j_i\leq n$.   Then the limit
$h_\mu(f,  \xi)=\lim_{n\to\infty}\frac1n H_\mu\left(\bigvee_{i=0}^{n-1}f^{-i}\xi\right)$ exists
and we define the \textit{metric entropy} of $\mu$ as
$$h_{\mu}(f):=\sup\{h_\mu(f,  \xi):\xi~\textrm{is a finite measurable partition of X}\}. $$

\subsubsection{Hausdorff dimension}
Now we recall the definitions of Hausdorff dimension of subsets. Given a subset $Z \subset X$, for any $s \geq 0$, let
$$
\mathcal{H}_\delta^s(Z)=\inf \left\{\sum_{i=1}^{\infty}\left(\operatorname{diam} U_i\right)^s:\left\{U_i\right\}_{i \geq 1} \text { is a cover of } Z \text { with } \operatorname{diam} U_i \leq \delta, \text { for all } i \geq 1\right\}
$$
and
$
\mathcal{H}^s(Z)=\lim _{\delta \rightarrow 0} \mathcal{H}_\delta^s(Z).
$
The above limit exists, though the limit may be infinity. We call $\mathcal{H}^s(Z)$ the $s$-Hausdorff measure of $Z$.
	The following jump-up value of $\mathcal{H}^s(Z)$
	$$
	\operatorname{dim}_H Z=\inf \left\{s: \mathcal{H}^s(Z)=0\right\}=\sup \left\{s: \mathcal{H}^s(Z)=\infty\right\}
	$$
	is called the \textit{Hausdorff dimension} of $Z$.

\subsection{Transitive, mixing, expansive and shadowing property}
Consider a  dynamical system $(X,  f).$
If for every pair of non-empty open sets $U,$ $V$ there is an integer $n$ such that $f^n(U)\cap V\neq \emptyset$ then we call $(X,  f)$ \textit{transitive}.
Furthermore,   if for every pair of non-empty open sets $U,$ $V$ there exists an integer $N$ such that $f^n(U)\cap V\neq \emptyset$ for every $n>N$,   then we call $(X,  f)$ \textit{mixing}. 
When $f:X\to X$ is a homeomorphism  of a compact metric space,   we say that $(X,  f)$ is \emph{expansive} if there exists a constant $c>0$ such that for any $x\neq  y\in X$,   $d(f^i(x),  f^i(y))> c$ for some $i\in\Z$.   We call $c$ the expansive constant.
When $f:X\to X$ is a homeomorphism  of a compact metric space, we say that a subset $Y$ of $X$ is $f$-invariant if $f(Y)= Y.$ 
If $Y$ is a closed $f$-invariant subset of $X,$ then $(Y,f)$ also is a dynamical system. We will call it a subsystem of $(X,f).$ 

A finite sequence $\C=\langle x_1,  \cdots,  x_l\rangle,  l\in\N$ is called a \emph{chain}.   Furthermore,   if $d(f(x_i),  x_{i+1})<\eps,  1\leq i\leq l-1$,   we call $\C$ an \textit{$\eps$-chain with length $l.$}
For any $m\in\N$,   if there are $m$ $\eps$-chains $\mathfrak{C}_i=\langle x_{i,  1},  \cdots,  x_{i,  l_i}\rangle$,   $l_i\in\N,  1\leq i\leq m$ satisfying that $d(f(x_{i,  l_i}),x_{i+1,  1})<\eps,   1\leq i\leq m-1$,   then we can concatenate $\mathfrak{C}_i$s to constitute a new $\eps$-chain
$\langle x_{1,  1},  \cdots,  x_{1,  l_1},  x_{2,  1},  \cdots,  x_{2,  l_2},  \cdots,  x_{m,  1},  \cdots,  x_{m,  l_m}\rangle$
which we denote by $\mathfrak{C}_1\mathfrak{C}_2\cdots\mathfrak{C}_m$.   

\begin{Def}\label{def-shadowing}
	Suppose $f:X\to X$ is a homeomorphism  of a compact metric space. For any $\delta>0$,   a sequence $\{x_n\}_{n\in \Z}$ is called a \textit{$\delta$-pseudo-orbit} if
	$d(f(x_n),  x_{n+1})<\delta~\textrm{for any}~n\in\Z.$ $\{x_n\}_{n\in \Z}$ is \textit{$\eps$-shadowed} by some $y\in X$ if
	$d(f^n(y),  x_n)<\eps~\textrm{for any}~n\in\Z.$
	We say that $(X,  f)$ has the \textit{shadowing property} if for any $\eps>0$,   there exists  $\delta>0$ such that any $\delta$-pseudo-orbit is $\eps$-shadowed by some point in $X$.
\end{Def}
\begin{Lem}\cite[Theorem 2.3.3]{AH}\label{lem-AQ}
	Suppose $f:X\to X$ is a homeomorphism  of a compact metric space. Let $k>0$ be an integer. Then $(X,f)$ has shadowing property if and only if so does
	$(X,f^{k})$.
\end{Lem}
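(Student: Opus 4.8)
The plan is to prove both implications straight from Definition \ref{def-shadowing}, using that $f$ (hence each iterate $f^j$) is uniformly continuous on the compact metric space $X$. Throughout, for $0\le j<k$ fix a modulus of continuity $\omega_j$ for $f^j$, so $d(a,b)<t$ implies $d(f^j(a),f^j(b))<\omega_j(t)$, with $\omega_0=\mathrm{id}$ and $\omega_j(t)\to 0$ as $t\to 0^+$; set $\Omega(t):=\sum_{j=0}^{k-1}\omega_j(t)$, which still tends to $0$ as $t\to 0^+$. Note that $f^k$ is again a homeomorphism, so the statement makes sense, and a $\delta$-pseudo-orbit for $f^k$ is a bi-infinite sequence $\{y_n\}_{n\in\mathbb{Z}}$ with $d(f^k(y_n),y_{n+1})<\delta$.

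First I would show: if $(X,f)$ has the shadowing property, so does $(X,f^k)$. Given $\varepsilon>0$, take $\delta>0$ witnessing the shadowing property of $f$ for $\varepsilon$. Given a $\delta$-pseudo-orbit $\{y_n\}$ for $f^k$, refine it to the sequence $x_{nk+j}:=f^j(y_n)$, $0\le j<k$. Then $d(f(x_m),x_{m+1})=0$ unless $m+1\in k\mathbb{Z}$, in which case it equals $d(f^k(y_n),y_{n+1})<\delta$; so $\{x_m\}$ is a $\delta$-pseudo-orbit for $f$, and a point $z$ $\varepsilon$-shadowing it for $f$ satisfies $d((f^k)^n(z),y_n)=d(f^{nk}(z),x_{nk})<\varepsilon$ for all $n$, i.e.\ $z$ $\varepsilon$-shadows $\{y_n\}$ for $f^k$. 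This direction needs no uniform continuity.

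For the converse, suppose $(X,f^k)$ has the shadowing property; I would show $(X,f)$ does. Given $\varepsilon>0$, choose $\varepsilon'>0$ with $\omega_j(\varepsilon')<\varepsilon/2$ for all $0\le j<k$, let $\delta'>0$ be the shadowing constant of $f^k$ for accuracy $\varepsilon'$, and then choose $\delta>0$ with $\Omega(\delta)<\min\{\delta',\varepsilon/2\}$. Let $\{x_m\}_{m\in\mathbb{Z}}$ be a $\delta$-pseudo-orbit for $f$. A telescoping estimate along $f,f^2,\dots,f^{k-1}$ (induction on $j$, over all starting indices) gives, for every $n$ and every $0\le j\le k$,
$$d(f^j(x_{nk}),x_{nk+j})\le \sum_{i=0}^{j-1}\omega_i(\delta)\le \Omega(\delta);$$
in particular $y_n:=x_{nk}$ satisfies $d(f^k(y_n),y_{n+1})\le\Omega(\delta)<\delta'$, so $\{y_n\}$ is a $\delta'$-pseudo-orbit for $f^k$. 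Pick $z$ with $d(f^{nk}(z),x_{nk})<\varepsilon'$ for all $n$. For an arbitrary $m=nk+j$, $0\le j<k$, the triangle inequality and the estimate above give $d(f^m(z),x_m)\le d\big(f^j(f^{nk}(z)),f^j(x_{nk})\big)+d(f^j(x_{nk}),x_{nk+j})<\omega_j(\varepsilon')+\Omega(\delta)<\varepsilon$. Hence $z$ $\varepsilon$-shadows $\{x_m\}$ for $f$, so $(X,f)$ has the shadowing property.

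I do not expect a genuine obstacle here; the whole content is the bookkeeping of the moduli $\omega_j$ needed to pass between $f$-pseudo-orbits and $f^k$-pseudo-orbits and to propagate the shadowing estimate from the arithmetic-progression times $nk$ to all integer times. Compactness of $X$ makes the uniform continuity of each $f^j$ automatic, so the constants $\varepsilon',\delta',\delta$ can be chosen in exactly the order indicated. (This is precisely \cite[Theorem 2.3.3]{AH}, which we therefore only cite.)
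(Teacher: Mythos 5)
Your proof is correct: both directions work as written — the refinement $x_{nk+j}=f^j(y_n)$ handles the easy direction without any continuity input, and for the converse the telescoping bound $d(f^j(x_{nk}),x_{nk+j})\le\sum_{i=0}^{j-1}\omega_i(\delta)$ together with the order of choices $\varepsilon'$, then $\delta'$, then $\delta$ gives the shadowing estimate at all times $m=nk+j$. The paper supplies no proof of its own, simply citing \cite[Theorem 2.3.3]{AH}, and your argument is essentially the standard proof of that result, so the two approaches coincide.
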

Given two  dynamical systems $(X, f)$ and $(Y, g)$, if $\pi:X \rightarrow Y$ is a homeomorphism such that $\pi \circ f=g \circ \pi$, then we say $\pi$ is a \textit{conjugation}, and $(X, f)$ conjugates to $(Y, g).$ 
Then if $(X, f)$ is a transitive topologically Anosov system and $(X, f)$ conjugates to $(Y, g),$ then $(Y, g)$ is also  a transitive topologically Anosov system.

Now, we recall the definition of entropy-dense property.
\begin{Def}
	We say $(X,  f)$ satisfies the {\it entropy-dense property},  if for any $\mu\in \mathcal{M}_f(X)$,   for any neighborhood $G$ of $\mu$ in $\mathcal{M}(X)$,
	and for any $\eta>0$,   there exists a closed $f$-invariant set $\Lambda_{\mu}\subseteq X $ such that $\mathcal{M}_f( \Lambda_{\mu})\subseteq G$ and $\htop(f,  \Lambda_{\mu})>h_{\mu}(f)-\eta$. By classical variational principle,
	it is equivalent that for any neighborhood $G$ of $\mu$ in $\mathcal{M}(X)$,   and for any $\eta>0$,   there exists a $\nu\in \mathcal{M}_f^e(X)$ such that $h_{\nu}(f)>h_{\mu}(f)-\eta$ and $\mathcal{M}_f( S_{\nu})\subseteq G$.
\end{Def}
For systems with the approximate product property,  Pfister and Sullivan had obtained the entropy-dense properties by \cite[Proposition 2.3]{PS2005}.
Note that if a dynamical system is transitivie and has the shadowing property, then it has  approximate product property by their definitions. Then we have 
\begin{Prop}\label{prop-entropy-dense-for-shadowing}
	Suppose that $(X,  f)$  is  transitive and satisfies the shadowing property.   Then $(X,  f)$ has the entropy-dense property.
\end{Prop}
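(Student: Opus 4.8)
\emph{Proof proposal.} As indicated in the paragraph preceding the statement, the plan is to reduce to the work of Pfister and Sullivan: by \cite[Proposition 2.3]{PS2005} every system with the approximate product property has the entropy-dense property, so it suffices to show that a transitive $(X,f)$ with the shadowing property has the approximate product property. Recall that the approximate product property requires, for every $\varepsilon>0$ and all $\delta_1,\delta_2>0$, a number $N=N(\varepsilon,\delta_1,\delta_2)$ such that any finite family of orbit segments of lengths at least $N$ can be traced, consecutively, by a single point of $X$ up to an error $\varepsilon$ outside a set of ``bad'' instants of relative frequency at most $\delta_1$ (the parameters $\delta_1,\delta_2$ quantifying the fraction of bad instants and the imprecision in the placement of the segments). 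Thus the only thing to manufacture is a \emph{uniformly bounded} bridge joining the end of each segment to the beginning of the next one.

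The first step is to observe that transitivity alone — together with compactness of $X$ and uniform continuity of $f$ — yields, for every $\delta>0$, a constant $L=L(\delta)$ such that \emph{any} two points $x,y\in X$ are joined by a $\delta$-chain of length at most $L$. Indeed, cover $X\times X$ by finitely many products $U_i\times V_i$ of balls of radius $r$ chosen small enough that $r<\delta$ and that $d(a,b)<2r$ forces $d(f(a),f(b))<\delta$; by transitivity choose $n_i\geq 1$ with $f^{n_i}(U_i)\cap V_i\neq\emptyset$, and put $L:=\max_i n_i$. Given $x,y$, pick $i$ with $(x,y)\in U_i\times V_i$ and $u\in U_i$ with $f^{n_i}(u)\in V_i$; then $\langle x,\,f(u),\,f^2(u),\,\dots,\,f^{n_i-1}(u),\,y\rangle$ is a $\delta$-chain from $x$ to $y$ of length $n_i\leq L$ (the first transition uses $d(x,u)<2r$, the middle ones are exact, the last uses $d(f^{n_i}(u),y)<2r$). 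Note that shadowing is not needed here.

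Now fix $\varepsilon>0$, let $\delta=\delta(\varepsilon)$ be a shadowing constant for $\varepsilon$ (Definition \ref{def-shadowing}), and let $L=L(\delta)$ be as above. Given $\delta_1>0$, set $N>L/\delta_1$. For orbit segments $x_1,\dots,x_k$ with lengths $n_1,\dots,n_k\geq N$, concatenate the finite orbit pieces $x_j,f(x_j),\dots,f^{n_j-1}(x_j)$, inserting between consecutive pieces a bridge of length at most $L$ produced by the previous step; the result is a $\delta$-pseudo-orbit, hence $\varepsilon$-shadowed by some $y\in X$. On each block corresponding to $x_j$ the point $y$ follows the orbit of $x_j$ within $\varepsilon$, while the bridges contribute at most $(k-1)L\leq kL$ instants in total, which is less than a $\delta_1$-fraction of the overall length $\geq kN$; re-reading the orbit of $y$ along these blocks is exactly the tracing demanded by the approximate product property. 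Therefore $(X,f)$ has the approximate product property, and \cite[Proposition 2.3]{PS2005} yields the entropy-dense property. The only genuinely delicate point is the bookkeeping needed to align the parameters $(\varepsilon,\delta_1,\delta_2)$ in Pfister and Sullivan's precise formulation with the pseudo-orbit construction above; the dynamical input (transitivity for the bridges, shadowing to realize the concatenated pseudo-orbit as a true orbit) is elementary.
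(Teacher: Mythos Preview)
Your proposal is correct and follows exactly the route the paper takes: the paper simply remarks that transitivity together with the shadowing property implies the approximate product property ``by their definitions'' and then invokes \cite[Proposition 2.3]{PS2005}, while you have written out the underlying bridge-construction and pseudo-orbit concatenation that justify this implication. There is nothing to add.
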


In this paper, we assume that $(X,f)$ is non-degenerate (i.e. is not reduced to a single periodic orbit). Following the argument of \cite[Proposition 21.6]{DGS}, if $(X,  f)$  is  transitive and topologically Anosov, then $\htop(f)>0.$
From \cite[Corollary C]{LiOpro2018}, if $(X,  f)$  is  transitive and topologically Anosov, then  it has ergodic measures of arbitrary intermediate metric entropies, that is $[0,h_{top}(f))\subset\{h_{\mu}(f):\mu\in\mathcal{M}_f^e(X)\}.$ This implies $\{h_{\mu}(f):\mu\in\mathcal{M}_f(X)\}=\{h_{\mu}(f):\mu\in\mathcal{M}_f^e(X)\}$ by the variational principle of the topological entropy and the ergodic decomposition theorem.
\begin{Lem}\label{Lemma-inter-entropy}
	Suppose that $(X,  f)$  is  transitive and topologically Anosov. Then we have  $\htop(f)>0$ and  $[0,h_{top}(f)]=\{h_{\mu}(f):\mu\in\mathcal{M}_f(X)\}=\{h_{\mu}(f):\mu\in\mathcal{M}_f^e(X)\}.$
\end{Lem}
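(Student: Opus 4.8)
The plan is to assemble the statement from three ingredients, two of which are already quoted in the paragraph preceding the lemma. Throughout we use the standing assumption that $(X,f)$ is non-degenerate, so that $X$ is infinite; being transitive and topologically Anosov it in fact has infinitely many points by definition.

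First I would establish $\htop(f)>0$. A transitive homeomorphism of an infinite compact metric space that is expansive and has the shadowing property falls under the argument of \cite[Proposition 21.6]{DGS}, which yields $\htop(f)>0$ (alternatively, one may pass to the spectral decomposition, obtain a specification-type property on a mixing piece, and build a subsystem of positive entropy from it). Next I would record the two trivial inclusions: for every $\mu\in\mathcal{M}_f(X)$ the variational principle gives $0\le h_\mu(f)\le\htop(f)$, hence $\{h_\mu(f):\mu\in\mathcal{M}_f^e(X)\}\subseteq\{h_\mu(f):\mu\in\mathcal{M}_f(X)\}\subseteq[0,\htop(f)]$.

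The third ingredient, which does the real work, is the intermediate entropy property for ergodic measures: since $(X,f)$ is transitive and satisfies the shadowing property, \cite[Corollary C]{LiOpro2018} gives $[0,\htop(f))\subseteq\{h_\mu(f):\mu\in\mathcal{M}_f^e(X)\}$. It then remains to realize $\htop(f)$ itself by an ergodic measure. Because $(X,f)$ is expansive, the entropy map $\mu\mapsto h_\mu(f)$ is upper semi-continuous on the compact convex set $\mathcal{M}_f(X)$, so the supremum in the variational principle is attained by some $\mu_{\max}\in\mathcal{M}_f(X)$ with $h_{\mu_{\max}}(f)=\htop(f)$. Applying the ergodic decomposition theorem to $\mu_{\max}$, and using that $h_\nu(f)\le\htop(f)$ for every ergodic $\nu$, one sees that almost every ergodic component has entropy exactly $\htop(f)$; in particular there is $\mu\in\mathcal{M}_f^e(X)$ with $h_\mu(f)=\htop(f)$. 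Combining this with the Li--Oprocha inclusion yields $[0,\htop(f)]\subseteq\{h_\mu(f):\mu\in\mathcal{M}_f^e(X)\}$, and together with the two trivial inclusions above all three sets coincide with $[0,\htop(f)]$.

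I do not expect a serious obstacle here: the only points needing care are citing the correct form of the positivity-of-entropy statement for transitive topologically Anosov systems and verifying that expansiveness gives upper semi-continuity of the entropy map, so that a measure of maximal entropy exists and can be split into ergodic pieces.
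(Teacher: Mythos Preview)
Your proposal is correct and follows essentially the same route as the paper: positivity of entropy via \cite[Proposition 21.6]{DGS}, the inclusion $[0,\htop(f))\subset\{h_\mu(f):\mu\in\mathcal{M}_f^e(X)\}$ via \cite[Corollary C]{LiOpro2018}, and the endpoint $\htop(f)$ handled through the variational principle together with the ergodic decomposition theorem. The only difference is that you spell out the step ``expansiveness $\Rightarrow$ upper semi-continuity of the entropy map $\Rightarrow$ existence of a measure of maximal entropy'' explicitly, whereas the paper leaves this implicit in its appeal to the variational principle and ergodic decomposition.
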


\subsection{Lyapunov exponents, hyperbolic sets and conformality }\label{subsection-LayHyp}
\subsubsection{Lyapunov exponents}
Let $f: M \rightarrow M$ be a $C^1$ diffeomorphism on a  compact Riemannian manifold.  For $x \in M$ and $v \in T_x M$, the Lyapunov exponent of $v$ at $x$ is the limit
$
\lambda(x, v)=\lim _{n \rightarrow \infty} \frac{1}{n} \log \left\|D_x f^n(v)\right\|
$
whenever the limit exists. Given an invariant measure $\mu \in\mathcal{M}_f(M)$, by the Oseledec multiplicative ergodic theorem \cite{Oseledec1968}, for $\mu$-almost every $x$, every vector $v \in T_x M$ has a Lyapunov exponent, and they can be denoted by $$
\chi_1(x) \geq \chi_2(x) \geq \cdots \geq \chi_{\dim M}(x).
$$ If $\mu$ is ergodic, since the Lyapunov exponents are $f$-invariant, we write the Lyapunov exponents as $$
\chi_1(\mu) \geq \chi_2(\mu) \geq \cdots \geq \chi_{\dim M}(\mu).$$  

An  $f$-invariant ergodic  measure $\mu$ is said to be \textit{hyperbolic} if it has positive and negative but no zero Lyapunov exponents.
The following is Katok’s Horseshoe Theorem:
\begin{Thm}\cite[Theorem 2.12]{BCS2022} and \cite[Theorem 3.17]{SunTian2015}\label{Thm-Katok’s Horseshoe}
	Let $f: M \rightarrow M$ be a $C^1$ diffeomorphism on a  compact Riemannian manifold, and $\nu$ an $f$-invariant ergodic hyperbolic measure with $h_\nu(f)>0$. Assume that $f$ is $C^{1+\alpha}$ for some $0<\alpha<1$ or the Oseledec splitting of $\nu$ is dominated, then for any $\varepsilon>0$ there exists a compact set $\Lambda_{\varepsilon}\subset M$ such that the following properties hold:
	\begin{enumerate}
		\item [(1)] $\Lambda_\varepsilon$ is a transitive locally maximal hyperbolic set.
		\item [(2)] $h_{\nu}(f)-\varepsilon<\htop(f,\Lambda_{\varepsilon})<h_{\nu}(f)+\varepsilon$.
		\item [(3)] $|\chi_i(\mu)-\chi_i(\nu)|<\varepsilon$ for each $1\leq i\leq \dim M$ and $\mu\in\mathcal{M}_{f}^e(\Lambda_{\varepsilon}).$
	\end{enumerate}
\end{Thm}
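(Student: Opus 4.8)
The statement is the classical nonuniformly hyperbolic horseshoe theorem of Katok (in the $C^{1+\alpha}$ case) together with its extension to $C^1$ diffeomorphisms whose Oseledec splitting is dominated; since it is quoted here as a black box, I only sketch how one proves it, following Katok's original scheme. First I would set up Pesin theory for $\nu$: because $\nu$ is hyperbolic with $h_\nu(f)>0$, Oseledec's theorem gives an $f$-invariant full-measure set carrying a measurable splitting $T_xM=E^s(x)\oplus E^u(x)$ with exponential contraction on $E^s$ and expansion on $E^u$; passing to Pesin (regular) blocks $R_\ell$ with $\nu\bigl(\bigcup_\ell R_\ell\bigr)=1$, on each $R_\ell$ the subspaces vary continuously, the hyperbolic rates and the angle between $E^s$ and $E^u$ are uniform, and the local stable/unstable manifolds have uniform size. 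In the $C^{1+\alpha}$ case this is classical Pesin theory; in the $C^1$ case the assumed domination of the Oseledec splitting is exactly what upgrades the a priori nonuniform splitting to one with uniform angles on the blocks, which is the technical input taken from \cite{BCS2022,SunTian2015}.

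Next I would run Katok's counting-and-shadowing construction. Fixing a block $R_\ell$ with $\nu(R_\ell)>0$ and using the identification of $h_\nu(f)$ with Katok's entropy (via the Brin--Katok local entropy formula), for all large $n$ one finds roughly $e^{n(h_\nu(f)-\varepsilon/2)}$ orbit segments of length $n$ that are $(n,\delta)$-separated, begin and end in $R_\ell$, and---by the Birkhoff ergodic theorem applied to $\mathbf 1_{R_\ell}$ plus a pigeonhole argument---spend a proportion at least $1-\varepsilon'$ of their time in $R_\ell$. Since the endpoints lie in a common Pesin block, the Katok closing lemma converts each segment into a hyperbolic periodic orbit shadowing it at a small scale; concatenating these periodic orbits along $R_\ell$ (a shadowing/gluing argument now taking place in the uniformly hyperbolic regime they generate) produces a compact $f$-invariant set $\Lambda_\varepsilon$ which is uniformly hyperbolic and locally maximal, and on which $f$ is conjugate to a transitive subshift of finite type; this gives (1). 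The $(n,\delta)$-separation forces $\htop(f,\Lambda_\varepsilon)>h_\nu(f)-\varepsilon$, while a crude cardinality bound on the building blocks gives $\htop(f,\Lambda_\varepsilon)<h_\nu(f)+\varepsilon$, which is (2).

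For (3), any $\mu\in\mathcal{M}_f^e(\Lambda_\varepsilon)$ is a weak$^*$ limit of empirical averages along the constructed periodic orbits, each of which spends a proportion $\geq 1-\varepsilon'$ of its time in $R_\ell$. On $R_\ell$ the functions controlling the partial sums $\chi_1+\cdots+\chi_k$ (logarithms of the norms of the $k$-th exterior powers of $Df$ restricted to $E^u\oplus E^s$) are continuous, so their time-averages along the shadowing orbits are within $\varepsilon$ of the corresponding integrals against $\nu$, provided $\ell$ is large, $\varepsilon'$ and $\delta$ are small, and the shadowing scale is fine; this yields $|\chi_i(\mu)-\chi_i(\nu)|<\varepsilon$ for every $1\le i\le\dim M$ simultaneously.

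The main obstacle is the nonuniform-hyperbolicity bookkeeping: one must fix the quantifiers in the right order---Pesin block level $\ell$, the in-block fraction $\varepsilon'$, the separation scale $\delta$, the shadowing scale, and the length $n$---so that the output is a genuinely \emph{locally maximal, uniformly hyperbolic} set (not merely an invariant set supporting measures near $\nu$) while \emph{all} $\dim M$ Lyapunov exponents remain within $\varepsilon$ of those of $\nu$. In the $C^1$ setting one also has to replace Pesin's $C^{1+\alpha}$ graph-transform input by the dominated-splitting versions of the closing and stable-manifold lemmas, which is precisely where \cite{BCS2022,SunTian2015} are used.
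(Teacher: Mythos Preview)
The paper does not prove this theorem; it is quoted as a black box from \cite{BCS2022} and \cite{SunTian2015}, with only a one-line remark noting that the upper bound $\htop(f,\Lambda_\varepsilon)<h_\nu(f)+\varepsilon$ in (2) is not in the original statements but follows from a slight modification. You correctly identified this and your sketch of Katok's classical Pesin-block/closing-lemma/horseshoe construction is an accurate outline of how the cited results are proved, including the role of domination in the $C^1$ case.
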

\begin{Rem}
	In the item(2) of Theorem \ref{Thm-Katok’s Horseshoe}, the original result does not give the inequality of the right-hand side. However, only a slight modiﬁcation can give the upper bound of $\htop(f,\Lambda_{\varepsilon}).$
\end{Rem}

A compact $f$-invariant set $\Lambda\subset M$  is said to be  {\it average conformal}  if  each $\mu \in \mathcal{M}_f^e(\Lambda)$  has only two Lyapunov exponents  $\chi_s(\mu)<0<\chi_u(\mu)$.
Let  $\Lambda\subset M$ be an average conformal compact $f$-invariant set and $\mu\in\mathcal{M}_f^e(\Lambda)$ be a hyperbolic ergodic measure.  If $f$ is $C^{1+\alpha}$ for some $0<\alpha<1$ or the Oseledec splitting of $\nu$ is dominated, then 
\begin{equation}\label{equation-WC}
	\dim_H \mu=\frac{h_\mu(f)}{\chi_u(\mu)}-\frac{h_\mu(f)}{\chi_s(\mu)}.
\end{equation}
See \cite{WC2016} for the detailed proofs, which can be viewed as an extension of Young’s results in \cite{Young1982} to the case of an average conformal setting.

\subsubsection{Hyperbolic sets}
For each $x \in M$, the  quantity
$
\left\|D_x f\right\|=\sup _{0 \neq v \in T_x M} \frac{\left\|D_x f(v)\right\|}{\|v\|}
$
is called the maximal norm of the differentiable operator $D_x f: T_x M \rightarrow T_{f(x)} M$, where $\|\cdot\|$ is the norm induced by the Riemannian metric on $M$. An $f$-invariant subset $\Lambda \subset M$ is called a {\it locally maximal hyperbolic set} if $\Lambda$ is compact, there exists an open neighborhood $U$ such that $\Lambda=\bigcap_{n \in \mathbb{Z}} f^n U$, and a continuous splitting of the tangent bundle $T_x M=E_x^s \oplus E_x^u$, and constants $0<\lambda<1, C>0$ such that for every $x \in \Lambda$ :
\begin{enumerate}
	\item $D_x f\left(E_x^u\right)=E_{f(x)}^u, D_x f\left(E_x^s\right)=E_{f(x)}^s$;
	\item for every $n \in \mathbb{N}$, one has $\left\|D_x f^{-n}(v)\right\| \leqslant C \lambda^n\|v\|$ for all $v \in E_x^u$, and $\left\|D_x f^n(v)\right\| \leqslant$ $C \lambda^n\|v\|$ for all $v \in E_x^s$.
\end{enumerate}
It's known that a locally maximal hyperbolic set is expansive by \cite[Corollary 6.4.10]{KatHas} and has shadowing property  by \cite[Theorem 18.1.2]{KatHas}. 
So we have the following.
\begin{Lem}\label{LH}
	Every system restricted on a  locally maximal hyperbolic set is topologically Anosov.
\end{Lem}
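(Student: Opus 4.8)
The plan is to verify directly the three defining properties of a topologically Anosov homeomorphism for the pair $(\Lambda,f|_\Lambda)$, where $\Lambda$ is a locally maximal hyperbolic set. First, since $\Lambda$ is compact and $f$-invariant and $f$ is a diffeomorphism, $f|_\Lambda\colon\Lambda\to\Lambda$ is a homeomorphism of the compact metric space $(\Lambda,d)$, with $d$ the distance inherited from the ambient manifold, which up to bi-Lipschitz equivalence is the one fixed in \eqref{metric-on-X}; by compactness this equivalence is automatic, so the notions of expansiveness and of the shadowing property are insensitive to which of these metrics we use, and I will freely work with the Riemannian distance.

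Second, for expansiveness I would simply invoke \cite[Corollary 6.4.10]{KatHas}: a locally maximal hyperbolic set is expansive. The underlying reason — which I would only recall, not reprove — is that the uniform hyperbolic splitting $T_xM=E_x^s\oplus E_x^u$ over $\Lambda$ produces a local product structure: there is $c>0$ such that $d(f^n x,f^n y)\le c$ for all $n\ge 0$ forces $y\in W^s_{\mathrm{loc}}(x)$, and symmetrically closeness of backward orbits forces $y\in W^u_{\mathrm{loc}}(x)$; since $W^s_{\mathrm{loc}}(x)\cap W^u_{\mathrm{loc}}(x)=\{x\}$ inside a small product box, no two distinct points of $\Lambda$ can have $c$-close orbits for every $n\in\mathbb{Z}$, so $c$ is an expansive constant.

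Third, for the shadowing property I would cite the Anosov--Bowen shadowing lemma for hyperbolic sets, \cite[Theorem 18.1.2]{KatHas}: for every $\varepsilon>0$ there is $\delta>0$ so that every $\delta$-pseudo-orbit lying in a neighbourhood of $\Lambda$ is $\varepsilon$-shadowed by a genuine $f$-orbit, and by local maximality that orbit lies in $\Lambda$ once $\varepsilon$ is small. Since a $\delta$-pseudo-orbit of $f|_\Lambda$ is in particular a $\delta$-pseudo-orbit of $f$ contained in $\Lambda$, this gives exactly the shadowing property of Definition \ref{def-shadowing} for $(\Lambda,f|_\Lambda)$. The defining list also asks that $\Lambda$ be infinite; this holds in every situation where we apply the lemma (under the standing non-degeneracy convention, and in particular whenever $\Lambda$ is transitive and not a single periodic orbit), since a finite locally maximal hyperbolic set is merely a finite union of hyperbolic periodic orbits, which we exclude. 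Combining the three points, $(\Lambda,f|_\Lambda)$ is an infinite expansive homeomorphism of a compact metric space with the shadowing property, hence topologically Anosov.

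As for the main difficulty: there is essentially none of mathematical substance, the argument being an assembly of two standard facts from hyperbolic dynamics together with the elementary remark that restricting a diffeomorphism to an invariant compact set yields a homeomorphism. The only items needing a moment of care are (i) checking that the metric-dependent notions transfer between the Riemannian distance and the metric \eqref{metric-on-X}, which is immediate by compactness, and (ii) the bookkeeping that shadowing \emph{inside} $\Lambda$ follows from shadowing \emph{near} $\Lambda$ via local maximality. I would expect the write-up to occupy a single short paragraph.
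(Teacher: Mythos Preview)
Your proposal is correct and follows exactly the paper's approach: the paper simply cites \cite[Corollary 6.4.10]{KatHas} for expansiveness and \cite[Theorem 18.1.2]{KatHas} for the shadowing property, which are precisely the two facts you invoke. Your additional remarks on metric equivalence, local maximality ensuring the shadowing orbit lies in $\Lambda$, and the infinite-points condition are all valid elaborations that the paper leaves implicit.
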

A $C^1$ diffeomorphism $f:M\to M$ is said to be an \textit{Anosov diffeomorphism} if $M$ is a hyperbolic set. It's clear that $M$ is locally maximal. So every Anosov diffeomorphism is topologically Anosov.
By spectral decomposition, every transitive Anosov diffeomorphism on a compact Riemannian manifold is mixing \cite[Corollary 18.3.5]{KatHas}. 
\begin{Lem}\label{LH2}
	Let $f:M\to M$ be a transitive Anosov diffeomorphism on a compact Riemannian manifold. Then $M$ is a mixing locally maximal hyperbolic set.
\end{Lem}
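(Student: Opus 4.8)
The plan is to verify the two assertions separately: that $M$ is a locally maximal hyperbolic set, and that $(M,f)$ is topologically mixing.

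For hyperbolicity and local maximality, recall that $f$ being an Anosov diffeomorphism means exactly that $M$ itself is a hyperbolic set for $f$: there is a continuous $Df$-invariant splitting $T_xM=E_x^s\oplus E_x^u$ together with constants $0<\lambda<1$ and $C>0$ realizing the uniform contraction on $E^s$ and expansion on $E^u$, as in the definition of hyperbolic set recalled above. For local maximality I would simply take the open neighbourhood $U:=M$ of $M$ inside $M$; then $M=\bigcap_{n\in\mathbb Z}f^n(M)=\bigcap_{n\in\mathbb Z}f^n(U)$, so $M$ is a locally maximal hyperbolic set (and hence, by Lemma \ref{LH}, also topologically Anosov).

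For mixing I would invoke Smale's spectral decomposition. Since $f$ is transitive one has $\Omega(f)=M$, and since $f$ is Anosov, $\Omega(f)$ is hyperbolic with dense periodic points; thus $M$ is a single basic set. The spectral decomposition then writes $M=\bigcup_{i=0}^{k-1}X_i$ as a finite disjoint union of closed sets (which, being finitely many and disjoint, are also open) cyclically permuted by $f$, that is $f(X_i)=X_{(i+1)\bmod k}$, with $f^k|_{X_i}$ topologically mixing for each $i$. Connectedness of $M$ forces $k=1$, so $(M,f)$ is itself topologically mixing. Equivalently and more directly, this is exactly \cite[Corollary 18.3.5]{KatHas}, which one may cite to conclude.

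The lemma presents essentially no obstacle, being an assembly of standard facts; the only points worth a moment of care are that the abstract definition of a locally maximal hyperbolic set is met by the trivial choice $U=M$, and that it is connectedness of $M$ which rules out a nontrivial period in the spectral decomposition — without it the statement fails, since a diffeomorphism permuting finitely many components cyclically can be transitive but not mixing.
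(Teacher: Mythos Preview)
Your proposal is correct and matches the paper's approach: the paper notes that $M$ is trivially locally maximal (as you do with $U=M$) and cites \cite[Corollary 18.3.5]{KatHas} for mixing via spectral decomposition, which is exactly what you invoke. Your additional remark that connectedness of $M$ is what forces the cyclic period to be $1$ is a useful unpacking of that citation.
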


When a hyperbolic set $\Lambda\subset M$ is average conformal, then  for each $\mu \in \mathcal{M}_f^e(M)$, one has $\chi_1(\mu)=\chi_2(\mu)=\cdots=\chi_{d_u}(\mu)>0$ and $\chi_{d_u+1}(\mu)=\chi_{d_u+2}(\mu)=\cdots=\chi_{\dim M}(\mu)<0$, where $d_u=\operatorname{dim} E^u$ and $d_s=\operatorname{dim} E^s=\dim M-d_u$. In other words, $\Lambda$ has only two Lyapunov exponents  $\chi_s(\nu)<0<\chi_u(\nu)$ with respect to each $\nu \in \mathcal{M}_f^e(M)$.

Let $\Lambda\subset M$ be a compact $f$-invariant set.
A $D f$ invariant subbundle $G \subset T_\Lambda M$ is called to be {\it quasi-conformal} if for any $\varepsilon>0$, there exists $C_\varepsilon>0$ such that for any $x \in \Lambda$ and $n \geq 1$,
$$
C_\varepsilon^{-1} e^{-n \varepsilon} \leq \frac{\left\|\left.D_x f^n\right|_{G(x)}\right\|}{m\left(\left.D_x f^n\right|_{G(x)}\right)} \leq C_\varepsilon e^{n \varepsilon},
$$
where $m\left(D_x f|_{G(x)}\right)=\inf _{0 \neq v \in G(x)} \frac{\left\|D_x f(v)\right\|}{\|v\|}.$
Quasi-conformal condition implies that for any $x \in \Lambda$,
$$
\begin{aligned}
	\limsup _{n \rightarrow+\infty} \frac{1}{n} \log \left\|\left.D_x f^{ \pm n}\right|_{G(x)}\right\|&=\limsup _{n \rightarrow+\infty} \frac{1}{n} \log m\left(\left.D_x f^{ \pm n}\right|_{G(x)}\right), \\
	\liminf _{n \rightarrow+\infty} \frac{1}{n} \log\left\|\left.D_x f^{ \pm n}\right|_{G(x)}\right\|&=\liminf _{n \rightarrow+\infty} \frac{1}{n} \log m\left(\left.D_x f^{ \pm n}\right|_{G(x)}\right) .
\end{aligned}
$$
A hyperbolic set $\Lambda\subset M$  is said to be {\it quasi-conformal}  if $E^u$ and $E^s$ are both quasi-conformal. It's clear that every quasi-conformal hyperbolic set is average conformal.

\subsubsection{Lyapunov exponents of hyperbolic sets}
Let $f: M \mapsto M$ be a $C^1$ diffeomorphism on a compact Riemannian manifold $M.$
Given  $\mu \in\mathcal{M}_f(M)$,  for $\mu$ a.e. $x$,  denote by $
\chi_1(x) \geq \chi_2(x) \geq \cdots \geq \chi_{\dim M}(x)
$
the Lyapunov exponents at $x.$ Denote
$
\chi_i^{+}(x)=\max \left\{\chi_i(x), 0\right\}, \chi_i^{-}(x)=\min \left\{\chi_i(x), 0\right\}.
$
Now assume that $\Lambda\subset M$ be a hyperbolic set. Define $\psi^u(x)=\log|\det D_xf|_{E_x^{u}}|$ and $\psi^s(x)=\log|\det D_xf|_{E_x^{s}}|$ for any $x\in \Lambda.$
From \cite[Lemma 3.5]{SunTian2012}, one has $$\lim\limits_{n\to\infty}\frac{1}{n}\log|\det D_xf^n|_{E^u_x}|=\sum_{i=1}^{\operatorname{dim} M} \chi_i^{+}(x),\lim\limits_{n\to\infty}\frac{1}{n}\log|\det D_xf^n|_{E^s_x}|=\sum_{i=1}^{\operatorname{dim} M} \chi_i^{-}(x)$$
for $\mu$-a.e. $x \in \Lambda$ and any $\mu\in\mathcal{M}_f(\Lambda).$
Since $\Lambda$ is hyperbolic,  $\psi^u$ and $\psi^s$ are both continuous.  Thus by Birkhoff's ergodic theorem,
\begin{equation}\label{equation-Birk0}
	\begin{split}
		\int \psi^u d\mu&=\int \lim\limits_{n\to\infty}\frac{1}{n}\log|\det Df^n|_{E^u_x}| d\mu=\int \sum_{i=1}^{\operatorname{dim} M} \chi_i^{+}(x) d\mu.\\
		\int \psi^s d\mu&=\int \lim\limits_{n\to\infty}\frac{1}{n}\log|\det Df^n|_{E^s_x}| d\mu=\int \sum_{i=1}^{\operatorname{dim} M} \chi_i^{-}(x) d\mu.
	\end{split}
\end{equation}
When $\Lambda$ is  average conformal, it has only two Lyapunov exponents $\chi_u(\mu)>0$ and $\chi_s(\mu)<0$ with respect to each $\mu \in \mathcal{M}_f^e(\Lambda)$. Then for any $\mu \in \mathcal{M}_f^e(\Lambda)$, we have
\begin{equation}\label{equation-Birk}
	\int \psi^u d\mu=d_u\chi_u(\mu), \int \psi^s d\mu=d_s\chi_s(\mu),
\end{equation}
where $d_u=\operatorname{dim} E^u$ and $d_s=\operatorname{dim} E^s=\dim M-d_u$.

\subsection{Subshifts of finite type}
 Let $k$ be a fixed natural number and let $C=\{0, 1, \ldots, k-1\}$. Put the discrete topology on $C.$  Consider the two-sided full symbolic space $\Sigma=\prod_{-\infty}^{\infty} C$, equipped with the product topology, and the shift homeomorphism $\sigma: \Sigma \to \Sigma$ defined by $(\sigma(w))_{n}=w_{n+1}$, where $w =\left(w_{n}\right)_{n=-\infty}^{\infty}.$ $(\Sigma,\sigma)$ is called a two-sided full shift. A metric on $\Sigma$ is defined by $d(x, y)=2^{-m}$ if $m$ is the largest natural number with $x_{n}=y_{n}$ for any $|n|<m$, and $d(x, y)=1$ if $x_{0} \neq y_{0}.$ If $X$ is a closed subset of $\Sigma$ with $\sigma (X)=X,$ then $(X,\sigma)$ is called a subshift.   A subshift $(X,\sigma)$ is said to be \textit{of finite type}, if there exists some natural number $N$ and a collection of blocks of length $N+1$ with the property that $x=\left(x_{n}\right)_{n=-\infty}^{\infty} \in X$ if and only if each block $\left(x_{i}, \ldots, x_{i+N}\right)$ in $x$ of length $N+1$ is one of the prescribed blocks. 
 Recall from \cite{Walters2} a subshift satisfies shadowing property if and only if it is a subshift of finite type. As a subsystem of two-sided full shift, it is expansive. So we have the following.
 \begin{Lem}\label{SFT}
 	Every two-sided subshift of finite type is topologically Anosov.
 \end{Lem}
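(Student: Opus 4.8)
The plan is to check the three conditions in the definition of a topologically Anosov homeomorphism for a two-sided subshift of finite type $(X,\sigma)$, where $X\subseteq\Sigma=\prod_{-\infty}^{\infty}C$ is a closed $\sigma$-invariant set. The requirement that $X$ be infinite is part of the standing non-degeneracy conventions (and holds automatically in all the situations where the lemma is applied, e.g.\ transitive subshifts of finite type of positive entropy), so the substance of the proof is to verify expansiveness and the shadowing property.

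For expansiveness it is cleanest to argue first for the full shift $(\Sigma,\sigma)$ and then restrict. Take any constant $c$ with $0<c<1$, say $c=\tfrac12$, as a candidate expansive constant. If $x\neq y$ in $\Sigma$, then by definition there is some $m\in\mathbb{Z}$ with $x_m\neq y_m$; applying $\sigma^m$ gives $(\sigma^m x)_0\neq(\sigma^m y)_0$, so by the definition of the metric on $\Sigma$ we have $d(\sigma^m x,\sigma^m y)=1>c$. Hence $(\Sigma,\sigma)$ is expansive with constant $c$. Since $X$ carries the restricted metric and $\sigma|_X$ is the restriction of $\sigma$, the same $c$ witnesses expansiveness of $(X,\sigma)$; in other words every subsystem of an expansive system is expansive, and in particular every subshift is expansive.

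For the shadowing property I would invoke the characterization recalled in the paragraph preceding the lemma (from \cite{Walters2}): a subshift of $(\Sigma,\sigma)$ has the shadowing property if and only if it is a subshift of finite type. Consequently every two-sided subshift of finite type has the shadowing property. Combining this with expansiveness and the cardinality convention yields that $(X,\sigma)$ is topologically Anosov, which completes the verification.

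If instead one wanted a self-contained argument for the shadowing direction rather than citing \cite{Walters2}, the only nontrivial point — and thus the main obstacle — is a splicing step. Given $\varepsilon>0$, choose $m$ with $2^{-m}\le\varepsilon$ and let $N+1$ be the block length governing the finite-type condition; then take $\delta<2^{-(m+N+1)}$. A $\delta$-pseudo-orbit $\{x^{(n)}\}_{n\in\mathbb{Z}}$ has the property that the words $x^{(n)}$ and $x^{(n+1)}$ agree, after the shift, on a long block of coordinates around the origin, so the bi-infinite word $w$ defined by $w_n:=(x^{(n)})_0$ is well-specified and its length-$(N+1)$ subblocks all appear inside individual $x^{(n)}\in X$; since admissibility in $X$ is detected by blocks of length $N+1$, it follows that $w\in X$, and by construction $w$ (viewed as a point of $X$) $\varepsilon$-shadows $\{x^{(n)}\}$. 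Making this bookkeeping precise is exactly the content of the quoted result, so quoting it is the economical choice.
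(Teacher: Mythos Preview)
Your proof is correct and follows essentially the same approach as the paper: the paper's argument is simply to note that subshifts are expansive as subsystems of the full shift and to cite \cite{Walters2} for the equivalence between the shadowing property and being of finite type. Your treatment is just a slightly more expanded version of this, with the optional self-contained sketch of the shadowing direction being an extra that the paper omits.
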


 \subsection{Convex set and its properties}\label{sec-convex}
 Now we recall some properties of  convex set. Readers can refer to \cite{BoyVan}.
 A subset $C$ of $\mathbb{R}^n$ is \textit{convex}, if $\{\theta x+(1-\theta)y:\theta\in[0,1]\}\subset C$ for any $x,y\in C.$
 The following properties are easy to verify. 
 \begin{Prop}\label{Prop-convex}
 	(1) If $C\subset \mathbb{R}^n$ is a convex set, then $\mathrm{Int}(C)$ is convex.
 	
 	(2) If $C\subset \mathbb{R}^n$ is a convex set with $\mathrm{Int}(C)\neq\emptyset$, then $\overline{C}=\overline{\mathrm{Int}(C)}$.
 \end{Prop}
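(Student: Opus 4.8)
The plan is to prove the two items separately, in each case reducing everything to the defining property of convexity applied to suitable translates of the given points.

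For item (1), I would fix $x,y\in\mathrm{Int}(C)$ and $\theta\in[0,1]$, choose $r>0$ with $B(x,r)\cup B(y,r)\subseteq C$, and show that the open ball $B(z,r)$ around $z:=\theta x+(1-\theta)y$ is contained in $C$. The key observation is the identity $w=\theta\bigl(x+(w-z)\bigr)+(1-\theta)\bigl(y+(w-z)\bigr)$: whenever $\|w-z\|<r$, both translated points $x+(w-z)$ and $y+(w-z)$ lie in $C$, since they are within distance $r$ of $x$ and $y$ respectively, so convexity of $C$ gives $w\in C$. Hence $B(z,r)\subseteq C$ and $z\in\mathrm{Int}(C)$.

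For item (2), the inclusion $\overline{\mathrm{Int}(C)}\subseteq\overline{C}$ is immediate from $\mathrm{Int}(C)\subseteq C$. For the reverse it suffices to prove $C\subseteq\overline{\mathrm{Int}(C)}$ and then pass to closures, since that yields $\overline{C}\subseteq\overline{\overline{\mathrm{Int}(C)}}=\overline{\mathrm{Int}(C)}$. So I would fix $x\in C$ and an interior point $x_0\in\mathrm{Int}(C)$ with $B(x_0,r)\subseteq C$, and for $t\in(0,1)$ set $x_t:=tx_0+(1-t)x$. The claim is that $B(x_t,tr)\subseteq C$: given $w$ with $\|w-x_t\|<tr$, writing $w=t\bigl(x_0+\tfrac1t(w-x_t)\bigr)+(1-t)x$ exhibits $w$ as a convex combination of the point $x_0+\tfrac1t(w-x_t)$, which lies in $B(x_0,r)\subseteq C$, and the point $x\in C$, so $w\in C$ by convexity. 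Thus $x_t\in\mathrm{Int}(C)$ for every $t\in(0,1)$, and letting $t\to0^+$ gives $x=\lim_{t\to0^+}x_t\in\overline{\mathrm{Int}(C)}$, as desired.

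Since the statement is elementary, there is no serious obstacle; the only points requiring care are the bookkeeping in the convex-combination identities, in particular choosing the ball radius $tr$ in item (2) so that the auxiliary point $x_0+\tfrac1t(w-x_t)$ stays inside $B(x_0,r)$, and the remark that it is enough to establish $C\subseteq\overline{\mathrm{Int}(C)}$ before taking closures.
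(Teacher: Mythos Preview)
Your proof is correct and follows the standard elementary argument. The paper itself does not supply a proof of this proposition; it simply states that the properties ``are easy to verify,'' so your write-up is more detailed than what the paper provides.
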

Let $n\in\mathbb{N}$ and $C$ be a nonempty convex subset of $\mathbb{R}^n$. The set of all affine combinations of points in $C$ is called the \textit{affine hull} of $C$, and denoted $\mathrm{aff}(C)$,
$$
\mathrm { aff } (C)=\left\{\sum_{i=1}^{k}\theta_i x_i : k\geq 1,  x_i \in C, \theta_i\in\mathbb{R} \text{ for each } 1\leq i\leq k, \text{ and } \sum_{i=1}^{k}\theta_i=1\right\} .
$$
We deﬁne the \textit{affine dimension} of $C$ as the dimension of its aﬃne hull, denote by $\dim_{aff}(C)$.  Denote the \textit{relative interior} of the set $C$, denoted  $\mathrm{relint}(C)$, as its interior relative to  $\mathrm { aff } (C)$ :
$$
\mathrm { relint } (C)=\{x \in C : B(x, r) \cap \mathrm { aff } (C) \subseteq C \text { for some } r>0\}.
$$
Then $\mathrm { relint } (C)$ is nonempty and convex, and
\begin{equation}\label{equ-close-relint}
	\overline{C}=\overline{\mathrm{relint}(C)}.
\end{equation}
In particular, if $\sharp C=1,$ then we have $\mathrm { aff } (C)=\mathrm { relint } (C)=C.$ When $\dim_{aff}(C)=n,$ we have $\mathrm { relint } (C)=\mathrm{Int}(C).$ If $\dim_{aff}(C)<n,$ then $\mathrm{Int}(C)=\emptyset.$ 
\begin{Rem}
	Interior and relative interior are very different. For example, the interior of a point in an at least one-dimensional ambient space is empty, but its relative interior is the point itself.
	The interior of a disc in an at least three-dimensional ambient space is empty, but its relative interior is the same disc without its circular edge.
\end{Rem}

We assume $1\leq \dim_{aff}(C)\leq n-1.$ Take $x_0=(x_0^1,\dots,x_0^n)\in \mathrm { aff } (C),$ then $V:=\mathrm { aff } (C)-x_0=\{x-x_0:x\in \mathrm { aff } (C)\}$ is a subspace of $\mathbb{R}^n$ with dimension $\dim_{aff}(C).$
Thus there is a matrix $A \in \mathbb{R}^{(n-\dim_{aff}(C) )}\times \mathbb{R}^n$ such that  $V=\left\{\boldsymbol{x} \in \mathbb{R}^n : A \boldsymbol{x}=0\right\}.$ Denote $x=(x_1,\dots,x_n).$ There exist $I_{aff}\subset\{1,2,\dots,n\}$ with $\sharp I_{aff}=\dim_{aff}(C)$  and $\{c_{j,i}\}_{j\in [1,n]\setminus I_{aff},i\in I_{aff}}\subset \mathbb{R}$ such that $$V =\{x\in \mathbb{R}^n:x_i\in \mathbb{R} \text{ for each }i\in I_{aff}, x_j=\sum_{i\in I_{aff}}c_{j,i}x_i \text{ for each }j\in [1,n]\setminus I_{aff}\}.$$
Denote $c_0^j=-\sum_{i\in I_{aff}}c_{j,i}x_0^i +x_0^j.$  Then 
\begin{equation*}
	\begin{split}
		\mathrm{aff}(C)&=V+x_0 \\
		&=\{x+x_0\in \mathbb{R}^n:x_i\in \mathbb{R} \text{ for each }i\in I_{aff}, x_j=\sum_{i\in I_{aff}}c_{j,i}x_i \text{ for each }j\in [1,n]\setminus I_{aff}\}\\
		&=\{y\in \mathbb{R}^n:y_i\in \mathbb{R} \text{ for each }i\in I_{aff}, y_j=\sum_{i\in I_{aff}}c_{j,i}(y_i-x_0^i) +x_0^j\text{ for each }j\in [1,n]\setminus I_{aff}\}\\
		&=\{y\in \mathbb{R}^n:y_i\in \mathbb{R} \text{ for each }i\in I_{aff}, y_j=\sum_{i\in I_{aff}}c_{j,i}y_i +c_0^j\text{ for each }j\in [1,n]\setminus I_{aff}\}.
	\end{split}
\end{equation*}
Define a map from $\mathbb{R}^n$ to $\mathbb{R}^{\dim_{aff}(C)}$ as following: $$\pi_{aff}:(y_1,\dots,y_n)\to (y_i)_{i\in I_{aff}}.$$ 
Then $\pi_{aff}$ is affine and $\pi_{aff}$ is a homeomorphism from $\mathrm{aff}(C)$ to its image. So $\pi_{aff}(C)$ is a  nonempty convex subset of $\mathbb{R}^{\dim_{aff}(C)}$, and $\pi_{aff}(\mathrm { relint } (C))=\mathrm{Int}(\pi_{aff}(C))$.

Let $m,n\in\mathbb{N},$ and $C$ be a convex subset of $\mathbb{R}^{m+n}$. For $x \in \mathbb{R}^m$, let
$$
C_x=\{y\in\mathbb{R}^n :(x, y) \in C\},
$$
and let
$$
C_m=\{x \in\mathbb{R}^m: C_x \neq \emptyset\} .
$$
Then
\begin{equation}\label{equ-product-inter}
	\mathrm { relint } (C)=\{(x, y) : x \in \mathrm { relint } (C_m), y \in \mathrm { relint } (C_x)\}.
\end{equation}

\section{'Multi-horseshoe' entropy-dense property}\label{section-entropy-dense}

Now  we prove  the 'multi-horseshoe' dense property  holds for transitive topologically Anosov systems.

\begin{Thm}\label{Mainlemma-convex-by-horseshoe}
	Suppose $(X,  f)$ is topologically Anosov and transitive.   Then for any positive integer $m,$ any $f$-invariant measures $\{\mu_i\}_{i=1}^m\subseteq \mathcal{M}_f(X),$ any $x\in X$ and any $\eta,  \zeta,\eps>0$,   there exist compact invariant subsets $\Lambda_i\subseteq\Lambda\subsetneq X$ such that for each $1\leq i\leq m$
	\begin{enumerate}
		\item $(\Lambda_i,f)$ and $(\Lambda,f)$ conjugate to transitive two-sided subshifts of ﬁnite type.
		\item $\htop(f,  \Lambda_i)>h_{\mu_i}(f)-\eta$.
		\item $d_H(K,  \mathcal{M}_f(\Lambda))<\zeta$,   $d_H(\mu_i,  \mathcal{M}_f(\Lambda_i))<\zeta$, where $K=\cov\{\mu_i\}_{i=1}^m.$
		\item There is a positive integer $L$ such that for any $z$ in $\Lambda_i$ or $\Lambda$ one has  $f^{j+tL}(z) \in B(x,\eps)$ for some $0\leq j\leq L-1$ and any $t\in\Z$.
	\end{enumerate}
\end{Thm}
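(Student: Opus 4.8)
# Proof Proposal for Theorem 3.1 ('Multi-horseshoe' entropy-dense property)

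\textbf{Overall strategy.} The plan is to reduce everything to two-sided subshifts of finite type by exploiting expansivity together with the shadowing property, and then to build the sets $\Lambda_i$ and $\Lambda$ as nested subshifts generated by carefully chosen finite words. The starting point is the entropy-dense property, which holds here by Proposition \ref{prop-entropy-dense-for-shadowing} since $(X,f)$ is transitive with the shadowing property: for each $i$ we may first replace $\mu_i$ by an ergodic measure $\nu_i$ with $h_{\nu_i}(f) > h_{\mu_i}(f) - \eta$ and $\mathcal{M}_f(S_{\nu_i})$ inside a small weak$^*$-neighbourhood of $\mu_i$. Then, using Bowen's theorem relating $h_{\nu_i}(f)$ to the topological entropy of generic points, and Katok-type approximation within the symbolic/expansive setting, I would extract for each $i$ a long orbit segment (or a finite collection of segments) along which Birkhoff averages are close to $\int \varphi\, d\nu_i$ for all $\varphi$ in a fixed finite test set, and such that the exponential growth rate of the number of such $(n,\varepsilon)$-separated segments exceeds $h_{\mu_i}(f) - \eta$.

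\textbf{Key steps in order.} First, fix an expansive constant $c>0$ and choose, via the shadowing property, a $\delta>0$ such that every $\delta$-pseudo-orbit is $(c/3)$-shadowed, so that shadowing points are unique. Second, for each $i$ produce a finite family $W_i$ of finite words (orbit segments of some common large length $N$) that are $\delta$-separated in the Bowen metric, whose cardinality satisfies $\frac{1}{N}\log \#W_i > h_{\mu_i}(f) - \eta$, and along each of which the empirical measures are $\zeta/4$-close to $\nu_i$ (hence close to $\mu_i$); this uses entropy-density plus the standard separated-set characterization of entropy. Third, because $(X,f)$ is transitive, use transitivity to find connecting orbit segments of a fixed length $M$ linking the end of any word in $\bigcup_i W_i$ to the start of any other, and also linking to the prescribed point $x$: concatenating these produces $\delta$-pseudo-orbits. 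Fourth, define $\Lambda_i$ to be the closure of the set of shadowing points of all bi-infinite concatenations formed from words in $W_i$ together with the connectors (and a periodic "visit $x$" block inserted every $L$ steps, for $L = N+M$ or a multiple), and $\Lambda$ analogously using $\bigcup_i W_i$; by uniqueness of shadowing and a standard coding argument these are conjugate to transitive SFTs (item (1)), the forced periodic insertions give item (4) with the chosen $L$, the entropy of $\Lambda_i$ is at least $\frac{1}{N+M}\log\#W_i$ which, after taking $N$ large relative to $M$, exceeds $h_{\mu_i}(f)-\eta$ (item (2)), and every invariant measure on $\Lambda_i$ (resp. $\Lambda$) is a weak$^*$-limit of empirical measures built from the $W_i$-words and the short connectors, hence $\zeta$-close to $\mu_i$ (resp. to the convex hull $K$), while conversely each $\mu_i$ is approximated by the measure equidistributed on a single periodic concatenation of one $W_i$-word — giving the Hausdorff-distance bounds in item (3). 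Lemma \ref{lem:prohorov} is exactly the tool to control the effect of the short connector blocks and the $x$-visiting blocks on empirical measures, since their relative frequency $M/(N+M)$ can be made as small as desired.

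\textbf{Main obstacle.} The delicate point is item (3), and specifically the inclusion $d_H(K, \mathcal{M}_f(\Lambda)) < \zeta$ in the direction "every invariant measure of $\Lambda$ is close to $K$": an ergodic measure on $\Lambda$ could a priori concentrate on the connector/$x$-visiting blocks rather than on the $W_i$-words, which would pull it away from $K$. Controlling this requires the insertion density of the auxiliary blocks to be uniformly small \emph{along every orbit}, not merely on average — which is why item (4) is phrased with a fixed $L$ and why the construction must force a rigid block structure (e.g. every point's coding is a concatenation of blocks of the single length $L = N+M$, with the $W_i$-word occupying a fixed proportion $N/L$ of each block). Once the coding is genuinely block-rigid, Lemma \ref{lem:prohorov} applied with $\delta \approx M/L$ and $\varepsilon$ the shadowing radius closes the estimate uniformly. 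A secondary technical nuisance is ensuring $\Lambda \subsetneq X$ (properness): this follows because $\htop(f,\Lambda) \le \max_i \frac{1}{N}\log\#W_i + o(1)$ can be kept strictly below $\htop(f)$ by not exhausting all orbit segments — or, more simply, by noting that the forced periodic visits to $B(x,\varepsilon)$ exclude, e.g., points whose orbit avoids $B(x,\varepsilon)$, which exist whenever $\htop(f)>0$ (Lemma \ref{Lemma-inter-entropy}).
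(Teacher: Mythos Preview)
Your overall architecture is right and matches the paper's: build large separated families of orbit segments whose empirical measures sit near each $\mu_i$, concatenate them into $\delta$-pseudo-orbits with a rigid block structure, shadow, and read off an SFT. Two substantive differences from the paper are worth flagging, and the second is a genuine gap.

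\textbf{Periodic points versus arbitrary segments.} The paper does not use general orbit segments plus transitivity connectors. Instead it invokes Lemma~\ref{Maincor-mu-gamma-n-expansive} to obtain, for each $i$, an $(n,\eps^*/3)$-separated set $\Gamma^{\nu_i}_n$ of \emph{periodic points of common period $n$} that all lie in $B(x,\delta/2)$ and whose $n$-empirical measures are $\zeta/4$-close to $\nu_i$. Because every block starts and ends in $B(x,\delta/2)$, concatenation requires no connectors at all, the block length is uniformly $n$, and item~(4) is automatic (the shadowing point passes within $\eps$ of $x$ at every multiple of the block length). Your scheme with transitivity connectors of some bounded length $M$ can be made to work, but you have to argue that $M$ can be taken uniform over all word-pairs and that the connectors do not spoil separation; the periodic-point route simply avoids this.

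\textbf{The real gap: conjugacy of $(\Lambda,f)$, not $(\Delta,f^L)$, to a transitive SFT.} You write that ``by uniqueness of shadowing and a standard coding argument these are conjugate to transitive SFTs''. Uniqueness of shadowing gives you a continuous surjection from the full shift $\Sigma$ on your word-alphabet onto the set $\Delta$ of shadowing points, and expansivity makes it injective, so $(\Delta,f^L)$ is conjugate to $(\Sigma,\sigma)$. But the theorem asks for $(\Lambda,f)$ with $\Lambda=\bigcup_{k=0}^{L-1}f^k(\Delta)$, and for this you need the iterates $f^k(\Delta)$, $0\le k<L$, to be pairwise disjoint (this is exactly the hypothesis of Proposition~\ref{prop-transitive-shadowing-for-f-n}). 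That disjointness is \emph{not} automatic: a shift of one admissible pseudo-orbit by $k<L$ positions could coincide with another admissible pseudo-orbit, in which case $f^k(\Delta)\cap\Delta\neq\emptyset$. The paper handles this with a nontrivial marker trick: it first produces (using a near-maximal-entropy auxiliary measure $\nu_0$ and a counting argument) a periodic point $x_0$ of \emph{minimal} period exactly $n$, and then prefixes every block by two copies of the length-$n$ orbit segment of $x_0$. Minimality of the period guarantees that the only way a length-$n$ window can $\eps^*/9$-shadow the $x_0$-segment inside the doubled marker is at the two aligned positions; together with the choice of $\widetilde\Gamma^{\nu_i}_n$ to be $\eps^*/9$-far from every shift of the $x_0$-segment, this forces the block boundaries to be detectable from the orbit, hence $f^k(\Delta)\cap\Delta=\emptyset$ for $0<k<nl$. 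This marker argument is the technical heart of the proof and is not ``standard''; your proposal needs an analogous mechanism.

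A minor point: your argument for $\Lambda\subsetneq X$ via ``points whose orbit avoids $B(x,\eps)$'' is not quite right (transitivity makes such points scarce). The paper instead observes at the outset that the finite-dimensional simplex $K$ satisfies $d_H(K,\mathcal M_f(X))>0$ (since $(X,f)$ has infinitely many ergodic measures), takes $\zeta$ smaller than this distance, and then reads off $\Lambda\subsetneq X$ directly from item~(3).
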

\begin{Rem}
	The item 4 of Theorem \ref{Mainlemma-convex-by-horseshoe} will be  used in Theorem \ref{def-strong-basic-2} and another paper.
\end{Rem}
\subsection{Two lemmas}

\begin{Prop}\label{prop-transitive-shadowing-for-f-n}
	Suppose $f:X\to X$ is a homeomorphism of a compact metric space. Consider a compact set $\Delta\subseteq X$ which satisfies $f^n(\Delta)=\Delta$ for some $n\in\N$ and let $\Lambda=\bigcup_{i=0}^{n-1}f^i(\Delta).$ If $f^i(\Delta)\cap f^j(\Delta)=\emptyset$ for any $0\leq i<j\leq n-1,$ then
	\begin{description}
		\item[(1)] if $(\Delta,  f^n)$ is expansive,   then $(\Lambda,  f)$ is expansive;
		\item[(2)] if $(\Delta,  f^n)$ is transitive,   then $(\Lambda,  f)$ is  transitive;
		\item[(3)] if $(\Delta,  f^n)$ has the shadowing property,   then $(\Lambda,  f)$ also has the shadowing property. 
	\end{description}
\end{Prop}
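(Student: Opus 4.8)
I would base the whole argument on one structural remark: since the compact sets $\Delta_i:=f^i(\Delta)$, $0\le i\le n-1$, are pairwise disjoint, the number $\gamma:=\min_{i\ne j}\dist(\Delta_i,\Delta_j)$ is positive; moreover $f$ carries $\Delta_i$ homeomorphically onto $\Delta_{i+1\bmod n}$, so $f|_\Lambda$ is a homeomorphism of the compact set $\Lambda=\bigcup_i\Delta_i$ and each $\Delta_i$ is clopen in $\Lambda$. This ``$f$ cyclically permutes finitely many separated clopen pieces'' picture drives all three parts. For (1): let $c>0$ witness expansiveness of $(\Delta,f^n)$ and put $c'=\tfrac12\min\{c,\gamma\}$. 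Given $x\ne y$ in $\Lambda$, if $x,y$ lie in distinct pieces then already $d(x,y)\ge\gamma>c'$; if $x,y\in\Delta_i$ then $f^{n-i}x\ne f^{n-i}y$ in $\Delta$, so expansiveness of $(\Delta,f^n)$ gives $m\in\Z$ with $d\big((f^n)^mf^{n-i}x,(f^n)^mf^{n-i}y\big)>c$, i.e. $d(f^{nm+n-i}x,f^{nm+n-i}y)>c'$. For (2): given nonempty open $U,V\subseteq\Lambda$, pick pieces $\Delta_i,\Delta_j$ with $U\cap\Delta_i\ne\emptyset\ne V\cap\Delta_j$; these intersections are open (the pieces being clopen), so $f^{-i}(U\cap\Delta_i)$ and $f^{-j}(V\cap\Delta_j)$ are nonempty open subsets of $\Delta$. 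Transitivity of $(\Delta,f^n)$ yields $m$ and a point $p\in f^{-i}(U\cap\Delta_i)$ with $(f^n)^mp\in f^{-j}(V\cap\Delta_j)$; then $f^ip\in U$ and $f^{nm+j}p\in V$, so $f^{nm+j-i}(U)\cap V\ne\emptyset$.

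For (3) I would argue directly by an $\eps$-$\delta$ argument. Fix $\eps>0$ and let $\omega^{(l)}$ be a modulus of uniform continuity of $f^l$ on $X$ for $0\le l\le n$ (these exist since $X$ is compact; take $\omega^{(0)}=\mathrm{id}$). The constants must be chosen in this order: first $\eps'>0$ with $\omega^{(l)}(\eps')<\eps/2$ for all $l\le n-1$; then $\delta_0'>0$ from the shadowing property of $(\Delta,f^n)$ applied to $\eps'$; then $\delta>0$ with $\delta<\gamma$ and $\sum_{l=0}^{n-1}\omega^{(l)}(\delta)<\min\{\delta_0',\eps/2\}$. Now let $\{x_k\}_{k\in\Z}$ be any $\delta$-pseudo-orbit of $(\Lambda,f)$. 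Since $\delta<\gamma$, the estimate $d(f(x_k),x_{k+1})<\delta$ forces $x_{k+1}\in f(\Delta_i)=\Delta_{i+1\bmod n}$ whenever $x_k\in\Delta_i$, so the pseudo-orbit runs through the pieces cyclically; choose $k_0$ with $x_{k_0}\in\Delta$, so $x_{k_0+nm}\in\Delta$ for all $m\in\Z$. Telescoping $n$ consecutive pseudo-orbit inequalities and distorting by the $\omega^{(l)}$ shows $\{x_{k_0+nm}\}_{m\in\Z}$ is a $\delta_0'$-pseudo-orbit of $(\Delta,f^n)$, hence is $\eps'$-shadowed by some $w\in\Delta$. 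Put $y:=f^{-k_0}(w)\in\Lambda$. Writing $k=k_0+nm+r$ with $0\le r<n$ we have $f^ky=f^r\big((f^n)^mw\big)$, and the triangle inequality through $f^r(x_{k_0+nm})$ gives $d(f^ky,x_k)\le\omega^{(r)}\big(d((f^n)^mw,x_{k_0+nm})\big)+\sum_{l=0}^{r-1}\omega^{(l)}(\delta)<\eps/2+\eps/2=\eps$, so $y$ $\eps$-shadows $\{x_k\}$.

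A shorter alternative for (3): each $(\Delta_i,f^n)$ is conjugate to $(\Delta,f^n)$ via the homeomorphism $f^i:\Delta\to\Delta_i$ (since $f^if^n=f^nf^i$), hence has the shadowing property; $f^n$ fixes each piece, and a finite disjoint union of shadowing systems with $\gamma$-separated pieces has the shadowing property (for $\delta<\gamma$ a $\delta$-pseudo-orbit cannot leave a piece), so $(\Lambda,f^n)$ has the shadowing property, and then Lemma \ref{lem-AQ} applied to $f|_\Lambda$ gives it for $(\Lambda,f)$. Either way the one delicate point is (3), specifically the ordering and interdependence of $\eps'$, $\delta_0'$ and $\delta$: $\eps'$ must be fixed before $(\Delta,f^n)$-shadowing is invoked, and only afterwards can $\delta$ be shrunk enough both to enforce the cyclic structure ($\delta<\gamma$) and to keep the two independent errors — the $(\Delta,f^n)$-shadowing error propagated $r<n$ forward steps, and the accumulated pseudo-orbit drift over $r<n$ steps — each below $\eps/2$. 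Parts (1) and (2) are routine once the separated-cyclic-clopen-pieces picture is in hand.
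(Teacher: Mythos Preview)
Your proposal is correct. Your ``shorter alternative'' for (3) is precisely the paper's argument: it observes that each $(f^i(\Delta),f^n)$ inherits shadowing by conjugacy, that the disjoint union $(\Lambda,f^n)$ therefore has shadowing, and then invokes Lemma~\ref{lem-AQ} to pass from $f^n$ to $f$. The paper handles (1) and (2) in a single sentence (``come directly from the uniform continuity of $f,\dots,f^{n-1}$''), and your explicit arguments for these parts are correct expansions of that remark. Your primary direct $\eps$--$\delta$ argument for (3) is a valid alternative that trades the citation of Lemma~\ref{lem-AQ} for an explicit bookkeeping of constants; it is longer but self-contained, whereas the paper's route is shorter but relies on the black-box equivalence of shadowing for $f$ and $f^n$.
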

\begin{proof}
	Item (1) and (2) come directly from the uniform continuity of $f, \cdots, f^{m-1}.$ Since $(\Delta,  f^n)$ has the shadowing property and $f:X\to X$ is a homeomorphism, then $(f^i(\Delta),  f^n)$ has the shadowing property for any $0\leq i\leq n-1.$ Combining with $f^i(\Delta)\cap f^j(\Delta)=\emptyset$ for any $0\leq i<j\leq n-1,$  $(\Lambda,  f^n)$ also has the shadowing property.  So $(\Lambda,  f)$  has the shadowing property by Lemma \ref{lem-AQ}. 
\end{proof}

For $\eps>0$ and $n\in\N$,   two points $x$ and $y$ are $(n,  \eps)$-separated if
$d_n(x,y)>\eps.$
A subset $E$ is $(n,  \eps)$-separated if any pair of different points of $E$ are $(n,  \eps)$-separated. For $x\in X$,   we define the empirical measure of $x$ as
$
	\mathcal{E}_{n}(x):=\frac{1}{n}\sum_{j=0}^{n-1}\delta_{f^{j}(x)},
$
where $\delta_{x}$ is the Dirac mass at $x$. 
Let $F\subseteq \mathcal{M}(X)$  be a neighbourhood of $\nu \in \mathcal{M}_f(X)$, define $X_{n,F}:=\{x\in X:\mathcal{E}_{n}(x)\in F\}.$
For $k\in\N$,   let $P_k(f):=\{x\in X:f^k(x)=x\}$.

\begin{Lem}\label{Maincor-mu-gamma-n-expansive}\cite[Corollary 4.13]{DT}
	Suppose $(X,  f)$ is topologically Anosov and transitive.
	Then for any $\eta>0$,   there exists an $\eps^*_1=\eps^*_1(\eta)>0$ such that for any $\mu\in \mathcal{M}_f(X)$ and its neighborhood $F_{\mu}$, there exists an $0<\eps^*_2=\eps^*_2(\eta,\mu,F_\mu)<\eps^*_1$ such that  for any $x\in X$,   for any $0<\eps\leq\eps^*_2$,   for any $N\in\N$,   there exist an $n=n(\eta,\mu,F_\mu,  \eps,x)\geq N$ such that for any $p\in\N$,   there exists an $(pn,  \frac{\eps^*_1}{3})$-separated set $\Gamma_{pn}$ so that
	\begin{description}
		\item[(1)]\label{lem-B-aACor} $\Gamma_{pn}\subseteq X_{pn,  F_{\mu}}\cap B(x,  \eps)\cap P_{pn}(f)$;
		\item[(2)]\label{lem-B-bACor} $\frac{\log |\Gamma_{pn}|}{pn}>h_{\mu}(f)-\eta$.
	\end{description}	
\end{Lem}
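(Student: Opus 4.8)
\textbf{Proof plan for Theorem~\ref{Mainlemma-convex-by-horseshoe}.} The plan is to manufacture $\Lambda$ and the $\Lambda_i$ as images of explicit transitive two‑sided subshifts of finite type under a coding map built from the shadowing property, expansivity being what makes the coding a homeomorphism; the alphabet comes from the separated sets of periodic orbits in Lemma~\ref{Maincor-mu-gamma-n-expansive}. First I fix an expansivity constant $c>0$ and, for the given $\eta$, the constant $\eps_1^{*}=\eps_1^{*}(\eta)$ of Lemma~\ref{Maincor-mu-gamma-n-expansive}, which we may take $\le c$. Picking a small $\zeta_0>0$ (its size will be specified later in terms of $\zeta$), setting $F_{\mu_i}=\B(\mu_i,\zeta_0)$ and taking the resulting $\eps_{2,i}^{*}$, I then fix a shadowing precision $\eps''$ with $\eps''<\min\{c/2,\eps_1^{*}/6,\eps/4\}$, a matching $\delta>0$ from Definition~\ref{def-shadowing}, and $\eps'<\min\{\delta/2,\zeta_0,\eps/4,\min_i\eps_{2,i}^{*}\}$. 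Applying Lemma~\ref{Maincor-mu-gamma-n-expansive} to each $\mu_i$ (with $\eps'$ in place of $\eps$, a large lower bound $N$, and $\eta$ replaced by $\eta/2$) and then choosing $p_i$ so that the products $p_in_i$ equal a common large value $T$, I obtain finite sets $\Gamma^{(i)}\subseteq X_{T,F_{\mu_i}}\cap B(x,\eps')\cap P_T(f)$ with $\tfrac1T\log|\Gamma^{(i)}|>h_{\mu_i}(f)-\eta/2$. Since $\eps_1^{*}\le c$ and every point of $A:=\bigcup_i\Gamma^{(i)}$ has period $T$, any two distinct points of $A$ are $(T,\eps_1^{*}/3)$‑separated — else their periodic orbits would stay within $c$ for all time, against expansivity. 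I also fix at the outset a periodic orbit $O$ of $X$ with $x\notin O$ (it exists because $\htop(f)>0$ by Lemma~\ref{Lemma-inter-entropy}, so periodic orbits are infinite in number) and demand $\eps'+\eps''<\dist(x,O)$.

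Next comes the coding. Read each bi‑infinite sequence over the alphabet $\hat A=A\times\{0,\dots,T-1\}$ that obeys the one‑step rules $(a,r)\to(a,r+1)$ for $r<T-1$ and $(a,T-1)\to(b,0)$ (all $a,b$) as a $\delta$‑pseudo‑orbit $P(\hat\omega)$, namely the concatenation of the genuine length‑$T$ orbit segments of the symbols; the only jumps, at block boundaries, have size $<2\eps'<\delta$. By shadowing and expansivity ($2\eps''\le c$) each $P(\hat\omega)$ has a unique $\eps''$‑shadowing point $\hat\pi(\hat\omega)$, $\hat\pi$ is continuous and intertwines the shift with $f$, and the associated SFT $(\hat\Sigma,\hat\sigma)$ and its sub‑SFT $\hat\Sigma_i$ (on the symbols of $\Gamma^{(i)}$) are transitive, being irreducible. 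The point is that $\hat\pi$ is injective: if $\hat\pi(\hat\omega)=\hat\pi(\hat\omega')$ the two pseudo‑orbits stay $2\eps''$‑close forever; once their block decompositions are known to be synchronized, the first differing symbol — a pair of distinct $(T,\eps_1^{*}/3)$‑separated points of $A$ — forces the pseudo‑orbits to differ by more than $\eps_1^{*}/3>2\eps''$ somewhere, a contradiction. The only missing ingredient for injectivity is precisely the synchronization of the block decompositions, equivalently the pairwise disjointness of the $T$ cyclic copies $f^{r}(\Delta)$ of $\Delta:=\hat\pi(\hat\Sigma\cap\{\hat\omega_0\in A\times\{0\}\})$ — see the obstacle below — after which one sets $\Lambda:=\hat\pi(\hat\Sigma)$, $\Lambda_i:=\hat\pi(\hat\Sigma_i)\subseteq\Lambda$, gets $(\hat\Sigma,\hat\sigma)\cong(\Lambda,f)$ and $(\hat\Sigma_i,\hat\sigma)\cong(\Lambda_i,f)$, and invokes Proposition~\ref{prop-transitive-shadowing-for-f-n} to see these are also topologically Anosov; this is item~1 (item~1 when $|\Gamma^{(i)}|=1$ is just a periodic orbit). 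Item~4 holds with $L=T$: for $z=\hat\pi(\hat\omega)$ the block‑start times form one residue class modulo $T$, and at each of them $f^{\,\cdot}(z)\in B(x,\eps'+\eps'')\subseteq B(x,\eps)$. And $\Lambda\subsetneq X$ because every orbit in $\Lambda$ meets $B(x,\eps'+\eps'')$ while $O$ does not.

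It remains to verify items~2 and~3. For item~2, $\htop(f,\Lambda_i)=\htop(\hat\sigma,\hat\Sigma_i)=\tfrac1T\log|\Gamma^{(i)}|>h_{\mu_i}(f)-\eta/2>h_{\mu_i}(f)-\eta$, the middle equality because in the graph of $\hat\Sigma_i$ one traverses all $T$ levels before branching among the $|\Gamma^{(i)}|$ symbols. For item~3, every $\nu\in\mathcal{M}_f(\Lambda)$ pulls back through the conjugacy to a $\hat\sigma$‑invariant $\hat\nu$; setting $t_i:=\hat\nu\{\hat\omega:\hat\omega_0\in\Gamma^{(i)}\times\{0,\dots,T-1\}\}$ one has $\sum_it_i=1$, and — since inside a type‑$i$ block the orbit of $\hat\pi(\hat\omega)$ stays $\eps''$‑close to a genuine $\Gamma^{(i)}$‑orbit segment whose empirical measure lies in $\B(\mu_i,\zeta_0)$, block‑boundary times having density $o(1)$ as $T\to\infty$, via Lemma~\ref{lem:prohorov} and the ergodic decomposition — one gets $\rho(\nu,\sum_it_i\mu_i)<\zeta$, so $\sup_{\nu\in\mathcal{M}_f(\Lambda)}\inf_{\kappa\in K}\rho(\nu,\kappa)<\zeta$ as $\sum_it_i\mu_i\in K$. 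Conversely, for $\kappa=\sum_is_i\mu_i\in K$ approximate the $s_i$ by rationals with common denominator $q$ and take the $\hat\sigma$‑periodic point of $\hat\Sigma$ that runs, per period, proportions close to $s_i$ of type‑$i$ blocks; its periodic orbit measure lies in $\mathcal{M}_f(\Lambda)$ and is within $\zeta$ of $\kappa$. Hence $d_H(K,\mathcal{M}_f(\Lambda))<\zeta$, and the same computation carried out inside $\hat\Sigma_i$ gives $d_H(\mu_i,\mathcal{M}_f(\Lambda_i))<\zeta$.

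The main obstacle is the disjointness of the $T$ cyclic copies $f^{r}(\Delta)$, $0\le r<T$ (equivalently, the injectivity of the coding across phase shifts): a period‑$T$ orbit may re‑enter a neighbourhood of $x$ many times, so ``proximity to $x$'' cannot by itself detect which block a coded point is currently in. I would handle this by inserting between consecutive $T$‑blocks a short fixed excursion — available by transitivity of $(X,f)$, of length $\ell$ bounded independently of $T$ — that visits a small region $B(y_{*},\rho_{*})$ avoided by all orbits of points of $A$ (such $y_{*}$ exists since those orbits form a finite set $\neq X$; one shrinks the $F_{\mu_i}$ so that the building blocks charge a neighbourhood of $y_{*}$ negligibly, forcing their finitely many orbit points to miss $B(y_{*},\rho_{*})$), thereby planting one distinctive landmark per period; the super‑period becomes $T+\ell$, all the estimates above acquire only $o(1)$ corrections as $T\to\infty$, and Proposition~\ref{prop-transitive-shadowing-for-f-n} applies. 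Once this is in place, the rest — injectivity of $\hat\pi$, the subshift‑of‑finite‑type identifications, and items~2 and~4 — is bookkeeping, the remaining care being to run the item‑3 estimates so that all accumulated errors ($\zeta_0$, $\eps''$, the $o(1)$ density of boundary times, the rational approximation) stay below $\zeta$, which dictates the order of the parameter choices.
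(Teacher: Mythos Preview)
Your proposal is not a proof of the stated Lemma~\ref{Maincor-mu-gamma-n-expansive} at all: that lemma is quoted verbatim from \cite[Corollary~4.13]{DT} and the paper gives no proof of it. What you have written is a proof plan for Theorem~\ref{Mainlemma-convex-by-horseshoe}, which \emph{uses} Lemma~\ref{Maincor-mu-gamma-n-expansive} as a black box. I will therefore compare your plan with the paper's proof of Theorem~\ref{Mainlemma-convex-by-horseshoe}.

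The overall architecture matches: build separated families $\Gamma^{(i)}$ of periodic points near $x$ via Lemma~\ref{Maincor-mu-gamma-n-expansive}, concatenate their length-$T$ orbit segments into $\delta$-pseudo-orbits, shadow, and use expansivity to make the coding a conjugacy onto a transitive SFT. Your expansivity argument that any two distinct period-$T$ points are $(T,\eps_1^{*}/3)$-separated is slicker than the paper's, which instead separates points from different $\Gamma^{(i)}$ by the distance $\rho_0$ between the measures $\mu_i$.

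The substantive difference is how cyclic disjointness of $f^0(\Delta),\dots,f^{T-1}(\Delta)$ is obtained. You propose inserting between consecutive $T$-blocks a short transitivity-excursion to a landmark region $B(y_*,\rho_*)$ missed by all $A$-orbits. The paper does something different and arguably cleaner: it introduces an auxiliary measure $\nu_0$ near maximal entropy, applies Lemma~\ref{Maincor-mu-gamma-n-expansive} to $\nu_0$ as well, and uses a pigeonhole/counting argument against the growth rate $s(n,\eps^*/3)$ to extract a point $x_0\in\Gamma_n^{\nu_0}$ whose \emph{minimal} period is exactly $n$. Each building block is then prefixed by \emph{two} copies of the $x_0$-orbit segment, $\mathfrak{C}_{x_0}^n\mathfrak{C}_{x_0}^n$, and the $\widetilde{\Gamma}_n^{\nu_i}$ are pruned to stay $(n,\eps^*/9)$-far from every shift of $x_0$'s orbit. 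Disjointness of the cyclic translates then reduces to four short cases, each contradicting either the minimality of the period of $x_0$ or the pruning condition. This avoids having to manufacture an external landmark and keeps everything inside the periodic-point machinery already set up; by contrast, your excursion device requires extra bookkeeping (choosing $y_*$ after $A$ is known, threading the excursion as a genuine $\delta$-pseudo-orbit segment from near $x$ back to near $x$, and controlling the $o(1)$ corrections), none of which is fatal but all of which is left at the level of a sketch.

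For item~3 the paper also argues via generic points of ergodic measures and block-by-block empirical estimates (your route through the pushforward $\hat\nu$ and the weights $t_i$ is equivalent after ergodic decomposition, though you should note that the paper groups the symbols into super-blocks of length $ln$ with the $x_0$-prefix absorbing the boundary error, rather than relying on ``block-boundary times having density $o(1)$'').
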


\begin{Cor}\label{Corollary-zero-metric-entropy}
	Suppose $(X,  f)$ is topologically Anosov and transitive.
	Then we have 
	\begin{description}
		\item [(1)] $\{\mu\in \mathcal{M}_f(X):h_{\mu}(f)=0\}$ is dense in $\mathcal{M}_f(X).$
		\item [(2)] $\{\mu\in \mathcal{M}_f(X):h_{\mu}(f)>0\}$ is dense in $\mathcal{M}_f(X).$
		\item [(3)] there is an invariant measure $\nu$ with full support, that is, $S_\nu=X$.
		\item [(4)] $\mathcal{M}_f^e(X)$ is a dense $G_\delta$ subset of $\mathcal{M}_f(X)$.
	\end{description}
\end{Cor}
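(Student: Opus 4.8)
The plan is to deduce all four items from Lemma \ref{Maincor-mu-gamma-n-expansive}, which manufactures $n$-periodic points whose empirical measures $\mathcal{E}_n(\cdot)$ lie in a prescribed weak$^*$ neighbourhood, together with Lemma \ref{Lemma-inter-entropy}, which gives $\htop(f)>0$ and hence an invariant measure of positive entropy, and the standard fact that $\mu\mapsto h_\mu(f)$ is affine on $\mathcal{M}_f(X)$ \cite{Walters}. I would prove $(2)$ first, because it lets me feed Lemma \ref{Maincor-mu-gamma-n-expansive} a strictly positive $\eta$ and so guarantee the separated sets it outputs are nonempty; then $(1)$, $(3)$, $(4)$ all follow by routine bookkeeping. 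The only step needing genuine input beyond this is $(2)$ itself.

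For $(2)$: fix $\mu\in\mathcal{M}_f(X)$ and $\zeta>0$. By Lemma \ref{Lemma-inter-entropy} choose $\omega\in\mathcal{M}_f(X)$ with $h_\omega(f)=\tfrac12\htop(f)>0$, and set $\mu_t=(1-t)\mu+t\omega\in\mathcal{M}_f(X)$. Then $\mu_t\to\mu$ in the weak$^*$ topology as $t\to0^+$, so $\rho(\mu_t,\mu)<\zeta$ for all small $t>0$, while affinity of the entropy map gives $h_{\mu_t}(f)=(1-t)h_\mu(f)+th_\omega(f)\ge th_\omega(f)>0$. Hence $\{\mu\in\mathcal{M}_f(X):h_\mu(f)>0\}$ meets $\mathcal{B}(\mu,\zeta)$, which proves $(2)$.

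For $(1)$ (and, as a by-product, density of periodic measures): given $\mu$ and $\zeta>0$, use $(2)$ to pick $\mu'\in\mathcal{B}(\mu,\zeta/2)$ with $h_{\mu'}(f)>0$, and let $F_{\mu'}=\mathcal{B}(\mu',\zeta/2)\subseteq\mathcal{B}(\mu,\zeta)$. Apply Lemma \ref{Maincor-mu-gamma-n-expansive} to $(\mu',F_{\mu'})$ with $\eta=\tfrac12 h_{\mu'}(f)$: taking any $x\in X$, $\eps\le\eps^*_2$, $N$ large and $p=1$, we obtain $n\ge N$ and an $(n,\eps^*_1/3)$-separated set $\Gamma_n\subseteq X_{n,F_{\mu'}}\cap P_n(f)$ with $\tfrac1n\log|\Gamma_n|>\tfrac12 h_{\mu'}(f)>0$, so $\Gamma_n\neq\emptyset$. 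Any $x_0\in\Gamma_n$ is $n$-periodic, so $\mathcal{E}_n(x_0)$ is $f$-invariant, is supported on a finite set and therefore has zero entropy, and lies in $F_{\mu'}\subseteq\mathcal{B}(\mu,\zeta)$. This proves $(1)$; since periodic-orbit measures are ergodic, it also shows $\mathcal{M}_f^e(X)$ is dense in $\mathcal{M}_f(X)$, and combined with the standard fact that $\mathcal{M}_f^e(X)$ is a $G_\delta$ subset of $\mathcal{M}_f(X)$ (\cite[Proposition 5.7]{DGM2019}) this gives $(4)$.

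For $(3)$: the compact metric space $X$ has a countable base $\{U_k\}_{k\ge1}$ of nonempty open sets. For each $k$, choose $x\in U_k$ and $\eps>0$ with $B(x,\eps)\subseteq U_k$, and repeat the construction of the previous paragraph (now with the weak$^*$ neighbourhood taken to be all of $\mathcal{M}(X)$) to obtain an $n$-periodic $x_0\in B(x,\eps)$; then $\mu_{U_k}:=\mathcal{E}_n(x_0)\in\mathcal{M}_f(X)$ satisfies $\mu_{U_k}(U_k)\ge\tfrac1n>0$. Set $\nu:=\sum_{k\ge1}2^{-k}\mu_{U_k}$, which is an $f$-invariant Borel probability measure. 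For every $k$ we have $\nu(U_k)\ge 2^{-k}\mu_{U_k}(U_k)>0$, so $\nu(U)>0$ for every nonempty open $U\subseteq X$, i.e. $S_\nu=X$. As noted, the one place care is needed is $(2)$, which rests on Lemma \ref{Lemma-inter-entropy} and on affinity of entropy; once $(2)$ is available the remaining items are immediate consequences of Lemma \ref{Maincor-mu-gamma-n-expansive} and the fact that periodic-orbit measures are ergodic with zero entropy.
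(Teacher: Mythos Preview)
Your proof is correct and follows essentially the same approach as the paper: periodic measures from Lemma \ref{Maincor-mu-gamma-n-expansive} give (1) and (4), a convex combination with a positive-entropy measure (via Lemma \ref{Lemma-inter-entropy}) gives (2), and density of periodic points gives (3). Two minor remarks: your detour through (2) before (1) is unnecessary, since for any $\eta>0$ the conclusion $\tfrac{1}{n}\log|\Gamma_n|>h_\mu(f)-\eta$ already forces $|\Gamma_n|\ge1$ regardless of whether $h_\mu(f)$ is positive, so the paper applies the lemma directly to $\mu$; and for (3) the paper simply cites \cite[Proposition 6.5]{T16} (density of periodic points implies existence of a fully supported invariant measure), whereas you reprove that proposition by the standard countable convex combination, which is fine.
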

\begin{proof}
	From Lemma \ref{Maincor-mu-gamma-n-expansive}, the set of periodic measures is dense in $\mathcal{M}_f(X)$.  Since measure supported on periodic points has zero metric entropy, we obtain item(1).
	From Lemma \ref{Maincor-mu-gamma-n-expansive}, 
	the set of periodic points is dense in $X.$ Then by \cite[Proposition 6.5]{T16}, there is an invariant measure with full support.  We obtain item(3).
	By Lemma \ref{Lemma-inter-entropy}, there is $\mu_+ \in \mathcal{M}_f(X)$ with $h_{\mu_+}(f)>0.$ Given $\omega \in \mathcal{M}_f(X)$ with $h_\omega(f)=0,$ denote $\omega_\theta=\theta\omega+(1-\theta)\mu_+$ for any $\theta\in(0,1).$ Then $h_{\omega_\theta}(f)>0$ and $\lim\limits_{\theta\to0}\omega_\theta=\omega.$ So we obtain item(2). Finally, since the set of periodic measures is dense in $\mathcal{M}_f(X)$, then $\mathcal{M}_f^e(X)$ is a dense  subset of $\mathcal{M}_f(X)$. So by  \cite[Proposition 5.7]{DGM2019}, $\mathcal{M}_f^e(X)$ is a dense $G_\delta$ subset of $\mathcal{M}_f(X)$.
\end{proof}
\begin{Cor}\label{Corollary-zero-metric-entropy-2}
	Suppose $(X,  f)$ is topologically Anosov and transitive. Let $\varphi$ be a continuous function on $X$. 
	Then  $\mathrm{Int}(\mathcal{P}_\varphi(\mathcal{M}_f(X)))\subset \mathcal{P}_\varphi(\{\mu\in \mathcal{M}_f(X):h_{\mu}(f)=0\}).$
\end{Cor}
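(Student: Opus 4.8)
The plan is to reduce the statement to two structural facts that are already available: that the set $Z:=\{\mu\in\mathcal{M}_f(X):h_\mu(f)=0\}$ is \emph{dense} in $\mathcal{M}_f(X)$ (this is Corollary~\ref{Corollary-zero-metric-entropy}(1)), and that $Z$ is \emph{convex}, because the metric entropy function $\mu\mapsto h_\mu(f)$ is affine on $\mathcal{M}_f(X)$. Since $\mathcal{M}_f(X)$ is convex and weak*-compact and $\mathcal{P}_\varphi:\mu\mapsto\int\varphi\,d\mu$ is continuous and affine, $\mathcal{P}_\varphi(\mathcal{M}_f(X))$ is a compact interval $[c,d]\subset\mathbb{R}$; if $c=d$ its interior is empty and there is nothing to prove, so I would assume $c<d$, in which case $\mathrm{Int}(\mathcal{P}_\varphi(\mathcal{M}_f(X)))=(c,d)$.

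First I would fix $a\in(c,d)$ and choose $\mu_1,\mu_2\in\mathcal{M}_f(X)$ with $\int\varphi\,d\mu_1<a<\int\varphi\,d\mu_2$; such $\mu_1,\mu_2$ exist precisely because $a$ is interior to $[c,d]$. Using the density of $Z$ together with the continuity of $\nu\mapsto\int\varphi\,d\nu$, I would pick $\nu_1,\nu_2\in Z$ close enough to $\mu_1,\mu_2$ respectively that still $\int\varphi\,d\nu_1<a$ and $\int\varphi\,d\nu_2>a$. Then, along the segment $t\mapsto\nu_t:=t\nu_1+(1-t)\nu_2$, $t\in[0,1]$, the map $t\mapsto\int\varphi\,d\nu_t$ is affine and changes sign relative to $a$, so by the intermediate value theorem there is $t_0$ with $\int\varphi\,d\nu_{t_0}=a$. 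By affineness of the entropy, $h_{\nu_{t_0}}(f)=t_0h_{\nu_1}(f)+(1-t_0)h_{\nu_2}(f)=0$, so $\nu_{t_0}\in Z$ and $\mathcal{P}_\varphi(\nu_{t_0})=a$, which gives $a\in\mathcal{P}_\varphi(Z)$ and hence the claimed inclusion.

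There is no substantial obstacle here: the only points needing a word of justification are the density of $Z$ (already proved in Corollary~\ref{Corollary-zero-metric-entropy}) and the affineness of $\mu\mapsto h_\mu(f)$ on $\mathcal{M}_f(X)$ (a standard fact), while the small subtlety of keeping the zero-entropy approximants $\nu_1,\nu_2$ on the correct sides of $a$ is handled immediately by continuity of $\mathcal{P}_\varphi$. An equivalent, slightly slicker packaging, which I would use if shorter, is to observe that $\mathcal{P}_\varphi(Z)$ is a convex subset of $[c,d]$, hence an interval, whose closure is all of $[c,d]$ by density of $Z$; an interval with this closure necessarily contains $(c,d)=\mathrm{Int}(\mathcal{P}_\varphi(\mathcal{M}_f(X)))$.
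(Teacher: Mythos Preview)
Your proposal is correct and follows essentially the same approach as the paper's own proof: pick $\mu_1,\mu_2$ with integrals straddling $a$, approximate them by zero-entropy measures $\nu_1,\nu_2$ using Corollary~\ref{Corollary-zero-metric-entropy}(1), and take a convex combination to hit $a$ exactly, using affineness of entropy to keep the result in $Z$. The alternative packaging via ``$\mathcal{P}_\varphi(Z)$ is an interval with closure $[c,d]$'' is a nice observation but not in the paper.
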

\begin{proof}
	For any $a\in \mathrm{Int}(\mathcal{P}_\varphi(\mathcal{M}_f(X))),$ there are $\mu_1,\mu_2\in \mathcal{M}_f(X)$ such that $ \mathcal{P}_\varphi(\mu_1)<a< \mathcal{P}_\varphi(\mu_2).$ By Corollary \ref{Corollary-zero-metric-entropy}(1), there are $\nu_1,\nu_2\in \{\mu\in \mathcal{M}_f(X):h_{\mu}(f)=0\}$ satisfying $\rho(\mu_1,\nu_1)$ and $\rho(\mu_2,\nu_2)$ small enough such that $ \mathcal{P}_\varphi(\nu_1)<a< \mathcal{P}_\varphi(\nu_2).$ Choose $\theta\in(0,1)$ such that $\nu=\theta\nu_1+(1-\theta)\nu_2$ satisfying $\mathcal{P}_\varphi(\nu)=a.$ Then $h_\nu(f)=\theta h_{\nu_1}(f)+(1-\theta)h_{\nu_2}(f)=0.$
\end{proof}

\subsection{Proof of Theorem \ref{Mainlemma-convex-by-horseshoe}}

Fix $m>0,$ $K=\cov\{\nu_i\}_{i=1}^m\subseteq \mathcal{M}_f(X),$ $x\in X$ and $\eta_0,\zeta_0,\eps_0>0.$ Let $\rho_0=\min\{\rho(\nu_i,  \nu_j):1\leq i<j\leq m\}.$ Then $  K\cap \mathcal{M}_f^e(X)$ is empty or finite (less than $m$). By Proposition \ref{prop-entropy-dense-for-shadowing},   $(X,  f)$ has the entropy-dense property,   so there are infinitely many ergodic measures on $X$.
This implies that $K\neq \mathcal{M}_f(X)$ and thus $d_H(K,  \mathcal{M}_f(X))>0.$
Let $\eta,\zeta>0$ with $\eta\leq \min\{\htop(f),\eta_0\}$ and $\zeta<\min\{d_H(K,  \mathcal{M}_f(X)),\rho_0,\zeta_0\}$. By the variational principle of the topological entropy, there exists $\nu_0\in \mathcal{M}_f(X)$ such that $h_{\nu_0}(f)>\htop(f)-\frac{\eta}{8}.$ 

By Lemma \ref{Maincor-mu-gamma-n-expansive},  there exist $\eps^*>0$ and $0<\tilde{\eps}^*<\eps^*$  such that for any $0<\frac{\delta}{2}<\tilde{\eps}^*$ and each $0\leq i\leq m,$ there exists an $n_i\in\N$ such that for any $p\in\N$,   there exists an $(pn_i,  \frac{\eps^*}{3})$-separated set $\Gamma_{pn_i}^{\nu_i}$ with
\begin{description}
	\item[(a)]\label{lem-B-aBasic} $\Gamma_{pn_i}^{\nu_i}\subseteq P_{pn_i}(f)\cap X_{pn_i,  \B(\nu_i,  \frac{\zeta}{4})}\cap B(x,  \frac{\delta}{2})$;
	\item[(b)]\label{lem-B-bBasic} $\frac{\log |\Gamma_{pn_i}^{\nu_i}|}{pn_i}>h_{\nu_i}(f)-\frac{\eta}{8}$.
\end{description}
We can assume that $\frac{\eps^*}{3}<\frac{c}{4}$ where $c>0$ is the expansive constant. Let $s(n,\frac{\eps^*}{3})$ denote the largest cardinality of any $(n,\frac{\eps^*}{3})$-separated set of $X$, then by \cite[Theorem 7.11]{Walters} one has
\begin{equation}
	\htop(f)=\limsup_{n\to\infty}\frac{1}{n}\log s(n,\frac{\eps^*}{3}).
\end{equation}
Then there exists $N\in\N$ such that for any $n\geq N,$ one has 
\begin{equation}\label{equation-AB}
	s(n,\frac{\eps^*}{3})<e^{n(\htop(f)+\frac{\eta}{4})}.
\end{equation}

Set $\eps=\min\{\frac{\zeta}{4},  \frac{\rho_0}{6},  \frac{\tilde{\eps}^*}{27},\frac{\eps_0}{2}\}.$  Then there exists a $0<\delta<\eps$ such that any $\delta$-pseudo-orbit can be $\eps$-shadowed by some point in $X.$ Set $n=p_0n_0n_1n_2\cdots n_m$ where $p_0$ is large enough such that for any $1\leq i\leq m$
\begin{equation}\label{equation-AA}
	n\geq 2N,\ e^{n(h_{\nu_i}(f)-\frac{\eta}{8})}-n\geq e^{n(h_{\nu_i}(f)-\frac{\eta}{4})}\ \text{ and } e^{n(\htop(f)-\frac{\eta}{4})}>\lceil\frac{n}{2}\rceil e^{\lceil\frac{n}{2}\rceil(\htop(f)+\frac{\eta}{4})}+\sum_{m=1}^{N_1-1}| P_{m}^*(f)|
\end{equation} 
where $P_{m}^*(f)$ is the set of periodic points with minimal period $m.$
Then for each $0\leq i\leq m$,   $P_{n_i}(f)\subseteq P_n(f)$ by definition and furthermore,   we can obtain an $(n,  \frac{\eps^*}{3})$-separated set $\Gamma_n^{\nu_i}$ with
\begin{description}
	\item[(a)]\label{lem-B-aBasic2} $\Gamma_{n}^{\nu_i}\subseteq P_{n}(f)\cap X_{n,  \B(\nu_i,  \frac{\zeta}{4})}\cap B(x, \frac{\delta}{2})$;
	\item[(b)]\label{lem-B-bBasic2} $\frac{\log |\Gamma_{n}^{\nu_i}|}{n}>h_{\nu_i}(f)-\frac{\eta}{8}$.
\end{description}
Since periodic points in $\Gamma_n^{\nu_0}$ with same period $l_0$ for some $l_0\in\N$ are $(l_0,\frac{\eps^*}{3})$-separated, by \eqref{equation-AB} and \eqref{equation-AA} we have
\begin{equation*}
	\begin{split}
		\sum_{m=1}^{\lceil\frac{n}{2}\rceil}| P_{m}^*(f)\cap \Gamma_n^{\nu_0}|\leq &\sum_{m=N_1}^{\lceil\frac{n}{2}\rceil} s(m,\frac{\varepsilon^*}{3})+\sum_{m=1}^{N_1-1}| P_{m}^*(f)|\\
		<&\sum_{m=N_1}^{\lceil\frac{n}{2}\rceil} e^{{m}(\htop(f)+\frac{\eta}{4})}+\sum_{m=1}^{N_1-1}| P_{m}^*(f)|\\
		\leq&{\lceil\frac{n}{2}\rceil}e^{{\lceil\frac{n}{2}\rceil}(\htop(f)+\frac{\eta}{4})}+\sum_{m=1}^{N_1-1}| P_{m}^*(f)|\\
		< &e^{n(\htop(f)-\frac{\eta}{4})}<e^{n(h_{\nu_0}(f)-\frac{\eta}{8})}<|\Gamma_{n}^{\nu_0}|.
	\end{split}
\end{equation*}
Thus there exists $x_0\in\Gamma_n^{\nu_0}$ with minimal period $n.$ Together with $\frac{\eps^*}{3}<\frac{c}{4},$ the only sub-intervals of length $n$ of $\langle x_0,  f(x_0),  \cdots,  f^{n-1}(x_0),x_0,  f(x_0),  \cdots,  f^{n-1}(x_0)\rangle$ that are $\frac{\eps^*}{9}$-shadowed by $\langle x_0,  f(x_0),  \cdots,  f^{n-1}(x_0)\rangle$ are the initial and the final sub-intervals.
By the separation assumption, for any $1\leq i\leq m$ we have $$|\{y\in\Gamma_n^{\nu_i}:d_n(y,f^j(x_0))<\frac{\eps^*}{9} \text{ for some }0\leq j\leq n-1\}|\leq n.$$ Consequently, by \eqref{equation-AA} one can find a subset $\widetilde{\Gamma}_n^{\nu_i}\subset \Gamma_n^{\nu_i}$ with $|\widetilde{\Gamma}_n^{\nu_i}|>e^{n(h_{\nu_i}-\frac{\eta}{4})}$ such that $d_n(y,f^j(x_0))\geq \frac{\eps^*}{9}$ for any $y\in\widetilde{\Gamma}_n^{\nu_i}$ and $0\leq j\leq n-1.$ 
Denote $r_i=|\widetilde{\Gamma}_n^{\nu_i}|$ and $r=\sum_{i=1}^mr_i$. Enumerate the elements of each $\widetilde{\Gamma}_n^{\nu_i}$ by $\widetilde{\Gamma}_n^{\nu_i}=\{p_1^i,  \cdots,  p_{r_i}^i\}$.  Let $\widetilde{\Gamma}_n=\{p_1^1,  \cdots,  p_{r_1}^1,  \cdots,  p_1^m, $ $ \cdots,  p_{r_m}^m\}.$

Take $l$ large enough such that
\begin{equation}\label{equation-AC}
	\frac{1}{l}<\frac{\zeta}{12} \text{ and }\frac{(l-2)\log|\widetilde{\Gamma}_n^{\nu_i}|}{nl}>h_{\nu_i}(f)-\eta \text{ for any } 1\leq i\leq m.
\end{equation}
Now let $\Gamma^i=\widetilde{\Gamma}_n^{\nu_i}\times\widetilde{\Gamma}_n^{\nu_i}\times\cdots\times\widetilde{\Gamma}_n^{\nu_i}$ whose element is $\underline{y}=(y_1,  \cdots,  y_{l-2})$ with $y_j\in\widetilde{\Gamma}_n^{\nu_i}$ for $1\leq j\leq l-2,$ and let $\Gamma=\widetilde{\Gamma}_n\times\widetilde{\Gamma}_n\times\cdots\times\widetilde{\Gamma}_n$ whose element is $\underline{y}=(y_1,  \cdots,  y_{l-2})$ with $y_j\in\widetilde{\Gamma}_n$ for $1\leq j\leq l-2.$   For any $y\in X$,  let $\mathfrak{C}_y^n=\langle y,  fy,  \cdots,  f^{n-1}y\rangle$. Then for $\underline{y}\in \Gamma^i$ or $\Gamma$ we define the following pseudo-orbit:
$$\mathfrak{C}_{\underline{y}}=\mathfrak{C}_{x_0}^n\mathfrak{C}_{x_0}^n\mathfrak{C}_{y_1}^n\mathfrak{C}_{y_2}^n\cdots\mathfrak{C}_{y_{l-2}}^n.$$
It is clear that $\mathfrak{C}_{\underline{y}}$ is a $\delta$-pseudo-orbit.   Moreover,   one notes that we can freely concatenate such $\mathfrak{C}_{\underline{y}}$s to constitutes a $\delta$-pseudo-orbit. We write $\mathfrak{C}_{\underline{y}}=\langle \omega_1,  \omega_2,  \cdots,  \omega_{ln}\rangle.$ If $\underline{y}\in\Gamma^i$ and $d(f^k(z),\omega_{k+1})\leq \eps$ for any $0\leq k\leq ln-1$ then by Lemma \ref{lem:prohorov} and \eqref{equation-AC} one has 
\begin{equation}\label{eq-AA}
	\begin{split}
		\rho(\E_{ln}(z),  \nu_i)\leq &\rho(\E_{ln}(z), \frac{1}{n(l-2)} \sum_{k=1}^{n(l-2)}\delta_{\omega_k})+\rho(\frac{1}{n(l-2)} \sum_{k=1}^{n(l-2)}\delta_{\omega_k},  \nu_i)\\
		\leq &\eps+\frac{4}{l}+\frac{1}{l-2}\sum_{j=1}^{l-2}\rho(\E_{n}(y_j),  \nu_i)\\
		<&\eps+\frac{4}{l}+\frac{\zeta}{4}<\frac34\zeta.
	\end{split}
\end{equation}
If $\underline{y}\in\Gamma$ and $d(f^k(z),\omega_{k+1})\leq \eps$ for any $0\leq k\leq ln-1,$ we denote $q_i=|\{1\leq j\leq l-2:y_j\in \widetilde{\Gamma}_n^{\nu_i}\}|,$
then $\sum_{i=1}^mq_i=l-2$ and
\begin{equation}\label{eq-AB}
	\begin{split}
		\rho(\E_{ln}(z),  \frac{\sum_{i=1}^mq_i\nu_i}{\sum_{i=1}^m q_i})\leq &\rho(\E_{ln}(z), \frac{1}{n(l-2)} \sum_{k=1}^{n(l-2)}\delta_{\omega_k})+\rho(\frac{1}{n(l-2)} \sum_{k=1}^{n(l-2)}\delta_{\omega_k}, \frac{\sum_{i=1}^mq_i\nu_i}{\sum_{i=1}^m q_i})\\
		\leq &\eps+\frac{4}{l}+\frac{1}{l-2}\sum_{i=1}^m\sum_{y_j\in \widetilde{\Gamma}_n^{\nu_i}}\rho(\E_{n}(y_j),  \nu_i)\\
		<&\eps+\frac{4}{l}+\frac{\zeta}{4}<\frac34\zeta.
	\end{split}
\end{equation}
by Lemma \ref{lem:prohorov} and \eqref{equation-AC}.
Now we define
$$\Sigma_{r_i^{l-2}}:=\{\theta=\dots\theta_{-2}\theta_{-1}\theta_{0}\theta_{1}\theta_{2}\dots:\theta_{j}=(\theta_{j,1},  \cdots,  \theta_{j,l-2})\in \Gamma^i \text{ for any } j\in\Z \},$$ 
$$\Sigma_{r^{l-2}}:=\{\theta=\dots\theta_{-2}\theta_{-1}\theta_{0}\theta_{1}\theta_{2}\dots:\theta_{j}=(\theta_{j,1},  \cdots,  \theta_{j,l-2})\in \Gamma \text{ for any } j\in\Z \}.$$
Then $(\Sigma_{r_i^{l-2}},\sigma)$ and $(\Sigma_{r^{l-2}},\sigma)$ are full shifts. This implies $(\Sigma_{r_i^{l-2}},\sigma)$ and $(\Sigma_{r^{l-2}},\sigma)$ are mixing and have shadowing property.  And for each $\theta=\dots\theta_{-2}\theta_{-1}\theta_{0}\theta_{1}\theta_{2}\dots$ in $\Sigma_{r_i^{l-2}}$ or $\Sigma_{r^{l-2}},$ 
$$\mathfrak{C}_{\theta}=\dots\mathfrak{C}_{\theta_{-2}}\mathfrak{C}_{\theta_{-1}}\mathfrak{C}_{\theta_{0}}\mathfrak{C}_{\theta_{1}}\mathfrak{C}_{\theta_{2}}\dots$$
is a $\delta$-pseudo-orbit. We write $\mathfrak{C}_{\theta}=\dots\omega_{-2}\omega_{-1}\omega_{0}\omega_{1}\omega_{2}\dots,$ by the shadowing property,
$$Y_{\theta}=\{z\in X:d(f^{j}(z),  \omega_j)\leq\eps,  j\in\Z\}$$
is nonempty and closed. 

We claim that $Y_{\theta}\cap Y_{\theta'}=\emptyset$ for any $\theta\neq\theta'$ in $\Sigma_{r_i^{l-2}}$ or $\Sigma_{r^{l-2}}.$ Next we prove the claim by the following two cases.

Case (1):  If $\theta\neq\theta'\in \Sigma_{r_i^{l-2}}$ for some $1\leq i\leq m,$ then there is $t\in\Z$ and $1\leq s\leq l-2$ such that $\theta_{t,s}\neq\theta'_{t,s}.$ Since $\theta_{t,s}$ and $\theta'_{t,s}$ are $(n,\frac{\eps^*}{3})$-separated, we have $d_n(f^{lnt+sn+n}(z),f^{lnt+sn+n}(z'))>\eps^*/3-2\eps>\eps^*/9$ for any $z\in Y_{\theta}$ and $z'\in Y_{\theta'}.$ So $Y_{\theta}\cap Y_{\theta'}=\emptyset.$

Case (2): For any  $\theta\neq\theta'\in \Sigma_{r^{l-2}},$ there is $t\in\Z$ and $1\leq s\leq l-2$ such that $\theta_{t,s}\neq\theta'_{t,s}.$ If $\theta_{t,s},\theta'_{t,s}\in\widetilde{\Gamma}_n^{\nu_i}$ for some $1\leq i\leq m,$ then $Y_{\theta}\cap Y_{\theta'}=\emptyset$ by Case (1). If there are $1\leq i\neq i'\leq m$ such that $\theta_{t,s}\in\widetilde{\Gamma}_n^{\nu_i}$ and $\theta'_{t,s}\in\widetilde{\Gamma}_n^{\nu_{i'}}$, then $$d_n(f^{lnt+sn+n}(z),f^{lnt+sn+n}(z'))>\eps$$
for any $z\in Y_{\theta}$ and $z'\in Y_{\theta'}.$ Otherwise, we have $$\rho(\mathcal{E}_{n}(f^{lnt+sn+n}(z)),\mathcal{E}_{n}(f^{lnt+sn+n}(z')))\leq \eps\leq \frac{\rho_0}{6}.$$ 
Combining with $$\rho(\mathcal{E}_{n}(f^{lnt+sn+n}(z)),\mathcal{E}_{n}(\theta_{t,s}))\leq \eps\leq \frac{\rho_0}{6},$$ $$\rho(\mathcal{E}_{n}(f^{lnt+sn+n}(z')),\mathcal{E}_{n}(\theta'_{t,s}))\leq \eps\leq \frac{\rho_0}{6}$$ 
and $$\rho(\nu_i,\mathcal{E}_{n}(\theta_{t,s}))<\frac{\zeta}{4}\leq \frac{\rho_0}{4},$$ $$\rho(\nu_{i'},\mathcal{E}_{n}(\theta'_{t,s}))<\frac{\zeta}{4}\leq \frac{\rho_0}{4},$$ 
we have  $\rho(\nu_i,\nu_{i'})<\rho_0$ which contradicts that $\rho_0=\min\{\rho(\nu_i,  \nu_j):1\leq i<j\leq m\}.$ 
So $$d_n(f^{lnt+sn+n}(z),f^{lnt+sn+n}(z'))>\eps$$ for any $z\in Y_{\theta}$ and $z'\in Y_{\theta'}.$
This implies $Y_{\theta}\cap Y_{\theta'}=\emptyset.$
Then we can define the following disjoint union:
$$\Delta_i=\bigsqcup_{\theta\in \Sigma_{r_i^{l-2}}}Y_{\theta}~\textrm{and}~\Delta=\bigsqcup_{\theta\in \Sigma_{r^{l-2}}}Y_{\theta}.  $$
Note that $f^{nl}(Y_{\theta})= Y_{\sigma(\theta)}$.   Then $f^{nl}(\Delta_i)=\Delta_i$,   $1\leq i\leq m$ and $f^{nl}(\Delta)=\Delta.$
Therefore,   if we define $\pi:\Delta\to \Sigma_{r^{l-2}}$ and $\pi_i:\Delta_i\to \Sigma_{r_i^{l-2}}$ as
\begin{equation*}
	\pi(x):=\theta \textrm{ for all } x\in Y_{\theta}~\textrm{with}~\theta\in \Sigma_{r^{l-2}},
\end{equation*}
\begin{equation*}
	\pi_i(x):=\theta' \textrm{ for all } x\in Y_{\theta'}~\textrm{with}~\theta'\in \Sigma_{r_i^{l-2}},
\end{equation*}
then $\pi$ and $\pi_i$ are surjective by the shadowing property.   Moreover,   it is not hard to check that $\pi$ and $\pi_i$ are continuous.  So $\Delta$ and $\Delta_i$ are closed. Meanwhile,   $(X,  f)$ is expansive, so $\pi,  \pi_i$ are conjugations.

Let $\Lambda_i=\cup_{k=0}^{nl-1}f^k(\Delta_i)$ and $\Lambda=\cup_{k=0}^{nl-1}f^k(\Delta).$  Then $f(\Lambda_i)=\Lambda_i$,   $1\leq i\leq m$ and $f(\Lambda)=\Lambda.$
Now let us prove that $\Lambda$ and $\Lambda_i$ satisfy the property 1-4.

(1) Since $\pi$ and $\pi_i$ are conjugations, the transitivity, expansivity and the shadowing property of $(\Sigma_{r_i^{l-2}},\sigma)$ and $(\Sigma_{r^{l-2}},\sigma)$ yield the same properties of $(\Delta,  f^{nl})$ and $(\Delta_i,  f^{nl})$. Next we show that $f^{k}(\Delta_i)\cap f^{k'}(\Delta_i)=\emptyset$ for any $0\leq k<k'\leq nl-1.$ If $f^{k}(\Delta_i)\cap f^{k'}(\Delta_i)\neq\emptyset,$ then for any $z\in f^{k}(\Delta_i)\cap f^{k'}(\Delta_i),$ there exist $\theta,\  \theta'\in \Sigma_{r_i^{l-2}}$ such that 
$$d(f^{j-k}(z),  \omega_j)\leq\eps \text{ and } d(f^{j-k'}(z),  \omega'_j)\leq\eps\ \forall j\in\Z$$
where $\mathfrak{C}_{\theta}=\dots\omega_{-2}\omega_{-1}\omega_{0}\omega_{1}\omega_{2}\dots$ and $\mathfrak{C}_{\theta'}=\dots\omega'_{-2}\omega'_{-1}\omega'_{0}\omega'_{1}\omega'_{2}\dots.$  Then we have 
\begin{equation}\label{eq-AD}
	d(\omega_{j+k},  \omega'_{j+k'})\leq2\eps\ \forall j\in\Z.
\end{equation}

Case (1): If $1\leq k'-k\leq n-1,$ then \eqref{eq-AD} implies $d(\omega_{j},  \omega'_{j+k'-k})\leq2\eps<\frac{\eps^*}{9}\ \forall 0\leq j\leq n-1.$ Note that $\omega_{0}\omega_{1}\omega_{2}\dots\omega_{2n-1}=\omega'_{0}\omega'_{1}\omega'_{2}\dots\omega'_{2n-1}=\langle x_0,  fx_0,  \cdots,  f^{n-1}x_0,x_0,  fx_0,  \cdots,  f^{n-1}x_0\rangle,$ this contradicts that the minimal period of $x_0$ is $n.$ 

Case (2): If $k'-k= n,$ then \eqref{eq-AD} implies $d(\omega_{j+n},  \omega'_{j+2n})\leq2\eps<\frac{\eps^*}{9}\ \forall 0\leq j\leq n-1.$ Note that $\omega_{n}\omega_{n+1}\dots\omega_{2n-1}=\langle x_0,  fx_0,  \cdots,  f^{n-1}x_0\rangle$ and $\omega'_{2n}\in \widetilde{\Gamma}_n^{\nu_i},$ this contradicts that $d_n(y,f^j(x_0))\geq \frac{\eps^*}{9}$ for any $y\in\widetilde{\Gamma}_n^{\nu_i}$ and $0\leq j\leq n-1.$ 

Case (3): If $n< k'-k\leq n(l-1),$ then $k'-k=tn+s$ for some $1\leq t\leq l-2$ and $0\leq s\leq n-1.$ Thus \eqref{eq-AD} implies $d(\omega_{n-s+j},  \omega'_{(t+1)n+j})\leq2\eps\ \forall 0\leq j\leq n-1.$ Note that $\omega_{0}\omega_{1}\omega_{2}\dots\omega_{2n-1}=\langle x_0,  fx_0,  \cdots,  f^{n-1}x_0,x_0,  fx_0,  \cdots,  f^{n-1}x_0\rangle,$ and $\omega'_{(t+1)n}\in \widetilde{\Gamma}_n^{\nu_i},$ this contradicts that $d_n(y,f^j(x_0))\geq \frac{\eps^*}{9}$ for any $y\in\widetilde{\Gamma}_n^{\nu_i}$ and $0\leq j\leq n-1.$ 

Case (4): If $n(l-1)< k'-k\leq nl-1,$ then \eqref{eq-AD} implies $d(\omega_{nl-k'+k+j},  \omega'_{nl+j})\leq2\eps\ \forall 0\leq j\leq n-1.$ Note that $\omega_{0}\omega_{1}\omega_{2}\dots\omega_{2n-1}=\omega'_{nl}\omega'_{nl+1}\omega'_{nl+2}\dots\omega'_{(l+2)n-1}=\langle x_0,  fx_0,  \cdots,  f^{n-1}x_0,x_0,  fx_0,  \cdots,  f^{n-1}x_0\rangle,$ and $1\leq nl-k'+k<n,$ this contradicts that the minimal period of $x_0$ is $n.$ 

So we have $f^{k}(\Delta_i)\cap f^{k'}(\Delta_i)=\emptyset$ for any $0\leq k<k'\leq nl-1.$ Therefore, Proposition \ref{prop-transitive-shadowing-for-f-n} ensures that
$(\Lambda_i,f)$ is transitive and topologically Anosov. 
In fact, if we define $$\Omega_i=\{\mathfrak{C}_{\theta}=\dots\omega_{-2}\omega_{-1}\omega_{0}\omega_{1}\omega_{2}\dots:\theta\in \Sigma_{r_i^{l-2}}\},$$
then $(\Omega_i,\sigma^{nl})$ conjugates to $(\Sigma_{r_i^{l-2}},\sigma).$ Thus $(\Omega_i,\sigma^{nl})$ conjugates to $(\Delta_i,  f^{nl}),$ and $(\cup_{k=0}^{nl-1}\sigma^k(\Omega_i),\sigma)$ conjugates to $(\Lambda_i,  f).$ This implies $(\cup_{k=0}^{nl-1}\sigma^k(\Omega_i),\sigma)$ is a subshift which is transitive and topologically Anosov. Recall from \cite{Walters2} a subshift satisfies shadowing property if and only if it is a subshift of finite type. So $(\cup_{k=0}^{nl-1}\sigma^k(\Omega_i),\sigma)$ is a transitive subshift of finite type.
By similar method, $(\Lambda,f)$ is also transitive and topologically Anosov, and $(\Lambda,f)$ conjugate to a transitive subshift of ﬁnite type.

(2)  $\htop(f,  \Lambda_i)=\frac{1}{nl}\htop(f^{nl},  \Delta_i)=\frac{1}{nl}\htop(\sigma,  \Sigma_{r_i^{l-2}})=\frac{(l-2)\log|\widetilde{\Gamma}_n^{\nu_i}|}{nl}>h_{\nu_i}(f)-\eta>h_{\nu_i}(f)-\eta_0$ by \eqref{equation-AC}.

(3) For any ergodic measure $\mu_i\in \mathcal{M}_f(\Lambda_i)$,   pick an arbitrary generic point $z_i$ of $\mu_i$ in $\Delta_i$. 
Then $$\rho(\E_{ln}(f^{tln}(z_i)),  \nu_i)<\frac34\zeta\text{ for any } t\in\N$$ by \eqref{eq-AA}.
In addition,   we have $\mu_i=\lim_{j\to\infty}\E_j(z_i)=\lim_{t\to\infty}\E_{tln}(z_i)$.   So we have $$\rho(\mu_i,  \nu_i)=\lim_{t\to\infty}\rho(\E_{tln}(z_i),  \nu_i)\leq \frac34\zeta.$$   By the ergodic decomposition Theorem,   we obtain that $d_H(\nu_i,  \mathcal{M}_f(\Lambda_i))\leq \frac34\zeta$.
Now since $K$ is convex and $\Lambda_i\subseteq\Lambda$,   one gets that $K\subseteq \B(\mathcal{M}_f( \Lambda),  \zeta)\subseteq \B(\mathcal{M}_f( \Lambda),  \zeta_0)$.

On the other hand,   for any ergodic measure $\mu\in \mathcal{M}_f( \Lambda)$,   pick a generic point $z$ of $\mu$ in $\Delta$.   Then $z$ $\eps$-shadows some $\delta$-pseudo-orbit $\mathfrak{C}_{\theta}$ with $\theta\in\Sigma_{r^{l-2}}$.   Then for any $t\in\N$, there exist nonnegative integers $q_i,  1\leq i\leq m$ such that $$\rho\left(\E_{ln}(f^{tln}(z)),  \frac{\sum_{i=1}^mq_i\nu_i}{\sum_{i=1}^m q_i}\right)< \frac34\zeta$$ by \eqref{eq-AB}.
So $\mu\in \B(K,  \frac34\zeta).$ By the ergodic decomposition Theorem,   $\mathcal{M}_f(\Lambda)\subseteq \B(K,  \zeta)\subseteq \B(K,  \zeta_0)$.   As a result,   $\Lambda\subsetneq X$.   For otherwise,   $d_H(K,  \mathcal{M}_f(\Lambda))=d_H(K,  \mathcal{M}_f(X))>\zeta$,   a contradiction.

(4) Note thet  for any $\theta$ in $\Sigma_{r_i^{l-2}}$ or $\Sigma_{r^{l-2}},$ one has $\omega_{tnl}=x_0$ for any integer $t$ where $\mathfrak{C}_{\theta}=\dots\omega_{-1}\omega_{0}\omega_{1}\dots.$ Then for any $z$ in $\Delta_i$ or $\Delta,$ $$d(f^{tnl}(z),x)\leq d(f^{tnl}(z),x_0)+d(x_0,x)<\eps+\frac{\delta}{2}<2\eps<\eps_0.$$
So for any $z$ in $\Lambda_i$ or $\Lambda$ one has  $f^{j+tnl}(z) \in B(x,\eps)$ for some $0\leq j\leq nl-1$ and any $t\in\Z$.\qed

\section{Asymptotically additive sequences: an abstract version of Theorem \ref{thm-continuous}(I)}\label{Almost Additive}
In this section we give  abstract conditions on which  Theorem \ref{thm-continuous}(I) holds in the more general context of asymptotically additive sequences of continuous functions. We first recall some definitions about asymptotically additive sequences.
Consider a dynamical system $(X,  f).$ A sequence of functions $\Phi=\left(\varphi_{n}\right)_{n\in\mathbb{N}}$ is said to be {\it additive} (with respect to  $(X,f)$ ) if for any $m,n\in \mathbb{N}$ and any $x\in X$, we have $\varphi_{m+n}(x)=\varphi_m(x)+\varphi_n(f^{m}(x)).$ Then for any continuous function $\varphi,$ $\left(\varphi_{n}=\sum_{i=1}^{n-1}\varphi\circ f^i\right)_{n\in\mathbb{N}}$ is additive.
A sequence of functions $\Phi=\left(\varphi_{n}\right)_{n\in\mathbb{N}}$ is said to be {\it asymptotically additive} (with respect to  $(X,f)$ ) if for each $\varepsilon>0$, there exists $\varphi\in C(X)$ such that
$$
\limsup\limits _{n \rightarrow \infty} \frac{1}{n} \sup _{x \in X}\left|\varphi_n(x)-S_n \varphi(x)\right| \leqslant \varepsilon,
$$
where $S_n \varphi=\sum_{k=0}^{n-1} \varphi \circ f^k$. Obviously,  every additive sequence is asymptotically additive.
We denote by $AA(f,X)$ the family of asymptotically additive sequences of continuous functions. It is not difficult to see that for any $\Phi=\left(\varphi_{n}\right)_{n\in\mathbb{N}}\in AA(f,X)$ and any $\mu\in \mathcal{M}_f(X),$ the limit $\lim\limits_{n \rightarrow \infty} \frac{1}{n} \int \varphi_{n} d \mu$ exists  and 
the function,
\begin{equation}\label{equation-P}
\mathcal{M}_f(X) \ni \mu \mapsto \lim _{n \rightarrow \infty} \frac{1}{n} \int\varphi_{n} d \mu,
\end{equation}
is continuous with the weak* topology in $\mathcal{M}_f(X),$ for example, see \cite{FH2010}.
Let $d \in \mathbb{N}$ and take $(A, B) \in AA(f,X)^{d} \times AA(f,X)^{d}$. We write
$
A=\left(\Phi^{1}, \ldots, \Phi^{d}\right) \text { and } B=\left(\Psi^{1}, \ldots, \Psi^{d}\right)
$
and also $\Phi^{i}=\left(\varphi_{n}^{i}\right)_{n\in\N}$ and $\Psi^{i}=\left(\psi_{n}^{i}\right)_{n\in\N}$. We consider $(A, B)$ satisfying the following condition:
\begin{equation}\label{equation-O}
	\lim _{n \rightarrow \infty} \frac{1}{n} \int\psi_{n}^i d \mu\geq 0 \text{ for any } \mu\in \mathcal{M}_f(X), \text{ with equality only permitted if } \lim _{n \rightarrow \infty} \frac{1}{n} \int\varphi_{n}^i d \mu\neq 0
\end{equation}
for every $i=1, \ldots, d$. 
We  consider the function $\mathcal{P}_{A,B}: \mathcal{M}_f(X) \rightarrow \mathbb{R}$ defined by:
\begin{equation}\label{equation-A}
		\mathcal{P}_{A,B}(\mu) 
		=\lim _{n \rightarrow \infty}\left(\frac{\int \varphi_{n}^{1} d \mu}{\int \psi_{n}^{1} d \mu}, \ldots, \frac{\int \varphi_{n}^{d} d \mu}{\int \psi_{n}^{d} d \mu}\right).
\end{equation}
\eqref{equation-P} ensures that the function $\mathcal{P}_{A,B}$ is continuous.

Let $\alpha:\mathcal{M}_f(X)\to \mathbb{R}$ be a continuous function. 
We define the pressure of $\alpha$ with respect to $\mu$ by $P(f,\alpha,\mu)=h_\mu(f)+\alpha(\mu).$ For any convex subset $C$ of $\mathcal{M}_f(X)$ and $a\in  \mathcal{P}_{A,B}(C),$ denote $$H_{A,B}(f,\alpha,a,C)=\sup\{P(f,\alpha,\mu):\mu\in \mathcal{P}_{A,B}^{-1}(a)\cap C\}.$$
In particular, when $\alpha\equiv0,$ we write 
$H_{A,B}(f,a,C)=H_{A,B}(f,0,a,C)=\sup\{h_\mu(f):\mu\in \mathcal{P}_{A,B}^{-1}(a)\cap C\}.$
For convenience, we write $H_{A,B}(f,\alpha,a)=H_{A,B}(f,\alpha,a,\mathcal{M}_f(X)),$ $H_{A,B}(f,a)=H_{A,B}(f,a,\mathcal{M}_f(X))$.

\begin{Def}
	Given a dynamical system $(X,  f).$ Let $C_1, C_2$ be two subset of $\mathcal{M}_f(X)$ with $C_2\subset C_1.$ We say $(X,f)$ satisfies the {\it locally conditional variational principle} with respect to $(C_1,C_2)$, if for any positive integer $m,$ any $f$-invariant measures
	$\{\mu_i\}_{i=1}^m\subseteq C_1,$ and any $\eta,  \zeta>0$,   there exist compact invariant subsets $\Lambda_i\subseteq\Lambda\subset X$ such that for each $1\leq i\leq m$
	\begin{description}
		\item[(1)] $\htop(f,  \Lambda_i)>h_{\mu_i}(f)-\eta.$
		\item[(2)] $d_H(K,  \mathcal{M}_f(\Lambda))<\zeta$,   $d_H(\mu_i,  \mathcal{M}_f(\Lambda_i))<\zeta,$ where $K=\cov\{\mu_i\}_{i=1}^m.$
		\item [(3)] $\mathcal{M}_f(\Lambda)\subset C_1.$
		\item[(4)]  for any $d \in \mathbb{N}$, any $(A, B) \in AA(f,\Lambda)^{d} \times AA(f,\Lambda)^{d}$ satisfying \eqref{equation-O}, and any $a\in \mathrm{Int}(\mathcal{P}_{A,B}(\mathcal{M}_f(\Lambda)))$ we have $H_{A,B}(f,a,\mathcal{M}_f(\Lambda))=H_{A,B}(f,a,\mathcal{M}_f(\Lambda)\cap C_2).$
	\end{description}
\end{Def}

Now we give  an abstract version of  Theorem \ref{thm-continuous}(I)  in the more general context of asymptotically additive sequences of continuous functions.

\begin{Thm}\label{thm-Almost-Additive}
	Suppose $(X,  f)$ is a dynamical system. Let $C_1, C_2$ be two subset of $\mathcal{M}_f(X)$ with $C_2\subset C_1$ such that $C_1$ is convex, and $(X,f)$ satisfies the locally conditional variational principle with respect to $(C_1,C_2)$.   Let $d \in \mathbb{N}$ and $(A, B) \in AA(f,X)^{d} \times AA(f,X)^{d}$ satisfying \eqref{equation-O} and $\mathrm{Int}(\mathcal{P}_{A,B}(C_1))\neq\emptyset$. Let $\alpha$ be a continuous function on  $C_1$. If $\mu\to h_\mu(f)$ is upper semi-continuous on $C_1$, 
	then for any $a\in \mathrm{Int}(\mathcal{P}_{A,B}(C_1)),$ any $\mu\in \mathcal{P}_{A,B}^{-1}(a)\cap C_1$ and any $\eta,  \zeta>0$, there is $\nu\in \mathcal{P}_{A,B}^{-1}(a)\cap C_2$ such that $\rho(\nu,\mu)<\zeta$ and $|P(f,\alpha,\nu)-P(f,\alpha,\mu)|<\eta.$
\end{Thm}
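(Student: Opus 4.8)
The plan is to combine the locally conditional variational principle (which produces a small ``multi-horseshoe'' $\Lambda$ carrying copies of the given measures with nearly prescribed entropy) with the abstract conditional variational principle of Barreira--Saussol applied \emph{inside} $\Lambda$. First I would dispose of a degenerate case: if $a = \mathcal{P}_{A,B}(\mu)$ forces $h_\mu(f)=0$ there is little to do, since one can approximate $\mu$ by an ergodic measure of small entropy via the locally conditional variational principle with $m=1$; but in fact the uniform treatment below covers this too, so I would not separate it. The substantive argument runs as follows. Fix $a\in\mathrm{Int}(\mathcal{P}_{A,B}(C_1))$, $\mu\in\mathcal{P}_{A,B}^{-1}(a)\cap C_1$, and $\eta,\zeta>0$. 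Since $a$ is in the interior of the image of the convex set $C_1$, I would choose finitely many auxiliary measures $\mu_2,\dots,\mu_m\in C_1$ whose images under $\mathcal{P}_{A,B}$ form (together with $a=\mathcal{P}_{A,B}(\mu_1)$, $\mu_1:=\mu$) a configuration in $\mathbb{R}^d$ having $a$ in the interior of its convex hull; this is possible because $\mathcal{P}_{A,B}$ is continuous and affine-like on convex combinations (the limit in \eqref{equation-P} is affine in $\mu$, so $\mathcal{P}_{A,B}$ as a ratio of affine functionals behaves well enough on the relevant simplex — here I would be careful and actually pick the $\mu_i$ so that the affine functionals $\mu\mapsto\lim\frac1n\int\varphi_n^i d\mu$ and $\mu\mapsto\lim\frac1n\int\psi_n^i d\mu$ separate directions, which suffices to keep $a$ interior after passing to $\Lambda$).

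Next, apply the locally conditional variational principle to $\{\mu_i\}_{i=1}^m$ with parameters $\eta'=\eta/3$ and $\zeta'=\zeta/3$ (shrunk further to absorb the modulus of continuity of $\alpha$ at $\mu$, using upper semi-continuity of entropy on $C_1$). This yields compact invariant $\Lambda_i\subseteq\Lambda\subset X$ with $\mathcal{M}_f(\Lambda)\subset C_1$, $d_H(\mu_i,\mathcal{M}_f(\Lambda_i))<\zeta'$, $d_H(K,\mathcal{M}_f(\Lambda))<\zeta'$ where $K=\cov\{\mu_i\}_{i=1}^m$, and $\htop(f,\Lambda_i)>h_{\mu_i}(f)-\eta'$. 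From $d_H(K,\mathcal{M}_f(\Lambda))<\zeta'$ and the configuration chosen above, $a$ lies in $\mathrm{Int}(\mathcal{P}_{A,B}(\mathcal{M}_f(\Lambda)))$ (the images of $\mathcal{M}_f(\Lambda_i)$ are within $\zeta'$ of $\mathcal{P}_{A,B}(\mu_i)$, and $\mathcal{P}_{A,B}$ restricted to $\mathcal{M}_f(\Lambda)$ is continuous, so their convex hull still has $a$ in its interior once $\zeta'$ is small). Now I would invoke clause (4) of the locally conditional variational principle: $H_{A,B}(f,a,\mathcal{M}_f(\Lambda))=H_{A,B}(f,a,\mathcal{M}_f(\Lambda)\cap C_2)$. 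Combined with the classical conditional variational principle of Barreira--Saussol applied to the subsystem $(\Lambda,f)$ — valid since the entropy function of the subsystem is upper semi-continuous (it is the restriction to $\mathcal{M}_f(\Lambda)$ of the u.s.c. function on $C_1$) — there is an ergodic measure $\nu_0\in\mathcal{M}_f(\Lambda)$ with $\mathcal{P}_{A,B}(\nu_0)=a$ and $h_{\nu_0}(f)=H_{A,B}(f,a,\mathcal{M}_f(\Lambda))$; by clause (4) this supremum is attained within $\mathcal{M}_f(\Lambda)\cap C_2$, and since the supremum over an u.s.c. function on a compact set of the required level set is attained, one can take $\nu\in\mathcal{P}_{A,B}^{-1}(a)\cap\mathcal{M}_f(\Lambda)\cap C_2$ with $h_\nu(f)=H_{A,B}(f,a,\mathcal{M}_f(\Lambda))$. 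Actually, to land at an arbitrary intermediate entropy I would instead choose $\nu$ by a convex combination argument: the convex hull inside $\mathcal{M}_f(\Lambda)$ of a maximal-entropy measure on the relevant level set and a zero-entropy measure with the same $\mathcal{P}_{A,B}$-value (obtained from Corollary \ref{Corollary-zero-metric-entropy-2} applied to $\Lambda$) sweeps out all entropies in $[0,H_{A,B}(f,a,\mathcal{M}_f(\Lambda))]$, in particular $h_\mu(f)$ up to $\eta'$; intersecting with $C_2$ via clause (4) preserves this. Finally verify $\rho(\nu,\mu)<\zeta$: since $\nu\in\mathcal{M}_f(\Lambda)$ and $d_H(K,\mathcal{M}_f(\Lambda))<\zeta'$, $\nu$ is within $\zeta'$ of $K=\cov\{\mu_i\}$; combining with the fact that $\mu=\mu_1$ is a vertex of $K$ and choosing the $\mu_i$'s in a small $\zeta/3$-neighborhood of $\mu$ to begin with (which is allowed — the only constraint on $\mu_2,\dots,\mu_m$ is that their $\mathcal{P}_{A,B}$-images surround $a$, and by interiority of $a$ these can be taken arbitrarily close to $\mu$ after composing with a small perturbation in $C_1$) forces $\rho(\nu,\mu)<\zeta$. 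The entropy estimate $|P(f,\alpha,\nu)-P(f,\alpha,\mu)|<\eta$ then follows from $|h_\nu(f)-h_\mu(f)|<\eta'$ and the continuity of $\alpha$ on $C_1$ with the closeness $\rho(\nu,\mu)<\zeta$, after having chosen $\zeta$ small relative to $\eta$ using the modulus of continuity of $\alpha$.

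The main obstacle I anticipate is the bookkeeping that keeps $a$ in the interior of $\mathcal{P}_{A,B}(\mathcal{M}_f(\Lambda))$ \emph{while simultaneously} keeping all the $\mu_i$ close to $\mu$: interiority in $\mathbb{R}^d$ requires the surrounding measures to push $\mathcal{P}_{A,B}$ in $d$ independent directions, and $\mathcal{P}_{A,B}$ is a vector of \emph{ratios} of affine functionals, not affine itself, so one must check that small convex perturbations near $\mu$ still move $\mathcal{P}_{A,B}$ nondegenerately — this uses $a\in\mathrm{Int}(\mathcal{P}_{A,B}(C_1))$ together with condition \eqref{equation-O} (which guarantees the denominators $\lim\frac1n\int\psi_n^i d\mu$ do not vanish where they matter, so the ratios are well-defined and locally Lipschitz). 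A secondary delicate point is ensuring that the zero-entropy measure used to interpolate entropies has exactly the right $\mathcal{P}_{A,B}$-value $a$; this is where Corollary \ref{Corollary-zero-metric-entropy-2} is applied to the subsystem $\Lambda$ (legitimate since $\Lambda$ is topologically Anosov and transitive when produced by Theorem \ref{Mainlemma-convex-by-horseshoe}, though in the abstract setting one should formally record this as part of what the locally conditional variational principle delivers, or argue via clause (4) applied with a suitable auxiliary $(A',B')$). All remaining steps — the modulus-of-continuity adjustments for $\alpha$, and assembling the final inequalities — are routine.
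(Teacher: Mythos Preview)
Your overall architecture is right---build a small multi-horseshoe $\Lambda$ containing auxiliary measures whose $\mathcal{P}_{A,B}$-images surround $a$, then use clause~(4) to extract $\nu\in C_2$ on the level set---but the step where you match $h_\nu(f)$ to $h_\mu(f)$ has a genuine gap. You propose to interpolate between a maximal-entropy measure and a zero-entropy measure on $\mathcal{P}_{A,B}^{-1}(a)\cap\mathcal{M}_f(\Lambda)$ to hit entropy $h_\mu(f)$, and then write ``intersecting with $C_2$ via clause~(4) preserves this.'' It does not: clause~(4) only asserts that the \emph{supremum} $H_{A,B}(f,a,\mathcal{M}_f(\Lambda))$ agrees with the supremum over $\mathcal{M}_f(\Lambda)\cap C_2$; it says nothing about realizing any other entropy level inside $C_2$. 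Your interpolated measure is a nontrivial convex combination and will typically lie outside $C_2$ (think $C_2=\mathcal{M}_f^e(X)$), and there is no mechanism in the hypotheses of Theorem~\ref{thm-Almost-Additive} to trade it for a $C_2$-measure with the same entropy. Your appeal to Corollary~\ref{Corollary-zero-metric-entropy-2} on $\Lambda$ is also not available in this abstract setting, as you yourself flag.

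The paper avoids interpolation entirely by observing a second inequality you do not use. Having chosen the auxiliary measures $\mu_\xi$ already within $\zeta'/2$ of $\mu$ (this is Lemma~\ref{lemma-D}), the multi-horseshoe construction makes \emph{every} measure in $\mathcal{M}_f(\Lambda)$ lie within $\zeta'$ of $\mu$. Then upper semi-continuity of entropy on $C_1$ (with $\zeta'$ chosen small enough in advance) forces $h_\omega(f)<h_\mu(f)+\eta$ for all $\omega\in\mathcal{M}_f(\Lambda)$, hence $H_{A,B}(f,a,\mathcal{M}_f(\Lambda))<h_\mu(f)+\eta$. Combined with the lower bound $H_{A,B}(f,a,\mathcal{M}_f(\Lambda))>h_\mu(f)-\eta$ (which you essentially have, via $\htop(f,\Lambda_\xi)>h_{\mu_\xi}(f)-\eta'$ and Corollary~\ref{corollary-A}), the supremum itself is pinned to $h_\mu(f)\pm\eta$. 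Now clause~(4) gives $\nu\in\mathcal{P}_{A,B}^{-1}(a)\cap\mathcal{M}_f(\Lambda)\cap C_2$ with $h_\nu(f)$ within $\eta/4$ of that supremum, hence within $\eta/2$ of $h_\mu(f)$---no interpolation needed. This is exactly the content of Lemma~\ref{Lem-interior}. You mention upper semi-continuity in a parenthetical about choosing parameters, but never deploy it to bound $H_{A,B}(f,a,\mathcal{M}_f(\Lambda))$ from above; that omission is what sends you down the interpolation detour.
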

\begin{Ex}\label{example-1}
	The function $\alpha:\mathcal{M}_f(X)\to \mathbb{R}$ can be defined as  following:
	\begin{description}
		\item[(1)] $\alpha\equiv0.$ Then $P(f,\alpha,\mu)=h_\mu(f)$ is the metric entropy of $\mu.$
		\item[(2)] $\alpha(\mu)=\int\varphi d \mu$ with a continuous function $\varphi.$ Then from the $weak^{*}$-topology on $\mathcal{M}(X),$ $\alpha:\mathcal{M}_f(X)\to \mathbb{R}$ is a continuous function. $P(f,\varphi,\mu)=h_\mu(f)+\alpha(\mu)$ is the pressure of $\varphi$ with respect to $\mu.$
		\item[(3)] $\alpha(\mu)=\lim\limits_{n \rightarrow \infty} \frac{1}{n} \int\varphi_{n} d \mu$ with an asymptotically additive sequences of continuous functions $\Phi=\left(\varphi_{n}\right)_{n\in\N}.$ Then $\alpha:\mathcal{M}_f(X)\to \mathbb{R}$ is a continuous function from (\ref{equation-P}). 
		$P(f,\Phi,\mu)=h_\mu(f)+\lim\limits_{n \rightarrow \infty} \frac{1}{n} \int\varphi_{n} d \mu$ is the pressure of $\Phi$ with respect to $\mu.$ Readers can refer to \cite{Barreira1996,FH2010} for thermodynamic formalism of asymptotically additive sequences.
	\end{description}
\end{Ex}

\subsection{Some lemmas}
To prove Theorem \ref{thm-Almost-Additive}, we need some lemmas.
For any $r\in \mathbb{R},$ denote $r^+=\{s\in\mathbb{R}:s>r\}$ and $r^-=\{s\in\mathbb{R}:s<r\}.$  For any $d \in \mathbb{N},$ $r=\left(r_1, \ldots, r_{d}\right)\in\mathbb{R}^d$ and $\xi=\left(\xi_1, \ldots, \xi_{d}\right)\in\{+,-\}^d$, we define $$r^\xi=\{s=\left(s_1, \ldots, s_{d}\right)\in\mathbb{R}^d:s_i\in r_i^{\xi_i} \text{ for }i=1,2,\cdots,d\}.$$ We denote $F^d=\{\left(\frac{p_1}{q_1}, \ldots, \frac{p_d}{q_d}\right):p_i,q_i\in\mathbb{R}\text{ and }q_i>0 \text{ for any }1\leq i\leq d\}.$
It is easy to check that
\begin{Lem}\label{lemma-E}
	Let $b_i=\frac{p^i}{q^i}\in F^1$ for $i=1,2.$
	\begin{description}
		\item[(1)] If $b_1=b_2,$ then $\frac{\theta p^1+(1-\theta)p^2}{\theta q^1+(1-\theta)q^2}=b_1=b_2$ for any $\theta\in[0,1].$
		\item[(2)] If $b_1\neq b_2,$ then $\frac{\theta p^1+(1-\theta)p^2}{\theta q^1+(1-\theta)q^2}$ is strictly monotonic on $\theta\in[0,1].$
	\end{description}
\end{Lem}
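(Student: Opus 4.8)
The plan is to reduce both parts to an explicit rewriting of the quotient as a function of $\theta$. Write $b_i=p^i/q^i$ with $q^i>0$ (this is exactly what $b_i\in F^1$ means), and set
\[
N(\theta)=\theta p^1+(1-\theta)p^2,\qquad D(\theta)=\theta q^1+(1-\theta)q^2,\qquad g(\theta)=\frac{N(\theta)}{D(\theta)}.
\]
The first step is to note that $D(\theta)>0$ for every $\theta\in[0,1]$, since it is a convex combination of the positive numbers $q^1$ and $q^2$; hence $g$ is well defined, continuous, and in fact smooth on $[0,1]$.

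For part (1), if $b_1=b_2=:b$ then $p^i=bq^i$, so $N(\theta)=bD(\theta)$ and therefore $g(\theta)=b$ for all $\theta\in[0,1]$, which is the assertion.

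For part (2) I would carry out the identity $N(\theta)-b_2D(\theta)=\theta q^1(b_1-b_2)$, which follows from $p^i-b_2q^i=q^i(b_i-b_2)$ (so the $(1-\theta)$-term vanishes). Dividing by $D(\theta)$ gives $g(\theta)=b_2+(b_1-b_2)\,h(\theta)$, where $h(\theta)=\theta q^1/\big(\theta q^1+(1-\theta)q^2\big)$. It then suffices to check that $h$ is strictly increasing on $[0,1]$: on $(0,1]$ write $h(\theta)=\big(1+(q^2/q^1)\tfrac{1-\theta}{\theta}\big)^{-1}$ and use that $(1-\theta)/\theta$ is strictly decreasing there, while $h(0)=0<h(\theta)$ for $\theta>0$ settles the endpoint. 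Consequently $g$ is strictly increasing if $b_1>b_2$ and strictly decreasing if $b_1<b_2$; since $b_1\neq b_2$, $g$ is strictly monotonic on $[0,1]$.

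This argument is essentially routine, so there is no real obstacle; the only points needing a little care are the positivity of $D(\theta)$ (which is precisely where the hypothesis $b_i\in F^1$ enters) and the behaviour of $h$ at $\theta=0$. An equally short alternative is to differentiate directly and observe that $g'(\theta)=(p^1q^2-p^2q^1)/D(\theta)^2$, whose numerator equals the nonzero constant $q^1q^2(b_1-b_2)$ when $b_1\neq b_2$, giving a fixed sign for $g'$ on all of $[0,1]$.
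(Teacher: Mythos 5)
Your proof is correct, and since the paper offers no argument for this lemma (it is introduced with ``It is easy to check that''), your write-up simply supplies the routine verification that the authors leave implicit. Both of your routes — the decomposition $g(\theta)=b_2+(b_1-b_2)h(\theta)$ and the direct derivative computation $g'(\theta)=(p^1q^2-p^2q^1)/D(\theta)^2=q^1q^2(b_1-b_2)/D(\theta)^2$ — are sound, and you correctly identify the only place the hypothesis $b_i\in F^1$ (i.e.\ $q^i>0$) is needed, namely the positivity of the denominator $D(\theta)$ on $[0,1]$.
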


\begin{Lem}\label{lemma-C}
	Let $d \in \mathbb{N}$ and $a=\left(\frac{p_1}{q_1}, \ldots, \frac{p_d}{q_d}\right) \in F^{d}.$  If $\{b_\xi=\left(\frac{p_1^\xi}{q_1^\xi}, \ldots, \frac{p_d^\xi}{q_d^\xi}\right)\}_{\xi\in\{+,-\}^d}\subseteq F^d$ are $2^d$ numbers satisfying $b_\xi\in a^\xi$ for any $\xi\in \{+,-\}^d,$ then there are $2^d$ numbers $\{\theta_\xi\}_{\xi\in\{+,-\}^d}\subseteq [0,1]$ such that $\sum_{\xi\in\{+,-\}^d}\theta_\xi=1$ and $ \frac{\sum_{\xi\in\{+,-\}^d}\theta_\xi p^{\xi}_i}{\sum_{\xi\in\{+,-\}^d}\theta_\xi q^{\xi}_i}=\frac{p_i}{q_i}\text{ for any }1\leq i\leq d.$
\end{Lem}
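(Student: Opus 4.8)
The goal is to prove Lemma~\ref{lemma-C}, which is a purely arithmetic statement about selecting convex-combination weights so that the $d$ coordinate quotients are simultaneously hit. The plan is to reduce the $d$-dimensional problem to $d$ independent one-dimensional problems and then exploit the product structure of the index set $\{+,-\}^d$ in a recursive fashion, using Lemma~\ref{lemma-E} as the atomic building block.

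First I would treat the case $d=1$. Here we are given $b_+ = p^+/q^+ > p/q$ and $b_- = p^-/q^- < p/q$, and we must find $\theta_+,\theta_-\in[0,1]$ with $\theta_++\theta_-=1$ and $(\theta_+ p^+ + \theta_- p^-)/(\theta_+ q^+ + \theta_- q^-) = p/q$. By Lemma~\ref{lemma-E}(2) (applied with $b_1 = b_-$, $b_2 = b_+$, noting $b_-\neq b_+$), the map $\theta \mapsto (\theta p^- + (1-\theta) p^+)/(\theta q^- + (1-\theta) q^+)$ is continuous and strictly monotone on $[0,1]$, taking the value $b_+$ at $\theta=0$ and $b_-$ at $\theta=1$; since $b_- < p/q < b_+$, the intermediate value theorem produces a unique $\theta\in(0,1)$ achieving $p/q$. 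This settles $d=1$, and the existence of the weights is immediate. (One must remember that the hypothesis $b_\xi\in a^\xi$ forces genuine strict inequalities, so Lemma~\ref{lemma-E}(2) genuinely applies; the degenerate equality case of Lemma~\ref{lemma-E}(1) is not needed here.)

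For general $d$, I would argue by induction on $d$, splitting the sign vector $\xi = (\xi_1,\xi')$ with $\xi_1\in\{+,-\}$ and $\xi'\in\{+,-\}^{d-1}$. Fix the value $\theta\in(0,1)$ obtained from the $d=1$ argument applied to the first coordinate, using any pair $b_{(+,\cdot)}$ and $b_{(-,\cdot)}$ — but more carefully, the cleanest route is: for each fixed $\xi_1$, group the $2^{d-1}$ numbers $\{b_{(\xi_1,\xi')}\}_{\xi'}$; by the inductive hypothesis applied in dimensions $2,\dots,d$ one gets weights $\{\mu_{\xi'}^{\xi_1}\}_{\xi'}$ summing to $1$ that realize $(p_2/q_2,\dots,p_d/q_d)$ within that half. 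This reduces the first coordinate to a weighted aggregate $P^{\xi_1} := \sum_{\xi'}\mu_{\xi'}^{\xi_1} p_1^{(\xi_1,\xi')}$ and $Q^{\xi_1} := \sum_{\xi'}\mu_{\xi'}^{\xi_1} q_1^{(\xi_1,\xi')}$; one checks $P^{+}/Q^{+}$ still lies in $p_1^+$ and $P^{-}/Q^{-}$ in $p_1^-$ (a convex combination of numbers all exceeding, resp. all below, $p_1/q_1$, with positive denominators), so the $d=1$ step gives $\theta_1\in(0,1)$ with $(\theta_1 P^+ + (1-\theta_1)P^-)/(\theta_1 Q^+ + (1-\theta_1)Q^-) = p_1/q_1$. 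Finally set $\theta_{(\xi_1,\xi')} := \theta_1 \mu_{\xi'}^{+}$ if $\xi_1=+$ and $(1-\theta_1)\mu_{\xi'}^{-}$ if $\xi_1=-$; these are nonnegative, sum to $1$, realize coordinate $1$ by construction, and realize coordinates $2,\dots,d$ because within each half the sub-weights were already calibrated and scaling all sub-weights of a half by a common factor does not change the quotients in those coordinates (here Lemma~\ref{lemma-E}(1) — or rather the trivial observation that $(\lambda\sum a_i)/(\lambda\sum b_i)=(\sum a_i)/(\sum b_i)$ — is what is being used).

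The main obstacle, such as it is, is bookkeeping: verifying that after the inductive aggregation the first-coordinate aggregates $P^{\xi_1}/Q^{\xi_1}$ still satisfy the strict sign conditions needed to re-apply the one-dimensional lemma, and confirming that the final reweighting preserves coordinates $2,\dots,d$. Both are elementary — the sign conditions follow because a convex combination (with strictly positive total denominator) of ratios all lying strictly on one side of $p_i/q_i$ remains strictly on that side, and the preservation follows from homogeneity of the ratio under common scaling of numerator and denominator weights. So there is no deep difficulty; the lemma is a structured exercise in the intermediate value theorem plus induction on the dimension.
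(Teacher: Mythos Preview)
Your proof is correct and follows essentially the same route as the paper: induction on $d$, splitting the $2^d$ sign vectors into two halves according to one distinguished coordinate, applying the inductive hypothesis within each half to hit the remaining $d-1$ targets, and then blending the two halves via Lemma~\ref{lemma-E}(2) to hit the distinguished coordinate while invoking Lemma~\ref{lemma-E}(1) to see that the other coordinates are preserved. The only cosmetic difference is that you split on $\xi_1$ whereas the paper splits on $\xi_{k+1}$.
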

\begin{proof}
	We prove the lemma inductively. It is clearly true if $d=1$ from Lemma \ref{lemma-E}.
	Now we assume that it is true for $d=k\in\mathbb{N}.$ Let $a=\left(a_1, \ldots, a_{k+1}\right) \in \mathbb{R}^{k+1},$ and  $\{b^\xi\}_{\xi\in\{+,-\}^{k+1}}$ is $2^{k+1}$ numbers satisfies $b^\xi\in a^\xi$ for any $\xi\in \{+,-\}^{k+1}.$ Then for the $2^{k}$ numbers $\{b^\xi\}_{\xi_{k+1}=+},$ there is $2^k$ numbers $\{\tau^\xi\}_{\xi_{k+1}=+}\subseteq [0,1]$ such that  $\sum_{\xi_{k+1}=+}\tau^\xi=1$ and 
	\begin{equation}\label{equation-S}
		\frac{\sum_{\xi_{k+1}=+}\tau_\xi p^{\xi}_i}{\sum_{\xi_{k+1}=+}\tau_\xi q^{\xi}_i}=\frac{p_i}{q_i}\text{ for any }1\leq i\leq k.
	\end{equation} 
    Since $\frac{p^\xi_{k+1}}{q^\xi_{k+1}}>\frac{p_{k+1}}{q_{k+1}}$ for any $\xi\in\{+,-\}^{k+1}$ with $\xi_{k+1}=+,$ then we have 
	\begin{equation}\label{equation-Q}
		\frac{\sum_{\xi_{k+1}=+}\tau_\xi p^{\xi}_{k+1}}{\sum_{\xi_{k+1}=+}\tau_\xi q^{\xi}_{k+1}}>\frac{p_{k+1}}{q_{k+1}}.
	\end{equation}
	Similarly, the $2^{k}$ numbers $\{b^\xi\}_{\xi_{k+1}=-},$ there is $2^k$ numbers $\{\tau^\xi\}_{\xi_{k+1}=-}\subseteq [0,1]$ such that  $\sum_{\xi_{k+1}=-}\tau^\xi=1$ and 
	\begin{equation}\label{equation-T}
		\frac{\sum_{\xi_{k+1}=-}\tau_\xi p^{\xi}_i}{\sum_{\xi_{k+1}=-}\tau_\xi q^{\xi}_i}=\frac{p_i}{q_i}\text{ for any }1\leq i\leq k.
	\end{equation} 
    Since $\frac{p^\xi_{k+1}}{q^\xi_{k+1}}<\frac{p_{k+1}}{q_{k+1}}$ for any $\xi\in\{+,-\}^{k+1}$ with $\xi_{k+1}=-,$ then we have 
	\begin{equation}\label{equation-R}
		\frac{\sum_{\xi_{k+1}=-}\tau_\xi p^{\xi}_{k+1}}{\sum_{\xi_{k+1}=-}\tau_\xi q^{\xi}_{k+1}}<\frac{p_{k+1}}{q_{k+1}}.
	\end{equation}
	By \eqref{equation-Q}, \eqref{equation-R} and Lemma \ref{lemma-E}(2) there is $\tau_{k+1}\in(0,1)$ such that 
	$$\frac{\tau_{k+1}\sum_{\xi_{k+1}=+}\tau_\xi p^{\xi}_{k+1}+(1-\tau_{k+1})\sum_{\xi_{k+1}=-}\tau_\xi p^{\xi}_{k+1}}{\tau_{k+1}\sum_{\xi_{k+1}=+}\tau_\xi q^{\xi}_{k+1}+(1-\tau_{k+1})\sum_{\xi_{k+1}=-}\tau_\xi q^{\xi}_{k+1}}=\frac{p_{k+1}}{q_{k+1}}.$$
	By \eqref{equation-S}, \eqref{equation-T} and Lemma \ref{lemma-E}(1), we have $$\frac{\tau_{k+1}\sum_{\xi_{k+1}=+}\tau_\xi p^{\xi}_{i}+(1-\tau_{k+1})\sum_{\xi_{k+1}=-}\tau_\xi p^{\xi}_{i}}{\tau_{k+1}\sum_{\xi_{k+1}=+}\tau_\xi q^{\xi}_{i}+(1-\tau_{k+1})\sum_{\xi_{k+1}=-}\tau_\xi q^{\xi}_{i}}=\frac{p_{i}}{q_{i}}\text{ for any }1\leq i\leq k.$$
	Let 
	$
	\theta_\xi=\left\{\begin{array}{ll}
		\tau_{k+1}\tau_\xi, & \text { for } \xi_{k+1}=+  \\
		(1-\tau_{k+1})\tau_\xi, & \text { for } \xi_{k+1}=-.
	\end{array}\right.
	$
	Then we have $\sum_{\xi\in\{+,-\}^{k+1}}\theta_\xi=1$ and $ \frac{\sum_{\xi\in\{+,-\}^{k+1}}\theta_\xi p^{\xi}_i}{\sum_{\xi\in\{+,-\}^{k+1}}\theta_\xi q^{\xi}_i}=\frac{p_i}{q_i}\text{ for any }1\leq i\leq k+1.$ So we complete the proof of Lemma \ref{lemma-C}.
\end{proof} 
By Lemma \ref{lemma-C} we have
\begin{Cor}\label{corollary-A}
	Suppose $(X,  f)$ is a dynamical system. Let $d \in \mathbb{N}$ and $(A, B) \in AA(f,X)^{d} \times AA(f,X)^{d}$ satisfying \eqref{equation-O}. Then for any $a\in \mathcal{P}_{A,B}(\mathcal{M}_f(X))$ and $2^d$ invariant measures $\{\mu_\xi\}_{\xi\in\{+,-\}^d}$ with 
	$\mathcal{P}_{A,B}\left(\mu_\xi\right)\in a^\xi \text{ for any }\xi\in \{+,-\}^d,$ there are $2^d$ numbers $\{\theta_\xi\}_{\xi\in\{+,-\}^d}\subseteq [0,1]$ such that $\sum_{\xi\in\{+,-\}^d}\theta_\xi=1$ and $\mathcal{P}_{A,B}(\sum_{\xi\in\{+,-\}^d}\theta_\xi\mu_\xi)= a.$
\end{Cor}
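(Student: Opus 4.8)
The plan is to deduce the statement from the purely numerical Lemma \ref{lemma-C}; the only point that needs care is that the denominators appearing in \eqref{equation-A} are strictly positive for the measures in question, which is exactly what \eqref{equation-O} secures. Throughout, recall from \eqref{equation-P} that for $\Phi^i=(\varphi_n^i)_n,\Psi^i=(\psi_n^i)_n\in AA(f,X)$ and $\mu\in\mathcal{M}_f(X)$ the limits $\lim_n\frac1n\int\varphi_n^i\,d\mu$ and $\lim_n\frac1n\int\psi_n^i\,d\mu$ exist and depend affinely on $\mu$ (integration is linear in the measure, and the limit commutes with the finite convex combinations we will form).

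First I would fix $\nu_0\in\mathcal{M}_f(X)$ with $\mathcal{P}_{A,B}(\nu_0)=a$ and set, for $1\le i\le d$,
\[
p_i:=\lim_{n\to\infty}\frac1n\int\varphi_n^i\,d\nu_0,\qquad q_i:=\lim_{n\to\infty}\frac1n\int\psi_n^i\,d\nu_0 .
\]
By \eqref{equation-O} one has $q_i\ge 0$; moreover $q_i\ne 0$, since otherwise the $i$-th coordinate $\lim_n\bigl(\int\varphi_n^i\,d\nu_0\bigr)/\bigl(\int\psi_n^i\,d\nu_0\bigr)$ of $\mathcal{P}_{A,B}(\nu_0)$ could not equal the finite number $a_i$. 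Hence $q_i>0$ and $a=\bigl(p_1/q_1,\dots,p_d/q_d\bigr)\in F^d$. The identical reasoning applied to each $\mu_\xi$ — whose image $\mathcal{P}_{A,B}(\mu_\xi)\in a^\xi\subseteq\mathbb{R}^d$ is a finite vector — gives, with $p_i^\xi:=\lim_n\frac1n\int\varphi_n^i\,d\mu_\xi$ and $q_i^\xi:=\lim_n\frac1n\int\psi_n^i\,d\mu_\xi$, that $q_i^\xi>0$ and $b_\xi:=\mathcal{P}_{A,B}(\mu_\xi)=\bigl(p_1^\xi/q_1^\xi,\dots,p_d^\xi/q_d^\xi\bigr)\in F^d$ with $b_\xi\in a^\xi$.

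Next I would apply Lemma \ref{lemma-C} to $a$ and the family $\{b_\xi\}_{\xi\in\{+,-\}^d}$: it yields $\{\theta_\xi\}_{\xi\in\{+,-\}^d}\subseteq[0,1]$ with $\sum_\xi\theta_\xi=1$ and
\[
\frac{\sum_{\xi}\theta_\xi p_i^\xi}{\sum_{\xi}\theta_\xi q_i^\xi}=\frac{p_i}{q_i}\qquad(1\le i\le d).
\]
Set $\mu:=\sum_\xi\theta_\xi\mu_\xi\in\mathcal{M}_f(X)$. By affineness of the limits, $\lim_n\frac1n\int\varphi_n^i\,d\mu=\sum_\xi\theta_\xi p_i^\xi$ and $\lim_n\frac1n\int\psi_n^i\,d\mu=\sum_\xi\theta_\xi q_i^\xi$; the latter is a convex combination, with weights summing to $1$, of the positive numbers $q_i^\xi$, hence strictly positive. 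Therefore $\mathcal{P}_{A,B}(\mu)$ is well defined and its $i$-th coordinate equals $\bigl(\sum_\xi\theta_\xi p_i^\xi\bigr)/\bigl(\sum_\xi\theta_\xi q_i^\xi\bigr)=p_i/q_i=a_i$, so $\mathcal{P}_{A,B}(\mu)=a$, as desired.

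There is no real obstacle beyond bookkeeping here: the combinatorial core is Lemma \ref{lemma-C}, and the only subtlety is ensuring the denominators $q_i$ and $q_i^\xi$ do not vanish, which follows from \eqref{equation-O} together with the (implicit) finiteness of $a$ and of the vectors $\mathcal{P}_{A,B}(\mu_\xi)$.
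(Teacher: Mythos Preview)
Your proof is correct and follows precisely the approach of the paper, which simply records this corollary as an immediate consequence of Lemma~\ref{lemma-C}. You have merely spelled out the bookkeeping---in particular the positivity of the denominators via \eqref{equation-O}---that the paper leaves implicit.
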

\begin{Cor}\label{corollary-AA}
	Suppose $(X,  f)$ is a dynamical system, and $C$ is a convex subset of $\mathcal{M}_f(X).$ Let $d \in \mathbb{N}$ and $(A, B) \in AA(f,X)^{d} \times AA(f,X)^{d}$ satisfying \eqref{equation-O}. Given $a\in \mathcal{P}_{A,B}(C).$ If there are $2^d$ invariant measures $\{\mu_\xi\}_{\xi\in\{+,-\}^d}\subset C$ such that
	$\mathcal{P}_{A,B}\left(\mu_\xi\right)\in a^\xi$ for any $\xi\in \{+,-\}^d,$ then $a\in \mathrm{Int}(\mathcal{P}_{A,B}(C)).$
\end{Cor}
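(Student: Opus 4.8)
The plan is to exhibit an open box around $a$ that is contained in $\mathcal{P}_{A,B}(C)$; since $a\in\mathcal{P}_{A,B}(C)$ by hypothesis, this gives $a\in\mathrm{Int}(\mathcal{P}_{A,B}(C))$. The engine is the arithmetic Lemma \ref{lemma-C} (equivalently Corollary \ref{corollary-A}) combined with the convexity of $C$ and the linearity of the functionals $\mu\mapsto\lim_{n\to\infty}\frac1n\int\varphi_n^i\,d\mu$ and $\mu\mapsto\lim_{n\to\infty}\frac1n\int\psi_n^i\,d\mu$ on $\mathcal{M}_f(X)$.

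First I would fix notation and extract a uniform gap. For each $\xi\in\{+,-\}^d$ write $\mathcal{P}_{A,B}(\mu_\xi)=\left(\frac{p_1^\xi}{q_1^\xi},\ldots,\frac{p_d^\xi}{q_d^\xi}\right)$ with $p_i^\xi=\lim_{n\to\infty}\frac1n\int\varphi_n^i\,d\mu_\xi$ and $q_i^\xi=\lim_{n\to\infty}\frac1n\int\psi_n^i\,d\mu_\xi$; since $\mathcal{P}_{A,B}(\mu_\xi)\in a^\xi\subset\mathbb{R}^d$ has finite coordinates, condition \eqref{equation-O} forces $q_i^\xi>0$ for all $i,\xi$. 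Writing $a=(a_1,\ldots,a_d)$, the hypothesis $\mathcal{P}_{A,B}(\mu_\xi)\in a^\xi$ means $\frac{p_i^\xi}{q_i^\xi}>a_i$ when $\xi_i=+$ and $\frac{p_i^\xi}{q_i^\xi}<a_i$ when $\xi_i=-$. As $\{+,-\}^d$ is finite, there is $\delta>0$ such that for all $i$ and all $\xi$ one has $\frac{p_i^\xi}{q_i^\xi}>a_i+\delta$ whenever $\xi_i=+$ and $\frac{p_i^\xi}{q_i^\xi}<a_i-\delta$ whenever $\xi_i=-$.

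Now fix an arbitrary $a'=(a_1',\ldots,a_d')\in\prod_{i=1}^d(a_i-\delta,a_i+\delta)$. For every $\xi$ we then have $\mathcal{P}_{A,B}(\mu_\xi)\in(a')^\xi$: if $\xi_i=+$ then $\frac{p_i^\xi}{q_i^\xi}>a_i+\delta>a_i'$, and if $\xi_i=-$ then $\frac{p_i^\xi}{q_i^\xi}<a_i-\delta<a_i'$. Writing $a_i'$ as a ratio with positive denominator, Lemma \ref{lemma-C} applied coordinate by coordinate produces weights $\{\theta_\xi\}_{\xi\in\{+,-\}^d}\subset[0,1]$ with $\sum_\xi\theta_\xi=1$ and $\frac{\sum_\xi\theta_\xi p_i^\xi}{\sum_\xi\theta_\xi q_i^\xi}=a_i'$ for every $1\leq i\leq d$. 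Put $\nu=\sum_\xi\theta_\xi\mu_\xi$; by convexity of $C$ and $\mu_\xi\in C$ we have $\nu\in C$, and by linearity in $\mu$ of the two limiting integrals the $i$-th coordinate of $\mathcal{P}_{A,B}(\nu)$ is exactly $\frac{\sum_\xi\theta_\xi p_i^\xi}{\sum_\xi\theta_\xi q_i^\xi}=a_i'$, the denominator being a convex combination of the positive numbers $q_i^\xi$, hence positive. Thus $a'=\mathcal{P}_{A,B}(\nu)\in\mathcal{P}_{A,B}(C)$. Since $a'$ was arbitrary in the open box, $\prod_{i=1}^d(a_i-\delta,a_i+\delta)\subset\mathcal{P}_{A,B}(C)$, so $a\in\mathrm{Int}(\mathcal{P}_{A,B}(C))$.

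The only genuinely delicate point is the bookkeeping for convex combinations of ratios: one must check that $\mathcal{P}_{A,B}$ interacts with convex combinations of measures as "sum of weighted numerators over sum of weighted denominators", which is precisely Lemma \ref{lemma-C} together with the affinity (indeed linearity) of $\mu\mapsto\lim_{n\to\infty}\frac1n\int\varphi_n^i\,d\mu$ and $\mu\mapsto\lim_{n\to\infty}\frac1n\int\psi_n^i\,d\mu$ guaranteed by \eqref{equation-P}; the passage from the single point $a$ to a neighborhood is exactly what the uniform gap $\delta$ (from finiteness of $\{+,-\}^d$) provides.
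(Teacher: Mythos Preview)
Your proof is correct and follows essentially the same approach as the paper: pick an open neighborhood of $a$ (you make the uniform gap $\delta$ explicit, the paper just says ``choose an open subset $B$'') on which the strict sign conditions $\mathcal{P}_{A,B}(\mu_\xi)\in(a')^\xi$ persist, then apply Lemma~\ref{lemma-C}/Corollary~\ref{corollary-A} together with convexity of $C$ to realize every $a'$ in that neighborhood as $\mathcal{P}_{A,B}$ of a convex combination of the $\mu_\xi$. The only cosmetic difference is that the paper invokes Corollary~\ref{corollary-A} directly, while you unwind it via Lemma~\ref{lemma-C} and the linearity of the numerator and denominator functionals.
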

\begin{proof}
	Choose an open subset $B$ of $\mathbb{R}^d$ such  that $a\in B$ and for any $\tilde{a}\in B$ we have $\mathcal{P}_{A,B}\left(\mu_\xi\right)\in \tilde{a}^\xi$ for any $\xi\in \{+,-\}^d.$ By Corollary  \ref{corollary-A}  there are $2^d$ numbers $\{\theta_\xi\}_{\xi\in\{+,-\}^d}\subseteq [0,1]$ such that $\sum_{\xi\in\{+,-\}^d}\theta_\xi=1$ and $\mathcal{P}_{A,B}(\sum_{\xi\in\{+,-\}^d}\theta_\xi\mu_\xi)= \tilde{a}.$ $C$ is convex, then $\sum_{\xi\in\{+,-\}^d}\theta_\xi\mu_\xi\in C.$ So $B\subset \mathcal{P}_{A,B}(C)$ and thus $a\in \mathrm{Int}(\mathcal{P}_{A,B}(C)).$
\end{proof}
\begin{Lem}\label{lemma-D}
	Suppose $(X,  f)$ is a dynamical system, and  $C$ is a convex subset of $\mathcal{M}_f(X).$ Let $d \in \mathbb{N}$ and $(A, B) \in AA(f,X)^{d} \times AA(f,X)^{d}$ satisfying \eqref{equation-O} and $\mathrm{Int}(\mathcal{P}_{A,B}(C))\neq\emptyset$. Then for any $a\in \mathrm{Int}(\mathcal{P}_{A,B}(C)),$ any $\mu\in \mathcal{P}_{A,B}^{-1}(a)\cap C$ and any $\eta,\zeta>0$, there exist $2^d$ invariant measures $\{\mu_\xi\}_{\xi\in\{+,-\}^d}\subset C$ such that 
	$\mathcal{P}_{A,B}\left(\mu_\xi\right)\in a^\xi, h_{\mu_\xi}(f)>h_{\mu}(f)-\eta \text{ and } \rho(\mu_\xi,\mu)<\zeta \text{ for any }\xi\in \{+,-\}^d.$
\end{Lem}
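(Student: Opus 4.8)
The plan is to exploit that $a$ lies in the \emph{interior} of $\mathcal{P}_{A,B}(C)$ to produce, for each corner direction $\xi\in\{+,-\}^d$, a witness measure $\nu_\xi\in C$ whose image $\mathcal{P}_{A,B}(\nu_\xi)$ sits strictly inside the open orthant $a^\xi$, and then to perturb $\mu$ a little towards $\nu_\xi$ by a convex combination. Concretely, first fix $\delta>0$ with $B(a,\delta)\subset\mathcal{P}_{A,B}(C)$ (possible by the definition of interior). For each $\xi\in\{+,-\}^d$ let $a_\xi$ be the point obtained from $a$ by adding $\tfrac{\delta}{2\sqrt d}$ to the $i$-th coordinate when $\xi_i=+$ and subtracting it when $\xi_i=-$; then $a_\xi\in B(a,\delta)\cap a^\xi$, so there is $\nu_\xi\in C$ with $\mathcal{P}_{A,B}(\nu_\xi)=a_\xi$. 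For $s\in(0,1)$ set $\mu_\xi^{s}:=(1-s)\mu+s\nu_\xi$; it is $f$-invariant and lies in $C$ by convexity of $C$.

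The heart of the matter is that $\mathcal{P}_{A,B}(\mu_\xi^{s})\in a^\xi$ for \emph{every} $s\in(0,1)$, not merely in the limit. Write $\bar\varphi^{i}(\omega):=\lim_n\tfrac1n\int\varphi_n^{i}\,d\omega$ and $\bar\psi^{i}(\omega):=\lim_n\tfrac1n\int\psi_n^{i}\,d\omega$; these limits exist by \eqref{equation-P}, and each is affine in $\omega$, being a pointwise limit of the affine maps $\omega\mapsto\tfrac1n\int(\cdot)\,d\omega$. Since $a_i$ and $(a_\xi)_i$ are finite, condition \eqref{equation-O} forces $\bar\psi^{i}(\mu)>0$ and $\bar\psi^{i}(\nu_\xi)>0$, whence $\bar\psi^{i}(\mu_\xi^{s})=(1-s)\bar\psi^{i}(\mu)+s\bar\psi^{i}(\nu_\xi)>0$ and, by the definition \eqref{equation-A}, $\mathcal{P}_{A,B}^{i}(\mu_\xi^{s})=\frac{(1-s)\bar\varphi^{i}(\mu)+s\bar\varphi^{i}(\nu_\xi)}{(1-s)\bar\psi^{i}(\mu)+s\bar\psi^{i}(\nu_\xi)}$. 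Because $a_i=\mathcal{P}_{A,B}^{i}(\mu)\neq\mathcal{P}_{A,B}^{i}(\nu_\xi)=(a_\xi)_i$, Lemma \ref{lemma-E}(2) (applied with $\theta=1-s$) shows this quantity is strictly monotone in $s$ on $[0,1]$, equal to $a_i$ at $s=0$ and to $(a_\xi)_i$ at $s=1$; hence for $s\in(0,1)$ it lies strictly between $a_i$ and $(a_\xi)_i$, i.e.\ strictly on the $\xi_i$-side of $a_i$. As $i$ was arbitrary, $\mathcal{P}_{A,B}(\mu_\xi^{s})\in a^\xi$ for all $s\in(0,1)$. It then remains to take $s=s_\xi\in(0,1)$ small enough: by affinity of the entropy map, $h_{\mu_\xi^{s}}(f)=(1-s)h_\mu(f)+s\,h_{\nu_\xi}(f)\to h_\mu(f)$ as $s\to0$, so $h_{\mu_\xi^{s}}(f)>h_\mu(f)-\eta$ once $s$ is small; and $\rho(\mu_\xi^{s},\mu)\to0$ as $s\to0$, so $\rho(\mu_\xi^{s},\mu)<\zeta$ for $s$ small. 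Setting $\mu_\xi:=\mu_\xi^{s_\xi}$ for such an $s_\xi$ (one for each of the finitely many $\xi$) completes the argument.

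I do not anticipate a real obstacle; the argument is direct. The only point that genuinely needs care is that one must combine $\mu$ with the interior witnesses $\nu_\xi$ and invoke the \emph{strict} monotonicity of the ratio in Lemma \ref{lemma-E}(2) — rather than mere continuity of $\mathcal{P}_{A,B}$ — so as to keep $\mathcal{P}_{A,B}(\mu_\xi^{s})$ inside the \emph{open} orthant $a^\xi$ all the way down to $s\to0$, where it converges to the boundary point $a$. One should also record the edge case $h_\mu(f)=\infty$: there the same $\mu_\xi^{s}$ (any small $s\in(0,1)$) still satisfies $h_{\mu_\xi^{s}}(f)=\infty$ by affinity of the entropy map, so one may and does assume $h_\mu(f)<\infty$ throughout.
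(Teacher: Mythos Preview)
The proposal is correct and follows essentially the same approach as the paper: pick witnesses $\nu_\xi\in C$ with $\mathcal{P}_{A,B}(\nu_\xi)\in a^\xi$ (using $a\in\mathrm{Int}(\mathcal{P}_{A,B}(C))$), form convex combinations $\mu_\xi=(1-s)\mu+s\nu_\xi$ with $s$ small, and invoke Lemma~\ref{lemma-E}(2) to keep $\mathcal{P}_{A,B}(\mu_\xi)$ in the open orthant $a^\xi$. Your write-up is in fact more careful than the paper's (you justify positivity of the denominators via \eqref{equation-O} and note the $h_\mu(f)=\infty$ edge case), but the argument is the same.
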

\begin{proof}
	By $a\in \mathrm{Int}(\mathcal{P}_{A,B}(C))$ there is $\nu_\xi\in C$ such that $\mathcal{P}_{A,B}\left(\nu_\xi\right)\in a^\xi$ for any $\xi\in \{+,-\}^d.$ Then there is $\tau_\xi\in(0,1)$ close to $1$ such that $\mu_\xi=\tau_\xi\mu+(1-\tau_\xi)\nu_\xi\in C$ satisfies 
	$h_{\mu_\xi}(f)>h_{\mu}(f)-\eta \text{ and } \rho(\mu_\xi,\mu)<\zeta \text{ for any }\xi\in \{+,-\}^d.$ And we have $\mathcal{P}_{A,B}\left(\mu_\xi\right)\in a^\xi$ by $\tau^\xi>0$ and Lemma \ref{lemma-E}(2).
\end{proof}

\begin{Lem}\label{Lem-interior}
	Suppose $(X,  f)$ is a dynamical system, and  $C$ is a convex subset of $\mathcal{M}_f(X).$ Assume that $C$ satisfies the following properties: for any positive integer $m,$ any $f$-invariant measures
	$\{\mu_i\}_{i=1}^m\subseteq C,$ and any $\eta,  \zeta>0$,   there exist compact invariant subsets $\Lambda_i\subseteq\Lambda\subset X$ such that for each $1\leq i\leq m$
	\begin{description}
		\item[(1)] $\htop(f,  \Lambda_i)>h_{\mu_i}(f)-\eta.$
		\item[(2)] $d_H(K,  \mathcal{M}_f(\Lambda))<\zeta$,   $d_H(\mu_i,  \mathcal{M}_f(\Lambda_i))<\zeta,$ where $K=\cov\{\mu_i\}_{i=1}^m.$
		\item [(3)] $\mathcal{M}_f(\Lambda)\subset C.$
		\item[(4)] $\Lambda$ has property $\mathbf{P}.$
	\end{description}
    Let $d \in \mathbb{N}$ and $(A, B) \in AA(f,X)^{d} \times AA(f,X)^{d}$ satisfying \eqref{equation-O} and $\mathrm{Int}(\mathcal{P}_{A,B}(C))\neq\emptyset$.
    Then for any $a\in \mathrm{Int}(\mathcal{P}_{A,B}(C)),$ any $\mu\in \mathcal{P}_{A,B}^{-1}(a)\cap C$ and any $\eta,  \zeta>0$, there is a compact invariant subset $\Lambda\subset X$ such that
   $a\in  \mathrm{Int}(\mathcal{P}_{A,B}(\mathcal{M}_f(\Lambda))),$ $d_H(\mu,  \mathcal{M}_f(\Lambda))<\zeta,$ $\mathcal{M}_f(\Lambda)\subset C,$ $\Lambda$ has property $\mathbf{P},$
    	and  $H_{A,B}(f,a,\mathcal{M}_f(\Lambda))
    	> h_{\mu}(f)-\eta.$  If further $\mu\to h_\mu(f)$ is upper semi-continuous on $C$, then $\Lambda$ also satisfies $H_{A,B}(f,a,\mathcal{M}_f(\Lambda))<h_{\mu}(f)+\eta.$ 
\end{Lem}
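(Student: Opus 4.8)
## Proof proposal for Lemma \ref{Lem-interior}

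The plan is to combine the convexity of $C$ with the hypothesis on $C$ (properties (1)--(4)) and with Corollary \ref{corollary-AA} and Lemma \ref{lemma-D} established above. Fix $a\in \mathrm{Int}(\mathcal{P}_{A,B}(C))$, fix $\mu\in \mathcal{P}_{A,B}^{-1}(a)\cap C$, and fix $\eta,\zeta>0$. First I would invoke Lemma \ref{lemma-D}: since $C$ is convex, $(A,B)$ satisfies \eqref{equation-O}, and $\mathrm{Int}(\mathcal{P}_{A,B}(C))\neq\emptyset$, there exist $2^d$ invariant measures $\{\mu_\xi\}_{\xi\in\{+,-\}^d}\subset C$ with $\mathcal{P}_{A,B}(\mu_\xi)\in a^\xi$, $h_{\mu_\xi}(f)>h_\mu(f)-\eta/2$, and $\rho(\mu_\xi,\mu)<\zeta/2$ for each $\xi$. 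This gives a finite family of invariant measures in $C$, each close to $\mu$, each with entropy close to (or above) $h_\mu(f)$, and whose $\mathcal{P}_{A,B}$-values straddle $a$ in all $2^d$ ``quadrant'' directions.

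Next I would feed this family $\{\mu_\xi\}_{\xi\in\{+,-\}^d}$ (enumerated as $\{\mu_i\}_{i=1}^{2^d}$) into the hypothesis on $C$, with tolerance parameters $\eta'\le \eta/2$ and $\zeta'$ small (to be specified; $\zeta'\le \zeta/2$ and small enough that the Hausdorff-distance conclusions below force the quadrant conditions to persist). This yields compact invariant sets $\Lambda_i\subseteq\Lambda\subset X$ with $\htop(f,\Lambda_i)>h_{\mu_i}(f)-\eta'$, $d_H(K,\mathcal{M}_f(\Lambda))<\zeta'$ and $d_H(\mu_i,\mathcal{M}_f(\Lambda_i))<\zeta'$ where $K=\cov\{\mu_i\}_{i=1}^{2^d}$, together with $\mathcal{M}_f(\Lambda)\subset C$ and the property $\mathbf P$ for $\Lambda$. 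Since $d_H(\mu_i,\mathcal{M}_f(\Lambda_i))<\zeta'$, choose $\tilde\mu_\xi\in\mathcal{M}_f(\Lambda_i)\subset\mathcal{M}_f(\Lambda)$ with $\rho(\tilde\mu_\xi,\mu_\xi)<\zeta'$; by continuity of $\mathcal{P}_{A,B}$ (guaranteed by \eqref{equation-P}) and by taking $\zeta'$ small enough depending on the distances from the $\mathcal{P}_{A,B}(\mu_\xi)$ to the coordinate hyperplanes through $a$, we still have $\mathcal{P}_{A,B}(\tilde\mu_\xi)\in a^\xi$ for all $\xi$. Then Corollary \ref{corollary-AA}, applied to the convex set $\mathcal{M}_f(\Lambda)$ and the measures $\{\tilde\mu_\xi\}$, gives $a\in\mathrm{Int}(\mathcal{P}_{A,B}(\mathcal{M}_f(\Lambda)))$. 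The bound $d_H(\mu,\mathcal{M}_f(\Lambda))<\zeta$ follows from $d_H(K,\mathcal{M}_f(\Lambda))<\zeta'$ together with $\mu\in\B(K,\zeta/2)$ (as $\rho(\mu_\xi,\mu)<\zeta/2$ and $K$ is a convex combination of such measures, $\mu$ lies within $\zeta/2$ of $K$ in Hausdorff distance after using convexity). The conditions $\mathcal{M}_f(\Lambda)\subset C$ and ``$\Lambda$ has property $\mathbf P$'' are handed to us directly.

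For the entropy lower bound: by Corollary \ref{corollary-A} applied inside $\mathcal{M}_f(\Lambda)$ to the $\{\tilde\mu_\xi\}$, there exist weights $\{\theta_\xi\}$ summing to $1$ with $\mathcal{P}_{A,B}\big(\sum_\xi\theta_\xi\tilde\mu_\xi\big)=a$. Actually it is cleaner to run Corollary \ref{corollary-A} using, for one distinguished $\xi_0$, an \emph{ergodic} measure of near-maximal entropy on $\Lambda_{i_0}$: the variational principle on $\Lambda_{i_0}$ together with $\htop(f,\Lambda_{i_0})>h_{\mu_{i_0}}(f)-\eta'>h_\mu(f)-\eta$ gives $\nu\in\mathcal{M}_f(\Lambda_{i_0})$ with $h_\nu(f)>h_\mu(f)-\eta$ and $\mathcal{P}_{A,B}(\nu)$ arbitrarily close to $\mathcal{P}_{A,B}(\mu_{i_0})\in a^{\xi_0}$, hence still in $a^{\xi_0}$. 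Combine this $\nu$ with measures $\tilde\mu_\xi$ (for $\xi\neq\xi_0$) of the other quadrants via Corollary \ref{corollary-A} to get $\theta_{\xi_0}\nu+\sum_{\xi\neq\xi_0}\theta_\xi\tilde\mu_\xi\in\mathcal{P}_{A,B}^{-1}(a)\cap\mathcal{M}_f(\Lambda)$, whose entropy is $\theta_{\xi_0}h_\nu(f)+\sum_{\xi\neq\xi_0}\theta_\xi h_{\tilde\mu_\xi}(f)$. Here lies the \textbf{main obstacle}: this affine combination has entropy at least $\theta_{\xi_0}(h_\mu(f)-\eta)$, which is not by itself $>h_\mu(f)-\eta$ unless $\theta_{\xi_0}$ is close to $1$. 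To fix this I would run Corollary \ref{corollary-A} more carefully — choosing \emph{all} the quadrant measures $\tilde\mu_\xi$ to have entropy $>h_\mu(f)-\eta/2$ (which Lemma \ref{lemma-D} already provides) and noting that $\htop(f,\Lambda_i)>h_{\mu_i}(f)-\eta'$ lets us pick, for \emph{each} $\xi$, an ergodic $\nu_\xi\in\mathcal{M}_f(\Lambda_i)$ with $h_{\nu_\xi}(f)>h_\mu(f)-\eta$ and $\mathcal{P}_{A,B}(\nu_\xi)$ still in $a^\xi$ (perturbing $\nu_\xi$ slightly toward $\tilde\mu_\xi$ if necessary to stay in the open quadrant while keeping entropy high — here upper/lower semicontinuity is not needed, only that the quadrant condition is open and entropy drops by an arbitrarily small amount under small convex perturbation). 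Then every measure in the convex hull of $\{\nu_\xi\}$ has entropy $>h_\mu(f)-\eta$, and Corollary \ref{corollary-A} produces one such measure in $\mathcal{P}_{A,B}^{-1}(a)\cap\mathcal{M}_f(\Lambda)$, giving $H_{A,B}(f,a,\mathcal{M}_f(\Lambda))>h_\mu(f)-\eta$. Finally, for the upper bound under upper semicontinuity of $\mu\mapsto h_\mu(f)$ on $C$: since $\mathcal{M}_f(\Lambda)\subset C$ and $\mathcal{M}_f(\Lambda)$ is contained in the $\zeta$-neighborhood of $\mu$ (shrinking $\zeta'$ so that $d_H(\mu,\mathcal{M}_f(\Lambda))$ is as small as we like), upper semicontinuity at $\mu$ gives $h_\nu(f)<h_\mu(f)+\eta$ for every $\nu\in\mathcal{M}_f(\Lambda)$, whence $H_{A,B}(f,a,\mathcal{M}_f(\Lambda))\le\sup_{\nu\in\mathcal{M}_f(\Lambda)}h_\nu(f)<h_\mu(f)+\eta$; to make this rigorous one runs the construction with $\zeta'$ chosen after $\eta$, using a uniform bound from upper semicontinuity on a small closed ball around $\mu$ intersected with $C$. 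This completes the plan.
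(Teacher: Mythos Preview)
Your proposal is correct and follows essentially the same route as the paper's proof: invoke Lemma~\ref{lemma-D} to obtain quadrant measures $\{\mu_\xi\}$ close to $\mu$ with entropy $>h_\mu(f)-\eta/3$, apply the multi-horseshoe hypothesis to these $2^d$ measures, use the variational principle on each $\Lambda_\xi$ to pick $\nu_\xi\in\mathcal{M}_f(\Lambda_\xi)$ with $h_{\nu_\xi}(f)>h_\mu(f)-\eta$, then apply Corollaries~\ref{corollary-A} and~\ref{corollary-AA}. One simplification worth noting: your ``perturbing $\nu_\xi$ toward $\tilde\mu_\xi$'' step is unnecessary, because the Hausdorff bound $d_H(\mu_\xi,\mathcal{M}_f(\Lambda_\xi))<\zeta''$ (with $\mu_\xi$ a singleton) already forces \emph{every} measure in $\mathcal{M}_f(\Lambda_\xi)$---in particular the near-maximal-entropy $\nu_\xi$ from the variational principle---to lie within $\zeta''$ of $\mu_\xi$, hence in the open quadrant $a^\xi$ once $\zeta''$ is chosen after the $\mu_\xi$ via continuity of $\mathcal{P}_{A,B}$; this is how the paper proceeds and it streamlines the entropy lower bound.
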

\begin{proof}
	Fix $a_0\in \mathrm{Int}(\mathcal{P}_{A,B}(C)),$ $\mu_0\in \mathcal{P}_{A,B}^{-1}(a_0)\cap C$ and $\eta,  \zeta>0.$ 
	If $\mu\to h_\mu(f)$ is upper semi-continuous on $C$, there is $0<\zeta'<\zeta$ such that for any $\omega\in C$ with $\rho(\mu_0,\omega)<\zeta'$ we have
	\begin{equation}\label{equation-C}
		h_{\omega}(f)<h_{\mu_0}(f)+\eta.
	\end{equation} 
	By Lemma \ref{lemma-D} there exist $2^d$ invariant measures $\{\mu_\xi\}_{\xi\in\{+,-\}^d}\subset C$ such that 
	\begin{equation}\label{equation-U}
		\mathcal{P}_{A,B}\left(\mu_\xi\right)\in a_0^\xi, h_{\mu_\xi}(f)>h_{\mu_0}(f)-\frac{\eta}{3} \text{ and } \rho(\mu_\xi,\mu_0)<\frac{\zeta'}{2} \text{ for any }\xi\in \{+,-\}^d.
	\end{equation}
	Since the function $\mathcal{P}_{A,B}$ is continuous,  there is $0<\zeta''<\zeta'$ such that such that  for any $\omega_\xi \in C$ with $\rho(\omega_\xi,\mu_\xi)<\zeta''$ we have
	\begin{equation}\label{equation-B}
		\mathcal{P}_{A,B}\left(\omega_\xi\right)\in a_0^\xi
	\end{equation} 
	For the $2^d$ invariant measures $\{\mu_\xi\}_{\xi\in\{+,-\}^d},$  there exist compact invariant subsets $\Lambda_\xi\subseteq\Lambda\subset X$ such that for each $\xi\in \{+,-\}^d$
	\begin{description}
		\item[(1)] $\htop(f,  \Lambda_\xi)>h_{\mu_\xi}(f)-\frac{\eta}{3}.$
		\item[(2)] $d_H(\cov\{\mu_\xi\}_{\xi\in \{+,-\}^d},  \mathcal{M}_f(\Lambda))<\frac{\zeta''}{2}$,   $d_H(\mu_\xi,  \mathcal{M}_f( \Lambda_\xi))<\frac{\zeta''}{2}.$
		\item[(3)] $\mathcal{M}_f(\Lambda)\subset C.$
		\item[(4)] $\Lambda$ has property $\mathbf{P}.$
	\end{description}
	By item(1) and the variational principle of the topological entropy, there are $\nu_\xi\in \mathcal{M}_f(\Lambda_\xi)$ such that $$h_{\nu_\xi}(f)>\htop(f,  \Lambda_\xi)-\frac{\eta}{3}>h_{\mu_\xi}(f)-\frac{2\eta}{3}>h_{\mu_0}(f)-\eta.$$
	Then by item(2) and \eqref{equation-B}, we have $\mathcal{P}_{A,B}\left(\nu_\xi\right)\in a_0^\xi.$ 
	By Corollary \ref{corollary-A} there are $2^d$ numbers $\{\theta_\xi\}_{\xi\in\{+,-\}^d}\subseteq [0,1]$ such that $\sum_{\xi\in\{+,-\}^d}\theta_\xi=1$ and $\mathcal{P}_{A,B}\left(\nu'\right)= a_0$ where $\nu'=\sum_{\xi\in\{+,-\}^d}\theta_\xi\nu_\xi\in \mathcal{M}_f(\Lambda).$ 
	Then by Corollary \ref{corollary-AA} we have $a_0\in \mathrm{Int}(\mathcal{P}_{A,B}(\mathcal{M}_f(\Lambda)))$ and 
	$H_{A,B}(f,a,\mathcal{M}_f(\Lambda))
		\geq h_{\nu'}(f)\geq \min\{h_{\nu_\xi}(f):\xi\in \{+,-\}^d\}
		> h_{\mu_0}(f)-\eta.$
    By item(2) and   \eqref{equation-U}, we have $d_H(\mu,  \mathcal{M}_f(\Lambda))<\zeta'.$ 
	So by \eqref{equation-C} we have 
	$H_{A,B}(f,a,\mathcal{M}_f(\Lambda))<h_{\mu_0}(f)+\eta.$
\end{proof}

\subsection{Proof of Theorem \ref{thm-Almost-Additive}}
	Fix $a_0\in \mathrm{Int}(\mathcal{P}_{A,B}(C_1)),$ $\mu_0\in \mathcal{P}_{A,B}^{-1}(a_0)\cap C_1$ and $\eta,  \zeta>0.$ 
	Since $\alpha$ is continuous on  $C_1$, there is $0<\zeta'<\zeta$ such that for any $\omega\in C_1$ with $\rho(\mu_0,\omega)<\zeta'$ we have
	\begin{equation}\label{equation-C'}
		|\alpha(\omega)-\alpha(\mu_0)|<\frac{\eta}{2}.
	\end{equation} 
    By Lemma \ref{Lem-interior} there is a compact invariant set $\Lambda$ such that
    $a_0\in  \mathrm{Int}(\mathcal{P}_{A,B}(\mathcal{M}_f(\Lambda))),$ $d_H(\mu_0,  \mathcal{M}_f(\Lambda))<\zeta',$ $\mathcal{M}_f(\Lambda)\subset C,$ 
    $H_{A,B}(f,a_0,\mathcal{M}_f(\Lambda))=H_{A,B}(f,a_0,\mathcal{M}_f(\Lambda)\cap C_2)$
    and  $|H_{A,B}(f,a_0,\mathcal{M}_f(\Lambda))-h_{\mu_0}(f)|<\frac{\eta}{4}.$ 
    Then there is $\nu\in \mathcal{M}_f(\Lambda)\cap C_2$ such that $\mathcal{P}_{A,B}\left(\nu\right)=a_0$ and $|h_{\nu}(f)-H_{A,B}(f,a_0,\mathcal{M}_f(\Lambda))|<\frac{\eta}{4}.$ It follows  $|h_{\nu}(f)-h_{\mu_0}(f)|<\frac{\eta}{2}.$
    By \eqref{equation-C'} we have $|\alpha(\nu)-\alpha(\mu_0)|<\frac{\eta}{2},$ and thus $|P(f,\alpha,\nu)-P(f,\alpha,\mu_0)|<\eta.$\qed

\section{Asymptotically additive sequences: an abstract version of Theorem \ref{thm-continuous}(II)-(IV)}\label{section-almost2}

In this section we give  abstract conditions on which  Theorem \ref{thm-continuous}(II)-(IV) holds in the more general context of asymptotically additive sequences of continuous functions. 
Consider a dynamical system $(X,  f).$ Let $d \in \mathbb{N}$ and $(A, B) \in AA(f,X)^{d} \times AA(f,X)^{d}$ such that
$B$ satisfies \eqref{equation-O}. Let $\alpha:\mathcal{M}_f(X)\to \mathbb{R}$ be a continuous function.    
Recall that the pressure of $\alpha$ with respect to $\mu$ is $P(f,\alpha,\mu)=h_\mu(f)+\alpha(\mu).$ For any convex subset $C$ of $\mathcal{M}_f(X)$ and $a\in  \mathcal{P}_{A,B}(C),$ denote
$$\alpha_{A,B}(a,C)=\sup\{\alpha(\mu):\mu\in \mathcal{P}_{A,B}^{-1}(a)\cap C\} .$$ 
For convenience, we write $\alpha_{A,B}(a)=\alpha_{A,B}(a,\mathcal{M}_f(X)).$
We list two conditions for $\alpha:$
\begin{description}
	\item[(A.1)] For any $\mu_1, \mu_2 \in C$ with $P(f,\alpha,\mu_1) \neq P(f,\alpha,\mu_2)$
	\begin{equation}\label{equation-W}
		\beta(\theta):=P(f,\alpha,\theta \mu_1+(1-\theta) \mu_2)\text{ is strictly monotonic on }[0,1].
	\end{equation} 
	\item[(A.2)] For any $\mu_1, \mu_2 \in C$ with $P(f,\alpha,\mu_1) = P(f,\alpha,\mu_2)$
	\begin{equation}\label{equation-AF}
		\beta(\theta):=P(f,\alpha,\theta \mu_1+(1-\theta) \mu_2)\text{ is constant on }[0,1].
	\end{equation} 
\end{description}
Now we give an abstract version of  Theorem \ref{thm-continuous}(II)-(IV) in the more general context of asymptotically additive sequences of continuous functions. 
\begin{Thm}\label{thm-Almost-Additive2}
	Suppose $(X,  f)$ is a dynamical system. Assume that $C_1$ is a convex subset of $\mathcal{M}_f(X)$, $C_2$ is a dense $G_\delta$ subset of $C_1$ and $(X,f)$ satisfies the locally conditional variational principle with respect to $(C_1,C_2)$.   Let $d \in \mathbb{N}$ and $(A, B) \in AA(f,X)^{d} \times AA(f,X)^{d}$ satisfying \eqref{equation-O} and $\mathrm{Int}(\mathcal{P}_{A,B}(C_1))\neq\emptyset$. Let $\alpha$ be a continuous function on $C_1$ satisfying \eqref{equation-W} and \eqref{equation-AF}. If $\mu\to h_\mu(f)$ is upper semi-continuous on $C_1$ and $\{\mu\in C_1:h_{\mu}(f)=0\}$ is dense in $C_1,$ 
	then 
	\begin{description}
		\item[(II)] For any $a\in \mathrm{Int}(\mathcal{P}_{A,B}(C_1)),$ any $\mu\in \mathcal{P}_{A,B}^{-1}(a)\cap C_1$ with $P(f,\alpha,\mu)\geq \alpha_{A,B}(a,C_1),$ any $\alpha_{A,B}(a,C_1)\leq P\leq P(f,\alpha,\mu)$ and any $\eta,  \zeta>0$, there is $\nu\in \mathcal{P}_{A,B}^{-1}(a)\cap C_2$ such that $\rho(\nu,\mu)<\zeta$ and $|P(f,\alpha,\nu)-P|<\eta.$ 
		\item[(III)] For any $a\in \mathrm{Int}(\mathcal{P}_{A,B}(C_1))$ and $\alpha_{A,B}(a,C_1)\leq P< H_{A,B}(f,\alpha,a,C_1),$ 
		$$
			\{\mu\in \mathcal{P}_{A,B}^{-1}(a)\cap C_2:P(f,\alpha,\mu)\geq P\} \text{ is  a dense $G_\delta$ subset of }\{\mu\in \mathcal{P}_{A,B}^{-1}(a)\cap C_1:P(f,\alpha,\mu)\geq P\},
		$$
	    $$
			\{\mu\in \mathcal{P}_{A,B}^{-1}(a)\cap C_1:P(f,\alpha,\mu)= P\} \text{ is  a dense $G_\delta$ subset of }\{\mu\in \mathcal{P}_{A,B}^{-1}(a)\cap C_1:P(f,\alpha,\mu)\geq P\}.
		$$
		If there is $\mu_{\mathrm{full}}\in C_1$ such that $S_{\mu_{\mathrm{full}}}=X$, then for any $a\in \mathrm{Int}(\mathcal{P}_{A,B}(C_1))$ and $\alpha_{A,B}(a,C_1)\leq P< H_{A,B}(f,\alpha,a,C_1),$ the set 
		$\{\mu\in \mathcal{P}_{A,B}^{-1}(a)\cap C_1:P(f,\alpha,\mu)\geq P,\ S_\mu=X\}$ is  a dense $G_\delta$ subset of $\{\mu\in \mathcal{P}_{A,B}^{-1}(a)\cap C_1:P(f,\alpha,\mu)\geq P\}.
		$
		\item[(IV)] If further $
		\{\mu\in \mathcal{P}_{A,B}^{-1}(a)\cap C_2:P(f,\alpha,\mu)= P\}$  is  dense in $\{\mu\in \mathcal{P}_{A,B}^{-1}(a)\cap C_1:P(f,\alpha,\mu)\geq P\}$ for any $a\in \mathrm{Int}(\mathcal{P}_{A,B}(C_1))$ and $\alpha_{A,B}(a,C_1)\leq P< H_{A,B}(f,\alpha,a,C_1),$  then the set   $\{(\mathcal{P}_{A,B}(\mu), P(f,\alpha,\mu)):\mu\in C_1,\ a=\mathcal{P}_{A,B}(\mu)\in\mathrm{Int}(\mathcal{P}_{A,B}(C_1)),\ \alpha_{A,B}(a,C_1)\leq P(f,\alpha,\mu)< H_{A,B}(f,\alpha,a,C_1)\}$ coincides with $\{(\mathcal{P}_{A,B}(\mu), P(f,\alpha,\mu)):\mu\in C_2,\ a=\mathcal{P}_{A,B}(\mu)\in\mathrm{Int}(\mathcal{P}_{A,B}(C_1)), \alpha_{A,B}(a,C_1)\leq P(f,\alpha,\mu)< H_{A,B}(f,\alpha,a,C_1)\}.$
	\end{description}
\end{Thm}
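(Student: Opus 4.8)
The plan is to bootstrap all three statements from the ``pressure-dense in a fixed level set'' property already proved in Theorem~\ref{thm-Almost-Additive}, which I use as a black box, upgrading it to an \emph{intermediate} and in fact \emph{exact} pressure statement. Throughout write $Q(\mu):=P(f,\alpha,\mu)=h_\mu(f)+\alpha(\mu)$, fix $a\in\mathrm{Int}(\mathcal{P}_{A,B}(C_1))$, and for $P\in\mathbb{R}$ set $X^{P}:=\{\mu\in\mathcal{P}_{A,B}^{-1}(a)\cap C_1:Q(\mu)\ge P\}$. Two reusable ingredients will be needed. \emph{(Zero-entropy level-set measures near any measure.)} Given $\mu\in\mathcal{P}_{A,B}^{-1}(a)\cap C_1$ and $\delta>0$, Lemma~\ref{lemma-D} gives $2^d$ measures $\mu_\xi\in C_1$ with $\mathcal{P}_{A,B}(\mu_\xi)\in a^\xi$ and $\rho(\mu_\xi,\mu)<\delta$; as $\{\nu\in C_1:h_\nu(f)=0\}$ is dense in $C_1$ and each $a^\xi$ is open, one may replace $\mu_\xi$ by a zero-entropy $\omega_\xi\in C_1$ still with $\mathcal{P}_{A,B}(\omega_\xi)\in a^\xi$ and $\rho(\omega_\xi,\mu)<2\delta$, and Corollary~\ref{corollary-A} then produces weights with $\omega:=\sum_\xi\theta_\xi\omega_\xi\in C_1$, $\mathcal{P}_{A,B}(\omega)=a$, $h_\omega(f)=0$, $\rho(\omega,\mu)<2\delta$; in particular $Q(\omega)=\alpha(\omega)\le\alpha_{A,B}(a,C_1)$. \emph{(Level-set interpolation.)} Given $\nu^0,\nu^1\in\mathcal{P}_{A,B}^{-1}(a)\cap C_1$ both within $\delta$ of a fixed $\mu$, with $\nu^0$ of zero entropy, take box measures $\mu_\xi^0$ (zero entropy, near $\nu^0$, as just described) and $\mu_\xi^1$ (near $\nu^1$, with entropies within $\delta$ of $h_{\nu^1}(f)$ by Lemma~\ref{lemma-D} and upper semi-continuity of $h$), all in $C_1$ with $\mathcal{P}_{A,B}\in a^\xi$; for $t\in[0,1]$ put $\mu_\xi^t:=t\mu_\xi^1+(1-t)\mu_\xi^0$, which stays in $C_1$ and, by Lemma~\ref{lemma-E}, has $\mathcal{P}_{A,B}(\mu_\xi^t)\in a^\xi$; finally $\gamma^t:=\sum_\xi\theta_\xi(t)\mu_\xi^t$ with the weights $\theta_\xi(t)$ from Lemma~\ref{lemma-C}. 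Since those weights solve rational equations whose data (the functionals $\tfrac1n\int\varphi_n^i$, $\tfrac1n\int\psi_n^i$ evaluated at $\mu_\xi^t$, together with the fixed target $a$) depend affinely on $t$ and all ratio inequalities stay strict, $\theta_\xi(t)$ --- hence $\gamma^t$ --- is continuous in $t$; moreover $\gamma^t\in\mathcal{P}_{A,B}^{-1}(a)\cap C_1$, $\rho(\gamma^t,\mu)<3\delta$, $\gamma^0$ has zero entropy, and $t\mapsto Q(\gamma^t)$ is continuous.

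\emph{Statement (II).} Given $\mu\in\mathcal{P}_{A,B}^{-1}(a)\cap C_1$ with $Q(\mu)\ge\alpha_{A,B}(a,C_1)$ and $\alpha_{A,B}(a,C_1)\le P\le Q(\mu)$, I first build $\mu'\in\mathcal{P}_{A,B}^{-1}(a)\cap C_1$ arbitrarily close to $\mu$ with $Q(\mu')=P$ exactly: if $P=Q(\mu)$ take $\mu'=\mu$; otherwise run the interpolation ingredient with $\nu^0:=\omega$ and $\nu^1:=\mu$, so that $Q(\gamma^0)=\alpha(\gamma^0)\le\alpha_{A,B}(a,C_1)\le P$ while $\gamma^1$ lies within $3\delta$ of $\mu$ and has entropy exceeding $h_\mu(f)-\delta$, whence $Q(\gamma^1)>P$ once $\delta$ is small (recall $Q(\mu)>P$); by the intermediate value theorem $Q(\gamma^{t^\ast})=P$ for some $t^\ast$, and $\mu':=\gamma^{t^\ast}$ works. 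Then Theorem~\ref{thm-Almost-Additive}, applied to $\mu'$ with errors $\eta/2$ and $\zeta/2$, yields $\nu\in\mathcal{P}_{A,B}^{-1}(a)\cap C_2$ with $\rho(\nu,\mu')<\zeta/2$ and $|Q(\nu)-Q(\mu')|<\eta/2$, so $\rho(\nu,\mu)<\zeta$ and $|Q(\nu)-P|<\eta$, which is (II). (Conditions \eqref{equation-W} and \eqref{equation-AF}, making $P(f,\alpha,\cdot)$ monotone along each segment, are precisely what keeps the pressures along the $\mu_\xi^t$ and their combinations under control.)

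\emph{Statement (III).} Fix $\alpha_{A,B}(a,C_1)\le P<H_{A,B}(f,\alpha,a,C_1)$. As $\mu\mapsto h_\mu(f)$ is upper semi-continuous and $\alpha$ continuous, $P(f,\alpha,\cdot)$ is upper semi-continuous, so $X^{P}$ is closed --- hence a Baire space, being a closed subset of the compact $\mathcal{M}(X)$ whenever $C_1$ is closed, which is the case in all our applications --- and $\{Q(\cdot)=P\}$ is a $G_\delta$; moreover $C_2$ is $G_\delta$ in $C_1$, and $\{\mu:S_\mu=X\}=\bigcap_{i,k}\{\mu:\mu(B(x_i,1/k))>0\}$ (with $\{x_i\}$ dense) is a $G_\delta$ since each of its defining sets is open by lower semi-continuity of $\mu\mapsto\mu(U)$ on open $U$. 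So the three sets appearing in (III) are $G_\delta$ relative to $X^{P}$, and it only remains to prove their density in $X^{P}$. The key auxiliary fact is that $\{\mu\in\mathcal{P}_{A,B}^{-1}(a)\cap C_1:Q(\mu)>P\}$ is dense in $X^{P}$: for $\mu\in X^{P}$ with $Q(\mu)=P$, pick $\mu^\ast\in\mathcal{P}_{A,B}^{-1}(a)\cap C_1$ with $Q(\mu^\ast)>P$ (available because $P<H_{A,B}(f,\alpha,a,C_1)$) and feed the $2^d$-combination device box measures $(1-s)\mu_\xi+s\mu_\xi^\ast$ (with $\mu_\xi$ near $\mu$ and $\mu_\xi^\ast$ near $\mu^\ast$) for a suitably small $s>0$; monotonicity along segments (\eqref{equation-W}) together with the two-sided entropy bounds from upper semi-continuity then forces the resulting level-set measure to lie within $\zeta$ of $\mu$ and to have pressure $>P$. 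Granting this: density of $\{Q(\cdot)=P\}$ in $X^{P}$ follows from the exact form of (II); density of $\{\mu\in\mathcal{P}_{A,B}^{-1}(a)\cap C_2:Q(\mu)\ge P\}$ follows by applying Theorem~\ref{thm-Almost-Additive} to measures of $\{Q(\cdot)>P\}\cap X^{P}$ with error smaller than their excess pressure over $P$; and density of $\{\mu:S_\mu=X\}\cap X^{P}$ follows by interpolating, once more through the $2^d$-combination, a small amount from a given $\mu$ with $Q(\mu)>P$ toward $\mu_{\mathrm{full}}$, which preserves full support, the level value $a$, and pressure $>P$. Baire's theorem now yields the three residuality statements.

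\emph{Statement (IV) and the main obstacle.} Statement (IV) is immediate from its own hypothesis, which in particular makes $\{\mu\in\mathcal{P}_{A,B}^{-1}(a)\cap C_2:Q(\mu)=P\}$ nonempty in $X^{P}$ for every admissible $(a,P)$: for each $\mu\in C_1$ with $a=\mathcal{P}_{A,B}(\mu)\in\mathrm{Int}(\mathcal{P}_{A,B}(C_1))$ and $\alpha_{A,B}(a,C_1)\le Q(\mu)<H_{A,B}(f,\alpha,a,C_1)$ there is then $\nu\in C_2$ with $\mathcal{P}_{A,B}(\nu)=a$ and $Q(\nu)=Q(\mu)$, so the two graphs coincide, the reverse inclusion being trivial since $C_2\subseteq C_1$. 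The entire difficulty in (II) and (III) is that the level set $\mathcal{P}_{A,B}^{-1}(a)$ is not convex --- each coordinate of $\mathcal{P}_{A,B}$ being a ratio of affine functionals --- so no interpolation can proceed along straight segments, and every convexity step must be routed through the $2^d$-combination machinery of Lemmas~\ref{lemma-E}--\ref{lemma-C}, whose output one must check is genuinely continuous in the parameter and along which $P(f,\alpha,\cdot)$ behaves quasi-affinely; the sharpest point, handled exactly by the hypotheses \eqref{equation-W}, \eqref{equation-AF}, upper semi-continuity of $\mu\mapsto h_\mu(f)$, density of zero-entropy measures, and the strict inequality $P<H_{A,B}(f,\alpha,a,C_1)$, is the treatment of the measures that sit precisely at the threshold pressure $P$.
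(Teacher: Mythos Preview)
Your overall architecture is the paper's: reduce everything to Theorem~\ref{thm-Almost-Additive} plus an interpolation step that lands a measure in the level set at any prescribed intermediate pressure, then approximate in $C_2$. However, your central claim that ``the level set $\mathcal{P}_{A,B}^{-1}(a)$ is not convex'' is simply false, and this misconception drives all the extra machinery in your write-up. If $\mu_0,\mu_1\in\mathcal{P}_{A,B}^{-1}(a)$ then each coordinate satisfies $\lim\frac1n\int\varphi_n^i\,d\mu_j=a_i\lim\frac1n\int\psi_n^i\,d\mu_j$, and since both numerator and denominator functionals are affine in $\mu$, the same linear relation holds for every convex combination; this is exactly Lemma~\ref{lemma-E}(1). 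The paper uses this directly: in Lemma~\ref{lemma-A}(3) one finds a zero-entropy measure $\nu'$ in the level set near $\mu_0$ (via Lemma~\ref{lemma-GG}) and then slides along the straight segment $\theta\mu_0+(1-\theta)\nu'$, which stays in $\mathcal{P}_{A,B}^{-1}(a)\cap C_1$ and along which $P(f,\alpha,\cdot)$ is monotone by \eqref{equation-W}, hitting the desired value $P$ exactly. The same convexity makes $X^P=\{\mu\in\mathcal{P}_{A,B}^{-1}(a)\cap C_1:Q(\mu)\ge P\}$ convex (via \eqref{equation-W}--\eqref{equation-AF}), so the full-support part of (III) reduces to Lemma~\ref{lemma-B}.

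Your parametrized $2^d$-combination argument does work, but it costs you a nontrivial continuity claim for the weights $\theta_\xi(t)$ that you only sketch; it is justifiable (the inductive construction in Lemma~\ref{lemma-C} determines the weights uniquely as rational functions of the data, hence continuously), but it is entirely avoidable once you realize the level set is convex. Two smaller points: (i) the statement only asks for ``dense $G_\delta$'', not residuality in a Baire space, so your hedge ``whenever $C_1$ is closed, which is the case in all our applications'' is unneeded and in fact not part of the hypotheses; (ii) for the density of $\{Q=P\}$ in $X^P$ you should cite your \emph{construction of} $\mu'$ (a $C_1$-measure with exact pressure $P$), not (II) itself, since (II) only produces a $C_2$-measure with pressure \emph{close} to $P$.
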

\begin{Rem}\label{Rem-baire}
	A set $C$ is said to be  Baire  if every countable intersection of dense open sets is dense.  When $C_1=\mathcal{M}_f(X)$ in Theorem \ref{thm-Almost-Additive2}, $\{\mu\in \mathcal{P}_{A,B}^{-1}(a)\cap C_1:P(f,\alpha,\mu)\geq P\}$ is a compact subset of $\mathcal{M}_f(X)$ since $\mu\to h_\mu(f)$ is upper semi-continuous. Then $\{\mu\in \mathcal{P}_{A,B}^{-1}(a)\cap C_1:P(f,\alpha,\mu)\geq P\}$ is a Baire set. So by item(III), $
	\{\mu\in \mathcal{P}_{A,B}^{-1}(a)\cap C_2:P(f,\alpha,\mu)= P\}$  is  a dense $G_\delta$ subset of $\{\mu\in \mathcal{P}_{A,B}^{-1}(a)\cap C_1:P(f,\alpha,\mu)\geq P\},$ and if there is $\mu_{\mathrm{full}}\in C_1$ such that $S_{\mu_{\mathrm{full}}}=X$, then  
	$\{\mu\in \mathcal{P}_{A,B}^{-1}(a)\cap C_2:P(f,\alpha,\mu)= P,\ S_\mu=X\}$ is  a dense $G_\delta$ subset of $\{\mu\in \mathcal{P}_{A,B}^{-1}(a)\cap C_1:P(f,\alpha,\mu)\geq P\}.
	$
\end{Rem}
\begin{Ex}
	The function $\alpha:\mathcal{M}_f(X)\to \mathbb{R}$ can be defined as  
	\begin{description}
		\item[(1)] $\alpha\equiv0.$ 
		\item[(2)] $\alpha(\mu)=\int\varphi d \mu$ with a continuous function $\varphi.$ 
		\item[(3)] $\alpha(\mu)=\lim\limits_{n \rightarrow \infty} \frac{1}{n} \int\varphi_{n} d \mu$ with an almost additive sequences of continuous functions $\Phi=\left(\varphi_{n}\right)_{n\in\N}.$ 
	\end{description}
    Then $\alpha:\mathcal{M}_f(X)\to \mathbb{R}$ is a continuous function from Example \ref{example-1}. Furthermore, $\alpha$ is affine and thus it satisfies \eqref{equation-W} and \eqref{equation-AF} if it is defined as above.  
\end{Ex}

\subsection{Proof of Theorem \ref{thm-Almost-Additive2}}
We ﬁrst establish several auxiliary results.
\begin{Lem}\label{lemma-GG}
	Suppose $(X,  f)$ is a dynamical system. Let $C$ be a convex compact subset of $\mathcal{M}_f(X).$ Let $d \in \mathbb{N}$ and $(A, B) \in AA(f,X)^{d} \times AA(f,X)^{d}$ satisfying \eqref{equation-O}  and $\mathrm{Int}(\mathcal{P}_{A,B}(C))\neq\emptyset$.   Assume $V$ is a nonempty convex subset of $C.$ Then for any $a\in \mathrm{Int}(\mathcal{P}_{A,B}(C)),$ the following properties holds:
	\begin{enumerate}
		\item[(1)] If $V$ is a dense  subset  of  $C,$ then $\mathcal{P}_{A,B}^{-1}(a)\cap V$ is a dense  subset  of  $\mathcal{P}_{A,B}^{-1}(a)\cap C.$
		 \item[(1)] If $V$ is a dense $G_\delta$ subset  of $C,$ then $ \mathcal{P}_{A,B}^{-1}(a)\cap V$ is a dense $G_\delta$ subset  of  $\mathcal{P}_{A,B}^{-1}(a)\cap C.$
	\end{enumerate}
\end{Lem}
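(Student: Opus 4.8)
## Proof proposal for Lemma \ref{lemma-GG}

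The plan is to exploit the fact that, for a fixed $a \in \mathrm{Int}(\mathcal{P}_{A,B}(C))$, the fiber $\mathcal{P}_{A,B}^{-1}(a) \cap C$ is itself a convex compact set with nonempty relative interior, and then transfer density (respectively comeagerness) from $C$ to this fiber via a retraction-type argument. The key structural input is Corollary \ref{corollary-A}: if I can write $a$ as $\mathcal{P}_{A,B}$ of a convex combination of $2^d$ measures whose images straddle $a$ in every coordinate sign-pattern $\xi \in \{+,-\}^d$, then I have room to move. Concretely, since $a \in \mathrm{Int}(\mathcal{P}_{A,B}(C))$, I fix once and for all measures $\{\mu_\xi\}_{\xi \in \{+,-\}^d} \subset C$ with $\mathcal{P}_{A,B}(\mu_\xi) \in a^\xi$; by Corollary \ref{corollary-AA} this persists for all $a'$ in a neighborhood of $a$. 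Given any $\mu \in \mathcal{P}_{A,B}^{-1}(a) \cap C$ that I wish to approximate, and any $\varepsilon > 0$, the idea is to perturb $\mu$ slightly inside $C$ to a measure $\mu' \in V$ (using density of $V$ in $C$), note that $\mathcal{P}_{A,B}(\mu') = a'$ is close to $a$ but generally not equal to $a$, and then correct the displacement $a' - a$ by mixing in a small amount of an appropriate convex combination of the $\mu_\xi$, chosen via Corollary \ref{corollary-A} applied to $a$ relative to the perturbed data. The continuity of $\mathcal{P}_{A,B}$ (from \eqref{equation-P}) guarantees the correction can be made small in the weak* metric, landing back on the fiber within $\varepsilon$ of $\mu$.

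For item (1) — density — the above sketch essentially finishes the argument once I am careful that the corrected measure actually lies in $V$: this is where I would instead run the perturbation in the opposite order. Given $\mu \in \mathcal{P}_{A,B}^{-1}(a) \cap C$ and $\varepsilon > 0$, first use Corollary \ref{corollary-A} and Lemma \ref{lemma-E}(2) to find, inside the convex set $C$, a small neighborhood-worth of measures all mapping exactly to $a$ under $\mathcal{P}_{A,B}$ — more precisely, I claim that $\mathcal{P}_{A,B}^{-1}(a) \cap C$ has nonempty interior relative to its affine hull and that $\mu$ can be approximated within that relative interior. Then, because $V$ is dense in $C$, and because the relative interior of a convex set is contained in the closure of $V \cap$ (that relative interior) whenever $V$ is dense — a standard convexity fact — I can push $\mu$ to a nearby point $\nu \in V$; but this $\nu$ need not lie on the fiber. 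So the genuinely correct route is: approximate $\mu$ by $\nu \in V$ with $\rho(\nu,\mu)$ tiny, then form $\nu_t = (1-t)\nu + t\,\omega_{a,\nu}$ where $\omega_{a,\nu}$ is the convex combination of the $\mu_\xi$ supplied by Corollary \ref{corollary-A} for the target $a$ and the perturbed base point, with $t$ chosen of the same infinitesimal order as $\rho(\nu,\mu)$ to kill the coordinate displacement exactly; since $\nu, \omega_{a,\nu} \in C$ and $\omega_{a,\nu}$ can itself be taken in $V$... — here is the actual subtlety. Let me instead argue via the projection $\pi_{aff}$ from Section \ref{sec-convex}: $\mathcal{P}_{A,B}$ restricted to the affine hull of $C$ composed with suitable affine coordinates makes $\mathcal{P}_{A,B}^{-1}(a) \cap C$ a convex compact slice, and density of $V$ in $C$ forces density of $V$ in an open dense (relative) subset of each such slice; combined with the fiberwise relative interior being dense in the slice, I get $\mathcal{P}_{A,B}^{-1}(a) \cap V$ dense in $\mathcal{P}_{A,B}^{-1}(a) \cap C$.

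For item (2) — the $G_\delta$ statement — once density is in hand, I write $V = \bigcap_{n} U_n$ with each $U_n$ open and dense in $C$ (using that $C$ is compact, hence Baire). Then $\mathcal{P}_{A,B}^{-1}(a) \cap U_n$ is open in $\mathcal{P}_{A,B}^{-1}(a) \cap C$, and the density argument from item (1), applied with $U_n$ in place of $V$ (each $U_n$ being open dense, in particular dense), shows $\mathcal{P}_{A,B}^{-1}(a) \cap U_n$ is dense in the fiber. Since the fiber $\mathcal{P}_{A,B}^{-1}(a) \cap C$ is itself compact (closed subset of compact $C$), hence Baire, the countable intersection $\bigcap_n \bigl(\mathcal{P}_{A,B}^{-1}(a) \cap U_n\bigr) = \mathcal{P}_{A,B}^{-1}(a) \cap V$ is a dense $G_\delta$ in the fiber.

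The main obstacle is the density step: showing that weak* density of $V$ in $C$ survives intersection with the codimension-$d$ affine slice $\mathcal{P}_{A,B}^{-1}(a)$. This does not follow from soft topology alone — one genuinely needs the convex geometry, namely that $a \in \mathrm{Int}(\mathcal{P}_{A,B}(C))$ provides transverse directions (the $2^d$ straddling measures $\mu_\xi$) so that any point of $C$ near the slice can be corrected back onto the slice by an arbitrarily small convex perturbation, and that this correction can be arranged to stay within (or arbitrarily close to) $V$. I expect the cleanest writeup routes everything through Corollary \ref{corollary-A}, Corollary \ref{corollary-AA}, Lemma \ref{lemma-E}(2) and the affine-coordinate reduction $\pi_{aff}$ of Section \ref{sec-convex}, rather than attempting a direct metric estimate.
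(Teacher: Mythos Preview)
Your proposal circles the right ingredients but misses the clean move that makes the density step work, and your argument for part (2) has a genuine gap.

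\textbf{Part (1).} The difficulty you identify --- that after perturbing $\mu$ to some $\nu \in V$ the correction back onto the fiber may leave $V$ --- is real, and none of the three routes you sketch resolves it. The paper's argument avoids this entirely by inverting the order of operations at the level of the \emph{straddling} measures rather than at $\mu$ itself. Concretely: given $\mu \in \mathcal{P}_{A,B}^{-1}(a)\cap C$ and $\zeta>0$, invoke Lemma~\ref{lemma-D} (not just the existence of some straddling family, but the fact that one can be chosen \emph{arbitrarily close to $\mu$}) to obtain $\{\mu_\xi\}_{\xi\in\{+,-\}^d}\subset C$ with $\mathcal{P}_{A,B}(\mu_\xi)\in a^\xi$ and $\rho(\mu_\xi,\mu)<\zeta/2$. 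Now use density of $V$ in $C$ and continuity of $\mathcal{P}_{A,B}$ to pick $\nu_\xi\in V$ with $\rho(\nu_\xi,\mu_\xi)<\zeta/2$ and still $\mathcal{P}_{A,B}(\nu_\xi)\in a^\xi$. Apply Corollary~\ref{corollary-A} to the $\nu_\xi$ to get a convex combination $\nu'=\sum_\xi\theta_\xi\nu_\xi$ with $\mathcal{P}_{A,B}(\nu')=a$. Because $V$ is convex, $\nu'\in V$; because each $\nu_\xi$ lies in the $\zeta$-ball around $\mu$, so does $\nu'$. That is the entire argument --- no relative interiors of slices, no $\pi_{aff}$, no retraction.

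\textbf{Part (2).} Your plan to write $V=\bigcap_n U_n$ and apply the density step of part~(1) to each $U_n$ fails because part~(1) requires the dense subset to be \emph{convex}, and open dense sets $U_n$ need not be convex. The fix is simpler than what you wrote: the $G_\delta$ property is automatic from set algebra (if $V=\bigcap_n U_n$ with $U_n$ relatively open in $C$, then $V\cap\mathcal{P}_{A,B}^{-1}(a)=\bigcap_n\bigl(U_n\cap\mathcal{P}_{A,B}^{-1}(a)\bigr)$ exhibits $V\cap\mathcal{P}_{A,B}^{-1}(a)$ as $G_\delta$ in $C\cap\mathcal{P}_{A,B}^{-1}(a)$), and density was already established in part~(1) applied to $V$ itself.
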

\begin{proof}
	(1) Fix $a\in \mathrm{Int}(\mathcal{P}_{A,B}(C)),$  $\mu\in \mathcal{P}_{A,B}^{-1}(a)\cap C$ and  $\zeta>0.$  By Lemma \ref{lemma-D} there exist $2^d$ invariant measures $\{\mu_\xi\}_{\xi\in\{+,-\}^d}\subset C$ such that 
	\begin{equation}\label{equ-AA}
		\mathcal{P}_{A,B}\left(\mu_\xi\right)\in a^\xi \text{ and } \rho(\mu_\xi,\mu)<\frac{\zeta}{2} \text{ for any }\xi\in \{+,-\}^d.
	\end{equation}
	Since $V$ is dense in $C$,  then there exist $\nu_\xi\in V$ close to $\mu_\xi$ such that 
	\begin{equation}\label{equ-BB}
		\mathcal{P}_{A,B}\left(\nu_\xi\right)\in a^\xi \text{ and } \rho(\nu_\xi,\mu_\xi)<\frac{\zeta}{2} \text{ for each } \xi\in \{+,-\}^d.
	\end{equation}
	By Corollary \ref{corollary-A} there are $2^d$ numbers $\{\theta_\xi\}_{\xi\in\{+,-\}^d}\subseteq [0,1]$ such that $\sum_{\xi\in\{+,-\}^d}\theta_\xi=1$ and $\mathcal{P}_{A,B}\left(\nu'\right)= a$ where $\nu'=\sum_{\xi\in\{+,-\}^d}\theta_\xi\nu_\xi.$ 
	Since $V$ is convex, we have $\nu'\in V.$ By \eqref{equ-AA} and \eqref{equ-BB} we have $\rho(\nu',\mu)<\zeta.$ 
	So $V\cap \mathcal{P}_{A,B}^{-1}(a)$ is a dense  subset  of  $\mathcal{P}_{A,B}^{-1}(a)\cap C.$
	
	(2) By item(1) we obtain item(2).
\end{proof}
\begin{Lem}\label{lemma-B}
	Suppose $(X,  f)$ is a dynamical system. Let $V$ be a convex subset of $\mathcal{M}_f(X).$ If there is an invariant measure $\mu_V\in V$ with $S_{\mu_V}=X,$ then $\{\mu\in V:S_\mu=X\}$ is a  dense $G_\delta$ subset of $V.$ 
\end{Lem}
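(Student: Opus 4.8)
$\textbf{Proof plan for Lemma \ref{lemma-B}.}$ The statement to prove is: if $V$ is a convex subset of $\mathcal{M}_f(X)$ containing a measure $\mu_V$ with full support $S_{\mu_V}=X$, then $\{\mu\in V:S_\mu=X\}$ is a dense $G_\delta$ subset of $V$. The plan is to handle density and the $G_\delta$ property separately, both exploiting convexity and the availability of the fixed full-support measure $\mu_V$.

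$\textbf{Density.}$ First I would show $\{\mu\in V:S_\mu=X\}$ is dense in $V$. Given any $\mu\in V$ and any $\varepsilon>0$, consider the convex combination $\mu_\theta:=(1-\theta)\mu+\theta\mu_V$ for small $\theta\in(0,1)$; by convexity $\mu_\theta\in V$. Since $S_{\mu_V}=X$, for any nonempty open set $U\subseteq X$ we have $\mu_\theta(U)\geq\theta\mu_V(U)>0$, so $S_{\mu_\theta}=X$. As $\theta\to 0$, $\mu_\theta\to\mu$ in the weak* topology, so $\rho(\mu_\theta,\mu)<\varepsilon$ for $\theta$ small enough. Hence full-support measures in $V$ approximate every point of $V$.

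$\textbf{The $G_\delta$ property.}$ Next I would exhibit $\{\mu\in V:S_\mu=X\}$ as a countable intersection of sets that are open in $V$. Fix a countable base $\{U_j\}_{j\in\mathbb{N}}$ of nonempty open subsets of $X$ (available since $X$ is a compact metric space, hence second countable). The key observation is that $S_\mu=X$ if and only if $\mu(U_j)>0$ for every $j$. Indeed, if $\mu(U)=0$ for some nonempty open $U$ then $U\cap S_\mu=\emptyset$ so $S_\mu\neq X$; conversely if every basic open set has positive measure then every nonempty open set (being a union of basic ones) has positive measure, so $S_\mu=X$. Therefore
\begin{equation*}
	\{\mu\in V:S_\mu=X\}=\bigcap_{j\in\mathbb{N}}\{\mu\in V:\mu(U_j)>0\}.
\end{equation*}
It remains to check each set $G_j:=\{\mu\in V:\mu(U_j)>0\}$ is open in $V$ (equivalently, is the intersection of $V$ with an open subset of $\mathcal{M}(X)$), together with density; density of $G_j$ follows from the density argument above since $\{\mu:S_\mu=X\}\subseteq G_j$. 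For openness I would use the standard portmanteau-type fact that $\mu\mapsto\mu(U)$ is lower semicontinuous on $\mathcal{M}(X)$ for $U$ open: writing $U_j$ as an increasing union of compact sets $K_n\subseteq U_j$ and choosing continuous $g_n$ with $\mathbf{1}_{K_n}\leq g_n\leq\mathbf{1}_{U_j}$, we get $\mu(U_j)=\sup_n\int g_n\,d\mu$, a supremum of weak*-continuous functions, hence lower semicontinuous; so $\{\mu\in\mathcal{M}(X):\mu(U_j)>0\}$ is open in $\mathcal{M}(X)$ and $G_j$ is its trace on $V$, hence open in $V$.

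$\textbf{Conclusion and the main subtlety.}$ Putting these together, $\{\mu\in V:S_\mu=X\}$ is a countable intersection of open dense subsets of $V$. If $V$ itself is known to be a Baire space (e.g., a $G_\delta$ subset of the Polish space $\mathcal{M}(X)$, or more simply if $V$ is closed hence compact metrizable), this already gives it is a dense $G_\delta$; but strictly the statement only asserts it is a dense $G_\delta$ subset of $V$, which is exactly what we have shown, with no Baire hypothesis on $V$ needed. I do not expect a genuine obstacle here; the only point requiring care is the passage from "$\mu$ gives positive mass to every basic open set" to "$S_\mu=X$", and the semicontinuity of $\mu\mapsto\mu(U_j)$, both of which are classical and were essentially used already in Corollary \ref{Corollary-zero-metric-entropy}(3) via \cite[Proposition 6.5]{T16}; one could alternatively just cite that proposition and restrict the resulting full-support measures to the convex set $V$ after the convex-combination trick. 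I would keep the self-contained argument above for clarity.
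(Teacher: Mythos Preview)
Your proof is correct and follows essentially the same approach as the paper: the density argument via the convex combination $(1-\theta)\mu+\theta\mu_V$ is identical, and for the $G_\delta$ property the paper simply cites \cite[Proposition 21.11]{DGS} (that $\{\mu\in\mathcal{M}_f(X):S_\mu=X\}$ is either empty or a dense $G_\delta$ in $\mathcal{M}_f(X)$) and then restricts to $V$, whereas you unpack that proposition's proof explicitly via the countable base and lower semicontinuity of $\mu\mapsto\mu(U)$.
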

\begin{proof}
	$\{\mu\in \mathcal{M}_f(X):S_\mu=X\}$ is either empty or a dense $G_\delta$ subset of  $\mathcal{M}_f(X)$ \cite[Proposition 21.11]{DGS}. So if there is  $\mu_V\in V$ with $S_{\mu_V}=X,$ then $\{\mu\in \mathcal{M}_f(X):S_\mu=X\}$ is a dense $G_\delta$ subset of $\mathcal{M}_f(X).$ Thus $\{\mu\in V:S_\mu=X\}$ is a $G_\delta$ subset of  $V.$ For any $\nu\in V$ and $\theta\in(0,1),$ we have $\nu_\theta=\theta\nu+(1-\theta)\mu_V\in V$ and $S_{\nu_\theta}=X.$ So $\{\mu\in V:S_\mu=X\}$ is dense in  $V,$ and thus is a dense $G_\delta$ subset of $V.$ 
\end{proof}

\begin{Lem}\label{lemma-A}
	Suppose $(X,  f)$ is a dynamical system. Assume that $C_1$ is a  convex subset of $\mathcal{M}_f(X)$, $C_2$ is a dense $G_\delta$ subset of $C_1$.   Let $d \in \mathbb{N}$ and $(A, B) \in AA(f,X)^{d} \times AA(f,X)^{d}$ satisfying \eqref{equation-O} and $\mathrm{Int}(\mathcal{P}_{A,B}(C_1))\neq\emptyset$. Let $\alpha$ be a continuous function on $C_1$ satisfying \eqref{equation-W} and \eqref{equation-AF}. Then for any $a\in \mathrm{Int}(\mathcal{P}_{A,B}(C))$ and $\alpha_{A,B}(a,C_1)\leq P< H_{A,B}(f,\alpha,a,C_1),$ the following properties hold:
	\begin{description}
		\item[(1)] If $\{\mu\in \mathcal{P}_{A,B}^{-1}(a)\cap C_2:P(f,\alpha,\mu)\geq P\}$ is dense in $\{\mu\in \mathcal{P}_{A,B}^{-1}(a)\cap C_1:P(f,\alpha,\mu)\geq P\},$ then  $\{\mu\in \mathcal{P}_{A,B}^{-1}(a)\cap C_2:P(f,\alpha,\mu)\geq P\}$ is a dense $G_\delta$ subset of $\{\mu\in \mathcal{P}_{A,B}^{-1}(a)\cap C_1:P(f,\alpha,\mu)\geq P\}.$
		\item[(2)] If there is $\mu_{\mathrm{full}}\in C_1$ such that $S_{\mu_{\mathrm{full}}}=X$, then $\{\mu\in \mathcal{P}_{A,B}^{-1}(a)\cap C_1:P(f,\alpha,\mu)\geq P,\ S_\mu=X\}$ is a dense  $G_\delta$ subset of $\{\mu\in \mathcal{P}_{A,B}^{-1}(a)\cap C_1:P(f,\alpha,\mu)\geq P\}.$   
		\item[(3)] If $\{\mu\in C_1:h_{\mu}(f)=0\}$ is dense in $C_1,$ then $\{\mu\in \mathcal{P}_{A,B}^{-1}(a)\cap C_1:P(f,\alpha,\mu)= P\}$ is dense in $\{\mu\in \mathcal{P}_{A,B}^{-1}(a)\cap C_1:P(f,\alpha,\mu)\geq P\}.$  If further $\mu\to h_\mu(f)$ is upper semi-continuous on $C_1$, then $\{\mu\in \mathcal{P}_{A,B}^{-1}(a)\cap C_1:P(f,\alpha,\mu)= P\}$ is a dense  $G_\delta$ subset of $\{\mu\in \mathcal{P}_{A,B}^{-1}(a)\cap C_1:P(f,\alpha,\mu)\geq P\}.$  
	\end{description}
\end{Lem}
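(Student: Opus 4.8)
\textbf{Proof proposal for Lemma \ref{lemma-A}.}

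The plan is to reduce all three assertions to a single structural picture: the ``target'' set
$$
S_{a,P}:=\{\mu\in \mathcal{P}_{A,B}^{-1}(a)\cap C_1:P(f,\alpha,\mu)\geq P\}
$$
is a convex set (by \eqref{equation-W}--\eqref{equation-AF} together with convexity of $C_1$ and linearity of $\mathcal{P}_{A,B}$ along segments, via Lemma \ref{lemma-E}), and each of the three distinguished subsets we want to locate inside it is itself convex and contains an ``interior-type'' witness. Once this is set up, items (1) and (2) follow from the abstract $G_\delta$ density mechanism already used in the paper (Lemma \ref{lemma-GG} and Lemma \ref{lemma-B}), and item (3) follows from a perturbation-toward-zero-entropy argument combined with upper semi-continuity.

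First I would record the convexity of $S_{a,P}$: given $\mu_1,\mu_2\in S_{a,P}$ and $\theta\in[0,1]$, convexity of $C_1$ gives $\theta\mu_1+(1-\theta)\mu_2\in C_1$; the map $\theta\mapsto \mathcal{P}_{A,B}(\theta\mu_1+(1-\theta)\mu_2)$ is constantly $a$ because each coordinate is of the form $\frac{\theta p^1+(1-\theta)p^2}{\theta q^1+(1-\theta)q^2}$ with $p^i/q^i=a_i$, so Lemma \ref{lemma-E}(1) applies; and $\beta(\theta)=P(f,\alpha,\theta\mu_1+(1-\theta)\mu_2)$ is either constant (if $P(f,\alpha,\mu_1)=P(f,\alpha,\mu_2)$, by \eqref{equation-AF}) or strictly monotone (by \eqref{equation-W}), hence in all cases $\beta(\theta)\geq \min\{P(f,\alpha,\mu_1),P(f,\alpha,\mu_2)\}\geq P$. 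Thus $S_{a,P}$ is convex. For \textbf{item (1)}: the hypothesis says $\mathcal{P}_{A,B}^{-1}(a)\cap C_2\cap\{P(f,\alpha,\cdot)\geq P\}$ is dense in $S_{a,P}$; since $C_2$ is a $G_\delta$ in $C_1$, this set is a $G_\delta$ in $S_{a,P}$ (intersect with the relatively closed set $S_{a,P}$; note $\{P(f,\alpha,\cdot)\geq P\}$ is closed and $\mathcal{P}_{A,B}^{-1}(a)$ is closed, so these impose no obstruction to the $G_\delta$ property), hence a dense $G_\delta$. For \textbf{item (2)}: apply Lemma \ref{lemma-B} with $V=S_{a,P}$; the only thing to check is that $V$ contains a full-support measure. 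Here I would use $H_{A,B}(f,\alpha,a,C_1)>P$ to pick $\mu_*\in \mathcal{P}_{A,B}^{-1}(a)\cap C_1$ with $P(f,\alpha,\mu_*)>P$ (strict), and then for small $\theta$ set $\mu_\theta=(1-\theta)\mu_*+\theta\mu_{\mathrm{full}}$ with $\mu_{\mathrm{full}}$ as in the hypothesis; by convexity $\mu_\theta\in C_1$, by Lemma \ref{lemma-E}(1) $\mathcal{P}_{A,B}(\mu_\theta)=a$ (both coordinates unchanged since $\mathcal{P}_{A,B}(\mu_{\mathrm{full}})$ need not equal $a$ — wait, that is the subtlety) — so more carefully I would first use $\mathrm{Int}(\mathcal{P}_{A,B}(C_1))\ni a$ and Lemma \ref{lemma-D}/Corollary \ref{corollary-A} to produce a full-support measure with $\mathcal{P}_{A,B}=a$ and $P(f,\alpha,\cdot)$ close to $H_{A,B}(f,\alpha,a,C_1)$ (convex-combine a full-support measure into the measures $\{\mu_\xi\}$ produced there; full support is preserved under convex combination with any measure), which lies in $S_{a,P}$ since its value exceeds $P$. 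Then Lemma \ref{lemma-B} gives the dense $G_\delta$ conclusion.

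For \textbf{item (3)}, I would first show density of $\{\mu\in \mathcal{P}_{A,B}^{-1}(a)\cap C_1:P(f,\alpha,\mu)=P\}$ in $S_{a,P}$. Fix $\mu\in S_{a,P}$ and $\zeta>0$. If $P(f,\alpha,\mu)=P$ there is nothing to do, so assume $P(f,\alpha,\mu)>P$. Using that $\{\nu\in C_1:h_\nu(f)=0\}$ is dense in $C_1$, and using Lemma \ref{lemma-D} as in the proof of Theorem \ref{thm-Almost-Additive}, I would build $2^d$ measures $\nu_\xi$ near $\mu$ with $\mathcal{P}_{A,B}(\nu_\xi)\in a^\xi$ and $h_{\nu_\xi}(f)$ small (hence $P(f,\alpha,\nu_\xi)$ near $\alpha(\mu)\le \alpha_{A,B}(a,C_1)\le P$), then convex-combine via Corollary \ref{corollary-A} to land back on $\mathcal{P}_{A,B}=a$ with pressure $\le P$; on the segment joining this low-pressure measure to $\mu$, the function $\beta(\theta)$ is continuous and monotone (by \eqref{equation-W}), hits the value $P$ at some interior $\theta_0$, and the resulting measure stays in $C_1$ and within $\zeta$ of $\mu$ once the $\nu_\xi$ were chosen close enough — this is the one place requiring a little care, balancing ``$h$ small'' against ``$\rho(\nu_\xi,\mu)<\zeta$''. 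The \emph{equality} level set $\{P(f,\alpha,\cdot)=P\}\cap \mathcal{P}_{A,B}^{-1}(a)\cap C_1$ is, when $\mu\to h_\mu(f)$ is upper semi-continuous, the difference of the closed set $S_{a,P}$ and the relatively open set $\{P(f,\alpha,\cdot)>P\}$ inside it — hence a $G_\delta$; combined with the density just shown it is a dense $G_\delta$ in $S_{a,P}$. The step I expect to be the main obstacle is exactly this last perturbation in item (3): one must simultaneously (a) stay inside the convex body $C_1$, (b) return to the fibre $\mathcal{P}_{A,B}^{-1}(a)$ (which is where Corollary \ref{corollary-A} and the $2^d$-fold convex-combination trick are needed rather than a single segment), and (c) control both the weak$^*$ distance and the pressure drop; organizing these three constraints is the technical heart, though all the tools are already available in Sections \ref{Almost Additive} and the earlier lemmas of this section.
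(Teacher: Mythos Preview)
Your proposal follows essentially the same route as the paper's proof, and is correct in outline, but two points deserve comment.

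First, in items (2) and (3) you reconstruct by hand, via the $2^d$-fold trick of Lemma~\ref{lemma-D}/Corollary~\ref{corollary-A}, a measure in the fibre $\mathcal P_{A,B}^{-1}(a)$ with the desired extra property (full support, resp.\ zero entropy). The paper does this in one stroke by invoking Lemma~\ref{lemma-GG}: both $\{\mu\in C_1:S_\mu=X\}$ and $\{\mu\in C_1:h_\mu(f)=0\}$ are \emph{convex} dense subsets of $C_1$, so Lemma~\ref{lemma-GG} immediately gives such a measure in the fibre, arbitrarily close to any prescribed $\mu_0\in\mathcal P_{A,B}^{-1}(a)\cap C_1$. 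This eliminates your ``balancing'' worry in item~(3) entirely: you pick $\nu'$ in the fibre with $h_{\nu'}(f)=0$ and $\rho(\nu',\mu_0)<\zeta$ in a single step, then move along the segment $[\nu',\mu_0]$ (which stays in the fibre by Lemma~\ref{lemma-E}(1)) until $P(f,\alpha,\cdot)=P$; since $P(f,\alpha,\nu')=\alpha(\nu')\le\alpha_{A,B}(a,C_1)\le P\le P(f,\alpha,\mu_0)$, the value $P$ is attained by \eqref{equation-W}/\eqref{equation-AF}. For item~(2) the paper likewise first produces a full-support $\omega$ in the fibre via Lemma~\ref{lemma-GG}, then convex-combines with a high-pressure $\nu\in\mathcal P_{A,B}^{-1}(a)\cap C_1$ (using $\theta$ close to $1$) to land in $S_{a,P}$ with full support, and finishes with Lemma~\ref{lemma-B}.

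Second, your $G_\delta$ argument in item~(3) contains a small slip: for an upper semi-continuous function the sets $\{P(f,\alpha,\cdot)<c\}$ are open, not $\{P(f,\alpha,\cdot)>c\}$, so ``$S_{a,P}$ minus the relatively open set $\{P(f,\alpha,\cdot)>P\}$'' is not a valid description. The correct (and equally short) argument is
\[
\{\mu\in S_{a,P}:P(f,\alpha,\mu)=P\}=\bigcap_{n\ge1}\{\mu\in S_{a,P}:P(f,\alpha,\mu)<P+\tfrac1n\},
\]
each term being relatively open in $S_{a,P}$ by upper semi-continuity of $h$ and continuity of $\alpha$.
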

\begin{proof}
	(1) Since $C_2$ is a $G_\delta$ subset of $C_1,$ then  $\{\mu\in \mathcal{P}_{A,B}^{-1}(a)\cap C_2:P(f,\alpha,\mu)\geq P\}$ is a $G_\delta$ subset of $\{\mu\in \mathcal{P}_{A,B}^{-1}(a)\cap C_1:P(f,\alpha,\mu)\geq P\}.$ So we have item(1).

	(2) By Lemma \ref{lemma-B}, $\{\mu\in C_1:S_\mu=X\}$ is a dense $G_\delta$ subset of $C_1.$  Note that $\{\mu\in C_1:S_\mu=X\}$ is convex.
	So by Lemma \ref{lemma-GG} there is $\omega\in \mathcal{P}_{A,B}^{-1}(a)\cap C_1$ such that $S_\omega=X.$
	Since $P< H_{A,B}(f,\alpha,a,C_1),$ there is $\nu\in \mathcal{P}_{A,B}^{-1}(a)\cap C_1$ such that $P(f,\alpha,\nu)>P.$
	By \eqref{equation-W} we can choose $\theta\in(0,1)$ close to $1$ such that $\tilde{\mu}=\theta\nu+(1-\theta)\omega$ satisfies $P(f,\alpha,\tilde{\mu})>P.$ 
	Then $\tilde{\mu}\in \{\mu\in \mathcal{P}_{A,B}^{-1}(a)\cap C_1:P(f,\alpha,\mu)\geq P,\ S_\mu=X\}.$ Note that $\{\mu\in \mathcal{P}_{A,B}^{-1}(a)\cap C_1:P(f,\alpha,\mu)\geq P\}$ is a convex set by \eqref{equation-W}, \eqref{equation-AF} and Lemma  \ref{lemma-E}(1). So by Lemma \ref{lemma-B} we complete the proof of item(2).
	
	(3) 
	Fix $\mu_0\in \mathcal{P}_{A,B}^{-1}(a)\cap C_1$ with $P(f,\alpha,\mu_0)\geq P$  and $\zeta>0.$ 
	By Lemma \ref{lemma-GG}, there is $\nu'\in  \mathcal{P}_{A,B}^{-1}(a)\cap C_1$ such that $h_{\nu'}(f)=0$ and $\rho(\nu',\mu_0)<\zeta.$  
	Now by \eqref{equation-W} we choose $\theta\in[0,1]$ such that  $\nu=\theta\mu_0+(1-\theta)\nu'\in C_1$ satisfies $P(f,\alpha,\nu)=P.$ Then by Lemma \ref{lemma-E}(1)
	$
		\mathcal{P}_{A,B}(\nu)=a \text{ and } \rho(\nu,\mu_0)<\zeta.
    $
	So $\{\mu\in \mathcal{P}_{A,B}^{-1}(a)\cap C_1:P(f,\alpha,\mu)= P\}$ is dense in $\{\mu\in \mathcal{P}_{A,B}^{-1}(a)\cap C_1:P(f,\alpha,\mu)\geq P\}.$  If further $\mu\to h_\mu(f)$ is upper semi-continuous on $C_1$, $\{\mu\in C_1:P(f,\alpha,\mu)\in[P,P+\frac{1}{n})\}$ is open in $\{\mu\in C_1:P(f,\alpha,\mu)\geq P\}$ for any $n\in\mathbb{N^{+}}.$  Then 
	$\{\mu\in \mathcal{P}_{A,B}^{-1}(a)\cap C_1:P(f,\alpha,\mu)= P\}=\cap _{n\geq 1}\{\mu\in C_1:P(f,\alpha,\mu)\in[P,P+\frac{1}{n})\}$ is a  $G_\delta$ subset of $\{\mu\in \mathcal{P}_{A,B}^{-1}(a)\cap C_1:P(f,\alpha,\mu)\geq P\},$ and thus we complete the proof of item(3).  
\end{proof}

Now we show that the result of Theorem \ref{thm-Almost-Additive}  is the keystone of the proof of Theorem \ref{thm-Almost-Additive2}.
\begin{Lem}\label{lemma-F}
	Suppose $(X,  f)$ is a dynamical system, $C_1$ is a convex subset of $\mathcal{M}_f(X)$, $C_2$ is a dense $G_\delta$ subset of $C_1$ and $(X,f)$ satisfies the locally conditional variational principle with respect to $(C_1,C_2)$.   Let $d \in \mathbb{N}$ and $(A, B) \in AA(f,X)^{d} \times AA(f,X)^{d}$ satisfying \eqref{equation-O} and $\mathrm{Int}(\mathcal{P}_{A,B}(C_1))\neq\emptyset$. Let $\alpha$ be a continuous function on $C_1$ satisfying \eqref{equation-W} and \eqref{equation-AF}. 
	Assume that for any $a\in \mathrm{Int}(\mathcal{P}_{A,B}(C_1)),$ any $\mu\in \mathcal{P}_{A,B}^{-1}(a)\cap C_1$ and any $\eta,  \zeta>0$, there is $\nu\in \mathcal{P}_{A,B}^{-1}(a)\cap C_2$ such that $\rho(\nu,\mu)<\zeta$ and $|P(f,\alpha,\nu)-P(f,\alpha,\mu)|<\eta.$
	If $\mu\to h_\mu(f)$ is upper semi-continuous on $C_1$ and $\{\mu\in C_1:h_{\mu}(f)=0\}$ is dense in $C_1,$   then items (II)(III)(IV) of Theorem  \ref{thm-Almost-Additive2} hold.
\end{Lem}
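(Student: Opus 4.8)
The plan is to bootstrap from the hypothesis (which is precisely the conclusion of Theorem \ref{thm-Almost-Additive}) to the three statements (II)(III)(IV) via the convex-combination technology developed in Lemmas \ref{lemma-A}, \ref{lemma-B}, \ref{lemma-GG} and \ref{lemma-D}. The key observation is that the hypothesis gives an approximation of \emph{any} target measure $\mu\in C_1$ by a measure $\nu\in C_2$ with nearly the same pressure $P(f,\alpha,\cdot)$; this handles the case $P=P(f,\alpha,\mu)$ in (II). For general $P$ between $\alpha_{A,B}(a,C_1)$ and $P(f,\alpha,\mu)$, I would first interpolate: take a zero-entropy measure $\nu'\in \mathcal P_{A,B}^{-1}(a)\cap C_1$ close to $\mu$ (available by Lemma \ref{lemma-GG}(1) applied to the convex dense set $\{\mu\in C_1:h_\mu(f)=0\}$, which lies in $C_1$ and is dense in $C_1$ by hypothesis). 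By \eqref{equation-W} the map $\theta\mapsto P(f,\alpha,\theta\mu+(1-\theta)\nu')$ is strictly monotonic, hence by the intermediate value theorem I can pick $\theta$ so that the convex combination $\mu_P:=\theta\mu+(1-\theta)\nu'$ satisfies $P(f,\alpha,\mu_P)=P$ (the endpoints are $P(f,\alpha,\mu)\ge P$ and $P(f,\alpha,\nu')=\alpha(\nu')\le\alpha_{A,B}(a,C_1)\le P$, using that $h_{\nu'}(f)=0$; here I also need to check $\mathcal P_{A,B}(\mu_P)=a$, which follows from Lemma \ref{lemma-E}(1) since $\mathcal P_{A,B}(\mu)=\mathcal P_{A,B}(\nu')=a$). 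Moreover $\rho(\mu_P,\mu)<\zeta$ if $\nu'$ was chosen close enough. Now apply the hypothesis to $\mu_P$ to obtain $\nu\in\mathcal P_{A,B}^{-1}(a)\cap C_2$ with $\rho(\nu,\mu_P)<\zeta$ and $|P(f,\alpha,\nu)-P|<\eta$; combining with $\rho(\mu_P,\mu)<\zeta$ gives (II) after adjusting constants.

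For (III), the strategy is to promote the density statements of Lemma \ref{lemma-A} to dense $G_\delta$ statements. First, the density of $\{\mu\in\mathcal P_{A,B}^{-1}(a)\cap C_2:P(f,\alpha,\mu)\ge P\}$ in $\{\mu\in\mathcal P_{A,B}^{-1}(a)\cap C_1:P(f,\alpha,\mu)\ge P\}$ follows from (II): given any target with pressure $\ge P$, approximate it by an element of $C_2$ with pressure within $\eta$; choosing $\eta$ small we can arrange, by first moving the target slightly up via a convex combination with a high-pressure measure (available since $P<H_{A,B}(f,\alpha,a,C_1)$) à la Lemma \ref{lemma-A}(2), that the $C_2$-approximant still has pressure $\ge P$. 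Then Lemma \ref{lemma-A}(1) upgrades this to dense $G_\delta$. The second assertion of (III), density of $\{\mu:P(f,\alpha,\mu)=P\}$, is exactly Lemma \ref{lemma-A}(3) (using upper semicontinuity of entropy to get the $G_\delta$ property). The full-support refinement is Lemma \ref{lemma-A}(2): under the existence of $\mu_{\mathrm{full}}\in C_1$ with $S_{\mu_{\mathrm{full}}}=X$, one first finds (via Lemma \ref{lemma-GG}(1) applied to the convex set $\{\mu\in C_1:S_\mu=X\}$, which is dense by Lemma \ref{lemma-B}) a full-support measure $\omega\in\mathcal P_{A,B}^{-1}(a)\cap C_1$, then takes convex combinations with a high-pressure measure using \eqref{equation-W} and the convexity of $\{P(f,\alpha,\cdot)\ge P\}$ (from \eqref{equation-W}, \eqref{equation-AF}, Lemma \ref{lemma-E}(1)), and invokes Lemma \ref{lemma-B} one more time to get the dense $G_\delta$ conclusion.

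For (IV), I would argue by double inclusion. The inclusion $\supseteq$ is trivial since $C_2\subseteq C_1$. For $\subseteq$: given $\mu\in C_1$ with $a=\mathcal P_{A,B}(\mu)\in\mathrm{Int}(\mathcal P_{A,B}(C_1))$ and $\alpha_{A,B}(a,C_1)\le P(f,\alpha,\mu)<H_{A,B}(f,\alpha,a,C_1)$, set $P=P(f,\alpha,\mu)$; the hypothesis of (IV) is precisely that $\{\nu\in\mathcal P_{A,B}^{-1}(a)\cap C_2:P(f,\alpha,\nu)=P\}$ is dense in $\{\nu\in\mathcal P_{A,B}^{-1}(a)\cap C_1:P(f,\alpha,\nu)\ge P\}$, which contains $\mu$, so there exists $\nu\in C_2$ with $\mathcal P_{A,B}(\nu)=a=\mathcal P_{A,B}(\mu)$ and $P(f,\alpha,\nu)=P=P(f,\alpha,\mu)$, giving $(\mathcal P_{A,B}(\nu),P(f,\alpha,\nu))=(\mathcal P_{A,B}(\mu),P(f,\alpha,\mu))$, and $\nu$ satisfies the required constraints. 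I expect the main obstacle to be the careful constant-juggling in (II) and (III): ensuring that the preliminary convex-combination adjustments (to hit the value $P$ exactly, or to land strictly above $P$ before invoking the $C_2$-approximation from the hypothesis) can be made with displacement smaller than a prescribed $\zeta$ while simultaneously keeping $\mathcal P_{A,B}$ pinned at $a$ — this is where \eqref{equation-W}, \eqref{equation-AF}, Lemma \ref{lemma-E} and the continuity of $\mathcal P_{A,B}$ all have to be orchestrated, but each individual step is routine.
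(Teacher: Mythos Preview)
Your proposal is correct and follows essentially the same route as the paper: for (II) you interpolate via a zero-entropy measure in $\mathcal P_{A,B}^{-1}(a)\cap C_1$ to hit pressure $P$ exactly and then invoke the hypothesis (the paper packages this step as Lemma \ref{lemma-A}(3)); for (III) you do the same case split $P(f,\alpha,\mu_0)>P$ vs.\ $=P$ and then upgrade via Lemma \ref{lemma-A}(1)(2)(3); and for (IV) you use the extra density hypothesis directly to produce the required $\nu\in C_2$, just as the paper does. The only cosmetic difference is that you unroll Lemma \ref{lemma-A}(3) by hand in your treatment of (II).
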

\begin{proof}
	(1) Fix $a_0\in \mathrm{Int}(\mathcal{P}_{A,B}(C_1)),$  $\mu_0\in \mathcal{P}_{A,B}^{-1}(a_0)$ with $P(f,\alpha,\mu_0)\geq \alpha_{A,B}(a_0,C_1),$   $\alpha_{A,B}(a_0,C_1)\leq P\leq P(f,\alpha,\mu_0)$
	and $\eta,  \zeta>0.$ By Lemma \ref{lemma-A}(3), there exists $\nu'\in \mathcal{P}_{A,B}^{-1}(a_0)\cap C_1$ such that $P(f,\alpha,\nu')=P$ and $\rho(\nu',\mu_0)<\frac{\zeta}{2}.$
	For the $a_0\in \mathrm{Int}(\mathcal{P}_{A,B}(C_1)),$ $\nu'\in \mathcal{P}_{A,B}^{-1}(a_0)\cap C_1$ and $\eta,  \frac{\zeta}{2}>0,$ there is $\nu\in \mathcal{P}_{A,B}^{-1}(a_0)\cap C_2$ such that $\rho(\nu,\nu')<\frac{\zeta}{2}$ and $|P(f,\alpha,\nu)-P(f,\alpha,\nu')|<\eta.$ Then we complete the proof of item(II).
	
	(2) Fix $a_0\in \mathrm{Int}(\mathcal{P}_{A,B}(C_1))$ and  $\alpha_{A,B}(a_0,C_1)\leq P< H_{A,B}(f,\alpha,a_0,C_1),$ First we show that
	\begin{equation}\label{equation-K}
		\{\mu\in \mathcal{P}_{A,B}^{-1}(a_0)\cap C_2:P(f,\alpha,\mu)\geq P\} \text{ is dense in }\{\mu\in \mathcal{P}_{A,B}^{-1}(a_0)\cap C_1:P(f,\alpha,\mu)\geq P\}.
	\end{equation} 
	Let $\mu_0\in \mathcal{P}_{A,B}^{-1}(a_0)\cap C_1$ be an invariant measure with $P(f,\alpha,\mu_0) \geq P$ and $\zeta>0$. If $P(f,\alpha,\mu_0)>P$, then there is  $\eta>0$ such that $P<P+\eta<P(f,\alpha,\mu_0).$ For the $a_0\in \mathrm{Int}(\mathcal{P}_{A,B}(C_1)),$ $\mu_0\in \mathcal{P}_{A,B}^{-1}(a_0)\cap C_1$ and $\eta,  \zeta>0,$ there exists  $\nu\in \mathcal{P}_{A,B}^{-1}(a_0)\cap C_2$ such that $\rho(\nu,\mu_0)<\zeta$ and $|P(f,\alpha,\nu)-P(f,\alpha,\mu_0)|<\eta.$ 
	If $P(f,\alpha,\mu_0)=P$, then we can pick a $\mu'\in \mathcal{P}_{A,B}^{-1}(a_0)\cap C_1$ such that $P<P(f,\alpha,\mu') \leq  H_{A,B}(f,\alpha,a_0,C_1)$, and next pick a sufficiently small number $\theta \in(0,1)$ such that $\rho\left(\mu_0, \mu''\right)<\zeta / 2$, where $\mu''=(1-\theta) \mu_0+\theta\mu'.$ By \eqref{equation-W} we have $P(f,\alpha,\mu'')>P.$ By the same argument, there exists $\nu\in \mathcal{P}_{A,B}^{-1}(a_0)\cap C_2$ such that $\rho(\nu,\mu'')<\zeta/2$ and $P(f,\alpha,\nu)> P.$  So $\rho(\nu,\mu_0)<\zeta.$
	
	By \eqref{equation-K} and Lemma \ref{lemma-A}(1),
	\begin{equation*}
		\{\mu\in \mathcal{P}_{A,B}^{-1}(a_0)\cap C_2:P(f,\alpha,\mu)\geq P\} \text{ is  a dense $G_\delta$ subset of }\{\mu\in \mathcal{P}_{A,B}^{-1}(a_0)\cap C_1:P(f,\alpha,\mu)\geq P\}.
	\end{equation*} 
    So by Lemma \ref{lemma-A}(3) and (2), we complete the proof of item(III). 
	
	(3) Fix $a_0\in \mathrm{Int}(\mathcal{P}_{A,B}(C_1))$ and $\mu_0\in \mathcal{P}_{A,B}^{-1}(a_0)\cap C_1$ with $\alpha_{A,B}(a_0,C_1)\leq P(f,\alpha,\mu_0)< H_{A,B}(f,\alpha,a_0,C_1).$ 
	Then by item(2) $\{\mu\in \mathcal{P}_{A,B}^{-1}(a_0)\cap C_2:P(f,\alpha,\mu)=P(f,\alpha,\mu_0)\}$ is a dense $G_\delta$ subset of $\{\mu\in \mathcal{P}_{A,B}^{-1}(a_0)\cap C_1:P(f,\alpha,\mu)\geq P(f,\alpha,\mu_0)\}.$ In particular, there is $\mu_a\in  \mathcal{P}_{A,B}^{-1}(a)\cap C_2$ such that $P(f,\alpha,\mu_a)=P(f,\alpha,\mu_0).$  So we complete the proof of item(IV).
\end{proof}

\noindent{\bf Proof of Theorem \ref{thm-Almost-Additive2}:} Note that the conditions of Theorem \ref{thm-Almost-Additive} is contained in Theorem \ref{thm-Almost-Additive2}. So we obtain Theorem \ref{thm-Almost-Additive2} by Lemma \ref{lemma-F}.\qed

\section{Proofs of Theorem \ref{thm-continuous}-\ref{maintheorem-robust}}\label{section-thm}
In this section, we use ’multi-horseshoe’ dense property and  results of asymptotically additive sequences   obtained in Section  \ref{section-entropy-dense}-\ref{section-almost2} to give a more general result than Theorem \ref{thm-continuous} and \ref{thm-continuous-2}. Before that, we need some results on uniqueness of equilibrium measures and conditional variational principles.

\subsection{Uniqueness of equilibrium measures}\label{subsection-equilbrium}
We first recall from \cite{Barreira1996,Barreira2006,Mummert2006} the notion of nonadditive topological pressure. Consider a dynamical sytem $(X,f)$. Let $\mathcal{U}$ be a finite open cover of $X$. Given $n \in \mathbb{N}$,  denote by $\mathcal{W}_{n}(\mathcal{U})$ the collection of $n$-tuples $U=\left(U_{1} \cdots U_{n}\right)$ with $U_{1}, \ldots, U_{n} \in \mathcal{U}$.  For each $U \in \mathcal{W}_{n}(\mathcal{U})$ we write $m(U)=n$, and define the open set
$$
X(U)=\left\{x \in X: f^{k-1} x \in U_{k} \text { for } k=1, \ldots, m(U)\right\} .
$$
We say that a collection $\Gamma \subset \bigcup_{n \in \mathbb{N}} \mathcal{W}_{n}(\mathcal{U})$ covers the set $X$ if $\bigcup_{U \in \Gamma} X(U) \supset X$. Now let $\Phi=\left(\varphi_{n}\right)_{n}$ be a sequence of continuous functions $\varphi_{n}: X \rightarrow \mathbb{R}$. We define the number
$$
\gamma_{n}(\Phi, \mathcal{U})=\sup \left\{\left|\varphi_{n}(x)-\varphi_{n}(y)\right|: x, y \in X(U) \text { with } U \in \mathcal{W}_{n}(\mathcal{U})\right\}
$$
We assume that
$
\lim _{\operatorname{diam} \mathcal{U} \rightarrow 0} \limsup _{n \rightarrow \infty} \frac{1}{n} \gamma_{n}(\Phi, \mathcal{U})=0.
$
For each $n$-tuple $U \in \mathcal{W}_{n}(\mathcal{U})$ we write $\varphi(U)=\sup _{X(U)} \varphi_{n}$ when $X(U) \neq \varnothing$, and $\varphi(U)=-\infty$ otherwise. We also define
\begin{equation}\label{equation-AD}
	M(\alpha, \Phi, \mathcal{U})=\lim _{n \rightarrow \infty} \inf _{\Gamma} \sum_{U \in \Gamma} \exp (-\alpha m(U)+\varphi(U))
\end{equation}
where the infimum is taken over all collections $\Gamma \subset \bigcup_{k \geq n} \mathcal{W}_{k}(\mathcal{U})$ covering $X$. One can show that the quantity in (\ref{equation-AD}) jumps from $+\infty$ to 0 at a unique value of $\alpha$, and thus we can define
$
P(\Phi, \mathcal{U})=\inf \{\alpha: M(\alpha, \Phi, \mathcal{U})=0\}.
$
Moreover, the limit
$$
P(\Phi)=\lim _{\operatorname{diam} U \rightarrow 0} P(\Phi, \mathcal{U})
$$
exists (see \cite{Barreira1996} for details). The number $P(\Phi)$ is called {\it the nonadditive topological pressure of the sequence of functions} $\Phi$ (with respect to $f$ on $X$).  One can easily verify that if $\Phi$ is the (additive) sequence of functions $\varphi_n=\sum_{k=0}^{n-1} \varphi \circ f^k$, for a given continuous function $\varphi: X \rightarrow \mathbb{R}$, then $P(\Phi)$ coincides with the classical topological pressure of the function $\varphi.$

A sequence of functions $\Phi=\left(\varphi_{n}\right)_{n\in\mathbb{N}}$ is said to be {\it almost additive} (with respect to  $(X,f)$ ) if there is a constant $C>0$ such that for every $n, m \in \mathbb{N}$ we have
$$
-C+\varphi_{n}+\varphi_{m} \circ f^{n} \leqslant \varphi_{n+m} \leqslant C+\varphi_{n}+\varphi_{m} \circ f^{n} .
$$
We denote by $A(f,X)$ the family of almost additive sequences of continuous functions.  It was showed in \cite{FH2010} that almost additive sequences are in fact asymptotically additive.
A measure $\mu\in \mathcal{M}_f(X)$ is said to be an {\it equilibrium measure} associated with $\Phi$ if $P(\Phi)=h_\mu(f)+\lim\limits_{n \rightarrow \infty} \frac{1}{n} \int \varphi_{n} d \mu.$
The uniqueness of equilibrium measures was established  if $\Phi$ has bounded variation, where $\Phi$ is said to have {\it bounded variation} if there exists $\varepsilon>0$ for which
$
\sup _{n \in \mathbb{N}} \gamma_{n}(\Phi, \varepsilon)<\infty
$
with $$\gamma_{n}(\Phi, \varepsilon)=\sup \left\{\left|\varphi_{n}(x)-\varphi_{n}(y)\right|: d\left(f^{k} (x), f^{k} (y)\right)<\varepsilon \text { for } k=0, \ldots, n\right\}.$$ 

\begin{Lem}\cite[Page 294]{Barreira2006} 
	Suppose that $(X,f)$ is an expansive dynamical system satisfying specification property. If $\Phi\in A(f,X)$ has bounded variation, then there is a unique equilibrium measure $\mu_{\Phi}$ for $\Phi$.
\end{Lem}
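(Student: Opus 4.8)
The plan is to follow Bowen's classical strategy for the uniqueness of equilibrium states, transported to the nonadditive setting by means of the almost-additivity and bounded-variation hypotheses. The argument splits into three parts: existence of an equilibrium measure, construction of a Gibbs measure from the specification property, and the identification of that Gibbs measure with the unique equilibrium measure. For existence, recall that expansiveness of $(X,f)$ makes the entropy map $\mu\mapsto h_\mu(f)$ upper semi-continuous on the compact metrizable space $\mathcal{M}_f(X)$; since almost additive sequences are asymptotically additive, the functional $\mu\mapsto\lim_{n\to\infty}\frac1n\int\varphi_n\,d\mu$ is continuous by \eqref{equation-P}. Hence $\mu\mapsto h_\mu(f)+\lim_{n\to\infty}\frac1n\int\varphi_n\,d\mu$ is upper semi-continuous, so it attains its supremum on $\mathcal{M}_f(X)$, and by the nonadditive variational principle this supremum equals $P(\Phi)$, which produces at least one equilibrium measure.

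Next I would build a Gibbs measure. Fix $\varepsilon>0$ below an expansive constant and below the scale at which $\sup_n\gamma_n(\Phi,\varepsilon)<\infty$. For each $n$ choose a maximal $(n,\varepsilon)$-separated set $E_n$ and set $Z_n=\sum_{x\in E_n}e^{\varphi_n(x)}$ and $\sigma_n=\frac1{Z_n}\sum_{x\in E_n}e^{\varphi_n(x)}\delta_x$. Using the specification property to glue orbit segments, and controlling the distortion of $\varphi_n$ along glued orbits and along $\varepsilon$-close orbits via the almost-additivity constant $C$ and the bound on $\gamma_n(\Phi,\varepsilon)$, one shows $\frac1n\log Z_n\to P(\Phi)$ and that any weak$^*$ accumulation point $\mu$ of the Cesàro averages $\frac1N\sum_{k=0}^{N-1}f^k_*\sigma_n$ is $f$-invariant and satisfies a two-sided Gibbs estimate
$$
K^{-1}\;\le\;\frac{\mu\big(B_n(x,\varepsilon)\big)}{\exp\big(-nP(\Phi)+\varphi_n(x)\big)}\;\le\;K
\qquad\text{for all }x\in X,\ n\in\mathbb{N},
$$
with $K>1$ independent of $x$ and $n$. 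A routine computation with this estimate shows that $\mu$ is an equilibrium measure, and specification together with the Gibbs property forces $\mu$ to be ergodic.

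For uniqueness, let $\nu$ be any equilibrium measure. Since the free-energy functional $\mu\mapsto h_\mu(f)+\lim_n\frac1n\int\varphi_n\,d\mu$ is affine, the ergodic decomposition theorem shows that $\nu$-almost every ergodic component is again an equilibrium measure, so it suffices to prove that an ergodic equilibrium measure $\nu$ must coincide with $\mu$. A standard argument valid at the expansive scale $\varepsilon$, using the Shannon--McMillan--Breiman theorem and Birkhoff's ergodic theorem, shows that for $\nu$-typical $x$ the quantity $\nu(B_n(x,\varepsilon))$ is, up to subexponential factors, comparable to $\exp(-nP(\Phi)+\varphi_n(x))$; comparing with the Gibbs bound for $\mu$ yields $\nu\ll\mu$, and hence $\mu=\nu$ since two mutually absolutely continuous ergodic invariant measures coincide. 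Equivalently, two distinct ergodic equilibrium states would have to be mutually singular, which is incompatible with both being equivalent to $\mu$.

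The main obstacle is the middle step: pushing the Gibbs-measure construction, and above all the two-sided distortion estimates, through in the almost-additive rather than additive setting. One must use repeatedly that $|\varphi_{n+m}-\varphi_n-\varphi_m\circ f^n|\le C$ and that $|\varphi_n(x)-\varphi_n(y)|$ stays uniformly bounded whenever $d(f^kx,f^ky)<\varepsilon$ for $0\le k\le n$, keeping every constant uniform in $n$ while concatenating arbitrarily many specification blocks. This bookkeeping is precisely the content of the nonadditive thermodynamic formalism developed in the cited works, and the hypotheses of expansiveness, specification, and bounded variation are tailored so that it closes up.
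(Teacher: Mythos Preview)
The paper does not prove this lemma at all: it is quoted verbatim as a known result from \cite[Page~294]{Barreira2006} and used as a black box. Your outline is a faithful sketch of the classical Bowen argument adapted to the almost-additive setting, which is indeed how the result is established in the cited reference; so your proposal is correct but supplies a proof where the paper simply invokes the literature.
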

From \cite[Proposition 23.20]{DGS} it's known that if a homeomorphism of a compact metric space is expansive, mixing and has the shadowing property, then it satisfies specification property. 
\begin{Cor}\label{corollary-B}
	Suppose that $(X,  f)$ is topologically Anosov and mixing. If $\Phi\in A(f,X)$ has bounded variation, then there is a unique equilibrium measure $\mu_{\Phi}$ for $\Phi$.
\end{Cor}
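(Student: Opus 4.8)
The plan is to deduce Corollary \ref{corollary-B} from the preceding lemma (Barreira's uniqueness result for expansive systems with the specification property) simply by checking that its hypotheses are met. The only nontrivial point is that a topologically Anosov mixing system has the specification property; everything else is a matter of assembling facts already recorded in the excerpt.

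First I would recall that ``topologically Anosov'' means, by the definition given in Section 1, that $(X,f)$ is expansive and has the shadowing property (with $X$ infinite). Together with the standing mixing hypothesis, this gives all three ingredients — expansiveness, mixing, shadowing — needed to invoke \cite[Proposition 23.20]{DGS}, which is quoted in the text immediately before the corollary and yields that $(X,f)$ satisfies the specification property. Thus $(X,f)$ is an expansive dynamical system with the specification property.

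Next, since $\Phi \in A(f,X)$ is assumed to have bounded variation, the hypotheses of \cite[Page 294]{Barreira2006} (the Lemma stated just above) are exactly satisfied: an expansive system with specification, together with an almost additive sequence of bounded variation. Applying that lemma directly produces a unique equilibrium measure $\mu_\Phi$ for $\Phi$, which is the assertion of the corollary. The proof is therefore a two-line chaining of quoted results.

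The ``main obstacle'' here is really only a bookkeeping one — making sure the definition of topologically Anosov is unpacked so that the expansiveness and shadowing hypotheses are visibly in place, and that the mixing assumption is used precisely where \cite[Proposition 23.20]{DGS} requires it. Concretely, the proof would read: \emph{Since $(X,f)$ is topologically Anosov, it is expansive and has the shadowing property; being also mixing, it satisfies the specification property by \cite[Proposition 23.20]{DGS}. As $\Phi\in A(f,X)$ has bounded variation, the preceding lemma gives a unique equilibrium measure $\mu_\Phi$ for $\Phi$.} No estimates or constructions are needed beyond this.
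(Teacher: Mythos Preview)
Your proposal is correct and matches the paper's approach exactly: the paper places the corollary immediately after the sentence citing \cite[Proposition 23.20]{DGS} (expansive $+$ mixing $+$ shadowing $\Rightarrow$ specification) and the Barreira lemma, intending precisely the two-line chaining you describe.
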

Next, using a spectral decomposition theorem due to Bowen, we will show that Corollary \ref{corollary-B} is still true if $(X,  f)$ is just transitive.
\begin{Thm}\label{theorem-A}
	Suppose that $(X,  f)$ is  topologically Anosov and transitive. If $\Phi\in A(f,X)$ has bounded variation, then there is a unique equilibrium measure $\mu_{\Phi}$ for $\Phi$.
\end{Thm}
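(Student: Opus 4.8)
The plan is to reduce the transitive case to the mixing case handled by Corollary \ref{corollary-B} via Bowen's spectral decomposition for topologically Anosov systems. First I would invoke the spectral decomposition theorem: since $(X,f)$ is expansive, transitive and has the shadowing property, there is an integer $k\geq 1$ and a closed set $Y\subseteq X$ with $f^k(Y)=Y$, $f^i(Y)\cap f^j(Y)=\emptyset$ for $0\leq i<j\leq k-1$, $X=\bigcup_{i=0}^{k-1}f^i(Y)$, and $(Y,f^k)$ transitive; moreover $(Y,f^k)$ is again expansive (as a subsystem) and, by Lemma \ref{lem-AQ}, has the shadowing property. In fact a transitive system with the shadowing property is totally transitive only after passing to this power, and the standard argument (see \cite[\S 18]{KatHas} or the analysis underlying Proposition \ref{prop-transitive-shadowing-for-f-n}) gives that $(Y,f^k)$ is \emph{mixing}. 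Hence $(Y,f^k)$ is topologically Anosov and mixing, so Corollary \ref{corollary-B} applies to it.

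Next I would set up the correspondence between almost additive sequences on $(X,f)$ and on $(Y,f^k)$. Given $\Phi=(\varphi_n)_{n\in\N}\in A(f,X)$ with bounded variation, define $\Psi=(\psi_n)_{n\in\N}$ on $Y$ by $\psi_n=\varphi_{kn}|_Y$. A short check using the almost additivity constant $C$ for $\Phi$ shows $\Psi\in A(f^k,Y)$ (with constant, say, $3C$), and bounded variation of $\Phi$ with respect to an expansivity constant for $f$ transfers to bounded variation of $\Psi$ with respect to an expansivity constant for $f^k$ on $Y$. By Corollary \ref{corollary-B} there is a unique equilibrium measure $\mu_\Psi\in\mathcal{M}_{f^k}(Y)$ for $\Psi$, i.e. $P_{f^k}(\Psi)=h_{\mu_\Psi}(f^k)+\lim_n\frac1n\int\psi_n\,d\mu_\Psi$ and this measure is the unique maximizer. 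The remaining work is bookkeeping: relate $P_f(\Phi)$ to $P_{f^k}(\Psi)$ (one has $P_{f^k}(\Psi)=k\,P_f(\Phi)$, which follows from the definition of nonadditive pressure in \eqref{equation-AD} together with the Abramov-type behaviour of topological pressure under iteration, using that $Y$ carries the whole dynamics up to the cyclic permutation), and push $\mu_\Psi$ forward to an $f$-invariant measure $\mu_\Phi:=\frac1k\sum_{i=0}^{k-1}f^i_*\mu_\Psi$ on $X$; by Abramov's formula $h_{\mu_\Phi}(f)=\frac1k h_{\mu_\Psi}(f^k)$ and $\lim_n\frac1n\int\varphi_n\,d\mu_\Phi=\frac1k\lim_n\frac1n\int\psi_n\,d\mu_\Psi$ (using that $\Phi$ is asymptotically additive, so $\frac1n\int\varphi_n\,d\mu$ is an affine continuous function of $\mu$ and is unchanged under the cyclic averaging since each $f^i_*\mu_\Psi$ has the same value), so $\mu_\Phi$ is an equilibrium measure for $\Phi$ on $(X,f)$.

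For uniqueness: if $\nu$ is any equilibrium measure for $\Phi$ on $(X,f)$, write $\nu=\sum_{i=0}^{k-1}\nu(f^i(Y))\,\nu_i$ where $\nu_i$ is the normalized restriction of $\nu$ to $f^i(Y)$; each $\nu_i$ is $f^k$-invariant and $f^i_*\nu_0=\nu_i$. Transitivity of $(X,f)$ forces $\nu(f^i(Y))=1/k$ for all $i$ once we know $\nu_0$ is $f^k$-ergodic on $Y$ — more carefully, by the ergodic decomposition and the affine/convexity structure one reduces to the ergodic case, and an $f$-ergodic measure must distribute mass evenly over the cyclic pieces. Then the equilibrium condition for $\nu$ on $X$, transported through the pressure identity $P_f(\Phi)=\frac1k P_{f^k}(\Psi)$ and Abramov's formula, says exactly that $\nu_0$ is an equilibrium measure for $\Psi$ on $(Y,f^k)$; uniqueness there gives $\nu_0=\mu_\Psi$, hence $\nu=\mu_\Phi$. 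The main obstacle I anticipate is the careful verification that passing to the power $k$ preserves \emph{all} the hypotheses simultaneously — almost additivity, bounded variation, and the exact scaling of the nonadditive pressure under iteration — since the nonadditive pressure is defined through open covers rather than through a variational principle, so the identity $P_{f^k}(\Psi)=k\,P_f(\Phi)$ needs either a direct cover-counting argument or an appeal to the nonadditive variational principle of \cite{Barreira1996,Mummert2006}; invoking the variational principle is the cleaner route and makes the entropy and integral bookkeeping via Abramov's formula routine.
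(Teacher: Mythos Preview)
Your proposal is correct and follows essentially the same route as the paper: invoke the spectral decomposition for transitive topologically Anosov systems to obtain a mixing piece $(Y,f^k)$, transfer the almost additive sequence to that piece, apply Corollary~\ref{corollary-B}, and then use the canonical bijection between $\mathcal{M}_f(X)$ and $\mathcal{M}_{f^k}(Y)$ together with Abramov's formula to pull the unique equilibrium measure back.

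The only notable difference is in the choice of the transferred sequence: the paper sets $\psi_n=\sum_{i=0}^{m-1}\varphi_n\circ f^i$ on $D$, whereas you take $\psi_n=\varphi_{kn}|_Y$. Your choice is arguably cleaner, since almost additivity of $(\varphi_{kn})_n$ with respect to $f^k$ follows immediately from a single application of the almost additivity inequality for $\Phi$ (so the constant stays $C$, not $3C$). Two small points: the equal distribution $\nu(f^i(Y))=1/k$ follows directly from $f$-invariance of $\nu$ (since $f$ cyclically permutes the pieces), so you do not need to reduce to the ergodic case first; and the pressure identity $P_{f^k}(\Psi)=k\,P_f(\Phi)$ is indeed most efficiently obtained via the nonadditive variational principle, exactly as you suggest, since the bijection $\mu\mapsto\sigma(\mu)$ scales both the entropy and the integral term by $k$.
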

\begin{proof}
	Since $(X,  f)$ is expansive, transitive and has the shadowing property,  by \cite[Theorem 3.1.11]{AH} $X$ admits a decomposition
	$
	X=\bigsqcup_{i=0}^{m-1} f^{i}(D)
	$
	where $m>0$ is a positive integer, such that for every $0 \leq i \leq m-1,$ $f^{i}(D)$ is closed $f^{m}$-invariant subsets of $X$, $f^{i}(D)\cap f^{j}(D)=\emptyset$ for any $0\leq i<j\leq m-1,$ and
	$
	f^{m}: f^{i}(D) \rightarrow f^{i}(D)
	$
	is mixing for every $0 \leq i \leq m-1$.   By Lemma \ref{lem-AQ}, $(D,f^m)$ has the shadowing property. Since $(X,  f)$ is expansive, from the uniform continuity of $f, \cdots, f^{m-1},$ $(D,f^m)$ is also expansive. So $(D,f^m)$  is topologically Anosov and mixing. 
	For any $\mu \in \mathcal{M}(X)$, define $\sigma(\mu) \in \mathcal{M}(D)$ by: $$\sigma(\mu)(A)=\mu (A \cup f(A) \cup \dots \cup f^{m-1}(A)),$$ where $A$ ia a Borel set of $D$. By \cite[Proposition 23.17]{DGS}, $\sigma$ is a homeomorphism from $\mathcal{M}_f(X)$ onto $\mathcal{M}_{f^m}(D)$ and $$\sigma^{-1}(\nu)=\frac{1}{m}(\nu + f_{*}\nu +\dots + f^{m-1}_{*}\nu) \in \mathcal{M}_f(X)$$ for any $\nu \in \mathcal{M}_{f^m}(D)$ where $f_*\nu(B)=\nu(f^{-1}(B))$ for any Borel set $B$.
	Note that $$h_{\sigma(\mu)}(f^m)=mh_{\mu}(f),\lim\limits_{n \rightarrow \infty} \frac{1}{n} \int \sum_{i=0}^{m-1}\varphi_{n}\circ f^i d \sigma(\mu)=m\lim\limits_{n \rightarrow \infty} \frac{1}{n} \int \varphi_{n} d \mu.$$ So maximizing $$h_{\sigma(\mu)}(f^m)+\lim\limits_{n \rightarrow \infty} \frac{1}{n} \int \sum_{i=0}^{m-1}\varphi_{n}\circ f^i d \sigma(\mu)$$ is equivalent to maximizing $$h_\mu(f)+\lim\limits_{n \rightarrow \infty} \frac{1}{n} \int \varphi_{n} d \mu.$$
	For $\Phi=\left(\varphi_{n}\right)_{n\in\mathbb{N}}$, define $\Psi=\left(\sum_{i=0}^{m-1}\varphi_{n}\circ f^i\right)_{n\in\mathbb{N}}.$ From the uniform continuity of $f, \cdots, f^{m-1},$ if $\Phi$ is an almost additive sequence of continuous functions with bounded variation, then so does $\Psi.$ Since $(D,f^m)$  is topologically Anosov and mixing, there is a unique equilibrium measure $\nu_{\Psi}$ for $\Psi$ by  Corollary \ref{corollary-B}. So $\sigma^{-1}(\nu_\Psi)$ is the unique equilibrium measure for $\Phi.$
\end{proof}

\begin{Lem}
	Given a two-sided full shift $(\Sigma,\sigma)$. Let $x,y\in \Sigma,$ $n\in\Z^+,$ and $\varepsilon>0.$ If $d(\sigma^i(x),\sigma^i(y))<\varepsilon$ for any $0\leq i\leq n,$ then we have $d(\sigma^i(x),\sigma^i(y))<\varepsilon \cdot2^{-\min\{i,n-i\}}$ for any $0\leq i\leq n.$
\end{Lem}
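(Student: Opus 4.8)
The plan is to reduce the statement to pure bookkeeping about how far the sequences $x$ and $y$ agree around each index, exploiting that the metric on $\Sigma$ records exactly this information. If $x=y$ then $\sigma^i(x)=\sigma^i(y)$ for every $i$ and the inequality holds trivially (its right-hand side is positive), so from now on I assume $x\neq y$; since $\sigma$ is a bijection this gives $\sigma^i(x)\neq\sigma^i(y)$ for all $i$, hence $d(\sigma^i(x),\sigma^i(y))=2^{-m_i}$ for a well-defined integer $m_i$, and by definition $m_i$ is the largest integer with $(\sigma^i x)_j=(\sigma^i y)_j$ whenever $|j|<m_i$, i.e. with $x_k=y_k$ for all $k\in\{i-m_i+1,\dots,i+m_i-1\}$.

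First I would fix the integer $M\geq 0$ with $2^{-(M+1)}<\varepsilon\leq 2^{-M}$ (this uses $\varepsilon\leq 1$, which is the only case occurring in the applications, where $\varepsilon$ is a small constant). Since the nonzero values of $d$ are dyadic, the hypothesis $d(\sigma^i(x),\sigma^i(y))<\varepsilon$ upgrades to $d(\sigma^i(x),\sigma^i(y))\leq 2^{-(M+1)}$ for every $0\leq i\leq n$, which by the description above is equivalent to $x_k=y_k$ for all $k\in\{i-M,\dots,i+M\}$. Taking the union of these blocks over $i=0,\dots,n$, I conclude that $x$ and $y$ agree on the entire block $\{-M,\dots,n+M\}$.

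Next, fix $i\in\{0,\dots,n\}$ and set $s=\min\{i,n-i\}$. The one genuine observation is that the symmetric block $\{i-(M+s),\dots,i+(M+s)\}$ of radius $M+s$ centred at $i$ lies inside $\{-M,\dots,n+M\}$: indeed $i-(M+s)\geq -M$ because $s\leq i$, and $i+(M+s)\leq n+M$ because $s\leq n-i$. Therefore $x$ and $y$ agree on this block, so $\sigma^i(x)$ and $\sigma^i(y)$ agree on all coordinates $j$ with $|j|\leq M+s$; reading off the metric, this gives $d(\sigma^i(x),\sigma^i(y))\leq 2^{-(M+s+1)}=2^{-(M+1)}\cdot 2^{-s}<\varepsilon\cdot 2^{-s}=\varepsilon\cdot 2^{-\min\{i,n-i\}}$, as required.

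I do not expect any real difficulty: the argument is entirely about nested intervals of integers. The only places needing a touch of care are (i) passing from the strict inequality $d<\varepsilon$ to the clean bound $d\leq 2^{-(M+1)}$, which rests on $d$ taking only dyadic values, and (ii) the trivial edge case $x=y$ together with the standing assumption $\varepsilon\leq 1$. I would organize the write-up by first recording, as a one-line sublemma, the equivalence ``$d(\sigma^i(x),\sigma^i(y))\leq 2^{-(r+1)}$ iff $x$ and $y$ agree on $\{i-r,\dots,i+r\}$'', and then applying it twice exactly as above.
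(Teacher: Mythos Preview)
Your proof is correct and follows essentially the same approach as the paper's: both fix the integer $M$ (the paper calls it $j$) with $2^{-(M+1)}<\varepsilon\le 2^{-M}$, deduce that $x$ and $y$ agree on the block $\{-M,\dots,n+M\}$, and then read off the improved bound at each $i$ from the symmetric sub-block of radius $M+\min\{i,n-i\}$. Your version is actually a bit more careful in explicitly handling the case $x=y$ and flagging the standing assumption $\varepsilon\le 1$, which the paper uses without comment.
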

\begin{proof}
	For the $\varepsilon>0,$ there is an integer $i$ such that $\frac{1}{2^{j+1}}< \varepsilon\leq\frac{1}{2^{j}}.$ Then for any $0\leq i\leq n,$ by $d(\sigma^i(x),\sigma^i(y))<\varepsilon,$ we have $(\sigma^i(x))_l=(\sigma^i(y))_l$ for any $-j\leq l\leq j.$ Thus we have $x_l=y_l$ for any $-j\leq l\leq n+j.$ This implies   $(\sigma^i(x))_l=(\sigma^i(y))_l$ for any $-j-i\leq l\leq n+j-i$ and any $0\leq i\leq n.$ So for any $0\leq i\leq n,$ we have
	$
		d(\sigma^i(x),\sigma^i(y))\leq 2^{-\min\{i+j,n+j-i\}-1}= 2^{-j-1} \cdot 2^{-\min\{i,n-i\}}<\varepsilon\cdot 2^{-\min\{i,n-i\}}.
	$
\end{proof}

Let $\varphi$ a Hölder continuous function  $\Sigma$ with constant $K$ and exponent $\alpha.$ If $x,y\in \Sigma,$ $n\in\Z^+,$ and $\varepsilon>0$ satisfy $d(\sigma^i(x),\sigma^i(y))<\varepsilon$ for any $0\leq i\leq n,$ then we have 
\begin{equation*}
	\begin{split}
		\left|\sum_{k=0}^{n}\varphi(\sigma^k(x))-\sum_{k=0}^{n}\varphi(\sigma^k(y))\right|\leq &\sum_{k=0}^{n}\left|\varphi(\sigma^k(x))-\varphi(\sigma^k(y))\right|\\
		\leq &\sum_{k=0}^{n}K(d(\sigma^k(x),\sigma^k(y)))^\alpha\\
		\leq &\sum_{k=0}^{n}K(\varepsilon \cdot 2^{-\min\{i,n-i\}})^\alpha
		\leq K (\varepsilon)^\alpha\cdot \frac{1}{1-2^{-\alpha}}.
	\end{split}
\end{equation*} 
This implies that every Hölder continuous function on $\Sigma$ has bounded variation. From Lemma \ref{SFT}, every  two-sided subshift of finite type is topologically Anosov. So we have the following corollary.
\begin{Cor}\label{cor-sft-equil}
	Suppose that $(X,  \sigma)$ is a transitive two-sided subshift of finite type. Let $\varphi$ be a Hölder continuous function. Then there is a unique equilibrium measure $\mu_{\varphi}$ for $\varphi$.
\end{Cor}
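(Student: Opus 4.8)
The plan is to deduce Corollary~\ref{cor-sft-equil} from Theorem~\ref{theorem-A}, whose hypotheses are: $(X,\sigma)$ topologically Anosov and transitive, together with an almost additive sequence $\Phi\in A(X,\sigma)$ of bounded variation. A transitive two-sided subshift of finite type is topologically Anosov by Lemma~\ref{SFT}, so the only remaining task is to convert the given H\"older continuous function $\varphi$ into an almost additive sequence of bounded variation and to identify equilibrium measures.

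First I would form the additive sequence $\Phi=(\varphi_n)_{n\in\N}$ associated to $\varphi$, where $\varphi_n=\sum_{k=0}^{n-1}\varphi\circ\sigma^k$. Every additive sequence is almost additive (with constant $C=0$), so $\Phi\in A(X,\sigma)$; moreover for an additive sequence the equilibrium measures of $\Phi$ in the nonadditive sense coincide exactly with the classical equilibrium measures of $\varphi$, since $\lim_{n\to\infty}\frac1n\int\varphi_n\,d\mu=\int\varphi\,d\mu$ and $P(\Phi)=P(\varphi)$ (this equality of pressures is recorded in the excerpt immediately after the definition of the nonadditive pressure). So proving uniqueness for $\Phi$ is exactly proving uniqueness for $\varphi$.

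Second I would verify that $\Phi$ has bounded variation, i.e.\ that $\sup_n\gamma_n(\Phi,\varepsilon)<\infty$ for some $\varepsilon>0$. This is precisely the content of the unnumbered lemma and the displayed estimate in the excerpt preceding Corollary~\ref{cor-sft-equil}: if $d(\sigma^i x,\sigma^i y)<\varepsilon$ for $0\le i\le n$, then $d(\sigma^i x,\sigma^i y)<\varepsilon\cdot 2^{-\min\{i,n-i\}}$, hence by H\"olderness of $\varphi$ (constant $K$, exponent $\alpha$),
\[
\Big|\sum_{k=0}^{n}\varphi(\sigma^k x)-\sum_{k=0}^{n}\varphi(\sigma^k y)\Big|
\le \sum_{k=0}^{n}K\big(\varepsilon\cdot 2^{-\min\{k,n-k\}}\big)^{\alpha}
\le \frac{2K\varepsilon^{\alpha}}{1-2^{-\alpha}},
\]
a bound independent of $n$. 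Restricting to $\varphi_n=\sum_{k=0}^{n-1}\varphi\circ\sigma^k$ (one fewer term) only improves the bound, so $\sup_n\gamma_n(\Phi,\varepsilon)\le 2K\varepsilon^\alpha/(1-2^{-\alpha})<\infty$; thus $\Phi$ has bounded variation.

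Finally, applying Theorem~\ref{theorem-A} to the transitive topologically Anosov system $(X,\sigma)$ and the almost additive bounded-variation sequence $\Phi$ yields a unique equilibrium measure $\mu_\Phi$ for $\Phi$, which by the identification above is the unique equilibrium measure $\mu_\varphi$ for $\varphi$. There is essentially no hard part here: the genuine difficulty — handling the transitive (non-mixing) case via Bowen's spectral decomposition — has already been absorbed into Theorem~\ref{theorem-A}, and the only thing to be careful about is making sure the normalization (number of summands $n$ versus $n+1$, and the fact that bounded variation for $(\varphi_n)$ need not literally follow from the displayed estimate for $(\sum_{k=0}^n\varphi\circ\sigma^k)$ without the trivial remark that dropping a term preserves a uniform bound) is stated cleanly.
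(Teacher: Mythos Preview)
Your proposal is correct and follows essentially the same approach as the paper: form the additive sequence $\Phi=(\varphi_n)$ from $\varphi$, use the unnumbered lemma and the displayed H\"older estimate to verify bounded variation, invoke Lemma~\ref{SFT} to get topological Anosov, and apply Theorem~\ref{theorem-A}. Your handling of the $n$ versus $n{+}1$ summands and the factor of $2$ in the geometric-series bound is in fact slightly more careful than the paper's own write-up.
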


\subsection{Conditional variational principles}
Let $d \in \mathbb{N}$ and take $(A, B) \in A(f,X)^{d} \times A(f,X)^{d}$.  We consider $(A, B)$ satisfying the following condition:
\begin{equation}\label{equation-BA}
	\liminf _{m \rightarrow \infty} \frac{\psi_{m}^{i}(x)}{m}>0 \quad \text { and } \quad \psi_{n}^{i}(x)>0
\end{equation}
for every $i=1, \ldots, d, x \in X$, and $n \in \mathbb{N}$. Given $a=\left(a_1, \ldots, a_{d}\right) \in \mathbb{R}^{d}$, we define:
$
R_{A,B}(a)=\bigcap_{i=1}^{d}\left\{x \in X: \lim _{n \rightarrow \infty} \frac{\varphi_{n}^{i}(x)}{\psi_{n}^{i}(x)}=a_{i}\right\}.
$
Let $E(f,X) \subseteq A(f,X)$ be the family of sequences with a unique equilibrium measure. 
Denote the sequence of constant functions by $U=(u_n)_{n\in\mathbb{N}}$ with $u_n\equiv n$ for any $n\in\mathbb{N}.$
In \cite{BarreiraDoutor2009} L. Barreira and P. Doutor give the conditional variational principle as following.
\begin{Thm}\cite[Theorem 3]{BarreiraDoutor2009}\label{BarreiraDoutor2009-theorem3}
	Suppose $(X,  f)$ is a dynamical system whose entropy function is upper semi-continuous. Let $d \in \mathbb{N}$ and $(A, B) \in A(f,X)^{d} \times A(f,X)^{d}$ such that
	$
	\text{span}\left\{\Phi^1, \Psi^1,\cdots,\Phi^d,\Psi^d,U\right\} \subseteq E(f,X),
	$ 
	$B$ satisfies \eqref{equation-BA}. 
	If $a \not\in \mathcal{P}_{A,B}(\mathcal{M}_f(X))$, then $R_{A,B}(a)=\emptyset.$ If $a \in \mathrm{Int}(\mathcal{P}_{A,B}(\mathcal{M}_f(X)))$, then $R_{A,B}(a)\neq \emptyset$, and the following properties hold:
	\begin{description}
		\item[(1)] 
		$
		\htop(f,R_{A,B}(a))=\max \left\{h_{\mu}(f): \mu \in \mathcal{M}_f(X) \text { and } \mathcal{P}_{A,B}(\mu)=a\right\}.
		$
		\item[(2)] There is $\mu_{a} \in \mathcal{M}_f^e(X)$ such that $\mathcal{P}_{A,B}\left(\mu_{a}\right)=a$ and
		$
		\htop(f,R_{A,B}(a))=h_{\mu_{a}}(f).
		$
	\end{description}
\end{Thm}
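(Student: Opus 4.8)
The plan is to run a Lagrange–multiplier argument on the nonadditive topological pressure. For $q=(q_1,\dots,q_d)\in\mathbb{R}^d$ and the prescribed vector $a=(a_1,\dots,a_d)$, introduce the sequence of functions $\Phi_q=(\varphi_{n,q})_{n}$ with $\varphi_{n,q}=\sum_{i=1}^{d}q_i\bigl(\varphi_n^{i}-a_i\psi_n^{i}\bigr)$. Since $A(f,X)$ is a vector space and the hypothesis $\mathrm{span}\{\Phi^1,\Psi^1,\dots,\Phi^d,\Psi^d,U\}\subseteq E(f,X)$ places each $\Phi_q$ (and each $\Phi_q+cU$) in $E(f,X)$, every $\Phi_q$ is almost additive with a unique, hence ergodic, equilibrium measure $\mu_q$, and the nonadditive pressure $P(\Phi_q)$ is well defined and finite. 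For brevity write $\overline{\Phi}(\mu):=\lim_n\tfrac1n\int\varphi_n\,d\mu$ for $\Phi=(\varphi_n)_n\in AA(f,X)$, which exists and depends continuously on $\mu$ by \eqref{equation-P}. The first, elementary step is the necessity direction: if $x\in R_{A,B}(a)$ and $\mu$ is any weak$^\ast$ limit of the empirical measures $\mathcal{E}_n(x)$ along some subsequence, then asymptotic additivity forces $\tfrac1n\varphi_n^{i}(x)\to\overline{\Phi^i}(\mu)$ and $\tfrac1n\psi_n^{i}(x)\to\overline{\Psi^i}(\mu)$ along that subsequence; by \eqref{equation-BA} one has $\overline{\Psi^i}(\mu)>0$, so dividing gives $\mathcal{P}_{A,B}(\mu)=a$. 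Hence $R_{A,B}(a)=\emptyset$ whenever $a\notin\mathcal{P}_{A,B}(\mathcal{M}_f(X))$, and the same computation applied to generic points shows $G_\mu\subseteq R_{A,B}(a)$ for every $\mu$ with $\mathcal{P}_{A,B}(\mu)=a$.

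Next I would establish the two upper bounds. On $R_{A,B}(a)$ one has $\tfrac1n\varphi_{n,q}(x)\to 0$ for each fixed $q$, so a standard Bowen-type covering estimate comparing the Carathéodory structure of $\htop(f,R_{A,B}(a))$ with the one defining $P(\Phi_q)$ yields $\htop(f,R_{A,B}(a))\le P(\Phi_q)$, and therefore $\htop(f,R_{A,B}(a))\le\inf_{q\in\mathbb{R}^d}P(\Phi_q)$. On the measure side, the variational principle for nonadditive pressure gives $P(\Phi_q)\ge h_\mu(f)+\overline{\Phi_q}(\mu)$ for all $\mu$, and if $\mathcal{P}_{A,B}(\mu)=a$ then $\overline{\Phi_q}(\mu)=\sum_i q_i\bigl(\overline{\Phi^i}(\mu)-a_i\overline{\Psi^i}(\mu)\bigr)=0$; hence $h_\mu(f)\le\inf_q P(\Phi_q)$ for every such $\mu$, i.e. $\sup\{h_\mu(f):\mathcal{P}_{A,B}(\mu)=a\}\le\inf_q P(\Phi_q)$.

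The crux is to produce a measure saturating both bounds, and here the interior hypothesis is decisive. The map $q\mapsto P(\Phi_q)$ is convex, and uniqueness of equilibrium measures makes it differentiable with $\partial_{q_i}P(\Phi_q)=\overline{\Phi^i}(\mu_q)-a_i\overline{\Psi^i}(\mu_q)$. Because $a\in\mathrm{Int}(\mathcal{P}_{A,B}(\mathcal{M}_f(X)))$, for each direction one can choose invariant measures whose $\mathcal{P}_{A,B}$–image lies strictly on either side of $a$ in the relevant coordinate, so that for $|q|$ large $\overline{\Phi_q}(\nu)$ grows linearly in $|q|$ for a suitable such $\nu$ while $h_\nu(f)$ stays bounded, giving $P(\Phi_q)\ge h_\nu(f)+\overline{\Phi_q}(\nu)\to+\infty$ as $|q|\to\infty$. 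Convexity, coercivity and lower semicontinuity then produce a minimizer $q^\ast$, at which $\nabla P(\Phi_{q^\ast})=0$, i.e. $\overline{\Phi^i}(\mu_{q^\ast})=a_i\overline{\Psi^i}(\mu_{q^\ast})$ for all $i$; combined with $\overline{\Psi^i}(\mu_{q^\ast})>0$ this says $\mathcal{P}_{A,B}(\mu_{q^\ast})=a$. Consequently $P(\Phi_{q^\ast})=h_{\mu_{q^\ast}}(f)+\overline{\Phi_{q^\ast}}(\mu_{q^\ast})=h_{\mu_{q^\ast}}(f)$, and since $\mu_{q^\ast}$ is ergodic, Bowen's theorem $\htop(f,G_{\mu_{q^\ast}})=h_{\mu_{q^\ast}}(f)$ together with $G_{\mu_{q^\ast}}\subseteq R_{A,B}(a)$ gives the matching lower bound $\htop(f,R_{A,B}(a))\ge h_{\mu_{q^\ast}}(f)$. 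Chaining all inequalities yields $R_{A,B}(a)\neq\emptyset$ and
$$\htop(f,R_{A,B}(a))=h_{\mu_{q^\ast}}(f)=\max\{h_\mu(f):\mu\in\mathcal{M}_f(X),\ \mathcal{P}_{A,B}(\mu)=a\}=\inf_{q}P(\Phi_q),$$
so $\mu_a:=\mu_{q^\ast}\in\mathcal{M}_f^e(X)$ is the desired measure, proving (1) and (2); upper semicontinuity of the entropy map is what guarantees the displayed supremum is attained.

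The main obstacle I anticipate is the thermodynamic input rather than the Carathéodory bookkeeping: one must justify the differentiability of the nonadditive pressure $q\mapsto P(\Phi_q)$ from the uniqueness of equilibrium states for almost additive potentials, and, most importantly, extract coercivity of $P(\Phi_q)$ from the single hypothesis $a\in\mathrm{Int}(\mathcal{P}_{A,B}(\mathcal{M}_f(X)))$ so that the minimizer $q^\ast$ genuinely exists. Everything else — the empirical-measure argument, the covering estimate, and the closing application of Bowen's formula — is routine once this is in place.
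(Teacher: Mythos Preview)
The paper does not prove this statement: it is quoted as \cite[Theorem~3]{BarreiraDoutor2009} and used as a black box (see the paragraph introducing it in the subsection on conditional variational principles). There is therefore no in-paper proof to compare against.

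That said, your proposal is essentially the argument of Barreira and Doutor in the cited reference: introduce the one-parameter family $\Phi_q=\sum_i q_i(\Phi^i-a_i\Psi^i)$, bound $\htop(f,R_{A,B}(a))$ above by $\inf_q P(\Phi_q)$ via a Carath\'eodory covering estimate, use the uniqueness hypothesis to get differentiability of $q\mapsto P(\Phi_q)$ with gradient given by $\overline{\Phi^i}(\mu_q)-a_i\overline{\Psi^i}(\mu_q)$, extract coercivity from $a\in\mathrm{Int}(\mathcal{P}_{A,B}(\mathcal{M}_f(X)))$, locate the critical point $q^\ast$, and close with $G_{\mu_{q^\ast}}\subseteq R_{A,B}(a)$ and Bowen's formula. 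The two points you flag as obstacles---differentiability of the nonadditive pressure from uniqueness of equilibrium states, and coercivity of $P(\Phi_q)$ from the interior hypothesis---are exactly the places where Barreira and Doutor do the real work, so your instincts are right. One small sharpening: for coercivity you should argue that for every unit vector $u\in\mathbb{R}^d$ the interior hypothesis supplies some $\nu\in\mathcal{M}_f(X)$ with $\sum_i u_i(\overline{\Phi^i}(\nu)-a_i\overline{\Psi^i}(\nu))>0$, which then gives $P(\Phi_{tu})\ge h_\nu(f)+t\cdot(\text{positive constant})\to\infty$; your phrasing ``for $|q|$ large in the relevant coordinate'' is a bit loose for the genuinely multidimensional case.
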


Without using the uniqueness of equilibrium measures, C. Holanda  obtain a conditional variational principle for asymptotically additive families of continuous functions.
\begin{Thm}\label{condition-variation-principle}\cite[Corollary 13]{Holanda}
	Suppose $(X,  f)$ is a dynamical system such that entropy function is upper semi-continuous and $\htop(f,X)<\infty$. 
	Assume that there is a dense subspace $D$ of the space of continuous functions such that every $\varphi \in D$ has a unique equilibrium measure. 
	Let $d \in \mathbb{N}$ and $(A, B) \in AA(f,X)^{d} \times AA(f,X)^{d}$ satisfying \eqref{equation-O}.
	If $a \not\in \mathcal{P}_{A,B}(\mathcal{M}_f(X))$, then $R_{A,B}(a)=\emptyset.$ If $a \in \mathrm{Int}(\mathcal{P}_{A,B}(\mathcal{M}_f(X)))$, then $R_{A,B}(a)\neq \emptyset$, and the following properties hold:
	\begin{description}
		\item[(1)] 
		$
		\htop(f,R_{A,B}(a))=\sup \left\{h_{\mu}(f): \mu \in \mathcal{M}_f(X) \text { and } \mathcal{P}_{A,B}(\mu)=a\right\}.
		$
		\item[(2)]  For any $\varepsilon>0,$ there is $\mu_{a} \in \mathcal{M}_f^e(X)$ such that $\mathcal{P}_{A,B}\left(\mu_{a}\right)=a$ and
		$
		|\htop(f,R_{A,B}(a))-h_{\mu_{a}}(f)|<\varepsilon.
		$
	\end{description}
\end{Thm}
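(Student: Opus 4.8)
The plan is to establish two matching bounds for $\htop(f,R_{A,B}(a))$ --- an upper bound coming from the nonadditive thermodynamic formalism of the families $\Xi_{\mathbf q}:=\big(\sum_{i=1}^{d}q_i(\varphi^i_n-a_i\psi^i_n)\big)_{n\in\mathbb N}$, $\mathbf q\in\mathbb R^d$, and a lower bound realized by \emph{ergodic} measures carrying the exact constraint $\mathcal P_{A,B}(\mu)=a$ --- and then to identify both with $S:=\sup\{h_\mu(f):\mu\in\mathcal M_f(X),\ \mathcal P_{A,B}(\mu)=a\}$ via convex duality (a minimax argument). The trivial half, $R_{A,B}(a)=\emptyset$ when $a\notin\mathcal P_{A,B}(\mathcal M_f(X))$, follows by taking a weak$^*$ accumulation point $\mu$ of the empirical measures of a point $x\in R_{A,B}(a)$: asymptotic additivity forces $\tfrac1n\varphi^i_n(x)\to\int\Phi^i d\mu$ and $\tfrac1n\psi^i_n(x)\to\int\Psi^i d\mu$ along the relevant subsequence, and since $a_i$ is finite the denominators are positive by \eqref{equation-O}, so $\mathcal P_{A,B}(\mu)=a$, a contradiction.

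For the upper bound, note that on $R_{A,B}(a)$ the Birkhoff average $\tfrac1n\sum_{i}q_i(\varphi^i_n(x)-a_i\psi^i_n(x))$ tends to $0$ for every $\mathbf q$; since $\Xi_{\mathbf q}$ is asymptotically additive it is uniformly $\varepsilon$-approximated by a genuine Birkhoff sum of a continuous function, so the classical Bowen covering estimate for the topological entropy of a set on which a Birkhoff average is prescribed yields $\htop(f,R_{A,B}(a))\le P(\Xi_{\mathbf q})$ for each $\mathbf q$, hence $\htop(f,R_{A,B}(a))\le\inf_{\mathbf q\in\mathbb R^d}P(\Xi_{\mathbf q})$. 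Writing $\mathbf b(\mu):=\big(\int\Phi^i d\mu-a_i\int\Psi^i d\mu\big)_{i=1}^d$, an affine weak$^*$-continuous function of $\mu$ (\cite{FH2010}), the variational principle for the nonadditive topological pressure (\cite{Barreira1996,FH2010}) gives $P(\Xi_{\mathbf q})=\sup_{\mu}\big(h_\mu(f)+\langle\mathbf q,\mathbf b(\mu)\rangle\big)$. As $\mathcal M_f(X)$ is convex and compact, $\mu\mapsto h_\mu(f)$ is affine and upper semi-continuous, and the function is affine in $\mathbf q$, Sion's minimax theorem applies and
\[
\inf_{\mathbf q}P(\Xi_{\mathbf q})=\sup_{\mu}\ \inf_{\mathbf q}\big(h_\mu(f)+\langle\mathbf q,\mathbf b(\mu)\rangle\big)=\sup\{h_\mu(f):\mathbf b(\mu)=0\}=S ,
\]
the last identity because, by \eqref{equation-O}, $\mathbf b(\mu)=0$ is equivalent to $\mathcal P_{A,B}(\mu)=a$. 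Thus $\htop(f,R_{A,B}(a))\le S$. Moreover, when $a\in\mathrm{Int}(\mathcal P_{A,B}(\mathcal M_f(X)))$ there are (by Corollary \ref{corollary-A} and Lemma \ref{lemma-C}) invariant measures $\mu_\xi$, $\xi\in\{+,-\}^d$, with $\mathbf b(\mu_\xi)$ in the $\xi$-orthant, so $0\in\mathrm{Int}\{\mathbf b(\mu):\mu\in\mathcal M_f(X)\}$; this forces the convex function $F(\mathbf q):=P(\Xi_{\mathbf q})$ to have bounded sublevel sets, hence to attain its minimum at some $\mathbf q^\ast$ with $P(\Xi_{\mathbf q^\ast})=S$.

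For the lower bound it suffices, by the Bowen formula (\cite{Bowen1973}), to produce for each $\varepsilon>0$ an ergodic $\mu_a$ with $\mathcal P_{A,B}(\mu_a)=a$ and $h_{\mu_a}(f)>S-\varepsilon$: indeed for such $\mu_a$ the Birkhoff ergodic theorem for asymptotically additive sequences gives $G_{\mu_a}\subseteq R_{A,B}(a)$, so $\htop(f,R_{A,B}(a))\ge\htop(f,G_{\mu_a})=h_{\mu_a}(f)>S-\varepsilon$. To build $\mu_a$ we use the dense subspace $D$. Choose a continuous $\phi^\ast$ with $\limsup_n\tfrac1n\sup_X|\Xi_{\mathbf q^\ast,n}-S_n\phi^\ast|$ small, and pick $\tilde\phi\in D$ near $\phi^\ast$; its unique equilibrium measure $\nu_{\tilde\phi}$ is automatically ergodic, and by continuity of the equilibrium-measure map at points of uniqueness together with $h_{\nu_{\tilde\phi}}(f)=P(\tilde\phi)-\int\tilde\phi\,d\nu_{\tilde\phi}$ we get $h_{\nu_{\tilde\phi}}(f)$ close to $P(\Xi_{\mathbf q^\ast})=S$ and $\mathcal P_{A,B}(\nu_{\tilde\phi})$ close to $a$. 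Finally we correct the constraint exactly: perturbing $\tilde\phi$ inside $D$ in $d$ suitable directions (e.g. by $D$-approximants of $\Phi^1,\dots,\Phi^d$) keeps the equilibrium measures unique and ergodic, the map $\mathbf t\mapsto\mathcal P_{A,B}(\nu_{\tilde\phi+\mathbf t})$ is continuous near $\mathbf t=0$ with value close to $a$, and a degree/intermediate-value argument --- non-degenerate because $a$ is an interior point --- yields a small $\mathbf t^\ast$ with $\mathcal P_{A,B}(\nu_{\tilde\phi+\mathbf t^\ast})=a$ exactly, while $h_{\nu_{\tilde\phi+\mathbf t^\ast}}(f)$ stays within $\varepsilon$ of $S$; set $\mu_a:=\nu_{\tilde\phi+\mathbf t^\ast}$.

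Combining the bounds gives $\htop(f,R_{A,B}(a))=S$, which is item (1), and the ergodic measures $\mu_a$ constructed above satisfy $\mathcal P_{A,B}(\mu_a)=a$ and $|\htop(f,R_{A,B}(a))-h_{\mu_a}(f)|=S-h_{\mu_a}(f)<\varepsilon$, which is item (2). The main obstacle is precisely the last step: without the uniqueness hypothesis of Theorem \ref{BarreiraDoutor2009-theorem3} the equilibrium measure of $\Xi_{\mathbf q^\ast}$ need not be ergodic, and its ergodic components need not satisfy $\mathcal P_{A,B}(\cdot)=a$; the dense-subspace assumption is exactly what lets us replace $\Xi_{\mathbf q^\ast}$ by a nearby potential with a genuinely ergodic equilibrium measure, after which the delicate point is the perturbation argument restoring the exact value $a$ without losing more than $\varepsilon$ of entropy.
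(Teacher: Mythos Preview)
The paper does not prove this theorem; it is quoted from Holanda, with the remark that the argument combines Cuneo's reduction of asymptotically additive sequences to genuine Birkhoff sums with Climenhaga's multifractal formalism under the dense--unique--equilibrium hypothesis. So your proposal is necessarily a different route, and must be judged on its own.

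Your upper bound is fine: the minimax computation $\inf_{\mathbf q}P(\Xi_{\mathbf q})=\sup\{h_\mu:\mathbf b(\mu)=0\}=S$ is the standard convex-dual argument, and the coercivity of $F(\mathbf q)=P(\Xi_{\mathbf q})$ when $a\in\mathrm{Int}(\mathcal P_{A,B}(\mathcal M_f(X)))$ is correctly identified.

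The lower bound, however, has a real gap. When you pass from $\Xi_{\mathbf q^\ast}$ to a nearby $\tilde\phi\in D$, the equilibrium set of $\Xi_{\mathbf q^\ast}$ may be a nontrivial face, and upper semicontinuity of the equilibrium correspondence only guarantees that $\nu_{\tilde\phi}$ is close to \emph{some} equilibrium of $\Xi_{\mathbf q^\ast}$, not to the particular convex combination $\nu^\ast$ with $\mathbf b(\nu^\ast)=0$. Concretely, from $h_{\nu_{\tilde\phi}}(f)+\int\tilde\phi\,d\nu_{\tilde\phi}=P(\tilde\phi)\approx S$ you only get $h_{\nu_{\tilde\phi}}(f)\approx S-\langle\mathbf q^\ast,\mathbf b(\nu_{\tilde\phi})\rangle$, and there is no reason the second term is small; so neither ``$h_{\nu_{\tilde\phi}}(f)$ close to $S$'' nor ``$\mathcal P_{A,B}(\nu_{\tilde\phi})$ close to $a$'' is justified. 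The subsequent ``degree/intermediate-value'' step is then doubly unsupported: the map $\mathbf t\mapsto\mathcal P_{A,B}(\nu_{\tilde\phi+\mathbf t})$ is merely continuous, and the fact that $a$ lies in the interior of $\mathcal P_{A,B}(\mathcal M_f(X))$ says nothing about local surjectivity of \emph{this particular} $d$-parameter family; without a nondegeneracy input (e.g.\ positive-definite pressure Hessian, which is not assumed) a degree argument cannot be launched.

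The route the paper alludes to sidesteps this. Cuneo's theorem replaces each $\Phi^i-a_i\Psi^i$ by a single continuous $\phi_i$ with identical integrals against every invariant measure; Climenhaga then works with the finite-dimensional pressure $\mathbf q\mapsto P\big(\sum q_i\phi_i\big)$ and exploits differentiability of pressure along $D$ to produce, for a dense set of parameters, ergodic equilibria whose $\mathbf b$-values sweep out the interior, together with a careful argument that at the minimiser one can extract an ergodic measure with $\mathbf b=0$ exactly. If you want to repair your approach, the missing ingredient is a genuine mechanism (beyond continuity) forcing the family $\mathbf t\mapsto\nu_{\tilde\phi+\mathbf t}$ to hit the constraint on the nose; one clean way is to parametrise by $\mathbf q$ rather than by arbitrary $D$-perturbations and use monotonicity of subgradients of the convex function $F$.
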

\begin{Rem}
	In fact,  using the work of Climenhaga  \cite[Theorem 3.3]{Climen2013} and Cuneo \cite[Theorem 1.2]{Cuneo2020}, C. Holanda in \cite[Corollary 13]{Holanda} give the proof of Theorem \ref{condition-variation-principle} under the assumption $d=1$ and $(A, B) \in A(f,X)^{d} \times A(f,X)^{d}.$ However, \cite[Theorem 3.3]{Climen2013} is stated for any $d\geq 1$ and \cite[Theorem 1.2]{Cuneo2020} is stated for asymptotically additive  sequence of continuous functions. So one can obtain Theorem \ref{condition-variation-principle} for any $d\geq 1$ and $(A, B) \in AA(f,X)^{d} \times AA(f,X)^{d}.$
\end{Rem}

Consider a  transitive two-sided subshift of finite type $(X,  \sigma)$. Let $D(X)$ be the space of Hölder continuous functions on $X$, then $D(X)$ is a dense subspace of the space of continuous functions on $X$, and every $\varphi \in D(X)$ has a unique equilibrium measure by Corollary \ref{cor-sft-equil}. So the results of Theorem \ref{condition-variation-principle} hold for every transitive two-sided subshift of finite type. And It is easy to see that 
the results of Theorem \ref{condition-variation-principle} are conjugacy invariant. Then we obtain the following results.
\begin{Thm}\label{condition-variation-principle2}
	Suppose $(X,  f)$ is a dynamical system which conjugates to a transitive subshift of ﬁnite type. 
	Let $d \in \mathbb{N}$ and $(A, B) \in AA(f,X)^{d} \times AA(f,X)^{d}$ satisfying \eqref{equation-O}. 
	If $a \not\in \mathcal{P}_{A,B}(\mathcal{M}_f(X))$, then $R_{A,B}(a)=\emptyset.$ If $a \in \mathrm{Int}(\mathcal{P}_{A,B}(\mathcal{M}_f(X)))$, then $R_{A,B}(a)\neq \emptyset$, and the following properties hold:
	\begin{description}
		\item[(1)] $
		\htop(f,R_{A,B}(a))=\sup \left\{h_{\mu}(f): \mu \in \mathcal{M}_f(X) \text { and } \mathcal{P}_{A,B}(\mu)=a\right\}.
		$
		\item[(2)]  For any $\varepsilon>0,$ there is $\mu_{a} \in \mathcal{M}_f^e(X)$ such that $\mathcal{P}_{A,B}\left(\mu_{a}\right)=a$ and
		$
		|\htop(f,R_{A,B}(a))-h_{\mu_{a}}(f)|<\varepsilon.
		$
	\end{description}
\end{Thm}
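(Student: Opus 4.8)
The plan is to prove Theorem \ref{condition-variation-principle2} by reducing it to Theorem \ref{condition-variation-principle} via conjugacy, after first verifying that the hypotheses of Theorem \ref{condition-variation-principle} are satisfied for a transitive two-sided subshift of finite type. First I would check each hypothesis of Theorem \ref{condition-variation-principle} for a transitive two-sided subshift of finite type $(Y,\sigma)$: it is expansive (as a subsystem of the full shift) and has the shadowing property (by the characterization of SFTs), hence its entropy function is upper semi-continuous (expansiveness gives upper semi-continuity of $\mu\mapsto h_\mu$), and $\htop(\sigma,Y)<\infty$ because $Y$ is a subshift on finitely many symbols. For the density hypothesis, I take $D(Y)$ to be the space of H\"older continuous functions, which is dense in $C(Y)$ in the sup norm, and every $\varphi\in D(Y)$ has a unique equilibrium measure by Corollary \ref{cor-sft-equil}. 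Thus Theorem \ref{condition-variation-principle} applies verbatim to $(Y,\sigma)$, giving the conclusion of Theorem \ref{condition-variation-principle2} for $(Y,\sigma)$ in place of $(X,f)$.

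Next I would transport everything through the conjugacy. Let $\pi:X\to Y$ be a homeomorphism with $\pi\circ f=\sigma\circ\pi$. Then $\pi_*:\mathcal{M}_f(X)\to\mathcal{M}_\sigma(Y)$ is an affine homeomorphism preserving entropy, it maps $\mathcal{M}_f^e(X)$ onto $\mathcal{M}_\sigma^e(Y)$, and for any $\Phi=(\varphi_n)_{n\in\mathbb{N}}\in AA(f,X)$ the pushed-forward sequence $\pi\Phi=(\varphi_n\circ\pi^{-1})_{n\in\mathbb{N}}$ lies in $AA(\sigma,Y)$ with $\lim_n\frac1n\int\varphi_n\,d\mu=\lim_n\frac1n\int(\varphi_n\circ\pi^{-1})\,d(\pi_*\mu)$; hence if $(A,B)$ satisfies \eqref{equation-O} on $X$ then $(\pi A,\pi B)$ satisfies \eqref{equation-O} on $Y$, and $\mathcal{P}_{A,B}=\mathcal{P}_{\pi A,\pi B}\circ\pi_*$, so $\mathcal{P}_{A,B}(\mathcal{M}_f(X))=\mathcal{P}_{\pi A,\pi B}(\mathcal{M}_\sigma(Y))$ and the interiors coincide. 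For the level sets, $\pi$ carries $R_{A,B}(a)\subset X$ bijectively onto $R_{\pi A,\pi B}(a)\subset Y$, and since $\pi$ is a homeomorphism it preserves Bowen topological entropy of arbitrary subsets, so $\htop(f,R_{A,B}(a))=\htop(\sigma,R_{\pi A,\pi B}(a))$.

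With these identifications in hand, the three conclusions follow directly: if $a\notin\mathcal{P}_{A,B}(\mathcal{M}_f(X))=\mathcal{P}_{\pi A,\pi B}(\mathcal{M}_\sigma(Y))$ then $R_{\pi A,\pi B}(a)=\emptyset$, hence $R_{A,B}(a)=\emptyset$; if $a\in\mathrm{Int}(\mathcal{P}_{A,B}(\mathcal{M}_f(X)))$ then $R_{\pi A,\pi B}(a)\neq\emptyset$, and item (1) is obtained by combining $\htop(f,R_{A,B}(a))=\htop(\sigma,R_{\pi A,\pi B}(a))$ with the supremum formula on $Y$ and the fact that $\pi_*$ preserves entropy and the constraint $\mathcal{P}_{\pi A,\pi B}(\pi_*\mu)=a$; item (2) follows by taking the ergodic measure $\nu_a$ provided by Theorem \ref{condition-variation-principle} on $Y$ and setting $\mu_a=\pi_*^{-1}\nu_a$, which is $f$-ergodic, satisfies $\mathcal{P}_{A,B}(\mu_a)=a$, and has $h_{\mu_a}(f)=h_{\nu_a}(\sigma)$ within $\varepsilon$ of the entropy of the level set.

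I do not expect a genuine obstacle here; the content of the statement is the already-proved Theorem \ref{condition-variation-principle} plus bookkeeping. The one point that requires a small amount of care — and which I would state explicitly rather than skip — is that Bowen topological entropy of a (possibly non-compact, non-invariant) set is invariant under topological conjugacy: this is immediate from the definition in terms of Bowen balls because $\pi$ is uniformly continuous with uniformly continuous inverse, so it distorts the metrics $d_n$ only by a uniform modulus of continuity, and the covers used to define $C(E;t,\sigma,f)$ transform accordingly. The other routine check is that H\"older continuity on the SFT metric $d(x,y)=2^{-m}$ indeed yields bounded variation in the sense needed for Corollary \ref{cor-sft-equil}, which is exactly the inequality displayed just before Corollary \ref{cor-sft-equil} in the excerpt.
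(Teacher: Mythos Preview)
Your proposal is correct and follows exactly the paper's approach: the paper simply observes that for a transitive two-sided subshift of finite type the space of H\"older continuous functions is dense in $C(X)$ and each such function has a unique equilibrium measure (Corollary~\ref{cor-sft-equil}), so Theorem~\ref{condition-variation-principle} applies, and then remarks that ``it is easy to see that the results of Theorem~\ref{condition-variation-principle} are conjugacy invariant.'' Your write-up is just a more explicit unpacking of that conjugacy-invariance step (pushforward of measures and asymptotically additive sequences, preservation of Bowen entropy of subsets), which the paper leaves to the reader.
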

For a transitive locally maximal hyperbolic set, every Hölder continuous function has a unique equilibrium measure \cite[Example 2]{Bowen}. So we have the following.
\begin{Cor}\label{condition-variation-principle3}
	Suppose that  $(X,f)$ is a system restricted on a transitive locally maximal hyperbolic set. Then the result of Theorem \ref{condition-variation-principle2} holds.
\end{Cor}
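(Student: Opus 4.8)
The plan is to deduce Corollary \ref{condition-variation-principle3} directly from Theorem \ref{condition-variation-principle}, rather than by transporting Theorem \ref{condition-variation-principle2} through a Markov partition (a Markov partition only provides a finite-to-one semiconjugacy onto a transitive subshift of finite type, not a conjugacy, so extra care would be needed to push the conditional variational principle through the factor map). Thus it suffices to check, for a transitive locally maximal hyperbolic set $(X,f)$, the three standing hypotheses of Theorem \ref{condition-variation-principle}: that the entropy map is upper semi-continuous, that $\htop(f,X)<\infty$, and that there is a dense subspace $D\subseteq C(X)$ all of whose elements have a unique equilibrium measure.

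For the first two, I would invoke Lemma \ref{LH}: $(X,f)$ is topologically Anosov, hence expansive. An expansive homeomorphism of a compact metric space is entropy expansive, so the entropy map $\mu\mapsto h_\mu(f)$ is upper semi-continuous on $\mathcal{M}_f(X)$, and the same expansiveness bounds $\htop(f,X)$ by the growth rate of separated sets at the expansive scale, which is finite. For the dense subspace, take $D$ to be the space of H\"older continuous functions (of arbitrary exponent) on $X$: it is a linear subspace, and it is a subalgebra containing the constants and separating points, so by the Stone--Weierstrass theorem it is dense in $C(X)$. That every $\varphi\in D$ has a unique equilibrium measure is exactly \cite[Example 2]{Bowen}; alternatively one may argue as in Section \ref{subsection-equilbrium}, observing that on a hyperbolic set the local product structure gives $d(f^k x,f^k y)\le C\lambda^{\min\{k,n-k\}}d(x,y)$ whenever $d(f^j x,f^j y)$ stays below a small $\varepsilon$ for $0\le j\le n$, so the additive sequence $\bigl(\sum_{k=0}^{n-1}\varphi\circ f^k\bigr)_{n}$ has bounded variation, and then Theorem \ref{theorem-A} applies since $(X,f)$ is topologically Anosov and transitive.

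With these three hypotheses verified, Theorem \ref{condition-variation-principle} applies verbatim to $(X,f)$, giving for every $d\in\mathbb{N}$ and every $(A,B)\in AA(f,X)^d\times AA(f,X)^d$ satisfying \eqref{equation-O} precisely the dichotomy for $R_{A,B}(a)$ together with items (1)--(2) asserted in Theorem \ref{condition-variation-principle2}, which is the content of the corollary. The only mildly delicate point is the bounded-variation estimate for H\"older potentials on a transitive (not necessarily mixing) hyperbolic set; but this is the situation already handled by the spectral-decomposition argument used in the proof of Theorem \ref{theorem-A}, so it requires no new work, and in any case the citation to \cite[Example 2]{Bowen} already supplies the uniqueness directly.
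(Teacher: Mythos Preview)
Your proof is correct and takes essentially the same approach as the paper: the paper's justification is the single sentence ``For a transitive locally maximal hyperbolic set, every H\"older continuous function has a unique equilibrium measure \cite[Example 2]{Bowen},'' which is precisely your route of verifying the hypotheses of Theorem \ref{condition-variation-principle} directly (upper semi-continuity and finite entropy via expansiveness, and $D$ the H\"older functions). Your remark that one cannot simply transport Theorem \ref{condition-variation-principle2} through a Markov partition because that only yields a semi-conjugacy is a worthwhile clarification that the paper leaves implicit.
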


\subsection{Proof of Theorem \ref{thm-continuous}}
Now we show that the results of Theorem \ref{thm-Almost-Additive} and \ref{thm-Almost-Additive2} hold for transitive topologically Anosov systems and asymptotically additive sequences. Let $d=1$ in Theorem \ref{thm-almost}, we obain Theorem \ref{thm-continuous}.
\begin{Thm}\label{thm-almost}
	Suppose that $(X,  f)$ is topologically Anosov and transitive. Let $d \in \mathbb{N}$ and $(A, B) \in AA(f,X)^{d} \times AA(f,X)^{d}$ such that
	$B$ satisfies \eqref{equation-O} and $\mathrm{Int}(\mathcal{P}_{A,B}(\mathcal{M}_f(X)))\neq\emptyset$.  Let $\alpha:\mathcal{M}_f(X)\to \mathbb{R}$ be a continuous function satisfying \eqref{equation-W} and \eqref{equation-AF}. 
	Then:
	\begin{description}
		\item[(I)] For any $a\in \mathrm{Int}(\mathcal{P}_{A,B}(\mathcal{M}_f(X))),$ any $\mu\in \mathcal{P}_{A,B}^{-1}(a)$ and any $\eta,  \zeta>0$, there is $\nu\in \mathcal{P}_{A,B}^{-1}(a)\cap\mathcal{M}_f^{e}(X)$ such that $\rho(\nu,\mu)<\zeta$ and $|P(f,\alpha,\nu)-P(f,\alpha,\mu)|<\eta.$
		\item[(II)] For any $a\in \mathrm{Int}(\mathcal{P}_{A,B}(\mathcal{M}_f(X))),$ any $\mu\in \mathcal{P}_{A,B}^{-1}(a)$ with $P(f,\alpha,\mu)\geq \alpha_{A,B}(a),$  any $\alpha_{A,B}(a)\leq P\leq P(f,\alpha,\mu)$ and any $\eta,  \zeta>0$, there is $\nu\in \mathcal{P}_{A,B}^{-1}(a)\cap \mathcal{M}_f^e(X)$ such that $\rho(\nu,\mu)<\zeta$ and $|P(f,\alpha,\nu)-P|<\eta.$ 
		\item[(III)] For any $a\in \mathrm{Int}(\mathcal{P}_{A,B}(\mathcal{M}_f(X)))$ and $\alpha_{A,B}(a)\leq P< H_{A,B}(f,\alpha,a),$
		the set $\{\mu\in \mathcal{P}_{A,B}^{-1}(a)\cap \mathcal{M}_f^e(X):P(f,\alpha,\mu)=P,S_\mu=X\}$ is a dense $G_\delta$ subset of $\{\mu\in \mathcal{P}_{A,B}^{-1}(a):P(f,\alpha,\mu)\geq P\}.$
		\item[(IV)] $\{(\mathcal{P}_{A,B}(\mu), P(f,\alpha,\mu)):\mu\in \mathcal{M}_f(X),\ a=\mathcal{P}_{A,B}(\mu)\in\mathrm{Int}(\mathcal{P}_{A,B}(\mathcal{M}_f(X))),\ \alpha_{A,B}(a)\leq P(f,\alpha,\mu)< H_{A,B}(f,\alpha,a)
		\}$ = $\{(\mathcal{P}_{A,B}(\mu), P(f,\alpha,\mu)):\mu\in \mathcal{M}_f^e(X),\ a=\mathcal{P}_{A,B}(\mu)\in\mathrm{Int}(\mathcal{P}_{A,B}(\mathcal{M}_f(X))), \alpha_{A,B}(a)\leq P(f,\alpha,\mu)< H_{A,B}(f,\alpha,a)\}.$   
	\end{description}
    If further $\alpha=0,$ $(A, B) \in A(f,X)^{d} \times A(f,X)^{d}$ satisfying \eqref{equation-BA} and $\Phi^i, \Psi^i$  have bounded variation for any $1\leq i\leq d,$ then
	\begin{description}
	\item[(V)]   $\{(\mathcal{P}_{A,B}(\mu), h_{\mu}(f)):\mu\in \mathcal{M}_f(X),\ \mathcal{P}_{A,B}(\mu)\in\mathrm{Int}(\mathcal{P}_{A,B}(\mathcal{M}_f(X)))\}=\{(\mathcal{P}_{A,B}(\mu), h_\mu(f)):\mu\in \mathcal{M}_f^e(X),\ \mathcal{P}_{A,B}(\mu)\in\mathrm{Int}(\mathcal{P}_{A,B}(\mathcal{M}_f(X)))\}.$
\end{description}
\end{Thm}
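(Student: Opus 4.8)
## Proof proposal for Theorem \ref{thm-almost}

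The plan is to verify that the hypotheses of the abstract Theorems \ref{thm-Almost-Additive} and \ref{thm-Almost-Additive2} are satisfied when we take $C_1=\mathcal{M}_f(X)$ and $C_2=\mathcal{M}_f^e(X)$, and then simply read off items (I)–(V). The work therefore breaks into four independent verifications: (a) $C_1$ is convex and $C_2$ is a dense $G_\delta$ subset of $C_1$; (b) the entropy map $\mu\mapsto h_\mu(f)$ is upper semi-continuous on $C_1$ and $\{\mu\in C_1: h_\mu(f)=0\}$ is dense; (c) $(X,f)$ satisfies the locally conditional variational principle with respect to $(C_1,C_2)$; (d) the additional density hypothesis needed for item (IV), and the hypotheses of Theorem \ref{BarreiraDoutor2009-theorem3} (equivalently, uniqueness of equilibrium states) needed for item (V).

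Item (a) is immediate: $\mathcal{M}_f(X)$ is convex, and by Corollary \ref{Corollary-zero-metric-entropy}(4), $\mathcal{M}_f^e(X)$ is a dense $G_\delta$ subset of $\mathcal{M}_f(X)$. For (b), upper semi-continuity of the entropy map follows because a transitive topologically Anosov system is expansive, and expansive systems have upper semi-continuous entropy function (a standard fact, e.g.\ via \cite{Walters}); density of the zero-entropy measures is exactly Corollary \ref{Corollary-zero-metric-entropy}(1), and the existence of a full-support invariant measure $\mu_{\mathrm{full}}$ is Corollary \ref{Corollary-zero-metric-entropy}(3). The main substantive step is (c). Here I would invoke Theorem \ref{Mainlemma-convex-by-horseshoe}: given $\{\mu_i\}_{i=1}^m\subseteq \mathcal{M}_f(X)=C_1$ and $\eta,\zeta>0$, it produces compact invariant sets $\Lambda_i\subseteq\Lambda\subsetneq X$ with $\htop(f,\Lambda_i)>h_{\mu_i}(f)-\eta$, with $d_H(K,\mathcal{M}_f(\Lambda))<\zeta$ and $d_H(\mu_i,\mathcal{M}_f(\Lambda_i))<\zeta$, and — crucially — with $(\Lambda,f)$ conjugate to a transitive two-sided subshift of finite type. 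Properties (1)(2) of the locally conditional variational principle are thus verified, and (3), $\mathcal{M}_f(\Lambda)\subseteq C_1=\mathcal{M}_f(X)$, is trivial since $\Lambda$ is $f$-invariant. For property (4) I would apply the conditional variational principle on the horseshoe: since $(\Lambda,f)$ conjugates to a transitive SFT, Theorem \ref{condition-variation-principle2} applies to $(A,B)$ restricted to $\Lambda$, and its item (2) gives, for every $a\in\mathrm{Int}(\mathcal{P}_{A,B}(\mathcal{M}_f(\Lambda)))$ and every $\varepsilon>0$, an ergodic measure $\mu_a\in\mathcal{M}_f^e(\Lambda)$ with $\mathcal{P}_{A,B}(\mu_a)=a$ and $h_{\mu_a}(f)>\htop(f,R_{A,B}(a))-\varepsilon = H_{A,B}(f,a,\mathcal{M}_f(\Lambda))-\varepsilon$; hence $H_{A,B}(f,a,\mathcal{M}_f(\Lambda))=H_{A,B}(f,a,\mathcal{M}_f(\Lambda)\cap\mathcal{M}_f^e(X))$, which is property (4) with $C_2=\mathcal{M}_f^e(X)$. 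This also explains the role of the hypothesis $\mathrm{Int}(\mathcal{P}_{A,B}(\mathcal{M}_f(X)))\neq\emptyset$: via Lemma \ref{Lem-interior} one transfers it down to $\mathrm{Int}(\mathcal{P}_{A,B}(\mathcal{M}_f(\Lambda)))\neq\emptyset$ so that Theorem \ref{condition-variation-principle2} can be invoked.

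With (a)–(c) in hand, Theorem \ref{thm-Almost-Additive} yields item (I) directly, and Theorem \ref{thm-Almost-Additive2} yields items (II) and (III); for (III) I additionally invoke Lemma \ref{lemma-A}(2) with $\mu_{\mathrm{full}}$ to upgrade to full support, and Remark \ref{Rem-baire} (applicable since, with $C_1=\mathcal{M}_f(X)$, the relevant super-level set of $P(f,\alpha,\cdot)$ is compact, hence Baire, by upper semi-continuity of entropy) to conclude that the set in question is a dense $G_\delta$ in $\{\mu\in\mathcal{P}_{A,B}^{-1}(a): P(f,\alpha,\mu)\geq P\}$. For item (IV) I must check the hypothesis of Theorem \ref{thm-Almost-Additive2}(IV), namely that $\{\mu\in\mathcal{P}_{A,B}^{-1}(a)\cap C_2: P(f,\alpha,\mu)=P\}$ is dense in $\{\mu\in\mathcal{P}_{A,B}^{-1}(a)\cap C_1: P(f,\alpha,\mu)\geq P\}$; this follows by combining Lemma \ref{lemma-A}(3) (which gives density of the measures with $P(f,\alpha,\cdot)=P$ among those with $P(f,\alpha,\cdot)\geq P$, using density of zero-entropy measures) with the locally conditional variational principle's output already used for (II). Finally, for item (V) ($\alpha\equiv 0$, $(A,B)\in A(f,X)^d\times A(f,X)^d$ satisfying \eqref{equation-BA}, with $\Phi^i,\Psi^i$ of bounded variation): by Theorem \ref{theorem-A} every element of $\mathrm{span}\{\Phi^1,\Psi^1,\dots,\Phi^d,\Psi^d,U\}\subseteq A(f,X)$ has a unique equilibrium measure, so Theorem \ref{BarreiraDoutor2009-theorem3} applies globally on $(X,f)$, and its item (2) furnishes, for each $a\in\mathrm{Int}(\mathcal{P}_{A,B}(\mathcal{M}_f(X)))$, an ergodic measure attaining $\max\{h_\mu(f):\mathcal{P}_{A,B}(\mu)=a\}=H_{A,B}(f,a)$; combining this top value (now realized ergodically) with items (II)–(IV) applied at the level below it gives the full-column statement (V). The main obstacle I anticipate is bookkeeping in step (c)–(IV): one must be careful that the ``$C_2$'' produced inside the horseshoe, $\mathcal{M}_f^e(\Lambda)$, really does sit inside the global $\mathcal{M}_f^e(X)$ (true, since ergodicity is intrinsic) and that the interiors $\mathrm{Int}(\mathcal{P}_{A,B}(\mathcal{M}_f(\Lambda)))$ are nonempty and contain the relevant value $a$ — which is precisely what Lemma \ref{Lem-interior} and Corollary \ref{corollary-AA} are designed to guarantee.
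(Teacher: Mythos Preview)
Your proposal is correct and follows essentially the same approach as the paper: set $C_1=\mathcal{M}_f(X)$, $C_2=\mathcal{M}_f^e(X)$, verify the hypotheses of Theorems \ref{thm-Almost-Additive} and \ref{thm-Almost-Additive2} using the multi-horseshoe property (Theorem \ref{Mainlemma-convex-by-horseshoe}), the conditional variational principle on the resulting SFT-conjugate horseshoes (Theorem \ref{condition-variation-principle2}), Corollary \ref{Corollary-zero-metric-entropy}, and upper semi-continuity from expansiveness; then read off (I)--(IV) via Remark \ref{Rem-baire}, and finally get (V) from item (IV) combined with Theorem \ref{BarreiraDoutor2009-theorem3} and Theorem \ref{theorem-A}. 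The only minor simplification is that the density hypothesis for (IV) is immediate from Remark \ref{Rem-baire} alone (since $C_1=\mathcal{M}_f(X)$ makes the super-level set compact, hence Baire), rather than requiring a separate appeal to Lemma \ref{lemma-A}(3).
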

\begin{proof}
	By Theorem \ref{Mainlemma-convex-by-horseshoe} and Theorem \ref{condition-variation-principle2}, $(X,  f)$ satisfies the  locally conditional variational principle with respect to $(\mathcal{M}_f(X),\mathcal{M}_f^e(X))$. By Corollary \ref{Corollary-zero-metric-entropy}(4),
	$\mathcal{M}_f^e(X)$ is a dense $G_\delta$ subset of $\mathcal{M}_f(X)$.
	Since $(X,f)$ is expansive, then the entropy function is upper semi-continuous from \cite[Theorem 8.2]{Walters}.   
	Then we obtain item(I) by Theorem \ref{thm-Almost-Additive}, obtain item(II)-(IV) by Theorem \ref{thm-Almost-Additive2}, Remark \ref{Rem-baire} and Corollary \ref{Corollary-zero-metric-entropy}.
	Finally, if further $\alpha=0,$ then by item(IV) we have  $\{(\mathcal{P}_{A,B}(\mu), h_{\mu}(f)):\mu\in \mathcal{M}_f(X),\ a=\mathcal{P}_{A,B}(\mu)\in\mathrm{Int}(\mathcal{P}_{A,B}(\mathcal{M}_f(X))),\ 0\leq h_{\mu}(f)< H_{A,B}(f,a)\}$ coincides with $\{(\mathcal{P}_{A,B}(\mu), h_{\mu}(f)):\mu\in \mathcal{M}_f^e(X),\ a=\mathcal{P}_{A,B}(\mu)\in\mathrm{Int}(\mathcal{P}_{A,B}(\mathcal{M}_f(X))),\ 0\leq h_{\mu}(f)< H_{A,B}(f,a)\}.$  Combining with Theorem \ref{BarreiraDoutor2009-theorem3} and Theorem \ref{theorem-A}, we obtain item (V).
\end{proof}
\begin{Rem}
	Denote $\mathcal{T}_{A,B,f}(\mu)=(\mathcal{P}_{A,B}(\mu),h_\mu(f))$ for any $\mu\in\mathcal{M}_f(X).$  When $\alpha=0,$ by (\ref{equ-product-inter}),  $$\mathrm{Int}(\mathcal{T}_{A,B,f}(\mathcal{M}_f(X)))=\{(a,h): a\in\mathrm{Int}(\mathcal{P}_{A,B}(\mathcal{M}_f(X))),\ 0< h< H_{A,B}(f,a)\}.$$     By Theorem \ref{thm-almost}(III), we have $\mathrm{Int}(\mathcal{T}_{A,B,f}(\mathcal{M}_f(X)))\subset \mathcal{T}_{A,B,f}(\mathcal{M}_f^e(X)).$ So $\mathrm{Int}(\mathcal{T}_{A,B,f}(\mathcal{M}_f(X)))\subset \mathrm{Int}(\mathcal{T}_{A,B,f}(\mathcal{M}_f^e(X))),$ and thus  $\mathrm{Int}(\mathcal{T}_{A,B,f}(\mathcal{M}_f(X)))= \mathrm{Int}(\mathcal{T}_{A,B,f}(\mathcal{M}_f^e(X))).$
\end{Rem}

From Lemma \ref{LH} and \ref{SFT}, every system restricted on a locally maximal hyperbolic set or a  two-sided subshift of finite type is topologically Anosov. So we have the following corollary.
\begin{Cor}\label{corollary-hyper-aa}
	Suppose that  $(X,f)$ is a system restricted on a transitive locally maximal hyperbolic set or a transitive two-sided subshift of finite type.  Then the results of Theorem \ref{thm-almost} hold.
\end{Cor}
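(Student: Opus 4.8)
The plan is to deduce the corollary directly from Theorem \ref{thm-almost}, whose only structural hypotheses on the ambient system are transitivity together with being topologically Anosov; all of the remaining hypotheses concern the data $(A,B)$ and $\alpha$ and are carried over verbatim into the statement of the corollary. So the entire task reduces to recognizing that a transitive locally maximal hyperbolic set and a transitive two-sided subshift of finite type are each topologically Anosov. Once that is in hand, Theorem \ref{thm-almost}, together with all of its items (I)--(V), applies with no modification.

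First I would treat the hyperbolic case. Let $\Lambda$ be a transitive locally maximal hyperbolic set and consider the subsystem $(\Lambda,f)$. By Lemma \ref{LH}, $(\Lambda,f)$ is topologically Anosov, and transitivity is assumed, so Theorem \ref{thm-almost} applies with $X=\Lambda$. For the symbolic case, let $(X,\sigma)$ be a transitive two-sided subshift of finite type; by Lemma \ref{SFT} it is topologically Anosov, and it is transitive by hypothesis, so Theorem \ref{thm-almost} applies with $f=\sigma$. In both cases the standing assumptions of Theorem \ref{thm-almost} --- that $B$ satisfies \eqref{equation-O}, that $\mathrm{Int}(\mathcal{P}_{A,B}(\mathcal{M}_f(X)))\neq\emptyset$, that $\alpha$ satisfies \eqref{equation-W} and \eqref{equation-AF}, and, for item (V), that $(A,B)\in A(f,X)^{d}\times A(f,X)^{d}$ satisfies \eqref{equation-BA} with each $\Phi^i$ and $\Psi^i$ of bounded variation --- are precisely the hypotheses placed on the corollary, so nothing further needs to be verified.

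There is essentially no obstacle: the content of the corollary is entirely contained in Theorem \ref{thm-almost} and Lemmas \ref{LH} and \ref{SFT}, and the proof is a one-line invocation of these facts. The only point worth recording is that, by Lemma \ref{LH2}, a transitive Anosov diffeomorphism on a compact Riemannian manifold is a mixing locally maximal hyperbolic set, so the corollary in particular covers transitive Anosov diffeomorphisms; this is the bridge between the abstract asymptotically additive framework and the geometric applications (Theorems \ref{thm-inter-huasdorff}, \ref{thm-Lyapunov}, \ref{thm-first-return}) established later in the paper.
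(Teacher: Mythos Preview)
Your proposal is correct and follows exactly the same approach as the paper: the paper simply notes that, by Lemma \ref{LH} and Lemma \ref{SFT}, every system restricted on a locally maximal hyperbolic set or a two-sided subshift of finite type is topologically Anosov, so Theorem \ref{thm-almost} applies directly.
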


\subsection{Proof of Theorem \ref{thm-continuous-2}} \label{section-applications}
Now we show that  Theorem \ref{thm-continuous-2} holds in the more general context of asymptotically additive sequences. 

Let $f$ be a $C^{1}$ diffeomorphism on a compact Riemannian manifold  $M$ and $p$ be a hyperbolic periodic point.  
We say  $\mu\in \mathcal{M}_f(M)$ is supported on a $p$-horseshoe if there is  a transitive locally maximal hyperbolic set which contains a hyperbolic periodic point $q$ homoclinically related to $p$ such that $\mu\in \mathcal{M}_f(\Lambda).$
We denote $\mathcal{M}(p)$ the set of invariant measures  supported on $p$-horseshoes.

\begin{Thm}\label{thm-almost-2}
	Let $f$ be a $C^{1}$ diffeomorphism on a compact Riemannian manifold  $M$, $p$ be a hyperbolic periodic point, and $C$ be a convex set satisfying $\mathcal{M}(p)\subset C\subset \mathcal{M}_{horse}(p).$  Assume that the entropy function $\mu\to h_\mu(f)$ is upper semi-continuous on $C$. Let $d \in \mathbb{N}$ and $(A, B) \in AA(f,M)^{d} \times AA(f,M)^{d}$ satisfying \eqref{equation-O} and $\mathrm{Int}(\mathcal{P}_{A,B}(C))\neq\emptyset$. Let $\alpha:\mathcal{M}_f(M)\to \mathbb{R}$ be a continuous function satisfying \eqref{equation-W} and \eqref{equation-AF}. 
	Then:
	\begin{description}
		\item[(I)] For any $a\in \mathrm{Int}(\mathcal{P}_{A,B}(C)),$ any $\mu\in \mathcal{P}_{A,B}^{-1}(a)\cap C$ and any $\eta,  \zeta>0$, there is $\nu\in \mathcal{P}_{A,B}^{-1}(a)\cap C\cap  \mathcal{M}_f^e(M)$ such that $\rho(\nu,\mu)<\zeta$ and $|P(f,\alpha,\nu)-P(f,\alpha,\mu)|<\eta.$
		\item[(II)] For any $a\in \mathrm{Int}(\mathcal{P}_{A,B}(C)),$ any $\mu\in \mathcal{P}_{A,B}^{-1}(a)\cap C$ with $P(f,\alpha,\mu)\geq \alpha_{A,B}(a,C),$ any $\alpha_{A,B}(a,C)  \leq P\leq P(f,\alpha,\mu)$ and any $\eta,  \zeta>0$, there is $\nu\in \mathcal{P}_{A,B}^{-1}(a)\cap C\cap  \mathcal{M}_f^e(M)$ such that $\rho(\nu,\mu)<\zeta$ and $|P(f,\alpha,\nu)-P|<\eta.$ 
		\item[(III)] For  any $a\in \mathrm{Int}(\mathcal{P}_{A,B}(C))$ and $\alpha_{A,B}(a,C)\leq P< H_{A,B}(f,\alpha,a,C),$ $\{\mu\in \mathcal{P}_{A,B}^{-1}(a)\cap C\cap  \mathcal{M}_f^e(M):P(f,\alpha,\mu)=P\}$ is a dense $G_\delta$ subset of $\{\mu\in \mathcal{P}_{A,B}^{-1}(a)\cap C:P(f,\alpha,\mu)\geq P\}.$
		\item[(IV)] The set $\{(\mathcal{P}_{A,B}(\mu), P(f,\alpha,\mu)):\mu\in C,\ a=\mathcal{P}_{A,B}(\mu)\in\mathrm{Int}(\mathcal{P}_{A,B}(C)), \alpha_{A,B}(a,C)\leq P(f,\alpha,\mu)< H_{A,B}(f,\alpha,a,C)\}$ coincides with the set  $\{(\mathcal{P}_{A,B}(\mu), P(f,\alpha,\mu)):\mu\in C\cap  \mathcal{M}_f^e(M),\ a=\mathcal{P}_{A,B}(\mu)\in\mathrm{Int}(\mathcal{P}_{A,B}(C)), \alpha_{A,B}(a,C)\leq P(f,\alpha,\mu)< H_{A,B}(f,\alpha,a,C)\}.$ 
	\end{description}
\end{Thm}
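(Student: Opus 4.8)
The plan is to deduce Theorem \ref{thm-almost-2} from the abstract machinery of Sections \ref{Almost Additive} and \ref{section-almost2}, exactly as Theorem \ref{thm-almost} was deduced for topologically Anosov systems, but with the pair $(C_1,C_2)$ replaced by $(C,C\cap\mathcal{M}_f^e(M))$. The single crucial verification is that, under the hypotheses of the theorem, $(X,f)=(M,f)$ satisfies the \emph{locally conditional variational principle} with respect to $(C,C\cap\mathcal{M}_f^e(M))$; once this is in hand, items (I)--(IV) follow verbatim from Theorem \ref{thm-Almost-Additive} and Theorem \ref{thm-Almost-Additive2}, using that $C\cap\mathcal{M}_f^e(M)$ is a dense $G_\delta$ subset of $C$ and that $\{\mu\in C:h_\mu(f)=0\}$ is dense in $C$ (both of which need to be established along the way).

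The main step is therefore to construct, for any $\{\mu_i\}_{i=1}^m\subseteq C$ and any $\eta,\zeta>0$, compact invariant sets $\Lambda_i\subseteq\Lambda\subset M$ with $\htop(f,\Lambda_i)>h_{\mu_i}(f)-\eta$, $d_H(K,\mathcal{M}_f(\Lambda))<\zeta$ and $d_H(\mu_i,\mathcal{M}_f(\Lambda_i))<\zeta$ (where $K=\cov\{\mu_i\}_{i=1}^m$), such that $\mathcal{M}_f(\Lambda)\subset C$ and such that $(\Lambda,f)$ satisfies the hyperbolicity-type property needed for clause (4) of the locally conditional variational principle. First I would recall that each $\mu_i\in C\subseteq\mathcal{M}_{horse}(p)$ can be approximated by $p$-horseshoes: there is a transitive locally maximal hyperbolic set $\Lambda_i$ containing a saddle $q_i$ homoclinically related to $p$, with $\mu_i^\varepsilon\in\mathcal{M}_f(\Lambda_i)$ close to $\mu_i$ and $h_{\mu_i^\varepsilon}(f)>h_{\mu_i}(f)-\eta$. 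Since all the $q_i$ are homoclinically related to $p$ (hence to each other), a standard argument — taking a finite piece of a transitive orbit in each $\Lambda_i$ together with connecting heteroclinic orbits and invoking the shadowing/$\lambda$-lemma on a larger locally maximal hyperbolic block — produces a single transitive locally maximal hyperbolic set $\Lambda$ containing $q_1,\dots,q_m$ (all homoclinically related to $p$) and containing each $\Lambda_i$ up to the required closeness; then $\mathcal{M}_f(\Lambda)\subset\mathcal{M}(p)\subset C$, and $\mathcal{M}_f(\Lambda)$ is weak$^*$-close to $K$ because every measure supported on $\Lambda$ is, up to $\zeta$, a convex combination of measures near the $\mu_i$'s by the transitivity-gluing argument of Theorem \ref{Mainlemma-convex-by-horseshoe}. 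Clause (4) is then immediate from Corollary \ref{condition-variation-principle3}, since $(\Lambda,f)$ is a system restricted to a transitive locally maximal hyperbolic set and hence satisfies the conditional variational principle of Theorem \ref{condition-variation-principle2} for asymptotically additive sequences; property $\mathbf{P}$ here is simply ``$(\Lambda,f)$ is restricted to a transitive locally maximal hyperbolic set containing a saddle homoclinically related to $p$'', which is exactly what places $\mathcal{M}_f(\Lambda)$ inside $\mathcal{M}(p)$ and hence inside $C$.

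The density of zero-entropy measures in $C$ and the fact that $C\cap\mathcal{M}_f^e(M)$ is a dense $G_\delta$ in $C$ follow from the horseshoe construction: periodic measures supported on the $q_i$-horseshoes lie in $\mathcal{M}(p)\subset C$ and are dense in $C$ (by clause (2) of the locally conditional variational principle just verified), periodic measures have zero entropy, and $\mathcal{M}_f^e(M)$ is always a $G_\delta$ in $\mathcal{M}_f(M)$ by \cite[Proposition 5.7]{DGM2019}, so its trace on $C$ is a dense $G_\delta$ since the dense set of periodic measures is ergodic. Upper semi-continuity of $\mu\mapsto h_\mu(f)$ on $C$ is assumed. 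With all hypotheses of Theorems \ref{thm-Almost-Additive} and \ref{thm-Almost-Additive2} verified for $(C_1,C_2)=(C,C\cap\mathcal{M}_f^e(M))$, items (I)--(IV) drop out directly.

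The hard part will be the gluing of several distinct $p$-horseshoes into a single transitive locally maximal hyperbolic set $\Lambda$ with $\mathcal{M}_f(\Lambda)$ simultaneously close to $K=\cov\{\mu_i\}$ and with the right entropy bounds on each $\Lambda_i$ — essentially porting the pseudo-orbit-concatenation argument of the proof of Theorem \ref{Mainlemma-convex-by-horseshoe} from the abstract topologically Anosov setting to the $C^1$ hyperbolic-block setting, where one must additionally control that the newly created hyperbolic set is \emph{locally maximal} and that its periodic orbits remain homoclinically related to $p$. This is where the homoclinic relation hypothesis is used essentially: it guarantees the heteroclinic connections between the $\Lambda_i$ needed to merge them transitively, and it is preserved under the construction, keeping $\mathcal{M}_f(\Lambda)\subset\mathcal{M}(p)\subset C$. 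Everything else is a routine transcription of the arguments already carried out in Sections \ref{section-entropy-dense}--\ref{section-thm}.
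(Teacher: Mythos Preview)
Your overall strategy is exactly the paper's: establish the locally conditional variational principle for $(C,C\cap\mathcal{M}_f^e(M))$ via a multi-horseshoe construction (this is the content of Theorem \ref{def-strong-basic-2} in the paper, and your sketch of it---approximate each $\mu_i$ by a $p$-horseshoe, merge the horseshoes via the homoclinic relation into one transitive locally maximal hyperbolic $\Lambda_\tau$, then run the pseudo-orbit argument of Theorem \ref{Mainlemma-convex-by-horseshoe} inside $\Lambda_\tau$---matches the paper's proof), verify density of ergodic and zero-entropy measures in $C$ (Lemma \ref{lemma-G}), and then invoke Theorems \ref{thm-Almost-Additive} and \ref{thm-Almost-Additive2}.

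There is, however, one genuine gap in your reduction. You assert that items (III)--(IV) ``drop out directly'' from Theorem \ref{thm-Almost-Additive2}, exactly as in the proof of Theorem \ref{thm-almost}. But in that proof the key step is Remark \ref{Rem-baire}, which uses that $\{\mu\in\mathcal{P}_{A,B}^{-1}(a):P(f,\alpha,\mu)\geq P\}$ is \emph{compact} (since $C_1=\mathcal{M}_f(X)$ there) and hence Baire, so the two separate dense $G_\delta$ statements in Theorem \ref{thm-Almost-Additive2}(III) can be intersected. Here $C$ is an arbitrary convex set between $\mathcal{M}(p)$ and $\mathcal{M}_{horse}(p)$; it need not be closed, so $\{\mu\in\mathcal{P}_{A,B}^{-1}(a)\cap C:P(f,\alpha,\mu)\geq P\}$ need not be Baire, and you cannot conclude that the intersection $\{\mu\in\mathcal{P}_{A,B}^{-1}(a)\cap C\cap\mathcal{M}_f^e(M):P(f,\alpha,\mu)=P\}$ is dense merely from the two dense $G_\delta$ pieces. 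The paper closes this gap with a separate argument, Lemma \ref{Lem-baire}: given $\mu_0$ with $P(f,\alpha,\mu_0)\geq P_0$, one first perturbs to $\mu''$ with $P(f,\alpha,\mu'')>P_0$, then uses Lemma \ref{Lem-interior} together with the multi-horseshoe property to find a transitive locally maximal hyperbolic $\Lambda$ with $a_0\in\mathrm{Int}(\mathcal{P}_{A,B}(\mathcal{M}_f(\Lambda)))$, $\mathcal{M}_f(\Lambda)$ close to $\mu''$, and $H_{A,B}(f,\alpha,a_0,\mathcal{M}_f(\Lambda))>P_0$; finally one applies Theorem \ref{thm-almost}(III) \emph{on the compact horseshoe $\Lambda$} to extract an ergodic $\nu$ with $\mathcal{P}_{A,B}(\nu)=a_0$ and $P(f,\alpha,\nu)=P_0$ exactly. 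This extra ``localize to a compact horseshoe, then use Baire there'' step is what your plan is missing; it also supplies the extra hypothesis required in Theorem \ref{thm-Almost-Additive2}(IV).
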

\begin{Rem}
	By Lemma \ref{lemma-H(p)}, $\mathcal{M}_{horse}(p)$ is convex. So we obtain Theorem \ref{thm-continuous-2} by applying Theorem \ref{thm-almost-2} to $C=\mathcal{M}_{horse}(p).$
\end{Rem}
There are many transitive systems for which the whole space is a homoclinic class and the entropy function is upper semi-continuous. For these systems we dan define $\mathcal{M}_{horse}(p)$. \\
(i) the nonuniformly hyperbolic diffeomorphisms constructed by Katok \cite{Katok-ex}. For arbitrary compact connected two-dimensional manifold $M$, A. Katok proved that there exists a $C^\infty$ diffeomorphism $f$ such that the Riemannian volume $m$ is an $f$-invariant ergodic hyperbolic measure. From \cite[Theorem S.5.3]{KatHas}) we know that the support of any ergodic and non-atomic hyperbolic measure of a $C^{1+\alpha}$ diffeomorphism is contained in a non-trivial homoclinic class, then there is a hyperbolic periodic point $p$ such that $M=S_m=H(p).$ Moreover, J. Buzzi \cite{Buzzi1997-2} showed that every $C^\infty$ diffeomorphism is asymptotically entropy expansive which implies that the entropy function is upper semi-continuous by \cite[Theorem 20.9]{DGS}.
\\(ii) generic systems in the space of robustly transitive diffeomorphisms $\operatorname{Diff}^{1}_{RT}(M).$ By the robustly transitive partially hyperbolic diffeomorphisms constructed by Ma\~{n}\'{e} \cite{Mane-ex} and the robustly transitive nonpartially hyperbolic diffeomorphisms constructed by Bonatti and Viana \cite{BV-ex}, we know that $\operatorname{Diff}^{1}_{RT}(M)$ is a non-empty open set in $\operatorname{Diff}^{1}(M).$ Since any non-trivial isolated transitive set of $C^{1}$ generic diffeomorphism is a non-trivial homoclinic class \cite{BD1999},  we have that $$\mathcal{R}_1=\{f\in \operatorname{Diff}^{1}_{RT}(M): \text{ there is a hyperbolic periodic point }  p \text{ such that } M=H(p) \}$$ is generic in  $\operatorname{Diff}^{1}_{RT}(M).$ Moreover,  $C^1$ generically in any dimension, isolated homoclinic classes are entropy expansive \cite{PV2008}.  Since  entropy expansive implies upper semi-continuous of the entropy function, then $\mathcal{R}_2=\{f\in \mathcal{R}_1: \text{ the entropy function is upper semi-continuous} \}$ is generic in  $\operatorname{Diff}^{1}_{RT}(M).$\\
(iii) generic systems in the space of volume-preserving diffeomorphisms $\operatorname{Diff}^{1}_{vol}(M).$ Let $M$ be a compact connected Riemannian manifold. Bonatti and Crovisier proved in \cite[Theorem 1.3]{BC2004} that there exists a residual $C^1$-subset $\mathcal{R}_1$ of $\operatorname{Diff}^{1}_{vol}(M)$ such that every $f\in\mathcal{R}_1$ is  transitive. Moreover, by its proof on page 79 and page 87 of \cite{BC2004}, if $f\in\mathcal{R}_1$ then there is a hyperbolic periodic point $p$ such that $M=H(p).$ Since the space of diffeomorphisms away from homoclinic tangencies  $\operatorname{Diff}^{1}(M)\setminus\overline{HT}$ is open in $\operatorname{Diff}^{1}(M),$ then
$\mathcal{R}_2=\mathcal{R}_1\cap \operatorname{Diff}^{1}(M)\setminus\overline{HT}$ is generic in $\operatorname{Diff}^{1}_{vol}(M)\setminus\overline{HT}.$ Moreover, every $C^1$ diffeomorphism away from homoclinic tangencies is entropy expansive \cite{LVY2013}.  Note that  entropy expansive implies upper semi-continuous of the entropy function, if $f\in\mathcal{R}_2$ then  there is a hyperbolic periodic point $p$ such that $M=H(p)$ and the entropy function is upper semi-continuous.

\subsubsection{Some lemmas}
\begin{Lem}\label{lemma-H(p)}
	Let $f$ be a $C^{1}$ diffeomorphism on a compact Riemannian manifold  $M$ and $p$ be a hyperbolic periodic point. Then the set $\mathcal{M}_{horse}(p)$ is convex.
\end{Lem}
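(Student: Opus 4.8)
The statement to prove is that $\mathcal{M}_{horse}(p)$ is convex. Recall that $\mu \in \mathcal{M}_{horse}(p)$ means: for every $\varepsilon > 0$ there is an $f$-invariant compact set $\Lambda_\varepsilon$ which is a transitive locally maximal hyperbolic set containing a hyperbolic saddle $q$ homoclinically related to $p$, and an invariant measure $\mu_\varepsilon \in \mathcal{M}_f(\Lambda_\varepsilon)$ with $\rho(\mu, \mu_\varepsilon) < \varepsilon$ and $h_{\mu_\varepsilon}(f) > h_\mu(f) - \varepsilon$.

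The plan is to take $\mu_1, \mu_2 \in \mathcal{M}_{horse}(p)$ and $\theta \in [0,1]$, and show $\mu_\theta := \theta\mu_1 + (1-\theta)\mu_2 \in \mathcal{M}_{horse}(p)$. First I would fix $\varepsilon > 0$ and obtain from the definition hyperbolic sets $\Lambda_1$ (with measure $\nu_1$) and $\Lambda_2$ (with measure $\nu_2$) approximating $\mu_1$ and $\mu_2$ respectively, each containing a saddle homoclinically related to $p$, with $\rho(\mu_i, \nu_i) < \varepsilon$ and $h_{\nu_i}(f) > h_{\mu_i}(f) - \varepsilon$. The key idea is that since both $\Lambda_1$ and $\Lambda_2$ contain saddles homoclinically related to $p$ (hence to each other, as homoclinic relatedness is an equivalence relation on the set of hyperbolic periodic points via the homoclinic class of $p$), one can build a single transitive locally maximal hyperbolic set $\Lambda$ containing both — or at least containing hyperbolic sets carrying measures close to $\nu_1$ and $\nu_2$. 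Concretely, using the shadowing/specification available inside a neighborhood of the homoclinic class (or directly: take a horseshoe $\Lambda$ that contains subhorseshoes $\Lambda_1'$ near $\Lambda_1$ and $\Lambda_2'$ near $\Lambda_2$ — this is exactly the kind of construction in Theorem \ref{Mainlemma-convex-by-horseshoe} applied to the hyperbolic sets, or one invokes the standard fact that two homoclinically related horseshoes are contained in a common transitive hyperbolic set), one gets $\nu_i' \in \mathcal{M}_f(\Lambda)$ with $\rho(\nu_i', \nu_i)$ small and $h_{\nu_i'}(f)$ close to $h_{\nu_i}(f)$.

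Then I would set $\nu_\theta := \theta \nu_1' + (1-\theta)\nu_2' \in \mathcal{M}_f(\Lambda)$, which is legitimate because $\mathcal{M}_f(\Lambda)$ is convex. Affinity of $\mu \mapsto \int \varphi\, d\mu$ gives $\rho(\mu_\theta, \nu_\theta) \leq \theta\rho(\mu_1, \nu_1') + (1-\theta)\rho(\mu_2, \nu_2')$, which can be made $< \varepsilon$ by choosing the approximations fine enough; and affinity of metric entropy gives $h_{\nu_\theta}(f) = \theta h_{\nu_1'}(f) + (1-\theta) h_{\nu_2'}(f) > \theta h_{\mu_1}(f) + (1-\theta)h_{\mu_2}(f) - \varepsilon = h_{\mu_\theta}(f) - \varepsilon$, again after tracking constants. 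Since $\Lambda$ is a transitive locally maximal hyperbolic set containing a saddle homoclinically related to $p$, this exhibits $\mu_\theta \in \mathcal{M}_{horse}(p)$; as $\varepsilon$ was arbitrary, we are done.

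The main obstacle is the construction of a single transitive locally maximal hyperbolic set $\Lambda$ containing (approximations of) both $\Lambda_1$ and $\Lambda_2$. This requires the fact that if $q_1 \in \Lambda_1$ and $q_2 \in \Lambda_2$ are both homoclinically related to $p$, then $q_1$ and $q_2$ are homoclinically related to each other, and one can find a horseshoe containing points close to orbits in both $\Lambda_1$ and $\Lambda_2$ by using transverse intersections of stable/unstable manifolds of the $q_i$'s to concatenate pseudo-orbits and shadow them — this is a standard but slightly technical $\lambda$-lemma / Smale horseshoe argument. I would expect the authors to either cite a known result (e.g., that the union of two homoclinically related basic sets is contained in a transitive basic set, or results on the homoclinic class structure) or invoke their own Theorem \ref{Mainlemma-convex-by-horseshoe} in the hyperbolic setting; either way, the weak$^*$ and entropy estimates in the last paragraph above are routine once $\Lambda$ is in hand.
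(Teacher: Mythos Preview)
Your proposal is correct and follows essentially the same approach as the paper's proof. The paper resolves your ``main obstacle'' cleanly by citing \cite[Proposition 2.1]{Newhouse1972} for the equivalence relation and \cite[Lemma 8]{Newho1979} for the existence of a single transitive locally maximal hyperbolic set $\Lambda_\varepsilon$ that genuinely contains both $\Lambda_\varepsilon^1$ and $\Lambda_\varepsilon^2$; hence no further approximation $\nu_i' \approx \nu_i$ is needed and one can take $\nu_\theta = \theta\mu_\varepsilon^1 + (1-\theta)\mu_\varepsilon^2$ directly, after which the affinity estimates for $\rho$ and entropy go through exactly as you wrote.
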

\begin{proof}
	Fix $\mu_1,\mu_2\in \mathcal{M}_{horse}(p)$ and $\theta\in [0,1].$ Then for any $\varepsilon>0$ and $i\in\{1,2\},$ there is an  $f$-invariant compact subset $\Lambda_{\varepsilon}^i \subseteq H(p)$ and a  $\mu_\varepsilon^i\in \mathcal{M}_f(\Lambda^i_\varepsilon)$ satisfying the following three properties 
	\begin{description}
		\item[(1)] $\Lambda_\varepsilon^i$ is a transitive locally maximal hyperbolic set which contains a hyperbolic periodic point  $q_i$ homoclinically related to $p.$
		\item[(2)] $\rho(\mu_i,\mu_\varepsilon^i)<\varepsilon.$
		\item[(3)] $h_{\mu_\varepsilon^i}(f)>h_{\mu_i}(f)-\varepsilon.$
	\end{description}
    Then $q_1$ is homoclinically related to $q_2,$ since homoclinically related is an equivalence relation  by \cite[Proposition 2.1]{Newhouse1972}. This implies that there is  a transitive locally maximal hyperbolic set $\Lambda_\varepsilon$ which  contains $\Lambda_\varepsilon^1$ and $\Lambda_\varepsilon^2$  (for example, see \cite[Lemma 8]{Newho1979}). Let $\mu_\varepsilon=\theta\mu_\varepsilon^1+(1-\theta)\mu_\varepsilon^2.$ Then we have 
    \begin{equation*}
    	\rho(\theta\mu_1+(1-\theta)\mu_2,\mu_\varepsilon)\leq \theta\rho(\mu_1,\mu_\varepsilon^1)+(1-\theta)\rho(\mu_2,\mu_\varepsilon^2)<\varepsilon.
    \end{equation*}
    \begin{equation*}
    	h_{\mu_\varepsilon}(f)=\theta h_{\mu_\varepsilon^1}(f)+(1-\theta)h_{\mu_\varepsilon^2}(f)>\theta h_{\mu_1}(f)+(1-\theta)h_{\mu_2}(f)-\varepsilon=h_{\theta\mu_1+(1-\theta)\mu_2}(f)-\varepsilon.
    \end{equation*}
    Note that $\mu_\varepsilon$ is supported on $\Lambda_\varepsilon.$ So $\theta\mu_1+(1-\theta)\mu_2\in \mathcal{M}_{horse}(p)$ and thus $\mathcal{M}_{horse}(p)$ is convex.
\end{proof}

\begin{Thm}\label{def-strong-basic-2}
	Let $f$ be a $C^{1}$ diffeomorphism on a compact Riemannian manifold  $M$, $p$ be a hyperbolic periodic point, and $C$ be a convex set satisfying $\mathcal{M}(p)\subset C\subset \mathcal{M}_{horse}(p).$  Then $(X,  f)$ satisfies the 'multi-horseshoe' entropy-dense property on $C$, that is,   for any positive integer $m,$ any $f$-invariant measures $\{\mu_i\}_{i=1}^m\subseteq C,$ and any $\eta,  \zeta>0$,   there exist compact invariant subsets $\Lambda_i\subseteq\Lambda\subsetneq M$ such that for each $1\leq i\leq m$
	\begin{enumerate}
		\item $\Lambda_i$ and $\Lambda$ are transitive locally maximal hyperbolic sets, and $\Lambda$ contains a hyperbolic periodic point $q$ homoclinically related to $p.$ In particular, one has $\mathcal{M}_f(\Lambda)\subset \mathcal{M}(p)\subset C.$
		\item $\htop(f,  \Lambda_i)>h_{\mu_i}(f)-\eta.$
		\item $d_H(K,  \mathcal{M}_f(\Lambda))<\zeta$,   $d_H(\mu_i,  \mathcal{M}_f(\Lambda_i))<\zeta$, where $K=\cov\{\mu_i\}_{i=1}^m.$
	\end{enumerate}
\end{Thm}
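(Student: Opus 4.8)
\textbf{Proof proposal for Theorem \ref{def-strong-basic-2}.}
The plan is to reduce the statement to the already-proven abstract 'multi-horseshoe' dense property for transitive topologically Anosov systems (Theorem \ref{Mainlemma-convex-by-horseshoe}) by working inside a single auxiliary horseshoe that simultaneously approximates all of the measures $\mu_1,\dots,\mu_m$. First I would use the definition of $\mathcal{M}_{horse}(p)$: since each $\mu_i\in C\subseteq\mathcal{M}_{horse}(p)$, for a sufficiently small auxiliary parameter $\varepsilon>0$ (to be fixed in terms of $\eta,\zeta$) there are transitive locally maximal hyperbolic sets $\Lambda^i_\varepsilon\subseteq H(p)$, each containing a hyperbolic saddle $q_i$ homoclinically related to $p$, and measures $\mu^i_\varepsilon\in\mathcal{M}_f(\Lambda^i_\varepsilon)$ with $\rho(\mu_i,\mu^i_\varepsilon)<\varepsilon$ and $h_{\mu^i_\varepsilon}(f)>h_{\mu_i}(f)-\varepsilon$. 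Because homoclinic relatedness is an equivalence relation (\cite[Proposition 2.1]{Newhouse1972}), all the $q_i$ are homoclinically related to one another, so (as in the proof of Lemma \ref{lemma-H(p)}, e.g. \cite[Lemma 8]{Newho1979}) there is a single transitive locally maximal hyperbolic set $\Gamma\subsetneq M$ containing every $\Lambda^i_\varepsilon$ and a hyperbolic periodic point $q$ homoclinically related to $p$. Any invariant measure carried by $\Gamma$ then lies in $\mathcal{M}(p)\subseteq C$.

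Next I would invoke Lemma \ref{LH}: the subsystem $(\Gamma,f)$ is transitive and topologically Anosov, so Theorem \ref{Mainlemma-convex-by-horseshoe} applies to $(\Gamma,f)$ with the input measures $\{\mu^i_\varepsilon\}_{i=1}^m\subseteq\mathcal{M}_f(\Gamma)$, an arbitrary base point $x\in\Gamma$, and tolerances $\eta/2,\ \zeta/2,\ \eps$. This produces compact invariant subsets $\Lambda_i\subseteq\Lambda\subsetneq\Gamma$ that conjugate to transitive two-sided subshifts of finite type, with $\htop(f,\Lambda_i)>h_{\mu^i_\varepsilon}(f)-\eta/2>h_{\mu_i}(f)-\varepsilon-\eta/2$, and with $d_H(\cov\{\mu^i_\varepsilon\}_{i=1}^m,\mathcal{M}_f(\Lambda))<\zeta/2$ and $d_H(\mu^i_\varepsilon,\mathcal{M}_f(\Lambda_i))<\zeta/2$. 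Since $\Lambda_i,\Lambda$ are closed invariant subsets of the locally maximal hyperbolic set $\Gamma$ on which $f$ is expansive with shadowing, each of them is itself a locally maximal hyperbolic set; transitivity comes from the conjugacy to a transitive SFT, and $\Lambda$ contains a hyperbolic periodic point homoclinically related to $p$ because — using item 4 of Theorem \ref{Mainlemma-convex-by-horseshoe} or simply by enlarging $\Lambda$ inside $\Gamma$ if necessary — we may arrange $\Lambda$ to contain one of the saddles $q_i$, hence contain a saddle homoclinically related to $p$; then $\mathcal{M}_f(\Lambda)\subseteq\mathcal{M}(p)\subseteq C$, giving item 1. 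For item 3, I would combine the Hausdorff-distance estimates from Theorem \ref{Mainlemma-convex-by-horseshoe} with $\rho(\mu_i,\mu^i_\varepsilon)<\varepsilon$ via the triangle inequality for $d_H$, noting that $\cov$ is $1$-Lipschitz in the $\mu_i$'s for $d_H$, so that choosing $\varepsilon<\zeta/2$ yields $d_H(\cov\{\mu_i\}_{i=1}^m,\mathcal{M}_f(\Lambda))<\zeta$ and $d_H(\mu_i,\mathcal{M}_f(\Lambda_i))<\zeta$; and for item 2, choosing $\varepsilon<\eta/2$ gives $\htop(f,\Lambda_i)>h_{\mu_i}(f)-\eta$.

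The fussy part — though not really an obstacle — is bookkeeping the two layers of approximation: the $\varepsilon$ coming from the definition of $\mathcal{M}_{horse}(p)$, and the $\eta/2,\zeta/2$ coming from Theorem \ref{Mainlemma-convex-by-horseshoe}. One must fix $\varepsilon=\varepsilon(\eta,\zeta)$ small enough \emph{first}, then run Theorem \ref{Mainlemma-convex-by-horseshoe} on $(\Gamma,f)$. The one genuinely substantive point to verify carefully is that $\Lambda$ can be taken to contain a saddle homoclinically related to $p$: if the sets $\Lambda_i,\Lambda$ produced by Theorem \ref{Mainlemma-convex-by-horseshoe} happen to miss all the $q_i$, one replaces $\Lambda$ by the smallest locally maximal hyperbolic set inside $\Gamma$ containing both $\Lambda$ and, say, $q_1$ (again via \cite[Lemma 8]{Newho1979}); this enlargement only increases $\mathcal{M}_f(\Lambda)$, but since every measure in $\mathcal{M}_f(\Gamma)$ already lies in $\mathcal{M}(p)\subseteq C$ and already lies within $\zeta/2$ of $\cov\{\mu^i_\varepsilon\}$ is not automatic — so instead the cleaner route is to feed $q_1$ (as an extra invariant measure $\delta$-supported on its orbit, with entropy $0$) into Theorem \ref{Mainlemma-convex-by-horseshoe} alongside the $\mu^i_\varepsilon$, or simply to apply item 4 of that theorem with the relevant base point, which forces the resulting $\Lambda$ to meet a neighbourhood of $q_1$ and, by local maximality, to contain the whole orbit of $q_1$. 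Either way item 1 follows, and the proof is complete.
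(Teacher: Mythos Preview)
Your overall strategy---approximate each $\mu_i$ by a measure on a $p$-horseshoe, merge the horseshoes into a single transitive locally maximal hyperbolic set $\Gamma$, then run Theorem~\ref{Mainlemma-convex-by-horseshoe} inside $\Gamma$---is exactly the paper's, and the entropy and Hausdorff-distance bookkeeping works as you describe. Two points need tightening.

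First, local maximality of $\Lambda_i,\Lambda$. Your justification (``closed invariant subset of a locally maximal hyperbolic set, expansive with shadowing'') is not a valid inference as stated: closed invariant subsets of locally maximal hyperbolic sets need not be locally maximal. The conclusion is nevertheless correct, and the paper obtains it by citing Anosov~\cite{Anosov2010}: a hyperbolic set conjugate to a subshift of finite type is locally maximal. Since Theorem~\ref{Mainlemma-convex-by-horseshoe} hands you exactly that conjugacy for $\Lambda_i$ and $\Lambda$, this closes the gap.

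Second---the genuinely substantive point---none of your three routes to the homoclinic relatedness in item~1 works as written. Item~4 of Theorem~\ref{Mainlemma-convex-by-horseshoe} only says that every orbit in $\Lambda$ visits $B(x,\eps)$; it does \emph{not} force $q_1\in\Lambda$, and local maximality of $\Lambda$ is no help (the orbit of $q_1$ need not stay in a neighbourhood of $\Lambda$). Enlarging $\Lambda$ via \cite{Newho1979} can destroy the $d_H$-estimate, as you already observe, and feeding in the periodic-orbit measure of $q_1$ as an extra $\mu_{m+1}$ both distorts $K$ and still gives no inclusion. The paper's fix uses item~4 differently and is clean: \emph{before} applying Theorem~\ref{Mainlemma-convex-by-horseshoe}, use the local product structure of the ambient set $\Gamma$ (\cite[Proposition~6.4.21]{KatHas}) to pick $\tilde\tau>0$ so small that any two periodic points of $\Gamma$ within distance $\tilde\tau$ are homoclinically related; then apply Theorem~\ref{Mainlemma-convex-by-horseshoe} with base point $x=q_1$ and $\eps\le\tilde\tau$. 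By item~4 every periodic point of $\Lambda$ (and $\Lambda$, being conjugate to a transitive SFT, has plenty) has an iterate in $B(q_1,\tilde\tau)\cap\Gamma$, hence is homoclinically related to $q_1$ and therefore to $p$. No enlargement of $\Lambda$ is needed, and item~1 follows. The moral is that the base point $x$ in Theorem~\ref{Mainlemma-convex-by-horseshoe} should not be ``arbitrary'' but chosen as $q_1$, with $\eps$ calibrated to the local product structure of $\Gamma$.
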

\begin{proof}
	Fix $m>0,$  $K=\cov\{\mu_i\}_{i=1}^m\subseteq C,$ and any $\eta,  \zeta>0.$ Denote $\tau=\frac{1}{2}\min\{\eta,\zeta\}.$ Then for any $1\leq i\leq m$ there is an  $f$-invariant compact subset $\Lambda_{\tau}^i $ and an invariant measure  $\mu_\tau^i\in \mathcal{M}_f(\Lambda_{\tau}^i)$ satisfies the following three properties 
	\begin{description}
		\item[(1)] $\Lambda_\tau^i$ is a transitive locally maximal hyperbolic set which contains a hyperbolic periodic point $q_i$ homoclinically related to $p.$
		\item[(2)] $\rho(\mu_i,\mu_\tau^i)<\tau.$
		\item[(3)] $h_{\mu_\tau^i}(f)>h_{\mu_i}(f)-\tau.$
	\end{description}
     Then $q_i$ is homoclinically related to $q_j,$ since homoclinically related is an equivalence relation  by \cite[Proposition 2.1]{Newhouse1972}. This implies that there is  a transitive locally maximal hyperbolic set $\Lambda_\tau$ which  contains $\bigcup_{i=1}^{m}\Lambda_\tau^i$. By \cite[Proposition 6.4.21]{KatHas}, a  locally maximal hyperbolic set has local product structure. Thus there is $0<\tilde{\tau}<\tau$ such that two periodic points $p_1$ and $p_2$  are homoclinically related if $p_1,p_2\in \Lambda_{\tau}$ satisfy $d(p_1,p_2)<\tilde{\tau}.$ Note that a transitive locally maximal hyperbolic set is expansive by \cite[Corollary 6.4.10]{KatHas} and has shadowing property  by \cite[Theorem 18.1.2]{KatHas}. Then $(\Lambda_\tau,f|_{\Lambda_\tau})$ has the 'multi-horseshoe' entropy-dense property by Theorem \ref{Mainlemma-convex-by-horseshoe}. Thus for the $K_\tau=\cov\{\mu_\tau^i\}_{i=1}^m\subseteq \mathcal{M}_f(\Lambda_\tau),$ and  $\tilde{\tau}>0$,   there exist compact invariant subsets $\Lambda_i\subseteq\Lambda\subsetneq M$ such that for each $1\leq i\leq m$
     \begin{enumerate}
     	\item $(\Lambda_i,f|_{\Lambda_i})$ and $(\Lambda,f|_{\Lambda})$ conjugate to transitive subshifts of ﬁnite type.
     	\item $\htop(f,  \Lambda_i)>h_{\mu_\tau^i}(f)-\tilde{\tau}>h_{\mu_i}(f)-2\tau>h_{\mu_i}(f)-\eta.$
     	\item $d_H(K_\tau,  \mathcal{M}_f(\Lambda))<\tilde{\tau}$,   $d_H(\mu_\tau^i,  \mathcal{M}_f(\Lambda_i))<\tilde{\tau}$.
     	\item There is a positive integer $L$ such that for any $z$ in   $\Lambda$ one has  $f^{j+tL}(z) \in B(q_1,\tilde{\tau})$ for some $0\leq j\leq L-1$ and any $t\in\Z$.
     \end{enumerate}
     From \cite{Anosov2010} any hyperbolic set conjugate to a subshift of finite type is locally maximal. So $\Lambda_i$ and $\Lambda$ are transitive locally maximal hyperbolic sets. By item 3 we have $$d_H(K,  \mathcal{M}_f(\Lambda))<d_H(K,  K_\tau)+d_H(K_\tau,  \mathcal{M}_f(\Lambda))<2\tau<\zeta,$$ $$d_H(\mu_i,  \mathcal{M}_f(\Lambda_i))<d_H(\mu_i,\mu_\tau^i)+d_H(\mu_\tau^i,  \mathcal{M}_f( \Lambda_i))<2\tau<\zeta.$$
     Finally, by item 4 every periodic point in $\Lambda$ is homoclinically related to $q_1,$ and thus is homoclinically related to $p.$
\end{proof}

By Theorem \ref{def-strong-basic-2} and Corollary \ref{Corollary-zero-metric-entropy}(1)(4), we have the following result.
\begin{Lem}\label{lemma-G}
	Let $f$ be a $C^{1}$ diffeomorphism on a compact Riemannian manifold  $M$, $p$ be a hyperbolic periodic point, and $C$ be a convex set satisfying $\mathcal{M}(p)\subset C\subset \mathcal{M}_{horse}(p).$ Then $\{\mu\in C:h_{\mu}(f)=0\}$ is dense in $C,$ $C\cap \mathcal{M}_f^e(M)$ is dense in $C.$
\end{Lem}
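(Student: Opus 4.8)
The final statement to prove is Lemma \ref{lemma-G}: for a convex set $C$ with $\mathcal{M}(p)\subset C\subset \mathcal{M}_{horse}(p)$, the set $\{\mu\in C:h_\mu(f)=0\}$ is dense in $C$, and $C\cap\mathcal{M}_f^e(M)$ is dense in $C$.

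The plan is to leverage Theorem \ref{def-strong-basic-2} (the 'multi-horseshoe' entropy-dense property on $C$) together with Corollary \ref{Corollary-zero-metric-entropy}, exactly as the hint in the text ("By Theorem \ref{def-strong-basic-2} and Corollary \ref{Corollary-zero-metric-entropy}(1)(4)") suggests. First I would fix an arbitrary $\mu\in C$ and $\zeta>0$ and apply Theorem \ref{def-strong-basic-2} with $m=1$, $\mu_1=\mu$, and small parameters $\eta,\zeta'$ (say $\zeta'<\zeta/2$). This produces a transitive locally maximal hyperbolic set $\Lambda_1\subseteq\Lambda$ with $\mathcal{M}_f(\Lambda)\subset\mathcal{M}(p)\subset C$ and $d_H(\mu,\mathcal{M}_f(\Lambda_1))<\zeta'$. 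In particular there is some $\nu\in\mathcal{M}_f(\Lambda_1)$ with $\rho(\mu,\nu)<\zeta'$. Now $(\Lambda_1,f|_{\Lambda_1})$ is a transitive topologically Anosov system (by Lemma \ref{LH}), so Corollary \ref{Corollary-zero-metric-entropy} applies to it.

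For the first assertion: apply Corollary \ref{Corollary-zero-metric-entropy}(1) to $(\Lambda_1,f|_{\Lambda_1})$ to find $\omega\in\mathcal{M}_f(\Lambda_1)$ with $h_\omega(f)=0$ and $\rho(\nu,\omega)<\zeta/2$. Since $\omega\in\mathcal{M}_f(\Lambda)\subset C$ and $\rho(\mu,\omega)<\zeta$, and $\omega$ has zero entropy, this shows $\{\mu'\in C:h_{\mu'}(f)=0\}$ is dense in $C$. For the second assertion: apply Corollary \ref{Corollary-zero-metric-entropy}(4) to $(\Lambda_1,f|_{\Lambda_1})$, which gives that $\mathcal{M}_{f}^e(\Lambda_1)$ is dense in $\mathcal{M}_f(\Lambda_1)$; thus pick $\omega'\in\mathcal{M}_f^e(\Lambda_1)$ with $\rho(\nu,\omega')<\zeta/2$. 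An ergodic measure on $\Lambda_1\subset M$ is ergodic as a measure on $M$, and $\omega'\in\mathcal{M}_f(\Lambda)\subset C$, so $\omega'\in C\cap\mathcal{M}_f^e(M)$ with $\rho(\mu,\omega')<\zeta$. This proves $C\cap\mathcal{M}_f^e(M)$ is dense in $C$.

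I do not anticipate a genuine obstacle here; the lemma is essentially a corollary of the two cited results, and the only minor points to be careful about are: (i) the parameters in Theorem \ref{def-strong-basic-2} must be chosen small enough that the triangle inequality $\rho(\mu,\omega)\le\rho(\mu,\nu)+\rho(\nu,\omega)<\zeta$ closes; (ii) one should note that a transitive locally maximal hyperbolic set is topologically Anosov (Lemma \ref{LH}) so that Corollary \ref{Corollary-zero-metric-entropy} is legitimately applicable to $(\Lambda_1,f|_{\Lambda_1})$, since that corollary is stated for transitive topologically Anosov systems; and (iii) that $\mathcal{M}_f(\Lambda)\subset C$ — which is part of the conclusion of Theorem \ref{def-strong-basic-2} — is what places the constructed measures inside $C$ rather than merely inside $\mathcal{M}_{horse}(p)$. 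With these observations the proof is a short two-step argument.
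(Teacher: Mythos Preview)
Your proposal is correct and follows exactly the approach indicated by the paper's one-line proof, which simply cites Theorem \ref{def-strong-basic-2} and Corollary \ref{Corollary-zero-metric-entropy}(1)(4). One minor simplification: since $d_H(\mu,\mathcal{M}_f(\Lambda_1))<\zeta'$ with $\{\mu\}$ a singleton, \emph{every} measure in $\mathcal{M}_f(\Lambda_1)$ is already within $\zeta'$ of $\mu$, so the intermediate step via $\nu$ and the triangle inequality is unnecessary---you can directly take any zero-entropy (resp.\ ergodic) measure on $\Lambda_1$.
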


\begin{Lem}\label{Lem-baire}
	Let $f$ be a $C^{1}$ diffeomorphism on a compact Riemannian manifold  $M$, $p$ be a hyperbolic periodic point, and $C$ be a convex set satisfying $\mathcal{M}(p)\subset C\subset \mathcal{M}_{horse}(p).$ Assume that $\mu\to h_\mu(f)$ is upper semi-continuous on $C$. Let $d \in \mathbb{N}$ and $(A, B) \in AA(f,M)^{d} \times AA(f,M)^{d}$ satisfying \eqref{equation-O} and $\mathrm{Int}(\mathcal{P}_{A,B}(C))\neq\emptyset$. Let $\alpha:\mathcal{M}_f(M)\to \mathbb{R}$ be a continuous function satisfying \eqref{equation-W} and \eqref{equation-AF}. 
	Then for any $a\in \mathrm{Int}(\mathcal{P}_{A,B}(C))$ and $\alpha_{A,B}(a,C)\leq P< H_{A,B}(f,\alpha,a,C),$ $\{\mu\in \mathcal{P}_{A,B}^{-1}(a)\cap C\cap \mathcal{M}_f^e(M):P(f,\alpha,\mu)=P\}$ is a  dense $G_\delta$  subset of $\{\mu\in \mathcal{P}_{A,B}^{-1}(a)\cap C:P(f,\alpha,\mu)\geq P\}.$
\end{Lem}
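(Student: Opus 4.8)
\textbf{Proof plan for Lemma \ref{Lem-baire}.}

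The plan is to deduce this lemma from the abstract machinery of Section \ref{section-almost2}, exactly as Theorem \ref{thm-continuous}(III) is obtained from Theorem \ref{thm-Almost-Additive2}, but now working with the convex set $C$ in place of $\mathcal{M}_f(X)$ and with $C\cap\mathcal{M}_f^e(M)$ in place of $\mathcal{M}_f^e(X)$. The first step is to verify that all the hypotheses of Theorem \ref{thm-Almost-Additive2} hold for the pair $(C_1,C_2)=(C,\,C\cap\mathcal{M}_f^e(M))$. Here $C$ is convex by hypothesis (indeed $\mathcal{M}_{horse}(p)$ is convex by Lemma \ref{lemma-H(p)}, and $C$ sits between $\mathcal{M}(p)$ and $\mathcal{M}_{horse}(p)$), and $C\cap\mathcal{M}_f^e(M)$ is a dense subset of $C$ by Lemma \ref{lemma-G}; to see it is a dense $G_\delta$ subset, note that ergodicity is a $G_\delta$ condition in $\mathcal{M}(M)$ (cf. \cite[Proposition 5.7]{DGM2019}), so its trace on $C$ is $G_\delta$ in $C$, and density was just recalled. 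The locally conditional variational principle with respect to $(C, C\cap\mathcal{M}_f^e(M))$ is precisely what Theorem \ref{def-strong-basic-2} together with the conditional variational principle on transitive locally maximal hyperbolic sets (Corollary \ref{condition-variation-principle3}) provides: Theorem \ref{def-strong-basic-2} gives the horseshoes $\Lambda_i\subseteq\Lambda$ with items (1)--(3) and with $\mathcal{M}_f(\Lambda)\subset\mathcal{M}(p)\subset C$, while Corollary \ref{condition-variation-principle3} applied on $\Lambda$ supplies item (4) of the locally conditional variational principle (with $C_2\cap\mathcal{M}_f(\Lambda)=\mathcal{M}_f(\Lambda)\cap\mathcal{M}_f^e(M)$, since every invariant measure on a transitive locally maximal hyperbolic set can be weak$^*$-approximated by ergodic ones attaining the required supremum of entropy).

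The remaining hypotheses are all given in the statement: $\mu\mapsto h_\mu(f)$ is upper semi-continuous on $C$ by assumption; $\{\mu\in C:h_\mu(f)=0\}$ is dense in $C$ by Lemma \ref{lemma-G}; $\alpha$ is continuous on $C$ and satisfies \eqref{equation-W} and \eqref{equation-AF} by hypothesis; and $\mathrm{Int}(\mathcal{P}_{A,B}(C))\neq\emptyset$ with $(A,B)$ satisfying \eqref{equation-O}. Hence Theorem \ref{thm-Almost-Additive2}(III) applies and yields that, for any $a\in\mathrm{Int}(\mathcal{P}_{A,B}(C))$ and $\alpha_{A,B}(a,C)\le P< H_{A,B}(f,\alpha,a,C)$, the set $\{\mu\in\mathcal{P}_{A,B}^{-1}(a)\cap C_2 : P(f,\alpha,\mu)\ge P\}$ is a dense $G_\delta$ subset of $S:=\{\mu\in\mathcal{P}_{A,B}^{-1}(a)\cap C : P(f,\alpha,\mu)\ge P\}$, and likewise $\{\mu\in\mathcal{P}_{A,B}^{-1}(a)\cap C : P(f,\alpha,\mu)= P\}$ is a dense $G_\delta$ subset of $S$. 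The final step is to intersect these two dense $G_\delta$ sets: provided $S$ is a Baire space, the intersection $\{\mu\in\mathcal{P}_{A,B}^{-1}(a)\cap C\cap\mathcal{M}_f^e(M):P(f,\alpha,\mu)=P\}$ is again a dense $G_\delta$ subset of $S$, which is exactly the claim.

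The main obstacle is justifying the Baire property of $S$ so that the intersection argument of the last step goes through. When $C=\mathcal{M}_f(M)$ (the situation of Remark \ref{Rem-baire}) this is easy because upper semi-continuity of the entropy function makes $S$ compact, hence Baire; but for a general convex $C$ lying strictly between $\mathcal{M}(p)$ and $\mathcal{M}_{horse}(p)$ the set $C$ need not be closed, so I will need to argue that $S$ is nonetheless Baire. The natural route is to observe that $S$ is a $G_\delta$ subset of the compact set $\overline{S}\subseteq\mathcal{M}(M)$: indeed $\{\mu:P(f,\alpha,\mu)\ge P\}$ is closed by upper semi-continuity of $\mu\mapsto h_\mu(f)$ and continuity of $\alpha$ and of $\mathcal{P}_{A,B}$, while membership in $C_2=C\cap\mathcal{M}_f^e(M)$ is itself $G_\delta$ in $\mathcal{M}(M)$ relative to $\overline{C}$ (using that $C$ contains $\mathcal{M}(p)$, which is dense in $\overline{C}$, so that $C$ inherits enough structure); a $G_\delta$ subset of a complete metric space is completely metrizable and hence Baire. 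Once this point is pinned down, the rest of the proof is a direct invocation of the already-established results, and I would spell out the verification of item (4) of the locally conditional variational principle on the horseshoe $\Lambda$ carefully, as that is where the uniqueness of equilibrium states for Hölder potentials (via Corollary \ref{cor-sft-equil} and Corollary \ref{condition-variation-principle3}) is actually used.
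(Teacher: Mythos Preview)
Your approach has a genuine gap at exactly the point you flag: the Baire property of $S=\{\mu\in\mathcal{P}_{A,B}^{-1}(a)\cap C:P(f,\alpha,\mu)\ge P\}$. Your proposed justification that $S$ is $G_\delta$ in $\overline{S}$ requires showing that $C$ itself is $G_\delta$ in $\overline{C}$, and the remark about $C_2=C\cap\mathcal{M}_f^e(M)$ being $G_\delta$ is beside the point (it is $C$, not $C_2$, that enters the definition of $S$). For an arbitrary convex $C$ with $\mathcal{M}(p)\subset C\subset\mathcal{M}_{horse}(p)$ there is no reason to expect $C$ to be $G_\delta$, so the intersection-of-two-dense-$G_\delta$'s argument does not go through.

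The paper sidesteps this entirely by proving density of $\{\mu\in\mathcal{P}_{A,B}^{-1}(a)\cap C\cap\mathcal{M}_f^e(M):P(f,\alpha,\mu)=P\}$ \emph{directly}, without appealing to Baire. Given $\mu_0\in S$ and $\zeta>0$, the paper first perturbs to $\mu''\in\mathcal{P}_{A,B}^{-1}(a_0)\cap C$ with $P(f,\alpha,\mu'')>P_0$ and $\rho(\mu_0,\mu'')<\zeta/2$, then uses Lemma~\ref{Lem-interior} together with Theorem~\ref{def-strong-basic-2} to produce a transitive locally maximal hyperbolic set $\Lambda$ with $a_0\in\mathrm{Int}(\mathcal{P}_{A,B}(\mathcal{M}_f(\Lambda)))$, $d_H(\mu'',\mathcal{M}_f(\Lambda))<\zeta'$, and $H_{A,B}(f,\alpha,a_0,\mathcal{M}_f(\Lambda))>P_0$. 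The point is that on $\Lambda$ the full strength of Theorem~\ref{thm-almost}(III) (via Corollary~\ref{corollary-hyper-aa}) is available because $\mathcal{M}_f(\Lambda)$ is compact, so Remark~\ref{Rem-baire} applies there: one obtains $\nu\in\mathcal{M}_f^e(\Lambda)$ with $\mathcal{P}_{A,B}(\nu)=a_0$ and $P(f,\alpha,\nu)=P_0$ \emph{exactly}. Since $\mathcal{M}_f(\Lambda)\subset\mathcal{M}(p)\subset C$ and $\rho(\mu_0,\nu)<\zeta$, density follows. The $G_\delta$ part is then the easy part (intersection of the two $G_\delta$ sets from Theorem~\ref{thm-Almost-Additive2}(III)), and does not require Baire. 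The key idea you are missing is this localization to the horseshoe, where compactness restores the Baire argument.
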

\begin{proof}
	Fix $a_0\in \mathrm{Int}(\mathcal{P}_{A,B}(C)),$ $\alpha_{A,B}(a_0,C)\leq P_0< H_{A,B}(f,\alpha,a_0,C),$ $\mu_0\in \mathcal{P}_{A,B}^{-1}(a)\cap C$ with $P(f,\alpha,\mu_0)\geq P_0$ and $ \zeta>0.$ Since $P_0< H_{A,B}(f,\alpha,a_0,C)$, we can pick a $\mu'\in \mathcal{P}_{A,B}^{-1}(a_0)\cap C$ such that $P_0<P(f,\alpha,\mu') \leq  H_{A,B}(f,\alpha,a_0,C)$, and next pick a sufficiently small number $\theta \in(0,1)$ such that $\rho\left(\mu_0, \mu''\right)<\zeta / 2$, where $\mu''=(1-\theta) \mu_0+\theta\mu'.$ Then $\mu''\in \mathcal{P}_{A,B}^{-1}(a_0)\cap C$ and by \eqref{equation-W} we have $P(f,\alpha,\mu'')>P_0.$  Denote $\eta=\frac{P(f,\alpha,\mu'')-P_0}{2}.$
	Since $\alpha$ is continuous, there is $0<\zeta'<\frac{\zeta}{2}$ such that for any $\omega\in \mathcal{M}_f(M)$ with $\rho(\mu'',\omega)<\zeta'$ we have
	\begin{equation}\label{equation-C''}
		|\alpha(\omega)-\alpha(\mu'')|<\frac{\eta}{2}.
	\end{equation} 
	Then by Lemma \ref{Lem-interior} and Lemma \ref{def-strong-basic-2},  there is a compact invariant subset $\Lambda\subset M$ such that
	$a_0\in  \mathrm{Int}(\mathcal{P}_{A,B}(\mathcal{M}_f(\Lambda))),$ $d_H(\mu'',  \mathcal{M}_f(\Lambda))<\zeta',$ $\mathcal{M}_f(\Lambda)\subset C,$ $\Lambda$  is a transitive locally maximal hyperbolic set,
	and  
	\begin{equation}\label{equation-C'''}
		|H_{A,B}(f,a_0,\mathcal{M}_f(\Lambda))- h_{\mu''}(f)|<\frac{\eta}{2}.
	\end{equation} 
    By (\ref{equation-C''}) and (\ref{equation-C'''}), we have $H_{A,B}(f,\alpha,a_0,\mathcal{M}_f(\Lambda))> P(f,\alpha,\mu'')-\eta>P_0.$ By Corollary \ref{corollary-hyper-aa} and Theorem \ref{thm-almost}(III),
   there exists $\nu\in \mathcal{M}_f^e(\Lambda)$ such that $\mathcal{P}_{A,B}(\nu)=a_0$ and $P(f,\alpha,\nu)=P_0.$ Since  $\rho(\mu_0,\nu)<\rho(\mu_0,\mu'')+\rho(\mu'',\nu)<\zeta$, then $\{\mu\in \mathcal{P}_{A,B}^{-1}(a_0)\cap C\cap \mathcal{M}_f^e(M):P(f,\alpha,\mu)=P_0\}$ is dense in  $\{\mu\in \mathcal{P}_{A,B}^{-1}(a_0)\cap C:P(f,\alpha,\mu)\geq P_0\}.$ Finally, by Theorem \ref{def-strong-basic-2} and Corollary \ref{condition-variation-principle3}, $(X,f)$ satisfies the  locally conditional variational principle with respect to $(C,C\cap \mathcal{M}_f^e(M))$, so by Theorem  \ref{thm-Almost-Additive2} and Lemma \ref{lemma-G}, $\{\mu\in \mathcal{P}_{A,B}^{-1}(a_0)\cap C\cap \mathcal{M}_f^e(M):P(f,\alpha,\mu)=P_0\}$ is a   $G_\delta$  subset of $\{\mu\in \mathcal{P}_{A,B}^{-1}(a_0)\cap C:P(f,\alpha,\mu)\geq P_0\}.$
\end{proof}

\subsubsection{Proof of Theorem \ref{thm-almost-2}}
By Theorem \ref{def-strong-basic-2}, Corollary \ref{condition-variation-principle3} and Theorem \ref{condition-variation-principle2}, $(X,f)$ satisfies the  locally conditional variational principle with respect to $(C,C\cap \mathcal{M}_f^e(M))$. By Lemma  \ref{lemma-G},   $C\cap \mathcal{M}_f^e(M)$ is dense in $C.$ By  \cite[Proposition 5.7]{DGM2019}, $\mathcal{M}_f^e(X)$ is a  $G_\delta$ subset of $\mathcal{M}_f(X)$, so $C\cap \mathcal{M}_f^e(M)$ is a dense $G_\delta$ subset of  $C.$ Then we obtain item(I) by Theorem \ref{thm-Almost-Additive}, obtain item(II)-(IV) by Theorem \ref{thm-Almost-Additive2}, Lemma \ref{lemma-G} and Lemma \ref{Lem-baire}.\qed

\subsection{Proof of Theorem \ref{maintheorem-skew}}
Given $N\geq 2$ and  $F\in \operatorname{SP}_{\text {shyp }}^1\left(\Sigma_N \times \mathbb{S}^1\right)$. Now we recall some properties of $F$ from \cite{DGM2017,DGM2019,DGM2022}, and give the proof of Theorem \ref{maintheorem-skew}.
Given a compact $F$-invariant set $\Gamma \subset \Sigma_N \times \mathbb{S}^1$, we say that $\Gamma$ has uniform fiber expansion (contraction) if every ergodic measure $\mu \in \mathcal{M}_{F}^e(\Gamma)$ has a positive (a negative) Lyapunov exponent. It is hyperbolic if it either has uniform fiber expansion or uniform fiber contraction. We say that a set is basic (with respect to $F$ ) if it is compact, $F$-invariant, locally maximal, topologically transitive, and hyperbolic. Basic sets have same properties as transitive locally maximal hyperbolic sets in a differentiable setting.
As used in \cite{DGM2019}, basic sets have same properties as transitive locally maximal hyperbolic sets in a differentiable setting (for example, see every basic set has the speciﬁcation property \cite[Page 369]{DGM2019}, every Hölder continuous potential has a unique equilibrium state for a basic set \cite[Page 373]{DGM2019}.)

\begin{Lem}(\cite[Theorem 1]{DGM2017} or \cite[Proposition 4.4]{DGM2019})\label{Lem-skew-horseshoe}
	Given $\mu\in \mathcal{M}_F^e(\Sigma_N \times \mathbb{S}^1)$ with $\chi(\mu)\geq 0$. Then for every $\eta,\lambda,\zeta>0,$  there is a basic set $\Gamma \subset \Sigma_N \times \mathbb{S}^1$ with uniform fiber expansion such that
	\begin{enumerate}
		\item $h_{top}(\Gamma) >h_\mu(F)-\eta.$
		\item $d_H(\mu,  \mathcal{M}_f(\Gamma))<\zeta$
	\end{enumerate}
   The analogous result holds for any $\chi(\mu)\leq 0.$
\end{Lem}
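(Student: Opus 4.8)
The plan is to carry out a Katok-type horseshoe construction tailored to step skew-products, following \cite{DGM2017,DGM2019}, splitting the argument according to whether $\chi(\mu)$ is strictly positive or equal to $0$. Consider first the case $\chi(\mu)>0$, so that $\mu$ is a hyperbolic (fiber-expanding) measure, albeit only nonuniformly. Fix a $\mu$-generic point $X_0=(\xi_0,x_0)$, so that along its forward orbit the Birkhoff averages of continuous functions converge to the corresponding $\mu$-integrals and $\tfrac1n\log|(f_{\xi_0}^n)'(x_0)|\to\chi(\mu)>0$. Using a Pliss-type lemma together with elementary $C^1$ distortion control in the fibers, I would extract, for all large $n$, a family $\mathcal{G}_n$ of length-$n$ orbit segments along the orbit of $X_0$ on which the fiber derivative expands uniformly at a fixed rate $\lambda'\in(0,\chi(\mu))$ with bounded distortion, and whose empirical measures are all $\zeta/2$-close to $\mu$; a Shannon--McMillan--Breiman / Katok entropy-formula count in the symbolic base gives $\#\mathcal{G}_n\ge e^{n(h_\mu(F)-\eta/2)}$. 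Freely concatenating the base-words of the segments in $\mathcal{G}_n$ defines a transitive subshift of finite type in $\Sigma_N$, and, the fiber expansion now being uniform, the associated $F$-invariant subset $\Gamma\subset\Sigma_N\times\mathbb{S}^1$ is compact, $F$-invariant, topologically transitive, locally maximal and hyperbolic with uniform fiber expansion — that is, a basic set. Its topological entropy is at least $\tfrac1n\log\#\mathcal{G}_n>h_\mu(F)-\eta$, giving item~(1); and since $\Gamma$ contains only orbits shadowing concatenations of the chosen segments, every $\nu\in\mathcal{M}_F(\Gamma)$ is $\zeta$-close to $\mu$ (recall $d_H(\mu,\mathcal{M}_F(\Gamma))<\zeta$ is equivalent to this) and has fiber exponent within $\lambda$ of $\chi(\mu)$, which gives item~(2) and the $\lambda$-control.

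For the borderline case $\chi(\mu)=0$ the plan is to reduce to the previous one. Here the hypothesis $F\in\operatorname{SP}^1_{\mathrm{shyp}}(\Sigma_N\times\mathbb{S}^1)$ is essential: by \cite[Section 7.1]{DGM2019} there is a basic set $\Gamma_+$ with uniform fiber expansion, and the transitivity of $F$ together with the blending structure underlying $\operatorname{SP}^1_{\mathrm{shyp}}$ lets one interpolate long $\mu$-generic orbit segments with rare excursions through $\Gamma_+$. Letting the frequency $\varepsilon$ of the $\Gamma_+$-excursions tend to $0$ produces ergodic measures $\mu'$ (realized inside a single transitive subshift-of-finite-type factor, so that genericity of $\mu'$ follows from the specification-type property of that factor) with $\mu'\to\mu$ weak$^*$, $h_{\mu'}(F)\to h_\mu(F)$, and $\chi(\mu')>0$ small. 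Applying the first case to such a $\mu'$ with parameters $\eta/2$, $\lambda/2$, $\zeta/2$ yields the required basic set.

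The statement for $\chi(\mu)\le0$ follows by symmetry, running the same construction with ``uniform fiber expansion'' replaced by ``uniform fiber contraction'' — equivalently, applying the expansion case to $F^{-1}$, whose fibers are the inverse maps $f_{\xi}^{-1}$, and using that a basic set with uniform fiber contraction for $F$ is precisely a basic set with uniform fiber expansion for $F^{-1}$, while the weak$^*$-topology and the entropy are unchanged under passing to the inverse.

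I expect the main obstacle to be the extraction of \emph{uniformly} fiber-expanding orbit segments when the fiber maps $f_i$ are only $C^1$: the classical Katok horseshoe machinery relies on Pesin theory and hence on $C^{1+\alpha}$ regularity, so one must instead exploit the one-step structure directly — the base being a uniformly hyperbolic full shift and the fiber distortion being controlled on short scales by elementary $C^1$ estimates — rather than Lyapunov charts. A second delicate point, specific to the case $\chi(\mu)=0$, is arranging that the interpolated measures are genuinely \emph{ergodic} and have a \emph{positive} fiber exponent (finite $\mu$-generic segments may momentarily contract in the fiber, so one must choose the excursion frequency and lengths so that the positive expansion accumulated through $\Gamma_+$ is never cancelled); both issues are exactly what the technology of \cite{DGM2017,DGM2019} is designed to handle.
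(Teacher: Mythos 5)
First, note that the paper does not prove this lemma at all: it is quoted verbatim, with attribution, from \cite[Theorem 1]{DGM2017} and \cite[Proposition 4.4]{DGM2019}, and is used in Section 5 as an external input (the proof of Theorem \ref{maintheorem-skew} only invokes it). So there is no internal argument to compare yours against; the relevant comparison is with the Díaz--Gelfert--Rams proof itself.

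Measured against that, your sketch of the case $\chi(\mu)>0$ is in the right spirit and essentially reconstructs the known route: Pliss times plus the one-step structure (uniformly hyperbolic full shift in the base, one-dimensional fiber, uniform continuity of $\log|f_i'|$ replacing bounded distortion) do suffice in the $C^1$ setting, and concatenation over a subshift of finite type yields a basic set with uniform fiber expansion, entropy at least $h_\mu(F)-\eta$, and all invariant measures $\zeta$-close to $\mu$. The genuine gap is in the case $\chi(\mu)=0$, which is the actual content of \cite{DGM2017}: your reduction ``first build ergodic $\mu'$ with $\chi(\mu')>0$ small, $\mu'$ weak$^*$-close to $\mu$ and $h_{\mu'}(F)$ close to $h_\mu(F)$, then apply the positive case'' presupposes exactly the approximation statement one is trying to prove, and the two points you would need to justify it --- that the interpolated measures lose essentially no entropy, and that the expansion gained during the rare excursions through the expanding blending region is not cancelled by the fluctuations of the finite $\mu$-generic segments --- are precisely the quantitative heart of the skeleton construction in \cite{DGM2017,DGM2019}, which builds the expanding horseshoe for a nonhyperbolic $\mu$ directly rather than via an intermediate ergodic measure. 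You flag these obstacles honestly, but as written the argument at $\chi(\mu)=0$ defers the decisive estimates to the very papers being cited, so as a self-contained proof it is incomplete; the $\chi(\mu)\le 0$ symmetry via $F^{-1}$ is fine, since the axioms of $\operatorname{SP}^1_{\text{shyp}}$ are symmetric under inversion.
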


A periodic point of F is said to be hyperbolic or a saddle if its (ﬁber) Lyapunov exponent is nonzero. We say that two saddles are of the same type if either both have negative exponents or both have positive exponents. Given a saddle $P$ we define the stable and unstable sets of its orbit $\mathcal{O}(P)$ by
$$
W^{\mathrm{s}}(\mathcal{O}(P)) \stackrel{\text { def }}{=}\left\{X: \lim _{n \rightarrow \infty} d\left(F^n(X), \mathcal{O}(P)\right)=0\right\}
$$
and
$$
W^{\mathrm{u}}(\mathcal{O}(P)) \stackrel{\text { def }}{=}\left\{X: \lim _{n \rightarrow \infty} d\left(F^{-n}(X), \mathcal{O}(P)\right)=0\right\}
$$
respectively.
Two saddles $P$ and $Q$ of the same index are homoclinically related if the stable and unstable sets of their orbits intersect cyclically, that is, if
$$
W^{\mathrm{s}}(\mathcal{O}(P)) \cap W^{\mathrm{u}}(\mathcal{O}(Q)) \neq \emptyset \neq W^{\mathrm{s}}(\mathcal{O}(Q)) \cap W^{\mathrm{u}}(\mathcal{O}(P)) .
$$
\begin{Lem}\cite[Lemma 6.4 and 6.5]{DGM2019}\label{Lem-skew-bridge}
	(1) Any pair of saddles $P, Q \in \Sigma_N \times \mathbb{S}^1$ of the same type are homoclinically related.
	
	(2) Consider two basic sets $\Gamma_1, \Gamma_2 \subset \Sigma_N \times \mathbb{S}^1$ of $F$ which are homoclinically related. Then there is a basic set $\Gamma$ of $F$ containing $\Gamma_1 \cup \Gamma_2$.
\end{Lem}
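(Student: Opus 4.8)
The plan is to treat (1) as a transitivity/connecting argument and (2) as the step skew--product analogue of the classical amalgamation of homoclinically related hyperbolic basic sets. Both are faithful adaptations of standard differentiable arguments, legitimised by the structural facts recalled just above the statement: basic sets of $F$ behave exactly like transitive locally maximal hyperbolic sets (local product structure, the specification property, uniqueness of equilibrium states for H\"older potentials, and coding by subshifts of finite type in the base). Throughout I use that ``homoclinically related'' is an equivalence relation (Newhouse), so (1) amounts to showing that the two equivalence classes of saddles are exactly the set of negative--exponent saddles and the set of positive--exponent saddles.

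For part (1): replacing $F$ by the conjugate of $F^{-1}$ (still an $\mathbb{S}^1$--cocycle over the full shift, with a negative saddle of $F$ becoming a positive one) we may assume $P=(\eta,p)$ and $Q=(\zeta,q)$ both have negative fiber exponent, with $\eta,\zeta\in\Sigma_N$ periodic of periods $a,b$ and $p,q$ attracting fixed points of $f_\eta^a$ and $f_\zeta^b$. First I would describe the relevant invariant sets: $W^{\mathrm s}(\mathcal{O}(P))$ is carried by base sequences $\xi$ forward--asymptotic to $\eta^\infty$ and over each such $\xi$ contains a whole fiber interval inside the basin of $p$, while $W^{\mathrm u}(\mathcal{O}(Q))$ is carried by base sequences backward--asymptotic to $\zeta^\infty$ and over each such sequence consists of a single fiber point (the image of $q$ along the backward tail). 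Because the base is the \emph{full} shift, for every finite word $w$ the sequence $\xi_w=(\dots\zeta\zeta\,w\,\eta\eta\dots)$ is admissible, and these sequences lie in the base projections of both sets; it then remains to choose $w$ so that the $W^{\mathrm u}(\mathcal{O}(Q))$--point over $\xi_w$ falls inside the $W^{\mathrm s}(\mathcal{O}(P))$--interval over $\xi_w$, i.e.\ so that the composition of the $f_i$ along $w$ (preceded by the backward $\zeta$--cocycle applied to $q$) lands in the basin of $p$. This is exactly where topological transitivity of $F$ is used (it is part of the definition of $\operatorname{SP}^{1}_{\mathrm{shyp}}$ and yields a form of fiberwise transitivity for the semigroup $\langle f_0,\dots,f_{N-1}\rangle$): it makes the set of fiber points reachable from $q$ by admissible compositions dense in $\mathbb S^1$, so a suitable $w$ exists, and a $\lambda$--lemma / graph--transform estimate near $P$ and $Q$ upgrades the resulting near--orbit to a genuine point of $W^{\mathrm u}(\mathcal O(Q))\cap W^{\mathrm s}(\mathcal O(P))$. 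Swapping $P$ and $Q$ gives the other cyclic intersection, so $P$ and $Q$ are homoclinically related.

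For part (2): the hypothesis forces $\Gamma_1$ and $\Gamma_2$ to be of the same type (say both uniformly fiber expanding), and part (1) supplies saddles $P_i\in\Gamma_i$ together with a cycle of heteroclinic intersection points. Using these, the local product structure of each $\Gamma_i$, and the specification property, one builds $\Gamma$ as the closure of the set of complete orbits obtained by concatenating finite orbit segments that stay alternately $\delta$--close to $\Gamma_1$ and to $\Gamma_2$, with transitions routed through fixed small neighbourhoods of the connecting points --- the classical ``bridge'' / horseshoe--amalgamation construction (the same mechanism as the use of [Newho1979, Lemma~8] in the proof of Lemma~\ref{lemma-H(p)}, and [KatHas, §6.5]). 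One then verifies the four defining properties of a basic set: $\Gamma$ is compact and $F$--invariant by construction; hyperbolic with uniform fiber expansion, because every orbit in $\Gamma$ shadows orbits of $\Gamma_1\cup\Gamma_2$ up to a uniformly bounded correction, so no ergodic measure on $\Gamma$ can have a nonpositive exponent; topologically transitive, since the concatenation scheme realises any admissible itinerary; and locally maximal --- here, as in the proof of Theorem~\ref{def-strong-basic-2}, one observes that $\Gamma$ is conjugate to a subshift of finite type and hence automatically locally maximal (the analogue of [Anosov2010]). Finally $\Gamma\supseteq\Gamma_1\cup\Gamma_2$ because the ``stay in $\Gamma_i$ forever'' itineraries are admissible.

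The main obstacle is part (1): converting topological transitivity of $F$ on the product into the concrete fiberwise reachability statement that allows the connecting word $w$ to be chosen, while simultaneously controlling the fiber dynamics near the two saddles so that the heteroclinic orbit produced genuinely lies in the stable resp.\ unstable set rather than merely passing near $P$ and $Q$. Part (2) is technically routine once the properties of basic sets recalled in the excerpt are available, the only delicate point being the verification of local maximality, for which we reuse the subshift--of--finite--type coding argument already invoked elsewhere in the paper.
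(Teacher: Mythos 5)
First, note that the paper does not prove this lemma at all: it is quoted verbatim from D\'iaz--Gelfert--Rams \cite{DGM2019} (their Lemmas 6.4 and 6.5), so your proposal has to be judged against the argument in that source rather than against anything in this paper.

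Your outline of part (2) (amalgamation of two homoclinically related basic sets via local product structure/specification, with local maximality obtained from an SFT coding) is the standard route and is essentially what is done in the hyperbolic theory and in \cite{DGM2019}. The genuine gap is in part (1), and it sits exactly at the step you yourself flag as the ``main obstacle'' but then dispose of by assertion. Your reduction shows correctly that $W^{\mathrm u}(\mathcal O(Q))$ over a base sequence of the form $\zeta^{-\infty}w\,\eta^{\infty}$ carries a \emph{single} fiber point, namely the image of $q$ under the composition along $w$, so that producing the heteroclinic intersection is literally equivalent to finding a finite word $w$ with $f_w(q)$ inside the (open) basin of $p$; i.e.\ the forward orbit of the \emph{specific point} $q$ under the semigroup $\langle f_0,\dots,f_{N-1}\rangle$ must meet a prescribed open set. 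You claim this follows because topological transitivity of $F$ ``yields a form of fiberwise transitivity'' making the reachable set from $q$ dense. That implication is unjustified: transitivity of the skew product only gives open-set-to-open-set statements (for any nonempty open $J_1,J_2\subset\mathbb{S}^1$ there is a word $w$ with $f_w(J_1)\cap J_2\neq\emptyset$), and this does not let you steer a single point, since the word produced depends on which point of $J_1$ gets moved. In \cite{DGM2019} this is precisely where the ``nonhyperbolic in a nontrivial way'' structure of the class $\operatorname{SP}^1_{\text{shyp}}$ enters: one uses the blending-interval axioms (forward/backward accessibility and controlled expanding covering, which come from the coexistence of fiber-expanding dynamics) to first send the point $q$ into the blending interval $J$, then to pull the basin of $p$ backward so that its preimage \emph{covers} $J$, which is what converts an open-set statement into a point statement. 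Without invoking this extra structure (or proving an equivalent ``every point has dense forward semigroup orbit'' statement), your part (1) does not go through; a $\lambda$-lemma cannot repair it, because in the fiber-contracting direction there is no interval to spread. Secondary, but fixable, is your verification of hyperbolicity of $\Gamma$ in part (2): ``shadowing up to a bounded correction'' should be turned into a uniform derivative/average estimate along concatenated orbits (or one should enforce long stays near $\Gamma_1,\Gamma_2$) before concluding that every ergodic measure on $\Gamma$ has positive fiber exponent.
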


A set  $\mathcal{M} \subset \mathcal{M}_{\text {inv }}(M, f)$ is path connected, if for any $\mu, v \in \mathcal{M}$, there exists a continuous path $\left\{v_t\right\}_{t \in[0,1]}$ in $\mathcal{M}$ such that $v_0=\mu$ and $v_1=v$.
Following the argument of \cite[Corollary 1.4]{YZ2020}, one can obtain the path connectedness of the space of ergodic measures.
\begin{Cor}\label{Cor-skew}
	$\mathcal{M}_F^e(\Sigma_N \times \mathbb{S}^1)$ is path connected.
\end{Cor}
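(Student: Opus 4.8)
\textbf{Proof proposal for Corollary \ref{Cor-skew}.}

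The plan is to follow the strategy that was used in \cite{YZ2020} to prove path connectedness of the space of ergodic measures for robustly non-hyperbolic transitive diffeomorphisms, adapting it to the step skew-product setting $F\in\operatorname{SP}^1_{\mathrm{shyp}}(\Sigma_N\times\mathbb{S}^1)$. The essential ingredients are all available in the excerpt: every ergodic measure with nonnegative (resp.\ nonpositive) fiber exponent can be approximated in the weak$^*$ topology and in entropy by basic sets of uniform fiber expansion (resp.\ contraction) by Lemma \ref{Lem-skew-horseshoe}; any two saddles of the same type are homoclinically related and any two homoclinically related basic sets sit inside a common larger basic set by Lemma \ref{Lem-skew-bridge}; and on a basic set every H\"older potential has a unique equilibrium state, so the conditional variational principle and the 'multi-horseshoe' machinery (Theorems \ref{thm-continuous-2}, \ref{maintheorem-skew}) apply. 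Also recall from \cite[Section 7.1]{DGM2019} that $a_{\min}<0<a_{\max}$, so ergodic measures of both signs of exponent, and with positive entropy, do exist.

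First I would reduce the problem to connecting an arbitrary ergodic measure $\mu$ to a fixed reference ergodic measure $\mu_\ast$ supported on a single saddle $P_0$ (say with positive exponent). The key observation is that it suffices to build, for each ergodic $\mu$, a continuous path inside $\mathcal{M}_F^e(\Sigma_N\times\mathbb{S}^1)$ from $\mu$ to $\mu_\ast$; concatenation then connects any two ergodic measures. Given $\mu$, I would first move $\mu$ slightly to an ergodic measure supported on a basic set $\Gamma$: if $\chi(\mu)\neq 0$, apply Lemma \ref{Lem-skew-horseshoe} directly to get a basic set $\Gamma\ni$ (an ergodic measure close to $\mu$); if $\chi(\mu)=0$ one first perturbs $\mu$ within the ergodic measures to one with small nonzero exponent — here one uses that, by Theorem \ref{Thm-skew}, the set of attainable exponents of ergodic measures is the whole interval $[a_{\min},a_{\max}]\ni 0$, together with an approximation argument of the type in Lemma \ref{Lem-skew-horseshoe}, to slide the exponent off zero while staying ergodic and close in $\rho$. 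Once $\mu$ is (approximately) realised on a basic set $\Gamma$, enlarge $\Gamma$ using Lemma \ref{Lem-skew-bridge}(1)–(2): the saddle $P_0$ and some saddle $Q\subset\Gamma$ of the same type (both positive exponent, after the previous step) are homoclinically related, hence there is a basic set $\widehat\Gamma\supset\Gamma\cup\mathcal{O}(P_0)$. On $\widehat\Gamma$, which is a transitive locally maximal hyperbolic set for the restricted dynamics, $\mathcal{M}_F^e(\widehat\Gamma)$ is path connected by the argument of \cite{YZ2020}: on a locally maximal hyperbolic set one connects ergodic measures through convex combinations using the 'multi-horseshoe' entropy-dense property — the convex combination $t\mu_1+(1-t)\mu_2$ can be $\rho$-approximated and entropy-approximated by measures carried on sub-horseshoes, and inside those one invokes the conditional variational principle (Theorem \ref{thm-continuous}(I), or Theorem \ref{condition-variation-principle2}) to replace the non-ergodic convex combination by a genuinely ergodic measure, doing so continuously in $t$ by a standard diagonal/partition-of-unity construction. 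This yields a path inside $\mathcal{M}_F^e(\widehat\Gamma)\subset\mathcal{M}_F^e(\Sigma_N\times\mathbb{S}^1)$ from the chosen ergodic measure near $\mu$ to $\mu_\ast=\delta$-type measure on $P_0$. Finally, gluing in the short initial path from $\mu$ to its basic-set approximant completes the path from $\mu$ to $\mu_\ast$.

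The main obstacle I anticipate is the case $\chi(\mu)=0$, i.e.\ connecting across the "non-hyperbolic divide". Measures with zero fiber exponent are not supported on hyperbolic basic sets, so Lemma \ref{Lem-skew-horseshoe} does not apply verbatim, and a path that must pass from negative-exponent measures to positive-exponent measures necessarily crosses $\chi=0$. The resolution should again be modelled on \cite{YZ2020}: use the fact (Theorem \ref{Thm-skew} and the structure in \cite{DGM2019,DGM2022}) that the exponent map $\mu\mapsto\chi(\mu)$ on ergodic measures is surjective onto $[a_{\min},a_{\max}]$ and that one can construct a continuous one-parameter family of ergodic measures whose exponents sweep continuously through $0$ — e.g.\ taking measures along a continuous path of basic sets whose exponents shrink to $0$ from one side and then grow from the other, with the zero-exponent measure realised as a weak$^*$ limit that is still ergodic. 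Making this limit measure ergodic and the family genuinely continuous at the crossing point is the delicate part; one expects to need the explicit constructions of non-hyperbolic ergodic measures (the "flip-flop"/skeleton constructions) from \cite{DGM2019} rather than only the soft statements quoted in the excerpt. Everything else — the enlargement of basic sets, the passage to ergodic measures via the conditional variational principle, and the concatenation of paths — is routine once the hyperbolic pieces are in place.
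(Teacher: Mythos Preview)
Your overall strategy (following \cite{YZ2020}) is the same as the paper's, but there are two concrete gaps that make the proposal incomplete as written.

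First, you overlook that Lemma \ref{Lem-skew-horseshoe} already applies to ergodic measures with $\chi(\mu)\geq 0$, \emph{including} $\chi(\mu)=0$: such a measure can be approximated by basic sets with uniform fiber expansion, hence by periodic orbit measures of positive type. Consequently there is no ``non-hyperbolic divide'' to cross. The paper simply approximates a zero-exponent ergodic measure $v$ by a sequence of positive-type periodic orbits $q_n$ with $\delta_{\mathcal{O}(q_n)}\to v$, and approximates a positive-exponent $\mu$ by positive-type periodic orbits $p_n$ with $\delta_{\mathcal{O}(p_n)}\to\mu$; all $p_n,q_n$ are pairwise homoclinically related by Lemma \ref{Lem-skew-bridge}(1). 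Your proposed detour through flip-flop/skeleton constructions is unnecessary, and the step where you ``first perturb $\mu$ within the ergodic measures to one with small nonzero exponent'' is circular (it presupposes a local path structure you do not yet have).

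Second, your mechanism for path connectedness \emph{inside} a basic set is not correct. Replacing the convex combination $t\mu_1+(1-t)\mu_2$ by an ergodic measure via the conditional variational principle gives an approximation for each fixed $t$, but there is no ``standard diagonal/partition-of-unity construction'' that makes this selection continuous in $t$; the multi-horseshoe machinery in this paper produces density statements, not continuous families. The paper instead invokes Sigmund's theorem \cite[Theorem B]{Sigmund1977}, which directly gives path connectedness of the ergodic measures on a transitive locally maximal hyperbolic set. Likewise, your ``short initial path from $\mu$ to its basic-set approximant'' does not exist a priori; the paper avoids this by concatenating infinitely many Sigmund paths on horseshoes $\Lambda_n\supset\{p_n,p_{n+1}\}$ over shrinking parameter intervals $[3^{-n-1},3^{-n}]$, so that continuity at the endpoint $t=0$ (resp.\ $t=1$) follows from the weak$^*$ convergence $\delta_{\mathcal{O}(p_n)}\to\mu$ (resp.\ $\delta_{\mathcal{O}(q_n)}\to v$).
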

\begin{proof}
	One only needs to show that given a  ergodic measure $\mu$ with positive Lyapunov exponent and a  ergodic measure $v$ with zero Lyapunov exponent, there exists a path connecting them.
	By Lemma \ref{Lem-skew-horseshoe}, there exist sequences of periodic orbits $\left\{p_n\right\}_{n\geq 1}$ and $\left\{q_n\right\}_{n\geq 1}$ of same type such that $\delta_{\mathcal{O}_{p_n}}$ converges to $\mu$ and $\delta_{\mathcal{O}_{q_n}}$ converges to $v$. By Lemma \ref{Lem-skew-bridge}(1), the periodic points $p_n$ and $q_n$ are pairwise homoclinically related. Since $q_{n+1}$ and $q_n$ are homoclinically related, by Lemma \ref{Lem-skew-bridge}(2), there exists a hyperbolic horseshoe $\Lambda_n$ containing $q_{n+1}$ and $q_n$, then by \cite[Theorem B]{Sigmund1977} (see also the comments after it), there exists a path $\left\{v_t\right\}_{t \in\left[1-3^{-n}, 1-3^{-n-1}\right]}$ in the set of ergodic measures supported on $\Lambda_n$ which connects $\delta_{\mathcal{O}_{q_n}}$ to $\delta_{\mathcal{O}_{q_{n+1}}}$. Analogously, in the set of ergodic measures, one has paths $\left\{v_t\right\}_{t \in\left[3^{-n-1}, 3^{-n}\right]}$ which connects $\delta_{\mathcal{O}_{p_{n+1}}}$ to $\delta_{\mathcal{O}_{p_n}}$ and $v_{t \in\left[\frac{1}{3}, \frac{2}{3}\right]}$ which connects $\delta_{\mathcal{O}_{p_1}}$ to $\delta_{\mathcal{O}_{q_1}}$. Let $v_0=\mu$ and $v_1=v$, then $\left\{v_t\right\}_{t \in[0,1]}$ gives a path connecting $\mu$ to $v$.
\end{proof}
Same as \cite[(2.3)]{DGM2019}, we define a  continuous function $\varphi: \Sigma_N \times \mathbb{S}^1 \rightarrow \mathbb{R}$  by
$$
\varphi(X) \stackrel{\text { def }}{=} \log \left|\left(f_{\xi_0}\right)^{\prime}(x)\right| .
$$
where $X=(\xi, x)$.
Then for any $F$-invariant measure $\mu$, 
$
\chi(\mu) = \int \varphi d \mu,
$
and for any $X\in \Sigma_N \times \mathbb{S}^1,$ $\chi(X)=\lim _{n \rightarrow \pm \infty} \frac{1}{n} \sum_{i=0}^{n-1}\varphi(F^i(X)).$
\begin{Lem}\label{Lem-skew-min-max}
	(1) $\max\{\chi(\mu):\mu \in  \mathcal{M}_{F}(\Sigma_N \times \mathbb{S}^1)\}=\max\{\chi(\mu):\mu \in  \mathcal{M}_{F}^e(\Sigma_N \times \mathbb{S}^1)\}=\sup\{a:\mathcal{L}(a) \neq \emptyset\}.$
	(2) $\min\{\chi(\mu):\mu \in  \mathcal{M}_{F}(\Sigma_N \times \mathbb{S}^1)\}=\min\{\chi(\mu):\mu \in  \mathcal{M}_{F}^e(\Sigma_N \times \mathbb{S}^1)\}=\inf\{a:\mathcal{L}(a) \neq \emptyset\}.$
\end{Lem}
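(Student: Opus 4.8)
\textbf{Proof proposal for Lemma \ref{Lem-skew-min-max}.}

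The plan is to establish the two identities by combining (i) the classical fact that the continuous function $\mu \mapsto \chi(\mu) = \int \varphi\, d\mu$ attains its extrema on the compact convex set $\mathcal{M}_F(\Sigma_N \times \mathbb{S}^1)$ at extreme points, hence at ergodic measures, and (ii) the characterization of $a_{\min}$ and $a_{\max}$ via level sets $\mathcal{L}(a)$. Since the two statements are symmetric (replace $\varphi$ by $-\varphi$, or equivalently $\chi$ by $-\chi$), it suffices to prove part (1); part (2) follows by the identical argument applied to the infimum. First I would record that $\mathcal{M}_F(\Sigma_N \times \mathbb{S}^1)$ is a nonempty compact convex subset of $\mathcal{M}(\Sigma_N \times \mathbb{S}^1)$ in the weak$^*$ topology, and that $\mu \mapsto \chi(\mu)$ is affine and continuous (it is the integral of the fixed continuous function $\varphi$ defined just above the lemma). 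Therefore the supremum $\sup\{\chi(\mu) : \mu \in \mathcal{M}_F\}$ is attained, and by the Bauer maximum principle (an affine continuous function on a compact convex set attains its maximum at an extreme point), it is attained at an extreme point of $\mathcal{M}_F$, i.e.\ at an ergodic measure. This gives
$$
\max\{\chi(\mu):\mu \in \mathcal{M}_F(\Sigma_N\times\mathbb{S}^1)\} = \max\{\chi(\mu):\mu \in \mathcal{M}_F^e(\Sigma_N\times\mathbb{S}^1)\}.
$$
Alternatively, one can avoid Bauer's principle and use the ergodic decomposition theorem directly: any $\mu \in \mathcal{M}_F$ decomposes as $\mu = \int \omega\, d\tau(\omega)$ over ergodic $\omega$, so $\chi(\mu) = \int \chi(\omega)\, d\tau(\omega) \le \sup_{\omega \in \mathcal{M}_F^e} \chi(\omega)$, while the reverse inequality is trivial since $\mathcal{M}_F^e \subset \mathcal{M}_F$.

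Next I would identify this common maximum with $a_{\max} = \sup\{a : \mathcal{L}(a) \neq \emptyset\}$. For the inequality $\max\{\chi(\mu) : \mu \in \mathcal{M}_F^e\} \le a_{\max}$: given an ergodic $\mu$, by Birkhoff's ergodic theorem there is a point $X$ with $\lim_{n\to\infty} \frac1n \sum_{i=0}^{n-1}\varphi(F^i(X)) = \int\varphi\,d\mu = \chi(\mu)$; one also needs the backward Birkhoff average to agree, which holds $\mu$-a.e.\ because $\mu$ is $F$-invariant and $F$ is invertible (apply Birkhoff to $F^{-1}$ and use that the forward and backward time averages of an $L^1$ function coincide a.e.\ for an invertible system). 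Hence $\chi(X) = \chi(\mu)$, so $X \in \mathcal{L}(\chi(\mu))$ and $\mathcal{L}(\chi(\mu)) \neq \emptyset$, giving $\chi(\mu) \le a_{\max}$. For the reverse inequality $a_{\max} \le \max\{\chi(\mu) : \mu \in \mathcal{M}_F^e\}$: if $\mathcal{L}(a) \neq \emptyset$, pick $X \in \mathcal{L}(a)$ and form the empirical measures $\mathcal{E}_n(X) = \frac1n\sum_{i=0}^{n-1}\delta_{F^i(X)}$; any weak$^*$ limit point $\nu$ of $\{\mathcal{E}_n(X)\}$ is $F$-invariant and satisfies $\int\varphi\,d\nu = \lim \frac1n\sum_{i=0}^{n-1}\varphi(F^i(X)) = a$ (using continuity of $\varphi$ and the defining limit for $\mathcal{L}(a)$), so $a = \chi(\nu) \le \sup\{\chi(\mu) : \mu \in \mathcal{M}_F\} = \max\{\chi(\mu) : \mu \in \mathcal{M}_F^e\}$. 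Taking the supremum over all $a$ with $\mathcal{L}(a) \neq \emptyset$ gives $a_{\max} \le \max\{\chi(\mu) : \mu \in \mathcal{M}_F^e\}$, completing the chain of equalities.

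I do not expect any serious obstacle here; the argument is standard soft analysis (compactness, affineness, ergodic decomposition, Birkhoff). The one point that requires a word of care is the equality of the forward and backward fiber Lyapunov exponents in the definition of $\chi(X)$: the level set $\mathcal{L}(a)$ is defined only for points where both one-sided limits exist and coincide, so when producing a point of $\mathcal{L}(\chi(\mu))$ from an ergodic $\mu$ I must invoke invertibility of $F$ to get the backward average a.e., not merely the forward one. Conversely, when extracting an invariant measure from a point of $\mathcal{L}(a)$, only the forward average is needed, and the construction via empirical measures is routine. All the tools invoked — weak$^*$ compactness of $\mathcal{M}_F$, continuity of $\varphi$, Birkhoff's theorem, and the ergodic decomposition — are already in play in the excerpt, so the proof is short.
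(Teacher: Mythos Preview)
Your proposal is correct and follows essentially the same approach as the paper: ergodic decomposition for the equality of the two maxima, Birkhoff's theorem to exhibit a point of $\mathcal{L}(\chi(\mu))$ from an ergodic $\mu$, and empirical-measure limit points to extract an invariant measure from a point of $\mathcal{L}(a)$. You are in fact slightly more careful than the paper in noting that the definition of $\chi(X)$ requires both the forward and backward limits to agree, which calls for applying Birkhoff to $F^{-1}$ as well; the paper glosses over this.
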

\begin{proof}
	One only needs to prove item(1).
	
	Take $\mu_{\max}\in \mathcal{M}_{F}(\Sigma_N \times \mathbb{S}^1)$ such that $\chi(\mu_{\max})=\max\{\chi(\mu):\mu \in  \mathcal{M}_{F}(\Sigma_N \times \mathbb{S}^1)\}.$ Then by ergodic decomposition theorem, there is $\nu_{\max}\in \mathcal{M}_{F}^e(\Sigma_N \times \mathbb{S}^1)$ such that $\chi(\nu_{\max})=\chi(\mu_{\max}).$  It follows $\max\{\chi(\mu):\mu \in  \mathcal{M}_{F}(\Sigma_N \times \mathbb{S}^1)\}= \max\{\chi(\mu):\mu \in  \mathcal{M}_{F}^e(\Sigma_N \times \mathbb{S}^1)\}.$
	
	By Birkhoff's ergodic theorem, there  exists $G_{\nu_{\max}}\subset \Sigma_N \times \mathbb{S}^1$ such that $\nu_{\max}(G_{\nu_{\max}})=1$ and $\chi(X)=\chi(\nu_{\max})$ for any $X\in G_{\nu_{\max}}.$ Then $G_{\nu_{\max}}\subset \mathcal{L}(\chi(\nu_{\max})),$ and thus $\mathcal{L}(\chi(\nu_{\max}))\neq\emptyset.$ It follows
	$\max\{\chi(\mu):\mu \in  \mathcal{M}_{F}^e(\Sigma_N \times \mathbb{S}^1)\}=\chi(\nu_{\max})\leq \sup\{a:\mathcal{L}(a) \neq \emptyset\}.$
	
	For any $a_0$ with $\mathcal{L}(a_0)\neq \emptyset,$ take $X_0\in \mathcal{L}(a_0).$ Then $\chi(X_0)=\lim _{n \rightarrow \pm \infty} \frac{1}{n} \sum_{i=0}^{n-1}\varphi(F^i(X_0))=a_0.$ It implies $\chi(\mu_0)=a_0$ for each limit point $\mu_0$ of $\{\frac{1}{n}\sum_{i=0}^{n-1}\delta_{F^i(X_0)}\}_{n\geq 1}.$ Thus $\max\{\chi(\mu):\mu \in  \mathcal{M}_{F}(\Sigma_N \times \mathbb{S}^1)\}\geq a_0.$ By the arbitrariness of $a_0,$  we obtain $\max\{\chi(\mu):\mu \in  \mathcal{M}_{F}(\Sigma_N \times \mathbb{S}^1)\}\geq\sup\{a:\mathcal{L}(a) \neq \emptyset\}.$
\end{proof}

By Lemma \ref{Lem-skew-horseshoe}, there exists a basic set $\Gamma^+$ which has uniform fiber expansion. Take a periodic point $P^+\in \Gamma^+.$ 
Then $P^+$ has positive Lyapunov exponent. 
Recall the definition of  $\mathcal{M}_{horse}(P^+)$ from section \ref{section-nonhyperblic}, $\mathcal{M}_{horse}(P^+)$ is the set of invariant measures  which can be approximated by $P^+$-horseshoes,
By Lemma \ref{Lem-skew-bridge}(1), every basic set with uniform fiber expansion contains periodic points  homoclinically related to $P^+.$
Then by  Lemma \ref{Lem-skew-horseshoe}, we have 
\begin{equation}\label{equation-skew-proof}
	\{\mu\in \mathcal{M}_F^e(\Sigma_N \times \mathbb{S}^1):\chi(\mu)\geq 0\}\subset \mathcal{M}_{horse}(P^+).
\end{equation}
By \cite[Page 77]{DGM2022}, the entropy function $\mu\to h_\mu(f)$ is upper semi-continuous on $\mathcal{M}_F(\Sigma_N \times \mathbb{S}^1)$.  So the results of Theorem \ref{thm-continuous-2} holds for $\mathcal{M}_{horse}(P^+).$

By Corollary \ref{Cor-skew}, $\mathcal{P}_\varphi(\mathcal{M}_F^e(\Sigma_N \times \mathbb{S}^1))$ is an interval.  Combining with Lemma   \ref{Lem-skew-min-max}, $\mathcal{P}_\varphi(\mathcal{M}_F^e(\Sigma_N \times \mathbb{S}^1))=[a_{\min},a_{\max}].$
Then by (\ref{equation-skew-proof}), we have $(0,a_{\max })\subset \mathrm{Int}(\mathcal{P}_\varphi(\mathcal{M}_{horse}(P^+))).$  Apply Theorem \ref{thm-continuous-2}(III) to $\varphi$, for  every $a\in \left(0, a_{\max }\right)$ we have
\begin{equation}\label{equation-skew-proof2}
    \begin{split}
    		[0,\sup\mathcal{E}_F(\mathcal{P}_\varphi^{-1}(a)\cap \mathcal{M}_{horse}(P^+)))&\subset  \left\{h_\mu(F): \mu \in \mathcal{M}_{horse}^e(P^+), \chi(\mu)=a\right\} \\
    	&=  \left\{h_\mu(F): \mu \in \mathcal{M}_{horse}(P^+)\cap \mathcal{M}_F^e(\Sigma_N \times \mathbb{S}^1), \chi(\mu)=a\right\} \\
    	&\subset   \left\{h_\mu(F): \mu \in  \mathcal{M}_F^e(\Sigma_N \times \mathbb{S}^1), \chi(\mu)=a\right\}.
    \end{split}
\end{equation}
By (\ref{equation-skew-proof})
\begin{equation*}
	\begin{split}
		\sup \{h_\mu(F): \mu \in \mathcal{M}_{F}^e(\Sigma_N \times \mathbb{S}^1),\chi(\mu)=a\}
		&\leq  \sup \{h_\mu(F): \mu\in \mathcal{M}_{horse}(P^+),\chi(\mu)=a\}\\
		&=\sup\mathcal{E}_F(\mathcal{P}_\varphi^{-1}(a)\cap \mathcal{M}_{horse}(P^+)).
	\end{split}
\end{equation*}
Combining with (\ref{equation-skew-proof2}), we have
$$
[0,H(f,\chi,a))\subset  \left\{h_\mu(F): \mu \in \mathcal{M}_{F}^e(\Sigma_N \times \mathbb{S}^1), \chi(\mu)=a\right\} \subset [0,H(f,\chi,a)],
$$
where $H(f,\chi,a)=\sup \left\{h_\mu(F): \mu \in \mathcal{M}_{F}^e(\Sigma_N \times \mathbb{S}^1),\chi(\mu)=a\right\}.$

Finally, by Theorem \ref{Thm-skew}$$
h_{top}(\mathcal{L}(a))=\sup \{h_\mu(F): \mu \in \mathcal{M}_{F}^e(\Sigma_N \times \mathbb{S}^1),\chi(\mu)=a\}.
$$
So we have
$$
[0,h_{top}(\mathcal{L}(a)))\subset  \left\{h_\mu(F): \mu \in \mathcal{M}_{F}^e(\Sigma_N \times \mathbb{S}^1), \chi(\mu)=a\right\} \subset [0,h_{top}(\mathcal{L}(a)].
$$
The analogous result holds for any $a\in (a_{\min },0).$\qed 

\subsection{Proof of Theorem \ref{maintheorem-cocycle}}
Note that the projective line $\mathbb{P}^1$ is topologically the circle $\mathbb{S}^1$ and the action of any $\mathrm{\operatorname{SL}}(2, \mathbb{R})$ matrix on $\mathbb{P}^1$ is a diffeomorphism. Given a matrix $A \in \operatorname{SL}(2, \mathbb{R})$, define $f_A: \mathbb{P}^1 \rightarrow \mathbb{P}^1$ by
$
f_A(v) {=} \frac{A v}{\|A v\|} .
$
Given a one-step $2 \times 2$ matrix cocycle $\mathbf{A}$, we denote by $F_{\mathbf{A}}$ the associated step skewproduct generated by the family of maps $f_{A_0}, \ldots, f_{A_{N-1}}$ as in (\ref{equation-step-skew}). 

Now we fix $N\geq 2$ and  $\mathbf{A}\in \mathfrak{E}_{N \text {,shyp }}$. By \cite[Proposition 11.23]{DGM2019}, we have $F_{\mathbf{A}}\in \operatorname{SP}_{\text {shyp }}^1\left(\Sigma_N \times \mathbb{S}^1\right).$ Denote $a_{\max }=\frac{1}{2}\sup\{a:\mathcal{L}(a) \neq \emptyset\}.$ 
\begin{Lem}\cite[Theorem 5]{DGM2019}
	For every $a \in\left[0, a_{\max }\right],$ one has $\mathcal{L}_{\mathbf{A}}^{+}(a) \neq \emptyset$ and
	$$
	h_{top}(\mathcal{L}_{\mathbf{A}}^{+}(a))=h_{top}(\mathcal{L}(2a)).
	$$
\end{Lem}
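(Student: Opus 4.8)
The plan is to route the statement through the two conditional variational principles already available: Theorem~\ref{Thm-skew} gives $h_{top}(\mathcal{L}(2a))=\sup\{h_\mu(F_{\mathbf{A}}):\mu\in\mathcal{M}_{F_{\mathbf{A}}}^e(\Sigma_N\times\mathbb{S}^1),\ \chi(\mu)=2a\}$ and Theorem~\ref{Thm-cocycle} gives $h_{top}(\mathcal{L}_{\mathbf{A}}^{+}(a))=\sup\{h_\nu(\sigma^+):\nu\in\mathcal{M}_{\sigma^+}^e(\Sigma_N^{+}),\ \lambda_1(\mathbf{A},\nu)=a\}$. Thus it suffices to produce, for every $a\in(0,a_{\max}]$, an entropy-preserving correspondence between the two families of ergodic measures (which also yields $\mathcal{L}_{\mathbf{A}}^{+}(a)\neq\emptyset$), and then obtain the endpoint $a=0$ from the continuity of both spectra on $[0,a_{\max}]$ by letting $a\to 0^{+}$. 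The bridge between the two Lyapunov data is the elementary identity for the projective action: since $\det A=1$, for a unit vector $v$ one has $|f_A'(v)|=\|Av\|^{-2}$, which telescopes along an orbit to $|(f_\xi^n)'(v)|=\|\mathbf{A}^n(\xi^+)v\|^{-2}$, so for $X=(\xi,x)$ with unit representative $\tilde x$,
\[
\chi(X)=-2\lim_{n\to\infty}\frac1n\log\|\mathbf{A}^n(\xi^+)\tilde x\|
\]
whenever the limits exist. Consequently, for any $F_{\mathbf{A}}$-invariant $\mu$, Birkhoff's theorem (applied to $F_{\mathbf{A}}$ and $F_{\mathbf{A}}^{-1}$) shows $\chi$ is defined $\mu$-a.e.\ and $\chi(\mu)=-2\int\log\|\mathbf{A}(\xi_0)\tilde x\|\,d\mu$.

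\textbf{From $\nu$ to $\mu$.} Given ergodic $\nu\in\mathcal{M}_{\sigma^+}^e(\Sigma_N^{+})$ with $\lambda_1(\mathbf{A},\nu)=a>0$, let $\hat\nu$ be its natural extension on $\Sigma_N$, so $h_{\hat\nu}(\sigma)=h_\nu(\sigma^+)$ and $\lambda_1(\mathbf{A},\hat\nu)=a$. Since this exponent is positive the linear cocycle is $\hat\nu$-a.e.\ hyperbolic, so the Oseledets stable line $E^{\mathrm{s}}(\xi)$ defines a measurable $F_{\mathbf{A}}$-equivariant section; pushing $\hat\nu$ forward under $\xi\mapsto(\xi,E^{\mathrm{s}}(\xi))$ gives an ergodic measure $\mu^{\mathrm{s}}\in\mathcal{M}_{F_{\mathbf{A}}}^e(\Sigma_N\times\mathbb{S}^1)$. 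As the graph map is an isomorphism onto its image, $h_{\mu^{\mathrm{s}}}(F_{\mathbf{A}})=h_{\hat\nu}(\sigma)=h_\nu(\sigma^+)$; and the contraction rate of $\mathbf{A}$ along $E^{\mathrm{s}}$ is $-a$ (subadditive ergodic theorem plus $\det=1$), so the displayed formula yields $\chi(\mu^{\mathrm{s}})=2a$. Hence $h_\nu(\sigma^+)$ occurs in the supremum on the $\mathcal{L}(2a)$-side. Note also $\nu$-a.e.\ $\xi^+\in\mathcal{L}_{\mathbf{A}}^{+}(a)$, so $\mathcal{L}_{\mathbf{A}}^{+}(a)\neq\emptyset$; such $\nu$ exist for every $a\in(0,a_{\max}]$ because the set of realizable exponents is an interval (path-connectedness of the ergodic measures, Corollary~\ref{Cor-skew}, together with the structure of $\mathfrak{E}_{N,\mathrm{shyp}}$ from \cite{DGM2019,DGM2022}).

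\textbf{From $\mu$ to $\nu$.} Conversely, let $\mu\in\mathcal{M}_{F_{\mathbf{A}}}^e(\Sigma_N\times\mathbb{S}^1)$ with $\chi(\mu)=2a>0$, and set $\hat\mu=(\pi_{\Sigma_N})_*\mu$. Because the fiber exponent is nonzero, a fiberwise-invariant measure is carried by an Oseledets graph: the disintegration of $\mu$ is $\hat\mu$-a.e.\ a Dirac mass at $E^{\mathrm{u}}(\xi)$ or at $E^{\mathrm{s}}(\xi)$. The $E^{\mathrm{u}}$-graph has fiber exponent $-2\lambda_1(\mathbf{A},\hat\mu)<0$, so $\chi(\mu)>0$ forces $\mu$ to be the $E^{\mathrm{s}}$-graph over $\hat\mu$ with $\lambda_1(\mathbf{A},\hat\mu)=a$; in particular $h_\mu(F_{\mathbf{A}})=h_{\hat\mu}(\sigma)$. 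Since $\hat\mu$ is the natural extension of its one-sided projection $\nu=(\pi^+)_*\hat\mu$, we get $h_\nu(\sigma^+)=h_{\hat\mu}(\sigma)=h_\mu(F_{\mathbf{A}})$, while $\lambda_1(\mathbf{A},\nu)=\lambda_1(\mathbf{A},\hat\mu)=a$ because $\mathbf{A}^n(\xi^+)$ depends only on $\xi_0,\dots,\xi_{n-1}$. Combining the two directions yields equality of the two suprema, hence $h_{top}(\mathcal{L}_{\mathbf{A}}^{+}(a))=h_{top}(\mathcal{L}(2a))$ for all $a\in(0,a_{\max}]$, and the case $a=0$ follows by passing to the limit $a\to 0^{+}$ using the continuity asserted in Theorems~\ref{Thm-skew} and~\ref{Thm-cocycle}.

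\textbf{Main obstacle.} The delicate point is the ``Oseledets-graph dichotomy'' used in the $\mu\rightsquigarrow\nu$ step: every ergodic $F_{\mathbf{A}}$-invariant measure with nonzero fiber Lyapunov exponent is supported on the graph of a measurable invariant line field, hence measurably conjugate to its base. This is the fiberwise analogue of Ledrappier's characterization for $\operatorname{SL}(2,\mathbb{R})$-cocycles with simple spectrum and is used throughout \cite{DGM2017,DGM2019,DGM2022}; importantly it requires only the hyperbolicity of the measure under consideration, not uniform hyperbolicity of the full cocycle, so the ``shyp'' restriction is not an issue. Stating this cleanly, and checking the endpoint matters (realization of $a=a_{\max}$ by an ergodic measure, and non-emptiness of $\mathcal{L}_{\mathbf{A}}^{+}(a)$ and $\mathcal{L}(2a)$ at the two ends of the interval), is where the remaining care is needed.
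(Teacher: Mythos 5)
First, a structural point: the paper does not prove this lemma at all — it is imported verbatim as \cite[Theorem 5]{DGM2019} — so there is no internal proof to measure yours against; the closest the paper comes is the proof of Theorem \ref{maintheorem-cocycle}, where the $\mu\to\nu$ half of your correspondence (push-forward under $\pi^+\circ\pi_1$, Lemma \ref{Lem-cocycle}, and the entropy identity $h_\nu(\sigma^+)=h_\mu(F_{\mathbf A})$ quoted from \cite[p.~82]{DGM2022}) appears almost word for word. Your measure-level correspondence itself is essentially sound: the identity $|f_A'(v)|=\|Av\|^{-2}$, the graph construction over the Oseledets stable direction in the $\nu\to\mu$ direction, and the conclusion $\lambda_1(\mathbf A,\hat\mu)=a$ in the $\mu\to\nu$ direction are all correct. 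In fact you do not need the full ``Oseledets-graph dichotomy'' you single out as the delicate point: the pointwise fiber exponent is an $F_{\mathbf A}$-invariant function taking values in $\{\pm 2\lambda_1(\mathbf A,\hat\mu)\}$, so ergodicity alone forces $\lambda_1(\hat\mu)=a$, and $h_\mu(F_{\mathbf A})=h_{\hat\mu}(\sigma)$ holds for \emph{every} invariant measure because circle-diffeomorphism fibers carry zero entropy (this is exactly the \cite[p.~82]{DGM2022} fact). The loose end you flag at $a=0$ (non-emptiness of $\mathcal L^{+}_{\mathbf A}(0)$, which continuity of the two spectra does not give you) is real but easily repaired: by Corollary \ref{Cor-skew} and Lemma \ref{Lem-skew-min-max} the ergodic fiber exponents of $F_{\mathbf A}$ form an interval containing $0$, the base measure of a zero-exponent ergodic measure has $\lambda_1=0$, and Kingman's theorem places almost every point of its one-sided projection in $\mathcal L^{+}_{\mathbf A}(0)$.

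The genuine objection is circularity. Your scheme turns the level-set identity into an equality of two suprema only after invoking \emph{both} restricted variational principles, and Theorem \ref{Thm-cocycle} — the cocycle-side principle — is obtained in \cite{DGM2019,DGM2022} by transferring the skew-product analysis of Theorem \ref{Thm-skew} to the cocycle setting, with precisely the level-set identity you are asked to prove (together with the same measure correspondence you re-derive) serving as the bridge. So your argument is a valid deduction from the statements this paper treats as black boxes, but as a proof of \cite[Theorem 5]{DGM2019} it assumes a result that sits downstream of it. A non-circular proof must compare the Bowen entropies of the non-compact, non-invariant level sets directly — for instance via the almost-everywhere dictionary $\xi^+\mapsto(\xi,v_0(\xi^+))$ of Lemma \ref{Lem-cocycle} applied at the level of points, combined with an entropy estimate for the projection $\Sigma_N\times\mathbb{S}^1\to\Sigma_N^{+}$, whose fibers (past coordinates together with the circle direction) contribute zero topological entropy — which is the route taken in the source and is exactly where your ``Main obstacle'' paragraph stops short.
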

Then by Theorem \ref{Thm-skew} we have $h_{top}(\mathcal{L}_{\mathbf{A}}^{+}(a))=h_{top}(\mathcal{L}(2a))=\sup \{h_\mu(F_{\mathbf{A}}): \mu \in \mathcal{M}_{F_{\mathbf{A}}}^e(\Sigma_N \times \mathbb{S}^1),\chi(\mu)=2a\}.$ Fix  $a\in (0,\max)$ and $0\leq h<h_{top}(\mathcal{L}_{\mathbf{A}}^{+}(a)).$ Then by Theorem \ref{maintheorem-skew} there exists $\mu_{a,h}\in  \mathcal{M}_{F_{\mathbf{A}}}^e(\Sigma_N \times \mathbb{S}^1)$ such that $\chi(\mu_{a,h})=2a$ and $h_{\mu_{a,h}}(F_{\mathbf{A}})=h,$ 

Given $\xi^{+} \in \Sigma_N^{+}$ and $\ell \in \mathbb{N}$, denote by $v_{+}(\xi^{+}, \ell) \in \mathbb{P}^1$ a vector at which $|(f_{\xi^{+}}^{\ell})^{\prime}|$ attains its maximum; note that this vector is unique unless $f_{\xi^{+}}^{\ell}$ is an isometry.
\begin{Lem}\cite[Proposition 11.5]{DGM2019}\label{Lem-cocycle}
	Assume $\xi^{+} \in \Sigma_N^{+}$ satisfies $\lambda_1\left(\mathbf{A}, \xi^{+}\right)=a$.
	\begin{enumerate}
		\item If $a=0$, then $\chi^{+}\left(\xi^{+}, v\right)=0$ for all $v \in \mathbb{P}^1$, where $
		\chi^{+}\left(\xi^{+}, v\right) {=} \lim _{n \rightarrow \infty} \frac{1}{n} \log |(f_{\xi^{+}}^n)^{\prime}(v)|.
		$
		\item If $a>0$, then the limit $v_0\left(\xi^{+}\right)=\lim _{\ell \rightarrow \infty} v_{+}\left(\xi^{+}, \ell\right)$ exists and it holds
		$$
		\chi^{+}\left(\xi^{+}, v\right)= \begin{cases}2 a & \text { for } v=v_0\left(\xi^{+}\right) \\ -2 a & \text { otherwise }\end{cases}
		$$
	\end{enumerate}
\end{Lem}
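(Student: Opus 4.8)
The plan is to prove Lemma~\ref{Lem-cocycle} by a direct analysis of the singular values of the partial products $\mathbf{A}^n:=\mathbf{A}^n(\xi^+)$; the ``shyp'' structure plays no role here, only the existence of the forward exponent for the given $\xi^+$. First I would record the elementary identity that for $A\in\operatorname{SL}(2,\mathbb{R})$ and a unit vector $v$ representing a point of $\mathbb{P}^1$, the derivative of $f_A$ in the round metric is $|(f_A)'(v)|=\|Av\|^{-2}$ (this is where $\det A=1$ enters). Since $f_{\xi^+}^n=f_{\mathbf{A}^n}$, the chain rule gives $|(f_{\xi^+}^n)'(v)|=\|\mathbf{A}^n v\|^{-2}$ for unit $v$, so that, whenever the limit on the right exists,
\[
\chi^+(\xi^+,v)=-2\lim_{n\to\infty}\frac1n\log\|\mathbf{A}^n v\|.
\]
Writing the singular values of $\mathbf{A}^n$ as $s_1(n)\ge s_2(n)>0$, the relation $\det\mathbf{A}^n=1$ forces $s_1(n)s_2(n)=1$, and the hypothesis $\lambda_1(\mathbf{A},\xi^+)=a$ then yields $\tfrac1n\log s_1(n)\to a$ and $\tfrac1n\log s_2(n)\to -a$. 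Let $u_n$ and $u_n^\perp$ be orthogonal unit right singular vectors of $\mathbf{A}^n$ belonging to $s_2(n)$ and $s_1(n)$ respectively; then $v_+(\xi^+,n)=[u_n]$, the maximizing direction for $|(f_{\xi^+}^n)'|$, which is unique once $s_1(n)/s_2(n)>1$, hence for all large $n$ when $a>0$.

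Part~(1) for $a=0$ is then immediate: $s_2(n)\le\|\mathbf{A}^n v\|\le s_1(n)$ for every unit $v$, both bounds have zero exponential rate, so $\chi^+(\xi^+,v)=0$. The substantive case is $a>0$, and the step I expect to be the main obstacle is the convergence of $([u_n])$ in $\mathbb{P}^1$. Here I would exploit that $A_0,\dots,A_{N-1}$ are finitely many invertible matrices, so $0<c:=\min_i m(A_i)$ and $C:=\max_i\|A_i\|<\infty$; from $\mathbf{A}^{n+1}=A_{\xi_n}\mathbf{A}^n$ one gets $s_1(n+1)\ge c\,s_1(n)$ and $\|\mathbf{A}^{n+1}u_n\|\le C\,s_2(n)$, while expanding $u_n$ in the basis $\{u_{n+1},u_{n+1}^\perp\}$ gives $\|\mathbf{A}^{n+1}u_n\|\ge|\langle u_n,u_{n+1}^\perp\rangle|\,s_1(n+1)$. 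Combining these and using $s_1(n)=s_2(n)^{-1}$,
\[
|\langle u_n,u_{n+1}^\perp\rangle|\le \frac{C}{c}\,\frac{s_2(n)}{s_1(n)}=\frac{C}{c}\,s_2(n)^2 .
\]
Since $\tfrac1n\log s_2(n)\to -a<0$, the right-hand side decays exponentially in $n$, hence the angular increments $\dist([u_n],[u_{n+1}])\asymp|\langle u_n,u_{n+1}^\perp\rangle|$ are summable; therefore $([u_n])$ is Cauchy, converges to some $v_0(\xi^+)\in\mathbb{P}^1$, and in fact $\dist([u_n],v_0(\xi^+))\le e^{(-2a+o(1))n}$.

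It then remains to evaluate $\chi^+(\xi^+,v)$ via the singular value decomposition. Writing a unit vector $v=\alpha_n u_n+\beta_n u_n^\perp$ gives $\|\mathbf{A}^n v\|^2=\alpha_n^2 s_2(n)^2+\beta_n^2 s_1(n)^2$. For $v=v_0(\xi^+)$ one has $|\beta_n|=\dist([u_n],v_0(\xi^+))\le e^{(-2a+o(1))n}$ and $\alpha_n\to1$, so both summands have exponential rate $\le -2a$ while the first attains it; hence $\|\mathbf{A}^n v_0\|=e^{(-a+o(1))n}$ and $\chi^+(\xi^+,v_0)=2a$. For $[v]\ne v_0(\xi^+)$, the triangle inequality makes $\dist([v],[u_n])$ bounded below for large $n$, so $|\beta_n|$ stays away from $0$; then $\beta_n^2 s_1(n)^2$ has rate $2a$ and dominates, whence $\|\mathbf{A}^n v\|=e^{(a+o(1))n}$ and $\chi^+(\xi^+,v)=-2a$. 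This gives part~(2), and combined with the $a=0$ computation it also yields part~(1). Along the way I would only need to be careful with two routine points: the well-definedness of $v_+(\xi^+,\ell)$ when $\mathbf{A}^\ell$ is a projective isometry (irrelevant for large $\ell$ once $a>0$), and the comparison between Euclidean angles of representative unit vectors and the fixed metric on $\mathbb{P}^1$, which affects only multiplicative constants.
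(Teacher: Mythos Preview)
The paper does not prove this lemma; it is quoted verbatim from \cite[Proposition~11.5]{DGM2019} and used as a black box in the proof of Theorem~\ref{maintheorem-cocycle}. So there is no in-paper argument to compare against.

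That said, your proposal is a correct and self-contained proof. The identity $|(f_A)'(v)|=\|Av\|^{-2}$ for $A\in\operatorname{SL}(2,\mathbb{R})$ is exactly the bridge between the cocycle exponent $\lambda_1$ and the fiber exponent $\chi^+$, and your SVD analysis is the standard route to Oseledets-type conclusions in dimension two: the summability of $|\langle u_n,u_{n+1}^\perp\rangle|$ via the one-step distortion bound $C/c$ and the gap $s_2(n)/s_1(n)=s_2(n)^2$ gives the Cauchy property, and the dichotomy for $\chi^+(\xi^+,v)$ then drops out of the decomposition $\|\mathbf{A}^n v\|^2=\alpha_n^2 s_2(n)^2+\beta_n^2 s_1(n)^2$. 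The two caveats you flag (ambiguity of $v_+(\xi^+,\ell)$ at isometric times, and the angle-vs-projective-distance comparison) are indeed harmless. Your argument is essentially what one finds in \cite{DGM2019} and in standard treatments of the $2\times 2$ Oseledets theorem.
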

A sequence $\xi=\left(\ldots \xi_{-1} \cdot \xi_0 \xi_1 \ldots\right) \in \Sigma_N$ can be written as $\xi=\xi^{-} . \xi^{+}$, where $\xi^{+} \in \Sigma_N^{+} {=}\{0, \ldots, N-1\}^{\mathbb{N}_0}$ and $\xi^{-} \in \Sigma_N^{-} {=}\{0, \ldots, N-1\}^{-\mathbb{N}}$.
Consider the projections $\pi^{+}: \Sigma_N \rightarrow \Sigma_N^{+}$, $\pi^{+}\left(\xi^{-} . \xi^{+}\right)=\xi^{+}$, and $\pi_1: \Sigma_N \times \mathbb{P}^1 \rightarrow \Sigma_N, \pi_1(\xi, x)=\xi$.
\begin{Lem}
	Let $\nu_{a,h}{=}\left(\pi^{+} \circ \pi_1\right)_* \mu_{a,h}\in \mathcal{M}_{\sigma^+}^e(\Sigma_N^{+})$. Then $\lambda_1(\mathbf{A}, \nu_{a,h})=a$. 
\end{Lem}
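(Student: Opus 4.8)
The plan is to compute $\lambda_1(\mathbf{A},\nu_{a,h})$ by relating it, via the projection $\pi^+\circ\pi_1$, to the fiber Lyapunov exponent $\chi(\mu_{a,h})=2a$ already known from the previous step. First I would unwind the definitions: since $\nu_{a,h}=(\pi^+\circ\pi_1)_*\mu_{a,h}$, for every bounded measurable $g$ on $\Sigma_N^+$ one has $\int g\,d\nu_{a,h}=\int g(\pi^+(\pi_1(X)))\,d\mu_{a,h}(X)$, and $\nu_{a,h}$ is $\sigma^+$-invariant because $\pi^+\circ\pi_1$ semiconjugates $F_{\mathbf A}$ to $\sigma^+$; ergodicity of $\nu_{a,h}$ is inherited from ergodicity of $\mu_{a,h}$ as the pushforward of an ergodic measure under a factor map. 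This is the routine part.

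The heart of the argument is the identity $\lambda_1(\mathbf A,\nu_{a,h})=\tfrac12\chi(\mu_{a,h})$. By Kingman's subadditive ergodic theorem applied to $\log\|\mathbf A^n(\xi^+)\|$, for $\nu_{a,h}$-a.e.\ $\xi^+$ the limit $\lambda_1(\mathbf A,\xi^+)=\lim_n\frac1n\log\|\mathbf A^n(\xi^+)\|$ exists and equals $\lambda_1(\mathbf A,\nu_{a,h})$ (using that $\nu_{a,h}$ is ergodic, so the limit is a.e.\ constant). Now I would invoke Lemma \ref{Lem-cocycle}: for a fixed $\xi^+$ with $\lambda_1(\mathbf A,\xi^+)=a'$, the one-sided cocycle has only the two possible values $\pm 2a'$ for $\chi^+(\xi^+,v)$ (or identically $0$ when $a'=0$), with the expanding value $2a'$ attained exactly at the distinguished direction $v_0(\xi^+)$. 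On the other hand, for $\mu_{a,h}$-a.e.\ $X=(\xi,x)$, Birkhoff's theorem gives $\chi(X)=\chi(\mu_{a,h})=2a>0$, and by definition $\chi(X)=\lim_{n\to\infty}\frac1n\log\|(f_\xi^n)'(x)\|$, which for the two-sided product restricted to the forward orbit is governed by $\chi^+(\xi^+,x)$. Since $2a>0$, the dichotomy in Lemma \ref{Lem-cocycle}(2) forces $\chi^+(\xi^+,x)=2a$, i.e.\ $x=v_0(\xi^+)$ for $\mu_{a,h}$-a.e.\ $X$, and in particular $a'=\lambda_1(\mathbf A,\xi^+)=a$ on a set of full $\mu_{a,h}$-measure. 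Pushing forward, $\lambda_1(\mathbf A,\xi^+)=a$ for $\nu_{a,h}$-a.e.\ $\xi^+$, hence $\lambda_1(\mathbf A,\nu_{a,h})=a$.

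The one point that needs care — and which I expect to be the main obstacle — is the precise bookkeeping identifying the forward fiber exponent of the two-sided skew-product $F_{\mathbf A}$ with the one-sided quantity $\chi^+(\pi^+\xi, x)$, together with checking that the full-measure sets align correctly under the pushforward (a pushforward of a full-measure set is full-measure, but one must phrase the a.e.\ statements on the correct spaces). Once that matching is set up cleanly, Lemma \ref{Lem-cocycle} does all the real work and the conclusion $\lambda_1(\mathbf A,\nu_{a,h})=a$ follows immediately; the entropy value $h_{\nu_{a,h}}(\sigma^+)=h$ is not needed for this particular lemma (it would be handled separately, e.g.\ via the Abramov--Rokhlin or factor entropy considerations, when assembling the proof of Theorem \ref{maintheorem-cocycle}).
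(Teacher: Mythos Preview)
Your proposal is correct and follows essentially the same route as the paper's own proof: both use ergodicity of $\mu_{a,h}$ to obtain $\chi(X)=2a$ almost everywhere, pass to the one-sided quantity $\chi^+(\xi^+,x)$, invoke Lemma~\ref{Lem-cocycle} to deduce $\lambda_1(\mathbf{A},\xi^+)=a$ for $\nu_{a,h}$-a.e.\ $\xi^+$, and then conclude via the subadditive ergodic theorem. The paper's only cosmetic difference is that it names the intermediate pushforward $\mu_{a,h}^+$ on $\Sigma_N^+\times\mathbb{P}^1$ explicitly, which is exactly the ``bookkeeping'' step you flagged as the point needing care.
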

\begin{proof}
	By ergodicity, $\chi(\xi, v)=2a$ for $\mu_{a,h}$-almost every $(\xi, v)$. Denote by $\mu^{+}_{a,h}$ the ergodic measure obtained as the push-forward of $\mu_{a,h}$ by the map $(\xi, v) \mapsto\left(\xi^{+}, v\right)$. Hence for $\mu^{+}_{a,h}$-almost every $\left(\xi^{+}, v\right)$ it holds $\chi^{+}\left(\xi^{+}, v\right)=2a$. It follows from Lemma \ref{Lem-cocycle} that $\lambda_1\left(\mathbf{A}, \xi^{+}\right)=a$ for $\nu_{a,h}$-almost every $\xi^{+}$. Note that $v^{+}$ is ergodic. Hence, by the subadditive ergodic theorem, the claim follows.
\end{proof}
Finally, by \cite[Page 82]{DGM2022} we have $h_{\nu_{a,h}}(\mathbf{A})=h_{\mu_{a,h}}(F_\mathbf{A})=h.$
\qed

\subsection{Proof of Theorem \ref{maintheorem-robust}}
In the proof of Theorem \ref{maintheorem-skew}, we mainly use the result of horseshoe's approximation given in \cite{DGM2017}. By  \cite[Theorem 1.1]{YZ2020}, every $f\in \mathcal{V}(M)$ also has the result of horseshoe's approximation. So we can  prove Theorem \ref{maintheorem-robust} by same method of \ref{maintheorem-skew}. We omit the proof.

\section{Proofs of Theorem \ref{thm-inter-huasdorff}, \ref{thm-Lyapunov} and \ref{thm-first-return}}\label{section-inter-huasdorff}
\subsection{Two lemmas}
Given  $(X,f),$ $d \in \mathbb{N}$ and $\Phi_d=\{\varphi_i\}_{i=1}^{d}\subset C(X)$.  Denote $\mathcal{P}_{\Phi_d}(\mu)=(\int \varphi d\mu,\dots,\int \varphi_d d\mu)$ for any $\mu\in\mathcal{M}_f(X).$  Then  $\mathcal{P}_{\Phi_d}(\mathcal{M}_f(X))$ is a convex compact subset of $\mathbb{R}^d.$  Denote $\mathcal{T}_{\Phi_d,f}(\mu)=(\mathcal{P}_{\Phi_d}(\mu),h_\mu(f))$ for any $\mu\in\mathcal{M}_f(X).$  Then $\mathcal{T}_{\Phi_d,f}$ is affine, and thus $\mathcal{T}_{\Phi_d,f}(\mathcal{M}_f(X))$ is a convex subset of $\mathbb{R}^{d+1}.$  
\begin{Lem}\label{lemma-relative}
	Suppose that $(X,  f)$ is topologically Anosov and transitive. Let $d \in \mathbb{N}$ and $\Phi_d=\{\varphi_i\}_{i=1}^{d}\subset C(X)$.  Assume that $\mathcal{Q}:\mathbb{R}^{d+1}\to \mathbb{R}$ is a continuous function.
	Then we have 
	\begin{enumerate}
		\item[(a)]  $\mathrm { relint } (\mathcal{P}_{\Phi_d}(\mathcal{M}_f(X)))=\mathrm { relint } (\mathcal{P}_{\Phi_d}(\mathcal{M}_f^e(X)))$ and for any $a\in \mathrm { relint } (\mathcal{P}_{\Phi_d}(\mathcal{M}_f(X))),$ there are $\mu_0,\mu_+\in \mathcal{M}_f^e(X)\cap\mathcal{P}_{\Phi_d}^{-1}(a)$ such that $h_{\mu_0}(f)=0,h_{\mu_+}(f)>0.$
		\item[(b)] $\mathrm { relint } (\mathcal{T}_{\Phi_d,f}(\mathcal{M}_f(X)))=\mathrm { relint } (\mathcal{T}_{\Phi_d,f}(\mathcal{M}_f^e(X))).$
		\item[(c)] $\mathcal{Q}(\mathcal{T}_{\Phi_d,f}(\mathcal{M}_f^e(X)))$ and $\mathcal{T}_{\Phi_d,f}(\{\mathcal{M}_f^e(X):h_\mu(f)>0\})$ are two intervals, and $\overline{\mathcal{Q}(\mathcal{T}_{\Phi_d,f}(\mathcal{M}_f^e(X)))}=\overline{\mathcal{T}_{\Phi_d,f}(\{\mathcal{M}_f^e(X):h_\mu(f)>0\}}=\overline{\mathcal{Q}(\mathcal{T}_{\Phi_d,f}(\mathcal{M}_f(X))))}$.
	\end{enumerate}
\end{Lem}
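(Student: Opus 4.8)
The plan is to reduce all three assertions to the abstract machinery of Theorem \ref{thm-almost} applied to the dynamical system $(X,f)$ together with the family of constant asymptotically additive pairs built from $\Phi_d$. Concretely, set $A=(\Phi^1,\dots,\Phi^d)$ with $\Phi^i=(S_n\varphi_i)_{n\in\mathbb{N}}$ and $B=(\Psi^1,\dots,\Psi^d)$ with $\Psi^i=(n)_{n\in\mathbb{N}}$ the constant sequence; then $\mathcal{P}_{A,B}=\mathcal{P}_{\Phi_d}$ and condition \eqref{equation-O} is trivially satisfied since $\lim\frac1n\int\psi_n^i\,d\mu=1>0$ for every $\mu$. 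The difficulty is that $\mathrm{Int}(\mathcal{P}_{\Phi_d}(\mathcal{M}_f(X)))$ may be empty (as the remark after Theorem \ref{thm-Lyapunov} notes), so Theorem \ref{thm-almost} cannot be applied directly to $\mathbb{R}^d$; instead I would first pass to the affine hull. Using the material of Section \ref{sec-convex}, let $C=\mathcal{P}_{\Phi_d}(\mathcal{M}_f(X))$, which is convex and compact, and let $\pi_{aff}:\mathbb{R}^d\to\mathbb{R}^{\dim_{aff}(C)}$ be the affine coordinate projection constructed there, so that $\pi_{aff}$ is a homeomorphism on $\mathrm{aff}(C)$, $\pi_{aff}(C)$ is convex with $\pi_{aff}(\mathrm{relint}(C))=\mathrm{Int}(\pi_{aff}(C))$, and replacing $\Phi_d$ by the coordinates of $\pi_{aff}\circ\mathcal{P}_{\Phi_d}$ (which are affine combinations of the $\varphi_i$, hence still continuous) we may assume $\mathrm{Int}(\mathcal{P}_{\Phi_d}(\mathcal{M}_f(X)))\neq\emptyset$. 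Note $\dim_{aff}(C)\geq 1$ by Lemma \ref{Lemma-inter-entropy} applied together with Corollary \ref{Corollary-zero-metric-entropy-2}; if $\dim_{aff}(C)=0$ the statements are vacuous or trivial (every $a=C$ is a single point and one takes $\mathrm{relint}=C$).

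For part (a): the inclusion $\mathrm{relint}(\mathcal{P}_{\Phi_d}(\mathcal{M}_f^e(X)))\subseteq\mathrm{relint}(\mathcal{P}_{\Phi_d}(\mathcal{M}_f(X)))$ is immediate from $\mathcal{M}_f^e(X)\subseteq\mathcal{M}_f(X)$ together with convexity and $\overline{\mathcal{M}_f^e(X)}=\mathcal{M}_f(X)$ (Corollary \ref{Corollary-zero-metric-entropy}(4)), which forces the two convex sets to have the same closure, hence (by \eqref{equ-close-relint}) the same relative interior. For the second claim in (a), fix $a\in\mathrm{relint}(\mathcal{P}_{\Phi_d}(\mathcal{M}_f(X)))$ (which after the reduction is $\mathrm{Int}$). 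Since $\{\mu:h_\mu(f)=0\}$ is dense (Corollary \ref{Corollary-zero-metric-entropy}(1)), Corollary \ref{Corollary-zero-metric-entropy-2}'s argument shows there is $\nu_0$ with $h_{\nu_0}(f)=0$ and $\mathcal{P}_{\Phi_d}(\nu_0)=a$; then Theorem \ref{thm-almost}(I) applied with $\alpha\equiv 0$ gives an ergodic $\mu_0\in\mathcal{P}_{\Phi_d}^{-1}(a)$ with $h_{\mu_0}(f)$ arbitrarily close to $0$, but in fact I would instead invoke Theorem \ref{thm-almost}(III): the set $\{\mu\in\mathcal{P}_{\Phi_d}^{-1}(a)\cap\mathcal{M}_f^e(X):h_\mu(f)=0,\ S_\mu=X\}$ is residual (in particular nonempty) in $\{\mu\in\mathcal{P}_{\Phi_d}^{-1}(a):h_\mu(f)\geq 0\}$ provided $0<H_{A,B}(f,a)$, and $H_{A,B}(f,a)>0$ holds because the fiber $\mathcal{P}_{\Phi_d}^{-1}(a)$ over an interior point contains measures of positive entropy (combine $a\in\mathrm{Int}$ with the fact that $\mathcal{M}_f(X)$ has measures of positive entropy near any given measure, Corollary \ref{Corollary-zero-metric-entropy}(2), plus Corollary \ref{corollary-AA} to stay over $a$). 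Taking $0<h<H_{A,B}(f,a)$ likewise produces $\mu_+$ ergodic over $a$ with $h_{\mu_+}(f)=h>0$. This gives both $\mu_0$ and $\mu_+$.

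For part (b): by Theorem \ref{thm-almost}(IV) with $\alpha\equiv 0$, the set $\mathrm{Int}(\mathcal{T}_{\Phi_d,f}(\mathcal{M}_f(X)))=\{(a,h):a\in\mathrm{Int}(\mathcal{P}_{\Phi_d}(\mathcal{M}_f(X))),\ 0<h<H_{A,B}(f,a)\}$ (via \eqref{equ-product-inter}) is contained in $\mathcal{T}_{\Phi_d,f}(\mathcal{M}_f^e(X))$, so the two convex sets $\mathcal{T}_{\Phi_d,f}(\mathcal{M}_f^e(X))\subseteq\mathcal{T}_{\Phi_d,f}(\mathcal{M}_f(X))$ have interiors (hence closures, by Proposition \ref{Prop-convex}(2)) that agree, and then $\mathrm{relint}$ agrees by \eqref{equ-close-relint}; undoing the affine reduction $\pi_{aff}$ (which preserves relative interiors since it is an affine homeomorphism on the affine hull) yields (b) in the original coordinates. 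For part (c): the sets $\mathcal{Q}(\mathcal{T}_{\Phi_d,f}(\mathcal{M}_f^e(X)))$ and $\mathcal{Q}(\mathcal{T}_{\Phi_d,f}(\{\mu\in\mathcal{M}_f^e(X):h_\mu(f)>0\}))$ are continuous images of the sets $\mathcal{T}_{\Phi_d,f}(\mathcal{M}_f^e(X))$ and $\mathcal{T}_{\Phi_d,f}(\{h_\mu>0\})$, which by Theorem \ref{thm-almost}(III)-(IV) differ from the convex sets $\mathcal{T}_{\Phi_d,f}(\mathcal{M}_f(X))$ and $\{(a,h):a\in\mathrm{relint}(\mathcal{P}_{\Phi_d}),0<h<H_{A,B}(f,a)\}$ only on their boundaries; since a continuous image of a connected set is connected, $\mathcal{Q}$ of each is an interval, and since $\mathcal{Q}$ is continuous and the relevant convex set, its interior, and its relative interior all share the same closure, taking $\mathcal{Q}$ and then closures makes all three closures in the statement coincide — using that $\overline{\mathcal{Q}(D)}=\overline{\mathcal{Q}(\overline{D})}$ for $D$ with $\overline{D}$ compact, which holds since $\mathcal{T}_{\Phi_d,f}(\mathcal{M}_f(X))$ is bounded (entropy is bounded by $h_{top}(f)<\infty$ and the $\varphi_i$ are bounded). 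The main obstacle throughout is the non-emptiness of the interior after passing to the affine hull, i.e. verifying $\dim_{aff}(\mathcal{P}_{\Phi_d}(\mathcal{M}_f(X)))$ and $\dim_{aff}(\mathcal{T}_{\Phi_d,f}(\mathcal{M}_f(X)))=\dim_{aff}(\mathcal{P}_{\Phi_d}(\mathcal{M}_f(X)))+1$ correctly so that Theorem \ref{thm-almost} can legitimately be invoked on the reduced coordinates; once that bookkeeping is done, everything else is a routine transfer of Theorem \ref{thm-almost}(III)-(IV) through affine homeomorphisms and the continuous map $\mathcal{Q}$.
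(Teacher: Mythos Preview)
Your overall strategy coincides with the paper's: both reduce via the affine projection $\pi_{aff}$ of Section~\ref{sec-convex} to the case where $\mathcal{P}_{\Phi_d}(\mathcal{M}_f(X))$ has nonempty interior (the paper does this by an explicit three-way case split on $\dim_{aff}$, you by one uniform reduction), then invoke Theorem~\ref{thm-almost}(III)--(IV) to place an ergodic measure at every interior point and to identify $\mathrm{relint}(\mathcal{T}_{\Phi_d,f}(\mathcal{M}_f(X)))$ with $\{(a,h):a\in\mathrm{relint},\ 0<h<H_{\Phi_d}(f,a)\}$ via~\eqref{equ-product-inter}. The argument for (c) is also essentially the same sandwich between a convex relint and its closure.

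There is, however, one genuine gap. In your first move for (a) you write that $\overline{\mathcal{M}_f^e(X)}=\mathcal{M}_f(X)$ ``forces the two convex sets to have the same closure, hence (by \eqref{equ-close-relint}) the same relative interior.'' But $\mathcal{P}_{\Phi_d}(\mathcal{M}_f^e(X))$ is \emph{not} convex (ergodic measures form the extreme points of $\mathcal{M}_f(X)$, not a convex subset), so \eqref{equ-close-relint} does not apply to it, and equal closures alone do not give equal relative interiors for a non-convex subset. What is actually needed---and what the paper does, and what your own second paragraph for (a) already supplies---is the inclusion $\mathrm{relint}(\mathcal{P}_{\Phi_d}(\mathcal{M}_f(X)))\subseteq\mathcal{P}_{\Phi_d}(\mathcal{M}_f^e(X))$, obtained by exhibiting for each $a$ in the relint an ergodic $\mu$ with $\mathcal{P}_{\Phi_d}(\mu)=a$ via Theorem~\ref{thm-almost}(III). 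So drop the closure argument and keep only the direct construction. Two smaller points: the sentence ``$\dim_{aff}(C)\geq 1$ by Lemma~\ref{Lemma-inter-entropy} \ldots'' is not justified by those lemmas (they concern entropy, not the range of $\mathcal{P}_{\Phi_d}$); the paper treats $\dim_{aff}=0$ as a genuine separate case, using Lemma~\ref{Lemma-inter-entropy} to get both $\mu_0$ and $\mu_+$. And for $H_{\Phi_d}(f,a)>0$ the clean route is Lemma~\ref{lemma-GG} applied to the dense convex set $\{\mu:h_\mu(f)>0\}$ (Corollary~\ref{Corollary-zero-metric-entropy}(2)), exactly as the paper does, rather than Corollary~\ref{corollary-AA}.
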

\begin{proof}
	First, we give the proof of item(a) and (b). 
	
	Since $\mathcal{P}_{\Phi_d}(\mathcal{M}_f(X))$ is a subset of $\mathbb{R}^d,$ we have $\dim_{aff}(\mathcal{P}_{\Phi_d}(\mathcal{M}_f(X)))\in \{0,1,\dots,d\}$.
	
	(1) When $\dim_{aff}(\mathcal{P}_{\Phi_d}(\mathcal{M}_f(X)))=0$, there exists $c\in\mathbb{R}^d$ such that $\mathcal{P}_{\Phi_d}(\mu)=c$ for any $\mu\in\mathcal{M}_f(M).$ Then $\mathrm { relint } (\mathcal{P}_{\Phi_d}(\mathcal{M}_f(X)))=\mathrm { relint } (\mathcal{P}_{\Phi_d}(\mathcal{M}_f^e(X)))=\{c\}$ and $\mathcal{T}_{\Phi_d,f}(\mathcal{M}_f(X))=\{(c,h_\mu(f)):\mu\in\mathcal{M}_f(M)\}$. By Lemma \ref{Lemma-inter-entropy}, we have $[0,h_{top}(f)]=\{h_{\mu}(f):\mu\in\mathcal{M}_f(M)\}=\{h_{\mu}(f):\mu\in\mathcal{M}_f^e(M)\}.$ Hence, $\mathcal{T}_{\Phi_d,f}(\mathcal{M}_f(X))=\mathcal{T}_{\Phi_d,f}(\mathcal{M}_f^e(X))=\{(c,h):0\leq h \leq h_{top}(f)\}.$ So there are $\mu_0,\mu_+\in \mathcal{M}_f^e(X)\cap\mathcal{P}_{\Phi_d}^{-1}(c)$ such that $h_{\mu_0}(f)=0,h_{\mu_+}(f)>0$ and $\mathrm { relint } (\mathcal{T}_{\Phi_d,f}(\mathcal{M}_f(X)))=\mathrm { relint } (\mathcal{T}_{\Phi_d,f}(\mathcal{M}_f^e(X))).$

	(2) When $\dim_{aff}(\mathcal{P}_{\Phi_d}(\mathcal{M}_f(X)))=d$, we have $\mathrm{relint}(\mathcal{P}_{\Phi_d}(\mathcal{M}_f(X)))=\mathrm{Int}(\mathcal{P}_{\Phi_d}(\mathcal{M}_f(X))).$
	From item(III) of Theorem \ref{thm-almost},  $\mathrm { Int } (\mathcal{P}_{\Phi_d}(\mathcal{M}_f(X)))=\mathrm { Int } (\mathcal{P}_{\Phi_d}(\mathcal{M}_f^e(X)))$ and for any $a\in \mathrm { Int } (\mathcal{P}_{\Phi_d}(\mathcal{M}_f(X)))$ we have $\{(a,h):0\leq h< H_{\Phi_d}(f,a)\}\subset \mathcal{T}_{\Phi_d,f}(\mathcal{M}_f^e(X)),$ where $H_{\Phi_d}(f,a)=\sup\{h_\mu(f):\mu\in \mathcal{P}_{\Phi_d}^{-1}(a)\}.$  Since $\{\mu\in\mathcal{M}_f(X):h_\mu(f)>0\}$ is convex, then by Corollary \ref{Corollary-zero-metric-entropy}(2) and Lemma \ref{lemma-GG} we have $H_{\Phi_d}(f,a)>0.$ So there are $\mu_0,\mu_+\in \mathcal{M}_f^e(X)\cap\mathcal{P}_{\Phi_d}^{-1}(a)$ such that $h_{\mu_0}(f)=0,h_{\mu_+}(f)>0.$

	By (\ref{equ-product-inter}), we have $\mathrm{Int}(\mathcal{T}_{\Phi_d,f}(\mathcal{M}_f(X)))=\{(a,h): a\in\mathrm{Int}(\mathcal{P}_{\Phi_d}(\mathcal{M}_f(X))),\ 0< h< H_{\Phi_d}(f,a)\}.$     From Theorem \ref{thm-almost}(IV), we have $\mathrm{Int}(\mathcal{T}_{\Phi_d,f}(\mathcal{M}_f(X)))\subset \mathcal{T}_{\Phi_d,f}(\mathcal{M}_f^e(X)).$ This implies $\mathrm{Int}(\mathcal{T}_{\Phi_d,f}(\mathcal{M}_f(X)))=\mathrm{Int}(\mathcal{T}_{\Phi_d,f}(\mathcal{M}_f^e(X))).$ So $\mathrm { relint } (\mathcal{T}_{\Phi_d,f}(\mathcal{M}_f(X)))=\mathrm { relint } (\mathcal{T}_{\Phi_d,f}(\mathcal{M}_f^e(X))).$

	(3) When $0<\dim_{aff}(\mathcal{P}_{\Phi_d}(\mathcal{M}_f(X)))<d$, there is $I_{aff}\subset\{1,\dots,d\}$ such that $\pi_{aff}(\mathcal{P}_{\Phi_d}(\mathcal{M}_f(X)))=\mathcal{P}_{\{\varphi\}_{i\in I_{aff}}}(\mathcal{M}_f(X)).$ Denote $\Phi_{aff}=\{\varphi_i\}_{i\in I_{aff}}.$ Then  $\pi_{aff}(\mathrm { relint } (\mathcal{P}_{\Phi_d}(\mathcal{M}_f(X)))=\mathrm{Int}(\mathcal{P}_{\Phi_{aff}}(\mathcal{M}_f(X))).$ 
	From  Theorem \ref{thm-almost}(III), $ \mathrm{Int}(\mathcal{P}_{\Phi_{aff}}(\mathcal{M}_f(X)))= \mathrm{Int}(\mathcal{P}_{\Phi_{aff}}(\mathcal{M}_f^e(X)))$ and for any $a_{aff}\in \mathrm{Int}(\mathcal{P}_{\Phi_{aff}}(\mathcal{M}_f(X)))$ we have
	$\{(a_{aff},h):0\leq h< H_{\Phi_{aff}}(f,a_{aff})\}\subset \mathcal{T}_{\Phi_{aff},f}(\mathcal{M}_f^e(X)),$
	By Corollary \ref{Corollary-zero-metric-entropy}(2) and Lemma \ref{lemma-GG} we have $H_{\Phi_{aff}}(f,a_{aff})>0.$ 
	Then we have  $\mathrm { relint } (\mathcal{P}_{\Phi_d}(\mathcal{M}_f(X)))=\mathrm { relint } (\mathcal{P}_{\Phi_d}(\mathcal{M}_f^e(X)))$, $H_{\Phi_d}(f,a)>0$ and
	\begin{equation}\label{equ-relative}
	\{(a,h):0\leq h< H_{\Phi_{d}}(f,a)\}\subset\mathcal{T}_{\Phi_{d},f}(\mathcal{M}_f^e(X))
	\end{equation} 
	for any $a\in \mathrm { relint } (\mathcal{P}_{\Phi_d}(\mathcal{M}_f(X))).$ Hence, there are $\mu_0,\mu_+\in \mathcal{M}_f^e(X)\cap\mathcal{P}_{\Phi_d}^{-1}(a)$ such that $h_{\mu_0}(f)=0,h_{\mu_+}(f)>0.$ By (\ref{equ-product-inter}), 
	$\mathrm{relint}(\mathcal{T}_{\Phi_d,f}(\mathcal{M}_f(X)))=\{(a,h):a\in\mathrm{relint}(\mathcal{P}_{\Phi_d}(\mathcal{M}_f(X))),\ 0< h< H_{\Phi_d}(f,a)\}.$ So from (\ref{equ-relative}) we have $\mathrm{relint}(\mathcal{T}_{\Phi_d,f}(\mathcal{M}_f(X)))\subset \mathcal{T}_{\Phi_d,f}(\mathcal{M}_f^e(X)),$ and thus $\mathrm { relint } (\mathcal{T}_{\Phi_d,f}(\mathcal{M}_f(X)))=\mathrm { relint } (\mathcal{T}_{\Phi_d,f}(\mathcal{M}_f^e(X))).$
	
	 Finally, we prove item(c).
	Since $\mathcal{T}_{\Phi_d,f}(\mathcal{M}_f(X))$ is convex, by (\ref{equ-close-relint}) we have $\mathcal{T}_{\Phi_d,f}(\mathcal{M}_f(X))\subset \overline{\mathrm { relint } (\mathcal{T}_{\Phi_d,f}(\mathcal{M}_f(X)))}.$ Then by item(b), $\mathrm { relint } (\mathcal{T}_{\Phi_d,f}(\mathcal{M}_f^e(X)))=\mathrm { relint } (\mathcal{T}_{\Phi_d,f}(\mathcal{M}_f(X)))$ is convex, and
	$$\mathcal{T}_{\Phi_d,f}(\mathcal{M}_f^e(X))\subset \mathcal{T}_{\Phi_d,f}(\mathcal{M}_f(X))\subset \overline{\mathrm { relint } (\mathcal{T}_{\Phi_d,f}(\mathcal{M}_f(X)))}= \overline{\mathrm { relint } (\mathcal{T}_{\Phi_d,f}(\mathcal{M}_f^e(X)))}.$$
	Since $\mathcal{Q}$ is continuous, then $\mathcal{Q}(\mathrm { relint } (\mathcal{T}_{\Phi_d,f}(\mathcal{M}_f^e(X))))$ is an  interval and $$\mathcal{Q}(\mathrm{relint}(\mathcal{T}_{\Phi_d,f}(\mathcal{M}_f^e(X))))\subseteq\mathcal{Q}(\mathcal{T}_{\Phi_d,f}(\mathcal{M}_f^e(X)))\subseteq \overline{\mathcal{Q}(\mathrm{relint}(\mathcal{T}_{\Phi_d,f}(\mathcal{M}_f^e(X))))}.$$
	This implies $\mathcal{Q}(\mathcal{T}_{\Phi_d,f}(\mathcal{M}_f^e(X)))$ is an interval, and $\overline{\mathcal{Q}(\mathcal{T}_{\Phi_d,f}(\mathcal{M}_f^e(X)))}=\overline{\mathcal{Q}(\mathrm{relint}(\mathcal{T}_{\Phi_d,f}(\mathcal{M}_f^e(X))))}=\overline{\mathcal{Q}(\mathcal{T}_{\Phi_d,f}(\mathcal{M}_f(X))))}$.
	
	Since $\mathrm{relint}(\mathcal{T}_{\Phi_d,f}(\mathcal{M}_f^e(X)))=\mathrm{relint}(\mathcal{T}_{\Phi_d,f}(\mathcal{M}_f(X)))=\{(a,h):a\in\mathrm{relint}(\mathcal{P}_{\Phi_d}(\mathcal{M}_f(X))),\ 0< h< H_{\Phi_d}(f,a)\},$ then we have $\mathrm{relint}(\mathcal{T}_{\Phi_d,f}(\mathcal{M}_f^e(X)))\subset \mathcal{T}_{\Phi_d,f}(\{\mathcal{M}_f^e(X):h_\mu(f)>0\})\subset \mathcal{Q}(\mathcal{T}_{\Phi_d,f}(\mathcal{M}_f^e(\Lambda)))\subseteq \overline{\mathcal{Q}(\mathrm{relint}(\mathcal{T}_{\Phi_d,f}(\mathcal{M}_f^e(\Lambda))))}.$ This implies  $\mathcal{T}_{\Phi_d,f}(\{\mathcal{M}_f^e(X):h_\mu(f)>0\})$ is also an interval, and we have $\overline{\mathcal{T}_{\Phi_d,f}(\{\mathcal{M}_f^e(X):h_\mu(f)>0\}}=\overline{\mathcal{Q}(\mathrm{relint}(\mathcal{T}_{\Phi_d,f}(\mathcal{M}_f^e(\Lambda))))}=\overline{\mathcal{Q}(\mathcal{T}_{\Phi_d,f}(\mathcal{M}_f(X))))}$.
\end{proof}

\begin{Lem}\label{lemma-relative-2}
	Suppose that $(X,  f)$ is topologically Anosov and transitive. Let $d_1,d_2\in \mathbb{N}$ and $\Phi_{d_1}=\{\varphi_i\}_{i=1}^{d_1},\Phi_{d_2}=\{\varphi_i\}_{i=d_1}^{d_2}\subset C(X)$.  Assume that $\mathcal{Q}:\mathbb{R}^{d_2+1}\to \mathbb{R}$ is a continuous function.
	Then for any $a_1\in \mathrm { relint } (\mathcal{P}_{\Phi_{d_1}}(\mathcal{M}_f(X))),$ we have
	\begin{enumerate}
		\item[(a)]  there are $\mu_0,\mu_+\in \mathcal{M}_f^e(X)\cap\mathcal{P}_{\Phi_{d_1}}^{-1}(a_1)$ such that $h_{\mu_0}(f)=0,h_{\mu_+}(f)>0.$
		\item[(b)] $\mathrm { relint } (\{\mathcal{T}_{\Phi_{d_2},f}(\mu):\mu\in \mathcal{P}_{\Phi_{d_1}}^{-1}(a_1)\})=\mathrm { relint } (\{\mathcal{T}_{\Phi_{d_2},f}(\mu):\mu\in \mathcal{M}_f^e(X)\cap  \mathcal{P}_{\Phi_{d_1}}^{-1}(a_1)\}).$
		\item[(c)] $\mathcal{Q}(\{\mathcal{T}_{\Phi_{d_2},f}(\mu):\mu\in \mathcal{M}_f^e(X)\cap   \mathcal{P}_{\Phi_{d_1}}^{-1}(a_1)\})$ is an interval,  $$\overline{\mathcal{Q}(\{\mathcal{T}_{\Phi_{d_2},f}(\mu):\mu\in \mathcal{M}_f^e(X)\cap   \mathcal{P}_{\Phi_{d_1}}^{-1}(a_1)\})}=\mathcal{Q}(\{\mathcal{T}_{\Phi_{d_2},f}(\mu):\mu\in \mathcal{M}_f(X)\cap   \mathcal{P}_{\Phi_{d_1}}^{-1}(a_1)\}).$$
	\end{enumerate}
\end{Lem}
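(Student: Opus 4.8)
\textbf{Proof proposal for Lemma \ref{lemma-relative-2}.}

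The plan is to reduce Lemma \ref{lemma-relative-2} to Lemma \ref{lemma-relative} by fixing the first batch of constraints and viewing the remaining problem inside a horseshoe. Fix $a_1\in \mathrm{relint}(\mathcal{P}_{\Phi_{d_1}}(\mathcal{M}_f(X)))$. The key observation is that, by Theorem \ref{thm-almost}(I) together with the 'multi-horseshoe' dense property (Theorem \ref{Mainlemma-convex-by-horseshoe}) — or more directly via Lemma \ref{Lem-interior} applied to $(A,B)$ coming from $\Phi_{d_1}$ and the sequence of constant functions — there is a transitive two-sided subshift of finite type $\Lambda\subset X$ with $a_1\in\mathrm{Int}(\mathcal{P}_{\Phi_{d_1}}(\mathcal{M}_f(\Lambda)))$ and with entropy close to the supremum of $h_\mu(f)$ over $\mathcal{P}_{\Phi_{d_1}}^{-1}(a_1)$. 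However, I do not want a single $\Lambda$; rather I want to exhaust $\mathcal{P}_{\Phi_{d_1}}^{-1}(a_1)$. So the cleaner route: first I would establish item (a). Since $a_1\in\mathrm{relint}(\mathcal{P}_{\Phi_{d_1}}(\mathcal{M}_f(X)))$, pick $2^{d_1}$ invariant measures $\{\mu_\xi\}$ with $\mathcal{P}_{\Phi_{d_1}}(\mu_\xi)$ in the appropriate orthant of $a_1$ inside $\mathrm{aff}(\mathcal{P}_{\Phi_{d_1}}(\mathcal{M}_f(X)))$; by Corollary \ref{Corollary-zero-metric-entropy}(1) we may assume each has zero entropy, and by Corollary \ref{corollary-A} (applied after the affine projection $\pi_{aff}$ as in the proof of Lemma \ref{lemma-relative}) a convex combination gives $\mu_0\in\mathcal{M}_f(X)\cap\mathcal{P}_{\Phi_{d_1}}^{-1}(a_1)$ with $h_{\mu_0}(f)=0$; by ergodic decomposition and the fact that $h_{\mu_0}(f)=0$ forces all ergodic components onto the same fiber value, refine to an ergodic $\mu_0$ with $h_{\mu_0}(f)=0$ — actually the cleanest is: the proof of Lemma \ref{lemma-relative}(a) in case (3) already produces, for $a$ in the relative interior, $H_{\Phi_{d}}(f,a)>0$; the same argument with $\Phi_d$ replaced by $\Phi_{d_1}$ gives $H_{\Phi_{d_1}}(f,a_1)>0$, and Theorem \ref{thm-almost}(III) yields $\{(a_1,h):0\le h<H_{\Phi_{d_1}}(f,a_1)\}\subset\mathcal{T}_{\Phi_{d_1},f}(\mathcal{M}_f^e(X))$, giving both $\mu_0$ (taking $h=0$) and $\mu_+$ (taking any $0<h<H_{\Phi_{d_1}}(f,a_1)$).

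For items (b) and (c) the strategy is to note that $\{\mathcal{T}_{\Phi_{d_2},f}(\mu):\mu\in\mathcal{P}_{\Phi_{d_1}}^{-1}(a_1)\}$ equals $\mathcal{T}_{(\Phi_{d_1},\Phi_{d_2}),f}(\mathcal{M}_f(X))$ sliced at first coordinate $a_1$, i.e. it is $(\mathcal{T}_{(\Phi_{d_1},\Phi_{d_2}),f}(\mathcal{M}_f(X)))_{a_1}$ in the notation of \eqref{equ-product-inter}. The full image $\mathcal{T}_{(\Phi_{d_1},\Phi_{d_2}),f}(\mathcal{M}_f(X))\subset\mathbb{R}^{d_1+d_2+1}$ is convex, and by \eqref{equ-product-inter}, $\mathrm{relint}$ of the slice at $a_1$ is the slice of $\mathrm{relint}$ of the whole set, \emph{provided} $a_1\in\mathrm{relint}$ of the projection to the first $d_1$ coordinates — which is exactly our hypothesis since that projection is $\mathcal{P}_{\Phi_{d_1}}(\mathcal{M}_f(X))$. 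So I would apply Lemma \ref{lemma-relative}(b) to the combined tuple $\Phi_{d_1+d_2}=\{\varphi_1,\dots,\varphi_{d_2}\}$ (note $\Phi_{d_1}$ is the first $d_1$ coordinates of $\Phi_{d_2}$), obtaining $\mathrm{relint}(\mathcal{T}_{\Phi_{d_2},f}(\mathcal{M}_f(X)))=\mathrm{relint}(\mathcal{T}_{\Phi_{d_2},f}(\mathcal{M}_f^e(X)))$, then intersect with the affine slice $\{$first $d_1$ coordinates $=a_1\}$ and use \eqref{equ-product-inter} on both sides to get (b). For (c), since $\{\mathcal{T}_{\Phi_{d_2},f}(\mu):\mu\in\mathcal{M}_f(X)\cap\mathcal{P}_{\Phi_{d_1}}^{-1}(a_1)\}$ is convex, by \eqref{equ-close-relint} it is contained in the closure of its relative interior, which by (b) equals the closure of $\mathrm{relint}(\{\mathcal{T}_{\Phi_{d_2},f}(\mu):\mu\in\mathcal{M}_f^e(X)\cap\mathcal{P}_{\Phi_{d_1}}^{-1}(a_1)\})$; applying the continuous map $\mathcal{Q}$ and using that $\mathcal{Q}$ of a convex set is an interval and $\mathcal{Q}(\overline{A})\subseteq\overline{\mathcal{Q}(A)}$ gives that $\mathcal{Q}(\{\mathcal{T}_{\Phi_{d_2},f}(\mu):\mu\in\mathcal{M}_f^e(X)\cap\mathcal{P}_{\Phi_{d_1}}^{-1}(a_1)\})$ is an interval with the claimed closure, exactly mirroring the last paragraph of the proof of Lemma \ref{lemma-relative}.

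The main obstacle I anticipate is bookkeeping around the relative interiors: one must be careful that slicing a convex set at a fixed value of the first $d_1$ coordinates interacts correctly with $\mathrm{relint}$, which requires knowing $a_1$ lies in the relative interior of the \emph{projection} (our hypothesis) and then invoking \eqref{equ-product-inter} in the form where $C\subset\mathbb{R}^{m+n}$ with $m$ corresponding to the $\Phi_{d_1}$-coordinates and $n$ to the remaining $\Phi_{d_2}$-plus-entropy coordinates. A subtlety is that $\Phi_{d_1}$ is literally the initial segment of $\Phi_{d_2}$ (the indices run $1,\dots,d_1$ then $d_1,\dots,d_2$, with overlap at index $d_1$ in the statement's notation), so I would first fix the indexing convention, writing $\Phi_{d_2}=\{\varphi_1,\dots,\varphi_{d_2}\}$ and $\Phi_{d_1}=\{\varphi_1,\dots,\varphi_{d_1}\}$, so that $\mathcal{P}_{\Phi_{d_2}}=(\mathcal{P}_{\Phi_{d_1}},\mathcal{P}_{\{\varphi_{d_1+1},\dots,\varphi_{d_2}\}})$ — once this is set up, everything is a direct specialization of Lemma \ref{lemma-relative} and its proof, with $a_1$ playing no role other than pinning the first $d_1$ coordinates, and no genuinely new dynamical input is needed beyond Theorem \ref{thm-almost}(III)(IV) and Corollary \ref{Corollary-zero-metric-entropy}.
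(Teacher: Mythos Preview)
Your proposal is correct and follows essentially the same approach as the paper: reduce to Lemma~\ref{lemma-relative} applied to the combined tuple, and then slice via \eqref{equ-product-inter} at the first $d_1$ coordinates fixed to $a_1$. The paper's proof of (a) differs only cosmetically---it picks an auxiliary $a_2$ in the relative interior of the $\Phi_{d_2}$-slice and applies Lemma~\ref{lemma-relative}(a) to the combined tuple at $(a_1,a_2)$, whereas you apply Lemma~\ref{lemma-relative}(a) directly to $\Phi_{d_1}$ at $a_1$; your route is marginally more direct, and for (b) and (c) the arguments coincide (the paper also just writes ``Similar as Lemma~\ref{lemma-relative}'' for (c)). Your identification of the indexing ambiguity in the statement is apt and matches how the paper silently uses ``$\Phi_{d_1+d_2}$'' in its proof.
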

\begin{proof}
	Take $a_2\in \mathrm { relint } (\{\mathcal{P}_{\Phi_{d_2}}(\mu):\mu\in \mathcal{P}_{\Phi_{d_1}}^{-1}(a_1)\})$, then we have $(a_1,a_2)\in \mathrm { relint } (\mathcal{P}_{\Phi_{d_1+d_2}}(\mathcal{M}_f(X)))$ by (\ref{equ-product-inter}).
	By Lemma \ref{lemma-relative}(a), there are $\mu_0,\mu_+\in \mathcal{M}_f^e(X)\cap\mathcal{P}_{\Phi_{d_1+d_2}}^{-1}(a_1,a_2)\subset \mathcal{M}_f^e(X)\cap\mathcal{P}_{\Phi_{d_1}}^{-1}(a_1)$ such that $h_{\mu_0}(f)=0,h_{\mu_+}(f)>0.$
	
	By Lemma \ref{lemma-relative}(b), we have $\mathrm { relint } (\mathcal{T}_{\Phi_{d_1+d_2},f}(\mathcal{M}_f(X)))=\mathrm { relint } (\mathcal{T}_{\Phi_{d_1+d_2},f}(\mathcal{M}_f^e(X))).$ For any  $(a_2,h)\in \mathrm { relint } (\{\mathcal{T}_{\Phi_{d_2},f}(\mu):\mu\in \mathcal{P}_{\Phi_{d_1}}^{-1}(a_1)\}),$ by (\ref{equ-product-inter}) we have $$(a_1,a_2,h)\in \mathrm { relint } (\mathcal{T}_{\Phi_{d_1+d_2},f}(\mathcal{M}_f(X)))=\mathrm { relint } (\mathcal{T}_{\Phi_{d_1+d_2},f}(\mathcal{M}_f^e(X))).$$ Hence, $(a_2,h)\in \{\mathcal{T}_{\Phi_{d_2},f}(\mu):\mu\in \mathcal{M}_f^e(X)\cap  \mathcal{P}_{\Phi_{d_1}}^{-1}(a_1)\}.$ 
	So we have $\mathrm { relint } (\{\mathcal{T}_{\Phi_{d_2},f}(\mu):\mu\in \mathcal{P}_{\Phi_{d_1}}^{-1}(a_1)\})=\mathrm { relint } (\{\mathcal{T}_{\Phi_{d_2},f}(\mu):\mu\in \mathcal{M}_f^e(X)\cap  \mathcal{P}_{\Phi_{d_1}}^{-1}(a_1)\}).$
	
	Similar as Lemma \ref{lemma-relative}, we obtain item(3).
\end{proof}

\subsection{Proof of Theorem \ref{thm-inter-huasdorff}}
\begin{Lem}\label{lemma-locally-haus}
	Let $f: M \mapsto M$ be a $C^1$  diffeomorphism on a compact Riemannian manifold $M,$ and let $\Lambda\subset M$ be a transitive locally maximal hyperbolic set such that $\Lambda$ is average conformal.  Then $$[0,\sup\limits_{\mu\in \mathcal{M}_f^e(\Lambda)}\dim_H\mu)\subset \{\dim_H\mu:\mu\in \mathcal{M}_f^e(\Lambda)\}\subset  [0,\sup\limits_{\mu\in \mathcal{M}_f^e(\Lambda)}\dim_H\mu].$$ 
	If further $f$ is $C^{1+\alpha}$ and $f:\Lambda\to \Lambda$ is mixing, then $\{\operatorname{dim}_H \mu: \mu\in\mathcal{M}_f^e(\Lambda)\}= \left\{\operatorname{dim}_H \mu: \mu\in\mathcal{M}_f(\Lambda)\right\}.$
\end{Lem}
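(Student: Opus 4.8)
The plan is to deduce Lemma~\ref{lemma-locally-haus} from the general machinery of Theorem~\ref{thm-almost} applied to the two geometric potentials $\psi^u(x)=\log|\det D_xf|_{E_x^{u}}|$ and $\psi^s(x)=\log|\det D_xf|_{E_x^{s}}|$, which are continuous since $\Lambda$ is hyperbolic. The first step is to set up the dimension formula: by \eqref{equation-WC}, for every hyperbolic ergodic $\mu\in\mathcal{M}_f^e(\Lambda)$ one has $\dim_H\mu=\frac{h_\mu(f)}{\chi_u(\mu)}-\frac{h_\mu(f)}{\chi_s(\mu)}$, and by \eqref{equation-Birk} we have $\int\psi^u\,d\mu=d_u\chi_u(\mu)$, $\int\psi^s\,d\mu=d_s\chi_s(\mu)$. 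Hence, writing $\mathcal{T}_{\psi^u,\psi^s,f}(\mu)=(\int\psi^u d\mu,\int\psi^s d\mu,h_\mu(f))=:(y_1,y_2,y_3)$, I would define $\mathcal{Q}:\mathbb{R}^3\to\mathbb{R}$ by $\mathcal{Q}(y_1,y_2,y_3)=\frac{d_u y_3}{y_1}-\frac{d_s y_3}{y_2}$ on the region $\{y_1>0>y_2\}$ (extending continuously by $0$ when $y_3=0$), so that $\mathcal{Q}(\mathcal{T}_{\psi^u,\psi^s,f}(\mu))=\dim_H\mu$ for every $\mu\in\mathcal{M}_f^e(\Lambda)$: indeed average conformality forces $\chi_u(\mu)>0>\chi_s(\mu)$, so every ergodic measure is hyperbolic and the formula applies. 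Note $\mathcal{Q}$ is continuous on the relevant open region, which is all we need.

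Next I would invoke Lemma~\ref{lemma-relative}(c) with $d=2$, $\Phi_2=\{\psi^u,\psi^s\}$, and this $\mathcal{Q}$. Since $\Lambda$ is a transitive locally maximal hyperbolic set it is topologically Anosov and transitive by Lemma~\ref{LH}, so the hypotheses of Lemma~\ref{lemma-relative} hold. Part (c) gives that $\mathcal{Q}(\mathcal{T}_{\psi^u,\psi^s,f}(\mathcal{M}_f^e(\Lambda)))=\{\dim_H\mu:\mu\in\mathcal{M}_f^e(\Lambda)\}$ is an interval, and moreover $\mathcal{T}_{\psi^u,\psi^s,f}(\{\mu\in\mathcal{M}_f^e(\Lambda):h_\mu(f)>0\})$—wait, more precisely the statement I want is that $\{\dim_H\mu:\mu\in\mathcal{M}_f^e(\Lambda)\}$ is an interval whose closure equals $\overline{\mathcal{Q}(\mathcal{T}_{\psi^u,\psi^s,f}(\mathcal{M}_f(\Lambda)))}$. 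An interval containing $0$ (take any ergodic measure of zero entropy, which exists by Corollary~\ref{Corollary-zero-metric-entropy}(1) and has $\dim_H=0$) and with supremum $\sup_{\mu\in\mathcal{M}_f^e(\Lambda)}\dim_H\mu$ must contain $[0,\sup_{\mu\in\mathcal{M}_f^e(\Lambda)}\dim_H\mu)$; combined with the trivial upper bound $\dim_H\mu\le\sup_{\mu\in\mathcal{M}_f^e(\Lambda)}\dim_H\mu$ this yields the first displayed chain of inclusions.

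For the $C^{1+\alpha}$ mixing case, I would additionally use that when $f$ is $C^{1+\alpha}$ and $f:\Lambda\to\Lambda$ is mixing, the existence of an ergodic measure of maximal dimension is known: by Chen--Luo--Zhao \cite[Theorem C]{CLZ2018} (cited in the introduction) there is $\mu^*\in\mathcal{M}_f^e(\Lambda)$ with $\dim_H\mu^*=\max\{\dim_H\mu:\mu\in\mathcal{M}_f(\Lambda)\}=\sup_{\mu\in\mathcal{M}_f^e(\Lambda)}\dim_H\mu$. Then the interval $\{\dim_H\mu:\mu\in\mathcal{M}_f^e(\Lambda)\}$ contains both $0$ and its supremum, hence equals $[0,\dim_H\mu^*]=\{\dim_H\mu:\mu\in\mathcal{M}_f(\Lambda)\}$ (the last equality because for non-ergodic $\mu$, ergodic decomposition together with $\dim_H(\theta\mu_1+(1-\theta)\mu_2)=\max\{\dim_H\mu_1,\dim_H\mu_2\}$—more carefully, one only needs $\{\dim_H\mu:\mu\in\mathcal{M}_f(\Lambda)\}\subseteq[0,\dim_H\mu^*]$, which is immediate since $\mu^*$ realizes the maximum over all invariant measures).

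The main obstacle, and the step deserving the most care, is verifying that $\mathcal{Q}$ is genuinely well-defined and continuous on a region containing $\mathcal{T}_{\psi^u,\psi^s,f}(\mathcal{M}_f^e(\Lambda))$ and its relative interior closure, i.e.\ controlling the behavior near $y_3=0$ (where $\dim_H\mu=0$ regardless of $y_1,y_2$) and ensuring $y_1>0>y_2$ throughout. This requires knowing that every ergodic measure on an average conformal hyperbolic set has $\chi_u>0>\chi_s$ strictly, so that $\int\psi^u d\mu$ and $-\int\psi^s d\mu$ are bounded below by a positive constant uniformly—this follows from compactness of $\mathcal{M}_f(\Lambda)$, continuity of $\mu\mapsto\int\psi^u d\mu$, and the hyperbolicity constants of $\Lambda$. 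One must also confirm that the formula \eqref{equation-WC} is being quoted under the correct hypotheses; here $f$ is only assumed $C^1$ in the first part, but the statement of \eqref{equation-WC} as recalled in the excerpt requires $f$ to be $C^{1+\alpha}$ or the Oseledets splitting dominated—on a hyperbolic set the splitting $E^u\oplus E^s$ is dominated, so \eqref{equation-WC} applies to every ergodic $\mu\in\mathcal{M}_f^e(\Lambda)$ even in the $C^1$ case, which is exactly what makes the first part work.
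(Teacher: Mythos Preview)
Your proposal is correct and follows essentially the same approach as the paper: define $\mathcal{Q}(y_1,y_2,y_3)=\frac{d_u y_3}{y_1}-\frac{d_s y_3}{y_2}$, use \eqref{equation-WC} and \eqref{equation-Birk} to identify $\dim_H\mu=\mathcal{Q}(\mathcal{T}_{\psi^u,\psi^s,f}(\mu))$ for ergodic $\mu$, apply Lemma~\ref{lemma-relative}(c) to conclude the set of dimensions is an interval containing $0$, and for the $C^{1+\alpha}$ mixing case invoke \eqref{max-dim} (i.e.\ \cite{CLZ2018}) to realize the supremum by an ergodic measure. Your additional remarks---that the hyperbolic splitting is dominated so \eqref{equation-WC} applies in the $C^1$ case, and that $\int\psi^u\,d\mu$, $-\int\psi^s\,d\mu$ are uniformly bounded away from zero so $\mathcal{Q}$ is continuous on the relevant region---are exactly the points the paper handles (the latter via the observation that $\psi^u>\delta_0$, $-\psi^s>\delta_0$ pointwise on $\Lambda$).
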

\begin{proof}
	By (\ref{equation-WC}) we have $\dim_H \mu=\frac{h_\mu(f)}{\chi_u(\mu)}-\frac{h_\mu(f)}{\chi_s(\mu)}$ for any $\mu\in\mathcal{M}_f^e(\Lambda).$ 
	Combining with (\ref{equation-Birk}), we have 
	$
	\dim_H \mu=\frac{d_uh_\mu(f)}{\int \psi^u d\mu}-\frac{d_sh_\mu(f)}{\int \psi^s d\mu}.
	$
	Since $\Lambda$ is hyperbolic, there is $\delta_0>0$ such that $\psi^u(x)>\delta_0,-\psi^s(x)>\delta_0$ for any $x\in \Lambda$.
	Define a map from $\mathbb{R}^3$ to $\mathbb{R}$ as following: $$\mathcal{Q}:(x,y,z)\to\frac{d_uz}{x}-\frac{d_sz}{y}.$$ 
	Then $\dim_H\mu=\mathcal{Q}(\mathcal{T}_{\psi^u,\psi^s,f}(\mu))$ for any $\mu\in \mathcal{M}_f^e(\Lambda),$ $\mathcal{Q}$ is continuous on $\mathcal{T}_{\psi^u,\psi^s,f}(\mathcal{M}_f(\Lambda)).$ 
	By Lemma \ref{lemma-relative}(c), $\mathcal{Q}(\mathcal{T}_{\psi^u,\psi^s,f}(\mathcal{M}_f^e(\Lambda)))$ is an interval.
	So $\{\dim_H\mu:\mu\in \mathcal{M}_f^e(\Lambda)\}$ is an interval. 
	By Lemma \ref{lemma-relative}(a) there are ergodic measures supported on $\Lambda$ with zero entropy. Then $0\in \{\dim_H\mu:\mu\in \mathcal{M}_f^e(\Lambda)\}.$
	Since $\{\dim_H\mu:\mu\in \mathcal{M}_f^e(\Lambda)\}$ is an interval and $$\sup\limits_{\mu\in \mathcal{M}_f^e(\Lambda)}\dim_H\mu\in \overline{\{\dim_H\mu:\mu\in \mathcal{M}_f^e(\Lambda)\}},$$ then we have $[0,\sup\limits_{\mu\in \mathcal{M}_f^e(\Lambda)}\dim_H\mu)\subset \{\dim_H\mu:\mu\in \mathcal{M}_f^e(\Lambda)\}\subset  [0,\sup\limits_{\mu\in \mathcal{M}_f^e(\Lambda)}\dim_H\mu].$
	
	If further $f$ is $C^{1+\alpha}$ and $f:\Lambda\to \Lambda$ is mixing, by (\ref{max-dim}), we have $\max\limits_{\mu\in \mathcal{M}_f^e(\Lambda)}\dim_H\mu=\max\limits_{\mu\in \mathcal{M}_f(\Lambda)}\dim_H\mu.$ This implies $\max\limits_{\mu\in \mathcal{M}_f(\Lambda)}\dim_H\mu\in \{\dim_H\mu:\mu\in \mathcal{M}_f^e(\Lambda)\}.$ So  we have $\{\dim_H\mu:\mu\in \mathcal{M}_f^e(\Lambda)\}=[0,\max\limits_{\mu\in \mathcal{M}_f(\Lambda)}\dim_H\mu]=\{\dim_H\mu:\mu\in \mathcal{M}_f(\Lambda)\}.$
\end{proof}

\begin{Lem}\label{lemma-locally-haus-2}
	Let $f: M \mapsto M$ be a $C^1$  diffeomorphism on a compact Riemannian manifold $M,$ and let $\Lambda\subset M$ be a transitive locally maximal hyperbolic set such that $\Lambda$ is average conformal. Let $d\in \mathbb{N}$ and $\Phi_{d}=\{\varphi_i\}_{i=1}^{d}\subset C(\Lambda)$.  Then for any $a\in \mathrm { relint } (\mathcal{P}_{\Phi_{d}}(\mathcal{M}_f(\Lambda))),$ we have $\sup\limits_{\mu\in \mathcal{M}_f^e(\Lambda)\cap   \mathcal{P}_{\Phi_{d}}^{-1}(a)}\dim_H\mu>0$ and $$[0,\sup\limits_{\mu\in \mathcal{M}_f^e(\Lambda)\cap   \mathcal{P}_{\Phi_{d}}^{-1}(a)}\dim_H\mu)\subset \{\dim_H\mu:\mu\in \mathcal{M}_f^e(\Lambda)\cap   \mathcal{P}_{\Phi_{d}}^{-1}(a)\}\subset  [0,\sup\limits_{\mu\in \mathcal{M}_f^e(\Lambda)\cap   \mathcal{P}_{\Phi_{d}}^{-1}(a)}\dim_H\mu].$$ 
\end{Lem}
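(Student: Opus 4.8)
The plan is to mimic the proof of Lemma \ref{lemma-locally-haus}, but now working relative to a fixed level set of the potential vector $\Phi_d$, using Lemma \ref{lemma-relative-2} in place of Lemma \ref{lemma-relative}. First I would recall that for a transitive locally maximal hyperbolic set which is average conformal, every ergodic measure $\mu\in\mathcal{M}_f^e(\Lambda)$ has exactly two Lyapunov exponents $\chi_s(\mu)<0<\chi_u(\mu)$, so by \eqref{equation-WC} one has $\dim_H\mu=\frac{h_\mu(f)}{\chi_u(\mu)}-\frac{h_\mu(f)}{\chi_s(\mu)}$, and combining with \eqref{equation-Birk} this becomes
$$
\dim_H\mu=\frac{d_uh_\mu(f)}{\int\psi^u\,d\mu}-\frac{d_sh_\mu(f)}{\int\psi^s\,d\mu},
$$
where $\psi^u(x)=\log|\det D_xf|_{E_x^u}|$ and $\psi^s(x)=\log|\det D_xf|_{E_x^s}|$ are continuous on $\Lambda$. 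Since $\Lambda$ is hyperbolic there is $\delta_0>0$ with $\psi^u(x)>\delta_0$ and $-\psi^s(x)>\delta_0$ for all $x\in\Lambda$, so the map $\mathcal{Q}:(x_1,\dots,x_d,x_{d+1},x_{d+2},x_{d+3})\mapsto \frac{d_u x_{d+3}}{x_{d+1}}-\frac{d_s x_{d+3}}{x_{d+2}}$ is continuous on the relevant region of $\mathbb{R}^{d+3}$.

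Next I would set up the right index bookkeeping so that Lemma \ref{lemma-relative-2} applies. Take $\Phi_{d_1}=\Phi_d$ (the given $d$ functions) and $\Phi_{d_2}=\{\psi^u,\psi^s\}$ appended after them, i.e. $d_1=d$ and $d_2=d+2$ in the notation of Lemma \ref{lemma-relative-2}, so that $\mathcal{T}_{\Phi_{d_2},f}(\mu)=(\int\varphi_1\,d\mu,\dots,\int\varphi_d\,d\mu,\int\psi^u\,d\mu,\int\psi^s\,d\mu,h_\mu(f))$ and $\dim_H\mu=\mathcal{Q}(\mathcal{T}_{\Phi_{d_2},f}(\mu))$ for every $\mu\in\mathcal{M}_f^e(\Lambda)$ with the obvious reindexing of $\mathcal{Q}$ to act on $\mathbb{R}^{d_2+1}$. (Since $(\Lambda,f)$ is topologically Anosov and transitive by Lemma \ref{LH}, the hypotheses of Lemma \ref{lemma-relative-2} are met.) Fixing $a\in\mathrm{relint}(\mathcal{P}_{\Phi_d}(\mathcal{M}_f(\Lambda)))$, Lemma \ref{lemma-relative-2}(c) gives that $\mathcal{Q}(\{\mathcal{T}_{\Phi_{d_2},f}(\mu):\mu\in\mathcal{M}_f^e(\Lambda)\cap\mathcal{P}_{\Phi_d}^{-1}(a)\})=\{\dim_H\mu:\mu\in\mathcal{M}_f^e(\Lambda)\cap\mathcal{P}_{\Phi_d}^{-1}(a)\}$ is an interval.

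Then I would pin down the two endpoints. By Lemma \ref{lemma-relative-2}(a) there exist $\mu_0,\mu_+\in\mathcal{M}_f^e(\Lambda)\cap\mathcal{P}_{\Phi_d}^{-1}(a)$ with $h_{\mu_0}(f)=0$ and $h_{\mu_+}(f)>0$; the first gives $\dim_H\mu_0=0$, so $0$ belongs to the interval, and the second forces $\dim_H\mu_+=\frac{d_uh_{\mu_+}(f)}{\int\psi^u\,d\mu_+}-\frac{d_sh_{\mu_+}(f)}{\int\psi^s\,d\mu_+}>0$, whence $\sup_{\mu\in\mathcal{M}_f^e(\Lambda)\cap\mathcal{P}_{\Phi_d}^{-1}(a)}\dim_H\mu>0$. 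Since the set is an interval containing $0$ and its supremum lies in its closure, we conclude
$$
[0,\sup_{\mu\in\mathcal{M}_f^e(\Lambda)\cap\mathcal{P}_{\Phi_d}^{-1}(a)}\dim_H\mu)\subset\{\dim_H\mu:\mu\in\mathcal{M}_f^e(\Lambda)\cap\mathcal{P}_{\Phi_d}^{-1}(a)\}\subset[0,\sup_{\mu\in\mathcal{M}_f^e(\Lambda)\cap\mathcal{P}_{\Phi_d}^{-1}(a)}\dim_H\mu],
$$
which is the claim. The only genuine subtlety — and the step I would be most careful about — is verifying that all relevant invariant measures on $\Lambda$ actually have the two-exponent structure so that \eqref{equation-WC}–\eqref{equation-Birk} apply to every ergodic measure appearing in the argument (in particular $\mu_0$ and $\mu_+$), and that the continuity domain of $\mathcal{Q}$ genuinely contains $\mathcal{T}_{\Phi_{d_2},f}(\mathcal{M}_f(\Lambda))$ rather than merely the ergodic part; this is handled by the uniform bound $\delta_0$ coming from hyperbolicity, together with the fact that average conformality is a statement about all of $\mathcal{M}_f^e(\Lambda)$ and hence, by the ergodic decomposition applied to the integrals $\int\psi^u\,d\mu$, $\int\psi^s\,d\mu$, transfers to the whole simplex $\mathcal{M}_f(\Lambda)$.
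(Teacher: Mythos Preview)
Your proposal is correct and follows essentially the same argument as the paper's proof: express $\dim_H\mu$ as a continuous function $\mathcal{Q}$ of $(\int\psi^u\,d\mu,\int\psi^s\,d\mu,h_\mu(f))$, invoke Lemma~\ref{lemma-relative-2}(c) to conclude the image over $\mathcal{M}_f^e(\Lambda)\cap\mathcal{P}_{\Phi_d}^{-1}(a)$ is an interval, and use Lemma~\ref{lemma-relative-2}(a) to produce $\mu_0,\mu_+$ giving the endpoints $0$ and a strictly positive value. The only cosmetic difference is that the paper takes $\mathcal{Q}$ on $\mathbb{R}^3$ (with $\Phi_{d_2}=\{\psi^u,\psi^s\}$ alone), whereas you pad the domain of $\mathcal{Q}$ with the first $d$ coordinates that it ignores; since those coordinates are fixed to $a$ on the level set anyway, this makes no difference.
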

\begin{proof}
	Following the argument of Lemma \ref{lemma-locally-haus},  
	$
	\dim_H \mu=\frac{d_uh_\mu(f)}{\int \psi^u d\mu}-\frac{d_sh_\mu(f)}{\int \psi^s d\mu},
	$
	$\mathcal{Q}:(x,y,z)\to\frac{d_uz}{x}-\frac{d_sz}{y}$ is continuous on $\mathcal{T}_{\psi^u,\psi^s,f}(\mathcal{M}_f(\Lambda)),$ and 
	$\dim_H\mu=\mathcal{Q}(\mathcal{T}_{\psi^u,\psi^s,f}(\mu))$ for any $\mu\in \mathcal{M}_f^e(\Lambda).$ 
	By Lemma \ref{lemma-relative-2}(c), $\mathcal{Q}(\{\mathcal{T}_{\psi^u,\psi^s,f}(\mu):\mu\in \mathcal{M}_f^e(\Lambda)\cap   \mathcal{P}_{\Phi_{d}}^{-1}(a)\})$  is an interval.
	So $\{\dim_H\mu:\mu\in \mathcal{M}_f^e(\Lambda)\cap   \mathcal{P}_{\Phi_{d}}^{-1}(a)\}$ is an interval.

	By Lemma \ref{lemma-relative-2}(a) there are $\mu_0,\mu_+\in \mathcal{M}_f^e(\Lambda)\cap\mathcal{P}_{\Phi_{d}}^{-1}(a)$ such that $h_{\mu_0}(f)=0,h_{\mu_+}(f)>0.$
	Then we have $0\in \{\dim_H\mu:\mu\in \mathcal{M}_f^e(\Lambda)\cap   \mathcal{P}_{\Phi_{d}}^{-1}(a)\}$ and $\sup\limits_{\mu\in \mathcal{M}_f^e(\Lambda)\cap   \mathcal{P}_{\Phi_{d}}^{-1}(a)}\dim_H\mu>0.$
	Since $\{\dim_H\mu:\mu\in \mathcal{M}_f^e(\Lambda)\cap   \mathcal{P}_{\Phi_{d}}^{-1}(a)\}$ is an interval and $$\sup\limits_{\mu\in \mathcal{M}_f^e(\Lambda)\cap   \mathcal{P}_{\Phi_{d}}^{-1}(a)}\dim_H\mu\in \overline{\{\dim_H\mu:\mu\in \mathcal{M}_f^e(\Lambda)\cap   \mathcal{P}_{\Phi_{d}}^{-1}(a)\}},$$ we have $[0,\sup\limits_{\mu\in \mathcal{M}_f^e(\Lambda)\cap   \mathcal{P}_{\Phi_{d}}^{-1}(a)}\dim_H\mu)\subset \{\dim_H\mu:\mu\in \mathcal{M}_f^e(\Lambda)\cap   \mathcal{P}_{\Phi_{d}}^{-1}(a)\}\subset  [0,\sup\limits_{\mu\in \mathcal{M}_f^e(\Lambda)\cap   \mathcal{P}_{\Phi_{d}}^{-1}(a)}\dim_H\mu].$
\end{proof}

\noindent {\textbf{Proof of Theorem \ref{thm-inter-huasdorff}:}} By Lemma \ref{lemma-locally-haus} and \ref{lemma-locally-haus-2}, we obtain item(1) and (2). Now we give the proof of item(3).
If $h_{\nu}(f)=0,$ then by (\ref{equation-WC}) we have $\dim_H \nu=0.$ Hence, $\dim_H \nu=0\in\{\dim_H\mu:\mu\in \mathcal{M}_f^e(M)\}$.
Now we assume that $h_{\nu}(f)>0.$
Let $\chi_s(\nu)<0<\chi_u(\nu)$ be the two Lyapunov exponents of $\nu$.  By Theorem \ref{Thm-Katok’s Horseshoe} for any $\varepsilon>0$ there exists a transitive locally maximal hyperbolic set $\Lambda_\varepsilon$ and $\mu_\varepsilon\in \mathcal{M}_f^e(\Lambda_\varepsilon)$ such that $|h_{\nu}(f)-h_{\mu_\varepsilon}(f)|<\varepsilon$, $|\chi_u(\nu)-\chi_u(\mu_\varepsilon)|<\varepsilon$ and $|\chi_s(\nu)-\chi_s(\mu_\varepsilon)|<\varepsilon$ where $
\chi_s(\mu_\varepsilon) < 0<  \chi_{u}(\mu_\varepsilon)$ are   the Laypunov exponents of $\mu_\varepsilon.$
By Lemma \ref{lemma-locally-haus},  $\{\dim_H\mu:\mu\in \mathcal{M}_f^e(M)\}\supset \{\dim_H\mu:\mu\in \mathcal{M}_f^e(\Lambda_\varepsilon)\} \supset [0,\dim_H\mu_\varepsilon].$ Since $\varepsilon$ is arbitrary and $\lim\limits_{\varepsilon\to 0}\dim_H\mu_\varepsilon=\lim\limits_{\varepsilon\to 0}(\frac{h_{\mu_\varepsilon}(f)}{\chi_u(\mu_\varepsilon)}-\frac{h_{\mu_\varepsilon}(f)}{\chi_s(\mu_\varepsilon)})=\frac{h_\nu(f)}{\chi_u(\nu)}-\frac{h_\nu(f)}{\chi_s(\nu)}=\dim_H\nu,$ we have $\{\dim_H\mu:\mu\in \mathcal{M}_f^e(M)\}\supset [0,\dim_H\nu].$\qed

\subsection{Proof of Theorem \ref{thm-Lyapunov}}\label{section-Lyapunov}
Now we consider an transitive multi-average conformal  Anosov diffeomorphism $f:M\to M$.
Denote $\psi^j(x)=\log|\det Df|_{E_x^{j}}|$ for any $x\in M$ and any $1\leq j\leq t_u+t_s.$ Similar as (\ref{equation-Birk0}), we have $\int\psi^j d\mu=d_j\chi_{\sum_{k=1}^{j-1}d_k+1}(\mu).$ Denote $\phi^i=\frac{\psi^j}{d_j}$ for any $\sum_{k=1}^{j-1}d_k+1\leq i\leq \sum_{k=1}^{j}d_k.$  Then we have  $\mathrm{Lya}(\mu)=(\int \phi^1 d\mu,\dots,\int \phi^{\dim M} d\mu )$ for any $\mu\in \mathcal{M}_f(M).$  By Lemma \ref{lemma-relative} (a) and (b) we obtain (1) and (2).
By Lemma \ref{lemma-locally-haus-2} we obtain (3).
\qed

\subsection{Proof of Theorem \ref{thm-first-return}}\label{section-proof-F}
\begin{Lem}\cite[Theorem 1]{STV2003} and \cite[Corollary 3.1]{OliveTian 2013}\label{Lem-first}
	Let $f: M \rightarrow M$ be a $C^1$ diffeomorphism on a  compact Riemannian manifold, and $\nu$ an $f$-invariant ergodic hyperbolic measure with $h_\nu(f)>0$. Denote  the Laypunov exponents of $\nu$ by $
	\chi_1(\nu) \geq \chi_2(\nu) \geq \cdots\geq \chi_{d_u}(\mu)>0>\chi_{d_u+1}(\nu) \geq\dots\geq \chi_{\dim M}(\nu).$  If $f$ is $C^{1+\alpha}$ for some $0<\alpha<1$ or the Oseledec splitting of $\nu$ is dominated,   then for  $\mu$ a.e. $x \in M$,
	$$
	\frac{1}{\chi_{1}(\mu)}-\frac{1}{\chi_{\dim M}(\mu)}\leq \limsup _{r \rightarrow 0} \frac{\tau(B(x, r))}{-\log r} \leq\liminf _{r \rightarrow 0} \frac{\tau(B(x, r))}{-\log r} \leq \frac{1}{\chi_{d_u}(\mu)}-\frac{1}{\chi_{d_u+1}(\mu)}.
	$$
\end{Lem}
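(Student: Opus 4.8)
The plan is to follow the Pesin-theoretic scheme of \cite{STV2003} and \cite{OliveTian 2013} for shortest-return times. First I would set up Lyapunov charts adapted to $\nu$. The two alternative hypotheses — $f\in C^{1+\alpha}$, or the Oseledec splitting of $\nu$ dominated — are exactly the two settings in which $\nu$-a.e.\ point carries a genuine hyperbolic local structure: in the $C^{1+\alpha}$ case Pesin theory provides local stable/unstable manifolds $W^{s}_{\mathrm{loc}}(y),W^{u}_{\mathrm{loc}}(y)$ with H\"older control of the nonlinear distortion, while in the dominated case one uses the continuous invariant splitting $E^{s}\oplus E^{u}$ and its cone fields directly (and where convenient one may use Theorem~\ref{Thm-Katok’s Horseshoe} to approximate $\nu$ from inside by uniformly hyperbolic horseshoes). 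Fix $\varepsilon>0$ and a Pesin block $\Lambda_{\ell}$ with $\nu(\Lambda_{\ell})>1-\varepsilon$ on which: the contraction along $E^{s}$ over $n$ steps lies between $e^{-n(\chi_{\dim M}(\nu)-\varepsilon)}$ and $e^{-n(\chi_{d_u+1}(\nu)+\varepsilon)}$ in norm, the expansion along $E^{u}$ over $n$ steps lies between $e^{n(\chi_{d_u}(\nu)-\varepsilon)}$ and $e^{n(\chi_{1}(\nu)+\varepsilon)}$, and the local product structure holds with a uniform plaque size $\rho_{\ell}$; by Birkhoff's theorem $\nu$-a.e.\ $x$ visits $\Lambda_{\ell}$ with frequency at least $1-\varepsilon$, and $\tau(B(x,r))<\infty$ by Poincar\'e recurrence.

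\emph{Upper bound.} To prove $\limsup_{r\to0}\tau(B(x,r))/(-\log r)\le \tfrac{1}{\chi_{d_u}(\nu)}-\tfrac{1}{\chi_{d_u+1}(\nu)}$ for a.e.\ $x$, I would exhibit, for every small $r$, an honest returning point. The mechanism: $B(x,r)$ contains a sub-rectangle that is a product of a $W^{u}$-plaque and a $W^{s}$-plaque of sizes $\asymp r$; under $f^{k}$ the unstable plaque unfolds, and it takes $\approx \tfrac{1}{\chi_{d_u}(\nu)}\log\tfrac1r$ iterations (using the \emph{slowest} expansion rate, which is why $\chi_{d_u}$ and not $\chi_{1}$ enters) for it to reach macroscopic size; symmetrically, running $f^{-k}$, the stable plaque expands and needs $\approx \tfrac{1}{|\chi_{d_u+1}(\nu)|}\log\tfrac1r$ backward iterations to refocus onto $x$ — again the slowest contraction rate governs the cost. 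Combining these two unfoldings with a controlled recurrence time of the orbit of $x$ (from the Ornstein--Weiss/Kac return-time estimates, available because $h_{\nu}(f)>0$, or directly from the Brin--Katok formula) and the uniform local product structure on $\Lambda_{\ell}$, one builds $y\in B(x,r)$ with $f^{k}(y)\in B(x,r)$ for some $k\le\big(\tfrac{1}{\chi_{d_u}(\nu)}-\tfrac{1}{\chi_{d_u+1}(\nu)}+C\varepsilon\big)\log\tfrac1r$; letting $\varepsilon\to0$ along a sequence of blocks gives the claimed upper bound.

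\emph{Lower bound.} For $\liminf_{r\to0}\tau(B(x,r))/(-\log r)\ge \tfrac{1}{\chi_{1}(\nu)}-\tfrac{1}{\chi_{\dim M}(\nu)}$ I would show that a \emph{fast} return is eventually impossible, via a Borel--Cantelli argument over dyadic scales $r=2^{-n}$. If $f^{k}(B(x,r))\cap B(x,r)\neq\emptyset$ there is $y\in B(x,r)$ with $f^{k}(y)\in B(x,r)$; propagating this through a Lyapunov chart at a nearby point of $\Lambda_{\ell}$ forces $y$ into an exponentially thin slab around the local stable plaque of $x$ of unstable-width $\asymp r\,e^{-k\chi_{1}(\nu)(1-\varepsilon)}$ (the \emph{fastest} unstable expansion, hence $\chi_{1}$), and forces $f^{k}(y)$ into a slab of stable-width $\asymp r\,e^{-k|\chi_{\dim M}(\nu)|(1-\varepsilon)}$ around the local unstable plaque of $x$. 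The $\nu$-measure of the set of $x$ whose $B(x,r)$ admits such a $k$-return with $k\le(c-\varepsilon)\log\tfrac1r$, where $c=\tfrac{1}{\chi_{1}(\nu)}-\tfrac{1}{\chi_{\dim M}(\nu)}$, is summable in $n$ once the (lower) pointwise dimensions of $\nu$ along $E^{s}$ and $E^{u}$ are combined with these exponentially small widths; Borel--Cantelli then rules out fast returns at dyadic scales for $\nu$-a.e.\ $x$, and a Besicovitch-type covering upgrades this to all $r$.

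\emph{Main obstacle.} The delicate step is turning the two heuristic thin-slab pictures into rigorous summable bounds: this requires the exact-dimensionality theory for hyperbolic measures (pointwise dimensions transverse to and along the invariant manifolds), uniform over a Pesin block, together with careful bookkeeping of the slowly-varying Pesin constants so the many $\varepsilon$'s do not accumulate, plus the dyadic-to-continuous passage. In the merely dominated case there is the extra chore that $W^{s}_{\mathrm{loc}},W^{u}_{\mathrm{loc}}$ must be replaced by the plaques of the dominated splitting, for which the same expansion/contraction and product-structure estimates hold but need a separate verification. Both points are handled in \cite{STV2003} and \cite{OliveTian 2013}, and the proof amounts to invoking them.
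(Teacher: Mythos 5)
The paper contains no proof of this lemma: it is quoted directly from \cite{STV2003} and \cite{OliveTian 2013}, and the remark following it merely records that the dominated ($C^1$) case is obtained by replacing the Pesin-theoretic ingredients with a shadowing-lemma argument as in \cite{SunTian2015}. Your proposal is a plausible heuristic reconstruction of the Pesin-block/return-time arguments of those references and, as you acknowledge at the end, it ultimately consists in invoking them, so it is consistent with the paper's treatment; the one divergence is that for the dominated case the paper points to the $C^1$ shadowing lemma rather than to cone-field/plaque estimates as you suggest, though both routes are standard. One small point: the chain of inequalities in the statement should of course read $\liminf\leq\limsup$, and what your sketch (correctly) aims at is the intended version, namely the lower bound for the $\liminf$ and the upper bound for the $\limsup$.
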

\begin{Rem}
	Lemma \ref{Lem-first} was proved for $C^{1+\alpha}$ case in \cite{STV2003,OliveTian 2013}. Then dominated case can be proved using Shadowing lemma in $C^1$ setting. See, for example, \cite{SunTian2015}.
\end{Rem}

\begin{Lem}\label{lemma-locally-first}
	Let $f: M \mapsto M$ be a $C^1$  diffeomorphism on a compact Riemannian manifold $M,$ and let $\Lambda\subset M$ be a transitive locally maximal hyperbolic set such that $\Lambda$ is average conformal.   Then $\{r_f(\mu):\mu\in \mathcal{M}_f^e(\Lambda) \text{ and }h_\mu(f)>0\}$ is an interval.
\end{Lem}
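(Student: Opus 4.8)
\textbf{Proof plan for Lemma \ref{lemma-locally-first}.}

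The plan is to mimic exactly the argument of Lemma \ref{lemma-locally-haus}, replacing the Hausdorff dimension formula by the first-return-rate formula. First I would record that since $\Lambda$ is an average conformal locally maximal hyperbolic set, every $\mu\in\mathcal{M}_f^e(\Lambda)$ has exactly two Lyapunov exponents $\chi_s(\mu)<0<\chi_u(\mu)$, so $\chi_1(\mu)=\chi_{d_u}(\mu)=\chi_u(\mu)$ and $\chi_{d_u+1}(\mu)=\chi_{\dim M}(\mu)=\chi_s(\mu)$. Hence the upper and lower bounds in Lemma \ref{Lem-first} coincide, the limit $r_f(x)$ exists for $\mu$-a.e.\ $x$, and $r_f(\mu)=\int r_f(x)\,d\mu=\frac1{\chi_u(\mu)}-\frac1{\chi_s(\mu)}$ for every ergodic $\mu$ with $h_\mu(f)>0$. (For the $C^1$/dominated case one invokes the remark after Lemma \ref{Lem-first} giving Lemma \ref{Lem-first} in full generality; a transitive locally maximal hyperbolic set is dominated, so the hypothesis is automatic.) Using \eqref{equation-Birk}, namely $\int\psi^u\,d\mu=d_u\chi_u(\mu)$ and $\int\psi^s\,d\mu=d_s\chi_s(\mu)$ with $\psi^u(x)=\log|\det D_xf|_{E_x^u}|$, $\psi^s(x)=\log|\det D_xf|_{E_x^s}|$, this rewrites as
$$
r_f(\mu)=\frac{d_u}{\int\psi^u\,d\mu}-\frac{d_s}{\int\psi^s\,d\mu}.
$$

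Next I would set up the continuous auxiliary map. Since $\Lambda$ is hyperbolic, $\psi^u$ and $\psi^s$ are continuous and there is $\delta_0>0$ with $\psi^u(x)>\delta_0$ and $-\psi^s(x)>\delta_0$ on $\Lambda$; therefore $\int\psi^u\,d\mu\geq\delta_0>0$ and $\int\psi^s\,d\mu\leq-\delta_0<0$ for all $\mu\in\mathcal{M}_f(\Lambda)$. Define $\mathcal{Q}\colon\mathbb{R}^2\to\mathbb{R}$, $\mathcal{Q}(x,y)=\frac{d_u}{x}-\frac{d_s}{y}$, which is continuous on the (compact, bounded away from the bad set) image $\mathcal{P}_{\psi^u,\psi^s}(\mathcal{M}_f(\Lambda))$, where $\mathcal{P}_{\psi^u,\psi^s}(\mu)=(\int\psi^u\,d\mu,\int\psi^s\,d\mu)$. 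Then $r_f(\mu)=\mathcal{Q}(\mathcal{P}_{\psi^u,\psi^s}(\mu))$ for every $\mu\in\mathcal{M}_f^e(\Lambda)$ with $h_\mu(f)>0$. Note $\Lambda$ is topologically Anosov and transitive by Lemma \ref{LH}, so Lemma \ref{lemma-relative}(c) applies with $d=0$ (no constraint functions, or equivalently $\Phi_0=\emptyset$ so that $\mathcal{T}_{\Phi_0,f}(\mu)=h_\mu(f)$ and one works with $\mathcal{P}_{\psi^u,\psi^s}$ in place of $\mathcal{T}$): the relevant statement is that $\mathcal{P}_{\psi^u,\psi^s}(\{\mu\in\mathcal{M}_f^e(\Lambda):h_\mu(f)>0\})$ is a connected (convex-image) subset of $\mathbb{R}^2$, being the relative interior piece of a convex set together with limit points. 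Concretely, by Lemma \ref{lemma-relative}(c) applied to $\mathcal{Q}$ composed with the projection, the set $\{r_f(\mu):\mu\in\mathcal{M}_f^e(\Lambda),\ h_\mu(f)>0\}=\mathcal{Q}(\mathcal{T}_{\Phi_0,f}(\{\mu\in\mathcal{M}_f^e(\Lambda):h_\mu(f)>0\}))$ is an interval.

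The main obstacle — really the only point requiring care — is to make sure I am invoking a correctly-shaped instance of Lemma \ref{lemma-relative}(c) or Lemma \ref{lemma-relative-2}(c): those are stated for $\mathcal{T}_{\Phi_d,f}(\mu)=(\mathcal{P}_{\Phi_d}(\mu),h_\mu(f))$, whereas here the relevant functionals $\psi^u,\psi^s$ are themselves continuous functions to be averaged, and $r_f$ involves their integrals but \emph{not} the entropy directly (entropy enters only through the constraint $h_\mu(f)>0$ restricting to the positive-entropy ergodic measures). The clean fix is to apply Theorem \ref{thm-almost}(I)/(IV) (equivalently Lemma \ref{lemma-GG} together with Corollary \ref{Corollary-zero-metric-entropy}(2)) to the pair of functions $(\psi^u,\psi^s)$ to conclude that $\mathrm{Int}(\{(\int\psi^u\,d\mu,\int\psi^s\,d\mu):\mu\in\mathcal{M}_f^e(\Lambda)\})$ is a nonempty convex subset of $\mathbb{R}^2$ contained in $\{(\int\psi^u\,d\mu,\int\psi^s\,d\mu):\mu\in\mathcal{M}_f^e(\Lambda),\ h_\mu(f)>0\}$ — the positive-entropy condition comes for free on the interior since ergodic measures of positive entropy are dense (Corollary \ref{Corollary-zero-metric-entropy}(2)) and one can always push an interior point to such a measure — and that this set is sandwiched between that convex interior and its closure. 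Applying the continuous $\mathcal{Q}$ then yields that $\mathcal{Q}$ of it is an interval, which is precisely $\{r_f(\mu):\mu\in\mathcal{M}_f^e(\Lambda),\ h_\mu(f)>0\}$. This completes the proof. The second half of Theorem \ref{thm-first-return} (the "in particular" statement about $M$ a transitive average conformal Anosov diffeomorphism) then follows immediately by taking $\Lambda=M$, which is a transitive locally maximal hyperbolic set by Lemma \ref{LH2}, and reading off that an interval containing $r_f(\mu_1)$ and $r_f(\mu_2)$ contains every value in between.
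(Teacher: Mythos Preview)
Your proposal is essentially correct and in fact follows the outline the paper sketches in the introduction (where $\mathcal{Q}$ is described as a map $\mathbb{R}^2\to\mathbb{R}$). However, the paper's actual proof sidesteps the ``shape mismatch'' you wrestle with by a simple trick: it defines $\mathcal{Q}:\mathbb{R}^3\to\mathbb{R}$ by $\mathcal{Q}(x,y,z)=\frac{d_u}{x}-\frac{d_s}{y}$, i.e.\ it includes the entropy coordinate as a dummy variable. Then $r_f(\mu)=\mathcal{Q}(\mathcal{T}_{\psi^u,\psi^s,f}(\mu))$ for every positive-entropy ergodic $\mu$, and Lemma~\ref{lemma-relative}(c) applies verbatim with $d=2$ and $\Phi_2=\{\psi^u,\psi^s\}$: that lemma already asserts that $\mathcal{Q}(\mathcal{T}_{\Phi_d,f}(\{\mu\in\mathcal{M}_f^e(\Lambda):h_\mu(f)>0\}))$ is an interval. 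No separate sandwich or connectedness argument is needed.

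Your workaround (show the positive-entropy ergodic image under $\mathcal{P}_{\psi^u,\psi^s}$ is sandwiched between a convex set and its closure, hence connected, then push through the continuous $\mathcal{Q}$) does work, with one caveat: you should write $\mathrm{relint}$ rather than $\mathrm{Int}$ throughout, since the $2$-dimensional interior of $\mathcal{P}_{\psi^u,\psi^s}(\mathcal{M}_f(\Lambda))$ could be empty (nothing prevents an affine relation between $\int\psi^u\,d\mu$ and $\int\psi^s\,d\mu$), whereas the relative interior is always nonempty. This is exactly why Lemma~\ref{lemma-relative} is phrased in terms of $\mathrm{relint}$. Also, your attempt to invoke Lemma~\ref{lemma-relative}(c) with ``$d=0$'' does not make sense as written, since with $\Phi_0=\emptyset$ the map $\mathcal{T}_{\Phi_0,f}$ records only entropy and cannot recover $r_f$; you correctly abandon this and move to the direct argument, so just delete that sentence.
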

\begin{proof}
	For any $\mu\in\mathcal{M}_f^e(\Lambda),$ let $\chi_s(\mu)<0<\chi_u(\mu)$ be the two Lyapunov exponents of $\nu$. Then by Lemma \ref{Lem-first}, $r_f(\mu)=\frac{1}{\lambda_u(\mu)}-\frac{1}{\lambda_s(\mu)}$ if $h_\mu(f)>0.$
	By (\ref{equation-Birk0}), we have 
	$
	r_\mu=\frac{d_u}{\int \psi^u d\mu}-\frac{d_s}{\int \psi^s d\mu}.
	$ 
	Since $\Lambda$ is hyperbolic, there is $\delta_0>0$ such that $\psi^u(x)>\delta_0,-\psi^s(x)>\delta_0$ for any $x\in \Lambda$.
	Define a map from $\mathbb{R}^3$ to $\mathbb{R}$ as following: $$\mathcal{Q}:(x,y,z)\to\frac{d_u}{x}-\frac{d_s}{y}.$$ 
	Then $r_f(\mu)=\mathcal{Q}(\mathcal{T}_{\psi^u,\psi^s,f}(\mu))$ for any $\mu\in \mathcal{M}_f^e(\Lambda)$ with $h_\mu(f)>0,$ $\mathcal{Q}$ is continuous on $\mathcal{T}_{\psi^u,\psi^s,f}(\mathcal{M}_f(\Lambda)).$ 
	By Lemma \ref{lemma-relative}(c), $\mathcal{Q}(\mathcal{T}_{\psi^u,\psi^s,f}(\{\mathcal{M}_f^e(X):h_\mu(f)>0\})$ is an interval.
	So $\{r_f(\mu):\mu\in \mathcal{M}_f^e(\Lambda) \text{ and }h_\mu(f)>0\}$ is an interval.
\end{proof}

By Lemma \ref{lemma-locally-first} we obtain Theorem \ref{thm-first-return}.



\section{Other applications}\label{section-Applications}
\subsection{Proof of Corollary \ref{thm-pressure}}\label{section-proof-D}
First, we give a lemma.
\begin{Lem}\label{lemma-locally-press}
	Let $f: M \mapsto M$ be a $C^1$  diffeomorphism on a compact Riemannian manifold $M,$ and let $\Lambda\subset M$ be a transitive locally maximal hyperbolic set.  Then $$(-\psi^u_{\sup},P^u_{sup}]\subseteq \{P^{u}(\mu):\mu\in \mathcal{M}_f^e(\Lambda)\}\subseteq [-\psi^u_{\sup},P^u_{sup}],$$ where $\psi^u_{\sup}=\sup\limits_{\mu\in \mathcal{M}_f(\Lambda)}\int \sum_{i=1}^{\operatorname{dim} M} \chi_i^{+} d\mu,$ $P^u_{sup}=\sup\limits_{\mu\in\mathcal{M}_f(\Lambda)}P^u(\mu).$ Moreover, given $d\in \mathbb{N}$ and $\Phi_{d}=\{\varphi_i\}_{i=1}^{d}\subset C(\Lambda)$, then for any $a\in \mathrm { relint } (\mathcal{P}_{\Phi_{d}}(\mathcal{M}_f(\Lambda))),$   $\{P^u(f):\mu\in \mathcal{M}_f^e(\Lambda)\cap   \mathcal{P}_{\Phi_{d}}^{-1}(a)\}$ is an interval and $\overline{\{P^u(f):\mu\in \mathcal{M}_f^e(\Lambda)\cap   \mathcal{P}_{\Phi_{d}}^{-1}(a)\}}=\overline{\{P^u(f):\mu\in \mathcal{M}_f(\Lambda)\cap   \mathcal{P}_{\Phi_{d}}^{-1}(a)\}}.$
\end{Lem}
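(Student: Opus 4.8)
The plan is to reduce Lemma \ref{lemma-locally-press} to the abstract machinery already developed, exactly as was done for the Hausdorff dimension and first-return statements. Recall that on a transitive locally maximal hyperbolic set $\Lambda$ the functions $\psi^u(x)=\log|\det D_xf|_{E_x^u}|$ and $\psi^s(x)=\log|\det D_xf|_{E_x^s}|$ are continuous, and by \eqref{equation-Birk0} one has $\int\psi^u\,d\mu=\int\sum_{i=1}^{\dim M}\chi_i^+(x)\,d\mu$ for every $\mu\in\mathcal{M}_f(\Lambda)$. Hence $P^u(\mu)=h_\mu(f)-\int\psi^u\,d\mu$, which is an \emph{affine} function of $\mu$ of the form $h_\mu(f)+\alpha(\mu)$ with $\alpha(\mu)=-\int\psi^u\,d\mu$. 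So I will take $d=1$ and the trivial constraint (that is, apply Theorem \ref{thm-continuous}, equivalently Theorem \ref{thm-almost}, with no $(A,B)$ pair, or with $\mathcal{P}_{A,B}$ constant so that $\mathrm{Int}(\mathcal{P}_{A,B}(\mathcal{M}_f(\Lambda)))$ is the whole space) and the continuous affine function $\alpha=-\psi^u$, which satisfies \eqref{equation-W} and \eqref{equation-AF} since it is affine.

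For the first assertion, I would argue as follows. Since $\Lambda$ is a transitive locally maximal hyperbolic set, $(\Lambda,f)$ is topologically Anosov and transitive (Lemma \ref{LH}), the entropy map is upper semi-continuous (expansiveness), and $\{\mu:h_\mu(f)=0\}$ is dense in $\mathcal{M}_f(\Lambda)$ (Corollary \ref{Corollary-zero-metric-entropy}(1)). Write $P^u_{\sup}=\sup_{\mu\in\mathcal{M}_f(\Lambda)}P^u(\mu)$; by upper semi-continuity of $h$ and continuity of $\int\psi^u\,d\mu$ this supremum is attained, so there is $\mu_{\max}$ with $P^u(\mu_{\max})=P^u_{\sup}$. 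Applying Theorem \ref{thm-almost}(II)--(IV) (with no constraint) to $\alpha=-\psi^u$ and $P$ ranging over $[\,\inf_{\mathcal{M}_f(\Lambda)}P^u(\mu),\,P^u_{\sup})$ I get that for every such $P$ there is an ergodic $\nu$ with $P^u(\nu)$ arbitrarily close to $P$, and in fact (by item (IV) / the density in item (III)) that $\{(\,\mathcal{P}(\mu),P^u(\mu))\}$ for ergodic $\mu$ coincides with the interior values, so $\{P^u(\mu):\mu\in\mathcal{M}_f^e(\Lambda)\}$ is an interval containing $[\,\inf P^u,\,P^u_{\sup})$ — and by Corollary \ref{Corollary-zero-metric-entropy}(1) together with an ergodic zero-entropy measure it also contains the value $-\int\psi^u\,d\nu_0$ for suitable $\nu_0$. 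The remaining point is to identify $\inf_{\mu\in\mathcal{M}_f(\Lambda)}P^u(\mu)$ with $-\psi^u_{\sup}$: one direction is clear since $P^u(\mu)=h_\mu(f)-\int\psi^u\,d\mu\ge -\int\psi^u\,d\mu\ge-\psi^u_{\sup}$; for the reverse, pick $\mu_k$ with $\int\psi^u\,d\mu_k\to\psi^u_{\sup}$ and use Corollary \ref{Corollary-zero-metric-entropy}(1) (or the density of zero-entropy measures near $\mu_k$) to replace each $\mu_k$ by a nearby measure of small entropy, giving $P^u(\mu_k')\to-\psi^u_{\sup}$; hence $-\psi^u_{\sup}\in\overline{\{P^u(\mu):\mu\in\mathcal{M}_f^e(\Lambda)\}}$ and since the set of ergodic values is an interval with $-\psi^u_{\sup}$ as its infimum we obtain the two-sided inclusion.

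For the second assertion, with $d\in\mathbb{N}$ and $\Phi_d=\{\varphi_i\}_{i=1}^d\subset C(\Lambda)$, I will run exactly the same argument relative to the level set $\mathcal{P}_{\Phi_d}^{-1}(a)$, invoking Lemma \ref{lemma-relative-2} in the guise used in the proofs of Lemmas \ref{lemma-locally-haus-2} and \ref{lemma-locally-first}: define $\mathcal{Q}:\mathbb{R}^{d+1}\to\mathbb{R}$, $\mathcal{Q}(x_1,\dots,x_d,z)=z$, so that $P^u(\mu)=\mathcal{Q}\bigl(\mathcal{T}_{\Phi_d',f}(\mu)\bigr)$ where $\Phi_d'=\Phi_d\cup\{-\psi^u\}$ and we read off the last coordinate; more directly, since $P^u=h-\int\psi^u\,d\mu$ is affine, the set $\{P^u(\mu):\mu\in\mathcal{M}_f^e(\Lambda)\cap\mathcal{P}_{\Phi_d}^{-1}(a)\}$ is the image under a continuous (affine) map of $\mathcal{T}_{\{\varphi_1,\dots,\varphi_d,-\psi^u\},f}(\mathcal{M}_f^e(\Lambda)\cap\mathcal{P}_{\Phi_d}^{-1}(a))$, which by Lemma \ref{lemma-relative-2}(c) has the property that its image under any continuous $\mathcal{Q}$ is an interval whose closure equals the closure of the corresponding set over all invariant measures. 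That yields both that $\{P^u(\mu):\mu\in\mathcal{M}_f^e(\Lambda)\cap\mathcal{P}_{\Phi_d}^{-1}(a)\}$ is an interval and the stated equality of closures. The main obstacle is purely bookkeeping: checking that $-\psi^u$ (and the adjoined coordinate functions) fit the hypotheses of Lemma \ref{lemma-relative-2} (continuity, the affineness needed for \eqref{equation-W}--\eqref{equation-AF}) and that the infimum identification $\inf P^u=-\psi^u_{\sup}$ is handled correctly at the boundary; once those are in place the proof is a direct transcription of the Hausdorff-dimension and first-return arguments already given, so I do not expect genuine difficulty, only care with the relative-interior formalism of Section \ref{sec-convex}.
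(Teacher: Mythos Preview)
Your second-part argument (the ``Moreover'' clause via Lemma~\ref{lemma-relative-2}) is essentially what the paper does, modulo the notational slip that the correct map is $\mathcal{Q}(x,z)=z-x$ applied to $\mathcal{T}_{\{\psi^u\},f}(\mu)=(\int\psi^u\,d\mu,h_\mu(f))$, not $\mathcal{Q}(x_1,\dots,x_d,z)=z$.

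The first part, however, has a genuine gap in the way you invoke Theorem~\ref{thm-almost}. If you put $-\psi^u$ into the functional $\alpha$ and use a ``trivial'' constraint, then the lower threshold in items (II)--(IV) is
\[
\alpha_{A,B}(a)=\sup_{\mu}\alpha(\mu)=\sup_{\mu}\bigl(-\textstyle\int\psi^u\,d\mu\bigr)=-\psi^u_{\inf},
\]
not $-\psi^u_{\sup}$. So Theorem~\ref{thm-almost} as you apply it only yields ergodic values of $P^u$ in the range $[-\psi^u_{\inf},\,P^u_{\sup})$, and your subsequent density-of-zero-entropy argument only shows $-\psi^u_{\sup}$ lies in the closure of the \emph{invariant} values, not that the \emph{ergodic} values form an interval reaching down to it. (Also, ``trivial constraint'' does not literally fit the hypotheses: a constant $\mathcal{P}_{A,B}$ has empty interior in $\mathbb{R}^d$.)

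The paper avoids this by \emph{not} absorbing $\psi^u$ into $\alpha$. Instead it takes $\Phi_1=\{\psi^u\}$ as the coordinate function and applies Lemma~\ref{lemma-relative} with the continuous map $\mathcal{Q}:\mathbb{R}^2\to\mathbb{R}$, $\mathcal{Q}(x,y)=y-x$, so that $P^u(\mu)=\mathcal{Q}(\mathcal{T}_{\psi^u,f}(\mu))$. Lemma~\ref{lemma-relative}(c) then gives immediately that $\{P^u(\mu):\mu\in\mathcal{M}_f^e(\Lambda)\}$ is an interval; Lemma~\ref{lemma-relative}(a) provides ergodic zero-entropy measures at every $a\in\mathrm{relint}(\mathcal{P}_{\psi^u}(\mathcal{M}_f(\Lambda)))$, which shows $-\psi^u_{\sup}$ lies in the closure of the ergodic values; and the right endpoint $P^u_{\sup}$ is handled by upper semi-continuity plus ergodic decomposition (since $P^u$ is affine, the supremum over invariant measures is attained at an ergodic one --- a step you should also make explicit). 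This is exactly parallel to the proofs of Lemmas~\ref{lemma-locally-haus} and~\ref{lemma-locally-first}, so the transcription you anticipated works once $\psi^u$ is placed on the constraint side rather than in $\alpha$.
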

\begin{proof}
	By (\ref{equation-Birk0}), $\int \psi^u d\mu=\int \sum_{i=1}^{\operatorname{dim} M} \chi_i^{+}(x) d \mu.$ 
	Then $P^u(\mu)=h_{\mu}(f)-\int \psi^u d\mu$ for any $\mu\in\mathcal{M}_f(M).$ 
	Define a map from $\mathbb{R}^2$ to $\mathbb{R}$ as  $\mathcal{Q}:(x,y)\to y-x.$
	Then $P^u(\mu)=\mathcal{Q}(\mathcal{T}_{\psi^u,f}(\mu))$ for any $\mu\in \mathcal{M}_f(\Lambda),$ $\mathcal{Q}$ is continuous on $\mathcal{T}_{\psi^u,f}(\mathcal{M}_f(\Lambda)).$ 
	By Lemma \ref{lemma-relative}(c), $\mathcal{Q}(\mathcal{T}_{\psi^u,f}(\mathcal{M}_f^e(\Lambda)))$ is an interval.
	So $\{P^u(\mu):\mu\in \mathcal{M}_f^e(\Lambda)\}$ is an interval. 
	By Lemma \ref{LH}, $f:\Lambda\to \Lambda$ is expansive, and thus $\mu\to h_{\mu}(f)$ is upper semi-continuous. 
	Then there is $\nu\in\mathcal{M}_f(M)$ such that $P^u(\nu)=P^u_{sup}.$ Combinging with the ergodic decomposition theorem, there is $\nu^u\in\mathcal{M}_f^e(M)$ such that 
$
		P^u(\nu^u)=P^u_{sup}.
$
    Hence, $P^u_{\sup}\in \{P^u(\mu):\mu\in \mathcal{M}_f^e(\Lambda)\}.$ 
    By Lemma \ref{lemma-relative}(a),
    $\mathcal{T}_{\psi^u,f}(\mathcal{M}_f^e(\Lambda))\supset \mathrm{relint}(\mathcal{P}_{\psi^u}(\mathcal{M}_f(\Lambda)))\times \{0\}.$
    Then $-\psi^u_{\sup}\in \mathcal{Q}(\overline{\mathcal{T}_{\psi^u,f}(\mathcal{M}_f^e(\Lambda))})\subset \overline{\{P^u(\mu):\mu\in \mathcal{M}_f^e(\Lambda)\}}.$ So we have $(-\psi^u_{\sup},P^u_{\sup}]\subseteq \{P^{u}(\mu):\mu\in \mathcal{M}_f^e(M)\}\subseteq [-\psi^u_{\sup},P^u_{\sup}].$
    
    By Lemma \ref{lemma-relative-2}(c),  $\{P^u(f):\mu\in \mathcal{M}_f^e(\Lambda)\cap   \mathcal{P}_{\Phi_{d}}^{-1}(a)\}$ is an interval and $\overline{\{P^u(f):\mu\in \mathcal{M}_f^e(\Lambda)\cap   \mathcal{P}_{\Phi_{d}}^{-1}(a)\}}=\overline{\{P^u(f):\mu\in \mathcal{M}_f(\Lambda)\cap   \mathcal{P}_{\Phi_{d}}^{-1}(a)\}}.$
\end{proof}

\noindent {\textbf{Proof of Corollary \ref{thm-pressure}} (1) Let $f: M \mapsto M$ be a  transitive Anosov diffeomorphism. 
By \cite{CatsEnri2011}, there is $\nu\in\mathcal{M}_f(M)$ such that $\nu$ is an SRB-like measure for $f.$ Then by \cite[Corollary 2]{CatsCerEnri2011},  $h_{\nu}(f)=\int \sum_{i=1}^{\operatorname{dim} M} \chi_i^{+}(x) d \nu.$ Combinging with Ruelle’s inequality, we have $P^u_{sup}=0.$ Then by Lemma \ref{lemma-locally-press} we obtain (1).

(2) If $h_{\nu}(f)=0,$ then $P^u(\nu)=-\int \sum_{i=1}^{\operatorname{dim} M} \chi_i^{+} d\nu.$ It's obvious that  $\{P^{u}(\mu):\mu\in \mathcal{M}_f^e(M)\}\ni P^u(\nu).$ Now we assume that $h_{\nu}(f)>0.$ Denote the Laypunov exponents of $\nu$ by $
\chi_1(\nu) \geq \chi_2(\nu) \geq \cdots\geq \chi_{d_u}(\mu)>0>\chi_{d_u+1}(\nu) \geq\dots\geq \chi_{\dim M}(\nu).$ By Theorem \ref{Thm-Katok’s Horseshoe}, for any $\varepsilon>0$ there exists a transitive locally maximal hyperbolic set $\Lambda_\varepsilon$ and $\mu_\varepsilon\in \mathcal{M}_f^e(\Lambda_\varepsilon)$ such that $|h_{\nu}(f)-h_{\mu_\varepsilon}(f)|<\varepsilon$ and $|\sum_{i=1}^{d_u}\chi_i(\nu)-\sum_{i=1}^{d_u}\chi_i(\mu_\varepsilon)|<\varepsilon$ where $
\chi_1(\mu_\varepsilon) \geq \chi_2(\mu_\varepsilon) \geq \cdots\geq \chi_{d_u}(\mu_\varepsilon)>0>\chi_{d_u+1}(\mu_\varepsilon) \geq\dots\geq \chi_{\dim M}(\mu_\varepsilon)$ are   the Laypunov exponents of $\mu_\varepsilon.$
By Lemma \ref{lemma-locally-press},  we have $\{P^u(\mu):\mu\in \mathcal{M}_f^e(M)\}\supset \{P^u(\mu):\mu\in \mathcal{M}_f^e(\Lambda_{\varepsilon})\}\supset (-\sum_{i=1}^{d_u}\chi_i(\mu_\varepsilon),P^u(\mu_\varepsilon)].$ Since $\varepsilon$ is arbitrary, $\lim\limits_{\varepsilon\to 0}\sum_{i=1}^{d_u}\chi_i(\mu_\varepsilon)=\sum_{i=1}^{d_u}\chi_i(\nu),$ $\lim\limits_{\varepsilon\to 0}P^u(\mu_\varepsilon)=\lim\limits_{\varepsilon\to 0}(h_{\mu_\varepsilon}(f)-\sum_{i=1}^{d_u}\chi_i(\mu_\varepsilon))=h_{\nu}(f)-\sum_{i=1}^{d_u}\chi_i(\nu)=P^u(\nu),$ we have 
$\{P^{u}(\mu):\mu\in \mathcal{M}_f^e(M)\}\supset (-\int \sum_{i=1}^{\operatorname{dim} M} \chi_i^{+} d\nu,P^{u}(\nu)].$\qed

\subsection{Proof of Corollary  \ref{thm-inter-huasdorff-3}}\label{section-proof-E}
First, we give a lemma.
\begin{Lem}\label{lemma-locally-unstable}
		Let $f: M \mapsto M$ be a $C^1$  diffeomorphism on a compact Riemannian manifold $M,$ and let $\Lambda\subset M$ be a transitive locally maximal hyperbolic set.  Then $\{\dim^{u}_H\mu:\mu\in \mathcal{M}_f^e(\Lambda)\}\supset [0,\sup_{\mu\in\mathcal{M}_f^e(\Lambda)}\dim_H^u\mu).$ Moreover, given $d\in \mathbb{N}$ and $\Phi_{d}=\{\varphi_i\}_{i=1}^{d}\subset C(\Lambda)$, then for any $a\in \mathrm { relint } (\mathcal{P}_{\Phi_{d}}(\mathcal{M}_f(\Lambda))),$ we have $\dim_H^u(a)=\sup\limits_{\mu\in \mathcal{M}_f^e(\Lambda)\cap   \mathcal{P}_{\Phi_{d}}^{-1}(a)}\dim_H^u\mu>0$  and $$[0,\dim_H^u(a))\subset \{\dim_H^u\mu:\mu\in \mathcal{M}_f^e(\Lambda)\cap   \mathcal{P}_{\Phi_{d}}^{-1}(a)\}\subset  [0,\dim_H^u(a)],$$ $$\{\dim_H^u\mu:\mu\in \mathcal{M}_f(\Lambda)\cap   \mathcal{P}_{\Phi_{d}}^{-1}(a)\}= [0,\dim_H^u(a)].$$
\end{Lem}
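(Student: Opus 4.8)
\textbf{Proof proposal for Lemma \ref{lemma-locally-unstable}.}

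The plan is to follow the same strategy used for Lemma \ref{lemma-locally-haus}, \ref{lemma-locally-haus-2}, \ref{lemma-locally-press} and \ref{lemma-locally-first}: realize the function $\mu\mapsto\dim^u_H\mu$ as the composition of a continuous map $\mathcal{Q}$ on $\mathbb{R}^2$ with the affine map $\mathcal{T}_{\psi^u,f}$, and then invoke the interval statements in Lemma \ref{lemma-relative}(c) and Lemma \ref{lemma-relative-2}(c). Concretely, by \eqref{equation-Birk0} we have $\int\psi^u d\mu=\int\sum_{i=1}^{\dim M}\chi_i^+(x)\,d\mu$ for every $\mu\in\mathcal{M}_f(\Lambda)$, so $\dim^u_H\mu=\frac{h_\mu(f)}{\int\psi^u d\mu}$. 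Since $\Lambda$ is hyperbolic there is $\delta_0>0$ with $\psi^u(x)>\delta_0$ for all $x\in\Lambda$; hence on $\mathcal{T}_{\psi^u,f}(\mathcal{M}_f(\Lambda))$ the $x$-coordinate stays bounded away from $0$, and the map $\mathcal{Q}:(x,y)\mapsto \frac{y}{x}$ is continuous there. Then $\dim^u_H\mu=\mathcal{Q}(\mathcal{T}_{\psi^u,f}(\mu))$ for every $\mu\in\mathcal{M}_f(\Lambda)$.

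First I would apply Lemma \ref{lemma-relative}(c) with $d=0$ (i.e. no constraint functions $\Phi_d$, so $\mathcal{T}_{\Phi_d,f}=\mathcal{T}_{\psi^u,f}$ after the obvious identification) to conclude that $\mathcal{Q}(\mathcal{T}_{\psi^u,f}(\mathcal{M}_f^e(\Lambda)))=\{\dim^u_H\mu:\mu\in\mathcal{M}_f^e(\Lambda)\}$ is an interval. By Lemma \ref{lemma-relative}(a) there is an ergodic measure on $\Lambda$ with zero entropy, whose unstable dimension is $0$, so $0$ lies in this interval; and $\sup_{\mu\in\mathcal{M}_f^e(\Lambda)}\dim^u_H\mu$ lies in its closure. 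An interval containing $0$ whose supremum is $S$ must contain $[0,S)$, which gives the first assertion $\{\dim^u_H\mu:\mu\in\mathcal{M}_f^e(\Lambda)\}\supset[0,\sup_{\mu\in\mathcal{M}_f^e(\Lambda)}\dim_H^u\mu)$.

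For the second part, fix $d\in\mathbb{N}$, $\Phi_d=\{\varphi_i\}_{i=1}^d\subset C(\Lambda)$ and $a\in\mathrm{relint}(\mathcal{P}_{\Phi_d}(\mathcal{M}_f(\Lambda)))$. I would apply Lemma \ref{lemma-relative-2}(c), with the roles $d_1=d$, $\Phi_{d_1}=\Phi_d$, $a_1=a$, and $\Phi_{d_2}=\{\psi^u\}$ (so $d_2=1$) and the continuous map $\mathcal{Q}:(x,y)\mapsto y/x$ on $\mathbb{R}^2$; this yields that $\mathcal{Q}(\{\mathcal{T}_{\psi^u,f}(\mu):\mu\in\mathcal{M}_f^e(\Lambda)\cap\mathcal{P}_{\Phi_d}^{-1}(a)\})=\{\dim^u_H\mu:\mu\in\mathcal{M}_f^e(\Lambda)\cap\mathcal{P}_{\Phi_d}^{-1}(a)\}$ is an interval and that its closure equals $\{\dim^u_H\mu:\mu\in\mathcal{M}_f(\Lambda)\cap\mathcal{P}_{\Phi_d}^{-1}(a)\}$ (this last closed set being $\mathcal{Q}$ applied to the compact convex set $\mathcal{T}_{\psi^u,f}(\mathcal{M}_f(\Lambda)\cap\mathcal{P}_{\Phi_d}^{-1}(a))$, hence itself a closed interval). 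By Lemma \ref{lemma-relative-2}(a) there are $\mu_0,\mu_+\in\mathcal{M}_f^e(\Lambda)\cap\mathcal{P}_{\Phi_d}^{-1}(a)$ with $h_{\mu_0}(f)=0$ and $h_{\mu_+}(f)>0$; the former shows $0$ is in the interval and the latter shows $\dim_H^u(a)=\sup_{\mu\in\mathcal{M}_f^e(\Lambda)\cap\mathcal{P}_{\Phi_d}^{-1}(a)}\dim^u_H\mu>0$. Combining "interval containing $0$ with supremum $\dim_H^u(a)$" with the closure identification gives both displayed chains of inclusions. The only mild subtlety to watch is that Lemma \ref{lemma-relative} is stated for transitive topologically Anosov systems, so I must first invoke Lemma \ref{LH} to note that $(\Lambda,f|_\Lambda)$ is topologically Anosov (and transitive by hypothesis) before applying it; no step here is a real obstacle, the content is entirely in the already-established interval lemmas.
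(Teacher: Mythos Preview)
Your overall approach matches the paper's: express $\dim_H^u\mu=\mathcal{Q}(\mathcal{T}_{\psi^u,f}(\mu))$ with $\mathcal{Q}(x,y)=y/x$, invoke Lemma~\ref{lemma-relative}(a)(c) for the unconstrained statement and Lemma~\ref{lemma-relative-2}(a)(c) for the constrained one, and use the existence of ergodic measures with zero and positive entropy to pin down the interval. Two points deserve correction.

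First, a notational confusion: in Lemma~\ref{lemma-relative} the family $\Phi_d$ is not a set of \emph{constraints} but the set of functions entering $\mathcal{T}_{\Phi_d,f}$. So the correct application is with $d=1$ and $\Phi_1=\{\psi^u\}$, giving $\mathcal{T}_{\psi^u,f}(\mu)=(\int\psi^u d\mu,h_\mu(f))$; your ``$d=0$'' makes no literal sense, though you clearly intend the right thing.

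Second, and more substantively, your justification of the final equality
\[
\{\dim_H^u\mu:\mu\in \mathcal{M}_f(\Lambda)\cap\mathcal{P}_{\Phi_d}^{-1}(a)\}=[0,\dim_H^u(a)]
\]
has a gap. You assert that $\mathcal{T}_{\psi^u,f}(\mathcal{M}_f(\Lambda)\cap\mathcal{P}_{\Phi_d}^{-1}(a))$ is a \emph{compact} convex set and hence its image under the continuous $\mathcal{Q}$ is a closed interval. But $\mathcal{T}_{\psi^u,f}$ involves the entropy map, which is only upper semi-continuous, so compactness of this image is not automatic and in fact need not hold at boundary values of $\int\psi^u d\mu$. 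The paper closes this gap differently: from Lemma~\ref{LH} the system is expansive, so $\mu\mapsto h_\mu(f)$ is upper semi-continuous; since $\int\psi^u d\mu>\delta_0>0$ is continuous, the quotient $\mu\mapsto\dim_H^u\mu$ is itself upper semi-continuous on the compact set $\mathcal{M}_f(\Lambda)\cap\mathcal{P}_{\Phi_d}^{-1}(a)$ and therefore attains its supremum there. Combined with the closure identity from Lemma~\ref{lemma-relative-2}(c) (which gives $\sup_{\text{inv}}=\sup_{\text{erg}}=\dim_H^u(a)$), the containment $0\in\{\dim_H^u\mu\}$ from $\mu_0$, and the fact that the invariant-measure set is convex (so its $\dim_H^u$-image is connected), one obtains the stated equality. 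You should replace the compactness claim by this upper semi-continuity argument.
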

\begin{proof}
	By (\ref{equation-Birk0}), $\int \psi^u d\mu=\int \sum_{i=1}^{\operatorname{dim} M} \chi_i^{+}(x) d \mu.$ 
	Then $\dim^u_H\mu=\frac{h_\mu(f)}{\int \psi^u d\mu}$ for any $\mu\in\mathcal{M}_f(\Lambda).$ 
	Define a map from $\mathbb{R}^2$ to $\mathbb{R}$ as  $\mathcal{Q}:(x,y)\to \frac{y}{x}.$
	Then $\dim^u_H\mu=\mathcal{Q}(\mathcal{T}_{\psi^u,f}(\mu))$ for any $\mu\in \mathcal{M}_f(\Lambda),$ $\mathcal{Q}$ is continuous on $\mathcal{T}_{\psi^u,f}(\mathcal{M}_f(\Lambda)).$ 
	By Lemma \ref{lemma-relative}(c), $\mathcal{Q}(\mathcal{T}_{\psi^u,f}(\mathcal{M}_f^e(\Lambda)))$ is an interval.
	So $\{\dim^u_H\mu:\mu\in \mathcal{M}_f^e(\Lambda)\}$ is an interval. 
	By Lemma \ref{lemma-relative}(a) there are ergodic measures supported on $\Lambda$ with zero entropy. Then $0\in \{\dim_H^u\mu:\mu\in \mathcal{M}_f^e(\Lambda)\}.$
	Since $\sup_{\mu\in\mathcal{M}_f^e(\Lambda)}\dim_H^u\mu\in \overline{\{\dim_H^u\mu:\mu\in \mathcal{M}_f^e(M)\}},$ then $\{\dim^{u}_H\mu:\mu\in \mathcal{M}_f^e(\Lambda)\}\supset [0,\sup_{\mu\in\mathcal{M}_f^e(\Lambda)}\dim_H^u\mu).$
	
	By Lemma \ref{lemma-relative-2}(c), $\mathcal{Q}(\{\mathcal{T}_{\psi^uf}(\mu):\mu\in \mathcal{M}_f^e(\Lambda)\cap   \mathcal{P}_{\Phi_{d}}^{-1}(a)\})$  is an interval.
	So $\{\dim_H^u\mu:\mu\in \mathcal{M}_f^e(\Lambda)\cap   \mathcal{P}_{\Phi_{d}}^{-1}(a)\}$ is an interval. 
	By Lemma \ref{lemma-relative-2}(a) there are $\mu_0,\mu_+\in \mathcal{M}_f^e(\Lambda)\cap\mathcal{P}_{\Phi_{d}}^{-1}(a)$ such that $h_{\mu_0}(f)=0,h_{\mu_+}(f)>0.$
	Then we have $0\in \{\dim_H^u\mu:\mu\in \mathcal{M}_f^e(\Lambda)\cap   \mathcal{P}_{\Phi_{d}}^{-1}(a)\}$ and $\dim_H^u(a)=\sup\limits_{\mu\in \mathcal{M}_f^e(\Lambda)\cap   \mathcal{P}_{\Phi_{d}}^{-1}(a)}\dim_H^u\mu>0.$
	Since $\{\dim_H^u\mu:\mu\in \mathcal{M}_f^e(\Lambda)\cap   \mathcal{P}_{\Phi_{d}}^{-1}(a)\}$ is an interval and $\dim_H^u(a)\in \overline{\{\dim_H^u\mu:\mu\in \mathcal{M}_f^e(\Lambda)\cap   \mathcal{P}_{\Phi_{d}}^{-1}(a)\}},$ we have $$[0,\dim_H^u(a))\subset \{\dim_H^u\mu:\mu\in \mathcal{M}_f^e(\Lambda)\cap   \mathcal{P}_{\Phi_{d}}^{-1}(a)\}\subset  [0,\dim_H^u(a)].$$ 
	By Lemma \ref{lemma-relative-2}(c), $\overline{\{\dim_H^u\mu:\mu\in \mathcal{M}_f(\Lambda)\cap   \mathcal{P}_{\Phi_{d}}^{-1}(a)\}}=
	\overline{\{\dim_H^u\mu:\mu\in \mathcal{M}_f^e(\Lambda)\cap   \mathcal{P}_{\Phi_{d}}^{-1}(a)\}.}$ By Lemma \ref{LH}, $f:\Lambda\to \Lambda$ is expansive, and thus $\mu\to h_{\mu}(f)$ is upper semi-continuous. So $\mu\to \dim_H^u\mu$ is upper semi-continuous, and thus $\{\dim_H^u\mu:\mu\in \mathcal{M}_f(\Lambda)\cap   \mathcal{P}_{\Phi_{d}}^{-1}(a)\}= [0,\dim_H^u(a)].$
\end{proof}

\noindent {\textbf{Proof of Corollary  \ref{thm-inter-huasdorff-3}:}} (1) Let $f: M \mapsto M$ be a $C^1$ transitive Anosov diffeomorphism.
By \cite{CatsEnri2011} there is $\nu\in\mathcal{M}_f(M)$ such that $\nu$ is an SRB-like measure for $f.$ Then by \cite[Corollary 2]{CatsCerEnri2011}, one has $h_{\nu}(f)=\int \sum_{i=1}^{\operatorname{dim} M} \chi_i^{+}(x) d \nu=\int \psi^u d\nu.$ Combinging with Ruelle’s inequality and the ergodic decomposition theorem, there is $\nu^u\in\mathcal{M}_f^e(M)$ such that 
$
	h_{\nu^u}(f)=\int \psi^u d\nu^u.
$
Then $\dim_H^u\mu^u=1.$ By Lemma \ref{lemma-locally-unstable},  $\{\dim^{u}_H\mu:\mu\in \mathcal{M}_f^e(\Lambda)\}\supset [0,1].$ By Ruelle’s inequality, $\{\dim^{u}_H\mu:\mu\in \mathcal{M}_f(\Lambda)\}\subset [0,1].$  So $\{\dim^{u}_H\mu:\mu\in \mathcal{M}_f^e(\Lambda)\}=\{\dim^{u}_H\mu:\mu\in \mathcal{M}_f(\Lambda)\}= [0,1].$ By Lemma \ref{lemma-locally-unstable} we obtain item(1).

(2) If $h_{\nu}(f)=0,$ then $\dim_H^u\nu=0.$ It's obvious that $\{\dim^{u}_H\mu:\mu\in \mathcal{M}_f^e(M)\}\ni 0.$  Now we assume that $h_{\nu}(f)>0.$ Denote the Laypunov exponents of $\nu$ by $
\chi_1(\nu) \geq \chi_2(\nu) \geq \cdots\geq \chi_{d_u}(\mu)>0>\chi_{d_u+1}(\nu) \geq\dots\geq \chi_{\dim M}(\nu).$ By Theorem \ref{Thm-Katok’s Horseshoe}, for any $\varepsilon>0$ there exists a transitive locally maximal hyperbolic set $\Lambda_\varepsilon$ and $\mu_\varepsilon\in \mathcal{M}_f^e(\Lambda_\varepsilon)$ such that $|h_{\nu}(f)-h_{\mu_\varepsilon}(f)|<\varepsilon$ and $|\sum_{i=1}^{d_u}\chi_i(\nu)-\sum_{i=1}^{d_u}\chi_i(\mu_\varepsilon)|<\varepsilon$ where $
\chi_1(\mu_\varepsilon) \geq \chi_2(\mu_\varepsilon) \geq \cdots\geq \chi_{d_u}(\mu_\varepsilon)>0>\chi_{d_u+1}(\mu_\varepsilon) \geq\dots\geq \chi_{\dim M}(\mu_\varepsilon)$ are   the Laypunov exponents of $\mu_\varepsilon.$
By Lemma \ref{lemma-locally-unstable},  $\{\dim^{u}_H\mu:\mu\in \mathcal{M}_f^e(M)\}\supset \{\dim^{u}_H\mu:\mu\in \mathcal{M}_f^e(\Lambda)\}\supset [0,\dim_H^u\mu_\varepsilon].$ Since  $\lim\limits_{\varepsilon\to 0}\dim^u_H\mu_\varepsilon=\lim\limits_{\varepsilon\to 0}\frac{h_{\mu_\varepsilon}(f)}{\sum_{i=1}^{d_u}\chi_i(\mu_\varepsilon)}=\frac{h_{\nu}(f)}{\sum_{i=1}^{d_u}\chi_i(\nu)}=\dim_H^u\nu,$ we have $\{\dim^{u}_H\mu:\mu\in \mathcal{M}_f^e(M)\}\supset [0,\dim_H^u\nu].$\qed

\section{Question}
The 'multi-horseshoe' entropy-dense property plays a key role in the proof of our results. For dynamical systems with specification-like properties, we don't know how to obtain the 'multi-horseshoe' entropy-dense property. So we ask the following question:
\begin{Que}
	How can we obtain the 'multi-horseshoe' entropy-dense property  for  dynamical systems with specification-like properties, or how can we  answer Question \ref{Conjecture-2} without using the 'multi-horseshoe' entropy-dense property?
\end{Que}

\bigskip

{\bf Acknowledgements.  } We thank Prof. Jinhua Zhang for many helpful discussions, and thank Wanshan Lin for some comments. The authors are  
supported by National Natural Science Foundation of China (grant No.   12071082) and in part by Shanghai Science and Technology Research Program (grant No. 21JC1400700).

\end{document}